\theoremstyle{plain}
\newtheorem{thm}{Theorem}[section]
\newtheorem{lem}[thm]{Lemma}
\newtheorem{prop}[thm]{Proposition}
\newtheorem{prob}[thm]{Problem}
\def\@rst #1 #2other{#1}
\newcommand\MR[1]{\relax\ifhmode\unskip\spacefactor3000 \space\fi
  \MRhref{\expandafter\@rst #1 other}{#1}}
\newcommand{\MRhref}[2]{\href{http://www.ams.org/mathscinet-getitem?mr=#1}{MR#2}}
\theoremstyle{definition}
\newtheorem{defn}[thm]{Definition}
\newtheorem{remark}[thm]{Remark}
\numberwithin{equation}{section}
\newcommand{\dsb}{\begin{adjustwidth}{2.5em}{0pt}
\begin{footnotesize}}
\newcommand{\dse}{\end{footnotesize}
\end{adjustwidth}}
\newcommand{\ssb}{\begin{adjustwidth}{2.5em}{0pt}}
\newcommand{\sse}{\end{adjustwidth}}
\newcommand{\aryb}{\begin{eqnarray*}}
\newcommand{\arye}{\end{eqnarray*}}
\def\alb#1\ale{\begin{align*}#1\end{align*}}
\def\allb#1\alle{\begin{align}#1\end{align}}
\newcommand{\eqb}{\begin{equation}}
\newcommand{\eqe}{\end{equation}}
\newcommand{\eqbn}{\begin{equation*}}
\newcommand{\eqen}{\end{equation*}}
\newcommand{\BB}{\mathbbm}
\newcommand{\ol}{\overline}
\newcommand{\ul}{\underline}
\newcommand{\op}{\operatorname}
\newcommand{\im}{\operatorname{Im}}
\newcommand{\re}{\operatorname{Re}}
\newcommand{\frk}{\mathfrak}
\newcommand{\eqD}{\overset{d}{=}}
\newcommand{\ep}{\varepsilon}
\newcommand{\rta}{\rightarrow}
\newcommand{\wt}{\widetilde}
\newcommand{\wh}{\widehat}
\newcommand{\mcl}{\mathcal}
\newcommand{\bdy}{\partial}
\newcommand{\el}{l}
\newcommand{\crit}{{\mathrm{crit}}}
\newcommand{\ccM}{{\mathbf{c}_{\mathrm M}}}
\let\originalleft\left
\let\originalright\right
\renewcommand{\left}{\mathopen{}\mathclose\bgroup\originalleft}
\renewcommand{\right}{\aftergroup\egroup\originalright}
\title{Uniqueness of the critical and supercritical Liouville quantum gravity metrics}  
 \date{ }
 \author{
\begin{tabular}{c} Jian Ding\footnote{School of Mathematical Sciences, Peking University, Beijing {\rm 100871}, China, \url{dingjian@math.pku.edu.cn} }  \\[-5pt]\small Peking University \end{tabular}
\begin{tabular}{c} Ewain Gwynne\footnote{Dept.\ of Mathematics, University of Chicago, 5734 S University Ave, Chicago {\rm 60637}, USA, \url{ewain@uchicago.edu} } \\[-5pt]\small University of Chicago \end{tabular} 
}
\begin{document}

\maketitle

\begin{abstract}
We show that for each ${\mathbf c}_{\mathrm M} \in [1,25)$, there is a unique metric associated with Liouville quantum gravity (LQG) with matter central charge ${\mathbf c}_{\mathrm M}$. An earlier series of works by Ding-Dub\'edat-Dunlap-Falconet, Gwynne-Miller, and others showed that such a metric exists and is unique in the subcritical case ${\mathbf c}_{\mathrm M} \in (-\infty,1)$, which corresponds to coupling constant $\gamma \in (0,2)$. 
The critical case ${\mathbf c}_{\mathrm M} = 1$ corresponds to $\gamma=2$ and the supercritical case ${\mathbf c}_{\mathrm M} \in (1,25)$ corresponds to $\gamma \in \mathbb C$ with $|\gamma| = 2$. 

Our metric is constructed as the limit of an approximation procedure called Liouville first passage percolation, which was previously shown to be tight for $\mathbf c_{\mathrm M} \in [1,25)$ by Ding and Gwynne (2020). In this paper, we show that the subsequential limit is uniquely characterized by a natural list of axioms. This extends the characterization of the LQG metric proven by Gwynne and Miller (2019) for $\mathbf c_{\mathrm M} \in (-\infty,1)$ to the full parameter range $\mathbf c_{\mathrm M} \in (-\infty,25)$.  

Our argument is substantially different from the proof of the characterization of the LQG metric for $\mathbf c_{\mathrm M} \in (-\infty,1)$. In particular, the core part of the argument is simpler and does not use confluence of geodesics.  
\end{abstract}

\medskip
\noindent\textbf{MSC:} 60D05, 60G60
\medskip

\newcommand{\Cupper}{{\hyperref[eqn-bilip-def]{\mathfrak C_*}}}
\newcommand{\Clower}{{\hyperref[eqn-bilip-def]{\mathfrak c_*}}}
\newcommand{\Cmid}{{\mathfrak c}'}
\newcommand{\Cmed}{{\mathfrak C}'}

\newcommand{\Kopt}{\beta}
\newcommand{\Kep}{{\delta_0}}
\newcommand{\Kann}{{\alpha}}
\newcommand{\Karound}{{A}} 
\newcommand{\geoExp}{{\hyperref[lem-hit-ball-phi]{\theta}}}
 
\newcommand{\Fr}{{\hyperref[sec-block-event]{\mathsf F}}}

\newcommand{\Er}{{\mathsf E}}
\newcommand{\Ur}{{\mathsf U}}
\newcommand{\Vr}{{\mathsf V}}
\newcommand{\fr}{{\mathsf f}}

\newcommand{\Cmax}{{\mathsf M}} 
\newcommand{\Cacross}{{\mathsf a}}
\newcommand{\Caround}{{\mathsf A}}
\newcommand{\Ctube}{{\mathsf L}}
\newcommand{\Crn}{{\mathsf K}}
\newcommand{\Ctime}{{\mathsf b}}
\newcommand{\Cinc}{{\mathsf c}} 

\newcommand{\llambda}{{\hyperref[eqn-small-const]{\lambda}}}
 
\newcommand{\Aendpt}{{\hyperref[lem-endpt-ball]{\mathsf t}}}
\newcommand{\sr}{{{\mathsf s}}}
\newcommand{\vr}{{\mathsf v}}
\newcommand{\ur}{{\mathsf u}}
\newcommand{\Hr}{{\mathsf H}}
\newcommand{\Aloc}{{\hyperref[lem-endpt-ball]{\mathsf S}}}
\newcommand{\pr}{{\hyperref[lem-endpt-ball]{\mathsf p}}}

\tableofcontents

\section{Introduction}
\label{sec-intro}

\subsection{Overview}
\label{sec-overview}

Liouville quantum gravity (LQG) is a one-parameter family of random fractal surfaces which originated in the physics literature in the 1980s~\cite{polyakov-qg1,david-conformal-gauge,dk-qg} as a class of canonical models of random geometry in two dimensions.  
One possible choice of parameter is the \emph{matter central charge} $\ccM \in (-\infty,25)$. 
Heuristically speaking, for an open domain $U\subset \BB C$, an LQG surface with matter central charge $\ccM$ is a sample from ``the uniform measure on Riemannian metric tensors $g$ on $U$, weighted by $(\det\Delta_g)^{-\ccM/2}$", where $\Delta_g$ denotes the Laplace-Beltrami operator. This definition is far from rigorous, e.g., because the space of Riemannian metric tensors on $U$ is infinite-dimensional, so there is not an obvious notion of a uniform measure on this space. However, there are various ways of defining LQG surface rigorously, as we discuss just below. 

\begin{defn} \label{def-cc-phases}
We refer to LQG with $\ccM \in (-\infty,1)$, $\ccM  =1$, and $\ccM \in (1,25)$ as the \emph{subcritical}, \emph{critical}, and \emph{supercritical} phases, respectively.
\end{defn}

See Table~\ref{fig-c-phases-table} for a summary of the three phases. 
One way to define LQG rigorously in the subcritical and critical phases is via the \emph{David-Distler-Kawai (DDK) ansatz}. The DDK ansatz states that for $\ccM \in (-\infty,1]$, the Riemannian metric tensor associated with an LQG surface takes the form
\eqb \label{eqn-lqg-tensor}
g = e^{\gamma h} \, (dx^2 + dy^2) ,\quad \text{where $\gamma \in (0,2]$ satisfies} \quad \ccM = 25 - 6\left(\frac{2}{\gamma} + \frac{\gamma}{2} \right)^2 .
\eqe
Here, $dx^2 + dy^2$ denotes the Euclidean metric tensor on $U$ and $h$ is a variant of the Gaussian free field (GFF) on $U$, the most natural random generalized function on $U$.
We refer to~\cite{shef-gff,pw-gff-notes,bp-lqg-notes} for more background on the GFF.  

The Riemannian metric tensor in~\eqref{eqn-lqg-tensor} is still not well-defined since the GFF is not a function, so $e^{\gamma h}$ does not make literal sense. Nevertheless, it is possible to rigorously define various objects associated with~\eqref{eqn-lqg-tensor} using regularization procedures. To do this, one considers a family of continuous functions $\{h_\ep\}_{\ep > 0}$ which approximate $h$, then takes an appropriate limit of objects defined using $h_\ep$ in place of $h$. Objects which have been constructed in this manner include the LQG area and length measures~\cite{shef-kpz,rhodes-vargas-log-kpz,kahane}, Liouville Brownian motion~\cite{grv-lbm,berestycki-lbm}, the correlation functions for the random ``fields" $e^{\alpha h}$ for $\alpha  \in \BB R$~\cite{krv-dozz}, and the distance function (metric) associated with~\eqref{eqn-lqg-tensor}, at least for $\ccM < 1$~\cite{dddf-lfpp,gm-uniqueness}. 

LQG in the subcritical and critical phases is expected, and in some cases proven, to describe the scaling limit of various types of random planar maps. For example, in keeping with the above heuristic definition, LQG with $\ccM \in (-\infty,1]$ should describe the scaling limit of random planar maps sampled with probability proportional to $(\det\Delta)^{-\ccM/2}$, where $\Delta$ is the discrete Laplacian. We refer to~\cite{berestycki-lqg-notes,gwynne-ams-survey,ghs-mating-survey} for expository articles on subcritical and critical LQG. 

The supercritical phase $\ccM \in (1,25)$ is much more mysterious than the subcritical and critical phases, even from the physics perspective. In this case, the DDK ansatz does not apply. In fact, the parameter $\gamma$ from~\eqref{eqn-lqg-tensor} is complex with $|\gamma| = 2$, so attempting to directly analytically continue formulas from the subcritical case to the supercritical case often gives nonsensical complex answers. 
It is expected that supercritical LQG still corresponds in some sense to a random geometry related to the GFF. 
However, until very recently there have been few mathematically rigorous results for supercritical LQG. 
See~\cite{ghpr-central-charge} for an extensive discussion of the physics literature and various conjectures concerning LQG with $\ccM \in (1,25)$. 

The purpose of this paper is to show that in the critical and supercritical phases, i.e., when $\ccM \in [1,25)$, there is a canonical metric (distance function) associated with LQG. 
This was previously established in the subcritical phase $\ccM \in (-\infty,1)$ in the series of papers~\cite{dddf-lfpp,local-metrics,lqg-metric-estimates,gm-confluence,gm-uniqueness}.
Our results resolve~\cite[Problems 7.17 and 7.18]{gm-uniqueness}, which ask for a metric associated with LQG for $\ccM \in [1,25)$. 

\begin{figure}[ht!]
\begin{center}
\includegraphics[width=1\textwidth]{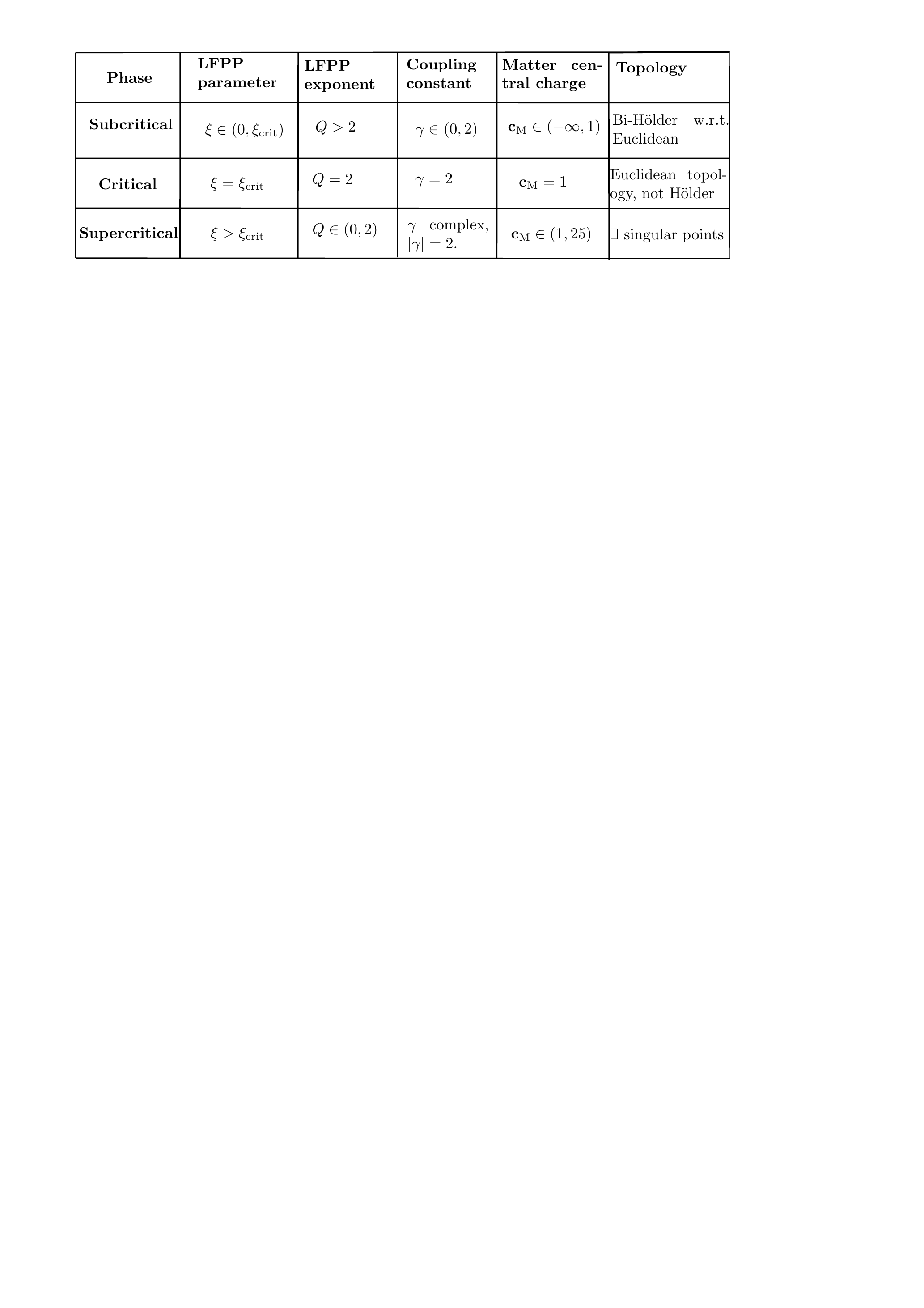} 
\caption{\label{fig-c-phases-table} 
Comparison of the different phases of LQG. This paper proves that the LQG metric is unique in the critical and supercritical phases. The bi-H\"older continuity w.r.t.\ to the Euclidean metric in the subcritical phase is proven in~\cite{lqg-metric-estimates}. The statement that the critical LQG metric induces the Euclidean topology, but is not H\"older continuous, is proven in~\cite{dg-critical-lqg}. 
}
\end{center}
\end{figure}

This paper builds on~\cite{dg-supercritical-lfpp}, which proved the tightness of an approximation procedure for the metric when $\ccM \in [1,25)$ (using~\cite{lfpp-pos} and some estimates from~\cite{dddf-lfpp} which also work for the critical/supercritical cases), and~\cite{pfeffer-supercritical-lqg}, which proved various properties of the subsequential limits. The analogs of these works in the subcritical case are~\cite{dddf-lfpp} and~\cite{lqg-metric-estimates}, respectively.
We will also use one preliminary lemma which was proven in~\cite{dg-confluence} (Lemma~\ref{lem-hit-ball}), but we will not need the main result of~\cite{dg-confluence}, i.e., the confluence of geodesics property.

Our results are analogous to those of~\cite{gm-uniqueness}, which proved uniqueness of the subcritical LQG metric. We will prove that the subsequential limiting metrics in the critical and supercritical cases are uniquely characterized by a natural list of axioms. 
However, our proof is very different from the argument of~\cite{gm-uniqueness}, for two main reasons.
\begin{itemize}
\item A key input in~\cite{gm-uniqueness} is \emph{confluence of geodesics}, which says that two LQG geodesics with the same starting point and different target points typically coincide for a non-trivial initial interval of time~\cite{gm-confluence}. We replace the core part of the argument in~\cite{gm-uniqueness}, which corresponds to~\cite[Section 4]{gm-uniqueness}, by a simpler argument which does not use confluence of geodesics (Section~\ref{sec-counting}). Instead, our argument is based on counting the number of events of a certain type which occur. Confluence of geodesics was proven for the critical and supercritical LQG metrics in~\cite{dg-confluence}, but it is not needed in this paper.  
\item There are many additional difficulties in our proof, especially in Section~\ref{sec-construction}, arising from the fact that the metrics we work with are not continuous with respect to the Euclidean metric, or even finite-valued. 
\end{itemize}
The first point reduces the complexity of this paper as compared to~\cite{gm-uniqueness}, whereas the second point increases it. The net effect is that our argument is overall longer than~\cite{gm-uniqueness}, but conceptually simpler and requires less external input. We note that all of our arguments apply in the subcritical phase as well as the critical and supercritical phases, so this paper also gives a new proof of the results of~\cite{gm-uniqueness}. 
\bigskip

\noindent \textbf{Acknowledgments.} We thank an anonymous referee for helpful comments on an earlier version of this article. J.D.\ was partially supported by NSF grants DMS-1757479 and DMS-1953848. E.G.\ was partially supported by a Clay research fellowship. 

\subsection{Convergence of Liouville first passage percolation}
\label{sec-lfpp}

For concreteness, throughout this paper we will restrict attention to the whole-plane case. We let $h$ be the whole-plane Gaussian free field with the additive constant chosen so that its average over the unit circle is zero. Once the LQG metric for $h$ is constructed, it is straightforward to construct metrics associated with variants of the GFF on other domains via restriction and/or local absolute continuity; see~\cite[Remark 1.5]{gm-uniqueness}. 

As in the subcritical case, the construction of our metric uses an approximation procedure called \emph{Liouville first passage percolation} (LFPP). 
To define LFPP, we first introduce a family of continuous functions which approximate $h$. 
For $s > 0$ and $z \in\BB C$, let $p_s(z) = \frac{1}{2\pi s} \exp\left( - \frac{|z|^2}{2s} \right)$ be the heat kernel. For $\ep >0$, we define a mollified version of the GFF by
\eqb \label{eqn-gff-convolve}
h_\ep^*(z) := (h*p_{\ep^2/2})(z) = \int_{\BB C} h(w) p_{\ep^2/2} (z-w) \, dw ,\quad \forall z\in \BB C  ,
\eqe
where the integral is interpreted in the sense of distributional pairing. We use $p_{\ep^2/2}$ instead of $p_\ep$ so that the variance of $h_\ep^*(z)$ is $\log\ep^{-1} + O_\ep(1)$. 
 
We now consider a parameter $\xi > 0$, which will shortly be chosen to depend on the matter central charge $\ccM$ (see~\eqref{eqn-cc-xi}). \emph{Liouville first passage percolation} (LFPP) with parameter $\xi$ is the family of random metrics $\{D_h^\ep\}_{\ep > 0}$ defined by
\eqb \label{eqn-lfpp}
D_h^\ep(z,w) := \inf_{P : z\rta w} \int_0^1 e^{\xi h_\ep^*(P(t))} |P'(t)| \,dt ,\quad \forall z,w\in\BB C 
\eqe
where the infimum is over all piecewise continuously differentiable paths $P:[0,1]\rta\BB C$ from $z$ to $w$.  
To extract a non-trivial limit of the metrics $D_h^\ep$, we need to re-normalize. We (somewhat arbitrarily) define our normalizing factor by
\eqb \label{eqn-gff-constant}
\frk a_\ep := \text{median of} \: \inf\left\{ \int_0^1 e^{\xi h_\ep^*(P(t))} |P'(t)| \,dt  : \text{$P$ is a left-right crossing of $[0,1]^2$} \right\} ,
\eqe  
where a left-right crossing of $[0,1]^2$ is a piecewise continuously differentiable path $P : [0,1]\rta [0,1]^2$ joining the left and right boundaries of $[0,1]^2$. 
We do not know the value of $\frk a_\ep$ explicitly. The best currently available estimates are given in~\cite[Theorem 1.11]{dg-polylog}. 

More generally, the definition~\eqref{eqn-lfpp} of LFPP also makes sense when $h$ is a \emph{whole-plane GFF plus a bounded continuous function}, i.e., a random distribution of the form $\wt h + f$, where $\wt h$ is a whole-plane GFF and $f$ is a (possibly random and $\wt h$-dependent) bounded continuous function. 

In terms of LFPP, the main result of this paper gives the convergence of the metrics $\frk a_\ep^{-1} D_h^\ep$ for each $\xi > 0$. 
For values of $\xi$ corresponding to the supercritical case $\ccM \in (1,25)$, the limiting metric is not continuous with respect to the Euclidean metric. 
Hence we cannot expect convergence with respect to the uniform topology. 
Instead, as in~\cite{dg-supercritical-lfpp}, we will work with the topology of the following definition.

\begin{defn} \label{def-lsc} 
Let $X\subset \BB C$. 
A function $f : X \times X \rta \BB R \cup\{-\infty,+\infty\}$ is \emph{lower semicontinuous} if whenever $(z_n,w_n) \in X\times X$ with $(z_n,w_n) \rta (z,w)$, we have $f(z,w) \leq \liminf_{n\rta\infty} f(z_n,w_n)$. 
The \emph{topology on lower semicontinuous functions} is the topology whereby a sequence of such functions $\{f_n\}_{n\in\BB N}$ converges to another such function $f$ if and only if
\begin{enumerate}[(i)]
\item Whenever $(z_n,w_n) \in X\times X$ with $(z_n,w_n) \rta (z,w)$, we have $f(z,w) \leq \liminf_{n\rta\infty} f_n(z_n,w_n)$.
\item For each $(z,w)\in X\times X$, there exists a sequence $(z_n,w_n) \rta (z,w)$ such that $f_n(z_n,w_n) \rta f(z,w)$. 
\end{enumerate}
\end{defn}

It follows from~\cite[Lemma 1.5]{beer-usc} that the topology of Definition~\ref{def-lsc} is metrizable (see~\cite[Section 1.2]{dg-supercritical-lfpp}). 
Furthermore,~\cite[Theorem 1(a)]{beer-usc} shows that the metric inducing this topology can be taken to be separable. 

\begin{thm} \label{thm-lfpp-conv}
Let $h$ be a whole-plane GFF, or more generally a whole-plane GFF plus a bounded continuous function. For each $\xi  >0$, the re-scaled LFPP metrics $\frk a_\ep^{-1} D_h^\ep$ converge in probability with respect to the topology on lower semicontinuous functions on $\BB C\times \BB C$ (Definition~\ref{def-lsc}). 
The limit $D_h$ is a random metric on $\BB C$, except that it is allowed to take on infinite values.
\end{thm}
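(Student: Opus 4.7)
The plan is to deduce the theorem from tightness plus a uniqueness statement about subsequential limits. By~\cite{dg-supercritical-lfpp}, the family $\{\frk a_\ep^{-1} D_h^\ep\}_{\ep > 0}$ is already tight in the topology of Definition~\ref{def-lsc}, so along any sequence $\ep_n \downarrow 0$ one can pass to a subsequence and, via the Skorokhod representation, obtain an almost sure limit $D_h$ jointly coupled with $h$. To promote this to convergence in probability of the whole family, it suffices to identify $D_h$ uniquely as a measurable function of $h$: a standard double-subsequence argument then takes us from tightness plus an a.s.-unique subsequential limit to convergence in probability in the separable metrizable topology of Definition~\ref{def-lsc}.

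First I would invoke the companion paper~\cite{pfeffer-supercritical-lqg} to verify that every subsequential limit $D_h$ is measurable with respect to $h$ and satisfies a natural list of ``weak LQG metric'' axioms: it is a length metric on $\BB C$ that may take infinite values, its restriction to any open $U$ is a local function of $h|_U$ in an appropriate sense, it is covariant under the affine maps $z\mapsto az+b$ with a scaling exponent determined by $\xi$, and it transforms under the Weyl rule $h\mapsto h+f$ for continuous $f$ via
\begin{equation*}
D_{h+f}(z,w) \;=\; \inf_{P : z \to w} \int_0^{\mathrm{len}(P; D_h)} e^{\xi f(P(t))}\, dt .
\end{equation*}
The specific choice of $\frk a_\ep$ additionally forces the median left--right crossing distance of $[0,1]^2$ under $D_h$ to equal $1$, killing the one remaining multiplicative degree of freedom. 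The statement for a GFF plus a bounded continuous function should then follow from the plain whole-plane statement by absolute continuity combined with the Weyl axiom.

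The core step---and the main content of this paper---is then the uniqueness theorem: any two weak LQG metrics $D_h, D_h'$ built from the same $h$ with this normalization must coincide almost surely. My approach would be to introduce deterministic optimal bi-Lipschitz constants $\mathfrak{c}_* \leq \mathfrak{C}_*$ satisfying $\mathfrak{c}_* D_h \leq D_h' \leq \mathfrak{C}_* D_h$ almost surely, and then force $\mathfrak{c}_* = \mathfrak{C}_* = 1$ by combining Weyl scaling, the affine covariance axiom, and near-independence of the GFF across disjoint annular scales: if $\mathfrak{c}_* < \mathfrak{C}_*$ one should be able to manufacture a small Weyl perturbation that strictly improves the ratio on a positive-probability event, contradicting the definition of $\mathfrak{c}_*$. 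The hard part, which is clearly where the bulk of the paper's work lies, is making this improvement argument quantitative in the critical and supercritical regimes, where $D_h$ may be infinite and is only lower semicontinuous: continuity-based tools from the subcritical proof in~\cite{gm-uniqueness}, and in particular the confluence-of-geodesics input of~\cite{gm-confluence}, are unavailable, and as the authors signal in the introduction the argument has to be restructured around a combinatorial counting argument over carefully chosen ``improvement'' events that bypasses confluence entirely.
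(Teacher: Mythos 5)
Your overall strategy matches the paper: tightness in the topology of Definition~\ref{def-lsc}, uniqueness of the subsequential limit via an axiomatic characterization, and normalization by the left--right crossing median forcing the multiplicative constant to be $1$. The paper indeed deduces Theorem~\ref{thm-lfpp-conv} from Theorem~\ref{thm-weak-uniqueness} in exactly this way, and your appeal to~\cite{pfeffer-supercritical-lqg} and~\cite{dg-supercritical-lfpp} for tightness and identification of subsequential limits is the right move.

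There is, however, a genuine slip in your axiom list that the paper's architecture is specifically designed to avoid. You assert that every subsequential limit ``is covariant under the affine maps $z\mapsto az+b$ with a scaling exponent determined by $\xi$.'' This cannot be directly verified for subsequential limits, and this is precisely why the paper introduces the notion of a \emph{weak} LQG metric (Definition~\ref{def-metric}). Rescaling space by $r$ replaces the mollification scale $\ep$ in~\eqref{eqn-lfpp} by $\ep/r$, so if one only has convergence along a subsequence $\ep_n \to 0$, there is no reason that the same subsequence converges after scaling; one therefore cannot transport scale covariance through the limit. What one \emph{can} verify for subsequential limits is translation invariance plus ``tightness across scales'' (Axiom~\hyperref[item-metric-translate]{V$'$}), which is strictly weaker. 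The uniqueness theorem that actually gets invoked in the proof of Theorem~\ref{thm-lfpp-conv} is Theorem~\ref{thm-weak-uniqueness}, which is stated and proved under these weaker axioms; exact scale covariance (i.e., that every weak LQG metric is a strong one) is then \emph{deduced} a posteriori in Lemma~\ref{lem-weak-to-strong} as a consequence of uniqueness. If you instead prove uniqueness under the strong axiom list including exact affine covariance, you cannot apply it to the subsequential limits of LFPP, and the deduction of Theorem~\ref{thm-lfpp-conv} breaks. So your sketch needs to either (i) replace exact scale covariance with tightness across scales in the axiom list and prove uniqueness in that weaker setting, or (ii) separately establish that subsequential limits satisfy exact affine covariance --- which is not known a priori and is circularly dependent on the uniqueness you are trying to prove.

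A smaller stylistic remark: the detour through Skorokhod representation is unnecessary because~\cite[Theorem 1.7]{pfeffer-supercritical-lqg} already yields convergence in probability to a weak LQG metric along a subsequence, coupled with $h$; the double-subsequence argument then runs as you describe.
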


To make the connection between Theorem~\ref{thm-lfpp-conv} and the LQG metric, we need to discuss the LFPP distance exponent $Q$. It was shown in~\cite[Proposition 1.1]{dg-supercritical-lfpp} that for each $\xi > 0$, there exists $Q = Q(\xi) > 0$ such that 
\eqb  \label{eqn-Q-def}
\frk a_\ep = \ep^{1 - \xi Q  + o_\ep(1)} ,\quad \text{as} \quad \ep \rta 0 . 
\eqe 
The existence of $Q$ is proven via a subadditivity argument, so the exact relationship between $Q$ and $\xi$ is not known. However, it is known that $Q \in (0,\infty)$ for all $\xi  > 0$ and $Q$ is a continuous, non-increasing function of $\xi$~\cite{dg-supercritical-lfpp,lfpp-pos}. See also~\cite{gp-lfpp-bounds,ang-discrete-lfpp} for bounds for $Q$ in terms of $\xi$. 

As we will discuss in more detail below, LFPP with parameter $\xi$ is related to LQG with matter central charge
\eqb \label{eqn-cc-xi}
\ccM = \ccM(\xi )  = 25 - 6Q(\xi)^2 .
\eqe
The function $\xi\mapsto Q(\xi)$ is continuous and $Q(\xi) \rta \infty$ as $\xi \rta 0$ and $Q(\xi) \rta 0$ as $\xi\rta \infty$~\cite[Proposition 1.1]{dg-supercritical-lfpp}. So, the formula~\eqref{eqn-cc-xi} shows that there is a value of $\xi$ corresponding to each $\ccM \in (-\infty,25)$. Furthermore, $\xi \mapsto Q(\xi)$ is strictly decreasing on $(0,0.7)$, so the function $\xi\mapsto\ccM(\xi)$ is injective on this interval. We expect that it is in fact injective on all of $(0,\infty)$, which would mean that there is a one-to-one correspondence between $\xi$ and $\ccM$.\footnote{One way to prove the injectivity of $\xi\mapsto \ccM(\xi)$ would be to show that if $\xi$ and $\ccM$ are related as in~\eqref{eqn-cc-xi}, then $\xi$ is the distance exponent for the dyadic subdivision model in~\cite{ghpr-central-charge} with parameter $\ccM$: indeed, this would give an inverse to the function $\xi \mapsto \ccM(\xi)$. We expect that this can be proven using similar arguments to the ones used to related LFPP and Liouville graph distance in~\cite{dg-lqg-dim}, see also the discussion of LFPP in~\cite[Section 2.3]{ghpr-central-charge}.}

The relation between $\xi$ and $\ccM$ in~\eqref{eqn-cc-xi} is not explicit since the dependence of $Q$ on $\xi$ is not known explicitly. 
The only exact relation between $\ccM$ and $\xi$ which we know is that $\ccM = 0$ corresponds to $\xi = 1/\sqrt 6$. This is equivalent to the fact that the Hausdorff dimension of LQG with $\gamma=\sqrt{8/3}$ is 4. See~\cite{dg-lqg-dim} for details. 

From~\eqref{eqn-cc-xi}, we see that $Q(\xi) = 2$ corresponds to the critical value $\ccM = 1$, which motivates us to define
\eqb
\xi_\crit := \inf\{\xi > 0 : Q(\xi) = 2\} .
\eqe
It follows from~\cite[Proposition 1.1]{dg-supercritical-lfpp} that $\xi_\crit$ is the unique value of $\xi$ for which $Q(\xi) = 2$ and from~\cite[Theorem 2.3]{gp-lfpp-bounds} that $\xi_\crit  \in [0.4135 , 0.4189]$. We have $Q > 2$ for $\xi < \xi_\crit$ and $Q \in(0,2)$ for $\xi > \xi_\crit$. 

\begin{defn} \label{def-xi-phases}
We refer to LFPP with $\xi <\xi_\crit$, $\xi = \xi_\crit$, and $\xi > \xi_\crit$ as the \emph{subcritical}, \emph{critical}, and \emph{supercritical} phases, respectively.
\end{defn}

By~\eqref{eqn-cc-xi}, the three phases of LFPP correspond exactly to the three phases of LQG in Definition~\ref{def-cc-phases}.

Theorem~\ref{thm-lfpp-conv} has already been proven in the subcritical phase $\xi < \xi_\crit$ (but this paper simplifies part of the proof). 
Indeed, it was shown by Ding, Dub\'edat, Dunlap, and Falconet~\cite{dddf-lfpp} that in this case the re-scaled LFPP metrics $\frk a_\ep^{-1} D_h^\ep$ are tight with respect to the topology of uniform convergence on compact subsets of $\BB C\times \BB C$, which is a stronger topology than the one in Definition~\ref{def-lsc}. 
Subsequently, it was shown by Gwynne and Miller~\cite{gm-uniqueness}, building on~\cite{local-metrics,lqg-metric-estimates,gm-confluence}, that the subsequential limit is unique. This was done by establishing an axiomatic characterization of the limiting metric. 

The limiting metric in the subcritical phase induces the same topology on $\BB C$ as the Euclidean metric, but has very different geometric properties. This metric can be thought of as the Riemannian distance function associated with the Riemannian metric tensor~\eqref{eqn-lqg-tensor}, where $\ccM \in (-\infty,1)$ and $\xi$ are related as in~\eqref{eqn-cc-xi}. The relation between $\ccM$ and $\xi$ can equivalently be expressed as $\gamma = \xi d(\xi)$, where $\gamma \in (0,2)$ is as in~\eqref{eqn-lqg-tensor} and $d(\xi) > 2$ is the Hausdorff dimension of the limiting metric~\cite{dg-lqg-dim,gp-kpz}. 
See~\cite{ddg-metric-survey} for a survey of results about the subcritical LQG metric (and some previous results in the critical and supercritical cases).

In the critical and supercritical cases, Theorem~\ref{thm-lfpp-conv} is new. 
We previously showed in~\cite{dg-supercritical-lfpp} that for all $\xi > 0$, the metrics $\{\frk a_\ep^{-1} D_h^\ep\}_{\ep > 0}$ are tight with respect to the topology on lower semicontinuous functions. 
The contribution of the present paper is to show that the subsequential limit is unique. 
We will do this by proving that the limiting metric is uniquely characterized by a list of axioms analogous to the one in~\cite{gm-uniqueness} (see Theorems~\ref{thm-strong-uniqueness} and~\ref{thm-weak-uniqueness}).  

In the critical case $\xi = \xi_\crit$, the limiting metric $D_h$ induces the same topology as the Euclidean metric~\cite{dg-critical-lqg}, and can be thought of as the Riemannian distance function associated with critical ($\gamma=2$) LQG. We refer to~\cite{powell-gmc-survey} for a survey of results concerning the critical LQG \emph{measure}. 

In the supercritical case $\xi > \xi_\crit$, the limiting metric in Theorem~\ref{thm-lfpp-conv} does not induce the Euclidean topology on $\BB C$. Rather, a.s.\ there exists an uncountable, Euclidean-dense set of \emph{singular points} $z\in \BB C$ such that
\eqb \label{def-singular}
D_h(z,w) = \infty ,\quad \forall w\in \BB C\setminus \{z\} .
\eqe
However, for each fixed $z\in\BB C$, a.s.\ $z$ is not a singular point, so the set of singular points has zero Lebesgue measure. Moreover, any two non-singular points lie at finite $D_h$-distance from each other~\cite{dg-supercritical-lfpp}. One can think of singular points as infinite ``spikes" which $D_h$-rectifiable paths must avoid. 

If we let $\{h_\ep\}_{\ep > 0}$ be the circle average process for the GFF~\cite[Section 3.1]{shef-kpz}, then the set of singular points is (almost) the same as the set of points $z\in\BB C$ which have \emph{thickness} greater than $Q$, in the sense that
\eqb \label{eqn-singular-thick}
\limsup_{\ep\rta 0} \frac{h_\ep(z)}{\log\ep^{-1}} > Q .
\eqe
See~\cite[Proposition 1.11]{pfeffer-supercritical-lqg} for a precise statement. 
It is shown in~\cite{hmp-thick-pts} that a.s.\
\eqbn
\limsup_{\ep\rta 0} h_\ep(z) / \log\ep^{-1} \in [-2,2] ,\quad \forall z\in\BB C ,
\eqen
which explains why $\xi_\crit$ (which corresponds to $Q = 2$) is the critical threshold for singular points to exist.

\begin{remark}[Conjectured random planar map connection] \label{remark-planar-map}
In the subcritical case, the LQG metric is conjectured to describe the scaling limit of various types of random planar maps, equipped with their graph distance, with respect to the Gromov-Hausdorff topology (see~\cite[Section 1.3]{gm-uniqueness}). 
This conjecture naturally extends to the critical case. In particular, the critical LQG metric should be the Gromov-Hausdorff scaling limit of random planar maps sampled with probability proportional to the partition function of, e.g., the discrete Gaussian free field, the O(2) loop model, the critical 4-state Potts model, or the critical Fortuin-Kasteleyn model with parameter $q=4$~\cite{shef-burger,ghs-mating-survey,ahps-critical-mating}.

A naive guess in the supercritical case is that the LQG metric for $\ccM \in (1,25)$ should describe the scaling limit of random planar maps sampled with probability proportional to $(\det\Delta)^{-\ccM/2}$, where $\Delta$ is the discrete Laplacian. This guess appears to be false, however, since numerical simulations and heuristics suggest that such planar maps converge in the scaling limit to trees (see~\cite[Section 2.2]{ghpr-central-charge} and the references therein). 
Rather, in order to get supercritical LQG in the limit, one should consider planar maps sampled with probability proportional to $(\det\Delta)^{-\ccM/2}$ which are in some sense ``allowed to have infinitely many vertices". We do not know how to make sense of such maps rigorously. However,~\cite{ghpr-central-charge} defines a random planar map which should be in the same universality class: it is the adjacency graph of a dyadic tiling of $\BB C$ by squares which all have the same ``$\ccM$-LQG size" with respect to an instance of the GFF. See~\cite{ghpr-central-charge} for further discussion. 
\end{remark}

\subsection{Characterization of the LQG metric}
\label{sec-strong}

Since we already know that LFPP is tight for all $\xi  >0$~\cite{dg-supercritical-lfpp}, in order to prove Theorem~\ref{thm-lfpp-conv} we need to show that the subsequential limit is unique. To accomplish this, we will prove that for each $\xi > 0$, there is a unique (up to multiplication by a deterministic positive constant) metric satisfying certain axioms. That is, we will extend the characterization result of~\cite{gm-uniqueness} to the supercritical case. To state our axioms, we first need some preliminary definitions. 

\begin{defn} \label{def-metric-properties}
Let $(X,d)$ be a metric space, with $d$ allowed to take on infinite values. 
\begin{itemize}
\item A \emph{curve (a.k.a.\ a path)} in $(X,d)$ is a continuous function $P : [a,b] \rta X$ for some interval $[a,b]$.
\item
For a curve $P : [a,b] \rta X$, the \emph{$d$-length} of $P$ is defined by 
\eqbn
\op{len}(P;d) :=  \sup_{T} \sum_{i=1}^{\# T} d(P(t_i) , P(t_{i-1})) 
\eqen
where the supremum is over all partitions $T : a= t_0 < \dots < t_{\# T} = b$ of $[a,b]$. Note that the $d$-length of a curve may be infinite. In particular, the $d$-length of $P$ is infinite if there are times $s,t\in[a,b]$ such that $d(P(s) , P(t)) = \infty$.
\item
We say that $(X,d)$ is a \emph{length space} if for each $x,y\in X$ and each $\ep > 0$, there exists a curve of $d$-length at most $d(x,y) + \ep$ from $x$ to $y$. 
If $d(x,y) < \infty$, a curve from $x$ to $y$ of $d$-length \emph{exactly} $d(x,y)$ is called a \emph{geodesic}. 
\item
For $Y\subset X$, the \emph{internal metric of $d$ on $Y$} is defined by
\eqb \label{eqn-internal-def}
d(x,y ; Y)  := \inf_{P \subset Y} \op{len}\left(P ; d \right) ,\quad \forall x,y\in Y 
\eqe 
where the infimum is over all curves $P$ in $Y$ from $x$ to $y$. 
Note that $d(\cdot,\cdot ; Y)$ is a metric on $Y$, except that it is allowed to take infinite values.  
\item
If $X \subset \BB C$, we say that $d$ is a \emph{lower semicontinuous metric} if the function $(x,y) \rta d(x,y)$ is lower semicontinuous w.r.t.\ the Euclidean topology.  
We equip the set of lower semicontinuous metrics on $X$ with the topology on lower semicontinuous functions on $X \times X$, as in Definition~\ref{def-lsc}, and the associated Borel $\sigma$-algebra.
\end{itemize}
\end{defn}

The axioms which characterize our metric are given in the following definition.

\begin{defn}[LQG metric]
\label{def-metric0}
Let $\mcl D'$ be the space of distributions (generalized functions) on $\BB C$, equipped with the usual weak topology.   
For $\xi > 0$, a \emph{(strong) LQG metric with parameter $\xi$} is a measurable function $h\mapsto D_h$ from $\mcl D'$ to the space of lower semicontinuous metrics on $\BB C$ with the following properties.\footnote{We do not care how $D$ is defined on any subset of $\mcl D'$ which has probability zero for the distribution of any whole-plane GFF plus a continuous function.} Let $h$ be a \emph{GFF plus a continuous function} on $\BB C$: i.e., $h$ is a random distribution on $\BB C$ which can be coupled with a random continuous function $f$ in such a way that $h-f$ has the law of the whole-plane GFF. Then the associated metric $D_h$ satisfies the following axioms. 
\begin{enumerate}[I.]
\item \textbf{Length space.} Almost surely, $(\BB C,D_h)$ is a length space. \label{item-metric-length} 
\item \textbf{Locality.} Let $U\subset\BB C$ be a deterministic open set. 
The $D_h$-internal metric $D_h(\cdot,\cdot ; U)$ is a.s.\ given by a measurable function of $h|_U$.  \label{item-metric-local}
\item \textbf{Weyl scaling.} For a continuous function $f : \BB C \rta \BB R$, define
\eqb \label{eqn-metric-f}
(e^{\xi f} \cdot D_h) (z,w) := \inf_{P : z\rta w} \int_0^{\op{len}(P ; D_h)} e^{\xi f(P(t))} \,dt , \quad \forall z,w\in \BB C ,
\eqe 
where the infimum is over all $D_h$-rectifiable paths from $z$ to $w$ in $\BB C$ parametrized by $D_h$-length (we use the convention that $\inf \emptyset = \infty$).
Then a.s.\ $ e^{\xi f} \cdot D_h = D_{h+f}$ for every continuous function $f: \BB C \rta \BB R$. \label{item-metric-f}
\item \textbf{Scale and translation covariance.} Let $Q$ be as in~\eqref{eqn-Q-def}. For each fixed deterministic $r > 0$ and $z\in\BB C$, a.s.\ \label{item-metric-coord0}
\eqb
 D_h \left( ru + z , r v + z \right) = D_{h(r\cdot + z)  +Q\log r}(u,v)  , \quad \forall u,v\in\BB C  .
\eqe    
\item \textbf{Finiteness.} Let $U \subset\BB C$ be a deterministic, open, connected set and let $K_1 , K_2\subset U$ be disjoint, deterministic, compact, connected sets which are not singletons. Almost surely, $ D_h(K_1,K_2;U) < \infty$.  \label{item-metric-finite}
\end{enumerate}
\end{defn} 

Definition~\ref{def-metric0} is nearly identical to the analogous definition in the subcritical case~\cite[Section 1.2]{gm-uniqueness}, except we only require the metric to be lower semicontinuous, rather than requiring it to induce the Euclidean topology. 
Because we allow $D_h$ to take infinite values, we need to include a finiteness condition (Axiom~\ref{item-metric-finite}) to rule out metrics which assign infinite distance to too many pairs of points. For example, if we defined $D_h$ for every distribution $h$ by $D_h(z,w) = 0$ if $z=w$ and $D_h(z,w) =  \infty$ if $z\not=w$, then $h\mapsto D_h$ would satisfy all of the conditions of Definition~\ref{def-metric0} except for Axiom~\ref{item-metric-finite}.

Axioms~\ref{item-metric-length}, \ref{item-metric-local}, and~\ref{item-metric-f} are natural from the heuristic that the LQG metric should be given by ``integrating $e^{\xi h}$ along paths, then taking an infimum over paths". 
We remark that if $h$ is a GFF plus a continuous function and $D_h$ is a weak LQG metric, then a.s.\ the Euclidean metric is continuous with respect to $D_h$~\cite[Proposition 1.10]{pfeffer-supercritical-lqg} (but $D_h$ is not continuous w.r.t.\ the Euclidean metric if $\xi > \xi_\crit$). Consequently, a.s.\ every path of finite $D_h$-length is Euclidean continuous.  

Axiom~\ref{item-metric-coord0} is the metric analog of the LQG coordinate change formula from~\cite[Section 2]{shef-kpz}, but restricted to translation and scaling. 
Following~\cite{shef-kpz}, we can think of the pairs $(\BB C , D_h)$ and $(\BB C , h(r\cdot + z)  +Q\log r)$ as representing two different parametrizations of the same LQG surface. Axiom~\ref{item-metric-coord0} implies that the metric is an intrinsic function of the LQG surface, i.e., it is invariant under changing coordinates to a different parametrization. We do not assume that the metric is covariant with respect to rotations in Definition~\ref{def-metric0}: this turns out to be a consequence of the other axioms (see Proposition~\ref{prop-rotation}). 

The following theorem extends~\cite[Theorem 1.2]{gm-uniqueness} to the critical and supercritical phases. 

\begin{thm} \label{thm-strong-uniqueness} 
For each $\xi> 0$, there is an LQG metric $D$ with parameter $\xi$ such that the limiting metric of Theorem~\ref{thm-lfpp-conv} is a.s.\ equal to $D_h$ whenever $h$ is a whole-plane GFF plus a bounded continuous function. 
Furthermore, this LQG metric is unique in the following sense. If $D$ and $\wt D$ are two LQG metrics with parameter $\xi$, then there is a deterministic constant $C>0$ such that a.s.\ $\wt D_h = C   D_h$ whenever $h$ is a whole-plane GFF plus a continuous function. 
\end{thm}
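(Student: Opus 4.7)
The plan is to deduce existence from the tightness established in~\cite{dg-supercritical-lfpp} together with properties of subsequential limits from~\cite{pfeffer-supercritical-lqg}, and to prove uniqueness in two stages: first bi-Lipschitz equivalence of any two LQG metrics with deterministic constants, then exact proportionality via a counting argument. For existence, fix any subsequential limit $D_h$ of $\frk a_\ep^{-1} D_h^\ep$. Axiom~\ref{item-metric-length} and Axiom~\ref{item-metric-f} are inherited from the LFPP pre-limit (which is itself a length metric satisfying an exact Weyl scaling identity) upon passage to the lower semicontinuous limit; Axiom~\ref{item-metric-local} follows from the local dependence of $h_\ep^*$ on $h$ together with a measurability-of-limits argument; Axiom~\ref{item-metric-coord0} follows from the scaling of the heat kernel and the definition of $\frk a_\ep$; and Axiom~\ref{item-metric-finite} follows from moment estimates on crossing distances already developed in~\cite{pfeffer-supercritical-lqg,dg-supercritical-lfpp}. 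Once uniqueness is available, the full sequence converges, yielding both Theorem~\ref{thm-lfpp-conv} and the existence clause of Theorem~\ref{thm-strong-uniqueness}.

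For uniqueness, let $D$ and $\wt D$ be two LQG metrics with parameter $\xi$. The first stage, \emph{weak uniqueness}, produces deterministic constants $0 < \mathfrak c_* \le \mathfrak C_* < \infty$ with
\[
\mathfrak c_*\, D_h(z,w) \;\le\; \wt D_h(z,w) \;\le\; \mathfrak C_*\, D_h(z,w) \qquad \forall z,w\in \BB C,\;\text{a.s.}
\]
To produce them, take $\mathfrak c_*$ and $\mathfrak C_*$ to be the essential infimum and supremum of $\wt D_h/D_h$ over pairs of fixed disjoint compact connected sets --- for instance concentric Euclidean annuli, along which Axiom~\ref{item-metric-finite} guarantees finite positive distances. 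Scale and translation covariance (Axiom~\ref{item-metric-coord0}) combined with locality (Axiom~\ref{item-metric-local}) and a Kolmogorov $0$--$1$ law for the GFF imply that these essential inf/sup are deterministic constants (not random); positivity and finiteness follow from Axiom~\ref{item-metric-finite} together with tail bounds on $D_h$ and $\wt D_h$ from~\cite{pfeffer-supercritical-lqg}. Finally, the bi-Lipschitz bounds are propagated from pairs of annuli to arbitrary $z,w\in\BB C$ using the Weyl scaling axiom, the length-space property, and the hitting-ball estimate Lemma~\ref{lem-hit-ball} imported from~\cite{dg-confluence}.

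The second stage shows $\mathfrak c_* = \mathfrak C_*$; the common value $C$ then satisfies $\wt D_h = C\, D_h$ a.s., finishing the proof. Assume for contradiction that $\mathfrak c_* < \mathfrak C_*$. Following Section~\ref{sec-construction}, design a local \emph{bump event} supported in a Euclidean annulus at a small scale $\ep$ on which, after a deterministic Cameron--Martin shift of the GFF supported in that annulus, the $D_h$-crossing distance across the annulus is strictly less than $\mathfrak c_*$ times the corresponding $\wt D_h$-crossing distance; Weyl scaling (Axiom~\ref{item-metric-f}) converts the shift into an explicit multiplicative modification of $D_h$, while Weyl scaling applied to $\wt D_h$ together with the bi-Lipschitz bound from the first stage controls the effect on $\wt D_h$. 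Near-independence of the GFF across well-separated scales, combined with locality, implies that such bump events occur with uniformly positive conditional probability at $\asymp \log\ep^{-1}$ nested scales. The counting step developed in Section~\ref{sec-counting} then shows that, with positive probability, enough bump events succeed within a macroscopic region to force $\wt D_h/D_h$ strictly below $\mathfrak c_*$, contradicting the definition of $\mathfrak c_*$. This counting replaces the confluence-of-geodesics input used in~\cite{gm-uniqueness}. The hardest obstacle, and the source of most of the technical work (especially in Section~\ref{sec-construction}), is that $D_h$ may take infinite values and the singular points of~\eqref{def-singular} are Euclidean dense when $\xi > \xi_\crit$, so the bump event, the Cameron--Martin shift, and every path used in the argument must be designed to interact correctly with the non-singular set; Axiom~\ref{item-metric-finite} is the structural input that makes this routing possible.
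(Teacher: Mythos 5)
There is a genuine gap in your existence argument, and it is precisely the issue the paper was engineered to avoid. You claim that Axiom~\ref{item-metric-coord0} (exact scale and translation covariance) for a subsequential LFPP limit ``follows from the scaling of the heat kernel and the definition of $\frk a_\ep$.'' It does not. The scaling relation for LFPP reads $D_h^\ep(rz,rw) = r\, D_{h(r\cdot)}^{\ep/r}(z,w)$, so re-scaling space replaces the approximation scale $\ep$ by $\ep/r$. If you only know that $\frk a_{\ep_n}^{-1} D_h^{\ep_n}$ converges along some subsequence $\ep_n\to 0$, you have no control over what $\frk a_{\ep_n/r}^{-1}D_h^{\ep_n/r}$ does, because $\ep_n/r$ need not lie along the subsequence; and the asymptotic $\frk a_\ep = \ep^{1-\xi Q + o(1)}$ has multiplicative slack that prevents you from matching the normalizations exactly. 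Consequently, a subsequential limit is only known to satisfy a \emph{tightness-across-scales} property (Axiom~V$'$ of Definition~\ref{def-metric}), not the exact identity $D_h(ru+z,rv+z) = D_{h(r\cdot+z)+Q\log r}(u,v)$ required by Axiom~\ref{item-metric-coord0}. This is exactly why the paper introduces the intermediate notion of a weak LQG metric: one first proves Theorem~\ref{thm-weak-uniqueness} (uniqueness among metrics satisfying only the weak axioms, which subsequential limits verifiably obey, cf.~\cite[Theorem~1.7]{pfeffer-supercritical-lqg}), and only \emph{afterwards} deduces exact scale covariance as a corollary of that uniqueness (Lemma~\ref{lem-weak-to-strong}, via the argument with $D^{(b)}_h := D_{h(b\cdot)+Q\log b}(\cdot/b,\cdot/b)$ and multiplicativity of the constants $\frk k_b$). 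Your plan tries to verify Axiom~\ref{item-metric-coord0} directly, which would short-circuit this, but the verification step is simply false at the level of subsequential limits.

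The uniqueness half of your proposal---deterministic bi-Lipschitz constants via a $0$--$1$ law, then $\mathfrak c_* = \mathfrak C_*$ via bump events, Cameron--Martin shifts, near-independence across scales, and a counting argument in place of confluence of geodesics, with the singular-point set as the main obstruction---matches the paper's strategy in outline. But as stated it presupposes both metrics satisfy the strong Axiom~\ref{item-metric-coord0}; to carry out the whole program (existence included) you must run the uniqueness argument under the weaker hypotheses of Definition~\ref{def-metric}, which is what Theorem~\ref{thm-weak-uniqueness} does. The fix to your proposal is therefore to replace ``verify Axiom~IV for the subsequential limit'' with ``verify Axiom~V$'$ for the subsequential limit, prove uniqueness under the weak axioms, and then recover Axiom~IV from uniqueness.''
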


Theorem~\ref{thm-strong-uniqueness} tells us that for every $\ccM \in (-\infty,25)$, there is an essentially unique\footnote{
Strictly speaking, we only show that there is a unique LQG metric with parameter $\xi$ for each $\xi \in (0,\infty)$. 
In order to deduce that the metric with central charge $\ccM$ is unique we would need to know that $\xi\mapsto \ccM(\xi)$ is injective.
We expect that this injectivity is not hard to prove, but a proof of has so far only been written down for $\xi \in (0,0.7)$. See the discussion just after~\eqref{eqn-cc-xi}.}
metric associated with LQG with matter central charge $\ccM$ (recall the non-explicit relation between $\xi$ and $\ccM$ from~\eqref{eqn-cc-xi}). 
The deterministic positive constant $C$ from Theorem~\ref{thm-strong-uniqueness} can be fixed in various ways. For example, we can require that the median of the $D_h$-distance between the left and right sides of the unit square is 1 in the case when $h$ is a whole-plane GFF normalized so that its average over the unit circle is 0. 
Due to~\eqref{eqn-gff-constant}, the limit of LFPP has this normalization. 

Theorem~\ref{thm-strong-uniqueness} implies that the LQG metric is covariant with respect to rotation, not just scaling and translation. See~\cite[Remark 1.6]{gm-uniqueness} for a heuristic discussion of why we do not need to assume rotational invariance in Definition~\ref{def-metric0}. 
 
\begin{prop}  \label{prop-rotation}
Let $\xi > 0$ and let $D $ be an LQG metric with parameter $\xi$. Let $h$ be a whole-plane GFF plus a continuous function and let $\omega \in \BB C$ with $|\omega | =1$. Almost surely, 
\eqb
D_h(u,v) = D_{h(\omega\cdot)}(\omega^{-1} u ,\omega^{-1} v) ,\quad\forall u ,v \in \BB C .
\eqe
\end{prop}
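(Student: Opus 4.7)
The plan is to deduce rotational covariance as a soft consequence of the uniqueness theorem (Theorem~\ref{thm-strong-uniqueness}). Fix $\omega \in \BB C$ with $|\omega| = 1$ and define a candidate metric by
\eqbn
\wt D_h(u,v) := D_{h(\omega \cdot)}\bigl( \omega^{-1} u , \omega^{-1} v \bigr),\quad u,v\in\BB C,
\eqen
where $h(\omega\cdot)$ denotes the pullback of the distribution $h$ by $x \mapsto \omega x$. Since $h \mapsto h(\omega\cdot)$ is a continuous linear map on $\mcl D'$ and rotation by $\omega^{-1}$ is a homeomorphism of $\BB C$, $\wt D$ is a measurable function on $\mcl D'$ taking values in the space of lower semicontinuous metrics on $\BB C$. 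The strategy is to verify that $\wt D$ satisfies Axioms~\ref{item-metric-length}--\ref{item-metric-finite}, invoke Theorem~\ref{thm-strong-uniqueness} to obtain a deterministic $C > 0$ with $\wt D_h = C D_h$ a.s.\ (for any whole-plane GFF plus continuous function $h$), and then use the rotational invariance of the law of the whole-plane GFF to pin down $C = 1$.

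The axiom checks are routine because rotation is a Euclidean isometry. Axiom~\ref{item-metric-length} follows since the bijection $z\mapsto \omega^{-1} z$ identifies curves in $(\BB C, \wt D_h)$ isometrically with curves in $(\BB C, D_{h(\omega\cdot)})$. Axiom~\ref{item-metric-local} follows because $\wt D_h(\cdot,\cdot;U)$ reduces to $D_{h(\omega\cdot)}(\cdot,\cdot;\omega^{-1} U)$, which is a measurable function of $h(\omega\cdot)|_{\omega^{-1} U}$ and hence of $h|_U$. Axiom~\ref{item-metric-f} holds because the $\wt D_h$-length parametrization of a path $P$ corresponds to the $D_{h(\omega\cdot)}$-length parametrization of $\omega^{-1} P$ and $(h+f)(\omega \cdot) = h(\omega\cdot) + f(\omega\cdot)$, so Weyl scaling for $D$ with the function $f(\omega\cdot)$ transfers to Weyl scaling for $\wt D$ with $f$. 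For Axiom~\ref{item-metric-coord0}, a direct computation using Axiom~\ref{item-metric-coord0} for $D$ applied with center $\omega^{-1} z$ gives
\eqbn
\wt D_h(ru+z,rv+z) = D_{h(\omega\cdot)}(r\omega^{-1} u + \omega^{-1} z, r\omega^{-1} v + \omega^{-1} z) = D_{h(r\omega\cdot + z) + Q\log r}(\omega^{-1} u,\omega^{-1} v),
\eqen
and the right side is exactly $\wt D_{h(r\cdot+z)+Q\log r}(u,v)$. Axiom~\ref{item-metric-finite} is immediate since $\omega^{-1}$ maps $U,K_1,K_2$ to sets of the same type. Therefore $\wt D$ is a strong LQG metric with parameter $\xi$.

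By Theorem~\ref{thm-strong-uniqueness}, there is a deterministic $C=C(\omega)>0$ such that $\wt D_h = C\, D_h$ a.s.\ whenever $h$ is a whole-plane GFF plus a continuous function. To show $C=1$, take $h$ to be the whole-plane GFF normalized so that its average over $\partial \BB D$ is zero. Since $|\omega|=1$, the unit circle is preserved by $x\mapsto \omega x$, so $h(\omega\cdot) \eqD h$ as distributions. Applied at the deterministic pair $(u,v)=(0,1)$, this gives
\eqbn
C\, D_h(0,1) = \wt D_h(0,1) = D_{h(\omega\cdot)}(0,\omega^{-1}) \eqD D_h(0,\omega^{-1}) \eqD D_h(0,1),
\eqen
where the final equality in law uses rotational invariance once more (applied to the rotation sending $\omega^{-1}$ to $1$). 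It remains to argue $D_h(0,1) \in (0,\infty)$ a.s., which forces $C=1$. Finiteness follows from the statement in the introduction that any two non-singular points lie at finite $D_h$-distance together with the fact that fixed points are a.s.\ non-singular. Positivity follows from the fact (cited from~\cite{pfeffer-supercritical-lqg}) that the Euclidean metric is a.s.\ continuous with respect to $D_h$: if $D_h(0,1)=0$ were possible with positive probability it would contradict continuity of $|\cdot|$ with respect to $D_h$ at the pair $(0,1)$. Once $C=1$ is established, the defining identity of $\wt D$ yields the claim of the proposition.

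The main obstacle, if any, is the verification that the uniqueness theorem is in fact applicable to $\wt D$, which requires checking that no axiom implicitly assumes a property (such as finite-valuedness or topological equivalence with the Euclidean metric) that $\wt D$ might fail to inherit from $D$; the use of Definition~\ref{def-metric0} rather than the stronger subcritical version is exactly what makes this pullback construction go through in the supercritical regime. The pinning down of $C$ is the only place where rotational invariance of the GFF law (a probabilistic input) enters, and the axiomatic argument does all the rest.
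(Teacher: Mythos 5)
Your strategy---define $\wt D_h := D_{h(\omega\cdot)}(\omega^{-1}\cdot,\omega^{-1}\cdot)$, check the axioms, invoke Theorem~\ref{thm-strong-uniqueness} to obtain $\wt D_h = C\,D_h$ a.s.\ for a deterministic $C>0$, and then pin down $C = 1$---is exactly the paper's approach, and your axiom verification is correct. The gap is in the last link of the chain you use to force $C = 1$. You write $D_h(0,\omega^{-1}) \eqD D_h(0,1)$ and attribute this to ``rotational invariance once more,'' but that equality in law is precisely (an instance of) the rotational covariance the proposition is supposed to establish, so this step is circular. The uniqueness theorem together with the rotational invariance of the law of $h$ does not, on its own, tell you that the law of $D_h(0,v)$ is independent of $v\in\bdy\BB D$; that independence is exactly the content of the proposition.

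The fix is what the paper does: evaluate $\wt D_h = C\,D_h$ at a rotationally \emph{fixed} observable. Take $D_h(0,\bdy\BB D)$ instead of $D_h(0,1)$. Since $\omega^{-1}\cdot 0 = 0$ and $\omega^{-1}\bdy\BB D = \bdy\BB D$, one has the exact (not merely in-law) identity
\eqbn
\wt D_h(0,\bdy\BB D) = D_{h(\omega\cdot)}\bigl(\omega^{-1}0,\,\omega^{-1}\bdy\BB D\bigr) = D_{h(\omega\cdot)}(0,\bdy\BB D),
\eqen
so $C\,D_h(0,\bdy\BB D) = D_{h(\omega\cdot)}(0,\bdy\BB D) \eqD D_h(0,\bdy\BB D)$, where the only probabilistic input is $h(\omega\cdot)\eqD h$ for the GFF normalized by $h_1(0)=0$. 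Combined with the a.s.\ finiteness and positivity of $D_h(0,\bdy\BB D)$ (for which your argument is fine), this forces $C=1$. An alternative patch---observing that $D^{(\omega\omega')} = (D^{(\omega)})^{(\omega')}$ and hence $\omega\mapsto C(\omega)$ is a homomorphism $S^1\to(0,\infty)$---would also work, but only after you establish continuity of $C$ in $\omega$, which is more work than the one-line observable swap.
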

\begin{proof}
Define $D_h^{(\omega)}(u,v) :=  D_{h(\omega\cdot)}(\omega^{-1} u ,\omega^{-1} v)$.
It is easily verified that $D^{(\omega)}$ satisfies the conditions of Definition~\ref{def-metric0}, so Theorem~\ref{thm-strong-uniqueness} implies that there is a deterministic constant $C >0$ such that a.s.\ $D^{(\omega)}_h = C D_h$ whenever $h$ is a whole-plane GFF plus a continuous function.
To check that $C = 1$, consider the case when $h$ is a whole-plane GFF $h$ normalized so that its average over the unit circle is 0.
Then the law of $h$ is rotationally invariant, so $\BB P[D_h(0,\bdy\BB D) > R] = \BB P[D_h^{(\omega)}(0,\bdy\BB D) > R]$ for every $R > 0$. 
Therefore $C  =1$. 
\end{proof}

Proposition~\ref{prop-rotation} implies that $D_h$ is covariant with respect to complex affine maps. 
It is natural to expect that $D_h$ is also covariant with respect to general conformal maps, in the following sense.
Let $U,\wt U \subset \BB C$ be open and let $\phi : U \rta \wt U$ be a conformal map. 
Then it should be the case that a.s.\
\eqb \label{eqn-metric-coord}
D_h(\phi(u) , \phi(v) ;  \wt U ) = D_{h\circ\phi + Q\log|\phi'|}(u,v ; U) ,\quad \forall u ,v \in   U . 
\eqe 
In the subcritical case, the coordinate change relation~\eqref{eqn-metric-coord} was proven in~\cite{gm-coord-change}. We expect that the proof there can be adapted to treat the critical and supercritical cases as well.

Various properties of the LQG metric $D_h$ for $\ccM \in [1,25)$ have already been established in the literature. 
For example, for $\ccM \in (1,25)$ a.s.\ each $D_h$-metric ball $\mcl B$ centered at a non-singular point is not $D_h$-compact~\cite[Proposition 1.14]{gp-kpz}, but the boundaries of the connected components of $\BB C\setminus \mcl B$ are $D_h$-compact and are Jordan curves~\cite[Theorem 1.4]{dg-confluence}.
Furthermore, one has a confluence property for LQG geodesics~\cite[Theorem 1.6]{dg-confluence} and a version of the Knizhnik-Polyakov-Zamolodchikov (KPZ) formula, which relates Hausdorff dimensions with respect to $D_h$ and the Euclidean metric~\cite[Theorem 1.15]{pfeffer-supercritical-lqg}. 
Simulations of supercritical LQG metric balls and geodesics can be found in~\cite{dg-supercritical-lfpp,dg-confluence,ddg-metric-survey}.

There are many open problems related to the LQG metric for $\ccM \in [1,25)$.  
A list of open problems concerning LQG with $\ccM \in (1,25)$ can be found in~\cite[Section 6]{ghpr-central-charge}.  
Moreover, most of the open problems for the LQG metric with $\ccM \in (-\infty,1)$ from~\cite[Section 7]{gm-uniqueness} are also interesting for $\ccM \in [1,25)$. 
Here, we mention one open problem which has not been discussed elsewhere.

\begin{prob} \label{prob-limit}
Let $D_h^{(\xi)}$ denote the LQG metric with parameter $\xi$. 
Does $D_h^{(\xi)}$, appropriately re-scaled, converge in some topology as $\xi\rta\infty$ (equivalently, $\ccM \rta 25$)? 
Even if one doesn't have convergence of the whole metric, can anything be said about the limits of $D_h^{(\xi)}$-metric balls, geodesics, etc.?
\end{prob}

\subsection{Weak LQG metrics}
\label{sec-weak}

In this subsection we will introduce a notion of weak LQG metric for general $\xi > 0$ (Definition~\ref{def-metric}), which is similar to Definition~\ref{def-metric0} but with Axiom~\ref{item-metric-coord0} replaced by a weaker condition. Our notion of a weak LQG metric first appeared in~\cite{pfeffer-supercritical-lqg}. We will then state a uniqueness theorem for weak LQG metrics (Theorem~\ref{thm-weak-uniqueness}) and explain why our other main theorems (Theorems~\ref{thm-lfpp-conv} and~\ref{thm-strong-uniqueness}) follow from this theorem. A similar notion of weak LQG metrics was used in the proof of uniqueness of the subcritical LQG metric~\cite{lqg-metric-estimates,gm-uniqueness}.

To motivate the definition of weak LQG metrics, we first observe that every possible subsequential limit of the re-scaled LFPP metrics $\frk a_\ep^{-1} D_h^\ep$ satisfies Axioms~\ref{item-metric-length}, \ref{item-metric-local}, and~\ref{item-metric-f} in Definition~\ref{def-metric0}.  
This is intuitively clear from the definition, and not too hard to check rigorously (see~\cite[Section 2]{pfeffer-supercritical-lqg}). 
It is also easy to see that every possible subsequential limit of LFPP satisfies Axiom~\ref{item-metric-coord0} for $r = 1$ (i.e., it satisfies the coordinate change formula for translations). 
However, it is far from obvious that the subsequential limits satisfy Axiom~\ref{item-metric-coord} when $r\not=1$. The reason is that re-scaling space changes the value of $\ep$ in~\eqref{eqn-lfpp}: for $\ep ,r>0$, one has~\cite[Lemma 2.6]{lqg-metric-estimates}
\eqbn
D_h^\ep(rz, rw) = r D_{h(r\cdot)}^{\ep/r}(z,w)  ,\quad \forall z,w\in \BB C.
\eqen
So, since we only have subsequential limits of $\frk a_\ep^{-1} D_h^\ep$, we cannot directly deduce that the subsequential limit satisfies an exact spatial scaling property. 

Because of the above issue, we do not know how to check Axiom~\ref{item-metric-coord0} for subsequential limits of LFPP directly. 
Instead, we will prove a stronger uniqueness statement than the one in Theorem~\ref{thm-strong-uniqueness}, under a weaker list of axioms which can be checked for subsequential limits of LFPP. We will then deduce from this stronger uniqueness statement that the weaker list of axioms implies the axioms in Definition~\ref{def-metric0} (Lemma~\ref{lem-weak-to-strong}).  

An \emph{annular region} is a bounded open set $A\subset\BB C$ such that $A$ is homeomorphic to an open, closed, or half-open Euclidean annulus. If $A$ is an annular region, then $\bdy A$ has two connected components, one of which disconnects the other from $\infty$. We call these components the outer and inner boundaries of $A$, respectively.

\begin{defn}[Distance across and around annuli] \label{def-around-across} 
Let $d$ be a length metric on $\BB C$. 
For an annular region $A \subset\BB C$, we define $d\left(\text{across $A$}\right)$ to be the $d $-distance between the inner and outer boundaries of $A$.
We define $d \left(\text{around $A$}\right)$ to be the infimum of the $d $-lengths of paths in $A$ which disconnect the inner and outer boundaries of $A$. 
\end{defn}

\noindent
Note that both $d(\text{across $A$})$ and $d(\text{around $A$})$ are determined by the internal metric of $d$ on $A$. 
Distances around and across Euclidean annuli play a similar role to ``hard crossings" and ``easy crossings" of $2\times 1$ rectangles in percolation theory. 
One can get a lower bound for the $d$-length of a path in terms of the $d$-distances across the annuli that it crosses. On the other hand, one can ``string together" paths around Euclidean annuli to get upper bounds for $d$-distances.
The following is (almost) a re-statement of~\cite[Definition 1.6]{pfeffer-supercritical-lqg}. 
 
\begin{defn}[Weak LQG metric]
\label{def-metric}
Let $\mcl D'$ be as in Definition~\ref{def-metric}. 
For $\xi > 0$, a \emph{weak LQG metric with parameter $\xi$} is a measurable function $h\mapsto D_h$ from $\mcl D'$ to the space of lower semicontinuous metrics on $\BB C$ which satisfies properties~\ref{item-metric-length} (length metric), \ref{item-metric-local} (locality), and~\ref{item-metric-f} (Weyl scaling) from Definition~\ref{def-metric0} plus the following two additional properties.
\begin{enumerate}[I$'$.]
\addtocounter{enumi}{3} 
\item \textbf{Translation invariance.} For each deterministic point $z \in \BB C$, a.s.\ $D_{h(\cdot + z)} = D_h(\cdot+ z , \cdot+z)$.  \label{item-metric-translate}
\item \textbf{Tightness across scales.} Suppose that $h$ is a whole-plane GFF and let $\{h_r(z)\}_{r > 0, z\in\BB C}$ be its circle average process. 
Let $A\subset \BB C$ be a deterministic Euclidean annulus.
In the notation of Definition~\ref{def-around-across}, the random variables
\eqbn
r^{-\xi Q} e^{-\xi h_r(0)} D_h\left( \text{across $r A$} \right) \quad \text{and} \quad
r^{-\xi Q}  e^{-\xi h_r(0)} D_h\left( \text{around $r A$} \right)
\eqen
and the reciprocals of these random variables for $r>0$ are tight.   \label{item-metric-coord}  
\end{enumerate}
\end{defn}

\newcommand{\reftranslate}{{\hyperref[item-metric-translate]{IV$'$}}}
\newcommand{\refcoord}{\hyperref[item-metric-translate]{V$'$}}

We think of Axiom~\refcoord as a substitute for Axiom~\ref{item-metric-coord0} of Definition~\ref{def-metric0}. 
Indeed, Axiom~\refcoord does not give an exact spatial scaling property, but it still allows us to get estimates for $D_h$ which are uniform across different Euclidean scales. 

It was shown in~\cite[Theorem 1.7]{pfeffer-supercritical-lqg} that every subsequential limit of the re-scaled LFPP metrics $\frk a_\ep^{-1} D_h^\ep$ is a weak LQG metric in the sense of Definition~\ref{def-metric}. Actually,~\cite{pfeffer-supercritical-lqg} allows for a general family of scaling constants $\{\frk c_r\}_{r > 0}$ in Axiom~\refcoord in place of $r^{\xi Q}$, but it was shown in~\cite[Theorem 1.9]{dg-polylog} that one can always take $\frk c_r = r^{\xi Q}$. 
So, our definition is equivalent to the one in~\cite{pfeffer-supercritical-lqg}. 

From the preceding paragraph and the tightness of $\frk a_\ep^{-1} D_h^\ep$~\cite{dg-supercritical-lfpp}, we know that there exists a weak LQG metric for each $\xi > 0$. 
Most of this paper is devoted to the proof of the uniqueness of the weak LQG metric. 

\begin{thm} \label{thm-weak-uniqueness} 
For each $\xi > 0$, the weak LQG metric is unique in the following sense. If $D$ and $\wt D$ are two weak LQG metrics with parameter $\xi$, then there is a deterministic constant $C>0$ such that a.s.\ $D_h = C\wt D_h$ whenever $h$ is a whole-plane GFF plus a continuous function. 
\end{thm}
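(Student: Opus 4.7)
The plan is to follow the bi-Lipschitz comparison strategy of~\cite{gm-uniqueness} but to replace the confluence-based core of that argument with a direct counting argument, as indicated by the references to Section~\ref{sec-counting} and Section~\ref{sec-construction} in the introduction.

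First, for $h$ a whole-plane GFF with the unit-circle-average normalization, define optimal bi-Lipschitz constants $\Cupper := \inf\{c > 0 : D_h \leq c \wt D_h \text{ a.s.}\}$ and $\Clower := \sup\{c > 0 : c \wt D_h \leq D_h \text{ a.s.}\}$. Applying Axiom~\refcoord (tightness across scales) to both $D$ and $\wt D$ yields matching two-sided bounds for across- and around-distances of every fixed Euclidean annulus in terms of the common normalizing factor $e^{\xi h_r(0)} r^{\xi Q}$. Combined with Axiom~\reftranslate, Axiom~\ref{item-metric-f} (Weyl scaling), and Axiom~\ref{item-metric-local} (locality), this should upgrade to the global a.s.\ inequalities $\Clower \wt D_h \leq D_h \leq \Cupper \wt D_h$ with $0 < \Clower \leq \Cupper < \infty$. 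These constants are \emph{deterministic} by Kolmogorov's zero-one law: using locality plus Weyl scaling, each event $\{D_h \leq c\wt D_h\}$ is measurable with respect to the tail $\sigma$-algebra of the GFF (i.e., unchanged by modifications of $h$ by bounded continuous functions supported in bounded regions). The theorem then reduces to showing $\Cupper = \Clower$, in which case $C := \Cupper$ works.

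Assume for contradiction that $\Cupper > \Clower$. The first step, which is the heart of Section~\ref{sec-construction}, is to produce a fixed $\delta > 0$, a deterministic Euclidean annulus $A$, and a local block event $E_A$ depending only on $h|_{A}$ (modulo the bounded continuous harmonic correction coming from Markov decomposition) such that $\BB P[E_A] > 0$ and on $E_A$ there is a path around $A$ of $D_h$-length at most $(\Cupper - \delta)\,\wt D_h(\text{around } A)$, with an analogous across statement. The key inputs are the exact optimality of $\Cupper$ and $\Clower$, tightness of normalized around/across distances across scales, and the Weyl-scaling axiom used to absorb the harmonic corrections. The counting argument of Section~\ref{sec-counting} then places many disjoint annuli $\{r_k A + z_k\}$ at well-separated centers and geometrically spaced scales. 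Near-independence of $E_{A_k}$ across well-separated annuli, combined with Axiom~\ref{item-metric-f} to absorb the Markov corrections into bounded multiplicative constants, gives via Borel--Cantelli/LLN a deterministic positive asymptotic density of ``good'' annuli $A_k$. Any candidate $\wt D_h$-near-geodesic between two distant points is forced to cross many good $A_k$; stitching the local improvements together yields a path of $D_h$-length at most $(\Cupper - \eta)\wt D_h(z,w)$ for some $\eta > 0$ depending only on the improvement density and $\delta$, contradicting the definition of $\Cupper$.

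The main obstacle is executing both the construction of the block events $E_A$ and the stitching step in the supercritical regime, where $D_h$ is only lower semicontinuous, has dense singular points, and is not continuous in $h$ under the uniform topology. The block events must be phrased entirely through around/across distances (Definition~\ref{def-around-across}) rather than fixed-endpoint distances, their $h|_A$-measurability must be argued via the internal-metric formulation of Axiom~\ref{item-metric-local}, and they must be stable under adding bounded continuous functions so that Weyl scaling and Cameron--Martin shifts can be deployed without losing quantitative bounds. For the stitching step, one needs a replacement for confluence of geodesics: rather than arguing that geodesics from $z$ to $w$ merge in a nice way through each $A_k$, one instead shows that many good annuli along a $\wt D_h$-efficient path must each produce an independent multiplicative savings, and this must be done while routing the global path around singular points (using the finiteness-type inputs of~\cite{pfeffer-supercritical-lqg} and the preliminary Lemma~\ref{lem-hit-ball} taken from~\cite{dg-confluence}). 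Once $\Cupper = \Clower$ is proven, Theorem~\ref{thm-strong-uniqueness} and Theorem~\ref{thm-lfpp-conv} are deduced via the weak-to-strong reduction (Lemma~\ref{lem-weak-to-strong}).
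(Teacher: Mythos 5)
Your setup is sound: reducing to $\Clower = \Cupper$ via the optimal bi-Lipschitz constants, making them deterministic by a tail-triviality argument, and recognizing that the role of confluence must be replaced by some kind of counting argument. The passage from Lemma~\ref{lem-weak-to-strong} back to the strong theorems is also correctly identified. However, the core of your sketch mischaracterizes what the counting argument actually is, and this leaves a genuine gap exactly where the confluence argument used to live.

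Your version of the counting step is: place many good annuli in space (local block events $E_A$ with a quantitative savings), argue by LLN/Borel--Cantelli that a positive density of annuli are good, and then assert that ``any candidate $\wt D_h$-near-geodesic is forced to cross many good $A_k$,'' after which you stitch the savings. But that forcing statement is precisely the non-trivial step. The good annuli and the geodesic are both functionals of the same GFF, so a positive spatial density of good annuli gives no control over whether the (random) geodesic passes through a good or a bad one; \emph{a priori} the geodesic could be adapted to stay in the bad annuli. In~\cite{gm-uniqueness} this correlation problem was broken by confluence, which yields an approximate Markov property for the geodesic. In this paper that role is played by something quite different, which your sketch does not describe: one introduces smooth bump functions $\fr_{z,r}$ supported in annuli $\BB A_{r,3r}(z)$ where the shortcut event $\Er_{z,r}$ lives, considers the perturbed field $h-\fr_{Z,r}$ for finite configurations $Z$, and counts configurations $Z$ for which a composite event $F_{Z,r}^{q,\BB r}(h)$ occurs versus those for which $F_{Z,r}^{q,\BB r}(h+\fr_{Z,r})$ occurs. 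The key inequality is the GFF Radon--Nikodym bound $\Crn^{-\#Z}\,\BB P[F_{Z,r}^{q,\BB r}(h)] \le \BB P[F_{Z,r}^{q,\BB r}(h+\fr_{Z,r})] \le \Crn^{\#Z}\,\BB P[F_{Z,r}^{q,\BB r}(h)]$ (Lemma~\ref{lem-rn}). On the assumed bad event one shows there are $\gtrsim \ep^{-\Cexp k}$ choices of $Z$ with $\#Z \le k$ for which $F_{Z,r}^{q,\BB r}(h)$ occurs (Proposition~\ref{prop-choices}), whereas unconditionally there can be at most $\Ccard^k$ choices for which $F_{Z,r}^{q,\BB r}(h+\fr_{Z,r})$ occurs (Proposition~\ref{prop-card}). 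Combining these with the Radon--Nikodym bound and taking $k$ large forces the bad event to have probability $O(\ep^\mu)$ for every $\mu$. That is the actual ``counting,'' and it is entirely absent from your sketch.

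A second, more minor issue: the stitching argument you describe does appear in the paper, but its role is to prove a quantitative probability lower bound with a $\Cmed$-independent constant (Proposition~\ref{prop-attained-good}, via its contrapositive Proposition~\ref{prop-attained-bad}), not to directly contradict the optimality of $\Cupper$. The contradiction itself comes from matching that $\Cmed$-uniform lower bound against the counting-derived upper bound $\BB P[H_{\BB r}(\Kann,\Cupper-\delta)] \to 0$ as $\delta\to 0$ (Lemma~\ref{lem-opt-event-prob}). Asserting that stitching directly yields $D_h \le (\Cupper-\eta)\wt D_h$ almost surely, contradicting optimality, overstates what that argument can produce and collapses two separate steps into one in a way that is not correct. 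To complete the proposal you would need to articulate the bump-function perturbation, the event-counting lemmas, and how the Radon--Nikodym estimate converts the excess count into a probability bound.
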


Let us now explain why Theorem~\ref{thm-weak-uniqueness} is sufficient to establish our main results, Theorems~\ref{thm-lfpp-conv} and~\ref{thm-strong-uniqueness}. 
We first observe that every strong LQG metric is a weak LQG metric. 

\begin{lem} \label{lem-strong-to-weak}
For each $\xi > 0$, each strong LQG metric (Definition~\ref{def-metric0}) is a weak LQG metric (Definition~\ref{def-metric}). 
\end{lem}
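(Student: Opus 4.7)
The plan is as follows. Axioms~\ref{item-metric-length}, \ref{item-metric-local}, and~\ref{item-metric-f} appear verbatim in both Definitions~\ref{def-metric0} and~\ref{def-metric}, so no verification is needed for those. Axiom~\reftranslate (translation invariance) is exactly the $r=1$ case of Axiom~\ref{item-metric-coord0}. All the real content of the lemma therefore lies in Axiom~\refcoord, the tightness across scales.

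For Axiom~\refcoord, I would first extract an exact distributional scaling identity from the strong axioms, and then use it to reduce the tightness statement across all $r>0$ to a single almost-sure finiteness-and-positivity statement. Combining Axiom~\ref{item-metric-coord0} with Weyl scaling (Axiom~\ref{item-metric-f}) applied first to the constant function $f\equiv Q\log r$ and then to the constant $h_r(0)$ yields, for every $r>0$ and $u,v\in\BB C$,
\eqbn
D_h(ru, rv) \;=\; r^{\xi Q}\, e^{\xi h_r(0)}\, D_{h(r\cdot)-h_r(0)}(u,v).
\eqen
Since $h$ is a whole-plane GFF normalized so that $h_1(0)=0$, the standard scaling property of the GFF gives $h(r\cdot)-h_r(0)\eqD h$, so for every deterministic Euclidean annulus $A$ and every $r>0$,
\eqbn
r^{-\xi Q} e^{-\xi h_r(0)}\, D_h(\text{across } rA) \;\eqD\; D_h(\text{across } A),
\eqen
and likewise with ``around'' in place of ``across''. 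Thus the tightness required by Axiom~\refcoord, including that of the reciprocals, is equivalent to showing that $D_h(\text{across } A)$ and $D_h(\text{around } A)$ each lie in $(0,\infty)$ almost surely.

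Positivity I would obtain from lower semicontinuity of $D_h$ together with compactness: a lower semicontinuous metric attains a strictly positive minimum on any compact set of pairs of distinct points. Applied to the inner and outer boundaries of $A$, this handles the across distance. For the around distance, I would pick two disjoint compact connected non-singleton radial arcs $K,K'\subset A$ each spanning from a neighborhood of the inner to a neighborhood of the outer boundary of $A$; any loop around $A$ must intersect both $K$ and $K'$, so the forward and backward sub-paths of the loop between them each contribute at least $D_h(K,K')>0$ to its length.

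Finiteness of $D_h(\text{across } A)$ is immediate from Axiom~\ref{item-metric-finite} applied to the inner and outer boundary curves of $A$ inside the open connected set $A$. The finiteness of $D_h(\text{around } A)$ is the one genuinely delicate point, and I expect it to be the main obstacle in the proof. The plan is to cover $A$ by finitely many overlapping open topological disks $U_1,\dots,U_n\subset A$ arranged cyclically, with each overlap $U_{j-1}\cap U_j$ containing a compact connected non-singleton ``connector'' $L_j$; applying Axiom~\ref{item-metric-finite} inside each $U_j$ to the pair $L_{j-1},L_j$ yields a finite-$D_h$-length path $P_j\subset U_j$ between them, and chaining these cyclically should produce a loop around $A$ of finite $D_h$-length. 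The subtlety, which I expect to require the most care, is that the endpoint of $P_j$ in $L_j$ need not coincide with the starting point of $P_{j+1}$, so the connectors $L_j$ must be chosen fat enough (for instance as short arcs rather than single points) and a further application of Axiom~\ref{item-metric-finite} to disjoint sub-arcs of $L_j$ must be used to bridge the mismatched endpoints. Once both finiteness and positivity statements are in hand, the scaling identity above delivers Axiom~\refcoord and completes the verification that a strong LQG metric is a weak LQG metric.
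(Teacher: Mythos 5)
Your overall structure matches the paper's proof closely: Axioms~\ref{item-metric-length}--\ref{item-metric-f} are shared verbatim, Axiom~\reftranslate{} is the $r=1$ case of Axiom~\ref{item-metric-coord0}, and for Axiom~\refcoord{} the paper uses exactly the same exact-scaling identity (its equation~\eqref{eqn-metric0-scale}) to reduce tightness across scales to the a.s.\ finiteness and positivity of across/around distances for a fixed annulus. Your positivity argument via lower semicontinuity of $D_h$ is also the paper's argument (with a minor imprecision: for the ``around'' case your radial arcs must span the \emph{full} closed annulus $\ol A$, taken inside a slightly larger open set $U\supset \ol A$, since a separating loop hugging $\bdy A$ would dodge an arc that only starts a positive distance from the inner boundary).

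The one place where you part ways is the finiteness of $D_h(\text{around }A)$, which the paper dismisses with ``Axiom~\ref{item-metric-finite} easily implies,'' and which you correctly flag as the genuinely delicate point. But the fix you propose does not quite close the gap. Applying Axiom~\ref{item-metric-finite} to two disjoint sub-arcs of a connector $L_j$ produces yet another path with endpoints you do not control, so it need not meet $P_{j}$ or $P_{j+1}$, and iterating this produces an infinite regress. What actually works is a topological crossing argument, not an endpoint-matching one: choose each connector $L_j$ to be a closed radial rectangle; use Axiom~\ref{item-metric-finite} to produce a path $Q_j$ that crosses $L_j$ from its inner to its outer radial edge (in a small open neighborhood of $L_j$), while the paths $P_{j-1}$ and $P_j$ are constructed so that each crosses $L_j$ from its clockwise to its counterclockwise angular edge. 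Two such transversal crossings of a topological rectangle must intersect, so $P_{j-1}\cap Q_j\neq\emptyset$ and $P_j\cap Q_j\neq\emptyset$, and concatenating sub-arcs of the $P_j$'s and $Q_j$'s yields a single loop of finite $D_h$-length with winding number $1$ around the inner boundary. Without this crossing lemma your chain of paths may simply fail to be connected; this is a genuine gap in the proposal even though the underlying strategy is sound.
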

\begin{proof}
Let $D$ be a strong LQG metric. It is immediate from Axiom~\ref{item-metric-coord} of Definition~\ref{def-metric0} with $r=1$ that $D$ satisfies translation invariance (Axiom~\reftranslate). We need to check Axiom~\refcoord. 
To this end, let $h$ be a whole-plane GFF normalized so that $h_1(0) = 0$. Weyl scaling (Axiom~\ref{item-metric-f}) together with conformal covariance (Axiom~\ref{item-metric-coord0}) gives
\eqb \label{eqn-metric0-scale}
r^{-\xi Q} e^{-\xi h_r(0)} D_h(r\cdot,r\cdot) = D_{h(r\cdot) - h_r(0)} (\cdot,\cdot) \eqD D_h(\cdot,\cdot),  
\eqe
where the equality in law is due to the scale invariance of the law of $h$, modulo additive constant. 

To get tightness across scales, it therefore suffices to show that for each fixed Euclidean annulus $A$, a.s.\ $D_h(\text{across $A$})$ and $D_h(\text{around $A$})$ are finite and positive.
Our finiteness condition Axiom~\ref{item-metric-finite} easily implies that these two quantities are a.s.\ finite. 
To see that they are a.s.\ positive, it suffices to show that for any two deterministic, disjoint, Euclidean-compact sets $K_1,K_2\subset\BB C$, a.s.\ $D_h(K_1,K_2) > 0$. 
Indeed, on the event $\{D_h(K_1,K_2) = 0\}$ we can find sequences of points $z_n \in K_1$ and $w_n \in K_2$ such that $D_h(z_n,w_n) \rta 0$. 
After possibly passing to a subsequence, we can arrange that $z_n \rta z \in K_1$ and $w_n \rta w \in K_2$. By the lower semicontinuity of $D_h$, we get $D_h(z,w) =0$. 
Since $z$ and $w$ are distinct and $D_h$ is a metric (not a pseudometric) this implies that $\BB P[D_h(K_1,K_2) = 0] = 0$. 
\end{proof}

Theorem~\ref{thm-weak-uniqueness} implies that one also has the converse to Lemma~\ref{lem-strong-to-weak}. 

\begin{lem} \label{lem-weak-to-strong}
For each $\xi > 0$, every weak LQG metric is a strong LQG metric in the sense of Definition~\ref{def-metric0}. 
\end{lem}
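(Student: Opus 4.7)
The strategy is to derive the remaining axioms of Definition~\ref{def-metric0}, namely Axiom~\ref{item-metric-coord0} (scale and translation covariance) and Axiom~\ref{item-metric-finite} (finiteness), from the weak axioms using Theorem~\ref{thm-weak-uniqueness} as the key input. Let $D$ be a weak LQG metric with parameter $\xi$. For fixed $r > 0$ and $z \in \BB C$, define a candidate new metric
\eqbn
\wt D_h(u,v) := D_{h(r\cdot + z) + Q\log r}\bigl((u-z)/r,\, (v-z)/r\bigr) .
\eqen
The first step is to verify that $\wt D$ is itself a weak LQG metric. The length space property and Weyl scaling follow from the corresponding properties of $D$ by a direct manipulation; locality holds because the internal metric of $\wt D_h$ on $U$ corresponds to the internal metric of $D_{h(r\cdot+z)+Q\log r}$ on $(U-z)/r$, which depends only on $h|_U$; translation invariance of $\wt D$ reduces to translation invariance of $D$. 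For tightness across scales (Axiom~\refcoord), I would decompose $h(r\cdot+z) + Q\log r = h^{(r,z)} + (h_r(z) + Q\log r)$, where $h^{(r,z)}(\cdot) := h(r\cdot+z) - h_r(z)$ has the law of a whole-plane GFF normalized at the unit circle by the scale and translation invariance of the GFF, and then combine Weyl scaling with the tightness axiom applied to $h^{(r,z)}$.

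By Theorem~\ref{thm-weak-uniqueness}, there is a deterministic constant $C = C(r,z) > 0$ such that $\wt D_h = C(r,z) D_h$ a.s.\ whenever $h$ is a whole-plane GFF plus a continuous function. Translation invariance of both $D$ and $\wt D$ forces $C(r,z) = C(r,0) =: C(r)$, and iterating the scaling construction with factors $r$ and $s$ gives the multiplicative identity $C(rs) = C(r) C(s)$.

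To show that $C(r) = 1$, I would apply $\wt D_h = C(r) D_h$ (taking $z = 0$) to the ``around $rA$'' distance for a fixed Euclidean annulus $A$. Unwinding the definition of $\wt D$ and using Weyl scaling yields
\eqbn
r^{-\xi Q} e^{-\xi h_r(0)} D_h(\text{around } rA) = C(r)^{-1} D_{h^{(r)}}(\text{around } A) ,
\eqen
where $h^{(r)}(\cdot) := h(r\cdot) - h_r(0)$ has the same law as $h$. The left-hand side is tight in $r \in (0,\infty)$ by Axiom~\refcoord, while the right-hand side has the law of $C(r)^{-1} D_h(\text{around } A)$, so $C(r)^{-1}$ must be bounded above in $r$. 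Applying the same argument to the reciprocals gives $C(r)$ bounded as well, and a bounded measurable multiplicative function on $(0,\infty)$ must be identically $1$. This establishes Axiom~\ref{item-metric-coord0}.

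For the finiteness axiom, Axiom~\refcoord immediately implies that $D_h(\text{around } A) < \infty$ a.s.\ for each deterministic Euclidean annulus $A$. Given disjoint compact connected non-singleton sets $K_1, K_2 \subset U$, I would connect them by a continuous curve in $U$ and cover a neighborhood of this curve by a finite chain of Euclidean annuli $A_1, \ldots, A_n \subset U$ whose inner disks overlap consecutively, with $A_1$ enclosing a point of $K_1$ and $A_n$ a point of $K_2$. Two consecutive finite-length around-paths must intersect because each loops around a point lying inside the next annulus, so concatenation produces a path of finite $D_h$-length in $U$ from $K_1$ to $K_2$. The main obstacle throughout is the careful verification of Axiom~\refcoord for $\wt D$, since this requires tracking how scaling and translation interact with the additive-constant normalization of the circle average process; once that check is in hand, the rest reduces to applications of Theorem~\ref{thm-weak-uniqueness} and straightforward manipulation of the axioms.
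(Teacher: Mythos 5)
Your proof is correct and follows the same basic strategy as the paper: define the rescaled-and-shifted candidate metric, verify it is a weak LQG metric (where the tightness-across-scales check is the main work), invoke Theorem~\ref{thm-weak-uniqueness} to produce a deterministic constant $C(r,z)$, reduce to $z = 0$ via translation invariance, establish multiplicativity, and show the constant is $1$. The noteworthy difference is the final step: the paper first proves that $b \mapsto \frk k_b$ is continuous (by showing $b^{-\xi Q} e^{-\xi h_b(0)} D_h(b\cdot, b\cdot)$ depends continuously on $b$ in the lower-semicontinuous topology), deduces $\frk k_b = b^\alpha$ from measurability plus multiplicativity, and then argues $\alpha = 0$ by comparing the two normalizations in Axiom~\refcoord. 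You instead derive boundedness of $C(r)$ and $C(r)^{-1}$ directly from tightness and a comparison of laws, and then use the elementary fact that a positive multiplicative function $C$ on $(0,\infty)$ bounded away from $0$ and $\infty$ must be identically $1$ (since $C(r)^n = C(r^n)$ is bounded uniformly in $n$). This bypasses the measurability/continuity detour entirely and is a genuine simplification; it is essentially the paper's concluding comparison argument applied without first passing to the power-law representation. One remark on the finiteness axiom: the paper treats this as immediate because it has already stated Lemma~\ref{lem-two-set-dist}, which gives a.s.\ finiteness of $D_h(K_1,K_2;U)$ directly. Your chaining-of-annuli argument can be made to work, but the claim that two consecutive around-loops must intersect requires some care about the overlap geometry — having consecutive inner disks overlap is sufficient only with additional control (e.g., if the outer disk of one annulus does not contain the other annulus), so if you spell it out you should state the chaining conditions explicitly.
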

\begin{proof}[Proof of Lemma~\ref{lem-weak-to-strong} assuming Theorem~\ref{thm-weak-uniqueness}]
Let $D$ be a weak LQG metric. 
It is clear that $z$ satisfies Axioms~\ref{item-metric-length}, \ref{item-metric-local}, \ref{item-metric-f}, and~\ref{item-metric-finite} of Definition~\ref{def-metric0}.
To show that $D$ is a strong LQG metric, we need to check Axiom~\ref{item-metric-coord0} of Definition~\ref{def-metric0} in the case when $z = 0$ (note that we already have translation invariance from Definition~\ref{def-metric}). 
To this end, for $b > 0$ let 
\eqb
D^{(b)}_h(\cdot,\cdot) :=   D_{h( b  \cdot  ) + Q\log b } (   \cdot  / b   ,   \cdot / b )  .
\eqe
If $h$ is a whole-plane GFF with $h_1(0) = 0$ then by the scale invariance of the law of $h$, modulo additive constant, we have $h(b\cdot) - h_b(0) \eqD h$. 
Consequently, if $h$ is a whole-plane GFF plus a continuous function, then $h(b\cdot ) + Q\log b$ is also a whole-plane GFF plus a continuous function.
Hence $D_h^{(b)}$ is well-defined.

We need to show that a.s.\ $D_h^{(b)} = D_h$. We will prove this using Theorem~\ref{thm-weak-uniqueness}. 
We first claim that $D_h^{(b)}$ is a weak LQG metric. 
It is easy to check that $D^{(b)}$ satisfies Axioms~\ref{item-metric-length}, \ref{item-metric-local}, \ref{item-metric-f}, and~\reftranslate in Definition~\ref{def-metric}. To check Axiom~\refcoord, we use Weyl scaling (Axiom~\ref{item-metric-f}) to get that 
\alb
&r^{-\xi Q} e^{-\xi h_r(0)} D^{(b)}_h (r \cdot , r\cdot) \notag\\
&\qquad =  e^{-\xi (h_r(0) - h_{ r / b}(0))} e^{\xi h_b(0)} \times   (r/b)^{-\xi Q}  e^{-\xi h_{  r /b}(0)}  D_{h( b \cdot ) - h_b(0) } \left( (r/b)  \cdot , (r/b) \cdot \right)  .
\ale
In the case when $h$ is a whole-plane GFF, the random variables $h_r(0) - h_{ r /b}(0)$ and $h_b(0)$ are each centered Gaussian with variance $\log \max\{b,1/b\}$~\cite[Section 3.1]{shef-kpz}.   
Tightness across scales (Axiom~\refcoord) for $D$ applied with $h(b \cdot ) - h_b(0) \eqD h$ in place of $h$ and $  r /b$ in place of $r$ therefore implies tightness across scales for $D^{(b)}$. 
 
Hence we can apply Theorem~\ref{thm-weak-uniqueness} with $\wt D = D^{(b)}$ to get that for each $b >0$, there is a deterministic constant $\frk k_b >0$ such that whenever $h$ is a whole-plane GFF plus a continuous function, a.s.\ 
\eqbn
D_h^{(b)}  = \frk k_b D_h . 
\eqen
It remains to show that $\frk k_b = 1$. 

For $b_1,b_2 > 0$, we have $D^{(b_1b_2)} =  ( D^{(b_1)} )^{(b_2)}$, which implies that a.s.\ $D_h^{(b_1b_2)} = \frk k_{b_2} D_h^{(b_1)} = \frk k_{b_1} \frk k_{b_2} D_h$. Therefore, 
\eqb \label{eqn-weak-constants-mult}
\frk k_{b_1b_2} = \frk k_{b_1} \frk k_{b_2} . 
\eqe
It is also easy to see that $\frk k_b$ is a Lebesgue measurable function of $b$.  
Indeed, by Weyl scaling (Axiom~\ref{item-metric-f}) and since $h(b \cdot ) - h_b(0) \eqD h$, 
\eqb \label{eqn-weak-constants-law} 
\frk k_b e^{-\xi h_b(0)}  D_h(  b  \cdot    ,  b  \cdot    ) 
= e^{-\xi h_b(0)} D_h^{(b)}( b \cdot , b \cdot ) 
=  b^{\xi Q}  D_{h(b \cdot  ) - h_b(0) }(   \cdot,  \cdot) 
\eqD b^{\xi Q} D_h(\cdot,\cdot) .
\eqe 
The function $b\mapsto b^{-\xi Q} e^{-\xi h_b(0)}$ is continuous and $D_h$ is lower semicontinuous. 
Hence the metrics $b^{-\xi Q} e^{-\xi h_b(0)} D_h(b\cdot,b\cdot)$ depend continuously on $b$ with respect to the topology on lower semicontinuous functions.
Therefore, the law of $\frk k_b^{-1} D_h$ depends continuously on $b$ with respect to the topology on lower semicontinuous functions. 
It follows that $\frk k_b$ is continuous, hence Lebesgue measurable.   

The relation~\eqref{eqn-weak-constants-mult} and the measurability of $b\mapsto \frk k_b$ imply that $\frk k_b = b^\alpha$ for some $\alpha\in\BB R$.
By~\eqref{eqn-weak-constants-law}, we have $b^{\alpha -\xi Q}  e^{-\xi h_b(0)} D_h(b\cdot, b\cdot) \eqD D_h(\cdot,\cdot)$ for each $b>0$.
In particular, Axiom~\refcoord$\,$ holds for $D$ with $ \xi Q - \alpha$ in place of $\xi Q$. 
Hence $\alpha = 0$.  
\end{proof}

\begin{proof}[Proof of Theorem~\ref{thm-lfpp-conv}, assuming Theorem~\ref{thm-weak-uniqueness}]
By~\cite[Theorem 1.2]{dg-supercritical-lfpp}, if $h$ is a whole-plane GFF plus a bounded continuous function, then for each $\xi > 0$, the re-scaled LFPP metrics $\frk a_\ep^{-1} D_h^\ep$ are tight with respect to the topology of Definition~\ref{def-lsc}. 
In fact, by~\cite[Theorem 1.7]{pfeffer-supercritical-lqg}, for any sequence of positive $\ep$ values tending to zero there is a weak LQG metric $D$ and a subsequence $\ep_n \rta 0$ such that whenever $h$ is a whole-plane GFF plus a continuous functions, the metrics $\frk a_{\ep_n}^{-1} D_h^{\ep_n}$ converge in probability to $D_h$ with respect to this topology. 
By Theorem~\ref{thm-weak-uniqueness}, if $D$ and $\wt D$ are two weak LQG metrics arising as subsequential limits in this way, then there is a deterministic $C > 0$ such that a.s.\ $\wt D_h = C D_h$ whenever $h$ is a whole-plane GFF plus a continuous function. 

If $h$ is a whole-plane GFF normalized so that $h_1(0) = 0$, then by the definition of $\frk a_\ep$ in~\eqref{eqn-gff-constant}, the median $\frk a_\ep^{-1} D_h^\ep$-distance between the left and right sides of $[0,1]^2$ is 1.
By passing this through to the limit, we get that the constant $C$ above must be equal to 1. 
Therefore, a.s.\ $D_h = \wt D_h$ whenever $h$ is a whole-plane GFF plus a continuous function, so the subsequential limit of $\frk a_\ep^{-1} D_h^\ep$ is unique. 
\end{proof}

\begin{proof}[Proof of Theorem~\ref{thm-strong-uniqueness}, assuming Theorem~\ref{thm-weak-uniqueness}]
The uniqueness of the strong LQG metric follows from Theorem~\ref{thm-weak-uniqueness} and Lemma~\ref{lem-weak-to-strong}. 
The existence follows from the existence of the limit in Theorem~\ref{thm-lfpp-conv}, \cite[Theorem 1.7]{pfeffer-supercritical-lqg} (which says that the limit is a weak LQG metric), and Lemma~\ref{lem-weak-to-strong}. 
\end{proof}

\subsection{Outline}
\label{sec-outline}

As explained in Section~\ref{sec-weak}, to establish our main results we only need to prove Theorem~\ref{thm-weak-uniqueness}.
To this end, let $h$ be a whole-plane GFF and let $D_h$ and $\wt D_h$ be two weak LQG metrics as in Definition~\ref{def-metric}. 
We need to show that there is a deterministic constant $C>0$ such that a.s.\ $\wt D_h = C D_h$. In this subsection, we will give an outline of the proof of this statement. 
Throughout this outline and the rest of the paper, we will frequently use without comment the following fact, which is~\cite[Proposition 1.12]{pfeffer-supercritical-lqg}. 

\begin{lem}[\!\!\cite{pfeffer-supercritical-lqg}] \label{lem-geodesics}
Almost surely, the metric $D_h$ is complete and finite-valued on $\BB C \setminus \{\text{singular points}\}$. 
Moreover, every pair of points in $\BB C \setminus \{\text{singular points}\}$ can be joined by a $D_h$-geodesic (Definition~\ref{def-metric-properties}). 
\end{lem}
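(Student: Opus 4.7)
The plan is to establish the three assertions of Lemma~\ref{lem-geodesics}---finite-valuedness on the non-singular set, completeness, and existence of geodesics---in that order, using lower semicontinuity of $D_h$, the length-space Axiom~\ref{item-metric-length}, the tightness Axiom~\refcoord, and the companion fact~\cite[Prop.~1.10]{pfeffer-supercritical-lqg} that the Euclidean metric is almost surely continuous with respect to $D_h$.

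For finite-valuedness, I would first apply Axiom~\refcoord to every Euclidean annulus $A$ with rational center and rational inner/outer radii, together with a countable-union argument, to conclude that almost surely $D_h(\text{around } A) < \infty$ for every such rational annulus simultaneously. This gives a topologically dense lattice of finite-$D_h$-length loops. For a non-singular point $z$, the definition~\eqref{def-singular} supplies some $w \neq z$ with $D_h(z,w) < \infty$, and Axiom~\ref{item-metric-length} then produces a path of finite $D_h$-length from $z$ to $w$, all of whose points are automatically non-singular. Given two non-singular points $z_1, z_2$, one selects small rational annuli $A_1, A_2$ separating each $z_i$ from the corresponding escape path; that escape path must cross the loop around $A_i$, which yields a point on the loop at finite $D_h$-distance from $z_i$. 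A further rational loop enclosing both $A_1$ and $A_2$ links the two sides, so concatenation yields $D_h(z_1, z_2) < \infty$.

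Completeness is short: if $\{z_n\}$ is a $D_h$-Cauchy sequence of non-singular points, then by~\cite[Prop.~1.10]{pfeffer-supercritical-lqg} it is also Euclidean-Cauchy, so $z_n \rta z_\infty$ Euclideanly, and lower semicontinuity of $D_h$ gives $D_h(z_\infty, z_n) \leq \liminf_{m \to \infty} D_h(z_m, z_n)$, which tends to $0$ as $n \to \infty$; thus $z_n \rta z_\infty$ in $D_h$, and $D_h(z_\infty, z_1) < \infty$ forces $z_\infty$ to be non-singular. For geodesics, given non-singular $z,w$, I would take near-geodesics $P_n : [0,1] \rta \BB C$ of $D_h$-length $L_n \leq D_h(z,w) + 1/n$, parametrized proportionally to $D_h$-length so that each $P_n$ is $L_n$-Lipschitz from $[0,1]$ into $(\BB C, D_h)$; by~\cite[Prop.~1.10]{pfeffer-supercritical-lqg} these curves are Euclidean-equicontinuous with images in a Euclidean-compact set (as they lie at bounded $D_h$-distance, hence bounded Euclidean distance, from $z$), so Arzel\`a-Ascoli supplies a Euclidean-uniform subsequential limit $P_\infty$, and lower semicontinuity of the $D_h$-length functional under pointwise convergence yields $\op{len}(P_\infty; D_h) \leq \liminf_n L_n = D_h(z,w)$, exhibiting $P_\infty$ as a $D_h$-geodesic.

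The main obstacle is the connectivity part of the first step: the singular points form a Euclidean-dense set and a priori could disconnect the non-singular set in a dramatic way, so showing that the countable family of rational-annulus loops really does link arbitrary non-singular pairs requires a careful chaining argument---one must argue that a finite-length escape path from a non-singular point necessarily hits the rational loops one selects, and that the loops themselves can be concatenated without having their intended crossing points obstructed by singular points.
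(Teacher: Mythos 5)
The paper gives no proof of this lemma---it is quoted directly from \cite[Proposition~1.12]{pfeffer-supercritical-lqg}, as the line introducing it says---so there is no internal argument to compare against, and your attempt must stand on its own. Your completeness and geodesic-existence paragraphs are essentially sound, though both silently use that a $D_h$-bounded set is Euclidean-bounded (this is not part of \cite[Prop.~1.10]{pfeffer-supercritical-lqg}; it needs something like $\lim_{R\to\infty}D_h(z,\bdy B_R(z))=\infty$, which is \cite[Lemma~3.12]{pfeffer-supercritical-lqg} and is invoked elsewhere in this paper), and the equicontinuity you invoke for Arzel\`a--Ascoli follows from pointwise Euclidean continuity w.r.t.\ $D_h$ only after a short compactness-plus-lower-semicontinuity argument establishing uniform continuity on the relevant Euclidean-compact set.

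The gap you flag in the finite-valuedness step is real, and as formulated the step is broken, not merely delicate. A rational loop $\pi_3$ around an annulus $A_3$ whose inner disk contains both $A_1$ and $A_2$ lies entirely inside $A_3$, hence is Euclidean-disjoint from $A_1$, $A_2$, and the small loops $\pi_1,\pi_2$; nor need $\pi_3$ meet the escape paths $P_1,P_2$, whose far endpoints $w_i$ may lie well inside the inner disk of $A_3$, in which case $P_i$ never reaches $A_3$ at all. So nothing actually links the two sides, and the concatenation you envisage does not exist. The construction that works---essentially the one carried out in the proof of Lemma~\ref{lem-ball-bdy-moment} in this paper---chains \emph{interlocking} loops along a continuous curve joining small neighborhoods of $z_1$ and $z_2$: one chooses a family of annuli whose centers march along the curve and whose radii shrink geometrically toward each endpoint, arranged so that a loop separating the boundary components of one annulus necessarily intersects any such loop for the next annulus. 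The union of one near-minimal-length loop per annulus is then a connected set of finite total $D_h$-length whose closure contains both $z_1$ and $z_2$, and the escape paths serve only to attach each $z_i$ to the innermost loop of the chain. A single large loop enclosing everything from far away cannot perform this role.
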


\subsubsection{Optimal bi-Lipschitz-constants}
\label{sec-bilip}

By~\cite[Theorem 1.10]{dg-polylog}, the metrics $D_h$ and $\wt D_h$ are a.s.\ bi-Lipschitz equivalent, so in particular a.s.\ they have the same set of singular points. 
We define the optimal upper and lower bi-Lipschitz constants
\allb \label{eqn-bilip-def}
\Clower &:= \inf\left\{ \frac{\wt D_h(u,v)}{D_h(u,v)} \: : \:  u,v\in\BB C \setminus \{\text{singular points}\},\,  u\not= v \right\} \quad \text{and} \notag\\
\Cupper &:= \sup\left\{ \frac{\wt D_h(u,v)}{D_h(u,v)}\: : \: u,v\in\BB C \setminus \{\text{singular points}\},\,  u\not= v \right\}   .
\alle

\begin{lem} \label{lem-bilip-const}
Each of $\Clower$ and $\Cupper$ is a.s.\ equal to a deterministic, positive, finite constant.
\end{lem}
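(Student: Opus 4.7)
The strategy is to derive positivity and finiteness from the bi-Lipschitz equivalence of $D_h$ and $\wt D_h$ proved in~\cite[Theorem 1.10]{dg-polylog}, and to derive determinism by combining translation invariance of the metrics (Axiom~\reftranslate) with ergodicity of the translation action on the whole-plane GFF modulo additive constant.

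For positivity and finiteness, I would appeal directly to~\cite[Theorem 1.10]{dg-polylog}: that result provides (a priori random) positive finite bi-Lipschitz constants between $D_h$ and $\wt D_h$ on $\BB C \setminus \{\text{singular points}\}$, which immediately forces $0 < \Clower \leq \Cupper < \infty$ a.s.

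For determinism, the key observation is that the ratio $\wt D_h(u,v)/D_h(u,v)$ is invariant under adding a deterministic constant to $h$: indeed, Weyl scaling (Axiom~\ref{item-metric-f}) applied with $f \equiv c$ yields $D_{h+c} = e^{\xi c} D_h$ and $\wt D_{h+c} = e^{\xi c} \wt D_h$, so the constants cancel in the ratio. Combined with translation invariance (Axiom~\reftranslate), which gives $D_{h(\cdot+z)}(u,v) = D_h(u+z, v+z)$ a.s.\ and similarly for $\wt D_h$, a change of variables $(u,v) \mapsto (u-z, v-z)$ in the supremum/infimum defining $\Clower$ and $\Cupper$ produces, for each fixed $z \in \BB C$, the almost sure identity $\Clower(\tau_z h) = \Clower(h)$, where $\tau_z h := h(\cdot + z) - h_1(z)$. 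The crucial point is that $\tau_z$ preserves the law of the whole-plane GFF (normalized by $h_1(0) = 0$); and by the same argument, $\Cupper(\tau_z h) = \Cupper(h)$ a.s.

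The translation action $\{\tau_z\}_{z \in \BB C}$ on the whole-plane GFF modulo additive constant is ergodic, which is a standard consequence of, e.g., tail triviality for the white noise representation of the GFF. Intersecting the a.s.\ identities above over a countable dense subgroup such as $z \in \BB Q^2$, ergodicity forces each of $\Clower$ and $\Cupper$ to be a.s.\ equal to a deterministic constant. The main nontrivial input is thus the ergodicity statement, which I expect to quote from the literature; all other steps are routine manipulations with the weak LQG metric axioms together with a standard handling of the null sets arising from the a.s.\ qualifiers.
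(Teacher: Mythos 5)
Your proposal is correct but follows a different route from the paper. The paper proves determinism by showing that $\Clower$ and $\Cupper$ are measurable with respect to the tail $\sigma$-algebra $\bigcap_{R>0} \sigma\bigl(h|_{\BB C\setminus B_R(0)}\bigr)$, following the argument of~\cite[Lemma~3.1]{gm-uniqueness} from the subcritical case, and then invokes triviality of this tail. The key technical input for tail measurability --- which you do not use --- is the fact from~\cite[Lemma~3.12]{pfeffer-supercritical-lqg} that a.s.\ $\lim_{R\to\infty} D_h(z,\bdy B_R(z)) = \infty$ for every $z$; this ensures that the optimal ratios are governed by the behavior of the metrics far from any fixed compact set, so that modifying $h$ inside $B_R(0)$ does not change them. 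Your approach replaces this entire step with translation invariance plus ergodicity of the shift action on the whole-plane GFF modulo additive constant. That is a genuine alternative: it is arguably cleaner and more self-contained (you need no a priori growth estimate on $D_h$-distances), at the cost of having to quote ergodicity of the normalized translation action $\tau_z h := h(\cdot+z) - h_1(z)$, which the paper does not need.

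Your computation that $\Clower(\tau_z h)=\Clower(h)$ a.s.\ for each fixed $z$ is correct: the additive constant cancels in the ratio $\wt D_h/D_h$ by Weyl scaling, and the translation is absorbed by a change of variables using Axiom~\reftranslate. Two small points worth tightening. First, to pass from ``a.s.\ invariance for each fixed $z$'' to ``a.s.\ invariance for all $z$ simultaneously'' you restrict to a countable subgroup; you should then invoke ergodicity of that countable (say $\BB Q^2$ or $\BB Z^2$) subaction, not merely of the $\BB R^2$ action, since ergodicity of a flow does not automatically descend to an arbitrary countable subgroup. For the GFF this causes no trouble --- the $\BB Z^2$ shift is mixing by the absolute continuity of the spectral measure --- but it should be said. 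Second, the phrase ``tail triviality for the white noise representation'' is a reasonable heuristic, but the cleanest standard reference for ergodicity of translations of the whole-plane GFF modulo constant is the spectral criterion for stationary Gaussian fields (a shift is ergodic iff the spectral measure has no atoms); translation-invariant events need not be white-noise tail events, so the implication you gesture at requires a separate argument.
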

\begin{proof}
By the bi-Lipschitz equivalence of $D_h$ and $\wt D_h$, a.s.\ $\Clower$ and $\Cupper$ are positive and finite.
We know from~\cite[Lemma 3.12]{pfeffer-supercritical-lqg} that a.s.\ for each $z\in\BB C$, we have $\lim_{R\rta\infty} D_h(z,\bdy B_R(z)) = \infty$. 
With this fact in hand, the lemma follows from exactly the same elementary tail triviality argument as in the subcritical case~\cite[Lemma 3.1]{gm-uniqueness}. 
\end{proof}

We henceforth replace $\Clower$ and $\Cupper$ by their a.s.\ values in Lemma~\ref{lem-bilip-const}, so that each of $\Clower$ and $\Cupper$ is a deterministic constant depending only on the laws of $D_h$ and $\wt D_h$ and a.s.\
\eqb \label{eqn-bilip}
\Clower D_h(u,v) \leq \wt D_h(u,v) \leq \Cupper D_h(u,v) ,\quad\forall u,v\in\BB C .
\eqe

\subsubsection{Main idea of the proof}

To prove Theorem~\ref{thm-weak-uniqueness}, it suffices to show that $\Clower = \Cupper$. 
In the rest of this subsection, we will give an outline of the proof of this fact. There are many subtleties in our proof which we will gloss over in this outline in order to focus on the key ideas. So, the statements in the rest of this subsection should not be taken as mathematically precise. 
 
At a very broad level, the basic strategy of our proof is similar to the proof of the uniqueness of the subcritical LQG metric in~\cite{gm-uniqueness}. 
However, the details in Sections~\ref{sec-attained} and~\ref{sec-construction} are substantially different from the analogous parts of~\cite{gm-uniqueness}, and the argument in Section~\ref{sec-counting} is completely different from anything in~\cite{gm-uniqueness}. 

We now give a very rough explanation of the main idea of our proof. Assume by way of contradiction that $\Clower < \Cupper$.
We will show that for any $\Cmid \in (\Clower,\Cupper)$, there are many ``good" pairs of distinct non-singular points $u,v\in \BB C$ such that $\wt D_h(u,v) \leq \Cmid D_h(u,v)$ (Section~\ref{sec-attained}). 
In fact, we will show that the set of such points is large enough that every $D_h$-geodesic $P$ has to get $\wt D_h$-close to each of $u$ and $v$ for many ``good" pairs of points $u,v$ (Sections~\ref{sec-counting} and~\ref{sec-construction}). 
For each of these good pairs of points, we replace a segment of $P$ by the concatenation of a $\wt D_h$-geodesic from a point of $P$ to $u$, a $\wt D_h$-geodesic from $u$ to $v$, and a $\wt D_h$-geodesic from $v$ to a point of $P$. This gives a new path with the same endpoints as $P$.

By our choice of good pairs of points $u,v$, the $\wt D_h$-length of each of the replacement segments is at most a constant slightly larger than $\Cmid$ times its $D_h$-length. 
Furthermore, by the definition of $\Cupper$ the $\wt D_h$-length of each segment of $P$ which was not replaced is at most $\Cupper$ times its $D_h$-length.
Morally, we would like to say that this implies that there exists $\Cmid' \in (\Cmid , \Cupper)$ such that a.s.
\eqb  \label{eqn-smaller-upper}
\wt D_h(z,w) \leq \Cmid' D_h(z,w) ,\quad \forall z,w \in \BB C .
\eqe 
The bound~\eqref{eqn-smaller-upper} contradicts the fact that $\Cupper$ is the optimal upper bi-Lipschitz constant (recall~\eqref{eqn-bilip-def}). 
In actuality, what we will prove is a bit more subtle: assuming that $\Clower < \Cupper$, we will establish for ``many" small values of $r >0$ and each $\delta>0$ an upper bound for
\eqb  \label{eqn-smaller-upper'}
\BB P\left[ \wt D_h(z,w) \leq (\Cupper -\delta) D_h(z,w) ,\:\text{$\forall z,w \in \ol B_r(0)$ satisfying certain conditions} \right]  . 
\eqe
See Proposition~\ref{prop-outline-zero} for a somewhat more precise statement. 
This upper bound will be incompatible with a lower bound for the same probability (Proposition~\ref{prop-outline-pos}), which will lead to our desired contradiction. 

In the rest of this subsection, we give a more detailed, section-by-section outline of the proof. 

\subsubsection{Section~\ref{sec-prelim}: preliminary estimates}

We will fix some notation, then record several basic estimates for the LQG metric which are straightforward consequences of results in the existing literature (mostly~\cite{pfeffer-supercritical-lqg}). 

\subsubsection{Section~\ref{sec-attained}: quantitative estimates for optimal bi-Lipschitz constants}

Let $\Cmed \in (\Clower,\Cupper)$. By the definition~\eqref{eqn-bilip-def} of $\Clower$ and $\Cupper$, it holds with positive probability that there exists non-singular points $u,v\in\BB C$ such that $\wt D_h(u,v) \geq \Cmed D_h(u,v)$. 
The purpose of Section~\ref{sec-attained} is to prove a quantitative version of this statement. The argument of Section~\ref{sec-attained} is similar to the argument of~\cite[Section 3]{gm-uniqueness}, but many of the details are different due to the fact that our metrics do not induce the Euclidean topology. 

The following is a simplified version of the main result of Section~\ref{sec-attained} (see Proposition~\ref{prop-geo-annulus-prob} for a precise statement). 
 
\begin{prop} \label{prop-outline-pos}
There exists $ p \in (0,1)$, depending only on the laws of $D_h$ and $\wt D_h$, such that for each $\Cmed \in (0,\Cupper)$ and each sufficiently small $\ep > 0$ (depending on $\Cmed$ and the laws of $D_h$ and $\wt D_h$), there are at least $\frac34\log_8\ep^{-1}$ values of $r \in [\ep^2 , \ep] \cap \{8^{-k}\}_{k\in\BB N}$ such that
\eqb \label{eqn-outline-pos} 
\BB P\left[\text{$\exists$ a ``regular" pair of points $u,v\in \ol B_r(0)$ s.t.\ $\wt D_h(u,v)\geq \Cmed D_h(u,v)$} \right] \geq p .
\eqe 
\end{prop}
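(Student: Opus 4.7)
The plan is to combine the tail-triviality argument that established $\Cupper$ as a deterministic constant (Lemma~\ref{lem-bilip-const}) with translation invariance (Axiom~\reftranslate) and tightness across scales (Axiom~\refcoord) to obtain a uniform-in-$r$ lower bound on the per-scale probabilities. Since there are $\log_8 \ep^{-1}$ dyadic scales in $[\ep^2, \ep]$, a uniform per-scale bound (valid for all sufficiently small $r$) trivially gives the statement for every scale once $\ep$ is small enough, hence in particular for at least $\frac{3}{4} \log_8 \ep^{-1}$ of them.

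First, since $\Cmed < \Cupper$ and $\Cupper$ equals the a.s. supremum of $\wt D_h(u,v)/D_h(u,v)$ over non-singular pairs, the event that some non-singular pair achieves ratio at least $(\Cmed + \Cupper)/2$ has probability $1$. By monotone convergence in $R$, there is a deterministic radius $R_0 = R_0(\Cmed)$ such that with probability at least $3/4$ one such pair lies in $\ol B_{R_0}(0)$.

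Second, I would transfer this to arbitrarily small balls via scale tightness. Writing $\Phi_r := r^{-\xi Q} e^{-\xi h_r(0)}$, Axiom~\refcoord applied to both $D_h$ and $\wt D_h$ shows that the rescaled pair
\[
\bigl(\Phi_r D_h(r\cdot,r\cdot),\; \Phi_r \wt D_h(r\cdot,r\cdot)\bigr)
\]
is tight as $r \to 0$ in the product topology on pairs of lower semicontinuous functions, together with the field $h(r\cdot) - h_r(0) \eqD h$. By Prokhorov, extract a subsequential limit $(D^\ast, \wt D^\ast)$ coupled with a whole-plane GFF. Since $\Phi_r$ cancels in the ratio, $(D^\ast,\wt D^\ast)$ is bi-Lipschitz equivalent with constants bounded by $\Cupper$ from above and $\Clower$ from below; one verifies that $(D^\ast,\wt D^\ast)$ is again a pair of weak LQG metrics (the axioms pass to the limit), and the same tail-triviality argument as in Lemma~\ref{lem-bilip-const} forces the optimal upper constant for $(D^\ast,\wt D^\ast)$ to be exactly $\Cupper$. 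Applying the first step to $(D^\ast,\wt D^\ast)$ and passing the resulting inequality back along the subsequence using lower semicontinuity (Portmanteau) gives
\[
\BB P\!\left[\exists u,v \in \ol B_{rR_0}(0) \text{ non-singular with } \wt D_h(u,v) \geq \Cmed D_h(u,v)\right] \geq \tfrac{1}{2}
\]
for all sufficiently small $r$; since this holds along every subsequence, it holds for all small $r$.

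Third, the ``regular'' condition in the statement should bundle together uniform bounds on normalized annular distances in $\ol B_r(0)$ (of the form ``$\Phi_r D_h(\text{across } r A)$ and its $\wt D_h$-analogue lie in $[c, c^{-1}]$'') together with a controlled separation of $u$ and $v$. By Axiom~\refcoord, each such regularity event has probability arbitrarily close to $1$, so intersecting with the event from the previous step only reduces the probability by a small amount, leaving some $p \in (0,1)$ that depends only on the joint law of $(D, \wt D)$. Restricting to the countable collection $\{8^{-k}\}$ and discarding the finitely many scales for which the asymptotic bound has not yet taken hold gives the claimed count.

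The main obstacle is Step~2: namely, identifying subsequential limits $(D^\ast,\wt D^\ast)$ as a pair of weak LQG metrics and showing that their optimal upper bi-Lipschitz constant equals $\Cupper$ rather than some strictly smaller value. This requires checking that Axioms~\ref{item-metric-length}--\ref{item-metric-f} and~\refcoord survive the limit in the lower-semicontinuous topology (where ratios of lower semicontinuous functions need not be lower semicontinuous, so care is needed when passing inequalities about $\wt D_h/D_h$ through the limit), and then running a tail-triviality argument for the coupled limit. A closely related subtlety is ensuring that the ``regular'' condition is defined so that it is both stable under the subsequential limit and has probability bounded away from $0$ uniformly in $r$.
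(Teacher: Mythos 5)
There is a genuine gap in Step~2, and it is fatal to the proposed line of reasoning. You extract a subsequential limit $(D^\ast, \wt D^\ast)$ of the rescaled pair $(\Phi_r D_h(r\cdot,r\cdot),\Phi_r \wt D_h(r\cdot,r\cdot))$ and then assert that ``the same tail-triviality argument as in Lemma~\ref{lem-bilip-const} forces the optimal upper constant for $(D^\ast,\wt D^\ast)$ to be exactly $\Cupper$.'' This does not follow. Tail triviality shows that the optimal upper bi-Lipschitz constant for \emph{any} fixed pair of weak LQG metrics is \emph{deterministic}; it does not tell you \emph{which} deterministic constant arises. Passing the a.s.\ inequality $\wt D_h \leq \Cupper D_h$ through the limit only gives $\wt D^\ast \leq \Cupper D^\ast$, i.e.\ the optimal constant $C^\ast$ for the limit pair satisfies $C^\ast \leq \Cupper$; nothing forces equality. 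To identify $(D^\ast,\wt D^\ast)$ as a fixed scalar multiple of $(D_h,\wt D_h)$ (which would pin down $C^\ast = \Cupper$) you would need precisely the uniqueness statement that the whole paper is trying to prove -- this is exactly the issue that appears in Lemma~\ref{lem-weak-to-strong}, where the analogous identification $D^{(b)} = \frk k_b D$ is obtained only by \emph{invoking} Theorem~\ref{thm-weak-uniqueness}. So Step~2 is circular. (One might hope to salvage things by only needing $C^\ast \geq \Cmed$, but since $\Cmed$ is arbitrary in $(0,\Cupper)$, you would still need $C^\ast = \Cupper$, and the same obstruction applies.)

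There is a secondary soft spot in the portmanteau step: the event ``$\exists\, u,v$ with $\wt D(u,v) \geq \Cmed D(u,v)$'' is not open in the topology of Definition~\ref{def-lsc}, since ratios of lower semicontinuous functions behave awkwardly. One can repair this by settling for a slightly smaller ratio in the pre-limit (your Definition~\ref{def-lsc}(ii) applied to $D$ and Definition~\ref{def-lsc}(i) applied to $\wt D$ give a ratio at least $\Cmed - o(1)$), but this only degrades the constant and is not the main problem. For the record, the paper's actual proof of this proposition is by contradiction and does not use any compactness: one assumes that a positive fraction of dyadic scales $r \in [\ep^2,\ep]$ fail to satisfy~\eqref{eqn-outline-pos}, uses the near-independence of the GFF on disjoint concentric annuli (Lemma~\ref{lem-annulus-iterate}) to cover space by balls in which the reverse inequality holds uniformly, and then replaces segments of a $D_h$-geodesic by $\wt D_h$-geodesics to deduce $\wt D_h(z,w) \leq \Cmed' D_h(z,w)$ globally for some $\Cmed' < \Cupper$, contradicting the definition of $\Cupper$. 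This is a structurally different argument, and it also explains why the paper can only conclude the bound for $\tfrac34$ of the scales rather than all of them.
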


The statement that $u$ and $v$ are ``regular" in~\eqref{eqn-outline-pos} means that these points satisfy several regularity conditions which are stated precisely in Definition~\ref{def-annulus-geo}. 
These conditions include an upper bound on $D_h(u,v)$ (so in particular $u$ and $v$ are non-singular) and a lower bound on $|u-v|$ in terms of $r$. 
We emphasize that the parameter $p$ in Proposition~\ref{prop-outline-pos} does not depend on $\Cmed$. 
This will be crucial for our purposes, see the discussion just after Proposition~\ref{prop-outline-zero}. 

We will prove Proposition~\ref{prop-outline-pos} by contradiction. In particular, we will assume that there are arbitrarily small values of $\ep > 0$ for which there are at least $\frac14 \log_8\ep^{-1}$ values of $r \in [\ep^2 , \ep] \cap \{8^{-k}\}_{k\in\BB N}$ such that
\eqb \label{eqn-outline-bad} 
\BB P\left[\text{$\wt D_h(u,v) < \Cmed D_h(u,v)$, $\forall$ ``regular" pairs of points $ u,v\in \ol B_r(0)$} \right] \geq 1 - p .
\eqe 
If $p$ is small enough (depending only on the laws of $D_h$ and $\wt D_h$), then we can use the assumption~\eqref{eqn-outline-bad} together with the near-independence of the restrictions of the GFF to disjoint concentric annuli (Lemma~\ref{lem-annulus-iterate}) and a union bound to get the following. 
For any bounded open set $U\subset \BB C$, it holds with high probability that $U$ can be covered by balls $B_r(z)$ for $z\in U$ and $r\in [\ep^2,\ep] \cap \{8^{-k}\}_{k\in\BB N}$ such that the event in~\eqref{eqn-outline-bad} occurs. 

We will then work on the high-probability event that we have such a covering of $U$. Consider points $\BB z , \BB w \in U$ such that there exists a $ D_h$-geodesic $ P$ from $\BB z$ to $\BB w$ which is contained in $U$. We will replace several segments of $ P$ between pairs of ``regular" points $u,v$ as in~\eqref{eqn-outline-bad} by $\wt D_h$-geodesics from $u$ to $v$. The $\wt D_h$-length of each of these geodesics is at most $\Cmed D_h(u,v)$. Furthermore, by~\eqref{eqn-bilip-def}, the $D_h$-length of each segment of $ P$ which we did not replace is at most $\Cupper$ times its $ D_h$-length. We thus obtain a path from $\BB z$ to $\BB w$ with $\wt D_h$-length at most $\Cmed'  D_h(u,v)$, where $\Cmed' \in (\Cmed,\Cupper)$ is a constant depending only on $\Cmed$ and the laws of $D_h$ and $\wt D_h$. With high probability, this works for any $ D_h$-geodesic contained in $U$. So, by taking $U$ to be arbitrarily large, we contradict the definition of $\Cupper$. This yields Proposition~\ref{prop-outline-pos}. 

By the symmetry in our hypotheses for $D_h$ and $\wt D_h$, we also get the following analog of Proposition~\ref{prop-outline-pos} with the roles of $D_h$ and $\wt D_h$ interchanged.

\begin{prop} \label{prop-outline-pos'}
There exists $ p \in (0,1)$, depending only on the laws of $D_h$ and $\wt D_h$, such that for each $\Cmid > \Clower$ and each sufficiently small $\ep > 0$ (depending on $\Cmid$ and the laws of $D_h$ and $\wt D_h$), there are at least $\frac34\log_8\ep^{-1}$ values of $r \in [\ep^2 , \ep] \cap \{8^{-k}\}_{k\in\BB N}$ for which
\eqb \label{eqn-outline-pos'} 
\BB P\left[\text{$\exists$ a ``regular" pair of points $ u,v\in \ol B_r(0)$ s.t.\ $\wt D_h(u,v)\leq \Cmid D_h(u,v)$} \right] \geq p .
\eqe 
\end{prop}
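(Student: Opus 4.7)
The plan is to deduce Proposition~\ref{prop-outline-pos'} directly from Proposition~\ref{prop-outline-pos} by exchanging the roles of $D_h$ and $\wt D_h$. The hypotheses of Theorem~\ref{thm-weak-uniqueness} on the two metrics are symmetric: both $D_h$ and $\wt D_h$ are weak LQG metrics with the same parameter $\xi$, coupled to the same GFF $h$. Consequently, the pair $(\wt D_h, D_h)$ satisfies exactly the assumptions that were used to derive Proposition~\ref{prop-outline-pos} from the pair $(D_h, \wt D_h)$, so applying that proposition with the labels swapped is legitimate.

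Concretely, from~\eqref{eqn-bilip} we have $\Clower D_h \leq \wt D_h \leq \Cupper D_h$, which upon taking reciprocals and rearranging gives $\Cupper^{-1} \wt D_h \leq D_h \leq \Clower^{-1} \wt D_h$. Hence the optimal upper bi-Lipschitz constant for the pair $(\wt D_h, D_h)$ (playing the role of $(D_h, \wt D_h)$) equals $\Clower^{-1}$, and by Lemma~\ref{lem-bilip-const} this constant is deterministic, positive, and finite. Applying Proposition~\ref{prop-outline-pos} to this swapped pair yields a constant $p \in (0,1)$, depending only on the laws of $\wt D_h$ and $D_h$, such that for every $\Cmed' \in (0, \Clower^{-1})$ and all sufficiently small $\ep > 0$, there are at least $\frac34 \log_8 \ep^{-1}$ values of $r \in [\ep^2,\ep] \cap \{8^{-k}\}_{k \in \BB N}$ for which
\[
\BB P\bigl[\exists\,\text{a ``regular'' pair } u,v \in \ol B_r(0) \text{ with } D_h(u,v) \geq \Cmed'\, \wt D_h(u,v)\bigr] \geq p.
\]
Setting $\Cmid := 1/\Cmed'$, the range $\Cmed' \in (0, \Clower^{-1})$ becomes exactly $\Cmid \in (\Clower, \infty)$, and the inequality $D_h(u,v) \geq \Cmed' \wt D_h(u,v)$ is equivalent to $\wt D_h(u,v) \leq \Cmid D_h(u,v)$. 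This is precisely the event in~\eqref{eqn-outline-pos'}.

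The one point that needs care is the meaning of ``regular''. The notion of regularity in Definition~\ref{def-annulus-geo} involves upper and lower bounds on distances and on the modulus $|u-v|$ relative to $r$; when we swap the metrics, these conditions are stated in terms of $\wt D_h$ rather than $D_h$. However, by the global bi-Lipschitz equivalence~\eqref{eqn-bilip} with deterministic constants $\Clower,\Cupper$, every such upper or lower bound on $\wt D_h$-distances translates into a bound of the same form on $D_h$-distances at the cost of multiplying the constants appearing in Definition~\ref{def-annulus-geo} by $\Clower$ or $\Cupper$. Since Proposition~\ref{prop-outline-pos'} permits its regularity constants to depend on the laws of $D_h$ and $\wt D_h$, this rescaling is absorbed and the statement goes through with the same $p$.

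I expect the main (minor) obstacle to be precisely this bookkeeping: verifying that each clause in Definition~\ref{def-annulus-geo}, once translated back via~\eqref{eqn-bilip}, still defines a ``regular'' pair in the original sense with modified but law-dependent constants, and that the upper bound on $D_h(u,v)$ (ensuring non-singularity) is preserved under the swap. Once this dictionary is written out, the proof of Proposition~\ref{prop-outline-pos'} is essentially a one-line application of Proposition~\ref{prop-outline-pos}, with no new probabilistic input required.
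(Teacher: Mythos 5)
Your proposal is correct and is exactly the argument the paper uses: Proposition~\ref{prop-outline-pos'} is obtained from Proposition~\ref{prop-outline-pos} by symmetry, since $D_h$ and $\wt D_h$ satisfy identical hypotheses in Theorem~\ref{thm-weak-uniqueness} and the optimal upper bi-Lipschitz constant for the swapped pair $(\wt D_h, D_h)$ is $\Clower^{-1}$, so $\Cmed' \in (0,\Clower^{-1})$ corresponds via $\Cmid = 1/\Cmed'$ precisely to $\Cmid \in (\Clower,\infty)$. The only minor difference is that the paper's precise ``regularity'' conditions after the swap (Definition~\ref{def-annulus-geo'}) are simply kept in terms of $\wt D_h$ and $\wt D_h$-geodesics rather than translated back into $D_h$-conditions via~\eqref{eqn-bilip} as your final paragraph suggests; that is the form actually used downstream, so your translation step is harmless but unnecessary.
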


\subsubsection{Section~\ref{sec-counting}: the core argument}

The idea of the rest of the proof of Theorem~\ref{thm-weak-uniqueness} is to show that if $\Clower < \Cupper$, then Proposition~\ref{prop-outline-pos'} implies a contradiction to Proposition~\ref{prop-outline-pos}. 

The core part of the proof is given in Section~\ref{sec-counting}, where we will prove Theorem~\ref{thm-weak-uniqueness} conditional on the existence of events and bump functions satisfying certain specified properties. The needed events and bump functions will be constructed in Section~\ref{sec-construction}. 
Section~\ref{sec-counting} plays a role analogous to~\cite[Sections 4 and 6]{gm-uniqueness}, but the proof is completely different. 

We will consider a set of admissible radii $\mcl R\subset (0,1)$, which will eventually be taken to be equal to $\rho^{-1} \mcl R_0$, where $\rho$ is a constant and $\mcl R_0$ is the set of $r\in \{8^{-k}\}_{k\in\BB N}$ for which~\eqref{eqn-outline-pos'} holds. 
We also fix a constant $\BB p \in (0,1)$, which will eventually be chosen to be close to 1, in a manner depending only on the laws of $D_h$ and $\wt D_h$, and we set
\eqbn
\Cmid := \frac{\Clower + \Cupper}{2} , \quad \text{so that} \quad \Cmid \in (\Clower, \Cupper) \quad \text{if} \quad \Clower < \Cupper. 
\eqen 

We will assume that for each $r\in \mcl R$ and each $z\in\BB C$, we have defined an event $\Er_{z,r}$ and a deterministic function $\fr_{z,r}$ satisfying the following properties. 
\begin{itemize}
\item $\Er_{z,r} $ is determined by $h|_{B_{4r}(z) \setminus B_r(z)}$, viewed modulo additive constant, and $\BB P[\Er_{z,r}] \geq \BB p$.
\item $\fr_{z,r}$ is smooth, non-negative, and supported on the annulus $B_{3r}(z) \setminus B_r(z)$. 
\item Assume that $\Er_{z,r}$ occurs and $P'$ is a $D_{h-\fr_{z,r}}$-geodesic between two points of $\BB C\setminus B_{4r}(z)$ which spends ``enough" time in the support of $\fr_{z,r}$. Then there are times $s < t$ such that $P'([s,t]) \subset B_{4r}(z)$ and
\eqb \label{eqn-outline-inc}
\wt D_{h-\fr_{z,r}}(P'(s) , P'(t)) \leq \Cmid (t-s). 
\eqe
\end{itemize}
The precise list of properties that we need is stated in Section~\ref{sec-counting-setup}. 

Roughly speaking, the support of $\fr_{z,r}$ will be a long narrow tube contained in a small neighborhood of $\bdy B_{2r}(0)$. 
On the event $\Er_{z,r}$, there will be many ``good" pairs of non-singular points $u,v$ in the support of $\fr_{z,r}$ such that $\wt D_h(u,v) \leq \Cmid_0 D_h(u,v)$ and the $\wt D_h$-geodesic from $u$ to $v$ is contained in the support of $\fr_{z,r}$, where $\Cmid_0 \in (\Clower,\Cmid)$ is fixed. See Figure~\ref{fig-outline-counting} for an illustration.
We will show that $\Er_{z,r}$ occurs with high probability for $r\in\mcl R$ using Proposition~\ref{prop-outline-pos'} (with $\Cmid_0$ instead of $\Cmid$) and a long-range independence statement for the GFF (Lemma~\ref{lem-spatial-ind}). 
 
The function $\fr_{z,r}$ will be very large on most of its support. So, by Weyl scaling (Axiom~\ref{item-metric-f}), a $D_{h-\fr_{z,r}}$-geodesic which enters the support of $\fr_{z,r}$ will tend to spend a long time in the support of $\fr_{z,r}$. 
This will force the $D_{h-\fr_{z,r}}$-geodesic to get $D_{h-\fr_{z,r}}$-close to each of $u$ and $v$ for one of the aforementioned ``good" pairs of points $u,v$. 
The estimate~\eqref{eqn-outline-inc} will follow from this and the triangle inequality. 
Most of Section~\ref{sec-counting} is devoted to proving an estimate (Proposition~\ref{prop-counting}) which roughly speaking says the following.

\begin{figure}[ht!]
\begin{center}
\includegraphics[width=.9\textwidth]{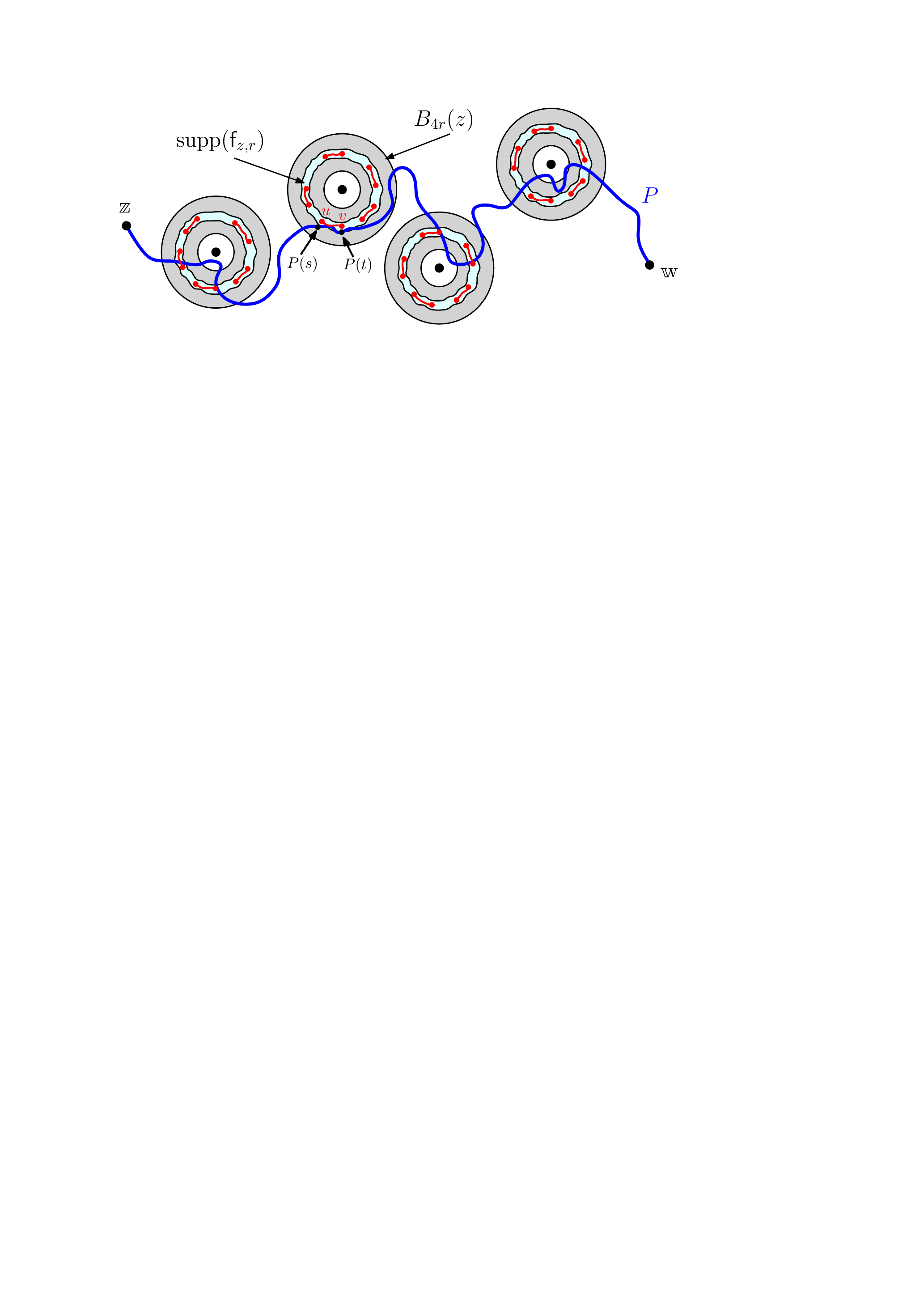} 
\caption{\label{fig-outline-counting} 
Illustration of three ``good" balls (i.e., ones for which $\Er_{z,r}$ occurs) and one ``very good" ball (i.e., one for which $\Er_{z,r}(h+\fr_{z,r})$ occurs) which are hit by the $D_h$-geodesic $P$. Each of the ``good" balls contains several pairs of non-singular points $u,v$ in the support of $\fr_{z,r}$ (light blue) for which $\wt D_h(u,v) \leq \Cmid_0 D_h(u,v)$. These points and the $\wt D_h$-geodesics joining them are shown in red. For the ``very good" ball (the labeled ball in the figure), $P$ gets $D_{h-\fr_{z,r}}$-close to each of $u$ and $v$ for one of the aforementioned pairs of points $u,v$. To prove Proposition~\ref{prop-outline-counting}, we will show that there are lots of ``very good" balls for which $P$ spends a lot of time in the support of $\fr_{z,r}$.  
}
\end{center}
\end{figure}

\begin{prop} \label{prop-outline-counting}
Assume that $\Clower < \Cupper$ and we have defined events $\Er_{z,r}$ and functions $\fr_{z,r}$ satisfying the above properties.
As $\delta\rta 0$, it holds uniformly over all $\BB z,\BB w \in\BB C$ that
\eqb \label{eqn-outline-counting} 
\BB P\left[ \wt D_h\left( \BB z,\BB w  \right) >  (\Cupper - \delta) D_h\left(\BB z,\BB w \right) , \: \text{regularity conditions} \right]  =  O_\delta(\delta^\mu) ,\quad \forall \mu  > 0  .
\eqe
\end{prop}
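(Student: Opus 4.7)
The plan is to fix a $D_h$-geodesic $P$ from $\BB z$ to $\BB w$ and count the number of pairs $(z,r) \in \BB C \times \mcl R$ for which a ``very good'' event occurs; each such event will let us replace a segment of $P$ by a path whose $\wt D_h$-length is strictly smaller than $\Cupper$ times its $D_h$-length. If enough very good pairs occur, concatenating the replacements with the unaltered segments of $P$ yields a path from $\BB z$ to $\BB w$ of total $\wt D_h$-length at most $(\Cupper - \delta) D_h(\BB z, \BB w)$, which contradicts the event in~\eqref{eqn-outline-counting}. The main task is to make this quantitative.

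For each scale $r \in \mcl R$ I will fix a grid $\{z_i^{(r)}\}_i$ of spacing of order $r$ covering a bounded region containing $\BB z, \BB w$, and declare $(z_i^{(r)},r)$ \emph{very good} if (a) the event $\Er_{z_i^{(r)},r}$ occurs for the shifted field $h + \fr_{z_i^{(r)},r}$, and (b) $P$ enters $B_r(z_i^{(r)})$ and accumulates enough $D_h$-length in the support of $\fr_{z_i^{(r)},r}$. Writing $h = (h+\fr_{z,r}) - \fr_{z,r}$ and applying the third bullet of the setup with $h + \fr_{z,r}$ playing the role of $h$, every very good pair produces times $s<t$ with $P([s,t]) \subset B_{4r}(z_i^{(r)})$ and $\wt D_h(P(s),P(t)) \leq \Cmid (t-s)$; replacing $P|_{[s,t]}$ by a near-$\wt D_h$-geodesic thus saves $(\Cupper - \Cmid)(t-s)$ compared to the worst-case bound $\wt D_h \leq \Cupper D_h$. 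Condition (b) will be ensured by designing $\fr_{z,r}$ in Section~\ref{sec-construction} to be very large on most of its support, so that Weyl scaling (Axiom~\ref{item-metric-f}) forces any $D_h$-geodesic entering $B_r(z_i^{(r)})$ to spend a macroscopic fraction of its $D_h$-length inside the support.

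Two points require care. First, the probability of $\Er_{z_i^{(r)},r}(h + \fr_{z_i^{(r)},r})$ under the law of $h$ must be bounded from below by a Cameron--Martin argument, using that $\fr_{z,r}$ has bounded Dirichlet energy (to be arranged in Section~\ref{sec-construction}); this gives a lower bound of $\BB p' > 0$ that can be made close to $1$ by taking $\BB p$ close to $1$. Second, one must show that $P$ hits a positive fraction of the grid balls at each scale $r$; this follows by combining the regularity conditions on $\BB z, \BB w$ (which constrain the endpoints and total $D_h$-length of $P$) with the Euclidean-to-$D_h$ metric estimates of Section~\ref{sec-prelim}, which lower-bound the number of distinct Euclidean balls at scale $r$ that a $D_h$-path of length of order $1$ must traverse. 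I expect this second point to be the trickiest, particularly in the supercritical regime $\xi > \xi_\crit$ where $D_h$ is not continuous with respect to the Euclidean metric and singular points must be carefully avoided.

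Finally, by the near-independence of the events in (a) across widely separated scales and centers (each event is determined by $h$ restricted to a fixed annulus modulo additive constant, cf.\ the spatial independence Lemma~\ref{lem-spatial-ind}), the count $N$ of very good pairs stochastically dominates a sum of weakly dependent Bernoullis with success probability close to $1$. The inequality $\wt D_h(\BB z, \BB w) > (\Cupper - \delta) D_h(\BB z, \BB w)$ forces $N \leq C \delta L$, where $L$ is the total number of candidate pairs, and the probability of this lower-tail event decays like $\exp(-c L)$. Since $L$ can be made as large as desired relative to $\log(1/\delta)$ by choosing $\mcl R$ with many scales and a sufficiently dense grid, this gives the claimed $O_\delta(\delta^\mu)$ bound for every $\mu > 0$.
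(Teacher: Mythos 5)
You have the right ingredients in view (a Cameron--Martin / Radon--Nikodym comparison, near-independence of local events, counting shortcuts along the geodesic), but there is a genuine gap at the heart of your plan, and it is precisely the gap that the paper's counting argument is designed to close.

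Your second paragraph asserts that ``Weyl scaling forces any $D_h$-geodesic entering $B_r(z)$ to spend a macroscopic fraction of its $D_h$-length inside the support'' of $\fr_{z,r}$, because $\fr_{z,r}$ is very large there. This is not true. The function $\fr_{z,r}$ does not appear in the field $h$; it only enters when one forms $D_{h \pm \fr_{z,r}}$. Weyl scaling (Axiom~\ref{item-metric-f}) makes $D_{h-\fr_{z,r}}$-geodesics, not $D_h$-geodesics, attracted to the support of $\fr_{z,r}$, because there the field $h - \fr_{z,r}$ is lowered and distances are cheap. The $D_h$-geodesic $P$ sees no such reduction: it must cross the annular tube $\Ur_{z,r}$ when it enters $B_r(z)$, but nothing prevents it from crossing quickly in the radial direction, accumulating only a negligible fraction of its $D_h$-length there. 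So your condition (b) does not follow from a deterministic Weyl-scaling argument, and the remainder of your plan (near-independent Bernoullis with success probability close to $1$, followed by concentration) collapses, because the ``very good'' event has a global component involving $P$ that you cannot control this way.

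The paper's actual proof avoids trying to verify condition (b) directly for $P$. Instead it defines events $F_{Z,r}^{q,\BB r}$ indexed by finite sets $Z$, whose fifth condition asks that the \emph{$D_{h-\fr_{Z,r}}$-geodesic} (not $P$) make a large excursion through $\Vr_{z,r}$ for each $z \in Z$. This is exactly the geodesic for which Weyl scaling does create the desired attraction, and Proposition~\ref{prop-choices} shows that on the bad event there are $\geq \ep^{-\Cexp k}$ sets $Z$ with $\#Z \le k$ for which $F_{Z,r}^{q,\BB r}(h)$ occurs. Lemma~\ref{lem-rn} then transfers this count, up to a factor $\Crn^{\pm k}$, to the shifted events $F_{Z,r}^{q,\BB r}(h + \fr_{Z,r})$ — and it is only after this Radon--Nikodym transfer that the fifth condition becomes an excursion condition on the $D_h$-geodesic $P$ itself. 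Finally, Proposition~\ref{prop-card} gives an a.s.\ upper bound $\Ccard^k$ on the number of such shifted events, under the assumption $\wt D_h(K_1,K_2) \geq \Cupper D_h(K_1,K_2) - \cdots$; the conflict between the $\ep^{-\Cexp k}$ lower count and the $\Ccard^k$ upper count yields $\BB P[\mcl G_{\BB r}^\ep] = O_\ep(\ep^\mu)$. The key structural move — swap the roles of $h$ and $h + \fr_{Z,r}$ so that Weyl scaling does the work for you, and compare counts rather than probabilities of individual events — is the piece missing from your proposal.
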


We think of a ball $B_{4r}(z)$ as ``good" if the event $\Er_{z,r}$ occurs and ``very good" if the event $\Er_{z,r}(h + \fr_{z,r})$, which is defined in the same manner as $\Er_{z,r}$ but with $h+\fr_{z,r}$ instead of $h$, occurs. By definition, if $B_{4r}(z)$ is``good" for $h$, then $B_{4r}(z)$ is ``very good" for $h -\fr_{z,r}$.  

Let $P$ be the $D_h$-geodesic from $\BB z$ to $\BB w$ (which is a.s.\ unique, see Lemma~\ref{lem-geo-unique} below). 
Recall that $\BB P[\Er_{z,r}] \geq \BB p$, which is close to 1, and $\Er_{z,r}$ is determined by $h|_{B_{4r}(z) \setminus B_{r}(z)}$, viewed modulo additive constant.  
From this, it is easy to show using the near-independence of the restrictions of $h$ to disjoint concentric annuli (Lemma~\ref{lem-annulus-iterate}) that $P$ has to hit $B_r(z)$ for lots of ``good" balls $B_{4r}(z)$. 

To prove Proposition~\ref{prop-outline-counting}, it suffices to show that with high probability, there are many ``very good" balls $B_{4r}(z)$ such that the $D_h$-geodesic $P$ from $\BB z$ to $\BB w$ spends ``enough" time in the support of the bump function $\fr_{z,r}$. 
Indeed, the condition~\eqref{eqn-outline-inc} (with $h+\fr_{z,r}$ instead of $h$) will then give us lots of pairs of points $s,t$ such that $\wt D_h(P(s) , P(t)) \leq \Cmid (t-s)$, which in turn will show that $\wt D_h(\BB z,\BB w)$ is bounded away from $\Cupper D_h(\BB z,\BB w)$ (see Proposition~\ref{prop-card}).

In~\cite{gm-uniqueness}, it was shown that $P$ hits many ``very good" balls by using confluence of geodesics (which was proven in~\cite{gm-confluence}) to get an approximate Markov property for $P$. 
In this paper, we will instead show this using a simpler argument based on counting the number of events of a certain type which occur. 
More precisely, for $r\in\mcl R$ and a finite collection of points $Z$ such that the balls $B_{4r}(z)$ for $z\in Z$ are disjoint, we will let $F_{Z,r}$ be (roughly speaking) the event that the following is true.
\begin{itemize}
\item Each ball $B_{4r}(z)$ for $z\in Z$ is ``good". 
\item The $D_h$-geodesic $P$ from $\BB z$ to $\BB w$ hits $B_{r}(z)$ for each $z\in Z$.
\item With $\fr_{Z,r} := \sum_{z\in Z} \fr_{z,r}$, the $D_{h-\fr_{Z,r}}$-geodesic from $\BB z$ to $\BB w$ spends ``enough" time in the support of $\fr_{z,r}$ for each $z\in Z$. 
\end{itemize}
We also let $F_{Z,r}'$ be defined in the same manner as $F_{Z,r}$ but with $h + \fr_{Z,r}$ in place of $h$, i.e., $F_{Z,r}'$ is the event that the following is true.
\begin{itemize}
\item Each $B_{4r}(z)$ for $z\in Z$ is ``very good". 
\item The $D_{h+\fr_{Z,r}}$-geodesic from $\BB z$ to $\BB w$ hits $B_r(z)$ for each $z\in Z$.
\item The $D_h$-geodesic $P$ from $\BB z$ to $\BB w$ spends ``enough" time in the support of $\fr_{z,r}$ for each $z\in Z$. 
\end{itemize}

Using a basic Radon-Nikodym derivative for the GFF, one can show that there is a constant $C > 0$ depending only on the laws of $D_h$ and $\wt D_h$ such that 
\eqb \label{eqn-outline-rn}
C^{-k}  \BB P[F_{Z,r}] \leq \BB P[F_{Z,r}'] \leq C^k \BB P[F_{Z,r}] , \quad \text{whenever $\# Z \leq k$} 
\eqe
(see Lemma~\ref{lem-rn}). We will eventually take $k$ to be a large constant, independent of $r,\BB z,\BB w$, depending on the number $\mu$ in~\eqref{eqn-outline-counting}.
So, the relation~\eqref{eqn-outline-rn} suggests that the number of sets $Z$ such that $\# Z \leq k$ and $F_{Z,r}$ occurs should be comparable to the number of such sets for which $F_{Z,r}'$ occurs. 

Furthermore, one can show that if $\ep$ is small enough, then for each $r \in [\ep^2,\ep]$, the number of sets $Z$ with $\#Z \leq k$ such that $F_{Z,r}$ occurs grows like a positive power of $\ep^{-k}$ (Proposition~\ref{prop-choices}). 
Indeed, as explained above, there are many sets $Z_0$ such that for each $z\in Z_0$, the ball $B_{4r}(z)$ is good and the ball $B_r(z)$ is hit by $P$.
We need to produce many sets $Z$ for which these properties hold and also that $D_{h-\fr_{Z,r}}$-geodesic spends enough time in the support of $\fr_{z,r}$ for each $z \in Z$. 
To do this, we start with a set $Z_0$ as above and iteratively remove the ``bad" points $z\in Z_0$ such that  the $D_{h-\fr_{Z_0,r}}$-geodesic from $\BB z$ to $\BB w$ does not spend very much time in the support of $\fr_{z,r}$. By doing so, we obtain a set $Z \subset Z_0$ such that $F_{Z,r}$ occurs and $\# Z$ is not too much smaller than $\# Z_0$.
See Section~\ref{sec-counting-choices} for details.   

By combining the preceding two paragraphs with an elementary calculation (see the end of Section~\ref{sec-counting-main}), we infer that with high probability there are lots of sets $Z$ with $\# Z \leq k$ such that $F_{Z,r}'$ occurs. In particular, there must be lots of ``very good" balls $B_{4r}(z)$ for which $P$ spends a lot of time in the support of $\fr_{z,r}$. As explained above, this gives Proposition~\ref{prop-outline-counting}. 

Once Proposition~\ref{prop-outline-counting} is established, one can take a union bound over many pairs of points $\BB z , \BB w \in B_r(0)$ to get, roughly speaking, the following (see Lemma~\ref{lem-opt-event-prob} for a precise statement). 
 
\begin{prop} \label{prop-outline-zero}
Assume that $\Clower < \Cupper$. For each sufficiently small $\ep > 0$ (depending only on the laws of $D_h$ and $\wt D_h$), there are at least $\frac34\log_8\ep^{-1}$ values of $r \in [\ep^2 , \ep] \cap \{8^{-k}\}_{k\in\BB N}$ for which
\eqb \label{eqn-outline-zero} 
\lim_{\delta \rta 0} \BB P\left[\text{$\exists$ a ``regular" pair $\BB z , \BB w \in \ol B_r(0)$ s.t.\ $\wt D_h(\BB z,\BB w) \geq  (\Cupper - \delta) D_h(\BB z , \BB w)$} \right] = 0  ,
\eqe 
uniformly over the choices of $\ep$ and $r$. 
\end{prop}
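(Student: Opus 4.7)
The plan is to deduce Proposition~\ref{prop-outline-zero} from the pointwise estimate of Proposition~\ref{prop-outline-counting} by discretizing $\ol B_r(0)$ and applying a union bound, exploiting the freedom to take the exponent $\mu$ in~\eqref{eqn-outline-counting} arbitrarily large. The set of admissible radii will be inherited directly from Proposition~\ref{prop-outline-pos'} applied with some $\Cmid_0 \in (\Clower, \Cmid)$: for each such $r \in \{8^{-k}\}_{k \in \BB N}$, the events $\Er_{z,r}$ and bump functions $\fr_{z,r}$ of Section~\ref{sec-counting} can be built with $\BB P[\Er_{z,r}] \geq \BB p$, and hence Proposition~\ref{prop-outline-counting} becomes available at that scale. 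Since Proposition~\ref{prop-outline-pos'} furnishes at least $\frac34\log_8\ep^{-1}$ such radii in $[\ep^2,\ep] \cap \{8^{-k}\}_{k \in \BB N}$, this directly matches the count claimed in Proposition~\ref{prop-outline-zero} (up to a harmless multiplicative factor of $\rho$ on $\ep$, reflecting that the set $\mcl R$ of Section~\ref{sec-counting} is $\rho^{-1}\mcl R_0$).

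Fix one such $r$ and a large integer $N$ to be chosen. Cover $\ol B_r(0)$ by a Euclidean lattice $G_r$ of spacing $\delta^N r$, so $\# G_r \lesssim \delta^{-2N}$. The first step is to show that on the event in~\eqref{eqn-outline-zero} there exist $\BB z', \BB w' \in G_r$ with $|\BB z - \BB z'|, |\BB w - \BB w'| \leq \delta^N r$ satisfying
\[
\wt D_h(\BB z', \BB w') \geq (\Cupper - 2\delta)\, D_h(\BB z', \BB w'),
\]
together with a slightly relaxed version of the regularity conditions from Definition~\ref{def-annulus-geo}. The key input is a quantitative modulus-of-continuity statement for $D_h$ along regular pairs: since the regularity conditions supply an upper bound on $D_h(\BB z,\BB w)$ in terms of $e^{\xi h_r(0)} r^{\xi Q}$ and a lower bound on $|\BB z-\BB w|$ in units of $r$, estimates on small-Euclidean-diameter $D_h$-distances from~\cite{pfeffer-supercritical-lqg,dg-polylog} should yield $D_h(\BB z,\BB z') \leq \delta\, D_h(\BB z,\BB w)$ with overwhelming probability once $N$ is large, and the corresponding bound for $\wt D_h$ follows from the bi-Lipschitz comparison~\eqref{eqn-bilip}. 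The triangle inequality then transfers the inequality from $(\BB z,\BB w)$ to $(\BB z', \BB w')$. Once this is in place, I apply Proposition~\ref{prop-outline-counting} with parameter $2\delta$ to each of the at most $C\delta^{-4N}$ pairs in $G_r \times G_r$; each pair contributes $O_\delta(\delta^\mu)$, so a union bound gives a total bound of order $\delta^{\mu - 4N}$. Choosing $\mu > 4N + 1$ in Proposition~\ref{prop-outline-counting} produces a bound that tends to zero with $\delta$, and uniformity in $r$ and $\ep$ is automatic from the $\BB z,\BB w$-uniformity of~\eqref{eqn-outline-counting} together with the scale-covariant form of the grid and the regularity conditions.

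The main obstacle I expect is the discretization step. In the supercritical regime $D_h$ is only lower semicontinuous and assigns infinite distance to a dense set of singular points, so there is no global H\"older-type modulus of continuity that would allow one to freely shift endpoints. The regularity conditions in Definition~\ref{def-annulus-geo} must therefore be tuned to simultaneously (i) bound $D_h(\BB z,\BB w)$ from above, (ii) keep $|\BB z - \BB w|$ bounded below in terms of $r$, and (iii) push $\BB z, \BB w$ a definite $D_h$-distance away from the singular set, all while remaining compatible with the regularity needed to invoke Proposition~\ref{prop-outline-counting} at the discretized pair $(\BB z',\BB w')$. Producing a grid approximation that preserves these features with error in $\delta^N$ small enough to absorb the loss from $\delta$ to $2\delta$ is where most of the technical work will live; thereafter the union bound, combined with the polynomial-in-$\delta^{-1}$ decay supplied by Proposition~\ref{prop-outline-counting} for arbitrarily large $\mu$, closes the argument.
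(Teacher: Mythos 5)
Your high-level strategy---discretize $\ol B_r(0)$, apply the polynomially decaying estimate to grid pairs, union bound, and choose $\mu$ large enough to beat the cardinality of the grid---is exactly the paper's strategy. And your instinct that the discretization step is where the difficulty lives is correct. But the proposal as written leaves the gap unresolved, and the resolution the paper uses is different in a way that matters.

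First, the precise form of Proposition~\ref{prop-outline-counting} that the paper actually proves (Proposition~\ref{prop-counting}) is stated for disjoint \emph{compact sets} $K_1,K_2 \subset B_{2\BB r}(0)$, not for pairs of points. In the union bound (Lemma~\ref{lem-end-union}), one takes $K_1 = \ol B_{\ep^{\nu'}\BB r}(z)$ and $K_2 = \ol B_{\ep^{\nu'}\BB r}(w)$ for $z,w$ in a grid of spacing $\sim\ep^{\nu'}\BB r$. This is not a cosmetic point: a fixed grid point can be singular, and there is no ``overwhelming probability'' statement of the form $D_h(\BB z,\BB z') \leq \delta D_h(\BB z,\BB w)$ that holds simultaneously over the (continuum of possible) locations of a regular pair $(\BB z,\BB w)$. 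You reference estimates on small-Euclidean-diameter $D_h$-distances from~\cite{pfeffer-supercritical-lqg,dg-polylog}, but those give bounds for fixed or averaged points and cannot be upgraded into the uniform-in-$(\BB z,\BB w)$ modulus you invoke; $D_h$ is only lower semicontinuous.

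Second, the transfer from a regular pair $(u,v)$ to the nearby balls is \emph{deterministic}, not probabilistic: it is supplied directly by condition~(\emph{iv}) in the definition of ``regular'' (Definition~\ref{def-annulus-geo}), which hands you a path from $u$ to $\bdy B_{\delta r}(u)$ and a loop in $\BB A_{\delta r,\delta^{1/2}r}(u)$ whose $D_h$-lengths are at most $\delta^{\geoExp} D_h(u,v)$. One concatenates these with the short path between the two grid-balls guaranteed by Lemma~\ref{lem-end-union} and reads off $\wt D_h(u,v) < (\Cupper - c\delta) D_h(u,v)$ directly, contradicting the assumed ratio bound. No further probabilistic estimate is needed at this step, and there is no ``relaxed regularity condition'' to verify at the grid points---indeed, the event $\mcl G_{\BB r}^\ep(K_1,K_2)$ places no regularity requirement on $K_1,K_2$ beyond Euclidean separation; its auxiliary regularity conditions concern the field on $B_{3\BB r}(0)$ globally and are handled once and for all by Lemma~\ref{lem-main-extra}. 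If you rework your step to apply the compact-set version of the counting proposition to grid-balls and to draw the transfer paths from Definition~\ref{def-annulus-geo}(\emph{iv}) rather than a modulus of continuity, the rest of your argument (cardinality $\lesssim \delta^{-cN}$ of the grid, pick $\mu$ larger) goes through essentially as you describe.
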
 

Proposition~\ref{prop-outline-zero} is incompatible with Proposition~\ref{prop-outline-pos} since the parameter $p$ in Proposition~\ref{prop-outline-pos} does not depend on $\Cmed$. We thus obtain a contradiction to the assumption that $\Clower < \Cupper$, so we conclude that $\Clower = \Cupper$ and hence Theorem~\ref{thm-weak-uniqueness} holds.

\subsubsection{Section~\ref{sec-construction}: constructing events and bump functions}

In Section~\ref{sec-construction}, we will construct the events $\Er_{z,r}$ and the bump functions $\fr_{z,r}$ described just before Proposition~\ref{prop-outline-counting}. 
This part of the argument has some similarity to~\cite[Section 5]{gm-uniqueness}, which gives a roughly similar construction in the subcritical case. But, the details are very different. The main reason for this is as follows. 

Recall that we want to force a $D_{h-\fr_{z,r}}$-geodesic $P'$ to get $D_{h-\fr_{z,r}}$-close to each of $u$ and $v$, where $u,v$ are non-singular points in the support of $\fr_{z,r}$ such that $\wt D_h(u,v) \leq \Cmid_0 D_h(u,v)$. 
We will do this in two steps: first we force $P'$ to get Euclidean-close to each of $u$ and $v$, then we force $P'$ to get $D_{h-\fr_{z,r}}$-close to each of $u$ and $v$. 
In the subcritical phase, the metric $D_h$ is Euclidean-continuous, so the second step is straightforward. However, this is not the case in the supercritical phase, so a substantial amount of work is needed to force $P'$ to get $D_{h-\fr_{z,r}}$-close to each of $u$ and $v$. 
Because of this, we will define the events $\Er_{z,r}$ in a significantly different way as compared to~\cite{gm-uniqueness}. 
We refer to Section~\ref{sec-construction-outline} for a more detailed outline.

\section{Preliminaries}
\label{sec-prelim}

In this subsection, we first establish some standard notational conventions (Section~\ref{sec-notation}). 
We then record several lemmas about a weak LQG metric $D_h$ which are either proven elsewhere (i.e., in~\cite{pfeffer-supercritical-lqg,dg-confluence}) or are straightforward consequences of statements which are proven elsewhere. The reader may wish to skim this section on a first read and refer back to the various lemmas as needed.
 
\subsection{Notational conventions}
\label{sec-notation}

\noindent
We write $\BB N = \{1,2,3,\dots\}$ and $\BB N_0 = \BB N \cup \{0\}$. 
\medskip

\noindent
For $a < b$, we define the discrete interval $[a,b]_{\BB Z}:= [a,b]\cap\BB Z$. 
\medskip

\noindent
If $f  :(0,\infty) \rta \BB R$ and $g : (0,\infty) \rta (0,\infty)$, we say that $f(\ep) = O_\ep(g(\ep))$ (resp.\ $f(\ep) = o_\ep(g(\ep))$) as $\ep\rta 0$ if $f(\ep)/g(\ep)$ remains bounded (resp.\ tends to zero) as $\ep\rta 0$. We similarly define $O(\cdot)$ and $o(\cdot)$ errors as a parameter goes to infinity. 
\medskip

\noindent
Let $\{E^\ep\}_{\ep>0}$ be a one-parameter family of events. We say that $E^\ep$ occurs with
\begin{itemize}
\item \emph{polynomially high probability} as $\ep\rta 0$ if there is a $\mu > 0$ (independent from $\ep$ and possibly from other parameters of interest) such that  $\BB P[E^\ep] \geq 1 - O_\ep(\ep^\mu)$. 
\item \emph{superpolynomially high probability} as $\ep\rta 0$ if $\BB P[E^\ep] \geq 1 - O_\ep(\ep^\mu)$ for every $\mu >0$.  
\end{itemize} 
\medskip

\noindent
For $z\in\BB C$ and $r>0$, we write $B_r(z)$ for the open Euclidean ball of radius $r$ centered at $z$. More generally, for $X\subset \BB C$ we write $B_r(X) = \bigcup_{z\in X} B_r(z)$. We also define the open annulus
\eqb \label{eqn-annulus-def}
\BB A_{r_1,r_2}(z) := B_{r_2}(z) \setminus \ol{B_{r_1}(z)} ,\quad\forall 0 < r_r < r_2 < \infty .
\eqe 
\medskip

\noindent
Topological concepts such as ``open", ``closed", ``boundary", etc., are always defined with respect to the Euclidean topology unless otherwise stated.  
For $X\subset \BB C$, we write $\ol X$ for its Euclidean closure and $\bdy X$ for its Euclidean boundary.
\medskip

\noindent
We will typically use the symbols $r$ and $\BB r$ for Euclidean radii. Many of our estimates for weak LQG metrics are required to be uniform over different values of $r$ (or $\BB r$). The reason why we need to include this condition is that we only have tightness across scales (Axiom~\refcoord) instead of exact scale invariance (Axiom~\ref{item-metric-coord0}), so estimates are not automatically uniform across different Euclidean scales.

\subsection{Some remarks on internal metrics}
\label{sec-closed}

Throughout the rest of this section, we let $h$ be a whole-plane GFF and $D_h$ be a weak LQG metric as in Definition~\ref{def-metric}.

Let $X\subset \BB C$ (not necessarily open or closed) and recall from Definition~\ref{def-metric-properties} that $D_h(\cdot,\cdot;X)$ is the $D_h$-internal metric on $X$, which is a metric on $X$ except that it is allowed to take on infinite values.
It is easy to check (see, e.g.,~\cite[Proposition 2.3.12]{bbi-metric-geometry}) that the $D_h(\cdot,\cdot;X)$-length of any $D_h$-rectifiable path contained in $X$ (and hence also every $D_h(\cdot,\cdot;X)$-rectifiable path) is the same as its $D_h$-length. 

The notion of a $D_h(\cdot,\cdot;X)$-geodesic between points of $X$ is well-defined by Definition~\ref{def-metric-properties}: it is simply a path in $X$ whose $D_h$-length is the same as the $D_h(\cdot,\cdot;X)$-distance between its endpoints, provided this distance is finite. 
Such a geodesic may not exist for every pair of points in $X$.  
However, such geodesics exist for some pairs of points: for example, if $z,w \in X$ and there is a $D_h$-geodesic $P$ from $z$ to $w$ which is contained in $X$, then $P$ is a $D_h(\cdot,\cdot;X)$-geodesic. 

We will most often consider internal metrics on open sets (which appear in the locality assumption Axiom~\ref{item-metric-local} for $D_h$).
But, we will sometimes also have occasion to consider internal metrics on the closures of open sets. 
Recall that for an open set $U\subset \BB C$, $h|_U$ is the random distribution on $U$ obtained by restricting the distributional pairing $f\mapsto (h,f)$ to functions which are supported on $U$. 
Following, e.g.,~\cite[Section 3.3]{ss-contour}, for a closed set $K\subset \BB C$, we define
\eqb \label{eqn-closed-restrict}
\sigma\left( h|_K \right) := \bigcap_{\ep > 0} \sigma\left( h|_{B_\ep(K)} \right)
\eqe
where $B_\ep(K)$ is the Euclidean $\ep$-neighborhood of $K$. 

We say that a random variable is a.s.\ determined by $h|_K$ if it is a.s.\ equal to a random variable which is measurable with respect to $\sigma(h|_K)$. 
Similarly, we say that a random variable is a.s.\ determined by $h|_K$, viewed modulo additive constant, if it is a.s.\ equal to a random variable which is measurable with respect to $ \sigma( (h+c)|_K)$ for any possibly random $c \in \BB R$. 

The metric $D_h(\cdot,\cdot;K)$ is equal to the internal metric of $D_h(\cdot,\cdot;B_\ep(K))$ on $K$ for any $\ep > 0$. 
So, by locality (Axiom~\ref{item-metric-local}) and~\eqref{eqn-closed-restrict}, the metric $D_h(\cdot,\cdot;K)$ is measurable with respect to $\sigma(h|_K)$.

\subsection{Independence for the GFF}
\label{sec-prelim-ind}

The following lemma is a consequence of the fact that the restrictions of the GFF to disjoint concentric annuli, viewed modulo additive constant, are nearly independent. See~\cite[Lemma 3.1]{local-metrics} for a slightly more general statement.

\begin{lem}[\!\!\cite{local-metrics}] \label{lem-annulus-iterate}
Fix $0 < s_1<s_2 < 1$. Let $\{r_k\}_{k\in\BB N}$ be a decreasing sequence of positive numbers such that $r_{k+1} / r_k \leq s_1$ for each $k\in\BB N$ and let $\{E_{r_k} \}_{k\in\BB N}$ be events such that $E_{r_k} \in \sigma\left( (h-h_{r_k}(0)) |_{\BB A_{s_1 r_k , s_2 r_k}(0)  } \right)$ for each $k\in\BB N$. 
For $K\in\BB N$, let $N(K)$ be the number of $k\in [1,K]_{\BB Z}$ for which $E_{r_k}$ occurs. 
\begin{enumerate}
\item \label{item-annulus-iterate} For each $a > 0$ and each $b\in (0,1)$, there exists $p = p(a,b,s_1,s_2) \in (0,1)$ and $c = c(a,b,s_1,s_2) > 0$ (independent of the particular choice of $\{r_k\}$ and $\{E_{r_k}\}$) such that if  
\eqb \label{eqn-annulus-iterate-prob}
\BB P\left[ E_{r_k}  \right] \geq p , \quad \forall k\in\BB N  ,
\eqe 
then 
\eqb \label{eqn-annulus-iterate}
\BB P\left[ N(K)  < b K\right] \leq c e^{-a K} ,\quad\forall K \in \BB N. 
\eqe 
\item \label{item-annulus-iterate-pos} For each $p\in (0,1)$, there exists $a = a(p,s_1,s_2) > 0$, $b = b(p,s_1,s_2) \in (0,1)$, and $c = c(p,s_1,s_2) > 0$  (independent of the particular choice of $\{r_k\}$ and $\{E_{r_k}\}$) such that if~\eqref{eqn-annulus-iterate-prob} holds, then~\eqref{eqn-annulus-iterate} holds. 
\end{enumerate}
\end{lem}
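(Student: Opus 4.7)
The plan is a standard near-independence plus large-deviations argument, organized in three moves. First, I would enlarge the annuli slightly: choose $s_1' \in (0, s_1)$ and $s_2' \in (s_2, 1)$ with $s_1'/s_2' \geq s_1$, which is possible because $s_2 < 1$, and set $\tilde A_k := \BB A_{s_1' r_k, s_2' r_k}(0)$. The hypothesis $r_{k+1}/r_k \leq s_1$ then guarantees both that the $\tilde A_k$ are pairwise disjoint and that $\BB A_{s_1 r_k, s_2 r_k}(0)$ is compactly contained in $\tilde A_k$, leaving a macroscopic buffer region on each side.

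Second, I would couple the $E_{r_k}$ to an i.i.d.\ sequence. By the Markov property of the GFF, on $\tilde A_k$ one may write $h = \varphi_k + h^{\tilde A_k}$, where $h^{\tilde A_k}$ is a zero-boundary GFF on $\tilde A_k$ and $\varphi_k$ is harmonic on $\tilde A_k$ and independent of $h^{\tilde A_k}$. Since the $\tilde A_k$ are disjoint, the fields $\{h^{\tilde A_k}\}_{k\in\BB N}$ are mutually independent. The harmonic piece $(\varphi_k - h_{r_k}(0))|_{\BB A_{s_1 r_k, s_2 r_k}(0)}$ is very smooth thanks to the buffer, and by standard circle-average and Gaussian-harmonic-function computations (exactly those underlying \cite[Lemma 3.1]{local-metrics}) its law admits a Radon-Nikodym derivative bounded by some $C = C(s_1, s_2, s_1', s_2') < \infty$ with respect to a fixed reference law independent of the other scales. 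Absorbing this Radon-Nikodym factor, I would construct a coupling of $h$ with mutually independent Bernoulli variables $X_k$ satisfying $\BB P[\BB 1_{E_{r_k}} \neq X_k] \leq \eta$ and $\BB P[X_k = 1] \geq \BB P[E_{r_k}]/C - \eta$, for any prescribed $\eta > 0$ (at the cost of tuning $s_1',s_2'$).

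Third, I would finish via Chernoff. The sum $\tilde N(K) := \sum_{k=1}^K X_k$ is a sum of i.i.d.\ Bernoullis, so Hoeffding's inequality gives $\BB P[\tilde N(K) < b'K] \leq c\,e^{-a'K}$ for every $b' < \BB E[X_1]$, with $a', c > 0$. Since $|N(K) - \tilde N(K)|$ is bounded by an i.i.d.\ sum with mean $\leq \eta$, a second Hoeffding bound transfers the estimate to $N(K)$. For part (\ref{item-annulus-iterate}), given $a$ and $b$, I would shrink $\eta$ and then take $p$ close enough to $1$ that $p/C - 2\eta > b$ and the combined Hoeffding exponent exceeds $a$. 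For part (\ref{item-annulus-iterate-pos}), given $p \in (0,1)$, set $b := p/(2C)$ after shrinking $\eta$ and read off $a > 0$ from the same Hoeffding bound.

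All the real content sits in the second step: controlling the harmonic correction $\varphi_k - h_{r_k}(0)$ on the smaller annulus modulo additive constant, with Radon-Nikodym bounds uniform across scales. This is precisely the near-independence principle made rigorous in \cite[Lemma 3.1]{local-metrics}, and once that tool is imported the present lemma follows almost immediately; the remainder is off-the-shelf concentration.
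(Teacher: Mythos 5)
A preliminary point: the paper does not prove this lemma but imports it directly from \cite[Lemma 3.1]{local-metrics}, as the theorem header and the sentence preceding the statement both indicate. There is no internal proof to compare against, so your sketch must stand on its own.

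Your skeleton is the right one --- enlarge the annuli to obtain disjointness and a buffer, decompose via the Markov property into independent zero-boundary pieces plus harmonic corrections, then concentrate --- but the bridging step does not hold as you state it. A pointwise Radon--Nikodym bound $d\mu_{\mathrm{cond}}/d\mu_{\mathrm{marg}} \leq C$ between two Gaussian laws is essentially never available: by Cameron--Martin the mean shift contributes a factor $\exp(\mathrm{linear})$, which is unbounded, and in the conditional GFF decomposition the shift is precisely the harmonic correction $\varphi_k$, a Gaussian and therefore unbounded random variable. The same obstruction makes the per-scale total-variation distance between conditional and marginal laws bounded by $1$ but not by any $\eta < 1$ almost surely; it is only small on the high-probability event that $\varphi_k$ has controlled oscillation, not everywhere. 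Nor can ``tuning $s_1', s_2'$'' rescue this: the constraints $s_1 s_2' \leq s_1' < s_1$ and $s_2 < s_2' < 1$ cap the available buffer width, so whatever quantitative near-independence you extract is a fixed function of $s_1, s_2$, and you cannot push the per-step error to zero. A correct argument must therefore truncate on a high-probability event controlling the oscillation of the centered harmonic correction, derive a per-scale conditional lower bound $\BB P[E_{r_k}\mid\mcl G_k]\geq q$ on that event (using, for instance, an $L^2$ moment bound on the RN derivative plus Cauchy--Schwarz), and finish by stochastic domination rather than an exactly i.i.d.\ coupling. Your Hoeffding step is fine once such a per-scale conditional bound is secured; the missing ingredient is exactly the nontrivial content of \cite[Lemma 3.1]{local-metrics}.
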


Lemma~\ref{lem-annulus-iterate} still applies if we require that $E_{r_k} \in \sigma\left( (h-h_{r_k}(0)) |_{\ol{\BB A}_{s_1 r_k , s_2 r_k}(0)  } \right)$ (i.e., we consider a closed annulus rather than an open annulus). This is an immediate consequence of the definition of the $\sigma$-algebra generated by the restriction of $h$ to a closed set~\eqref{eqn-closed-restrict}. We will use this fact without comment several times in what follows. 

For the proof of Lemma~\ref{lem-main-extra} below, we will need a minor variant of Lemma~\ref{lem-annulus-iterate} where we do not require that the annuli are concentric.

\begin{lem}  \label{lem-annulus-offcenter}
Fix $0 < s_1< s_2 < 1$ and $s_0 \in (0, \min\{s_1 , 1-s_2\})$. Let $\{r_k\}_{k\in\BB N}$ be a decreasing sequence of positive real numbers and let $\{z_k\}_{k\in\BB N}$ be a sequence of points in $\BB C$ such that 
\eqb \label{eqn-offcenter-pt}
r_{k+1} / r_k \leq s_1 -s_0 \quad \text{and} \quad |z_k| \leq s_0 r_k , \quad \forall k \in \BB N .
\eqe 
Let $\{E_{r_k}(z_k) \}_{k\in\BB N}$ be events such that for each $k\in \BB N$, the event $E_{r_k}(z_k)$ is a.s.\ determined by $h|_{\ol{\BB A}_{s_1 r_k , s_2 r_k}(z_k)}$, viewed modulo additive constant. 
For $K\in\BB N$, let $N(K)$ be the number of $k\in [1,K]_{\BB Z}$ for which $E_{r_k}(z_k)$ occurs. 
\begin{enumerate}
\item \label{item-annulus-offcenter} For each $a > 0$ and each $b\in (0,1)$, there exists $p = p(a,b,s_0,s_1,s_2) \in (0,1)$ and $c = c(a,b,s_0,s_1,s_2) > 0$ (independent of the particular choice of $\{r_k\}$, $\{z_k\}$, and $\{E_{r_k}(z_k)\}$) such that if  
\eqb \label{eqn-annulus-offcenter-prob}
\BB P\left[ E_{r_k}(z_k)  \right] \geq p , \quad \forall k\in\BB N  ,
\eqe 
then 
\eqb \label{eqn-annulus-offcenter}
\BB P\left[ N(K)  < b K\right] \leq c e^{-a K} ,\quad\forall K \in \BB N. 
\eqe 
\item \label{item-annulus-offcenter-pos} For each $p\in (0,1)$, there exists $a = a(p,s_0,s_1,s_2) > 0$, $b = b(p,s_0,s_1,s_2) \in (0,1)$, and $c = c(p,s_0,s_1,s_2) > 0$  (independent of the particular choice of $\{r_k\}$, $\{z_k\}$, and $\{E_{r_k}(z_k)\}$) such that if~\eqref{eqn-annulus-offcenter-prob} holds, then~\eqref{eqn-annulus-offcenter} holds. 
\end{enumerate}
\end{lem}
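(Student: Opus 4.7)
My plan is to reduce Lemma~\ref{lem-annulus-offcenter} directly to Lemma~\ref{lem-annulus-iterate}. The idea is that a slightly off-center annulus is contained in a slightly larger concentric annulus, so the off-center setting is a special case of the concentric one after a small adjustment of the parameters $s_1, s_2$.

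The key geometric observation is the following. Fix $k \in \BB N$ and let $w \in \ol{\BB A}_{s_1 r_k, s_2 r_k}(z_k)$, so that $s_1 r_k \leq |w - z_k| \leq s_2 r_k$. By the triangle inequality together with $|z_k| \leq s_0 r_k$, this yields
\[
(s_1 - s_0) r_k \leq |w| \leq (s_2 + s_0) r_k ,
\]
so that $\ol{\BB A}_{s_1 r_k, s_2 r_k}(z_k) \subset \ol{\BB A}_{(s_1 - s_0) r_k,\, (s_2 + s_0) r_k}(0)$. Since $s_0 < \min\{s_1, 1 - s_2\}$, both $s_1' := s_1 - s_0$ and $s_2' := s_2 + s_0$ lie in $(0,1)$ with $0 < s_1' < s_2' < 1$. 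Consequently, the hypothesis that $E_{r_k}(z_k)$ is a.s.\ determined by $h|_{\ol{\BB A}_{s_1 r_k, s_2 r_k}(z_k)}$ viewed modulo additive constant implies, using~\eqref{eqn-closed-restrict}, that $E_{r_k}(z_k)$ is a.s.\ determined by $(h - h_{r_k}(0))|_{\ol{\BB A}_{s_1' r_k,\, s_2' r_k}(0)}$.

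Next I will check the ratio condition required by Lemma~\ref{lem-annulus-iterate} with parameters $s_1', s_2'$ in place of $s_1, s_2$. The assumption~\eqref{eqn-offcenter-pt} gives $r_{k+1}/r_k \leq s_1 - s_0 = s_1'$, which is exactly the condition needed. Therefore Lemma~\ref{lem-annulus-iterate} (in its closed-annulus version, as noted immediately after its statement) applies to the sequence of events $\{E_{r_k}(z_k)\}_{k\in\BB N}$ with parameters $(s_1', s_2')$, yielding both conclusions~\eqref{item-annulus-offcenter} and~\eqref{item-annulus-offcenter-pos}. The constants $p, a, b, c$ produced depend only on $(s_1', s_2') = (s_1 - s_0, s_2 + s_0)$ and the parameters $a, b$ (or $p$), hence only on $s_0, s_1, s_2$ as required.

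There is no real obstacle here: the proof is a short geometric reduction, and the whole point of having stated Lemma~\ref{lem-annulus-iterate} in the flexible form given above (allowing any events measurable w.r.t.\ $h$ on a closed annular set modulo additive constant) is precisely so that this reduction goes through without extra work. The only mild subtlety is ensuring that the extra slack $s_0$ is absorbed both in the radial widths (to get the geometric inclusion) and in the ratio condition (which is why the hypothesis is $s_1 - s_0$ rather than $s_1$).
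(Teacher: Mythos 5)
Your proof is correct and is essentially the same as the paper's: absorb the off-centering $|z_k|\le s_0 r_k$ into the annulus radii to obtain $\ol{\BB A}_{s_1 r_k,s_2 r_k}(z_k)\subset\ol{\BB A}_{(s_1-s_0)r_k,(s_2+s_0)r_k}(0)$, then apply Lemma~\ref{lem-annulus-iterate} with $(s_1-s_0, s_2+s_0)$ in place of $(s_1,s_2)$, noting that the hypothesis $r_{k+1}/r_k\le s_1-s_0$ is exactly the ratio condition needed. The paper's proof is a more terse version of the same reduction.
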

\begin{proof}
Since $|z_k| \leq s_0 r_k$,  
\eqbn
\BB A_{s_1 r_k , s_2 r_k}(z_k) \subset \BB A_{(s_1-s_0)r_k , (s_2+s_0)r_k}(0) .
\eqen
Hence $E_{r_k}(z_k)$ is a.s.\ determined by $h|_{\ol{\BB A}_{(s_1-s_0)r_k , (s_2+s_0)r_k}(0)}$, viewed modulo additive constant. 
Since $0 < s_1 - s_0 < s_2  + s_0  < 1$ and by~\eqref{eqn-offcenter-pt}, we can apply Lemma~\ref{lem-annulus-iterate} with $s_1-s_0$ in place of $s_1$ and $s_2 + s_0$ in place of $s_2$ to obtain the lemma statement.
\end{proof}

We will also need an estimate which comes from the fact that the restrictions of the GFF to small disjoint Euclidean balls are nearly independent. 
See~\cite[Lemma 2.7]{gm-uniqueness} for a proof. 
 
\begin{lem}[\!\!\cite{gm-uniqueness}]  \label{lem-spatial-ind}
Let $h$ be a whole-plane GFF and fix $s > 0$. 
Let $n\in\BB N$ and let $\mcl Z$ be a collection of $\#\mcl Z = n$ points in $\BB C$ such that $|z-w| \geq 2(1+s)$ for each distinct $z,w\in\mcl Z$. 
For $z\in\mcl Z$, let $E_z$ be an event which is determined by $(h - h_{1+s}(z)) |_{B_1(z)}$. 
For each $p , q \in (0,1)$, there exists $n_* = n_*(s,p,q) \in \BB N$ such that if $\BB P[E_z] \geq p$ for each $z\in\mcl Z$, then
\eqbn
\BB P\left[ \bigcup_{z\in\mcl Z} E_z \right] \geq q ,\quad \forall n \geq n_* .
\eqen 
\end{lem}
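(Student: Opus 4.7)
The plan is to apply the Markov decomposition of the whole-plane GFF over the pairwise disjoint balls $B_{1+s}(z)$, $z\in\mcl Z$, in order to make the events $E_z$ conditionally independent, and then upgrade the uniform lower bound $\BB P[E_z]\geq p$ to $\BB P[\bigcup_z E_z]\geq q$ for large $n$ via a second-moment / concentration argument on the sum of the conditional probabilities.

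Concretely, the Markov property of the GFF gives, for each $z$, a decomposition $h|_{B_{1+s}(z)} = \mathring{h}_z + \mathfrak h_z$ where $\mathring{h}_z$ is a zero-boundary GFF on $B_{1+s}(z)$ and $\mathfrak h_z$ is the harmonic extension of $h|_{\bdy B_{1+s}(z)}$; conditional on the exterior field $h|_{\BB C\setminus\bigcup_{w\in\mcl Z}B_{1+s}(w)}$, the $\mathfrak h_z$ are deterministic and the $\mathring{h}_z$ are independent across $z\in\mcl Z$ (since the $B_{1+s}(z)$ are pairwise disjoint). The circle average $h_{1+s}(z)$ is measurable with respect to the exterior, so
\[
(h - h_{1+s}(z))\big|_{B_1(z)} = \mathring{h}_z\big|_{B_1(z)} + \bigl(\mathfrak h_z - h_{1+s}(z)\bigr)\big|_{B_1(z)},
\]
and the events $E_z$ become conditionally independent given the exterior. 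Setting $p_z := \BB P[E_z \mid h|_{\BB C\setminus\bigcup_w B_{1+s}(w)}]$, translation invariance of the whole-plane GFF modulo additive constant gives $\BB E[p_z] = \BB P[E_z] \geq p$ for every $z$.

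With conditional independence in hand, $\BB P[\bigcap_z E_z^c] = \BB E[\prod_z(1-p_z)] \leq \BB E[\exp(-\sum_z p_z)]$, so it suffices to show that $\sum_z p_z\to\infty$ in probability as $n\to\infty$. Since $\BB E[\sum_z p_z]\geq np$, this reduces to the variance bound $\mathrm{Var}(\sum_z p_z)=o(n^2)$. This is where I expect the main obstacle to lie: the $p_z$ are \emph{not} independent, because each $p_z$ is a functional of the centered harmonic piece $\mathfrak h_z - h_{1+s}(z)$ on $B_1(z)$, and these harmonic pieces for different balls are coupled through the long-range covariance of the GFF. The plan is to use the Green's function of the whole-plane GFF to show that each centered harmonic field $\mathfrak h_z - h_{1+s}(z)$ has pointwise variance uniformly bounded on $B_1(z)$ and that the cross-covariance between its values on $B_1(z)$ and that of $\mathfrak h_w - h_{1+s}(w)$ on $B_1(w)$ decays polynomially in $|z-w|$ (which is bounded below by $2(1+s)$); note that the subtraction of the circle averages is essential, since it is what removes the $-\log|z-w|$ piece of the GFF covariance. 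A Gaussian interpolation (or Cameron--Martin shift) bound then transfers this to $|\mathrm{Cov}(p_z,p_w)|\to 0$ as $|z-w|\to\infty$, uniformly in the functional form of $E_z$. Combined with the hard-sphere separation and a double-sum estimate, this yields $\mathrm{Var}(\sum p_z)=o(n^2)$; Chebyshev's inequality then gives $\sum p_z\to\infty$ in probability, and hence $\BB P[\bigcap_z E_z^c]\to 0$, producing the desired threshold $n_*(s,p,q)$.
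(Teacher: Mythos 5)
The reduction is set up correctly: the Markov decomposition over the disjoint balls does make the $E_z$ conditionally independent given the exterior field, $\BB E[p_z]=\BB P[E_z]\geq p$ holds by the tower property (translation invariance plays no role here), and $\BB P[\bigcap_z E_z^c]\leq \BB E[e^{-\sum_z p_z}]$ reduces the problem to showing $\sum_z p_z\to\infty$ in probability. The gap is at the step you yourself flag as the main obstacle. ``Gaussian interpolation'' and ``Cameron--Martin shift'' are not the right tools to pass from decay of the covariance \emph{kernel} of the harmonic fields $\mathfrak g_z$ to decay of $\mathrm{Cov}(p_z,p_w)$ \emph{uniformly over the choice of $E_z$}. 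Gaussian interpolation requires the functionals to be (twice) differentiable with controlled derivatives; here $p_z$ is only a bounded measurable functional of $\mathfrak g_z$, with no structure imposed on the events $E_z$, so there is nothing for the interpolation identity to act on. Cameron--Martin controls the effect of a deterministic mean shift, which is not what is at issue. The correct route is a total-variation bound: $|\mathrm{Cov}(p_z,p_w)|\leq 2\,d_{\mathrm{TV}}\bigl(\mathrm{Law}(\mathfrak g_z,\mathfrak g_w),\,\mathrm{Law}(\mathfrak g_z)\otimes\mathrm{Law}(\mathfrak g_w)\bigr)$, with the Gaussian TV distance in turn controlled (Feldman--Hajek / Hellinger) by the Hilbert--Schmidt norm of $\Sigma_z^{-1/2}C_{z,w}\Sigma_w^{-1/2}$, where $C_{z,w}$ is the cross-covariance operator and $\Sigma_z,\Sigma_w$ the marginal covariance operators.

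Verifying that this HS norm decays in $|z-w|$ is a real computation that the sketch does not carry out, and it is not implied by pointwise decay of the covariance kernel. The harmonic field $\mathfrak g_z$ on $B_1(z)$, arising from harmonic extension across $\BB A_{1,1+s}(z)$, is $C^\infty$ with covariance eigenvalues decaying geometrically, so $\Sigma_z^{-1/2}$ has geometrically \emph{growing} eigenvalues; one must check that the cross-covariance $C_{z,w}$, projected onto those eigenmodes, decays fast enough to compensate before concluding anything about $|\mathrm{Cov}(p_z,p_w)|$. (It does work out, essentially because the leading part of $C_{z,w}$ after subtracting circle averages is concentrated on the low modes, but this is precisely the content that needs to be supplied and is what the cited lemma establishes.) Without this, the claim ``$|\mathrm{Cov}(p_z,p_w)|\to 0$ uniformly in the functional form of $E_z$'' is asserted rather than proved, and the variance bound $\mathrm{Var}(\sum_z p_z)=o(n^2)$ does not follow.
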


\subsection{Basic facts about weak LQG metrics}
\label{sec-prelim-basic}

In this subsection, we will record some facts about our weak LQG metric $D_h$ which are mostly proven elsewhere and which will be used frequently in what follows. Similar results are proven in the subcritical case in~\cite{lqg-metric-estimates,mq-geodesics}.

\begin{remark} \label{remark-c_r}
Many of the estimates in~\cite{pfeffer-supercritical-lqg,dg-confluence} involve ``scaling constants" $\frk c_r$ for $r > 0$. 
It was shown in~\cite[Theorem 1.9]{dg-polylog} that one can take $\frk c_r = r^{\xi Q}$. 
We will use this fact without comment whenever we cite results from~\cite{pfeffer-supercritical-lqg,dg-confluence}. 
\end{remark}

It was shown in~\cite[Lemma 3.1]{pfeffer-supercritical-lqg} that one has the following stronger version of Axiom~\refcoord. 

\begin{lem}[\!\!\cite{pfeffer-supercritical-lqg}] \label{lem-set-tightness}
Let $U\subset \BB C$ be open and let $K_1,K_2\subset U$ be two disjoint, deterministic compact sets (allowed to be singletons).  
The re-scaled internal distances $r^{-1} e^{-\xi h_r(0)} D_h(r K_1,r K_2; r U)$ and their reciporicals as $r$ varies are tight (recall the notation from Definition~\ref{def-metric-properties}). 
\end{lem}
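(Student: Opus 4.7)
The statement has two parts: tightness of $Y_r := r^{-\xi Q} e^{-\xi h_r(0)} D_h(rK_1, rK_2; rU)$ and of $Y_r^{-1}$ as $r$ varies over $(0,\infty)$ (the displayed $r^{-1}$ should be read as $r^{-\xi Q}$, in accordance with Remark~\ref{remark-c_r} and Axiom~\refcoord). The plan is to reduce both directions to Axiom~\refcoord applied to finitely many Euclidean annuli, after handling off-center annuli via translation invariance (Axiom~\reftranslate) and Weyl scaling (Axiom~\ref{item-metric-f}).

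For the upper bound, fix (independently of $r$) a polygonal arc in $U$ from a point of $K_1$ to a point of $K_2$ and cover it together with $K_1$ and $K_2$ by finitely many concentric pairs $B_{\rho_i/2}(z_i) \subset \ol B_{\rho_i}(z_i) \subset U$, $i=1,\dots,N$, chosen so that the half-balls overlap consecutively and cover $K_1 \cup K_2$. Writing $A_i := \BB A_{\rho_i/2, \rho_i}(z_i)$, one builds a $D_h$-rectifiable path in $rU$ joining $rK_1$ and $rK_2$ out of arcs of around-loops in the annuli $rA_i$ together with across-crossings to transition between adjacent loops, using the elementary diameter estimate $\op{diam}(\partial B_a(z); D_h(\cdot,\cdot;A)) \le 2 D_h(\text{across }A) + D_h(\text{around }A)$ for $A = \BB A_{a,b}(z)$. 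This yields a purely deterministic bound
\eqbn
D_h(rK_1, rK_2; rU) \;\le\; C\sum_{i=1}^N \bigl[D_h(\text{around }rA_i) + D_h(\text{across }rA_i)\bigr].
\eqen
For the lower bound, since $K_1$ is compact in $U$ and disjoint from $K_2$, pick $\delta > 0$ with $B_\delta(z) \subset U \setminus K_2$ for every $z \in K_1$, and cover $K_1$ by finitely many balls $B_{\delta/4}(z_j)$ with $z_j \in K_1$, $j=1,\dots,M$. Any path in $rU$ from $rK_1$ to $rK_2$ begins inside some $rB_{\delta/4}(z_j) \subset rB_{\delta/2}(z_j)$ and must eventually leave $rB_\delta(z_j)$, so contains a sub-arc crossing the annulus $r\BB A_{\delta/2, \delta}(z_j)$, whence
\eqbn
D_h(rK_1, rK_2; rU) \;\ge\; \min_j D_h\bigl(\text{across }r\BB A_{\delta/2, \delta}(z_j)\bigr).
\eqen

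Each summand on the right of the two displays is tight in $r$ (with tight reciprocal) once Axiom~\refcoord is available for off-center annuli. To obtain the latter, for fixed $z\in\BB C$ and an annulus $\tilde A$ centered at the origin, translation invariance combined with Weyl scaling applied to $\tilde h := h(\cdot + rz) - h_1(-rz)$---which is a whole-plane GFF in our normalization---shows that Axiom~\refcoord for $D_{\tilde h}$ is equivalent to tightness of $r^{-\xi Q} e^{-\xi h_r(rz)} D_h(\text{across }r(\tilde A + rz))$, and similarly for ``around''. Finally, to replace $h_r(rz)$ by $h_r(0)$, one uses scale invariance of the whole-plane GFF modulo constant, which gives the identity in law $h_r(rz) - h_r(0) \eqD h_1(z)$ for every $r>0$; this is a centered Gaussian with variance depending only on $z$, so $e^{\pm \xi (h_r(rz)-h_r(0))}$ is tight in $r$, and combining with the two geometric bounds yields tightness of $Y_r$ and $Y_r^{-1}$.

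The main technical obstacle is the path-construction in the upper bound: one must verify carefully that around-loops of adjacent annuli can be stitched into a single $D_h$-rectifiable curve in $rU$ (in particular one that avoids singular points of $h$), using the diameter estimate above at each junction. The conceptual key step is the scale-invariance identity $h_r(rz) - h_r(0) \eqD h_1(z)$, which bypasses any explicit covariance computation when converting normalizations between $e^{-\xi h_r(rz_i)}$ and $e^{-\xi h_r(0)}$.
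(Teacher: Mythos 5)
Your lower bound is correct, and so is the reduction of off-center annuli to Axiom~\refcoord\ via translation invariance, Weyl scaling, and the identity $h_r(rz)-h_r(0)\eqD h_1(z)$ (modulo a sign slip: the shifted field should be $h(\cdot+rz)-h_1(rz)$, not $h(\cdot+rz)-h_1(-rz)$, so that its average over $\bdy B_1(0)$ is zero). The genuine gap is in the upper bound, and it occurs precisely in the case the lemma explicitly allows: $K_1$ (or $K_2$) a singleton, or more generally contained in the interior of a single inner ball $B_{\rho_i/2}(z_i)$. Your path is assembled from arcs of around-loops and across-crossings inside $\bigcup_i r\ol{A}_i$ with $A_i = \BB A_{\rho_i/2,\rho_i}(z_i)$, which is disjoint from the interiors of the inner balls; since the inner balls are required to cover $K_1\cup K_2$, a singleton $K_1=\{z_1\}$ lies strictly inside one of them and the constructed path never reaches $rz_1$. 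The $D_h$-distance from $rz_1$ to a surrounding around-loop is not controlled by around/across quantities over finitely many annuli, so the displayed inequality $D_h(rK_1,rK_2;rU)\le C\sum_i[\cdots]$ is not established in this case.

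Closing the gap requires a tail estimate bounding the distance from a fixed center point to a surrounding circle, and this requires summing around-loop lengths over a countably infinite nested family of annuli shrinking to the point, together with a moment bound to make the series summable. That is exactly the mechanism in Lemma~\ref{lem-ball-bdy-moment} of the present paper; prepending and appending two such radial tails (one centered at a point of $K_1$, one at a point of $K_2$) to your finitely-many-annuli chain, and accounting for the ratio $e^{\xi(h_{\rho r}(\cdot)-h_r(0))}$ via the same Gaussian-tightness argument you already use, would close the upper bound. For context, the paper does not reprove the lemma; it is quoted directly from \cite[Lemma~3.1]{pfeffer-supercritical-lqg}, so there is no in-paper proof to compare against.
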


The following proposition, which is~\cite[Proposition 1.8]{pfeffer-supercritical-lqg}, is a more quantitative version of Lemma~\ref{lem-set-tightness} in the case when $K_1,K_2$ are connected and are not singletons. 

\begin{lem}[\!\!\cite{pfeffer-supercritical-lqg}] \label{lem-two-set-dist}
Let $U \subset \BB C$ be an open set (possibly all of $\BB C$) and let $K_1,K_2\subset U$ be two disjoint, deterministic, connected, compact sets which are not singletons.  
For each $r  >0$, it holds with superpolynomially high probability as $R\rta \infty$, at a rate which is uniform in the choice of $r$, that 
\eqbn
 R^{-1}  r^{\xi Q} e^{\xi h_r(0)} \leq D_h(r K_1, r K_2 ; r U) \leq R r^{\xi Q} e^{\xi h_r(0)} .  
\eqen
\end{lem}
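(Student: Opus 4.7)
The plan is to upgrade the tightness provided by Lemma~\ref{lem-set-tightness} to superpolynomial tail bounds by exploiting the near-independence of the GFF across disjoint concentric annuli (Lemma~\ref{lem-annulus-iterate}) or disjoint Euclidean regions (Lemma~\ref{lem-spatial-ind}). Axiom~\refcoord$\,$ already ensures that the base tightness is uniform in $r$, so the uniformity claim will follow essentially automatically from any scale-invariant construction, and it suffices to carry out the argument at a convenient reference scale and translate via the scale invariance of $h$ modulo additive constant.

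For the upper bound, I would exploit that $K_1$ and $K_2$ are connected and non-singleton. This lets me choose points $z_i \in K_i$ and a smooth simple curve $\gamma \subset U$ from $z_1$ to $z_2$ at positive Euclidean distance from $\partial U$. I would then construct many near-independent ``trial paths'' from a neighborhood of $rz_1$ to a neighborhood of $rz_2$: cover $\gamma$ by a chain of Euclidean balls, and for each scale $\rho^k$ along the chain consider the event that there is a short crossing with $D_h$-length controlled by the tightness bound of Lemma~\ref{lem-set-tightness}. By Lemma~\ref{lem-spatial-ind} (together with Lemma~\ref{lem-annulus-iterate} to gain independence across scales), with probability at least $1 - (1 - p_0)^N$ one such system of crossings succeeds simultaneously, producing a full path from $rz_1$ to $rz_2$ (hence from $rK_1$ to $rK_2$) whose $D_h$-length is at most $O(R) \cdot r^{\xi Q} e^{\xi h_r(0)}$. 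Taking $N = \Theta(\log R)$ yields the desired $O(R^{-\mu})$ upper tail bound for every $\mu > 0$.

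For the lower bound I would use that any curve from $rK_1$ to $rK_2$ in $rU$ must cross each member of a family of annular regions separating $rK_1$ from $rK_2$. Concretely, since $d(K_1,K_2) > 0$, one can construct disjoint concentric ``rings'' surrounding $rK_1$ inside $rU$, lying strictly between $rK_1$ and $rK_2$, at several geometrically spaced radii. The distance across each such ring is tight after the appropriate normalization (Lemma~\ref{lem-set-tightness}), and by Lemma~\ref{lem-annulus-iterate} the events that these normalized distances exceed a fixed positive threshold are asymptotically independent across scales. Since the total distance from $rK_1$ to $rK_2$ dominates the sum (and in particular the maximum) of the individual crossing distances, combining $\Theta(\log R)$ nearly independent attempts gives the superpolynomial lower bound. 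The main obstacle is that the natural normalization at scale $\rho^k r$ involves $(\rho^k r)^{\xi Q} e^{\xi h_{\rho^k r}(\cdot)}$ rather than the target $r^{\xi Q} e^{\xi h_r(0)}$, so one must carefully balance the number of scales against the Gaussian fluctuations of $h_{\rho^k r}(\cdot) - h_r(0)$ (of variance $\asymp k\log\rho^{-1}$) to retain a uniformly positive lower bound in each scale. This is where the fact that the scaling constants may be taken to be exactly $r^{\xi Q}$ (see Remark~\ref{remark-c_r}) plays an essential role: without it, one would not know how to match up the different scales.
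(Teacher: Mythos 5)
The paper does not prove this lemma at all; it is cited verbatim from \cite[Proposition~1.8]{pfeffer-supercritical-lqg} (see the sentence immediately preceding the statement). Your write-up is therefore an attempt to reprove an external result, and it contains a genuine gap that would make the conclusion weaker than what is claimed.

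The central problem is the tail rate. You write that ``with probability at least $1-(1-p_0)^N$'' the good system of crossings exists, and that ``taking $N=\Theta(\log R)$ yields the desired $O(R^{-\mu})$ upper tail bound for every $\mu>0$.'' This is not so: with $p_0<1$ \emph{fixed} (as is forced by tightness across scales, which only gives probabilities bounded away from $1$) and $N\asymp\log R$ with a fixed implicit constant, you get $(1-p_0)^{c\log R}=R^{-c'}$ for a \emph{specific} $c'=c'(p_0,c)$, i.e.\ polynomially high probability, not superpolynomially high probability. To make the exponent arbitrarily large you must either take $N/\log R\to\infty$ or let the one-scale success probability $p_0$ tend to $1$, and you address neither. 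Lemma~\ref{lem-annulus-iterate} does allow the exponent $a$ to be taken arbitrarily large, but only at the cost of requiring $\BB P[E_{r_k}]\geq p(a,b,s_1,s_2)$ with $p\to 1$ as $a\to\infty$; achieving this requires loosening the thresholds in the events, which in turn increases the $R$-scaling you must tolerate, and this trade-off is never carried out. Worse, for the lower bound you correctly identify the obstacle — that the natural normalization at scale $\rho^k r$ involves $(\rho^k r)^{\xi Q}e^{\xi h_{\rho^k r}(\cdot)}$ while the target is $r^{\xi Q}e^{\xi h_r(0)}$ — but you never resolve it. In fact this obstacle \emph{caps} the number of usable concentric scales at $K\lesssim \log R/\log\rho^{-1}$ (beyond that, the deterministic factor $\rho^{k\xi Q}$ already puts the lower bound below $R^{-1}$ irrespective of what $h$ does), so ``just take more scales'' is not available. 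The cited proof in \cite{pfeffer-supercritical-lqg} instead establishes that $r^{-\xi Q}e^{-\xi h_r(0)}D_h(rK_1,rK_2;rU)$ and its reciprocal have \emph{finite moments of all orders}, uniformly in $r$, via an iterative multiplicative argument in the spirit of Kahane's GMC moment bounds, and then applies Markov's inequality with $p=p(\mu)$; the $\mu$-dependence lives in the moment order, not in a number of near-independent trials. That is a genuinely different mechanism from the one you propose, and the one you propose does not reach the claimed superpolynomial rate as written.

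A secondary issue: in the upper bound you assert the constructed path has $D_h$-length $O(R)\cdot r^{\xi Q}e^{\xi h_r(0)}$, but you give no accounting of how the $O(R)$ factor emerges from summing the lengths of the crossings along the chain. Those lengths carry independent lognormal factors $e^{\xi(h_{\rho^k r}(z_k)-h_r(0))}$, and controlling their sum so that both the probability estimate and the length estimate hold simultaneously for an $R$-independent chain requires exactly the kind of careful bookkeeping that the moment method is designed to automate.
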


Suppose that $A\subset\BB C$ is a deterministic bounded open set which has the topology of a Euclidean annulus and whose inner and outer boundaries are not singletons. 
Recall the notation for $D_h$-distance across and around Euclidean annuli from Definition~\ref{def-around-across}.
It is easy to see from Lemma~\ref{lem-two-set-dist} that with superpolynomially high probability as $R\rta\infty$, uniformly in the choice of $r$,  
\eqbn
 R^{-1}  r^{\xi Q} e^{\xi h_r(0)} \leq D_h(\text{around $A$}) \leq R r^{\xi Q} e^{\xi h_r(0)} ,  
\eqen
and the same is true for $D_h(\text{across $A$})$. 

Recall from Lemma~\ref{lem-geodesics} that a.s.\ any two non-singular points $z,w$ for $D_h$ can be joined by a $D_h$-geodesic, i.e., a path of $D_h$-length $D_h(z,w)$. 
In the subcritical case, it was shown in~\cite[Theorem 1.2]{mq-geodesics} that for a \emph{fixed} choice of $z$ and $w$, a.s.\ this geodesic is unique (see also~\cite[Lemma 4.2]{ddg-metric-survey} for a simplified proof). The same proof also works in the critical and supercritical cases. 
We will need a slightly more general statement than the uniqueness of geodesics between fixed points. 
For two sets $K_1,K_2\subset \BB C$, a \emph{$D_h$-geodesic} from $K_1$ to $K_2$ is a path from a point of $K_1$ to a point of $K_2$ such that
\eqb \label{eqn-set-geo}
\op{len}(P ; D_h) = D_h(K_1,K_2) := \inf_{z\in K_1 , w\in K_2} D_h(z,w) .
\eqe

\begin{lem} \label{lem-geo-unique}
Let $K_1,K_2\subset \BB C$ be deterministic disjoint Euclidean-compact sets.
Almost surely, there is a unique $D_h$-geodesic from $K_1$ to $K_2$.
\end{lem}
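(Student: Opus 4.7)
The plan is to extend the uniqueness of $D_h$-geodesics between fixed deterministic points (the analog of \cite[Theorem 1.2]{mq-geodesics} in the critical/supercritical regimes, whose validity is noted just before the lemma statement) to disjoint Euclidean-compact endpoint sets, using a countable covering combined with a local absolute continuity / Cameron--Martin perturbation argument for the GFF.

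I would first establish existence. Lemma~\ref{lem-two-set-dist} gives $D_h(K_1,K_2) \in (0,\infty)$ almost surely, so any minimizing sequence $(z_n,w_n) \in K_1 \times K_2$ with $D_h(z_n,w_n) \to D_h(K_1,K_2)$ admits a Euclidean subsequential limit $(z^*,w^*) \in K_1 \times K_2$ by compactness. The lower semicontinuity of $D_h$ combined with the definition of $D_h(K_1,K_2)$ as an infimum forces $D_h(z^*,w^*) = D_h(K_1,K_2) < \infty$. Since $z^* \ne w^*$ (as $K_1 \cap K_2 = \emptyset$), equation~\eqref{def-singular} rules out either point being singular, and Lemma~\ref{lem-geodesics} then produces a $D_h$-geodesic joining them, which is a $D_h$-geodesic from $K_1$ to $K_2$.

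For uniqueness I would argue by contradiction: assume that with positive probability there are two distinct $D_h$-geodesics $P_1, P_2$ from $K_1$ to $K_2$, with endpoint pairs $(z_1,w_1), (z_2,w_2) \in K_1 \times K_2$. Cover $K_1 \times K_2$ by countably many products $\ol{B}_\epsilon(q) \times \ol{B}_\epsilon(r)$ with $q,r$ ranging over a countable dense subset of $\BB C$ and $\epsilon$ over positive rationals. By a union bound, it suffices to rule out (i) the existence of two distinct minimizing endpoint pairs lying in two such products with disjoint closures, and (ii) the existence of two distinct $D_h$-geodesics with a common (random) endpoint pair lying in a single such product. In each case, perturb $h$ by $t\varphi$ for a smooth compactly supported bump function $\varphi$, apply Weyl scaling (Axiom~\ref{item-metric-f}) to track the effect on the relevant $D_h$-distances, and use a Fubini argument in $t$ to show that the problematic equality persists only on a set of $t$-measure zero; Cameron--Martin absolute continuity then forces the event to have probability zero at $t=0$. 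In case (ii) the residual statement, after this reduction, is precisely the fixed-endpoints uniqueness cited above.

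The main obstacle is the randomness of the optimal endpoints $(z^*,w^*) \in K_1 \times K_2$: the fixed-point uniqueness of \cite{mq-geodesics} applies only to deterministic pairs, and a naive countable-intersection argument over a dense subset of $K_1 \times K_2$ does not cover the random pair $(z^*,w^*)$. The covering/Cameron--Martin strategy above is designed to bridge this gap, converting the random-endpoint question into one about deterministic pairs to which the fixed-point uniqueness can be directly applied.
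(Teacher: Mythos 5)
Your existence argument is exactly the paper's: compactness, lower semicontinuity, and Lemma~\ref{lem-geodesics}. (One small clean-up: singular points cannot be involved because $D_h(z^*,w^*) = D_h(K_1,K_2) < \infty$ directly, so the appeal to \eqref{def-singular} is not really needed; finiteness already rules them out.)

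For uniqueness, the general machinery you invoke — perturbation by a bump function, Fubini in the perturbation parameter, Cameron--Martin absolute continuity — is the right one, and it is the same machinery behind the paper's citation to \cite[Section~3]{mq-geodesics} and \cite[Lemma~4.2]{ddg-metric-survey}. However, your case decomposition introduces a gap that you flag but do not close. The problem is case~(ii): you say the residual statement ``is precisely the fixed-endpoints uniqueness cited above,'' but the endpoints in case~(ii) remain random — constraining them to a small product $\ol B_\epsilon(q)\times\ol B_\epsilon(r)$ does not make them deterministic, so the fixed-endpoint result of \cite{mq-geodesics} does not apply. You correctly identify this as ``the main obstacle,'' but the covering step does nothing to remove it, and no explicit resolution is given.

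The reason the paper does not need any case split is that the perturbation argument never cares about the endpoints of the two geodesics. If $P \ne \wt P$ are two $D_h$-geodesics from $K_1$ to $K_2$, they must differ at some time strictly between $0$ and $\operatorname{len}(P;D_h)$ (they agree on the endpoints' hitting times, and a geodesic from $K_1$ to $K_2$ touches $K_1\cup K_2$ only at its two endpoints, else one could shorten it). That interior point of disagreement lies at positive distance from $K_1\cup K_2$, so one can choose nested open sets $V\subset\ol V\subset U$, at positive distance from $K_1\cup K_2$ and with rational data, such that $P$ avoids $U$ and $\wt P$ enters $V$. Taking a bump function $\phi$ supported in $U$ and $\equiv 1$ near $\ol V$, one shows that the event ``there exist such a $P$ avoiding $U$ and a $\wt P$ entering $V$'' can occur for at most one value of $x$ when $h$ is replaced by $h+x\phi$ (because replacing $x$ by $y>x$ leaves $D_{h+y\phi}(K_1,K_2)=D_{h+x\phi}(K_1,K_2)$ via the $U$-avoiding geodesic, while any $V$-entering path strictly lengthens). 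Fubini and absolute continuity then give probability zero, uniformly over the countably many rational choices of $(U,V)$. Reformulating your proof along these lines removes the need for the (i)/(ii) split and, more importantly, removes the unresolved appeal to fixed-endpoint uniqueness for random endpoints.
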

\begin{proof}
For existence, choose sequences of points $u_n \in K_1$ and $v_n \in K_2$ such that $\lim_{n\rta\infty} D_h(u_n,v_n) = D_h(K_1,K_2)$. 
Since $K_1$ and $K_2$ are Euclidean-compact, after possibly passing to a subsequence we can find $u\in K_1$ and $v\in K_2$ such that $|u_n - u| \rta 0$ and $|v_n-v| \rta 0$.
By the lower semicontinuity of $D_h$,
\eqbn
 D_h(u,v) \leq \liminf_{n\rta\infty} D_h(u_n,v_n) = D_h(K_1,K_2) .
\eqen
Hence $D_h(u,v) = D_h(K_1,K_2)$ and a $D_h$-geodesic from $u$ to $v$ (which exists by Lemma~\ref{lem-geodesics}) is also a $D_h$-geodesic from $K_1$ to $K_2$.

The uniqueness of the $D_h$-geodesic from $K_1$ to $K_2$ follows from the same argument as in the case when $K_1$ and $K_2$ are singletons, see~\cite[Section 3]{mq-geodesics} or~\cite[Lemma 4.2]{ddg-metric-survey}.
\end{proof}

\subsection{Estimates for distances in disks and annuli}
\label{sec-prelim-dist}

In this subsection, we will prove some basic estimates for $D_h$ which are straightforward consequences of the concentration bounds for LQG distances established in~\cite{pfeffer-supercritical-lqg}. 
We begin with a uniform comparison of distances around and across Euclidean annuli with different center points and radii.

\begin{lem} \label{lem-annulus-union}
Fix $\zeta >0$. 
Let $U\subset \BB C$ be a bounded open set and let $b > a > 0$ and $d > c > 0$. 
For each $\BB r > 0$, it holds with superpolynomially high probability as $\delta_0 \rta 0$ (at a rate which depends on $\zeta,U,a,b,c,d$ and the law of $D_h$, but is uniform in $\BB r$) that 
\eqb \label{eqn-annulus-union} 
D_h\left(\text{around $\BB A_{a \delta \BB r , b \delta \BB r}(z)$}\right) 
\leq \delta^{-\zeta} D_h\left(\text{across $\BB A_{c \delta \BB r , d \delta \BB r}(z)$}\right) ,\quad \forall z \in\BB r U ,\quad \forall \delta \in (0,\delta_0] .
\eqe
\end{lem}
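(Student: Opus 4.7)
The plan is to establish a pointwise version of the bound and then to extend it to all $(z, \delta)$ via a union bound over a discretization together with a set-inclusion monotonicity argument for annular distances. For the pointwise step, fix $z \in \BB C$ and $\rho > 0$, and use Axiom~\reftranslate$\,$ together with Weyl scaling (Axiom~\ref{item-metric-f}) and the stationarity of the law of the whole-plane GFF modulo additive constant to reduce the distribution of $e^{-\xi h_\rho(z)}$ times the around- and global-distances at $z$ to the corresponding quantities at the origin. Lemma~\ref{lem-two-set-dist} (together with its analog for around-distances noted just after it) then shows that for any $R > 1$, with probability at least $1 - O(R^{-\mu})$ for every $\mu > 0$, uniformly in $\BB r, z, \rho$,
\begin{equation*}
D_h\bigl(\text{around } \BB A_{a\rho, b\rho}(z)\bigr) \leq R^2 \, D_h\bigl(\bdy B_{c\rho}(z), \bdy B_{d\rho}(z); \BB C\bigr).
\end{equation*}

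Next, fix a small $\eta > 0$ depending on $\min(b - a, d - c)$ and auxiliary constants $a < a'' < b'' < b$ and $c < c'' < d'' < d$ with each gap comparable to $\eta$. Consider the scales $\delta' = (1+\eta)^{-k}$ with $\delta' \leq \delta_0$, and for each such $\delta'$ an $\eta \delta' \BB r$-net $N_{\delta'}$ of $\BB r U$ of cardinality $O_U(\eta^{-2} \delta'^{-2})$. Applying the pointwise step at each $(z', \delta' \BB r)$ for $z' \in N_{\delta'}$ with $R = \delta'^{-\zeta/3}$ and the shifted constants $a'', b'', c'', d''$ in place of $a, b, c, d$, and summing the failure probabilities over $k$ and $z' \in N_{\delta'}$, a geometric series computation gives total failure probability $O(\delta_0^A)$ for every $A > 0$. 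So with superpolynomially high probability in $\delta_0^{-1}$, the discrete pointwise bound with constants $a'', b'', c'', d''$ holds simultaneously at every $(z', \delta')$.

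For generic $(z, \delta) \in \BB r U \times (0, \delta_0]$, pick $\delta' = (1+\eta)^{-k-1}$ with $(1+\eta)^{-k-1} \leq \delta < (1+\eta)^{-k}$ and $z' \in N_{\delta'}$ with $|z - z'| \leq \eta \delta' \BB r$. For $\eta$ sufficiently small (depending on $a, b, c, d$), the Euclidean inclusions $B_{a \delta \BB r}(z) \subset B_{a'' \delta' \BB r}(z')$ and $B_{b'' \delta' \BB r}(z') \subset B_{b \delta \BB r}(z)$ hold, so any loop around $\BB A_{a''\delta'\BB r, b''\delta'\BB r}(z')$ inside that annulus is also a loop around $\BB A_{a\delta\BB r, b\delta\BB r}(z)$, which yields $D_h(\text{around generic}) \leq D_h(\text{around discrete})$. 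Symmetrically, $\ol{B_{c \delta \BB r}(z)} \subset B_{c'' \delta' \BB r}(z')$ and $\ol{B_{d'' \delta' \BB r}(z')} \subset B_{d \delta \BB r}(z)$ force any path across $\BB A_{c\delta\BB r, d\delta\BB r}(z)$ to contain a sub-path joining $\bdy B_{c''\delta'\BB r}(z')$ and $\bdy B_{d''\delta'\BB r}(z')$, so $D_h(\text{across generic}) \geq D_h(\bdy B_{c''\delta'\BB r}(z'), \bdy B_{d''\delta'\BB r}(z'); \BB C)$. Chaining these inequalities with the discrete pointwise estimate, and absorbing the bounded ratio $\delta/\delta' \in [1, 1 + \eta]$ into the $\zeta$-exponent, gives the claimed bound with effective exponent $2\zeta/3 < \zeta$.

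The main technical obstacle is the bookkeeping in the extension step: calibrating $\eta$ and the shifted constants $a'', b'', c'', d''$ so that all of the Euclidean inclusions above hold simultaneously for every generic $(z, \delta)$. The probabilistic input is entirely packaged in Lemma~\ref{lem-two-set-dist}, and no new estimates beyond those already in the preliminary section are required.
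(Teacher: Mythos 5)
Your proposal is correct and takes essentially the same approach as the paper: a pointwise estimate from Lemma~\ref{lem-two-set-dist}, a union bound over a discretization of $(z,\delta)$, and a monotonicity step based on nested-annulus inclusions with shifted intermediate constants. The minor structural differences (you discretize $z$ and $\delta$ jointly rather than in two separate steps, and you combine the around/across estimates into a single ratio $R^2$ rather than bounding each separately against $\BB r^{\xi Q}e^{\xi h_{\delta\BB r}(z)}$ as the paper does) do not change the substance of the argument.
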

\begin{proof}
Basically, this follows from Lemma~\ref{lem-two-set-dist} and a union bound. A little care is needed to discretize things so that we only have to take a union bound over polynomially many events.

Fix $a_1,a_2 , b_1,b_2 > 0$ and $c_1,c_2 ,d_1,d_2>  0$ such that 
\eqbn
 a < a_2 < a_1 < b_1 < b_2 < b \quad \text{and} \quad  c < c_2 < c_1 < d_1 < d_2 < d .
\eqen
By Lemma~\ref{lem-two-set-dist}, for each $z\in\BB C$ it holds with superpolynomially high probability as $\delta \rta 0$ (at a rate depending only on $\zeta,a_1,b_1,c_1,d_1,$ and the law of $D_h$) that
\allb \label{eqn-annulus-union-start}
D_h\left(\text{around $\BB A_{ a_1  \delta \BB r , b_1 \delta \BB r}(z)$}\right) 
&\leq \delta^{\xi Q - \zeta/2} \BB r^{\xi Q} e^{\xi h_{\delta \BB r}(z)} \quad \text{and} \notag\\
D_h\left(\text{across $\BB A_{ c_1 \delta \BB r , d_1 \delta \BB r}(z)$}\right) 
&\geq \delta^{\xi Q + \zeta/2} \BB r^{\xi Q} e^{\xi h_{\delta \BB r}(z)}  .
\alle
Let $s > 0$ be much smaller than $\min\{ a_1-a_2 , b_2 -b_1  , c_1-c_d , d_2-d_1  \}$. By a union bound, it holds with superpolynomially high probability as $\delta \rta 0$ that the bound~\eqref{eqn-annulus-union-start} holds for all $z \in (s\delta\BB r \BB Z^2) \cap B_{\BB r}(\BB r U)$. 

For each $z\in \BB r U$, there exists $z' \in (s\delta\BB r \BB Z^2) \cap B_{\BB r}(\BB r U)$ such that 
\eqbn
\BB A_{a_1 \delta \BB r , b_1 \delta\BB r}(z') \subset \BB A_{a_2\delta\BB r , b_2\delta\BB r}(z) 
\quad \text{and} \quad
\BB A_{c_1 \delta \BB r , d_1 \delta\BB r}(z') \subset \BB A_{c_2\delta\BB r , d_2\delta\BB r}(z) 
\eqen
For this choice of $z'$,  
\alb
D_h\left(\text{around $\BB A_{a_2 \delta \BB r , b_2 \delta \BB r}(z)$}\right) 
&\leq D_h\left(\text{around $\BB A_{a_1 \delta \BB r , b_1 \delta \BB r}(z')$}\right) 
\quad \text{and} \notag\\
 D_h\left(\text{across $\BB A_{c_2 \delta \BB r , d_2 \delta \BB r}(z)$}\right) 
&\geq D_h\left(\text{across $\BB A_{c_1 \delta \BB r , d_1 \delta \BB r}(z')$}\right)  .
\ale
By~\eqref{eqn-annulus-union-start} with $z'$ in place of $z$, we infer that with superpolynomially high probability as $\delta \rta 0$,
\eqb \label{eqn-annulus-union-onescale}
D_h\left(\text{around $\BB A_{a_2 \delta \BB r , b_2 \delta \BB r}(z)$}\right) 
\leq \delta^{-\zeta} D_h\left(\text{across $\BB A_{c_2 \delta \BB r , c_2 \delta \BB r}(z)$}\right) ,\quad \forall z \in\BB r U. 
\eqe

To upgrade to an estimate which holds for all $\delta \in (0,\delta_0]$ simultaneously, let 
\eqbn
q \in \left( 1 , \left( \min\{  a_2 /a , b/b_2  , c_2/c,d/d_2 \}\right)^{1/100} \right) .
\eqen 
By a union bound over integer powers of $q$, we infer that with superpolynomially high probability as $\delta_0\rta 0$, the estimate~\eqref{eqn-annulus-union-onescale} holds
for all $\delta \in (0,\delta_0] \cap \{q^{-k} : k\in\BB N\}$. By our choice of $q$, for each $\delta \in (0,\delta_0]$, there exists $k \in \BB N$ such that $q^{-k} \in (0,\delta_0]$ and for each $z\in\BB C$,
\eqbn
\BB A_{a_2 q^{-k} \BB r , b_2 q^{-k} \BB r}(z) \subset \BB A_{a  \delta \BB r , b  \delta \BB r}(z) 
\quad \text{and} \quad 
\BB A_{c_2 q^{-k} \BB r , d_2 q^{-k} \BB r}(z) \subset \BB A_{c \delta \BB r , d \delta \BB r}(z) .
\eqen
Hence~\eqref{eqn-annulus-union-onescale} for $\delta$ follows from~\eqref{eqn-annulus-union-onescale} with $q^{-k}$ in place of $\delta$.
\end{proof}

Our next estimate gives a moment bound for the LQG distance from the center point of a closed disk to a point on its boundary, along paths which are contained in the disk.

\begin{lem} \label{lem-ball-bdy-moment}
For each $p \in (0,2Q/\xi)$, there exists $C_p  > 0$, depending only on $p$ and the law of $D_h$, such that
\eqb \label{eqn-ball-bdy-moment}
\BB E\left[ \left(r^{-\xi Q} e^{-\xi h_r(0)} D_h\left(w , 0 ; \ol B_r(0) \right)  \right)^p \right] \leq C_p ,\quad\forall w \in \bdy B_r(0) .
\eqe
\end{lem}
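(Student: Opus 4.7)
The plan is to bound $D_h(w,0;\ol B_r(0))$ by an explicit dyadic chaining construction, then control the resulting series by scale-by-scale moment estimates combined with standard bounds on the circle-average process.

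First I would set $r_k := r/2^k$ and $A_k := \ol{\BB A}_{r_{k+1},r_k}(0)$ for $k\geq 0$. For each $k$, choose (up to an additive error $2^{-k}\varepsilon$) a near-minimal across-path $\alpha_k$ in $A_k$ from $\bdy B_{r_k}(0)$ to $\bdy B_{r_{k+1}}(0)$ and a near-minimal around-loop $\beta_k$ in $A_k$ separating those two boundary circles. Any across-path meets any around-loop of the same annulus by planar topology, and (after a mild overlap trick, e.g.\ working with slightly thicker annuli $\ol{\BB A}_{r_{k+2}, r_k}(0)$ so that two consecutive across-paths share a common intermediate around-loop which they both meet) these pieces concatenate into a single Euclidean-continuous path from $w$ whose Euclidean distance to $0$ tends to zero. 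Declaring its terminal value to be $0$ yields a valid path from $w$ to $0$ contained in $\ol B_r(0)$, of $D_h$-length at most
\[
\sum_{k\geq 0}\bigl[D_h(\text{across } A_k) + D_h(\text{around } A_k)\bigr] + \varepsilon;
\]
letting $\varepsilon \downarrow 0$ gives the corresponding upper bound on $D_h(w,0;\ol B_r(0))$.

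Next, for each $k$ I would apply Lemma~\ref{lem-two-set-dist} (together with the observation stated just after it, which extends the bound to the around and across quantities) at Euclidean scale $r_k$ to conclude that
\[
Z_k := r_k^{-\xi Q} e^{-\xi h_{r_k}(0)}\bigl[D_h(\text{across } A_k) + D_h(\text{around } A_k)\bigr]
\]
has moments of every order, with a bound uniform in $k$ and $r$. Using the circle-average identity $h_{r_k}(0) - h_r(0) \eqD B_{t_k}$ for a standard Brownian motion $B$ at time $t_k := k\log 2$, we rewrite
\[
r^{-\xi Q} e^{-\xi h_r(0)} D_h(w,0;\ol B_r(0)) \;\leq\; \sum_{k\geq 0} 2^{-k\xi Q} e^{\xi B_{t_k}} Z_k.
\]

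Finally, to estimate the $L^p$-norm of this series I would combine Minkowski's inequality (for $p\geq 1$) or the subadditivity $(a+b)^p \leq a^p+b^p$ (for $p\in(0,1)$) with H\"older's inequality inside each summand, choosing $p_1 \in (p, 2Q/\xi)$ and $p_2$ with $1/p = 1/p_1 + 1/p_2$ so as to separate the Gaussian factor from $Z_k$. Since $\|e^{\xi B_{t_k}}\|_{p_1} = 2^{p_1\xi^2 k/2}$ and $\|Z_k\|_{p_2}$ is bounded uniformly in $k$, this yields
\[
\bigl\| 2^{-k\xi Q} e^{\xi B_{t_k}} Z_k \bigr\|_p \;\leq\; 2^{-k\xi Q}\cdot 2^{p_1\xi^2 k/2}\cdot \|Z_k\|_{p_2} \;\leq\; C\cdot 2^{k(p_1\xi^2/2 - \xi Q)},
\]
which is summable precisely because $p_1 < 2Q/\xi$. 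The resulting constant $C_p$ depends only on $p$ and the law of $D_h$, and is independent of $r$ and $w$. The main obstacle is the first step: making completely rigorous the claim that a countable concatenation of across-paths and around-loops in shrinking dyadic annuli produces a Euclidean-continuous path reaching $0$, and that its total $D_h$-length is dominated by the scale-by-scale sum. This is essentially a planar-topology exercise but takes care, particularly at $k=0$ where $w$ lies on $\bdy B_r(0)$; using the slightly thicker, overlapping annuli $\ol{\BB A}_{r_{k+2},r_k}(0)$ indicated above should handle the matter uniformly in $w$ and $r$.
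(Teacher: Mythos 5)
The moment machinery in your proposal is sound and essentially the same as the paper's: you get uniform moments for the normalized across/around quantities at each dyadic scale from Lemma~\ref{lem-two-set-dist}, separate the Gaussian factor $e^{\xi(h_{r_k}(0)-h_r(0))}$ via H\"older (or subadditivity for $p<1$), and the threshold $p<2Q/\xi$ enters in exactly the way it does in the paper's proof. That part would work.

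The genuine gap is in the geometric construction, and the ``thicker overlapping annuli'' trick does not repair it. All of your annuli are concentric at $0$. For $k\ge 1$ they lie at Euclidean distance at least $r/4$ from $\bdy B_r(0)$, and the across-path and around-loop at scale $k=0$ are near-minimal curves in $\ol{\BB A}_{r/2,r}(0)$ (or $\ol{\BB A}_{r/4,r}(0)$) chosen without any reference to the point $w$; they have no reason to pass through or even approach $w$. Consequently the closure of your chain cannot contain $w$ --- the first link, from $w$ to whatever the chain does reach, is simply missing. You cannot close it by forcing $\alpha_0$ to start at $w$, since a bound on $D_h(w,\bdy B_{r/2}(0);\ol B_r(0))$ is essentially what you are trying to prove. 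Note also that the resulting bound $\sum_k[D_h(\text{across }A_k)+D_h(\text{around }A_k)]$ is \emph{independent of $w$}, which should be a red flag: the whole point of the lemma is to exploit that $w$ is a \emph{fixed} deterministic boundary point, and in the supercritical phase $D_h(\cdot,0;\ol B_r(0))$ is very badly behaved as a function on $\bdy B_r(0)$, so a $w$-independent chain cannot work.

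The paper's fix is to let the annuli slide toward $w$: set $\ep_k=2^{-k}r$, $w_{\ep_k}:=(1-\ep_k/r)w$, and $A_{\ep_k}:=\BB A_{\ep_k/2,\ep_k}(w_{\ep_k})$, together with the mirrored sequence $A'_{\ep_k}:=\BB A_{\ep_k/2,\ep_k}((\ep_k/r)w)$ whose centers slide toward $0$. The centers $w_{\ep_k}$ converge to $w$ along the radial segment $[0,w]$ and the annuli have Euclidean diameter $2\ep_k\to 0$, so the around-loops shrink onto $w$ (and, for the mirror sequence, onto $0$). A Jordan-curve argument shows that consecutive around-loops in each sequence must intersect, so the union is connected and its closure contains $\{0,w\}$; this gives the chain your construction lacks, and only around-distances are needed (no across-paths). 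Everything downstream --- the H\"older estimate with the Gaussian $h_{\ep_k}(w_{\ep_k})-h_r(0)$, whose variance is $\log(r/\ep_k)$ up to an additive constant --- then goes through exactly as you describe.
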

\begin{proof}
Fix $w\in \bdy B_r(0)$. All of our estimates are required to be uniform in the choice of $w$.  The idea of the proof is to string together countably many $D_h$-rectifiable loops centered at points on the segment $[0,w]$, with geometric Euclidean sizes. 

For $\ep \in (0,r)$, define 
\eqbn
w_\ep := \left( 1 - \frac{\ep}{r} \right)w \quad \text{and} \quad A_\ep := \BB A_{\ep/2,\ep}\left( w_\ep \right) 
\eqen
and note that $A_\ep \subset B_r(0)$.  

By Lemma~\ref{lem-two-set-dist}, for each $q > 0$, 
\eqb \label{eqn-square-bdy-annulus}
\BB E\left[ \left(\ep^{-\xi Q} e^{-\xi h_\ep( w_\ep  )} D_h\left(\text{around $A_\ep$} \right) \right)^q \right] \preceq 1 ,\quad \forall \ep > 0 ,
\eqe 
with the implicit constant depending only on $q$ and the law of $D_h$. By H\"older's inequality, for each $p > 0$ and each $q>1$, 
\allb \label{eqn-square-bdy-annulus'}
&\BB E\left[ \left(r^{-\xi Q} e^{-\xi h_r(0 )} D_h\left(\text{around $A_\ep$} \right) \right)^p \right]  \notag\\
&\qquad \qquad \leq \left( \frac{\ep}{r} \right)^{\xi Q p}  
\BB E\left[ \left( \ep^{-\xi Q} e^{-\xi h_\ep(w_\ep  )}  D_h\left(\text{around $A_\ep$} \right) \right)^{\frac{q p}{1-q} } \right]^{1 - 1/q} \notag\\ 
&\qquad\qquad\qquad\qquad\qquad \times \BB E\left[ e^{ q p \xi (h_\ep(w_\ep) - h_r(0))} \right]^{ 1/q} \notag \\
&\qquad \qquad \preceq \left( \frac{\ep}{r} \right)^{\xi Q p}  
\BB E\left[ e^{ q p \xi (h_\ep(w_\ep ) - h_r(0))} \right]^{ 1/q}  ,
\alle
where in the last line we used~\eqref{eqn-square-bdy-annulus}. The random variable $h_\ep(w_\ep ) - h_r(0)$ is centered Gaussian with variance at most $\log(r/\ep)$ plus a universal constant. We therefore infer from~\eqref{eqn-square-bdy-annulus'} that for each $p > 0$ and each $q > 1$, 
\allb \label{eqn-square-bdy-annulus-end}
 \BB E\left[ \left(r^{-\xi Q} e^{-\xi h_r(0)} D_h\left(\text{around $A_\ep$} \right) \right)^p \right]
 \preceq \left( \frac{\ep}{r} \right)^{\xi Q p - q  p^2 \xi^2 /2 }  
\alle
with the implicit constant depending only on $p,q$. 

Let 
\eqbn
w_\ep' := \frac{\ep}{r} w \quad \text{and} \quad A_\ep' := \BB A_{\ep/2,\ep}(w_\ep') , 
\eqen
which is contained in $B_r(0)$ for $\ep \in (0,r/2]$. 
Via a similar argument to the one leading to~\eqref{eqn-square-bdy-annulus-end}, we also have that for each $p > 0$ and each $q > 1$, 
\allb \label{eqn-square-bdy-annulus-end'}
 \BB E\left[ \left(r^{-\xi Q} e^{-\xi h_r(0)} D_h\left(\text{around $A_\ep'$} \right) \right)^p \right]
 \preceq \left( \frac{\ep}{r} \right)^{\xi Q p - q  p^2 \xi^2 /2 }  .
\alle

For $k\in \BB N$, let $\ep_k := 2^{-k} r$. Suppose that $\pi_k$ is a path in $A_{\ep_k}$ which disconnects the inner and outer boundaries and $\pi_k'$ is a path in $A_{\ep_k}'$ which disconnects the inner and outer boundaries of $A_{\ep_k}'$. Then the union of the paths $\pi_k$ and $\pi_k'$ for $k\in\BB N$ is connected and contained in $B_r(0)$ and its closure contains both $0$ and $w$. From this, we see that the union of these paths and $\{0,w\}$ contains a path from $0$ to $w$ which is contained in $\ol B_r(0)$. 
Hence
\eqb \label{eqn-square-bdy-sum}
D_h\left(w , 0 ; \ol B_r(0) \right)
\leq \sum_{k=0}^\infty  D_h\left(\text{around $A_{\ep_k} $} \right)
+\sum_{k=0}^\infty  D_h\left(\text{around $A_{\ep_k}'$} \right) .
\eqe 
 
Assume now that $p \in (0,\min\{1 , 2Q/\xi\})$. Since the function $x\mapsto x^p$ is concave, hence subadditive, we can take $p$th moments of both sides of~\eqref{eqn-square-bdy-sum}, then apply~\eqref{eqn-square-bdy-annulus-end} and~\eqref{eqn-square-bdy-annulus-end'}, to get
\allb
&\BB E\left[ \left(r^{-\xi Q} e^{-\xi h_r(0)} D_h\left(w , 0 ; \ol B_r(0) \right) \right)^p \right] \notag\\
&\qquad\qquad \leq \sum_{k=0}^\infty  \BB E\left[ \left(r^{-\xi Q} e^{-\xi h_r(0)} D_h\left(\text{around $A_{\ep_k}$} \right) \right)^p \right] \notag\\
&\qquad\qquad \qquad +\sum_{k=0}^\infty   \BB E\left[ \left(r^{-\xi Q} e^{-\xi h_r(0)} D_h\left(\text{around $A_{\ep_k}'$} \right) \right)^p \right] \notag\\
&\qquad\qquad\preceq  \sum_{k=0}^\infty   \left( \frac{\ep_k}{r} \right)^{\xi Q p - q  p^2 \xi^2 /2 }  \notag\\
&\qquad\qquad\preceq \sum_{k=0}^\infty 2^{-k (\xi Q p - q  p^2 \xi^2 /2)} .
\alle
Since $p < 2Q/\xi$, if $q > 1$ is sufficiently close to 1, we have $\xi Q p - q p^2\xi^2/2 > 0$. Hence this last sum is finite. This gives~\eqref{eqn-ball-bdy-moment} for $p < 1$. For $p\geq 1$, we obtain~\eqref{eqn-ball-bdy-moment} via the same argument, but with the triangle inequality for the $L^p$ norm used in place of the subadditivity of $p\mapsto x^p$. 
\end{proof}

Using Lemma~\ref{lem-ball-bdy-moment} and Markov's inequality, we obtain the following estimate, which says that with high probability ``most" points on a circle are not too LQG-far from the center point. Note that (unlike for subcritical LQG) we cannot say that this is the case for \emph{all} points on the circle, e.g., because there could be singular points on the circle.

\begin{lem} \label{lem-ball-bdy-union}
For each $R> 1$, 
\eqb \label{eqn-ball-bdy-union}
\BB E\left[ \left| \left\{ w \in \bdy B_r(0) :    D_h\left(w , 0 ; \ol B_r(0) \right)  > R r^{\xi Q} e^{\xi h_r(0)} \right\} \right|      \right] \leq  R^{-2Q/\xi + o_R(1)} r ,
\eqe
where $|\cdot|$ denotes one-dimensional Lebesgue measure and the rate of convergence of the $o_R(1)$ depends only on the law of $D_h$. 
\end{lem}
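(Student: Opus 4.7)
The plan is to deduce the bound by combining Lemma~\ref{lem-ball-bdy-moment} with Markov's inequality and Fubini's theorem. The argument is essentially routine once one sets up the right integral representation of the expected Lebesgue measure.

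First, I would use Fubini's theorem (equivalently, the layer-cake/Tonelli principle applied to the indicator $\mathbbm{1}\{D_h(w,0;\ol B_r(0)) > R r^{\xi Q} e^{\xi h_r(0)}\}$) to write
\[
\BB E\left[ \left| \left\{ w \in \bdy B_r(0) :    D_h\left(w , 0 ; \ol B_r(0) \right)  > R r^{\xi Q} e^{\xi h_r(0)} \right\} \right|      \right] = \int_{\bdy B_r(0)} \BB P\!\left[ D_h\left(w , 0 ; \ol B_r(0) \right)  > R r^{\xi Q} e^{\xi h_r(0)} \right] |dw|,
\]
where $|dw|$ denotes one-dimensional Hausdorff (arc length) measure on $\bdy B_r(0)$. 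One needs a brief measurability check for the event inside the integrand in $w$, which follows from the lower semicontinuity of $D_h$ (so that $w \mapsto D_h(w,0;\ol B_r(0))$ is measurable).

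Next, I would fix $p \in (0,2Q/\xi)$ and apply Markov's inequality pointwise in $w$ together with Lemma~\ref{lem-ball-bdy-moment}:
\[
\BB P\!\left[ D_h\left(w , 0 ; \ol B_r(0) \right)  > R r^{\xi Q} e^{\xi h_r(0)} \right] = \BB P\!\left[ \left(r^{-\xi Q} e^{-\xi h_r(0)} D_h(w,0;\ol B_r(0))\right)^p > R^p \right] \leq C_p R^{-p}.
\]
Crucially, the constant $C_p$ depends only on $p$ and the law of $D_h$, not on $w$ or $r$. Plugging this uniform bound into the integral representation and using $|\bdy B_r(0)| = 2\pi r$ yields
\[
\BB E\left[ \left| \left\{ w \in \bdy B_r(0) :    D_h\left(w , 0 ; \ol B_r(0) \right)  > R r^{\xi Q} e^{\xi h_r(0)} \right\} \right|      \right] \leq 2\pi C_p\, R^{-p} r.
\]

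Finally, to recover the stated exponent $-2Q/\xi + o_R(1)$, I would optimize in $p$. Taking a sequence $p_n \uparrow 2Q/\xi$ (with $C_{p_n}$ possibly blowing up as $p_n \uparrow 2Q/\xi$, but this happens only through a constant factor for each fixed $n$), the previous display gives, for each $n$,
\[
\BB E\left[\,|\cdot|\,\right] \leq 2\pi C_{p_n}\, R^{-p_n} r.
\]
Writing $2\pi C_{p_n} = R^{\log_R(2\pi C_{p_n})}$ and choosing $n = n(R)$ tending to $\infty$ slowly enough as $R \rta \infty$ so that $\log_R(2\pi C_{p_n(R)}) \rta 0$, we obtain the desired bound $R^{-2Q/\xi + o_R(1)} r$, with the rate of decay of $o_R(1)$ depending only on how $C_p$ blows up as $p \uparrow 2Q/\xi$, hence only on the law of $D_h$. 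There is no real obstacle here; the only point that needs mild care is the uniformity of $C_p$ in $w$ in Lemma~\ref{lem-ball-bdy-moment}, which is built into that lemma's statement.
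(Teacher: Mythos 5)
Your argument is correct and matches the paper's (one-line) proof exactly: the paper also just cites Lemma~\ref{lem-ball-bdy-moment} and Markov's inequality. The extra care you take with Fubini, the uniformity of $C_p$ in $w$, and the $p\uparrow 2Q/\xi$ optimization is precisely the unpacking the paper leaves implicit.
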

\begin{proof}
This follows from Lemma~\ref{lem-ball-bdy-moment} and Markov's inequality.
\end{proof}

We will also need a lemma to ensure that all of the $D_h$-geodesics between points in a specified Euclidean-compact set are contained in a larger compact set. 

\newcommand{\compExp}{\mu}

\begin{lem} \label{lem-geo-compact} 
There exists $\compExp > 0$, depending only on the law of $D_h$, such that the following is true. 
Let $K\subset \BB C$ be compact. 
For each $\BB r > 0$, it holds with probability $1-O_R(R^{-\compExp})$ as $R\rta\infty$ (at a rate depending only on $K$ and the law of $D_h$) that each $D_h$-geodesic between two points of $\BB r K$ is contained in $B_{R\BB r}(0)$. 
\end{lem}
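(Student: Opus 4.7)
The proof combines a lower bound on the $D_h$-``escape cost'' from $\BB r K$ to $\bdy B_{R\BB r}(0)$ with an upper bound on the $D_h$-distance between pairs of points in $\BB r K$. By Axiom~\refcoord and the uniformity across scales in Lemma~\ref{lem-set-tightness}, it suffices to treat the case $\BB r = 1$; the bounds I derive will be uniform in $\BB r$. Without loss of generality I enlarge $K$ so that $K = \ol B_{R_0}(0)$ for some $R_0 = R_0(K) \geq 1$, and I take $R > 4 R_0$.

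\textbf{Barrier (lower bound).} If a $D_h$-geodesic $P$ from $z, w \in K$ exits $B_R(0)$, then $P$ crosses the annulus $\BB A_{R_0, R}(0)$ from its inner boundary to its outer boundary and back, so
\[
D_h(z, w) \;=\; \op{len}(P;D_h) \;\geq\; 2\, D_h(\text{across } \BB A_{R_0, R}(0)).
\]
Applying Lemma~\ref{lem-two-set-dist} to the dyadic sub-annuli $\BB A_{2^k, 2^{k+1}}(0)$ for $k = \lceil \log_2 R_0 \rceil, \ldots, \lfloor \log_2 R \rfloor$, summing the resulting crossings, and combining with standard Gaussian tail bounds on the circle-average differences $h_{2^{k+1}}(0) - h_{2^k}(0)$, I will obtain that for any small $\delta > 0$ there exists $\mu_1 = \mu_1(\delta) > 0$ such that
\[
\BB P\!\left[\, D_h(\text{across } \BB A_{R_0, R}(0)) \geq R^{\xi Q - \delta} e^{\xi h_1(0)} \,\right] \;\geq\; 1 - O_R(R^{-\mu_1}).
\]

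\textbf{Interior upper bound.} I then plan to show that there exists $\mu_2 > 0$ such that with probability $1 - O_R(R^{-\mu_2})$, one has
\[
D_h(z, w) \;\leq\; R^{\xi Q - 2\delta} e^{\xi h_1(0)} \qquad \text{for every } z, w \in K \text{ with } D_h(z, w) < \infty.
\]
Combined with the barrier, this contradicts the possibility of a geodesic exiting $B_R(0)$ (on an event whose complement has probability $O_R(R^{-\min(\mu_1,\mu_2)})$), and therefore gives the lemma with $\compExp = \min(\mu_1, \mu_2)$.

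\textbf{Main obstacle.} The interior bound will be the heart of the argument. A naive approach bounding $\sup_{z, w \in K} D_h(z, w)$ is doomed in the supercritical regime: near any singular point in $K$, lower semicontinuity of $D_h$ forces $D_h(z, w) \to \infty$ for non-singular pairs accumulating on it. To circumvent this I plan to construct, for each pair $z, w$ with $D_h(z, w) < \infty$, an explicit competing path whose $D_h$-length is controlled. The path goes from $z$ outward to a ``highway'' circle $\bdy B_{2 R_0}(0)$, travels along the highway, then descends to $w$. The highway portion is bounded using Lemma~\ref{lem-two-set-dist} applied to an annular neighborhood of $\bdy B_{2R_0}(0)$. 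The radial portions from $z$ (resp.\ $w$) to the highway are controlled via the moment estimate in Lemma~\ref{lem-ball-bdy-moment}: applying it at a dyadic sequence of shrinking balls centered at points of a polynomial Euclidean net in $K$, a union bound combined with Markov's inequality upgrades the pointwise moment bound to a uniform-in-$z$ estimate with an acceptable polynomial-in-$R$ loss. Choosing the net precision and exponents of Markov inequalities carefully, the total cost of the competing path will be at most $R^{\xi Q - 2\delta} e^{\xi h_1(0)}$ on a high-probability event, as required.
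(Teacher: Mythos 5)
Your barrier step is sound in spirit, but the interior upper bound is not just technically difficult to establish — it is provably false in the supercritical phase, and this sinks the entire strategy. You correctly note that $\sup_{z,w\in K} D_h(z,w) = \infty$ because of singular points, but you do not draw the full consequence: the restriction ``with $D_h(z,w) < \infty$'' does not help, and the event
\[
\left\{ D_h(z,w) \le R^{\xi Q - 2\delta}\, e^{\xi h_1(0)} \;\;\text{for every } z,w\in K \text{ with } D_h(z,w) < \infty \right\}
\]
has probability $0$ (not $1 - O_R(R^{-\mu_2})$) for every fixed $R$ when $\xi > \xi_\crit$. Indeed, singular points are a.s.\ dense in $K$; pick $z_0 \in K$ singular and $w\in K$ non-singular with $w\ne z_0$, and let $z_n \rta z_0$ be a sequence of non-singular points (possible because singular points have zero Lebesgue measure). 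Then $D_h(z_n,w) < \infty$ by Lemma~\ref{lem-geodesics} for each $n$, while lower semicontinuity gives $\liminf_n D_h(z_n,w) \ge D_h(z_0,w) = \infty$. So a.s.\ there exist finite-distance pairs in $K$ with arbitrarily large $D_h$-distance, and no deterministic or random finite threshold can bound them all. The competing-path construction cannot circumvent this: any path from $z$ to $w$ has $D_h$-length at least $D_h(z,w)$, so no competing path of uniformly controlled length exists either. The ``polynomial Euclidean net'' idea does not rescue it because the expensive part of the competitor is the escape from $z$ itself, not the approach to a net point, and that escape cost is exactly what is unbounded.

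The paper takes a genuinely different route which makes no reference to $D_h(z,w)$ at all. For each scale $s$ one defines the event $E_s := \{D_h(\text{around }\BB A_{s\BB r, 2s\BB r}(0)) < D_h(\text{across }\BB A_{2s\BB r, 3s\BB r}(0))\}$, which has uniformly positive probability $p$ (independent of $s$ and $\BB r$) by tightness across scales and a bump-function absolute continuity argument. By assertion~\ref{item-annulus-iterate-pos} of Lemma~\ref{lem-annulus-iterate}, applied to logarithmically many concentric scales $s\in[\rho, R/3]$ with $K\subset B_\rho(0)$, some $E_s$ occurs with probability $1-O_R(R^{-\compExp})$. On $E_s$ one has a \emph{geometric} obstruction: if a $D_h$-geodesic $P$ between two points of $B_{s\BB r}(0)$ exited $B_{3s\BB r}(0)$, then between its first and last hitting times $t_1 < t_2$ of a near-optimal loop $\pi\subset \BB A_{s\BB r,2s\BB r}(0)$, it would have to cross $\BB A_{2s\BB r, 3s\BB r}(0)$ out and back, so $\op{len}(P|_{[t_1,t_2]}; D_h) \ge 2D_h(\text{across }\BB A_{2s\BB r,3s\BB r}(0)) > \op{len}(\pi; D_h)$; replacing $P|_{[t_1,t_2]}$ with an arc of $\pi$ would give a strictly $D_h$-shorter path with the same endpoints, contradicting that $P$ is a geodesic. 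This argument is completely indifferent to how large $D_h(z,w)$ is, which is exactly what your approach needed but cannot get.
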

\begin{proof}
Fix $\BB r >0$ and for $s >0$, let 
\eqbn
E_s := \left\{ D_h\left(\text{around $\BB A_{s \BB r , 2 s \BB r    }(0)$}\right) < D_h\left(\text{across $\BB A_{2 s \BB r , 3 s \BB r }(0)$} \right) \right\} .
\eqen
Using tightness across scales (Axiom~\refcoord) and a basic absolute continuity argument (see, e.g., the proof of~\cite[Lemma 6.1]{gwynne-ball-bdy}), we can find a $p \in (0,1)$, depending only on the law of $D_h$, such that $\BB P[E_s] \geq p$ for all $s , \BB r > 0$. 

Let $\rho > 0$ be chosen so that $K \subset B_{\rho}(0)$. 
By assertion~\ref{item-annulus-iterate-pos} of Lemma~\ref{lem-annulus-iterate} (applied to logarithmically many radii $r_k \in [\rho \BB r , R \BB r/3]$), we can find $\compExp  >0$ as in the lemma statement such that for with probability $1-O_R(R^{-\compExp})$, there exists $s \in [\rho , R/3]$ such that $E_s$ occurs. 

On the other hand, it is easily seen that if $E_s$ occurs, then no $D_h$-geodesic $P$ between two points of $B_{s\BB r}(0)$ can exit $B_{3s\BB r}(0)$. Indeed, otherwise we could replace a segment of $P$ by a segment of a path in $\BB A_{s\BB r  , 2 s\BB r}(0)$ which disconnects the inner and outer boundaries to get a path with the same endpoints as $P$ but strictly shorter $D_h$-length than $P$. 
\end{proof}

\subsection{Regularity of geodesics}
\label{sec-prelim-geo}

The following lemma is (almost) a re-statement of~\cite[Corollary 3.7]{dg-confluence}. Roughly speaking, the lemma states that every point in an LQG geodesic is surrounded by a loop of small Euclidean diameter whose $D_h$-length is much shorter than the $D_h$-length of the geodesic. A similar lemma also appears in~\cite[Section 2.4]{pfeffer-supercritical-lqg}. 

\begin{lem} \label{lem-hit-ball}
For each $\chi \in (0,1)$, there exists $\geoExp  > 0$, depending only on $\chi$ and the law of $D_h$, such that for each Euclidean-bounded open set $U\subset \BB C$ and each $\BB r > 0$,  
it holds with polynomially high probability as $\ep_0 \rta 0$, uniformly over the choice of $\BB r$, that the following is true for each $\ep \in (0,\ep_0]$. 
Suppose $z\in \BB r U$, $x,y\in\BB C \setminus B_{\ep^{\chi} \BB r}(z)$, and $s>0$ such that there is a $D_h$-geodesic $P$ from $x$ to $y$ with $P(s) \in B_{\ep \BB r}(z)$. Then  
\eqb \label{eqn-hit-ball}
D_h\left( \text{around $\BB A_{ \ep \BB r , \ep^{\chi} \BB r}(z)$} \right) 
\leq \ep^\geoExp s.
\eqe
\end{lem}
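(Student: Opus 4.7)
The plan is to reduce this lemma to a purely geometric comparison of $D_h$-distances around versus across the wide Euclidean annulus $\BB A_{\ep\BB r, \ep^{\chi}\BB r}(z)$, and then to establish that comparison by combining the concentration of LQG distances at two different Euclidean scales with Gaussian tails for the GFF circle-average increment.

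First, I would observe that the hypotheses yield $s \geq D_h\bigl(\text{across }\BB A_{\ep\BB r, \ep^{\chi}\BB r}(z)\bigr)$: since $P(0)=x\notin B_{\ep^{\chi}\BB r}(z)$ while $P(s)\in B_{\ep\BB r}(z)$, the initial segment $P|_{[0,s]}$ necessarily crosses from the outer to the inner boundary of the annulus, and its $D_h$-length equals $s$. It therefore suffices to prove, with polynomially high probability in $\ep_0$ and uniformly in $z\in\BB r U$ and $\ep\in(0,\ep_0]$, the inequality
\[
D_h\bigl(\text{around }\BB A_{\ep\BB r,\ep^{\chi}\BB r}(z)\bigr)\leq \ep^{\geoExp}\,D_h\bigl(\text{across }\BB A_{\ep\BB r,\ep^{\chi}\BB r}(z)\bigr).
\]

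For a fixed $z$, I would apply the annular variant of Lemma~\ref{lem-two-set-dist} (stated immediately after that lemma) at two different scales. At scale $\ep\BB r$, using that $\BB A_{\ep\BB r,3\ep\BB r/2}(z)\subset\BB A_{\ep\BB r,\ep^{\chi}\BB r}(z)$ so that loops around the smaller sub-annulus are loops around the full one, I obtain $D_h\bigl(\text{around }\BB A_{\ep\BB r,\ep^{\chi}\BB r}(z)\bigr)\leq R(\ep\BB r)^{\xi Q}e^{\xi h_{\ep\BB r}(z)}$. At scale $\ep^{\chi}\BB r$, I obtain $D_h\bigl(\text{across }\BB A_{\ep\BB r,\ep^{\chi}\BB r}(z)\bigr)\geq R^{-1}(\ep^{\chi}\BB r)^{\xi Q}e^{\xi h_{\ep^{\chi}\BB r}(z)}$; each estimate has superpolynomially small failure probability in $R$. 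Taking the ratio produces $R^{2}\,\ep^{(1-\chi)\xi Q}\,e^{\xi(h_{\ep\BB r}(z)-h_{\ep^{\chi}\BB r}(z))}$. The circle-average increment is centered Gaussian of variance $(1-\chi)\log\ep^{-1}$, so standard Gaussian tail bounds control its exponential, and the deterministic factor $\ep^{(1-\chi)\xi Q}$ supplies the polynomial savings, yielding the desired ratio bound at fixed $z$ with some $\geoExp>0$ depending on $\chi$ and on the law of $D_h$ through $\xi$ and $Q$.

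For uniformity over $z\in\BB r U$, I would take a $c\ep\BB r$-net of $\BB r U$ (of cardinality polynomial in $\ep^{-1}$), apply the fixed-$z$ estimate at each net point via a union bound, and extend to arbitrary $z$ by monotonicity: for the nearest net point $z'$, with $|z-z'|\leq c\ep\BB r$, loops around a slightly shrunken sub-annulus at $z'$ are loops around the annulus at $z$, and the across distance at $z$ is bounded below by that of a slightly expanded annulus at $z'$. Uniformity in $\ep$ would be obtained by proving the estimate along a dyadic sequence of $\ep$'s and interpolating. The main obstacle will be keeping $\geoExp$ strictly positive after all the polynomial losses from the concentration bounds, the Gaussian tail, and the union bound, particularly in the supercritical regime where $\xi Q$ is small and the margin in the inequality $(1-\chi)\xi Q > \xi\sqrt{1-\chi}$ produced by naive Gaussian tails becomes tight; sharpening this either requires a Gaussian-chaining argument over the net or, more cleanly, an appeal to \cite[Corollary 3.7]{dg-confluence}, whose proof already performs this bookkeeping.
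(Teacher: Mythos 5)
Your reduction $s\geq D_h\bigl(\text{across }\BB A_{\ep\BB r,\ep^{\chi}\BB r}(z)\bigr)$ is correct, and your fallback of citing \cite[Corollary 3.7]{dg-confluence} is in fact exactly what the paper does (the paper's proof is that citation for $\ep=\ep_0$, plus a union bound over dyadic $\ep$ with $\chi$ replaced by a slightly larger $\chi'$). But the main argument you propose -- a single across/around comparison between scales $\ep\BB r$ and $\ep^{\chi}\BB r$ via Lemma~\ref{lem-two-set-dist} plus a Gaussian tail -- genuinely does not close, and the failure is not bookkeeping. The per-point failure probability you get for the event $R^2\ep^{(1-\chi)\xi Q}e^{\xi(h_{\ep\BB r}(z)-h_{\ep^{\chi}\BB r}(z))}\leq\ep^{\theta}$ is of order $\ep^{(1-\chi)Q^2/2+o(1)}$, whereas the net of $z$'s has cardinality of order $\ep^{-2}$; the union bound therefore requires $(1-\chi)Q^2>4$, which fails for \emph{every} $\chi\in(0,1)$ whenever $Q\leq 2$, i.e.\ throughout the critical and supercritical regimes that are the whole point of this paper (and also fails for subcritical $Q$ close to $2$ unless $\chi$ is near $0$). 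Chaining over the net cannot rescue this either, because the bottleneck is the per-point exponent, not the union bound.

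The structural problem is that your reduction to the purely metric statement ``$D_h(\text{around})\leq\ep^{\theta}D_h(\text{across})$ uniformly in $z$'' throws away the geodesic hypothesis, and that metric statement is too strong to prove directly (and may be false) when $Q$ is small. The argument behind \cite[Corollary 3.7]{dg-confluence} (a version of which appears in comments in this paper) uses the geodesic property more substantially: one produces $N\asymp\log\ep^{-1}$ disjoint concentric sub-annuli $A_1,\dots,A_N$ of $\BB A_{\ep\BB r,\ep^{\chi}\BB r}(z)$ on which the constant-probability event $\{D_h(\text{around }A_n)\leq(1/c)D_h(\text{across }A_n)\}$ holds (Lemma~\ref{lem-annulus-iterate} handles the union bound at negligible cost because each such event has probability bounded below uniformly, not probability $1-\ep^{\text{small}}$), and then one shows that \emph{because $P$ is a geodesic hitting $B_{\ep\BB r}(z)$}, the around-distances $X_n:=D_h(\text{around }A_n)$ must satisfy $X_n\geq c\sum_{j>n}X_j$ for every $n$; the deterministic counting Lemma~\ref{lem-seq-sum-count} then forces $X_N\leq(1+c)^{-N+O(1)}\max_n X_n\lesssim\ep^{\geoExp}D_h(\text{across})$. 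The multiplicative gain is thus deterministic and independent of $Q$, which is what makes the argument survive in the supercritical regime. Your proposal would be correct if you replaced its main body with this chain-of-annuli argument, or simply with the citation you already mention.
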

\begin{proof}
\cite[Corollary 3.7]{dg-confluence} shows that with polynomially high probability as $\ep_0 \rta 0$, the condition in the lemma statement holds for $\ep  =\ep_0$. 
The statement for all $\ep \in (0,\ep_0]$ follows from the statement for $\ep = \ep_0$ (applied with $\chi$ replaced by $\chi'$ slightly larger than $\chi$) together with a union bound over dyadic values of $\ep$. 
\end{proof}

As explained in~\cite{dg-confluence,pfeffer-supercritical-lqg}, Lemma~\ref{lem-hit-ball} functions as a substitute for the fact that in the supercritical case, $D_h$ is not locally H\"older continuous with respect to the Euclidean metric. It says that the $D_h$-distance around a small Euclidean annulus centered at a point on a $D_h$-geodesic is small. A path of near-minimal length around this annulus can be linked up with various other paths to get upper-bounds for $D_h$-distances in terms of Euclidean distances.

\begin{figure}[ht!]
\begin{center}
\includegraphics[width=.7\textwidth]{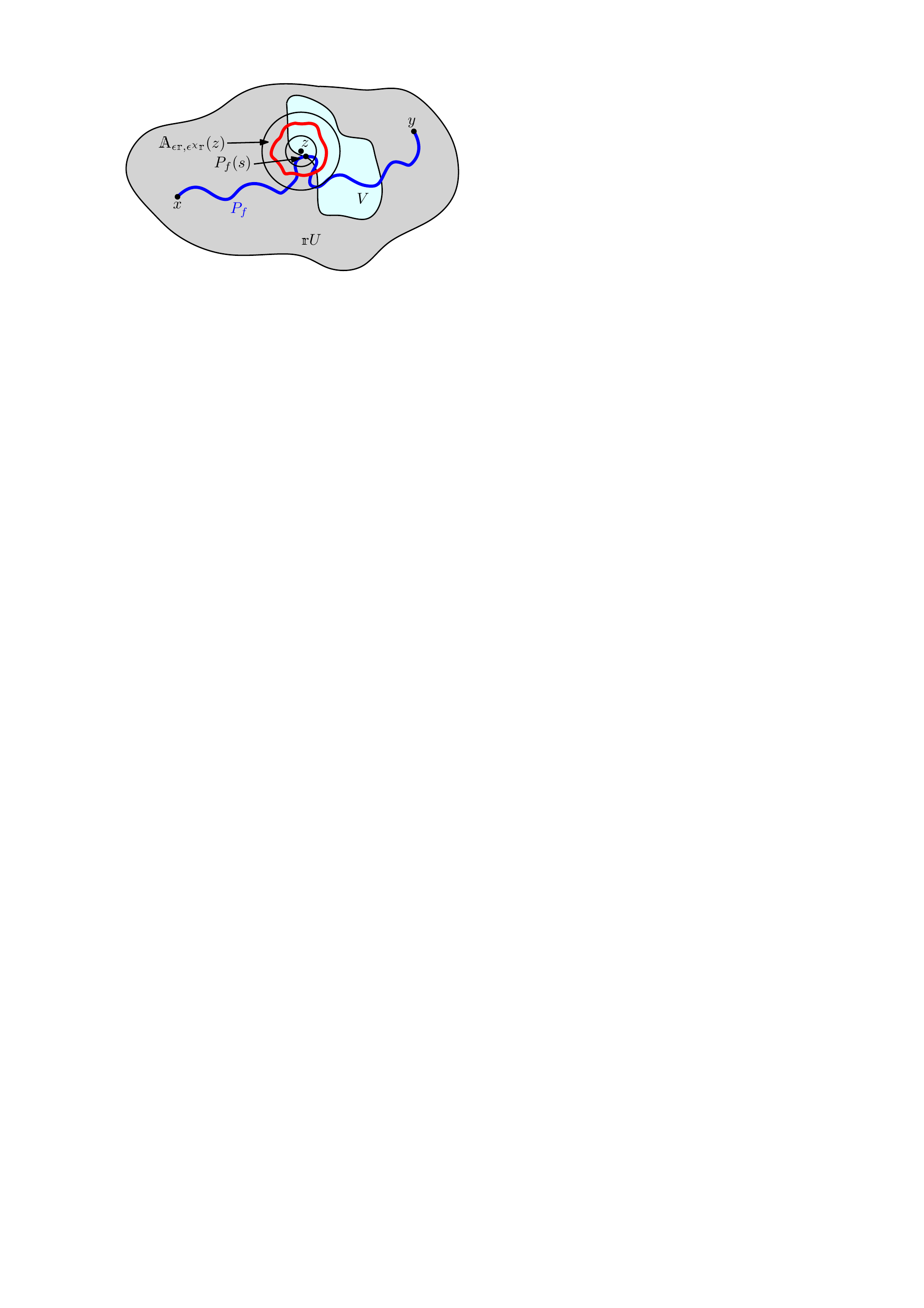} 
\caption{\label{fig-hit-ball} Illustration of the statement of Lemma~\ref{lem-hit-ball-phi} in the case where $s =  \inf\{t > 0 : P_f(t) \in V \} $ (which is the main case that we will use). The path $P_f$ is a $D_{h-f}(\cdot,\cdot; \BB r U)$-geodesic and the set $V$ is the support of $f$. 
The lemma gives us an upper bound for $D_h\left( \text{around $\BB A_{ \ep \BB r , \ep^\chi \BB r}(z)$} \right) $. 
}
\end{center}
\end{figure}

We will need the following generalization of Lemma~\ref{lem-hit-ball}, which follows from exactly the same proof. The lemma statement differs from Lemma~\ref{lem-hit-ball} in that we consider a $D_{h-f}(\cdot,\cdot; \BB r \ol U)$-geodesic, for a possibly random non-negative bump function $f$, instead of a $D_h$-geodesic (recall the discussion of geodesics for internal metrics from Section~\ref{sec-closed}). See Figure~\ref{fig-hit-ball} for an illustration of the lemma statement. 

\begin{lem} \label{lem-hit-ball-phi}
For each $\chi \in (0,1)$, there exists $\geoExp  > 0$ depending only on $\chi$ and the law of $D_h$, such that for each Euclidean-bounded open set $U\subset \BB C$ and each $\BB r > 0$,  
it holds with polynomially high probability as $\ep_0 \rta 0$, uniformly over the choice of $\BB r$, that the following is true for each $\ep \in (0,\ep_0]$. 
Let $V\subset \BB r U$ and let $f : \BB C\rta [0,\infty)$ be a non-negative continuous function which is identically zero outside of $ V$. 
Let $z\in \BB r [ U\setminus B_{\ep^\chi}(\bdy U)]$, $x,y\in (\BB r \ol U) \setminus ( V \cup B_{\ep^\chi \BB r}(z) )$, and $s>0$ such that there is a $D_{h-f}(\cdot,\cdot ; \BB r \ol U)$-geodesic $P_f$ from $x$ to $y$ with $P_f(s) \in B_{\ep \BB r}(z)$. Assume that
\eqb \label{eqn-hit-ball-time}
s \leq \inf\{t > 0 : P_f(t) \in V \} .
\eqe 
Then
\eqb \label{eqn-hit-ball-phi}
D_h\left( \text{around $\BB A_{ \ep \BB r , \ep^\chi \BB r}(z)$} \right) 
\leq \ep^\geoExp s.
\eqe
\end{lem}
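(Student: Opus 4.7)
\textbf{Proof plan for Lemma~\ref{lem-hit-ball-phi}.}

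The plan is to reduce essentially verbatim to Lemma~\ref{lem-hit-ball} by exploiting the hypothesis \eqref{eqn-hit-ball-time} to turn the initial segment of the $D_{h-f}(\cdot,\cdot;\BB r \ol U)$-geodesic $P_f$ into a bona fide $D_h$-curve. The key point is that the polynomially-high-probability event underlying Lemma~\ref{lem-hit-ball} is really an event about distances around versus across small thin annuli, and has nothing to do with which particular geodesic we look at.

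First, I would isolate from (the proof of) Lemma~\ref{lem-hit-ball} the following purely metric event, which holds with polynomially high probability as $\ep_0 \rta 0$, uniformly in $\BB r$: for every $\ep \in (0,\ep_0]$ and every $z \in \BB r U$,
\[
D_h\bigl(\text{around $\BB A_{\ep \BB r,\ep^{\chi'} \BB r}(z)$}\bigr) \;\leq\; \ep^{\geoExp}\,D_h\bigl(\text{across $\BB A_{\ep\BB r,\ep^{\chi'} \BB r}(z)$}\bigr),
\]
where $\chi'$ is chosen slightly smaller than $\chi$. (That such an event exists with polynomially high probability is what powers the original lemma, and can be obtained from tightness across scales plus Lemma~\ref{lem-annulus-union} together with the fact that, for thin annuli, the distance around decays polynomially faster in $\ep$ than the distance across because of the LQG scaling exponent $\xi Q$; taking $\chi'$ slightly smaller than $\chi$ and using the inclusion of annuli absorbs the difference.) Working on this event, I do not need any further randomness coming from $P_f$ or $f$.

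Next, I would use the hypothesis that $s \leq \inf\{t > 0 : P_f(t) \in V\}$. Since $f$ is supported on $V$, we have $f \equiv 0$ along $P_f|_{[0,s]}$, so by Weyl scaling (Axiom~\ref{item-metric-f}) the $D_{h-f}$-length and the $D_h$-length of every subsegment of $P_f|_{[0,s]}$ coincide. Parametrising $P_f$ by $D_{h-f}$-length (as is standard for geodesics for internal metrics; recall Section~\ref{sec-closed}), it follows that $P_f|_{[0,s]}$ is a curve from $x$ to $P_f(s)$ of $D_h$-length exactly $s$. Because $z \in \BB r[U \setminus B_{\ep^\chi}(\bdy U)]$, the annulus $\BB A_{\ep\BB r,\ep^{\chi'}\BB r}(z)$ (for $\chi'$ slightly smaller than $\chi$) lies in $\BB r \ol U$, so any crossing subsegment of $P_f|_{[0,s]}$ stays inside the annulus. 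Since $x \notin B_{\ep^\chi \BB r}(z)$ while $P_f(s) \in B_{\ep \BB r}(z)$, such a crossing subsegment exists, giving
\[
D_h\bigl(\text{across $\BB A_{\ep\BB r,\ep^{\chi'} \BB r}(z)$}\bigr) \;\leq\; s.
\]
Combining with the isolated high-probability event and adjusting $\geoExp$ slightly yields \eqref{eqn-hit-ball-phi}.

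The only mild obstacle is bookkeeping between $\chi$ and $\chi'$: the original statement of Lemma~\ref{lem-hit-ball} needs to be invoked with a value of $\chi$ slightly less than the one appearing in the conclusion so that (i) $x \in \BB C \setminus B_{\ep^\chi \BB r}(z)$ remains outside the larger annulus, and (ii) the annulus $\BB A_{\ep\BB r,\ep^{\chi'}\BB r}(z)$ used for the purely metric event is contained in $\BB r \ol U$ under the hypothesis $z \in \BB r[U \setminus B_{\ep^\chi}(\bdy U)]$. Once this choice is fixed, the rest of the argument reproduces the proof of Lemma~\ref{lem-hit-ball} word for word, with the $D_h$-geodesic replaced by $P_f|_{[0,s]}$, which by the Weyl scaling observation above behaves identically to a $D_h$-rectifiable path of $D_h$-length $s$.
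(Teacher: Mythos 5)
The central step of your proposal does not hold up. You claim to isolate, from the proof of Lemma~\ref{lem-hit-ball}, a purely metric high-probability event of the form
\begin{equation*}
D_h\bigl(\text{around }\BB A_{\ep\BB r,\ep^{\chi'}\BB r}(z)\bigr)\;\leq\;\ep^{\geoExp}\,D_h\bigl(\text{across }\BB A_{\ep\BB r,\ep^{\chi'}\BB r}(z)\bigr)\qquad\forall z\in\BB r U,\ \forall\ep\in(0,\ep_0],
\end{equation*}
and then feed the geodesic into it only to produce the trivial bound $D_h(\text{across})\leq s$. This is not what powers Lemma~\ref{lem-hit-ball}, and in fact the event above does not hold with polynomially high probability uniformly in $z$ for the regime this paper cares about. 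The heuristic ``around decays like $\ep^{\xi Q}$, across like $\ep^{\chi'\xi Q}$'' is the typical behaviour of $t^{\xi Q}e^{\xi h_t(z)}$ as $t$ ranges over $[\ep\BB r,\ep^{\chi'}\BB r]$, but at points $z$ where the circle average increases at rate close to $Q$ (near $Q$-thick points), the quantities $r_k^{\xi Q}e^{\xi h_{r_k}(z)}$ are roughly constant in $k$, so the around-distance of the long annulus is comparable to the across-distance of a single sub-annulus and the ratio is only of order $1/\log\ep^{-1}$, not a power of $\ep$. A union bound over the $\approx\ep^{-2}$ grid points only succeeds when the per-point failure probability $\approx\ep^{Q^2/2}$ beats $\ep^{-2}$, i.e.\ when $Q>2$; in the critical and supercritical regimes $Q\leq2$ (which is the entire point of this paper) the union bound fails, so your ``purely metric event'' does not exist. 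Citing Lemma~\ref{lem-annulus-union} does not help either: it gives $D_h(\text{around})\leq\delta^{-\zeta}D_h(\text{across})$, a \emph{loss} of a slowly growing factor, not a gain of $\ep^{\geoExp}$.

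The actual mechanism that produces the polynomial gain in Lemma~\ref{lem-hit-ball} is not a comparison of around versus across on a single thin annulus, but a multi-scale chain: one finds $N\asymp\log\ep^{-1}$ disjoint concentric sub-annuli $A_1,\dots,A_N$ on which the around-distance is at most a constant times the across-distance, and then uses the \emph{geodesic property} of $P$ to establish $X_n\geq c\sum_{j>n}X_j$ for $X_n=D_h(\text{around }A_n)$. (The geodesic argument is: if a loop around $A_n$ were shorter than the remaining distance to $B_{\ep\BB r}(z)$, the geodesic could short-circuit through that loop, contradicting minimality.) The deterministic Lemma quoted as $\ref{lem-seq-sum-count}$ then forces geometric decay of the $X_n$, and this — not a single-annulus comparison — is where $\ep^{\geoExp}$ comes from. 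So the geodesic structure is used in an essential way, not merely to bound $D_h(\text{across})$ by $s$ as in your plan.

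Your observation that $f\equiv0$ before time $s$ so $P_f|_{[0,s]}$ has $D_h$-length $s$ is correct, and is one of the two inputs the paper uses. The other, which you omit but which is needed, is that $f\geq0$ implies $D_{h-f}\leq D_h$: this is used when invoking the geodesic contradiction, because the short loop $\pi$ around $A_n$ and the segment of $P_f$ \emph{after} time $s$ may enter the support of $f$, and one must control $\op{len}(\pi;D_{h-f})\leq\op{len}(\pi;D_h)$. The paper's actual proof re-runs the argument of Lemma~\ref{lem-hit-ball} verbatim with the $D_{h-f}(\cdot,\cdot;\BB r\ol U)$-geodesic in place of a $D_h$-geodesic, propagating these two facts through the contradiction step, rather than reducing to a self-contained high-probability event on $h$.
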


The statement of Lemma~\ref{lem-hit-ball-phi} holds with polynomially high probability for all possible choices of $V,f,x,y,z,s,P_f$. In particular, these objects are allowed to be random and/or $\ep$-dependent. 
We also emphasize that the time $s$ in~\eqref{eqn-hit-ball-time} is allowed to be equal to $\inf\{t > 0 : P_f(t) \in V\}$, in which case $P_f(s) \in \bdy V$. 
In fact, this is the main setting in which we will apply Lemma~\ref{lem-hit-ball-phi}. 

In the setting of Lemma~\ref{lem-hit-ball-phi}, since $f$ is non-negative, we have $D_{h-f}(u,v ; \BB r \ol U) \leq D_h(u,v ; \BB r \ol U)$ for all $u,v\in \BB r U$. 
Furthermore, the condition~\eqref{eqn-hit-ball-time} implies that the $D_{h-f}$-length of $P_f|_{[0,s]}$ is the same as its $D_h$-length.   
These two facts allow us to apply the proof of Lemma~\ref{lem-hit-ball} (as given in~\cite[Section 3.2]{dg-confluence}) essentially verbatim to obtain Lemma~\ref{lem-hit-ball-phi}.

Out next lemma tells us that an LQG geodesic cannot trace a deterministic curve. Just like in Lemma~\ref{lem-hit-ball-phi}, we will consider not just a $D_h$-geodesic but a $D_{h-f}(\cdot,\cdot; \BB r \ol U)$-geodesic for a possible random continuous function $f$. 

\newcommand{\lebExp}{\nu}

\begin{lem} \label{lem-geodesic-leb}
For each $M > 0$, there exists $\lebExp > 0$, depending only on $M$ and the law of $D_h$, such that the following is true. 
Let $U\subset\BB C$ be a deterministic open set and let $\eta : [0,T] \rta U \setminus B_{\ep^{1/2}}(\bdy U) $ be a deterministic parametrized curve.
For each $\BB r > 0$, it holds with probability $1-O_\ep(\ep^\lebExp)$ as $\ep\rta 0$ (the implicit constant depends only on $M$ and the law of $D_h$) that the following is true. 
Let $f : \BB C\rta [-M,M]$ be a continuous function and let $P_f$ be a $D_{h-f}(\cdot,\cdot;\BB r \ol U)$-geodesic between two points of $ \BB r [ U \setminus B_{\ep^{1/2}  }(\eta) ]$. 
Then 
\eqb \label{eqn-geodesic-leb}
|\{t \in [0,T] : P_f \cap B_{\ep\BB r}(\BB r \eta(t)) \not=\emptyset \} | \leq \ep^\lebExp T ,
\eqe
where $|\cdot|$ denotes one-dimensional Lebesgue measure.  
\end{lem}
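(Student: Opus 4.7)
The plan is a Fubini plus Markov reduction to a pointwise-in-$t$ probabilistic bound. For each $t\in[0,T]$, let $A_t$ be the event that there exists an admissible pair $(f,P_f)$ — i.e., $f:\BB C\rta[-M,M]$ continuous and $P_f$ a $D_{h-f}(\cdot,\cdot;\BB r\ol U)$-geodesic between two points of $\BB r[U\setminus B_{\ep^{1/2}}(\eta)]$ — with $P_f\cap B_{\ep\BB r}(\BB r\eta(t))\neq\emptyset$. For any admissible $(f,P_f)$ the set in~\eqref{eqn-geodesic-leb} is contained in $\{t:A_t\text{ occurs}\}$, so it suffices to prove $\BB P[A_t]\le \ep^{\lebExp_0}$ uniformly in $t\in[0,T]$ and $\BB r>0$ for some $\lebExp_0=\lebExp_0(M,\text{law of }D_h)>0$; Fubini and Markov then yield the claim with $\lebExp=\lebExp_0/2$.

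To bound $\BB P[A_t]$, fix $t$ and set $z:=\BB r\eta(t)$. The hypothesis $\eta\subset U\setminus B_{\ep^{1/2}}(\bdy U)$ ensures $\ol B_{\ep^{1/2}\BB r}(z)\subset\BB r\ol U$, and the endpoint hypothesis places both endpoints of any admissible $P_f$ outside $B_{\ep^{1/2}\BB r}(z)$. I introduce a good event $G_z$, measurable w.r.t.\ $h|_{\ol B_{\ep^{1/2}\BB r}(z)\setminus B_{\ep\BB r}(z)}$, asserting the existence of a loop $L\subset\BB A_{\ep\BB r,2\ep\BB r}(z)$ around $B_{\ep\BB r}(z)$ together with a uniform (over $w\in\bdy B_{\ep^{1/2}\BB r}(z)$) upper bound $D_*$ on $D_h(w,L;\ol{\BB A}_{\ep\BB r,\ep^{1/2}\BB r}(z))$ satisfying
\[
\op{len}(L;D_h)+2D_* \;<\; 2 e^{-2\xi M}\,D_h\bigl(\text{across }\BB A_{\ep\BB r,\ep^{1/2}\BB r}(z)\bigr).
\]
Using Lemma~\ref{lem-two-set-dist} to normalize each distance by $r^{\xi Q}e^{\xi h_r(z)}$ at its native Euclidean scale $r$, Lemma~\ref{lem-annulus-union} to chain comparisons across neighbouring scales, Lemma~\ref{lem-ball-bdy-union} iterated over dyadic scales between $\ep\BB r$ and $\ep^{1/2}\BB r$ (to convert average control of boundary distances into a uniform one), and the Gaussian tail of the circle-average difference $h_{\ep\BB r}(z)-h_{\ep^{1/2}\BB r}(z)$ (which has variance $\tfrac{1}{2}\log\ep^{-1}$), I get $\BB P[G_z^c]\le\ep^{\lebExp_0}$ uniformly in $z$ and $\BB r$; the key scaling is that the RHS above lives at the outer scale $\ep^{1/2}\BB r$ while the LHS lives at inner scales and is therefore polynomially smaller in $\ep$.

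On $G_z$ I exclude $A_t$ by a path-surgery argument. Suppose some admissible $(f,P_f)$ had $P_f\cap B_{\ep\BB r}(z)\neq\emptyset$, and take the maximal sub-interval $[\sigma,\tau]$ with $P_f|_{[\sigma,\tau]}\subset\ol B_{\ep^{1/2}\BB r}(z)$ containing a time with $P_f\in B_{\ep\BB r}(z)$; then $P_f(\sigma),P_f(\tau)\in\bdy B_{\ep^{1/2}\BB r}(z)$ and, since $P_f|_{[\sigma,\tau]}$ crosses $\BB A_{\ep\BB r,\ep^{1/2}\BB r}(z)$ at least twice, $\tau-\sigma\ge 2e^{-\xi M}D_h(\text{across }\BB A_{\ep\BB r,\ep^{1/2}\BB r}(z))$. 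Replacing $P_f|_{[\sigma,\tau]}$ by the concatenation of a $D_h$-shortest path from $P_f(\sigma)$ to $L$ in the buffer annulus, the loop $L$ itself, and a $D_h$-shortest path from $L$ to $P_f(\tau)$ produces a competitor path in $\BB r\ol U\setminus B_{\ep\BB r}(z)$ with $D_{h-f}$-length at most $e^{\xi M}[\op{len}(L;D_h)+2D_*]$, which by the defining inequality of $G_z$ is strictly less than $\tau-\sigma$, contradicting that $P_f$ is a $D_{h-f}(\cdot,\cdot;\BB r\ol U)$-geodesic. Hence $\BB P[A_t]\le\BB P[G_z^c]\le\ep^{\lebExp_0}$, as required. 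The main obstacle is the uniform spoke-length bound $D_*$: Lemma~\ref{lem-ball-bdy-union} only controls a positive fraction of boundary points at any single scale, so upgrading to control at \emph{every} point of $\bdy B_{\ep^{1/2}\BB r}(z)$ requires iterating the lemma at every dyadic scale between $\ep\BB r$ and $\ep^{1/2}\BB r$ and chaining the resulting in-scale detours inward to $L$, while keeping the total failure probability polynomially small — this is where the bulk of the technical work would lie.
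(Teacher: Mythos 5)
Your Fubini-plus-Markov reduction to a pointwise-in-$t$ bound is exactly the skeleton the paper uses, but the single-scale good event you propose, and in particular the uniform spoke bound $D_*$, does not exist in the supercritical phase and cannot be manufactured by iterating Lemma~\ref{lem-ball-bdy-union}. In the supercritical case the circle $\bdy B_{\ep^{1/2}\BB r}(z)$ a.s.\ contains singular points $w$ with $D_h(w, L ; \ol{\BB A}_{\ep\BB r,\ep^{1/2}\BB r}(z)) = \infty$, so the quantity you call $D_*$ is $+\infty$ and $\BB P[G_z] = 0$. You flag this as ``the bulk of the technical work,'' but it is a conceptual obstruction, not a technical one: Lemma~\ref{lem-ball-bdy-union} only ever controls a $1-\llambda$ Lebesgue fraction of boundary points at any given scale, and chaining dyadic scales inward can only certify a large-measure (but never full-measure) set of starting points on $\bdy B_{\ep^{1/2}\BB r}(z)$. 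The geodesic's exit points $P_f(\sigma),P_f(\tau)$ are $f$- and $h$-dependent and a priori could land in the exceptional set, so you cannot conclude. Getting a statement valid for \emph{all} points actually hit by a geodesic is precisely the content of Lemma~\ref{lem-hit-ball-phi}, which requires the separate counting argument from \cite{dg-confluence}; your proposal does not invoke that lemma and a naive chaining of Lemma~\ref{lem-ball-bdy-union} cannot replace it.

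The paper's argument sidesteps the issue entirely by never needing spokes. It defines, for $k\in\BB N$ and $r_k := 4^k\ep\BB r$, the single-scale event that $D_h(\text{around } \BB A_{2r_k,3r_k}(\BB r\eta(t))) \le \frac12 e^{-2\xi M} D_h(\text{across } \BB A_{r_k,2r_k}(\BB r\eta(t)))$, which is determined by $h$ on the closed annulus $\ol{\BB A}_{r_k,3r_k}$ modulo additive constant and has probability $\ge p$ uniformly in $k$. Lemma~\ref{lem-annulus-iterate} (near-independence across concentric annuli) then shows that with probability $1-O_\ep(\ep^{2\nu})$, \emph{some} scale $k\in [1,\log_4\ep^{-1/2}-1]_{\BB Z}$ is good. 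On a good scale the surgery is done at the loop $\pi$ itself: the endpoints of $P_f$ are far from $\BB r\eta(t)$, so $P_f$ must cross $\pi$, enter $B_{\ep\BB r}(\BB r\eta(t))$, and cross $\pi$ again; between those two loop-hits it traverses $\BB A_{r_k,2r_k}$ at least once, and by Weyl scaling the replacement of $P_f|_{[\tau,\sigma]}$ by an arc of $\pi$ is strictly $D_{h-f}$-shorter. No connection from $\bdy B_{\ep^{1/2}\BB r}(z)$ back to the loop is required, which is exactly what makes the proof go through in the supercritical phase. Markov's inequality on the bad set of $t$'s then finishes, as in your sketch. If you want to repair your argument, the fix is to replace the circle-level surgery by the loop-level surgery and to replace your single annulus $\BB A_{\ep\BB r,\ep^{1/2}\BB r}$ by a sequence of concentric annuli together with Lemma~\ref{lem-annulus-iterate} to manufacture the polynomially-high-probability good scale.
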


We emphasize that, as in Lemma~\ref{lem-hit-ball-phi}, the function $f$ and the geodesic $P_f$ in Lemma~\ref{lem-geodesic-leb} are allowed to be random and $\ep$-dependent (but $\eta$ is fixed). 

\begin{proof}[Proof of Lemma~\ref{lem-geodesic-leb}]
The idea of the proof is that (by Lemma~\ref{lem-annulus-iterate}) for a ``typical" time $t \in [0,T]$, there is a loop in $\BB A_{\ep \BB r, \ep^{1/2} \BB r}(\BB r \eta(t))$ which disconnects the inner and outer boundaries and whose $D_h$-length is much shorter than the $D_h$-distance from the loop to $B_{\ep\BB r}(\BB r \eta(t))$. The existence of such a loop prevents a $D_{h-f}$-geodesic from hitting $B_{\ep\BB r}(\BB r \eta(t))$. 

For $k\in\BB N$, let 
\eqbn
r_k := 4^k \ep\BB r .
\eqen
For $t\in [0,T]$, define the event 
\eqb \label{eqn-geodesic-leb-event}
 E_k(t) := \left\{ D_h\left(\text{around $\BB A_{2r_k , 3 r_k}(\BB r \eta(t))$} \right) \leq \frac12 e^{-2\xi M} D_h\left(\text{across $\BB A_{r_k , 2r_k}(\BB r \eta(t))$} \right) \right\} .
\eqe
By locality and Weyl scaling (Axioms~\ref{item-metric-local} and~\refcoord), the event $E_k(t)$ is a.s.\ determined by $h|_{\BB A_{r_k,3r_k}(\BB r \eta(t))}$, viewed modulo additive constant.
By adding a bump function to $h$ and using absolute continuity together with tightness across scales (see, e.g., the proof of~\cite[Lemma 6.1]{gwynne-ball-bdy}), we see that there exists $p  > 0$ (depending only on $M$ and the law of $D_h$) such that $\BB P[E_k(t)] \geq p$ for each $k\in\BB N$ and $t\in [0,T]$. Consequently, assertion~\ref{item-annulus-iterate-pos} of Lemma~\ref{lem-annulus-iterate} implies that there exists $\lebExp   > 0$ depending only on $M$ and the law of $D_h$ such that 
\eqb \label{eqn-geodesic-leb-iterate}
\BB P\left[\text{$\exists k \in [1, \log_4 \ep^{-1/2} - 1]_{\BB Z}$ such that $E_k(t)$ occurs} \right] \geq 1 - O_\ep(\ep^{2\lebExp}) ,
\eqe 
with the implicit constant in the $O_\ep(\cdot)$ depending only on $M$ and the law of $D_h$. 

Say that $t\in [0,T]$ is \emph{good} if $E_k(t)$ occurs for some $k\in [1, \log_4 \ep^{-1/2} - 1]_{\BB Z}$, and that $t$ is \emph{bad} otherwise.  
By~\eqref{eqn-geodesic-leb-iterate}, 
\eqbn
\BB E\left[ |\{t\in [0,T] : \text{$t$ is bad} \} | \right]  \leq O_\ep(\ep^{2\lebExp}) T .
\eqen
By Markov's inequality, it holds with probability $1-O_\ep(\ep^\lebExp)$ that 
\eqb
  |\{t\in [0,T] : \text{$t$ is bad} \} |    \leq  \ep^\lebExp T .
\eqe

To prove~\eqref{eqn-geodesic-leb}, it remains to show that if $t$ is good and $f$ is as in the lemma statement, then no $D_{h-f}(\cdot,\cdot;\BB r \ol U)$-geodesic between two points of $\BB r [U \setminus B_{\ep^{1/2}}(\eta) ]$ can hit $B_{\ep\BB r}(\BB r \eta(t))$. 
To see this, let $P_f$ be such a geodesic and choose $ k \in [1, \log_4 \ep^{-1/2} - 1]_{\BB Z}$ such that $E_k(t)$ occurs.
By~\eqref{eqn-geodesic-leb-event}, there is a path $\pi$ in $\BB A_{2r_k, 3r_k}(\BB r \eta(t))$ which disconnects the inner and outer boundaries of this annulus such that
\eqbn
\op{len}\left(\pi ; D_h \right)  < e^{-2\xi M} D_h\left(\text{across $\BB A_{r_k , 2r_k}(\BB r \eta(t))$} \right) .
\eqen
By Weyl scaling (Axiom~\ref{item-metric-f}) and since $f$ takes values in $[-M,M]$,  
\eqb  \label{eqn-geodesic-leb-length}
\op{len}\left(\pi ; D_{h-f} \right)  < D_{h-f}\left(\text{across $\BB A_{r_k , 2r_k}(\BB r \eta(t))$} \right)  .
\eqe 
Since $\ep \BB r \leq r_k \leq \frac12 \ep^{1/2} \BB r$ and the endpoints of $P$ are at Euclidean distance at least $\ep^{1/2} \BB r$ from $\BB r \eta$, we see that if $P_f$ hits $B_{\ep\BB r}(\BB r \eta(t))$ then the following is true. There are times $0 < \tau < \sigma < \op{len}(P ; D_{h-f})$ such that $P(\tau) , P(\sigma) \in \pi$ and $P$ crosses between the inner and outer boundaries of $\BB A_{r_k,2r_k}(\BB r \eta(t))$ between times $\tau$ and $\sigma$. Since $\eta \subset   U \setminus B_{\ep^{1/2}}(\bdy U) $, we have $\pi \subset \BB r U$. By~\eqref{eqn-geodesic-leb-length}, we can obtain a path in $\BB r \ol U$ with the same endpoints as $P_f$ which is $D_{h-f}$-shorter than $P_f$ by replacing $P_f|_{[\tau,\sigma]}$ by a segment of the path $\pi$. This contradicts the fact that $P_f$ is a $D_{h-f}(\cdot,\cdot;\BB r \ol U)$-geodesic, so we conclude that $P_f$ cannot hit $B_{\ep\BB r}(\BB r \eta(t))$, as required.
\end{proof}

\section{Quantifying the optimality of the optimal bi-Lipschitz constants}
\label{sec-attained}

\subsection{Events for the optimal bi-Lipschitz constants}
\label{sec-bilip-events}

Let $h$ be a whole-plane GFF and let $D_h$ and $\wt D_h$ be two weak LQG metrics. 
We define the optimal upper and lower bi-Lipschitz constants $\Clower$ and $\Cupper$ as in Section~\ref{sec-bilip}, so that $\Clower$ and $\Cupper$ are deterministic and a.s.~\eqref{eqn-bilip} holds. Recall from Section~\ref{sec-outline} that we aim to prove by contradiction that $\Clower = \Cupper$. For this purpose, we will need several estimates which have non-trivial content only if $\Clower  < \Cupper$. 

From the optimality of $\Clower$ and $\Cupper$, we know that for every $\Cmed < \Cupper$, 
\eqb \label{eqn-near-optimal}
\BB P\left[ \text{$\exists$ non-singular $ u,v \in \BB C $ such that $\wt D_h(u,v) \geq\Cmed D_h(u,v)$} \right] >  0 .
\eqe
A similar statement holds for every $\Cmid > \Clower$. The goal of this section is to prove various quantitative versions of~\eqref{eqn-near-optimal}, which include regularity conditions on $u$ and $v$ and which are required to hold uniformly over different Euclidean scales. 

Our results will be stated in terms of two events, which are defined in Definitions~\ref{def-opt-event} and~\ref{def-annulus-geo} just below. 
In this subsection, we will prove some basic facts about these events and state the main estimates we need for them (Propositions~\ref{prop-attained-good} and~\ref{prop-attained-good'}). 
Then, in Section~\ref{sec-attained-proof}, we will prove our main estimates. 

\begin{defn} \label{def-opt-event}
For $r > 0$, $\Kopt > 0$, and $\Cmed > 0$, we let $G_r(\Kopt , \Cmed)$ be the event that there exist $z,w\in \ol B_r(0)$ such that 
\eqbn
 \wt D_h\left( B_{\Kopt r}(z) , B_{\Kopt r}(w) \right) \geq \Cmed D_h(z,w)    .
\eqen  
\end{defn}

The event $G_r(\Kopt , \Cmed)$ is a slightly stronger version of the event in~\eqref{eqn-near-optimal}. 
Our other event has a more complicated definition, and includes several regularity conditions on $u$ and $v$. 
See Figure~\ref{fig-annulus-geo} for an illustration. 

\begin{defn} \label{def-annulus-geo}
For $r > 0$, $\Kann \in (3/4,1)$, and $\Cmed> 0$, we let $H_r(\Kann  ,\Cmed)$ be the event that there exist non-singular points $u\in \bdy B_{\Kann r}(0)$ and $v\in\bdy B_r(0)$ such that
\eqb  \label{eqn-annulus-geo-opt} 
\wt D_h(u,v) \geq \Cmed D_h(u,v)  
\eqe 
and a $D_h$-geodesic $P$ from $u$ to $v$ such that the following is true. 
\begin{enumerate}[$(i)$]
\item $P \subset \ol{\BB A}_{\Kann r , r}(0)$.
\item The Euclidean diameter of $P$ is at most $r/100$. 
\item $D_h(u,v) \leq (1-\Kann)^{-1} r^{\xi Q} e^{\xi h_r(0)}$. 
\item Let $\geoExp > 0$ be as in Lemma~\ref{lem-hit-ball-phi} with $\chi = 1/2$. For each $ \delta \in \left(0,(1-\Kann)^2\right]$, \label{item-H0-loop}
\eqb \label{eqn-annulus-geo-loop} 
\max\left\{    D_h\left( u , \bdy B_{\delta r}(u) \right)  ,   D_h\left(\text{around $\BB A_{\delta r , \delta^{1/2} r}(u)$} \right)   \right\} \leq \delta^\geoExp D_h(u,v)
\eqe 
and the same is true with the roles of $u$ and $v$ interchanged. 
\end{enumerate}
\end{defn}

\begin{figure}[ht!]
\begin{center}
\includegraphics[width=.6\textwidth]{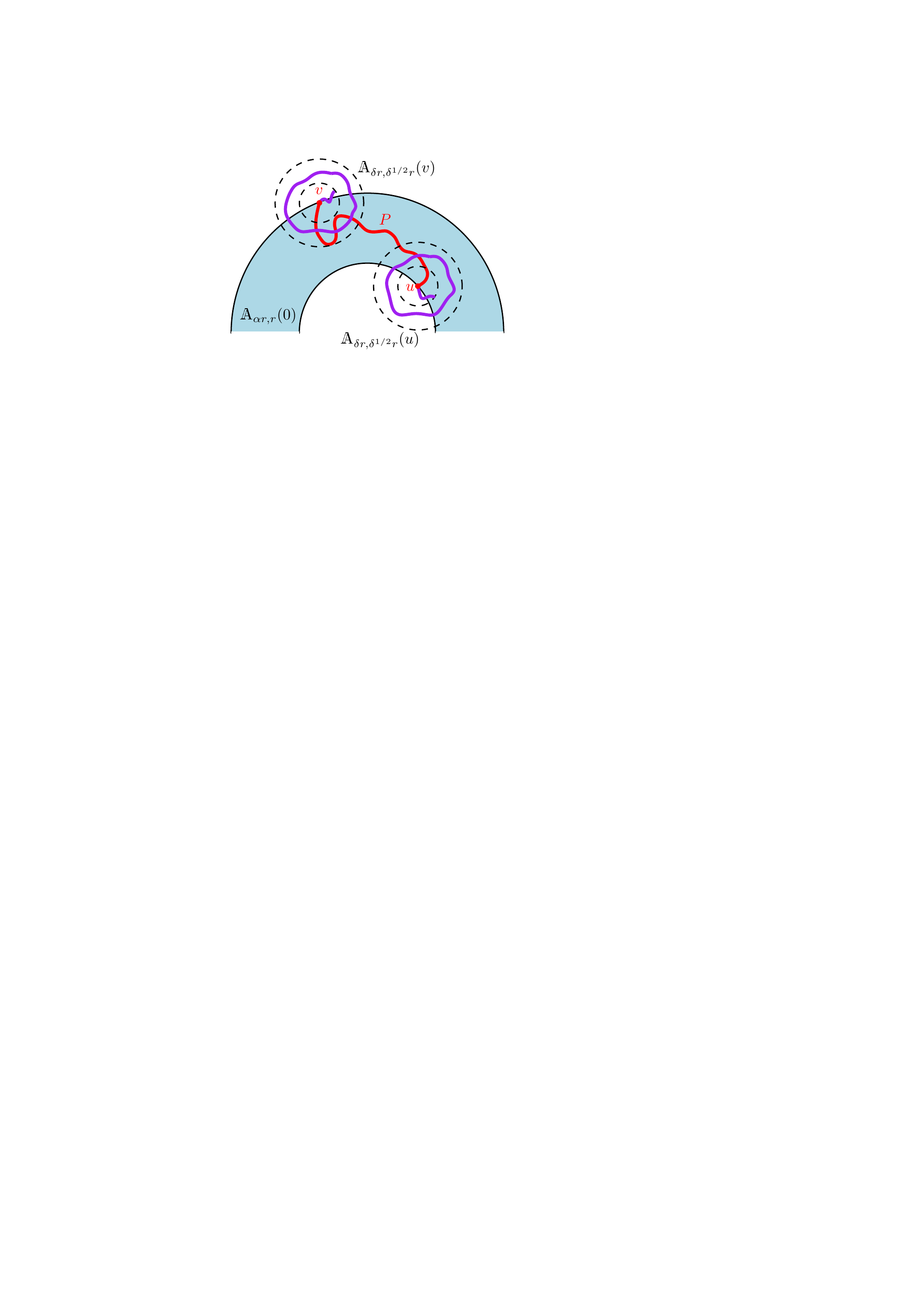} 
\caption{\label{fig-annulus-geo} Illustration of the event $H_r(\Kann,\Cmed)$ of Definition~\ref{def-annulus-geo}. The last condition~\eqref{item-H0-loop} says that for each $\delta>0$, there exist purple paths as in the figure whose $D_h$-lengths are at most $\delta^\geoExp D_h(u,v)$. The figure is not shown to scale --- in actuality we will take $\Kann$ to be close to 1, so the light blue annulus will be quite narrow. 
}
\end{center}
\end{figure}

The main result of this section, which will be proven in Section~\ref{sec-attained-proof}, tells us that (for appropriate values of $\Kopt,\Cmed',\Kann,\Cmed$) if $\BB P[G_{\BB r}(\Kopt , \Cmed')] \geq \Kopt$, then there are lots of ``scales" $r < \BB r$ for which $\BB P[H_r(\Kann  ,\Cmed)]$ is bounded below by a constant which does not depend on $r$ or $\Cmed$. 
 
\begin{prop} \label{prop-attained-good}
There exist $\Kann   \in (3/4 ,1)$ and $p \in (0,1)$, depending only on the laws of $D_h$ and $\wt D_h$, such that for each $\Cmed \in (0,\Cupper)$, there exists $\Cmed'  = \Cmed'(\Cmed)  \in (\Cmed , \Cupper)$ such that for each $\Kopt \in (0,1)$, there exists $\ep_0 = \ep_0( \Kopt,\Cmed )  > 0$ with the following property. If $\BB r > 0$ and $\BB P[ G_{\BB r}(\Kopt, \Cmed') ]  \geq \Kopt$, then the following is true for each $\ep \in (0,\ep_0]$.  
\begin{enumerate}[(A)]  
\item \label{item-attained-good} There are at least $\frac34 \log_8\ep^{-1}$ values of $r \in [\ep^2 \BB r ,\ep \BB r ] \cap \{8^{-k} \BB r : k\in\BB N\}$ for which $\BB P[H_r(\Kann, \Cmed)] \geq p$. 
\end{enumerate}
\end{prop}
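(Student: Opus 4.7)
I would proceed by contradiction: if the conclusion fails for some $\ep \in (0,\ep_0]$, then at least $\frac{1}{4}\log_8 \ep^{-1}$ admissible scales $r \in [\ep^2 \BB r, \ep \BB r] \cap \{8^{-k}\BB r\}_{k\in\BB N}$ are \emph{bad} in the sense that $\BB P[H_r(\Kann,\Cmed)] < p$. Since the law of the whole-plane GFF is translation invariant modulo additive constant and $D_h$, $\wt D_h$ are translation covariant by Axiom~\reftranslate, the analogous event centered at any fixed $z_0$ also has probability $< p$. I would invoke the offcenter iteration Lemma~\ref{lem-annulus-offcenter} for the complementary events over the bad scales, with $p$ chosen close enough to $1$ depending only on the laws of $D_h$ and $\wt D_h$, and combine this with a union bound over a sufficiently fine grid $\mcl Z \subset B_{2\BB r}(0)$ to produce a high-probability covering event: for every $z_0 \in \mcl Z$, the complementary event holds at some bad scale $r = r(z_0)$, i.e., in the annulus $\BB A_{\Kann r,r}(z_0)$ no non-singular pair $(u,v)$ on the two boundary circles simultaneously satisfies $\wt D_h(u,v) \geq \Cmed D_h(u,v)$ together with the regularity conditions (i)--(iv) of Definition~\ref{def-annulus-geo}.

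Intersecting this covering event with $G_{\BB r}(\Kopt,\Cmed')$ (of probability $\geq \Kopt$ by hypothesis), I would pick witnesses $z, w \in \ol B_{\BB r}(0)$ and the a.s.\ unique $D_h$-geodesic $P$ from $z$ to $w$ (Lemmas~\ref{lem-geodesics} and~\ref{lem-geo-unique}), confined to $B_{2\BB r}(0)$ with high probability by Lemma~\ref{lem-geo-compact}. Every point on $P$ is non-singular since $D_h(z,w) < \infty$ on $G_{\BB r}$. I would then greedily extract a sequence of disjoint annular crossings of $P$: at each step, pick the earliest grid point $z_0 \in \mcl Z$ meeting the unused portion of $P$, and let $u$ and $v$ be the last entry into $\bdy B_{\Kann r(z_0)}(z_0)$ and the first exit from $\bdy B_{r(z_0)}(z_0)$ of the corresponding sub-arc of $P$. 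Condition (i) of Definition~\ref{def-annulus-geo} holds tautologically for this crossing, and by choosing a sufficiently fine grid and invoking Lemma~\ref{lem-hit-ball-phi} with $\chi=1/2$ together with the moment estimate Lemma~\ref{lem-ball-bdy-moment}, I would force conditions (ii)--(iv) to hold on all but a negligible fraction of the selected crossings. At each such crossing the covering event then forces $\wt D_h(u,v) < \Cmed D_h(u,v)$.

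Replacing each successful sub-arc $P|_{u \to v}$ (which is itself a $D_h$-geodesic of length $D_h(u,v)$) by a $\wt D_h$-geodesic of length at most $\Cmed D_h(u,v)$, and bounding the residual portion of $P$ by the universal estimate $\wt D_h \leq \Cupper D_h$, yields $\wt D_h(z,w) \leq \bigl(\alpha\,\Cmed + (1-\alpha)\,\Cupper\bigr) D_h(z,w)$, where $\alpha \in [0,1]$ is the fraction of $D_h(z,w)$ captured by the successful crossings. Choosing $\Kann$ close to $1$ and $p$ close to $1$ (both depending only on the laws of $D_h$, $\wt D_h$) forces $\alpha$ arbitrarily close to $1$; fixing $\Cmed'$ strictly between this mixture and $\Cupper$ then gives $\wt D_h(z,w) < \Cmed' D_h(z,w)$, and together with $\wt D_h(B_{\Kopt \BB r}(z), B_{\Kopt \BB r}(w)) \leq \wt D_h(z,w)$ this contradicts the defining inequality of $G_{\BB r}(\Kopt,\Cmed')$. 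The principal obstacle is enforcing the regularity conditions (ii)--(iv) on sufficiently many of the greedily selected crossings: in the supercritical regime $D_h$ is not Euclidean-continuous and singular points are dense, so a generic annulus crossing of a geodesic need not give rise to a ``regular'' boundary pair, and Lemma~\ref{lem-hit-ball-phi} (which produces short loops around any point on a geodesic) is the key tool that rescues the extraction.
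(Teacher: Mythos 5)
Your proposal follows essentially the same route as the paper's Section~3.2: prove the contrapositive, use the concentric-annulus iteration lemma (together with a grid union bound) to cover the relevant ball by annuli in which the complementary event occurs, extract a chain of disjoint annular crossings along the $D_h$-geodesic from $\BB z$ to $\BB w$, replace each crossing segment by a $\wt D_h$-geodesic of length at most $\Cmed$ times the old $D_h$-length, and contradict $G_{\BB r}(\Kopt,\Cmed')$. Your choice of tools (Lemma~\ref{lem-hit-ball-phi} with $\chi = 1/2$ for the regularity conditions, the translation invariance to reduce to $z = 0$, the greedy extraction along $P$) matches the paper's construction of the events $E_r(z)$ closely enough that this is the same argument in all essentials.

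One claim is, however, both incorrect and an indication of a conceptual confusion worth flagging. You write that choosing $\Kann$ close to $1$ (together with $p$ close to $1$) ``forces $\alpha$ arbitrarily close to $1$,'' where $\alpha$ is the fraction of $D_h(\BB z,\BB w)$ captured by the replaced crossings. This is not what happens, and it is not needed. In the paper's argument the captured fraction of each sub-interval $[t_{j-1},t_j]$ is bounded below, via Lemma~\ref{lem-attained-compare}, by $1/(\Karound+1)$, where $\Karound$ is the ratio comparing $D_h(\text{around }\BB A_{\Kann r, r}(\cdot))$ with $D_h(\text{across }\BB A_{\Kann r, r}(\cdot))$. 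As $\Kann \rta 1$ the annulus gets narrower, the across-distance shrinks while the around-distance stays of constant order, so $\Karound$ grows and the lower bound on $\alpha$ deteriorates rather than improving. The mechanism that actually closes the argument only requires $\alpha$ bounded below by a constant depending solely on the laws of $D_h,\wt D_h$; one then fixes $\Cmed'$ strictly between $\Cmed + \frac{\Karound}{\Karound+1}(\Cupper - \Cmed)$ and $\Cupper$, as in the paper's~\eqref{eqn-C''-choice}. The reason $\Kann$ must be taken close to $1$ is a different one entirely: it guarantees that the event $E_r(z)$ is measurable with respect to $h|_{\ol{\BB A}_{r/2,2r}(z)}$ (Lemma~\ref{lem-attained-msrble}), which is what licenses the application of the concentric-annulus independence lemma. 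You should keep these two roles of the parameters separate, since conflating them would make it unclear that $\Cmed'$ can be chosen a priori, independently of $\ep$, $\BB r$, and the random realization.
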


We emphasize that in Proposition~\ref{prop-attained-good}, the parameters $\Kann$ and $p$ do \emph{not} depend in $\Cmed$. This will be crucial for our argument in Section~\ref{sec-counting-conclusion}.  

In the remainder of this subsection, we will prove some basic lemmas about the events of Definitions~\ref{def-opt-event} and~\ref{def-annulus-geo}, some of which are consequences of Proposition~\ref{prop-attained-good}. 
In order for Proposition~\ref{prop-attained-good} to have non-trivial content, one needs a lower bound for $\BB P[G_{\BB r}(\Kopt , \Cmed)]$. 
It is straightforward to check that one has such a lower bound if $\BB r = 1$ and $\Kopt$ is small enough.  

\begin{lem} \label{lem-opt-pos}
For each  $\Cmed < \Cupper$, there exists $\Kopt    > 0$, depending on $\Cmed$ and the laws of $D_h$ and $\wt D_h$, such that $\BB P[G_1(\Kopt, \Cmed)] > 0$.
\end{lem}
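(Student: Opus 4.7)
The proof is by contradiction. Suppose that $\BB P[G_1(\Kopt, \Cmed)] = 0$ for every $\Kopt > 0$. The event $G_1(\Kopt, \Cmed)$ is monotone in $\Kopt$, namely $G_1(\Kopt_1, \Cmed) \supseteq G_1(\Kopt_2, \Cmed)$ for $\Kopt_1 \leq \Kopt_2$, since shrinking the balls only increases the infimum $\wt D_h(B_{\Kopt r}(z), B_{\Kopt r}(w))$. Therefore $\bigcup_{\Kopt > 0} G_1(\Kopt, \Cmed) = \bigcup_{n \in \BB N} G_1(1/n, \Cmed)$ and has probability zero. For non-singular $z, w \in \ol B_1(0)$ with $z \neq w$, lower semicontinuity of $\wt D_h$ (Definition~\ref{def-lsc}) gives $\lim_{\Kopt \to 0^+} \wt D_h(B_\Kopt(z), B_\Kopt(w)) = \wt D_h(z, w)$. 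Consequently a.s.\ $\wt D_h(z, w) \leq \Cmed D_h(z, w)$ for every non-singular pair $z, w \in \ol B_1(0)$.

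The next step transfers this bound from the unit disk centered at $0$ to every unit disk. Combine translation invariance (Axiom~\reftranslate), the identity $h(\cdot + z_0) - h_1(z_0) \eqD h$ (modulo additive constant), and Weyl scaling (Axiom~\ref{item-metric-f}), which ensures that adding a constant to $h$ multiplies both $D_h$ and $\wt D_h$ by the same factor and therefore preserves the ratio $\wt D_h/D_h$: for every deterministic $z_0 \in \BB C$, the event ``$\wt D_h(u,v) \leq \Cmed D_h(u,v)$ for all non-singular $u, v \in \ol B_1(z_0)$'' has probability one. Taking a countable intersection over $z_0 \in \BB Q^2$ and noting that any two points $u, v \in \BB C$ with $|u - v| < 2$ belong to some $\ol B_1(z_0)$ with $z_0 \in \BB Q^2$, I conclude that a.s.\ $\wt D_h(u, v) \leq \Cmed D_h(u, v)$ for all non-singular $u, v$ with $|u - v| < 2$.

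Finally, I extend the bound to arbitrary non-singular pairs by path concatenation. Fix non-singular $u, v \in \BB C$ and $\delta > 0$. By the length-space property (Axiom~\ref{item-metric-length}), there is a $D_h$-rectifiable path $\gamma$ from $u$ to $v$ of $D_h$-length at most $D_h(u, v) + \delta$, which we parametrize by $D_h$-length. A key observation is that $\gamma$ cannot visit any singular point: by~\eqref{def-singular}, a singular point lies at infinite $D_h$-distance from every other point, whereas the subpath of $\gamma$ from $u$ to any intermediate point $\gamma(t)$ has finite $D_h$-length, and $u \neq \gamma(t)$ when $\gamma(t)$ is singular since $u$ is not. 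Hence $\gamma(t)$ is non-singular for every $t$. Since a.s.\ the Euclidean metric is continuous with respect to $D_h$~\cite[Proposition~1.10]{pfeffer-supercritical-lqg}, the path $\gamma$ is Euclidean continuous, and we may choose a partition $0 = t_0 < t_1 < \dots < t_n = \op{len}(\gamma; D_h)$ with $|\gamma(t_i) - \gamma(t_{i+1})| < 2$ for every $i$. Summing the local bound from the preceding step gives
\begin{align*}
\wt D_h(u, v) \leq \sum_i \wt D_h(\gamma(t_i), \gamma(t_{i+1})) \leq \Cmed \sum_i D_h(\gamma(t_i), \gamma(t_{i+1})) \leq \Cmed \bigl(D_h(u, v) + \delta\bigr).
\end{align*}
Sending $\delta \searrow 0$ yields $\wt D_h(u, v) \leq \Cmed D_h(u, v)$ for all non-singular $u, v$, whence $\Cupper \leq \Cmed$ by~\eqref{eqn-bilip-def}, contradicting the standing hypothesis $\Cmed < \Cupper$. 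The main obstacle is the last step: verifying that the path $\gamma$ can be traversed via non-singular waypoints; the observation that singular points are unreachable by any finite-length $D_h$-path handles this cleanly and obviates any density argument for non-singular points along $\gamma$.
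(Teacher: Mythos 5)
Your proof is correct and follows essentially the same route as the paper's: assume $\BB P[G_1(\Kopt, \Cmed)] = 0$ for all $\Kopt$, pass to the limit $\Kopt \to 0$ via lower semicontinuity, translate to cover all nearby pairs, and concatenate along a (near-)geodesic to get $\wt D_h \leq \Cmed D_h$ globally, contradicting the definition of $\Cupper$. Your explicit verification that the intermediate points on the path are non-singular (because they lie at finite $D_h$-distance from $u$) is a detail the paper glosses over when it invokes a $D_h$-geodesic, but it is not a different argument.
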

\begin{proof}
We will prove the contrapositive.
Let $\Cmed > 0$ and assume that 
\eqb  \label{eqn-opt-pos-hyp}
\BB P\left[ G_1(\Kopt , \Cmed) \right]  = 0,\quad \forall \Kopt  > 0 .
\eqe 
We will show that $\Cmed \geq \Cupper$. The assumption~\eqref{eqn-opt-pos-hyp} implies that a.s.\ 
\eqb \label{eqn-opt-pos-balls}
 \wt D_h\left( B_{\Kopt  }(z) , B_{\Kopt  }(w) \right) <  \Cmed D_h(z,w) , \quad \forall z , w \in \ol B_1(0) ,\quad \forall \Kopt > 0 . 
\eqe 
By lower semicontinuity, for each $z, w\in B_1(0)$, 
\eqbn
\wt D_h(z,w) \leq \liminf_{\Kopt \rta 0} \wt D_h\left( B_{\Kopt }(z) , B_{\Kopt }(w) \right)  , 
\eqen 
so~\eqref{eqn-opt-pos-balls} implies that a.s.\
\eqb \label{eqn-opt-pos-unit}
 \wt D_h\left(  z,w \right) \leq  \Cmed D_h(z,w) , \quad \forall z , w \in \ol B_1(0)  . 
\eqe 
By the translation invariance property of $D_h$ (Axiom~\reftranslate) and the translation invariance of the law of $h$, viewed modulo additive constant,~\eqref{eqn-opt-pos-unit} implies that a.s.\
\eqb \label{eqn-opt-pos-all}
 \wt D_h\left(  z,w \right) \leq  \Cmed D_h(z,w) , \quad \forall z , w \in \BB C \: \text{such that}\: |z-w| \leq 1 .
\eqe
For a general pair of non-singular points $z,w\in\BB C$, we can apply~\eqref{eqn-opt-pos-all} to finitely pairs of points along a $D_h$-geodesic from $z$ to $w$ to get that a.s.\ $\wt D_h(z,w) \leq \Cmed D_h(z,w)$ for all $z,w\in\BB C$. By the minimality of $\Cupper$, this shows that $\Cmed \geq \Cupper$, as required.
\end{proof}

By combining Proposition~\ref{prop-attained-good} and Lemma~\ref{lem-opt-pos}, we get the following.
 
\begin{prop} \label{prop-geo-annulus-prob}
There exist $\Kann \in (3/4,1)$ and $p\in(0,1)$, depending only on the laws of $D_h$ and $\wt D_h$, such that for each $\Cmed \in (0,\Cupper)$ and each sufficiently small $\ep > 0$ (depending on $\Cmed$ and the laws of $D_h$ and $\wt D_h$), there are at least $\frac34\log_8\ep^{-1}$ values of $r \in [\ep^2 , \ep] \cap \{8^{-k}\}_{k\in\BB N}$ for which $\BB P[H_r(\Kann , \Cmed)] \geq p $.
\end{prop}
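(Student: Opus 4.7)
The plan is to derive Proposition~\ref{prop-geo-annulus-prob} as a direct corollary of Proposition~\ref{prop-attained-good} and Lemma~\ref{lem-opt-pos}. The bridge between the two is the elementary monotonicity observation that for fixed $r$ and $\Cmed$, the event $G_r(\Kopt,\Cmed)$ is decreasing in $\Kopt$: if $\Kopt' \leq \Kopt$ then $B_{\Kopt' r}(z) \subset B_{\Kopt r}(z)$, so $\wt D_h(B_{\Kopt' r}(z),B_{\Kopt' r}(w)) \geq \wt D_h(B_{\Kopt r}(z),B_{\Kopt r}(w))$, and hence $G_r(\Kopt,\Cmed) \subset G_r(\Kopt',\Cmed)$. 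This will let us replace the parameter $\Kopt_0$ from Lemma~\ref{lem-opt-pos} by something small enough to satisfy the hypothesis of Proposition~\ref{prop-attained-good}.

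First, I would fix the constants $\Kann \in (3/4,1)$ and $p \in (0,1)$ furnished by Proposition~\ref{prop-attained-good}; these depend only on the laws of $D_h$ and $\wt D_h$. Given $\Cmed \in (0,\Cupper)$, Proposition~\ref{prop-attained-good} supplies a corresponding $\Cmed' = \Cmed'(\Cmed) \in (\Cmed,\Cupper)$. Applying Lemma~\ref{lem-opt-pos} with this $\Cmed'$ produces $\Kopt_0 > 0$ and a strictly positive probability $p_0 := \BB P[G_1(\Kopt_0,\Cmed')] > 0$.

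Next, set $\Kopt := \min\{\Kopt_0, p_0\} \in (0,1)$. By the monotonicity of $G_1(\cdot,\Cmed')$ in its first argument, $\BB P[G_1(\Kopt,\Cmed')] \geq \BB P[G_1(\Kopt_0,\Cmed')] = p_0 \geq \Kopt$, so the hypothesis of Proposition~\ref{prop-attained-good} is satisfied with $\BB r = 1$ and this choice of $\Kopt$. Invoking Proposition~\ref{prop-attained-good} with these parameters then yields a threshold $\ep_0 = \ep_0(\Kopt,\Cmed) > 0$ — which, unwinding the dependencies, depends only on $\Cmed$ and the laws of $D_h$ and $\wt D_h$ — such that for every $\ep \in (0,\ep_0]$, at least $\frac{3}{4}\log_8 \ep^{-1}$ of the radii $r \in [\ep^2,\ep] \cap \{8^{-k}\}_{k\in\BB N}$ satisfy $\BB P[H_r(\Kann,\Cmed)] \geq p$. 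This is exactly the conclusion of Proposition~\ref{prop-geo-annulus-prob}.

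There is no real obstacle in this deduction — the entire content lies in Proposition~\ref{prop-attained-good} and Lemma~\ref{lem-opt-pos}, which will be established in Section~\ref{sec-attained-proof} and earlier in this subsection respectively. The only point that requires a moment's thought is ensuring that the parameters $\Kann$ and $p$ can be chosen independently of $\Cmed$; this is guaranteed by the explicit statement of Proposition~\ref{prop-attained-good}, and will be essential for the contradiction argument in Section~\ref{sec-counting-conclusion}.
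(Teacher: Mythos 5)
Your proof is correct and follows the same route as the paper: fix $\Kann$, $p$, $\Cmed'$ from Proposition~\ref{prop-attained-good}, invoke Lemma~\ref{lem-opt-pos}, and then apply Proposition~\ref{prop-attained-good} with $\BB r=1$. You are slightly more careful than the paper's write-up, which asserts directly that Lemma~\ref{lem-opt-pos} gives $\BB P[G_1(\Kopt,\Cmed')]\geq\Kopt$ even though the lemma only states positivity; your explicit monotonicity-in-$\Kopt$ step (shrinking $\Kopt$ to $\min\{\Kopt_0,p_0\}$) is exactly the implicit justification the paper is relying on.
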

\begin{proof}
Let $\Kann \in (3/4,1)$ and $p\in (0,1)$ (depending only on the laws of $D_h$ and $\wt D_h$) and $\Cmed'  \in (\Cmed , \Cupper)$ (depending only on $\Cmed$ and the laws of $D_h$ and $\wt D_h$) be as in Proposition~\ref{prop-attained-good}. 
By Lemma~\ref{lem-opt-pos} (applied with $\Cmed'$ instead of $\Cmed$), there exists $\Kopt > 0$, depending only on $\Cmed$ and the laws of $D_h$ and $\wt D_h$, such that $\BB P[G_1(\Kopt , \Cmed')] \geq \Kopt$. 
By Proposition~\ref{prop-attained-good} applied with $\BB r = 1$, we now obtain the proposition statement. 
\end{proof}

We will also need an analog of Proposition~\ref{prop-geo-annulus-prob} with the events $G_r(\Kopt , \Cmed)$ in place of the events $H_r(\Kann , \Cmed)$, which strengthens Lemma~\ref{lem-opt-pos}.  
 
\begin{prop} \label{prop-opt-prob}
For each $\Cmed \in (0,\Cupper)$, there exists $\Kopt > 0 $, depending on $\Cmed$ and the laws of $D_h$ and $\wt D_h$, such that for each small enough $\ep > 0$ (depending on $\Cmed$ and the laws of $D_h$ and $\wt D_h$), there are at least $\frac34\log_8\ep^{-1}$ values of $r \in [\ep^2 , \ep] \cap \{8^{-k}\}_{k\in\BB N}$ for which $\BB P[G_r(\Kopt , \Cmed)] \geq \Kopt$.
\end{prop}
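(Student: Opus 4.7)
The plan is to deduce Proposition~\ref{prop-opt-prob} from Proposition~\ref{prop-geo-annulus-prob} by showing that, for well-chosen parameters, the event $H_r(\Kann, \Cmed')$ with $\Cmed' \in (\Cmed, \Cupper)$ is contained in $G_r(\Kopt, \Cmed)$. Concretely, I will fix $\Cmed' := (\Cmed + \Cupper)/2$, invoke Proposition~\ref{prop-geo-annulus-prob} with parameter $\Cmed'$ to produce constants $\Kann \in (3/4,1)$ and $p \in (0,1)$ (depending only on the laws of $D_h$ and $\wt D_h$) and, for each sufficiently small $\ep$, at least $\frac34 \log_8 \ep^{-1}$ scales $r \in [\ep^2,\ep] \cap \{8^{-k}\}_{k\in\BB N}$ with $\BB P[H_r(\Kann,\Cmed')] \geq p$. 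I will then choose $\Kopt \in (0,p]$ small enough (depending only on $\Cmed$ and the laws of $D_h,\wt D_h$) that the inclusion $H_r(\Kann,\Cmed') \subset G_r(\Kopt,\Cmed)$ holds deterministically, yielding $\BB P[G_r(\Kopt,\Cmed)] \geq p \geq \Kopt$ at each such scale.

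For the deterministic inclusion, on $H_r(\Kann,\Cmed')$ one obtains non-singular points $u \in \bdy B_{\Kann r}(0)$ and $v \in \bdy B_r(0)$, both in $\ol B_r(0)$, with $|u-v| \geq (1-\Kann)r$ and $\wt D_h(u,v) \geq \Cmed' D_h(u,v)$. Restricting to $\Kopt \leq (1-\Kann)^2$, condition~(iv) of Definition~\ref{def-annulus-geo} applied with $\delta = \Kopt$ produces a loop $L_u$ in $\BB A_{\Kopt r, \sqrt{\Kopt}\, r}(u)$ disconnecting the inner and outer boundaries with $D_h$-length at most $\Kopt^\geoExp D_h(u,v)$; applied with $\delta = \sqrt{\Kopt}$ it gives $D_h(u,\bdy B_{\sqrt{\Kopt}\, r}(u)) \leq \Kopt^{\geoExp/2} D_h(u,v)$, and since $L_u$ separates $u$ from $\bdy B_{\sqrt{\Kopt}\, r}(u)$ this also bounds $D_h(u,L_u)$. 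Analogous bounds hold around $v$, and by~\eqref{eqn-bilip} all of them transfer to $\wt D_h$ at the cost of a factor of $\Cupper$. Given any path $P$ from $B_{\Kopt r}(u)$ to $B_{\Kopt r}(v)$, the choice $\sqrt{\Kopt} < (1-\Kann)/2$ forces $P$ to cross both $L_u$ and $L_v$, so I can splice together a short path $u \to L_u$, an arc of $L_u$ to the first $P$-crossing, the corresponding segment of $P$, an arc of $L_v$ to $P$'s last $L_v$-crossing, and a short path $L_v \to v$ to obtain a path from $u$ to $v$ whose $\wt D_h$-length is at most $\op{len}(P;\wt D_h) + C_1 \Kopt^{\geoExp/2} D_h(u,v)$ for a constant $C_1$ depending only on $\Cupper$. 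Taking the infimum over $P$ and rearranging gives $\wt D_h(B_{\Kopt r}(u), B_{\Kopt r}(v)) \geq (\Cmed' - C_1 \Kopt^{\geoExp/2}) D_h(u,v)$, which is at least $\Cmed D_h(u,v)$ once $\Kopt$ is sufficiently small, giving the event $G_r(\Kopt,\Cmed)$ with $z=u$, $w=v$.

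I do not anticipate a substantial conceptual obstacle, since the loop conditions in~(iv) of Definition~\ref{def-annulus-geo} were engineered precisely to upgrade point-to-point bounds to ball-to-ball ones, and the entire argument is elementary $\wt D_h$-triangle-inequality bookkeeping once those loops are in hand. The only care required is to make a single choice of $\Kopt$, depending only on $\Cmed$ and the laws of $D_h$ and $\wt D_h$, simultaneously satisfying $\Kopt \leq (1-\Kann)^2$ (so condition~(iv) applies at both $\delta = \Kopt$ and $\delta = \sqrt{\Kopt}$), $\sqrt{\Kopt} < (1-\Kann)/2$ (so every path $P$ between the two balls must cross both loops), $C_1 \Kopt^{\geoExp/2} \leq \Cmed' - \Cmed$ (so the splicing loss is absorbed), and $\Kopt \leq p$ (so the final probability lower bound is at least $\Kopt$). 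All four constraints are compatible because $\Kann, \Cupper, p, \geoExp$ do not depend on $\Kopt$.
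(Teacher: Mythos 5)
Your proposal is correct and takes essentially the same approach as the paper, which proves this by combining Proposition~\ref{prop-geo-annulus-prob} with a separate Lemma~\ref{lem-opt-events} stating precisely the deterministic inclusion $H_r(\Kann,\Cmed') \subset G_r(\Kopt,\Cmed)$ that you derive inline via the same splicing of $u\to L_u$, an arc of $L_u$, a segment of $P$, an arc of $L_v$, $L_v\to v$. One small slip: to invoke condition~(iv) of Definition~\ref{def-annulus-geo} at $\delta = \sqrt{\Kopt}$ you need $\sqrt{\Kopt} \le (1-\Kann)^2$, i.e.\ $\Kopt \le (1-\Kann)^4$ rather than the $(1-\Kann)^2$ you wrote, but since all your constraints on $\Kopt$ are one-sided this does not affect the argument.
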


We will deduce Proposition~\ref{prop-opt-prob} from Proposition~\ref{prop-geo-annulus-prob} and the following elementary relation between the events $H_r(\cdot,\cdot)$ and $G_r(\cdot,\cdot)$.

\begin{lem} \label{lem-opt-events}
If $\Kann \in (3/4,1)$ and $\zeta \in (0,1)$, there exists $\Kopt  > 0$, depending only on $\Kann,\zeta$, and the laws of $D_h$ and $\wt D_h$, such that the following is true. 
For each $r > 0$ and each $\Cmed  > 0$, if $H_r(\Kann,\Cmed)$ occurs, then $G_r(\Kopt , \Cmed-\zeta)$ occurs.
\end{lem}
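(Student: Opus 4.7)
The plan is to take $z = u$ and $w = v$ for the points $u \in \bdy B_{\Kann r}(0)$ and $v \in \bdy B_r(0)$ provided by the event $H_r(\Kann,\Cmed)$; both lie in $\ol B_r(0)$. Since the infimum in the definition of $\wt D_h(B_{\Kopt r}(u),B_{\Kopt r}(v))$ is not affected by pairs $(u',v')$ with $\wt D_h(u',v')=\infty$, it suffices to show, for any non-singular $u'\in B_{\Kopt r}(u)$ and non-singular $v'\in B_{\Kopt r}(v)$ with $\wt D_h(u',v')<\infty$, the pointwise bound $\wt D_h(u',v')\geq (\Cmed-\zeta) D_h(u,v)$, provided $\Kopt$ is chosen small enough depending only on $\Kann$, $\zeta$, and the laws of $D_h$ and $\wt D_h$. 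The case $\Cmed \leq \zeta$ is trivial, so we assume $\Cmed>\zeta$.

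The key tool is the loop condition~\eqref{item-H0-loop} of Definition~\ref{def-annulus-geo}. Applying it at $u$ with $\delta = \Kopt$ produces a loop $\pi_u$ in $\BB A_{\Kopt r,\Kopt^{1/2} r}(u)$ disconnecting the inner and outer boundaries, of $D_h$-length at most $\Kopt^{\geoExp} D_h(u,v)$. Applying it at $u$ with $\delta = \Kopt^{1/2}$ produces a path from $u$ to $\bdy B_{\Kopt^{1/2} r}(u)$ of $D_h$-length at most $\Kopt^{\geoExp/2} D_h(u,v)$; since $\pi_u$ separates $u$ from $\bdy B_{\Kopt^{1/2} r}(u)$, this path must cross $\pi_u$, so $D_h(u,\pi_u)\leq \Kopt^{\geoExp/2} D_h(u,v)$. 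Combining with the loop-length bound and the triangle inequality along $\pi_u$ yields $D_h(u,p)\leq 2\Kopt^{\geoExp/2} D_h(u,v)$ for every $p\in\pi_u$ (once $\Kopt^{\geoExp/2}\leq 1$). The bi-Lipschitz equivalence~\eqref{eqn-bilip} then upgrades this to $\wt D_h(u,p)\leq 2\Cupper \Kopt^{\geoExp/2} D_h(u,v)$. The analogous construction at $v$ produces a loop $\pi_v$ around $v$ with $\wt D_h(v,q)\leq 2\Cupper \Kopt^{\geoExp/2} D_h(u,v)$ for every $q\in\pi_v$. For $\Kopt < (1-\Kann)^2/4$, the annuli $\BB A_{\Kopt r, \Kopt^{1/2} r}(u)$ and $\BB A_{\Kopt r, \Kopt^{1/2} r}(v)$ are disjoint because $|u-v|\geq (1-\Kann) r$, so the two loops are disjoint; we also require $\Kopt\leq (1-\Kann)^4$ so that the instance of~\eqref{item-H0-loop} at scale $\delta=\Kopt^{1/2}$ is applicable.

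Now let $P'$ be a $\wt D_h$-geodesic from $u'$ to $v'$, which exists by Lemma~\ref{lem-geodesics}. Since $u'$ is inside $\pi_u$ and $v'$ is outside $\pi_u$ (and symmetrically for $\pi_v$), the geodesic $P'$ must hit both loops; let $P'(\tau_u)\in\pi_u$ be its first intersection with $\pi_u$ and $P'(\tau_v)\in\pi_v$ its last intersection with $\pi_v$, with $\tau_u<\tau_v$. Additivity of $\wt D_h$-length along $P'$ gives $\wt D_h(u',v')\geq \wt D_h(P'(\tau_u),P'(\tau_v))$. Applying the triangle inequality to the path $u \leadsto P'(\tau_u) \leadsto P'(\tau_v) \leadsto v$ and using the two bounds from the previous paragraph produces
\[
\wt D_h(u,v)\leq 4\Cupper \Kopt^{\geoExp/2} D_h(u,v) + \wt D_h(P'(\tau_u),P'(\tau_v)).
\]
Using the defining bound $\wt D_h(u,v)\geq \Cmed D_h(u,v)$ from $H_r(\Kann,\Cmed)$, we conclude $\wt D_h(u',v')\geq (\Cmed - 4\Cupper \Kopt^{\geoExp/2})D_h(u,v)$. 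Choosing $\Kopt$ small enough that $4\Cupper \Kopt^{\geoExp/2}\leq\zeta$ (together with the earlier constraints $\Kopt\leq (1-\Kann)^4$ and $\Kopt<(1-\Kann)^2/4$) finishes the proof, since this $\Kopt$ depends only on $\Kann$, $\zeta$, and the laws of $D_h$ and $\wt D_h$ (through $\Cupper$ and $\geoExp$).

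The main subtlety is that we cannot bound $\wt D_h(u,u')$ for a general non-singular $u'\in B_{\Kopt r}(u)$ purely from Euclidean closeness, because in the supercritical regime $\wt D_h$ is not Euclidean-continuous. Condition~\eqref{item-H0-loop} sidesteps this by giving us loops of small $\wt D_h$-length at $u$ and $v$ themselves, which a $\wt D_h$-geodesic between $u'$ and $v'$ is forced to cross. This is the only real obstacle; the rest of the argument is a careful triangle-inequality bookkeeping, and one should note that condition~(iii) of Definition~\ref{def-annulus-geo} is not needed here.
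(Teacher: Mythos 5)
Your proof is correct and follows essentially the same approach as the paper's: both arguments hinge on condition~\eqref{item-H0-loop} of Definition~\ref{def-annulus-geo}, use the small loop in $\BB A_{\Kopt r,\Kopt^{1/2} r}$ and the short path to $\bdy B_{\Kopt^{1/2} r}$ at each of $u$ and $v$, convert the resulting $D_h$-bounds to $\wt D_h$-bounds via the bi-Lipschitz constant $\Cupper$, and then combine with $\wt D_h(u,v)\geq\Cmed D_h(u,v)$ to close the inequality. The only cosmetic difference is that you phrase the estimate through the crossing points of a $\wt D_h$-geodesic from $u'$ to $v'$, whereas the paper directly concatenates an arbitrary path $P^\delta$ from $B_{\delta r}(u)$ to $B_{\delta r}(v)$ with the loops and radial paths to build a single path from $u$ to $v$ and applies the definition of $\wt D_h$ as an infimum — which avoids invoking geodesic existence and the singular-point caveat, but yields the identical inequality chain.
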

\begin{proof}
Assume that $H_r(\Kann , \Cmed)$ occurs and let $u$ and $v$ be as in Definition~\ref{def-annulus-geo} of $H_r(\Kann , \Cmed)$. 
By Definition~\ref{def-opt-event} of $G_r(\Kopt , \Cmed-\zeta)$, it suffices to find $\Kopt > 0$ as in the lemma statement such that
\eqb \label{eqn-opt-events-show}
 \wt D_h\left( B_{\Kopt r}(u) , B_{\Kopt r}(v) \right) \geq ( \Cmed - \zeta)  D_h(u,v)  .
\eqe

To this end, let $\delta >0$ and suppose that $P^\delta$ is a path from $  B_{\delta r}(u)$ to $  B_{\delta r}(v)$; $P_u^\delta$ and $P^\delta_v$ are paths from $u$ and $v$ to $\bdy B_{\delta^{1/2} r}(u)$ and $\bdy B_{\delta^{1/2} r}(v)$, respectively; and $\pi_u^\delta$ and $\pi_v^\delta$ are paths in $\BB A_{\delta r,\delta^{1/2} r}(u)$ and  $\BB A_{\delta r,\delta^{1/2} r}(u)$, respectively, which disconnect the inner and outer boundaries.
Then the union $P^\delta \cup P_u^\delta \cup P_v^\delta \cup \pi_u^\delta \cup \pi_v^\delta$ contains a path from $u$ to $v$. 
From this observation followed by~\eqref{eqn-annulus-geo-loop} of Definition~\ref{def-annulus-geo} and the definition~\eqref{eqn-bilip-def} of $\Cupper$, we get that if $\delta \in (0,(1-\Kann)^4]$ then 
\allb \label{eqn-opt-events-decomp}
\wt D_h\left( u, v \right) 
&\leq  \wt D_h\left( B_{\delta r}(u) , B_{\delta r}(v)  \right) 
+ \sum_{w\in \{u,v\}}  \wt D_h\left( w , \bdy B_{\delta^{1/2} r}(w) \right) \notag\\
&\qquad\qquad\qquad\qquad\qquad + \sum_{w\in\{u,v\}} \wt D_h\left(\text{around $\BB A_{\delta r , \delta^{1/2} r}(w)$} \right) \notag\\ 
&\leq  \wt D_h\left(B_{\delta r}(u) , B_{\delta r}(v)  \right) 
+ \Cupper \sum_{w\in \{u,v\}}  D_h\left( w , \bdy B_{\delta^{1/2} r}(w) \right)  \notag\\
&\qquad\qquad\qquad\qquad\qquad  + \Cupper \sum_{w\in\{u,v\}}   D_h\left(\text{around $\BB A_{\delta r , \delta^{1/2} r}(w)$} \right) \notag\\   
&\leq  \wt D_h\left( B_{\delta r}(u) ,   B_{\delta r}(v)  \right)  +    2 \Cupper \left( \delta^{\geoExp / 2 } +    \delta^\geoExp \right) D_h(u,v) .
\alle

By~\eqref{eqn-annulus-geo-opt} and~\eqref{eqn-opt-events-decomp}, we obtain
\eqb
\wt D_h\left( B_{\delta r}(u) ,   B_{\delta r}(v)  \right) \geq \left[ \Cmed - 2 \Cupper \left( \delta^{\geoExp / 2 } +    \delta^\geoExp \right) \right] D_h(u,v) .
\eqe
We now obtain~\eqref{eqn-opt-events-show} by choosing $\delta \in (0,(1-\Kann)^4]$ to be sufficiently small, depending on $\zeta$ and $\Cupper$, and setting $\Kopt =\delta$. 
\end{proof}

\begin{proof}[Proof of Proposition~\ref{prop-opt-prob}] 
Let $\Kann \in (3/4,1)$ and $p\in (0,1)$ (depending only on the laws of $D_h$ and $\wt D_h$) be as in Proposition~\ref{prop-geo-annulus-prob}.
Also let $\Cmed' := (\Cmed + \Cupper)/2 \in (\Cmed,\Cupper)$. 
By Proposition~\ref{prop-geo-annulus-prob} (applied with $\Cmed'$ instead of $\Cmed$), for each small enough $\ep > 0$, there are at least $\frac34\log_8\ep^{-1}$ values of $r \in [\ep^2 , \ep] \cap \{8^{-k}\}_{k\in\BB N}$ for which $\BB P[H_r(\Kann , \Cmed')] \geq p $. 
By Lemma~\ref{lem-opt-events}, applied with $\Cmed'$ in place of $\Cmed$ and $\zeta = \Cmed' - \Cmed$, we see that there exists $\Kopt > 0$, depending only on $\Cmed$ and the laws of $D_h$ and $\wt D_h$, such that if $H_r(\Kann,\Cmed')$ occurs, then $G_r(\Kopt , \Cmed)$ occurs.
Combining the preceding two sentences gives the proposition statement with $p\wedge\Kopt$ in place of $\Kopt$. 
\end{proof}

Since our assumptions on the metrics $D_h$ and $\wt D_h$ are the same, all of the results above also hold with the roles of $D_h$ and $\wt D_h$ interchanged. For ease of reference, we will record some of these results here.

\begin{defn} \label{def-opt-event'}
For $r > 0$, $\Kopt > 0$, and $\Cmid > 0$, we let $\wt G_r(\Kopt , \Cmid)$ be the event that the event $G_r(\Kopt,1/\Cmid)$ of Definition~\ref{def-opt-event} occurs with the roles of $D_h$ and $\wt D_h$ interchanged.
That is, $\wt G_r(\Kopt , \Cmid)$ is the event that there exists $z,w\in \ol B_r(0)$ such that 
\eqbn
 \wt D_h\left( z, w \right) \leq \Cmid D_h\left( B_{\Kopt r}(z) , B_{\Kopt r}(w)  \right)     .
\eqen  
\end{defn}

\begin{defn} \label{def-annulus-geo'}
For $r > 0$, $\Kann \in (3/4,1)$, and $\Cmid> 0$, we let $\wt H_r(\Kann  ,\Cmid)$ be the event that the event $H_r(\Kann , 1/\Cmid)$ of Definition~\ref{def-annulus-geo} occurs with the roles of $D_h$ and $\wt D_h$ interchanged.
That is, $\wt H_r(\Kann , \Cmid)$ is the event that there exist non-singular points $u\in \bdy B_{\Kann r}(0)$ and $v\in\bdy B_r(0)$ such that
\eqb  \label{eqn-annulus-geo-opt'} 
\wt D_h(u,v) \leq \Cmid D_h(u,v)  
\eqe 
and a $\wt D_h$-geodesic $\wt P$ from $u$ to $v$ such that the following is true. 
\begin{enumerate}[$(i)$]
\item $\wt P \subset \ol{\BB A}_{\Kann r , r}(0)$.
\item The Euclidean diameter of $\wt P$ is at most $r/100$. 
\item $\wt D_h(u,v) \leq (1-\Kann)^{-1} r^{\xi Q} e^{\xi h_r(0)}$. 
\item Let $\geoExp > 0$ be as in Lemma~\ref{lem-hit-ball-phi} with $\chi = 1/2$. For each $ \delta \in \left(0,(1-\Kann)^2\right]$, 
\eqb \label{eqn-annulus-geo-loop-wt} 
\max\left\{   \wt D_h\left( u , \bdy B_{\delta r}(u) \right)  ,   \wt D_h\left(\text{around $\BB A_{\delta r , \delta^{1/2} r}(u)$} \right)   \right\} \leq \delta^\geoExp \wt D_h(u,v)
\eqe 
and the same is true with the roles of $u$ and $v$ interchanged. 
\end{enumerate}
\end{defn}

We have the following analog of Proposition~\ref{prop-attained-good}.

\begin{prop} \label{prop-attained-good'}
There exist $\Kann   \in (3/4 ,1)$ and $p \in (0,1)$, depending only on the laws of $D_h$ and $\wt D_h$, such that for each $\Cmid  > \Clower$, there exists $\Cmid'  = \Cmid'(\Cmid)  \in (\Clower, \Cmid)$ such that for each $\wt\Kopt \in (0,1)$, there exists $\ep_0 = \ep_0( \wt\Kopt,\Cmid )  > 0$ with the following property. If $\BB r > 0$ and $\BB P[ \wt G_{\BB r}(\wt\Kopt, \Cmid') ]  \geq \wt\Kopt$, then the following is true for each $\ep \in (0,\ep_0]$.  
\begin{enumerate}[(A')]  
\item \label{item-attained-good'} There are at least $\frac34 \log_8\ep^{-1}$ values of $r \in [\ep^2 \BB r ,\ep \BB r ] \cap \{8^{-k} \BB r : k\in\BB N\}$ for which $\BB P[\wt H_r(\Kann, \Cmid)] \geq p$. 
\end{enumerate}
\end{prop}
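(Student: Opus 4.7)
The plan is to derive Proposition~\ref{prop-attained-good'} from Proposition~\ref{prop-attained-good} by a direct symmetry argument, with essentially no new content to prove. Our hypotheses on $D_h$ and $\wt D_h$ are entirely symmetric---both are weak LQG metrics with the same parameter $\xi$ (Definition~\ref{def-metric})---so every conclusion about the ordered pair $(D_h, \wt D_h)$ applies equally to the pair $(\wt D_h, D_h)$ with the roles reversed. Moreover, Definitions~\ref{def-opt-event'} and~\ref{def-annulus-geo'} are expressly designed so that $\wt G_r$ and $\wt H_r$ are the events $G_r$ and $H_r$ with the roles of the two metrics swapped.

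Concretely, I would first observe that under the swap $D_h \leftrightarrow \wt D_h$, the optimal bi-Lipschitz constants defined in~\eqref{eqn-bilip-def} transform as $\Clower \mapsto 1/\Cupper$ and $\Cupper \mapsto 1/\Clower$. Given $\Cmid > \Clower$ as in the proposition, set $\Cmed := 1/\Cmid$, so that $\Cmed \in (0, 1/\Clower)$; this is exactly the range $\Cmed \in (0,\Cupper)$ demanded by the hypothesis of Proposition~\ref{prop-attained-good}, once we apply it to the swapped pair for which $1/\Clower$ plays the role of $\Cupper$. Proposition~\ref{prop-attained-good} then yields a constant (call it $\Cmed_*$) lying in $(\Cmed, 1/\Clower)$ and depending only on $\Cmed$ and the two laws; setting $\Cmid' := 1/\Cmed_*$ gives $\Cmid' \in (\Clower, \Cmid)$ as required. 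Reading Definitions~\ref{def-opt-event'} and~\ref{def-annulus-geo'}, the hypothesis $\BB P[\wt G_{\BB r}(\wt\Kopt, \Cmid')] \geq \wt\Kopt$ is literally the hypothesis $\BB P[G_{\BB r}(\wt\Kopt,\Cmed_*)] \geq \wt\Kopt$ of Proposition~\ref{prop-attained-good} with swapped roles, and the conclusion $\BB P[\wt H_r(\Kann, \Cmid)] \geq p$ is literally the conclusion $\BB P[H_r(\Kann,\Cmed)] \geq p$ of Proposition~\ref{prop-attained-good} with swapped roles.

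Since Proposition~\ref{prop-attained-good} produces $\Kann$ and $p$ depending only on the (unordered) pair of laws $\{D_h, \wt D_h\}$, the same values serve for both propositions---or, if one prefers, one may replace each constant by the minimum/maximum of its two incarnations---and the threshold $\ep_0$ transfers under the same dictionary. The main ``obstacle'' is therefore purely cosmetic: it amounts to carefully tracking the reciprocal dictionary $\Cmed \leftrightarrow 1/\Cmid$, $\Cmed_* \leftrightarrow 1/\Cmid'$ between the two propositions, and confirming that the proof of Proposition~\ref{prop-attained-good} to be given in Section~\ref{sec-attained-proof} uses only the axioms of a weak LQG metric and the abstract bi-Lipschitz setup from~\eqref{eqn-bilip-def}, both of which are manifestly invariant under exchange of $D_h$ and $\wt D_h$, so that no step of that proof breaks when the roles are swapped.
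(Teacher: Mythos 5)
Your proposal is correct and matches the paper's approach: the paper states Proposition~\ref{prop-attained-good'} immediately after observing that "Since our assumptions on the metrics $D_h$ and $\wt D_h$ are the same, all of the results above also hold with the roles of $D_h$ and $\wt D_h$ interchanged," and your reciprocal dictionary $\Cmid \leftrightarrow 1/\Cmed$, $\Cmid' \leftrightarrow 1/\Cmed'$, $\Clower \leftrightarrow 1/\Cupper$ is exactly what makes this bookkeeping go through. One minor caveat: your aside about replacing $\Kann$ by the min/max of its two incarnations isn't quite right, since the event $\wt H_r(\Kann,\cdot)$ is not obviously monotone in $\Kann$; but this doesn't matter, because each proposition is free to carry its own $\Kann$ and $p$ (and indeed the paper does not insist they coincide), so the symmetry argument works without that adjustment.
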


We will also need the following analog of Proposition~\ref{prop-opt-prob}.

\begin{prop} \label{prop-opt-prob'}
For each $\Cmid  > \Clower$, there exists $\wt\Kopt > 0 $, depending on $\Cmid$ and the laws of $D_h$ and $\wt D_h$, such that for each small enough $\ep > 0$ (depending on $\Cmid$ and the laws of $D_h$ and $\wt D_h$), there are at least $\frac34\log_8\ep^{-1}$ values of $r \in [\ep^2 , \ep] \cap \{8^{-k}\}_{k\in\BB N}$ for which $\BB P[\wt G_r(\wt\Kopt , \Cmid)] \geq \wt\Kopt$.
\end{prop}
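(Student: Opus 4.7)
The plan is to mirror the proof of Proposition~\ref{prop-opt-prob} verbatim, swapping the roles of $D_h$ and $\wt D_h$ throughout. The paper already supplies the primed analog of the key intermediate (Proposition~\ref{prop-attained-good'}), so what remains is to build symmetric analogs of the two auxiliary lemmas used in the proof of Proposition~\ref{prop-opt-prob}: Lemma~\ref{lem-opt-pos}, which supplies the positivity hypothesis of Proposition~\ref{prop-attained-good'}, and Lemma~\ref{lem-opt-events}, which converts an $\wt H_r$-event into a $\wt G_r$-event.

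First I would establish the primed analog of Lemma~\ref{lem-opt-pos}: for each $\Cmid > \Clower$, there exists $\wt\Kopt > 0$ with $\BB P[\wt G_1(\wt\Kopt, \Cmid)] > 0$. The argument is symmetric: if instead $\BB P[\wt G_1(\wt\Kopt, \Cmid)] = 0$ for every $\wt\Kopt > 0$, then a.s.\ $\wt D_h(z,w) \geq \Cmid D_h(B_{\wt\Kopt}(z), B_{\wt\Kopt}(w))$ for all $z,w \in \ol B_1(0)$ and all $\wt\Kopt > 0$. Since $D_h(B_{\wt\Kopt}(z), B_{\wt\Kopt}(w))$ is monotone non-increasing in $\wt\Kopt$ and its limit as $\wt\Kopt \to 0$ equals $D_h(z,w)$ (combining the trivial upper bound $D_h(B_{\wt\Kopt}(z), B_{\wt\Kopt}(w)) \leq D_h(z,w)$ with the lower semicontinuity of $D_h$), this gives $\wt D_h(z,w) \geq \Cmid D_h(z,w)$ on $\ol B_1(0)$, whereupon translation invariance of the law of $h$ modulo constants and chaining along a $\wt D_h$-geodesic extend this to all non-singular pairs in $\BB C$, forcing $\Cmid \leq \Clower$.

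Second I would prove the primed analog of Lemma~\ref{lem-opt-events}: for each $\Kann \in (3/4,1)$ and each $\zeta > 0$ there exists $\wt\Kopt > 0$ such that $\wt H_r(\Kann, \Cmid)$ implies $\wt G_r(\wt\Kopt, \Cmid + \zeta)$. The decomposition is now applied to $D_h(u,v)$ rather than $\wt D_h(u,v)$: for $\delta \in (0, (1-\Kann)^2]$, stringing together a $D_h$-path between $B_{\delta r}(u)$ and $B_{\delta r}(v)$, endpoint paths to $\bdy B_{\delta^{1/2} r}(u)$ and $\bdy B_{\delta^{1/2} r}(v)$, and loops around the annuli $\BB A_{\delta r, \delta^{1/2} r}(u)$ and $\BB A_{\delta r, \delta^{1/2} r}(v)$ yields
\eqbn
D_h(u,v) \leq D_h(B_{\delta r}(u), B_{\delta r}(v)) + 2\Clower^{-1}\bigl(\delta^{\geoExp/2} + \delta^\geoExp\bigr)\wt D_h(u,v),
\eqen
where the bi-Lipschitz inequality $D_h \leq \Clower^{-1}\wt D_h$ converts the loop and endpoint terms and then~\eqref{eqn-annulus-geo-loop-wt} is applied. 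Substituting $\wt D_h(u,v) \leq \Cmid D_h(u,v)$ from~\eqref{eqn-annulus-geo-opt'} gives $D_h(u,v)(1 - 2\Clower^{-1}\Cmid(\delta^{\geoExp/2} + \delta^\geoExp)) \leq D_h(B_{\delta r}(u), B_{\delta r}(v))$; for $\delta$ small enough (depending on $\Clower, \Cmid, \zeta$), applying~\eqref{eqn-annulus-geo-opt'} once more produces $\wt D_h(u,v) \leq (\Cmid + \zeta) D_h(B_{\delta r}(u), B_{\delta r}(v))$, and we set $\wt\Kopt := \delta$.

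Finally I would combine these as in the proof of Proposition~\ref{prop-opt-prob}. Let $\Kann \in (3/4,1)$ and $p \in (0,1)$ be the universal constants from Proposition~\ref{prop-attained-good'}, set $\Cmid' := (\Clower + \Cmid)/2 \in (\Clower, \Cmid)$, and apply Proposition~\ref{prop-attained-good'} with $\Cmid'$ in place of $\Cmid$ to obtain a $\Cmid'' \in (\Clower, \Cmid')$ with the stated property. The first step above (applied at level $\Cmid''$) furnishes some $\wt\Kopt_0 > 0$ with $\BB P[\wt G_1(\wt\Kopt_0, \Cmid'')] > 0$; shrinking $\wt\Kopt_0$ if necessary so that $\wt\Kopt_0 \leq \BB P[\wt G_1(\wt\Kopt_0, \Cmid'')]$ (legitimate because shrinking $\wt\Kopt_0$ only enlarges the event), Proposition~\ref{prop-attained-good'} with $\BB r = 1$ yields at least $\frac34\log_8\ep^{-1}$ values of $r \in [\ep^2, \ep]\cap\{8^{-k}\}$ with $\BB P[\wt H_r(\Kann, \Cmid')] \geq p$. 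The second step with $\zeta := \Cmid - \Cmid'$ then upgrades each such event to $\BB P[\wt G_r(\wt\Kopt, \Cmid)] \geq p$, and taking the final constant to be $\min(\wt\Kopt, p)$ completes the proof. The one genuinely nontrivial check is the decomposition in the second step: one must verify that a small $\delta$ can simultaneously absorb the error $2\Clower^{-1}\Cmid(\delta^{\geoExp/2} + \delta^\geoExp)$ into an arbitrarily small multiplicative perturbation of $D_h(u,v)$, but this is straightforward since $\Cmid$ and $\Clower$ are fixed constants.
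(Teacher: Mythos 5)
Your proposal is correct and follows precisely the route the paper intends: the paper states Proposition~\ref{prop-opt-prob'} as an immediate consequence of the symmetry of the hypotheses on $D_h$ and $\wt D_h$ (see the remark preceding Definition~\ref{def-opt-event'}), and your argument is exactly the careful mirror of the proof of Proposition~\ref{prop-opt-prob}, with $\Cupper$ replaced by $1/\Clower$, the bound $\wt D_h \leq \Cupper D_h$ replaced by $D_h \leq \Clower^{-1}\wt D_h$, and chaining along a $\wt D_h$-geodesic rather than a $D_h$-geodesic. The two auxiliary steps you supply (the primed analogs of Lemma~\ref{lem-opt-pos} and Lemma~\ref{lem-opt-events}) are just what the symmetry argument unwinds to, and your sign bookkeeping — in particular that $\wt G_r$ and $\wt H_r$ enlarge as their ratio constant increases, so the decomposition error should be absorbed by adding $\zeta$ rather than subtracting — is handled correctly.
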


\subsection{Proof of Proposition~\ref{prop-attained-good}}
\label{sec-attained-proof}

To prove Proposition~\ref{prop-attained-good}, we will prove the contrapositive, as stated in the following proposition.

\begin{prop} \label{prop-attained-bad}
There exists $\Kann   \in (3/4 ,1)$ and $p \in (0,1)$, depending only on the laws of $D_h$ and $\wt D_h$, such that for each $\Cmed \in (0,\Cupper)$, there exists $\Cmed'  = \Cmed'(\Cmed)  \in (\Cmed , \Cupper)$ such that for each $\Kopt \in (0,1)$, there exists $\ep_0 = \ep_0( \Kopt,\Cmed )  > 0$ with the following property. 
If $\BB r > 0$ and there exists $\ep \in (0,\ep_0]$ satisfying the condition~\eqref{item-attained-bad} just below, then $\BB P[ G_{\BB r}(\Kopt, \Cmed') ]  < \Kopt$. 
\begin{enumerate}[(A)] 
\addtocounter{enumi}{1}
\item \label{item-attained-bad} There are at least $\frac14 \log_8\ep^{-1}$ values of $r \in [\ep^2 \BB r ,\ep \BB r ] \cap \{8^{-k} \BB r : k\in\BB N\}$ for which $\BB P[H_r(\Kann, \Cmed)] < p$. 
\end{enumerate}
\end{prop}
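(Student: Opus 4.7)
The plan is to prove the contrapositive by a direct surgery argument. Assuming~\eqref{item-attained-bad}, we will show that with probability at least $1-\Kopt$, every non-singular pair $\BB z,\BB w\in\ol B_{\BB r}(0)$ satisfies $\wt D_h(\BB z,\BB w)\leq\Cmed'\,D_h(\BB z,\BB w)$ for an explicit $\Cmed'\in(\Cmed,\Cupper)$, which rules out $G_{\BB r}(\Kopt,\Cmed')$. The first step is a reduction to many bad scales at every point. By translation invariance of the law of $h$ modulo additive constant together with Axiom~\reftranslate, the translated event $H_{z,r}(\Kann,\Cmed)$ centered at $z$ has the same probability as $H_r(\Kann,\Cmed)$ and is a.s.\ determined by $h|_{\ol{\BB A}_{\Kann r,r}(z)}$ modulo additive constant. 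Fix $\Kann\in(3/4,1)$ sufficiently close to $1$ (depending only on the laws of $D_h,\wt D_h$) and denote by $\mathcal R_{\mathrm{bad}}$ the set of at least $\tfrac14\log_8\ep^{-1}$ ``bad'' scales in $[\ep^2\BB r,\ep\BB r]\cap\{8^{-k}\BB r\}$ guaranteed by~\eqref{item-attained-bad}. Taking $p$ small enough, assertion~\ref{item-annulus-iterate-pos} of Lemma~\ref{lem-annulus-iterate} yields constants $b\in(0,1)$ and $\mu_0>0$ such that for every fixed $z$, with probability $1-O_\ep(\ep^{\mu_0})$ the event $H_{z,r}^c$ occurs for at least $\tfrac{b}{4}\log_8\ep^{-1}$ scales $r\in\mathcal R_{\mathrm{bad}}$.

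Next, let $\mathcal Z\subset\ol B_{\BB r}(0)$ be a lattice of spacing $\ep^2\BB r$, so $\#\mathcal Z=O(\ep^{-4})$. By a union bound (shrinking $p$ further to boost $\mu_0$), for $\ep$ small the event $\mathsf E$ that every $z\in\mathcal Z$ enjoys the above abundance of scales where $H_{z,r}^c$ holds has probability at least $1-\Kopt/2$. We enlarge $\mathsf E$ by intersecting with the polynomially high-probability events of Lemma~\ref{lem-hit-ball} and Lemma~\ref{lem-two-set-dist}, which together ensure that the loop condition and the bound $D_h(u,v)\leq(1-\Kann)^{-1}r^{\xi Q}e^{\xi h_r(z)}$ of Definition~\ref{def-annulus-geo} hold for every $z\in\mathcal Z$, every $r\in\mathcal R_{\mathrm{bad}}$, and every non-singular endpoint pair visited by a $D_h$-geodesic inside the annulus $\BB A_{\Kann r,r}(z)$. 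Lemma~\ref{lem-geo-compact} is used to confine the relevant $D_h$-geodesics to a bounded region.

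Now fix non-singular $\BB z,\BB w\in\ol B_{\BB r}(0)$, let $P:[0,D_h(\BB z,\BB w)]\to\BB C$ be the a.s.-unique $D_h$-geodesic from $\BB z$ to $\BB w$ parametrized by $D_h$-length (Lemma~\ref{lem-geo-unique}), and work on $\mathsf E$. For each time $t$, pick a lattice point $z(t)\in\mathcal Z$ within Euclidean distance $\ep^2\BB r$ of $P(t)$ and select a scale $r(t)\in\mathcal R_{\mathrm{bad}}$ such that $H_{z(t),r(t)}^c$ holds and $P$ crosses $\BB A_{\Kann r(t),r(t)}(z(t))$ cleanly between times $s(t)<t<t(t)$, meaning $P([s(t),t(t)])\subset\ol{\BB A}_{\Kann r(t),r(t)}(z(t))$ with Euclidean diameter at most $r(t)/100$. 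With $\Kann$ close enough to $1$, a counting argument---ruling out high-angular-amplitude crossings by a short-loop obstruction coming from Lemma~\ref{lem-hit-ball}---shows that such a clean crossing is available at a positive proportion of the scales in $\mathcal R_{\mathrm{bad}}$. Since $H_{z(t),r(t)}^c$ holds, $(u(t),v(t)):=(P(t(t)),P(s(t)))$ is a regular pair in the sense of Definition~\ref{def-annulus-geo}, giving $\wt D_h(u(t),v(t))<\Cmed\,(t(t)-s(t))$.

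Finally, applying a one-dimensional Vitali covering argument to the family $\{[s(t),t(t)]\}_{t\in[0,D_h(\BB z,\BB w)]}$, which covers the parameter interval, extracts a disjoint subfamily $\{[s_i,t_i]\}$ with $\sum_i(t_i-s_i)\geq\tfrac13 D_h(\BB z,\BB w)$. Replacing each $P|_{[s_i,t_i]}$ by a $\wt D_h$-geodesic from $v_i$ to $u_i$ yields a path from $\BB z$ to $\BB w$ with $\wt D_h$-length at most
\[
\sum_i\wt D_h(u_i,v_i)+\Cupper\Bigl(D_h(\BB z,\BB w)-\sum_i(t_i-s_i)\Bigr)\leq\bigl(\tfrac13\Cmed+\tfrac23\Cupper\bigr)D_h(\BB z,\BB w),
\]
so setting $\Cmed':=\tfrac13\Cmed+\tfrac23\Cupper<\Cupper$ gives $\wt D_h(\BB z,\BB w)\leq\Cmed'\,D_h(\BB z,\BB w)$ and hence rules out $G_{\BB r}(\Kopt,\Cmed')$. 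The chief obstacle is the clean-crossing step: verifying that for a positive proportion of scales the $D_h$-geodesic traverses the narrow annulus through a single sub-segment of small Euclidean diameter; the lack of Euclidean continuity of $D_h$ in the supercritical case forces a delicate geometric argument here beyond the subcritical analog of~\cite[Section 3]{gm-uniqueness}.
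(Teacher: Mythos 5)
Your broad strategy is the same as the paper's: perform a surgery argument on a $D_h$-geodesic $P$, replacing segments of $P$ that cross annuli where the complement of the $H_r$-type event holds by $\wt D_h$-geodesics, using Lemma~\ref{lem-annulus-iterate} plus a union bound to guarantee a covering of $B_{\BB r}(0)$ by good annuli. However, there are genuine gaps in the places you gloss over, and they are exactly the places where the supercritical setting creates trouble.

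\textbf{Measurability.} You assert that the translated event $H_{z,r}(\Kann,\Cmed)$ is a.s.\ determined by $h|_{\ol{\BB A}_{\Kann r,r}(z)}$ modulo additive constant. This is not true: Definition~\ref{def-annulus-geo} requires the path $P$ to be a \emph{global} $D_h$-geodesic, so deciding whether a given local path from $u$ to $v$ actually attains $D_h(u,v)$ requires knowledge of the field outside the annulus (a shorter route might exist elsewhere). The same problem afflicts conditions~(iii) and~(iv), which are phrased in terms of $D_h(u,v)$. The paper's workaround is precisely why it introduces the auxiliary event $E_r(z)$: it works in the larger annulus $\ol{\BB A}_{r/2,2r}(z)$, imposes the extra hypothesis~\eqref{eqn-attained-geo} on $u,v$ that forces any candidate $D_h$-geodesic or $\wt D_h$-geodesic between them to stay inside the annulus (Lemma~\ref{lem-attained-msrble}), and additionally uses conditions like~\ref{item-attained-cross} to ensure this. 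Without some such device, Lemma~\ref{lem-annulus-iterate} simply does not apply to your translated events, and the whole covering step collapses.

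\textbf{Verifying the regularity conditions at the crossing endpoints.} Even after you have the covering, you declare that $(u(t),v(t))=(P(t(t)),P(s(t)))$ is ``a regular pair in the sense of Definition~\ref{def-annulus-geo},'' and use $H^c$ to conclude $\wt D_h(u(t),v(t))<\Cmed(t(t)-s(t))$. But verifying the hypotheses of Definition~\ref{def-annulus-geo}---in particular condition~\eqref{item-H0-loop}, the loop bound at every small scale $\delta$---for the \emph{endpoints of a geodesic crossing} is the heart of the proof and is not provided. The paper devotes condition~\ref{item-attained-reg} of $E_r(z)$ and the entirety of Lemmas~\ref{lem-attained-loop} and~\ref{lem-attained-increment} to this step, using Lemma~\ref{lem-hit-ball-phi} to control $D_h$-distance around annuli at the crossing points. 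Your mention of Lemma~\ref{lem-hit-ball} and Lemma~\ref{lem-two-set-dist} hints at awareness of the issue, but the paper's argument requires careful use of the ``distance around $\BB A_{3r/2,2r}(z)$'' condition~\ref{item-attained-cross} to verify the bound~\eqref{eqn-attained-geo}, and you give no such verification. Indeed you yourself call the ``clean crossing'' step ``the chief obstacle'' requiring ``a delicate geometric argument here beyond the subcritical analog''---that is precisely the content that needs to be supplied, and it is not.

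\textbf{The Vitali approach and positive proportion.} Your use of a one-dimensional Vitali covering to extract a disjoint subfamily of length $\geq\frac13 D_h(\BB z,\BB w)$ is a genuinely different route from the paper's. The paper proceeds inductively (define $t_0<s_1<t_1<\cdots$ by successive exits from $B_{r_j}(w_j)$) and then uses the ``distance around $\BB A_{\Kann r_j,r_j}(w_j)$'' condition~\ref{item-attained-dist} and Lemma~\ref{lem-attained-compare} to show $s_j-t_{j-1}\leq\frac{\Karound}{\Karound+1}(t_j-t_{j-1})$, so the crossing intervals occupy a fraction $\frac{1}{\Karound+1}$ of the total. This avoids needing the crossing interval to \emph{bracket} an arbitrary prescribed $t$. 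Your approach would need a proof that the family $\{[s(t),t(t)]\}$ covers the parameter interval (or at least a fixed fraction), which depends on showing that for every $t$ there is a scale $r(t)$ where $H^c$ holds \emph{and} a clean crossing brackets $t$---a nontrivial conjunction that you do not establish. There is also an endpoint issue: for $t$ with $P(t)$ within $\sim\ep\BB r$ of $\BB z$ or $\BB w$, the annulus $\BB A_{\Kann r(t),r(t)}(z(t))$ may contain an endpoint of the geodesic, so no clean crossing exists. This is why the paper bounds $\wt D_h(B_{4\ep\BB r}(\BB z),B_{4\ep\BB r}(\BB w))$ (which matches the $G_r$-form of the conclusion via the balls $B_{\Kopt\BB r}(\cdot)$ in Definition~\ref{def-opt-event}) rather than $\wt D_h(\BB z,\BB w)$ directly, taking $4\ep\leq\Kopt$; your version bounds $\wt D_h(\BB z,\BB w)$, which requires the covering to extend all the way to the endpoints, and that is not available.
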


Note that the second-to-last last sentence of Proposition~\ref{prop-attained-bad} (i.e., the one just before condition~\eqref{item-attained-bad}) is the contrapositive of the second-to-last sentence of Proposition~\ref{prop-attained-good} (i.e., the one just before condition~\eqref{item-attained-good}).
The proof of Proposition~\ref{prop-attained-bad} is similar to the argument in~\cite[Section 3.2]{gm-uniqueness}, but the definitions of the events involved are necessarily different due to the existence of singular points. 

The basic idea of the proof is as follows. 
If we assume that~\eqref{item-attained-bad} holds for a small enough (universal) choice of $p\in (0,1)$, then we can use Lemma~\ref{lem-annulus-iterate} (independence across concentric annuli) and a union bound to cover space by Euclidean balls of the form $B_{r/2}(z)$ for $r\in [\ep^2 \BB r , \ep \BB r]$ with the following property.
For each $u \in \bdy B_{\Kann r}(z)$ and each $v\in \bdy B_{r}(z)$ which are joined by a geodesic $P$ satisfying the numbered conditions in Definition~\ref{def-annulus-geo}, we have $\wt D_h(u,v) \leq \Cmed D_h(u,v)$.

By considering the times when a $D_h$-geodesic between two fixed points $\BB z , \BB w \in \BB C$ crosses the annulus $\BB A_{\Kann r , r}(z)$ for such a $z$ and $r$, we will be able to show that $\wt D_h(B_{\Kopt}(\BB z) , B_{\Kopt}(\BB w)) \leq \Cmed' D_h(\BB z,\BB w)$ for a suitable constant $\Cmed' \in (\Cmed , \Cupper)$. 
Applying this to an appropriate $\Kopt$-dependent collection of pairs of points $(\BB z, \BB w)$ will show that $\BB P[  G_{\BB r}(\Kopt , \Cmed'  ) ]  < \Kopt$. 
The reason why we need to make $\Kann$ close to 1 is to ensure that the events we consider depend on $h$ in a sufficiently ``local" manner (see the proof of Lemma~\ref{lem-attained-msrble}).

\begin{figure}[ht!]
\begin{center}
\includegraphics[width=.7\textwidth]{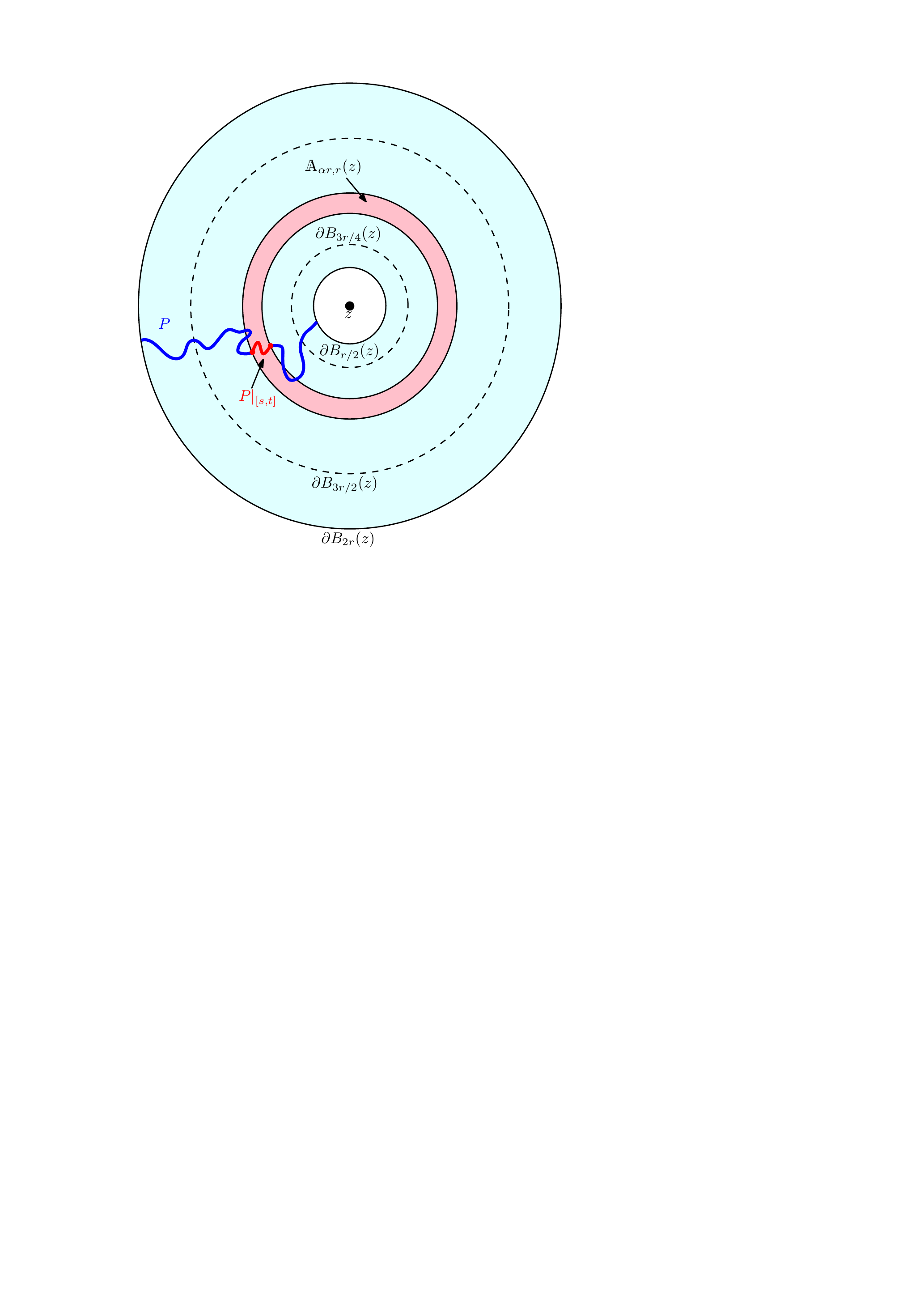} 
\caption{\label{fig-attained-event} Illustration of the definition of $E_r(z)$. We have shown the annuli involved in the definition and an example of a $D_h(\cdot,\cdot;\BB A_{r/2,2r}(z))$-geodesic $P$ between two points of $\bdy\BB A_{r/2,2r}(z)$, which appears in several of the conditions.  
Condition~\ref{item-attained-reg} allows us to compare distances around and across small annuli surrounding points of $\BB A_{3r/4,3r/2}(z)$ which are hit by $P$. Condition~\ref{item-attained-cross} provides an upper bound for the $D_h$-distance around the outer annulus $\BB A_{3r/2,2r}(z)$. Condition~\ref{item-attained-leb} gives an upper bound for the Euclidean diameters of segments of $P$ which are contained in the pink annulus $\BB A_{\Kann r , r}(z)$, such as the red segment in the figure. Condition~\ref{item-attained-dist} gives an upper bound for the $D_h$-distance around $\BB A_{\Kann r , r}(z)$. Finally, condition~\ref{item-attained-geo} will allow us to show that the $\wt D_h$-length of a red segment like $P|_{[s,t]}$ is at most $\Cmed(t-s)$. 
}
\end{center}
\end{figure}

Let us now define the events to which we will apply Lemma~\ref{lem-annulus-iterate}. 
See Figure~\ref{fig-attained-event} for an illustration of the definition. 
We will discuss the purpose of each condition in the event just below. 

For $z \in \BB C$, $r > 0$, and parameters $\Kep \in (0,1/100)$, $\Kann \in (1-\Kep ,1)$, and $ \Karound > 1$, let $E_r(z) = E_r(z; \Kep , \Kann,\Karound,\Cmed)$ be the event that the following is true.
\begin{enumerate}
\item \label{item-attained-reg} \textit{(Regularity along geodesics)} For each $D_h\left(\cdot,\cdot;\ol{\BB A}_{r/2,2r}(z)\right)$-geodesic $P$ between two points of $\bdy\BB A_{r/2,2r}(z)$, each $\delta \in (0,\Kep]$, and each $x \in \BB A_{3r/4,3r/2}(z)$ such that $P \cap B_{\delta r}(x) \not=\emptyset$,  
\eqb \label{eqn-attained-reg}
D_h\left(\text{around $\BB A_{\delta r ,\delta^{1/2} r}(x)$} \right) \leq \delta^\geoExp D_h\left(\text{across $\BB A_{\delta  r , \delta^{1/2} r}(x)$} \right) ,
\eqe
where (as in Definition~\ref{def-annulus-geo}) $\geoExp$ is as in Lemma~\ref{lem-hit-ball-phi} with $\chi=1/2$. 
\item \label{item-attained-cross} \textit{(Distance around $\BB A_{3r/2,2r}(z)$)} We have
\allb \label{eqn-attained-cross} 
&D_h\left( \text{around $\BB A_{3r/2,2r}(z)$}\right) \notag\\
&\leq \min\left\{   (1-\Kann)^{-1} r^{\xi Q} e^{\xi h_r(z)}  , \frac{\Clower}{2\Cupper} \Kep^{-\geoExp} D_h\left(  \BB A_{3r/4 , 3r/2}(z) ,  \bdy \BB A_{r/2,2r}(z) \right)  \right\} .
\alle
\item \label{item-attained-leb} \textit{(Euclidean length of geodesic segments in $\BB A_{\Kann r , r}(z)$)} For each $D_h(\cdot,\cdot;\ol{\BB A}_{r/2,2r}(z)$-geodesic $P$ between two points of $\bdy \BB A_{r/2,2r}(z)$ and any two times $t > s > 0$ such that $P([s,t]) \subset \ol{\BB A}_{\Kann r , r}(z)$, we have 
\eqb \label{eqn-attained-leb}
|P(t) -P(s)| \leq \Kep r .
\eqe
\item \label{item-attained-dist} \textit{(Distance around $\BB A_{\Kann r , r}(z)$)} We have
\eqb \label{eqn-attained-dist} 
D_h\left(\text{around $\BB A_{\Kann r , r}(z)$} \right) \leq \Karound  D_h\left(\text{across $\BB A_{\Kann r , r}(z)$} \right)  .
\eqe
\item \label{item-attained-geo} \textit{(Converse of $H_r(\Kann,\Cmed)$)} Let $u\in \bdy B_{\Kann r}(z)$ and $v\in\bdy B_r(z)$ such that $|u-v| \leq \Kep r$ and
\eqb \label{eqn-attained-geo}
D_h\left(\text{around $\BB A_{\Kep r ,\Kep^{1/2} r}(v)$} \right) \leq  \frac{\Clower}{2 \Cupper}   D_h\left( \BB A_{3r/4,3r/2}(z)  , \bdy\BB A_{r/2,2r}(z) \right) .
\eqe 
Assume that there is a $ D_h$-geodesic $ P'$ from $u$ to $v$ such that the numbered conditions in Definition~\ref{def-annulus-geo} of $H_r(\Kann,\Cmed)$ occur but with $z$ in place of $0$, i.e.,
\begin{enumerate}[$(i)$]
\item $P' \subset \ol{\BB A}_{\Kann r , r}(z)$. \label{item-H-subset}
\item The Euclidean diameter of $P'$ is at most $r/100$. \label{item-H-diam}
\item $D_h(u,v) \leq (1-\Kann)^{-1} r^{\xi Q} e^{\xi h_r(z)}$.  \label{item-H-dist}
\item For each $ \delta \in \left(0,(1-\Kann)^2\right]$, \label{item-H-loop}
\eqb \label{eqn-annulus-geo-loop'} 
\max\left\{    D_h\left( u , \bdy B_{\delta r}(u) \right)  ,   D_h\left(\text{around $\BB A_{\delta r , \delta^{1/2} r}(u)$} \right)   \right\} \leq \delta^\geoExp D_h(u,v)
\eqe 
and the same is true with the roles of $u$ and $v$ interchanged. 
\end{enumerate}
Then $\wt D_h(u,v) \leq \Cmed D_h(u,v)$. 
\end{enumerate}
 
The most important condition in the definition of $E_r(z)$ is condition~\ref{item-attained-geo}. By Definition~\ref{def-annulus-geo} and the translation invariance of the law of $h$, modulo additive constant, if $\BB P[H_r(\Kann,\Cmed)]$ is small, then the probability of condition~\ref{item-attained-geo} is large. The extra condition~\eqref{eqn-attained-geo} on $u$ and $v$ is included in order to prevent $D_h$-geodesics or $\wt D_h$-geodesics between $u$ and $v$ from exiting $\BB A_{r/2,2r}(z)$. This is needed to ensure that $E_r(z)$ is determined by $h|_{\BB A_{r/2,2r}(z)}$, which in turn is needed to apply Lemma~\ref{lem-annulus-iterate}. See Lemma~\ref{lem-attained-msrble}. 

We will eventually consider a $D_h$-geodesic $P$ which enters $B_{r/2}(z)$ and apply condition~\ref{item-attained-geo} to the $D_h$-geodesic $P' = P|_{[s,t]}$ from $u=P(s)$ to $v=P(t)$, where $s$ and $t$ are suitably chosen times such that $P(s) \in \bdy B_{\Kann r}(z)$ and $P(t) \in \bdy B_r(z)$. The first three conditions in the definition of $E_r(z)$ will allow us to do so (see Lemma~\ref{lem-attained-increment}). In particular, condition~\ref{item-attained-reg} will allow us to check~\eqref{eqn-annulus-geo-loop'} for $u = P(s)$ and $v = P(t)$. 
Condition~\ref{item-attained-cross} will be used in conjunction with condition~\ref{item-attained-reg} to check~\eqref{eqn-attained-geo}. 
Condition~\ref{item-attained-leb} will be used to upper-bound the Euclidean diameter of $P|_{[s,t]}$.

Condition~\ref{item-attained-dist} will be used to show that the intervals $[s,t]$ as above for varying choices of $r$ and $z$ such that $E_r(z)$ occurs and $P$ enters $B_{r/2}(z)$ cover a uniformly positive fraction of the time interval on which $P$ is defined. See Lemma~\ref{lem-attained-compare}. 

Let us now explain why we can apply Lemma~\ref{lem-annulus-iterate} to the events $E_r(z)$. 
For the statement, recall the definition of the restriction of the GFF to a closed set from~\eqref{eqn-closed-restrict}. 

\begin{lem} \label{lem-attained-msrble} 
The event $E_r(z)$ is a.s.\ determined by $h|_{\ol{\BB A}_{r/2,2r}(z)}$, viewed modulo additive constant.
\end{lem}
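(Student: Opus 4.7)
My plan is to verify, for each of the five conditions defining $E_r(z)$, two properties: (a) measurability with respect to $\sigma(h|_{\ol{\BB A}_{r/2,2r}(z)})$, and (b) invariance under the deterministic transformation $h\mapsto h+c$ for every $c\in\BB R$. Together these give measurability with respect to $h|_{\ol{\BB A}_{r/2,2r}(z)}$ viewed modulo additive constant, in the sense defined in Section~\ref{sec-closed}. The two workhorses are locality (Axiom~\ref{item-metric-local}), which makes the internal metrics $D_h(\cdot,\cdot;K)$ and $\wt D_h(\cdot,\cdot;K)$ measurable with respect to $h|_K$ for any closed $K$, and Weyl scaling (Axiom~\ref{item-metric-f}) applied with $f\equiv c$, which gives $D_{h+c}=e^{\xi c}D_h$ and $\wt D_{h+c}=e^{\xi c}\wt D_h$.

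Checking (b) is immediate: under $h\mapsto h+c$, every $D_h$- and $\wt D_h$-length is multiplied by $e^{\xi c}$, the circle average $h_r(z)$ is shifted by $c$ so that $r^{\xi Q}e^{\xi h_r(z)}$ is also multiplied by $e^{\xi c}$, while all Euclidean quantities are fixed. Every inequality appearing in conditions~\ref{item-attained-reg}--\ref{item-attained-geo} has both sides scaling by the common factor $e^{\xi c}$, and $D_h$-geodesics coincide with $e^{\xi c}D_h$-geodesics (and similarly for $\wt D_h$), so statements quantifying over geodesics are also invariant.

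For (a), I will observe that the constraints $\Kep<1/100$ and $\Kann\in(1-\Kep,1)$ force every sub-annulus and compact subset appearing in the definition --- namely $\BB A_{\delta r,\delta^{1/2}r}(x)$ for $x\in\BB A_{3r/4,3r/2}(z)$ and $\delta\in(0,\Kep]$, $\BB A_{\Kann r,r}(z)$, $\BB A_{3r/2,2r}(z)$, and $\BB A_{\Kep r,\Kep^{1/2}r}(v)$ for $v\in\bdy B_r(z)$ --- to sit inside $\ol{\BB A}_{r/2,2r}(z)$. Their around/across distances, pairwise distances, and internal metrics are therefore $\sigma(h|_{\ol{\BB A}_{r/2,2r}(z)})$-measurable by locality, and so is the circle average $h_r(z)$ since $\bdy B_r(z)\subset\ol{\BB A}_{r/2,2r}(z)$. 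The geodesics quantified over in conditions~\ref{item-attained-reg} and~\ref{item-attained-leb} are internal-metric geodesics of $D_h(\cdot,\cdot;\ol{\BB A}_{r/2,2r}(z))$, so their existence and geometry are measurable.

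The main obstacle will be condition~\ref{item-attained-geo}, which involves the non-internal distances $D_h(u,v)$ and $\wt D_h(u,v)$. For $D_h(u,v)$, the hypothesis that a $D_h$-geodesic $P'\subset\ol{\BB A}_{\Kann r,r}(z)$ exists already forces $D_h(u,v)=D_h(u,v;\ol{\BB A}_{r/2,2r}(z))$, since the internal distance is an upper bound. The delicate task is to rule out $\wt D_h$-shortcuts from $u$ to $v$ exiting $\ol{\BB A}_{r/2,2r}(z)$. I will combine the bi-Lipschitz inequality~\eqref{eqn-bilip} with the hypotheses (iii), \eqref{eqn-attained-cross}, and \eqref{eqn-attained-geo}: any path from $u$ to $v$ exiting $\ol{\BB A}_{r/2,2r}(z)$ has $\wt D_h$-length at least $2\Clower\cdot D_h(\BB A_{3r/4,3r/2}(z),\bdy\BB A_{r/2,2r}(z))$, whereas a short interior path obtained by concatenating $P'$ with a $D_h$-loop around $\BB A_{\Kep r,\Kep^{1/2}r}(v)$ (controlled by~\eqref{eqn-attained-geo}) and a path around $\BB A_{3r/2,2r}(z)$ (controlled by~\eqref{eqn-attained-cross}) has strictly smaller $\wt D_h$-length after applying $\wt D_h\leq\Cupper D_h$. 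This yields $\wt D_h(u,v)=\wt D_h(u,v;\ol{\BB A}_{r/2,2r}(z))$, so condition~\ref{item-attained-geo} reduces to a measurable relation between internal distances and the verification is complete.
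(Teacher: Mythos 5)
Your top-level plan --- check additive-constant invariance via Weyl scaling and measurability via locality for each condition, with condition~\ref{item-attained-geo} as the crux --- does match the paper's. Your observations that the sub-annuli appearing in conditions~\ref{item-attained-reg}--\ref{item-attained-dist} lie inside $\ol{\BB A}_{r/2,2r}(z)$ and that all five conditions are invariant under $h\mapsto h+c$ are correct. But the handling of condition~\ref{item-attained-geo} has a real gap, and the $\wt D_h$-argument is not quite right.

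First, the $D_h$-side is \emph{not} trivial. You argue that the existence of a $D_h$-geodesic $P'\subset\ol{\BB A}_{\Kann r,r}(z)$ forces $D_h(u,v)=D_h(u,v;\ol{\BB A}_{r/2,2r}(z))$, which is true, but this runs the wrong way for measurability. From $h|_{\ol{\BB A}_{r/2,2r}(z)}$ you only know the internal metric, so you can only tell which paths $P'$ are $D_h(\cdot,\cdot;\ol{\BB A}_{r/2,2r}(z))$-geodesics, not which are genuine $D_h$-geodesics. If there were an external $D_h$-shortcut for some $(u,v)$ satisfying the constraints, the "internalized" version of condition~\ref{item-attained-geo} could impose a requirement (via an internal geodesic $P'$) that the original condition does not, and the two events would differ. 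So external $D_h$-shortcuts must be ruled out, by the same type of argument you use for $\wt D_h$; the paper's claim explicitly covers \emph{both} metrics.

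Second, the $\wt D_h$-argument as you state it doesn't go through. The "short interior path obtained by concatenating $P'$ with a loop around $\BB A_{\Kep r,\Kep^{1/2}r}(v)$ and a path around $\BB A_{3r/2,2r}(z)$" is not a useful comparison object: appending loops to $P'$ only makes it longer, the hypothesis~(iii) and condition~\ref{item-attained-cross} give no bound relating $D_h(u,v)$ to $D_h(\BB A_{3r/4,3r/2}(z),\bdy\BB A_{r/2,2r}(z))$, and neither appears in the paper's proof of this lemma. The correct mechanism is a segment replacement on an arbitrary putative geodesic $P$. Take a loop $\pi\subset\BB A_{\Kep r,\Kep^{1/2}r}(v)$ of $D_h$-length less than $(\Clower/\Cupper)\,D_h(\BB A_{3r/4,3r/2}(z),\bdy\BB A_{r/2,2r}(z))$ as supplied by~\eqref{eqn-attained-geo}. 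Since $u,v\in \ol B_{\Kep r}(v)$ lie inside $\pi$, any path $P$ from $u$ to $v$ exiting $\ol{\BB A}_{r/2,2r}(z)$ hits $\pi$ before its first crossing of $\BB A_{3r/4,3r/2}(z)$ to $\bdy\BB A_{r/2,2r}(z)$ and after the last crossing back. Replacing the intermediate segment by a piece of $\pi$ strictly shortens the $D_h$-length (directly), and also the $\wt D_h$-length after applying $\wt D_h\leq\Cupper D_h$ to $\pi$ and $\wt D_h\geq\Clower D_h$ to the crossing segment; hence no $D_h$- or $\wt D_h$-geodesic from $u$ to $v$ exits the annulus.
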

\begin{proof}
It is immediate from Weyl scaling (Axiom~\ref{item-metric-f}) that adding a constant to $h$ does not affect the occurrence of $E_r(z)$. Therefore, $E_r(z)$ is a.s.\ determined by $h$ viewed modulo additive constant. We need to show that $E_r(z)$ is a.s.\ determined by $h|_{\ol{\BB A}_{r/2,r}(z)}$. 

Each of conditions~\ref{item-attained-reg}, \ref{item-attained-cross}, \ref{item-attained-leb}, and~\ref{item-attained-dist} in the definition of $E_r(z)$ depends only on $D_h(\cdot,\cdot;\ol{\BB A}_{r/2,r}(z))$. By locality (Axiom~\ref{item-metric-local}; see also Section~\ref{sec-closed}), we get that each of these four conditions is a.s.\ determined by $h|_{\ol{\BB A}_{r/2,2r}(z)}$. 

We still need to treat condition~\ref{item-attained-geo}.
To this end, we claim that if $u\in \bdy B_{\Kann r}(z)$ and $v\in\bdy B_r(z)$ such that $|u-v| \leq \Kep r$ and~\eqref{eqn-attained-geo} holds (as in condition~\ref{item-attained-geo}), then every $D_h$-geodesic and every $\wt D_h$-geodesic from $u$ to $v$ is contained in $\BB A_{r/2,2r}(z)$.
The claim implies that the set of $D_h(\cdot,\cdot; \BB A_{r/2,2r}(z))$-geodesics from $u$ to $v$ is the same as the set of $D_h$-geodesics from $u$ to $v$, and similarly with $\wt D_h$ in place of $D_h$. 
This, in turn, implies that condition~\ref{item-attained-geo} is equivalent to the analogous condition where we require that $P'$ is a $D_h(\cdot,\cdot;\BB A_{r/2,2r}(z))$-geodesic instead of a $D_h$-geodesic and we replace $ D_h(u,v)$ and $\wt D_h(u,v)$ by $ D_h(u,v; \BB A_{r/2,2r}(z))$ and $\wt D_h(u,v;\BB A_{r/2,2r}(z) ) $, respectively. It then follows from locality (Axiom~\ref{item-metric-local}) that $E_r(z)$ is a.s.\ determined by $h|_{\ol{\BB A}_{r/2,2r}(z)}$, viewed modulo additive constant.

It remains to prove the claim in the preceding paragraph. 
Let $u$ and $v$ be as above and let $P $ be path from $u$ to $v$ which exits $\BB A_{r/2,2r}(z)$. We need to show that $P$ is neither a $D_h$-geodesic nor a $\wt D_h$-geodesic. 
By~\eqref{eqn-attained-geo}, there is a path $\pi \subset \BB A_{\Kep r,  \Kep^{1/2} r}(v)$ such that
\eqb \label{eqn-attained-msrble-D}
\op{len}\left( \pi ; D_h \right) <  \frac{\Clower }{  \Cupper }  D_h\left(  \BB A_{3r/4,3r/2}(z)  , \bdy\BB A_{r/2,2r}(z) \right)   .
\eqe
By the bi-Lipschitz equivalence of $D_h$ and $\wt D_h$, this implies that also
\eqb \label{eqn-attained-msrble-tildeD}
\op{len}\left( \pi ; \wt D_h \right) <  \wt D_h\left(  \BB A_{3r/4,3r/2}(z)  , \bdy\BB A_{r/2,2r}(z) \right)   .
\eqe 
Since $u,v\in B_{\Kep r}(v)$, the path $P$ must hit $\pi$ before the first time it crosses from $\BB A_{3r/4,3r/2}(z)$ to $\bdy \BB A_{r/2,2r}(z)$ and after the last time that it does so. 
Therefore,~\eqref{eqn-attained-msrble-D} implies that we can replace a segment of $P$ with a segment of $\pi$ to get a path with the same endpoints and shorter $D_h$-length. 
Hence $P$ is not a $D_h$-geodesic. Similarly,~\eqref{eqn-attained-msrble-tildeD} implies that $P$ is not a $\wt D_h$-geodesic.
\end{proof}

We now check that $E_r(z)$ occurs with high probability if the parameters are chosen appropriately.

\begin{lem} \label{lem-attained-prob}  
For each $p\in (0,1)$, there exist parameters $\Kep \in (0,1/100)$, $\Kann \in (1-\Kep ,1)$, and $ \Karound > 1$, depending only on $p$ and the laws of $D_h$ and $\wt D_h$, such that the following is true. Let $\Cmed \in (0,\Cupper)$ and $\BB r > 0$ and assume that~\eqref{item-attained-bad} holds for our given choice of $\Kann$ and $p$. Then there are at least $\frac14 \log_8\ep^{-1}$ values of $r \in [\ep^2 \BB r , \ep \BB r] \cap \{8^{-k}\}_{k\in\BB N}$ such that $\BB P[E_r(z)] \geq 1 - 2p $ for each $z\in\BB C$. 
\end{lem}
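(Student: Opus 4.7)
The plan is to pick $\Kann$, $\Karound$, $\Kep$ --- depending only on $p$ and the laws of $D_h, \wt D_h$, and crucially \emph{not} on $\Cmed$ --- so that for every $r \in [\ep^2 \BB r, \ep \BB r]$ and $z \in \BB C$, conditions~\ref{item-attained-reg}--\ref{item-attained-dist} each fail with probability at most $p/4$, while condition~\ref{item-attained-geo} fails with probability at most $\BB P[H_r(\Kann,\Cmed)]$. A union bound then yields $\BB P[E_r(z)] \geq 1 - 2p$ whenever $\BB P[H_r(\Kann,\Cmed)] < p$, which by the hypothesis in~\eqref{item-attained-bad} holds for at least $\frac14 \log_8 \ep^{-1}$ radii in $[\ep^2 \BB r, \ep \BB r] \cap \{8^{-k} \BB r\}_{k\in\BB N}$.

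I would choose the parameters in the order $\Kann$ first, then $\Karound$, then $\Kep$, while ensuring the constraint $\Kep > 1 - \Kann$. First, take $\Kann$ close enough to $1$ that Lemma~\ref{lem-two-set-dist} applied to the annulus $\BB A_{3r/2,2r}(z)$ yields the first bound in condition~\ref{item-attained-cross} --- whose right-hand side is $(1-\Kann)^{-1} r^{\xi Q} e^{\xi h_r(z)}$ --- with probability at least $1 - p/8$; this requires $(1 - \Kann)^{-1}$ to exceed a threshold depending only on $p$ and the law of $D_h$. Uniformity in $r$ comes from tightness across scales (Axiom~\refcoord), and in $z$ from translation invariance of the whole-plane GFF law modulo additive constant. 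Second, take $\Karound$ large enough (depending on $\Kann$ and $p$) so that Lemma~\ref{lem-two-set-dist} applied to $\BB A_{\Kann r, r}(z)$ gives condition~\ref{item-attained-dist} with probability at least $1 - p/4$. Finally, take $\Kep$ small --- possibly after sending $\Kann$ still closer to $1$ --- to handle three items: (a) condition~\ref{item-attained-reg} via Lemma~\ref{lem-hit-ball-phi} with $\chi = 1/2$ and $f \equiv 0$, combined with a dyadic union bound over $\delta \in (0,\Kep]$; (b) the second half of condition~\ref{item-attained-cross} via Lemma~\ref{lem-two-set-dist} applied to bound both $D_h(\text{around } \BB A_{3r/2,2r}(z))$ from above and $D_h(\BB A_{3r/4,3r/2}(z), \bdy \BB A_{r/2,2r}(z))$ from below, both being of order $r^{\xi Q} e^{\xi h_r(z)}$, with the fixed constant $2\Cupper/\Clower$ absorbed into the factor $\Kep^{-\geoExp}$; and (c) condition~\ref{item-attained-leb} via Lemma~\ref{lem-geodesic-leb}.

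Condition~\ref{item-attained-geo} is dispatched by translation invariance. Axiom~\reftranslate, combined with the translation invariance of the whole-plane GFF law modulo additive constant, gives that $\BB P[\text{condition~\ref{item-attained-geo} fails at } z]$ equals the corresponding probability at $z = 0$. The failure event at the origin asks for a pair $u \in \bdy B_{\Kann r}(0)$, $v \in \bdy B_r(0)$ satisfying $|u-v| \leq \Kep r$, the bound~\eqref{eqn-attained-geo}, and all of (i)--(iv), together with $\wt D_h(u,v) > \Cmed D_h(u,v)$. Since $|u-v| \leq \Kep r$ and~\eqref{eqn-attained-geo} are \emph{additional} restrictions on the pair, this event is a sub-event of $H_r(\Kann, \Cmed)$, so its probability is at most $\BB P[H_r(\Kann,\Cmed)] < p$ for the chosen radii by hypothesis~\eqref{item-attained-bad}.

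The main obstacle is item (c). Lemma~\ref{lem-geodesic-leb} shows that a geodesic cannot linger near a deterministic curve; to apply it here, one would take its $\eta$ to be an evenly spaced net inside a slight interior shrinking of $\ol{\BB A}_{\Kann r, r}(z)$, with its $\ep$-parameter (distinct from the $\ep$ appearing in the present lemma) taken much smaller than $\Kep$, so that any segment of Euclidean diameter exceeding $\Kep r$ contained in $\ol{\BB A}_{\Kann r, r}(z)$ must meet $B_{\ep r}$ of the net points on an interval of the parametrization of positive measure. This is a bookkeeping matter rather than a conceptual issue, and together with the other items it yields the parameter selection --- independent of $\Cmed$ --- needed to conclude.
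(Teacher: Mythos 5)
Your overall strategy matches the paper's: a union bound over the five conditions defining $E_r(z)$, with conditions~\ref{item-attained-reg}--\ref{item-attained-dist} pushed to probability at least $1-p/4$ each (uniformly in $r$, $z$, and $\Cmed$) via Lemma~\ref{lem-hit-ball-phi}, tightness across scales (Lemma~\ref{lem-two-set-dist}/Axiom~\refcoord), and Lemma~\ref{lem-geodesic-leb}, while observing that the failure of condition~\ref{item-attained-geo} is a sub-event of $H_r(\Kann,\Cmed)$, so has probability $< p$ for the radii guaranteed by~\eqref{item-attained-bad}. The parameter ordering differs slightly from the paper's (which fixes $\Kep$ first and then pushes $\Kann$ toward $1$), but your version with the caveat about re-adjusting $\Kann$ works.

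The place that needs more care than you allow is your item (c). Applying Lemma~\ref{lem-geodesic-leb} to a traversal $\eta$ of an "evenly spaced net" in the thin annulus $\ol{\BB A}_{\Kann r, r}(z)$ has a quantitative pitfall: the conclusion of Lemma~\ref{lem-geodesic-leb} is the upper bound $\ep^{\lebExp} T$ on the marked parameter time, where $T$ is the \emph{total length} of $\eta$. For a fine net $T$ grows like $(1-\Kann)/(\text{spacing})$, and since $\lebExp$ is uncontrolled from below the product $\ep^{\lebExp} T$ need not be small compared to the lower bound $\gtrsim\Kep$ that you must beat --- and you only assert "an interval of positive measure," which is not a quantitative lower bound. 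The paper sidesteps this entirely: it takes $\eta$ to be the unit-speed parametrization of $\bdy B_1(0)$, so $T = 2\pi$ is absolute, and then uses the elementary observation that for any segment $P([s,t]) \subset \ol{\BB A}_{\Kann r, r}(0)$, the set $\{x \in \bdy B_r(0) : P \cap B_{100(1-\Kann)r}(x) \neq \emptyset\}$ --- which contains (a fattening of) the radial projection of $P([s,t])$ onto $\bdy B_r(0)$ --- has one-dimensional Lebesgue measure at least the Euclidean diameter of $P([s,t])$. Replacing your net with this single circle is what makes item (c) close cleanly; with that change your argument is the paper's.
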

\begin{proof}
By the translation invariance of the law of $h$, viewed modulo additive constant, and Axiom~\reftranslate, it suffices to prove the lemma in the case when $z = 0$. 

By Lemma~\ref{lem-hit-ball-phi} (applied with $f \equiv 0$), we can find $\Kep \in (0,1/100)$ depending only on $p$ and the laws of $D_h$ and $\wt D_h$ such that for each $r> 0$, the probability of condition~\ref{item-attained-reg} in the definition of $E_r(0)$ is at least $1-p/4$. 
By tightness across scales (Axiom~\refcoord), after possibly shrinking $\Kep$, we can find $\Kann \in (1-\Kep ,1)$ depending only on the laws of $D_h$ and $\wt D_h$ such that the probability of condition~\ref{item-attained-cross} is also at least $1-p/4$.

By Lemma~\ref{lem-geodesic-leb} (applied with $f\equiv 0$ and $\eta$ the unit-speed parametrization of $\bdy B_1(0)$), after possibly shrinking $\Kann$, in a manner depending on $\Kep$, we can arrange that for each $r>0$, it holds with probability at least $1-p/4$ that the following is true. For each $D_h\left( \cdot,\cdot;\ol{\BB A}_{r/2,2r}(0) \right)$-geodesic $P$ from a point of $\bdy B_{r/2}(0)$ to a point of $\bdy B_r(0)$, the one-dimensional Lebesgue measure of the set 
\eqb \label{eqn-attained-leb-set}
\left\{ x \in \bdy B_r(0) : P\cap   B_{100(1-\Kann) r}(x) \not=\emptyset \right\} 
\eqe 
is at most $ \Kep  r  $. If $t > s > 0$ such that $P([s,t]) \subset \ol{\BB A}_{\Kann r ,r}(0)$, then the one-dimensional Lebesgue measure of the set~\eqref{eqn-attained-leb-set} is at least the Euclidean diameter of $P([s,t])$. 
This shows that condition~\ref{item-attained-leb} in the definition of $E_r(0)$ occurs with probability at least $1-p/4$.  

By tightness across scales (Axiom~\refcoord), we can find $\Karound > 1$ (depending on $\Kann$) such that for each $r>0$, condition~\ref{item-attained-dist} in the definition of $E_r(0)$ occurs with probability at least $1-p/4$. 
By~\eqref{item-attained-bad} and the Definition~\ref{def-annulus-geo} of $H_r(\Kann,\Cmed)$, there are at least $\frac14 \log_8\ep^{-1}$ values of $r \in [\ep^2 \BB r , \ep \BB r] \cap \{8^{-k}\}_{k\in\BB N}$ such that condition~\ref{item-attained-geo} in the definition of $E_r(0)$ occurs with probability at least $1-p$. We note that the requirement~\eqref{eqn-attained-geo} does not show up in~\eqref{item-attained-bad}, but including the requirement~\eqref{eqn-attained-geo} makes the condition weaker, so makes the probability of the condition larger. 

Taking a union bound over the five conditions in the definition of $E_r(0)$ now concludes the proof.
\end{proof}

With Lemmas~\ref{lem-attained-msrble} and~\ref{lem-attained-prob} in hand, we can now apply Lemma~\ref{lem-annulus-iterate} to obtain the following. 

\begin{lem} \label{lem-attained-union}
There exist parameters $p_* \in (0,1)$, $\Kep \in (0,1/100)$, $\Kann \in (1-\Kep ,1)$, and $ \Karound > 1$, depending only on the laws of $D_h$ and $\wt D_h$, such that the following is true. Let $\Cmed \in (0,\Cupper)$ and $\BB r > 0$ and assume that~\eqref{item-attained-bad} holds for our given choice of $\Kann$ and with $p = p_*$. 
For each fixed bounded open set $U\subset \BB C$, it holds with probability tending to 1 as $\ep\rta 0$ (at a rate depending only on $U$) that for each $z\in \BB r U$, there exists $r \in [\ep^2 \BB r , \ep\BB r]$ and $w \in B_{r/2}(z)$ such that $E_r(w)$ occurs. 
\end{lem}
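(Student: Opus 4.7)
The plan is to combine Lemma~\ref{lem-attained-prob} (which produces many radii with high $\BB P[E_r(\cdot)]$) with Lemma~\ref{lem-annulus-iterate} (near-independence across disjoint concentric annuli) to show that, for each fixed point $w$, at least one of the events $E_r(w)$ occurs with polynomially high probability in $\ep$, and then union bound over a fine polynomial-size grid of choices of $w$.

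First I would fix small parameters. Given a small target probability $p_* \in (0,1)$ (to be chosen at the end), apply Lemma~\ref{lem-attained-prob} to obtain $\Kep, \Kann, \Karound$ depending on $p_*$ and the laws of $D_h, \wt D_h$ such that, whenever the bad condition (B) of Proposition~\ref{prop-attained-bad} holds, the set $\mcl R_\ep \subset [\ep^2 \BB r, \ep \BB r] \cap \{8^{-k}\BB r\}_{k \in \BB N}$ of ``good'' radii (those with $\BB P[E_r(z)] \geq 1 - 2p_*$ for every $z$) satisfies $|\mcl R_\ep| \geq \tfrac14 \log_8 \ep^{-1}$. By Lemma~\ref{lem-attained-msrble}, for each grid point $w$ the event $E_r(w)$ is a.s.\ determined by $h|_{\ol{\BB A}_{r/2, 2r}(w)}$ viewed modulo additive constant, and since consecutive radii in $\mcl R_\ep$ differ by a factor of at most $1/8$, the relevant annuli are disjoint. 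More precisely, after the harmless reparametrization $\wt r_k := 4 r_k$, $\ol{\BB A}_{r_k/2, 2r_k}(w) = \ol{\BB A}_{\wt r_k/8, \wt r_k/2}(w)$, so Lemma~\ref{lem-annulus-iterate} applies with $s_1 = 1/8$, $s_2 = 1/2$.

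Next, apply assertion~\ref{item-annulus-iterate} of Lemma~\ref{lem-annulus-iterate} to the events $\{E_{r_k}(w)\}_{r_k \in \mcl R_\ep}$ for any fixed $w \in \BB C$, with the parameter $b \in (0,1)$ chosen arbitrarily (say $b = 1/2$) and $a > 0$ chosen very large. The lemma then guarantees a threshold probability $p$ such that if $1 - 2p_* \geq p$ (i.e., $p_*$ is chosen sufficiently small depending on $a$), then with probability at least $1 - c e^{-a |\mcl R_\ep|} \geq 1 - c \ep^{a/(4 \log 8)}$, at least one $E_{r_k}(w)$ occurs. Choosing $a$ large (and correspondingly $p_*$ small), this failure probability can be made $O(\ep^A)$ for any prescribed $A > 0$, uniformly in $w$ and in $\BB r$.

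Finally, discretize: let $\mcl W$ be the set of lattice points $w \in \ep^2 \BB r \BB Z^2$ lying within Euclidean distance $1$ of $\BB r U$. Since $U$ is bounded, $|\mcl W| = O(\ep^{-4})$ with an implicit constant depending only on $U$. Choose $p_*$ small enough that $a > 4 \log 8 \cdot 10$, say, so that for each $w \in \mcl W$ the failure probability at $w$ is $O(\ep^{10})$. A union bound over $\mcl W$ shows that with probability $1 - O(\ep^6) \to 1$, for every $w \in \mcl W$ some $E_{r}(w)$ occurs with $r \in \mcl R_\ep$. On this event, given $z \in \BB r U$, choose $w \in \mcl W$ with $|z - w| \leq \ep^2 \BB r$; since the associated good radius satisfies $r \geq \ep^2 \BB r$, we have $|z-w| \leq \ep^2 \BB r \leq r/2$ (taking the grid spacing slightly smaller if needed), so $w \in B_{r/2}(z)$ as required.

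The main obstacle is the bookkeeping of parameters: one must choose $p_*$ small enough that, after fixing the constants produced by Lemma~\ref{lem-attained-prob} (which themselves depend on $p_*$), the iteration constant $a$ from Lemma~\ref{lem-annulus-iterate} is large enough to beat the $O(\ep^{-4})$ grid union bound. The order of quantifiers is: first fix the desired decay rate for the union bound, then fix $a$ from Lemma~\ref{lem-annulus-iterate}, then fix $p_*$ small enough that $1 - 2p_*$ exceeds the threshold probability $p(a,b)$, and finally fix $\Kep, \Kann, \Karound$ via Lemma~\ref{lem-attained-prob} applied to this $p_*$. All estimates are uniform in $\BB r$ because the inputs (Lemmas~\ref{lem-annulus-iterate}, \ref{lem-attained-msrble}, \ref{lem-attained-prob}) and the tightness-across-scales axiom are.
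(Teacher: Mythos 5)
Your proof is correct and follows essentially the same route as the paper's: obtain $\Kep,\Kann,\Karound$ from Lemma~\ref{lem-attained-prob} applied with a small $p_*$, use Lemma~\ref{lem-annulus-iterate} (after the obvious reparametrization of annuli) to get a polynomially-high-probability occurrence of at least one $E_r(w)$ for each fixed $w$ (uniformly in $\BB r$ by translation invariance and Axiom~\reftranslate), and then union-bound over a polynomial-size grid. The order of quantifiers you emphasize — first fix the target decay exponent, then take $a$ large in Lemma~\ref{lem-annulus-iterate}, then shrink $p_*$ so $1-2p_*$ exceeds the resulting threshold, then extract $\Kep,\Kann,\Karound$ — matches the paper's.

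One small point where your proof is actually more careful than the paper's written proof: you take the grid to have spacing $\ep^2\BB r$ (hence $O(\ep^{-4})$ points and a correspondingly larger choice of $a$). This is the right mesh, because the radius $r$ for which $E_r(w)$ occurs may be as small as $\ep^2\BB r$, and the conclusion requires $w\in B_{r/2}(z)$. The paper uses the grid $\left(\frac{\ep\BB r}{100}\BB Z^2\right)$ with $O(\ep^{-2})$ points, for which the nearest grid point to $z$ need not lie in $B_{r/2}(z)$ when $r$ is near its minimum; replacing $\ep$ by $\ep^2$ in that grid (as you do) is the correct fix, and it costs nothing since the per-point failure probability can still be made to beat $\ep^{-4}$.
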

\begin{proof} 
By Lemma~\ref{lem-annulus-iterate}, there exists a universal constant $p_* \in (0,1)$ such that the following is true.
Let $\BB r > 0$, let $\ep \in (0,1)$, let $K\geq \frac14\log_8\ep^{-1}$, and let $r_1,\dots,r_K \in [\ep^2 \BB r , \ep \BB r] \cap \{8^{-k}\}_{k\in\BB N}$ be distinct. If $z\in\BB C$ and $F_{r_k}(z)$ for $k=1,\dots,K$ is an event which is a.s.\ determined by $h|_{\ol{\BB A}_{r_j /2,2r_j}(z)}$, viewed modulo additive constant, and has probability at least $1-2p_*$, then
\eqbn
\BB P\left[ \text{$\exists k\in [1,K]_{\BB Z}$ such that $F_{r_k}$ occurs} \right] \geq 1 - O_\ep(\ep^{100}) ,
\eqen
with the implicit constant in the $O_\ep(\cdot)$ universal. 

We now choose $\Kep,\Kann,\Karound$ as in Lemma~\ref{lem-attained-prob} with $p = p_*$.
For $\Cmed \in (0,\Cupper)$ and $\BB r > 0$, we apply the above statement to the radii $r \in [\ep^2 \BB r , \ep \BB r] \cap \{8^{-k}\}_{k\in\BB N}$ from Lemma~\ref{lem-attained-prob}, which are chosen so that $\BB P[E_r(w)] \geq 1 - 2 p_*$ for all $w\in\BB C$. By Lemma~\ref{lem-attained-prob}, if~\eqref{item-attained-bad} holds with $p=p_*$, then there are at least $ \frac14 \log_8 \ep^{-1}$ such radii. Hence, if~\eqref{item-attained-bad} holds, then
\eqb \label{eqn-attained-union-prob}
\BB P\left[ \text{$\exists r \in [\ep^2 \BB r , \ep \BB r]$ such that $E_r(w)$ occurs} \right] \geq 1 - O_\ep(\ep^{100}) ,\quad \forall z\in\BB C  ,
\eqe 
with the implicit constant in the $O_\ep(\cdot)$ universal. 

The lemma statement now follows by applying~\eqref{eqn-attained-union-prob} to each of the $O_\ep(\ep^{-2})$ points $w\in   B_{\BB r}(\BB r U) \cap \left(\frac{\ep \BB r}{100} \BB Z^2 \right)$, then taking a union bound. 
\end{proof}

Henceforth fix $p_* ,\Kep , \Kann,$ and $\Karound$ as in Lemma~\ref{lem-attained-union}. 
Also fix
\eqb \label{eqn-C''-choice}
\Cmed' \in \left( \Cmed  + \frac{\Karound }{ \Karound +1 }(\Cupper - \Cmed)  , \Cupper \right) ,
\eqe 
and note that we can choose $\Cmed'$ in a manner depending only on $\Cmed$ and the laws of $D_h$ and $\wt D_h$ (since $\Karound$ depends only on the laws of $D_h$ and $\wt D_h$). 
 
We will show that for each $\Kopt > 0$, there exists $\ep_0 = \ep_0( \Kopt,\Cmed )  > 0$ such that if $\BB r > 0$, $\ep \in (0,\ep_0]$, and~\eqref{item-attained-bad} holds for the above values of $\BB r , \ep,p_* ,\Kann$, then with probability greater than $1  - \Kopt$,  
\eqb \label{eqn-lower-ratio}
\wt D_h\left( B_{\Kopt \BB r}(\BB z) ,B_{\Kopt \BB r}(\BB w) \right) \leq \Cmed' D_h(\BB z,\BB w) \quad  \forall \BB z,\BB w \in B_{\BB r }(0) .
\eqe
By Definition~\ref{def-opt-event}, the bound~\eqref{eqn-lower-ratio} implies that $\BB P[ G_{\BB r}(\Kopt , \Cmed')^c] > 1 - \Kopt$, which is what we aim to show in Proposition~\ref{prop-attained-bad}. 

By Lemma~\ref{lem-geo-compact}, there is some large bounded open set $U\subset \BB C$ (depending only on $\Kopt$ and the law of $D_h$) such that for each $\BB r > 0$, it holds with probability at least $1-\Kopt/2$ that each $D_h$-geodesic between two points of $\ol B_{\BB r}(0)$ is contained in $\BB r U$. 
For $\ep > 0$, let $F_{\BB r}^\ep$ be the event that this is the case and for each $z\in \BB r U$, there exists $r \in [\ep^2 \BB r , \ep\BB r]$ and $w \in B_{r/2}(z)$ such that $E_r(w)$ occurs. 
By Lemma~\ref{lem-attained-union}, if~\eqref{item-attained-bad} holds then 
\eqb \label{eqn-attained-reg-event}
\BB P[F_{\BB r}^\ep] \geq 1 - \Kopt/2 - o_\ep(1) ,
\eqe 
where the rate of convergence of the $o_\ep(1)$ depends only on $U$, hence only on $\beta$ and the law of $D_h$. 

We henceforth assume that $F_{\BB r}^\ep$ occurs. We will show that if $\ep$ is small enough, then~\eqref{eqn-lower-ratio} holds.  
Let $\BB z,\BB w \in B_{\BB r}(0)  $ and let $P : [0,D_h(\BB z , \BB w )] \rta \BB C$ be a $D_h$-geodesic from $\BB z$ to $\BB w$. 
We assume that 
\eqb \label{eqn-pt-ep-assume}
\ep \leq \frac14 \Kopt  \quad \text{and} \quad |\BB z -\BB w| \geq \Kopt \BB r .
\eqe 
The reason why we can make these assumptions is that $\ep_0$ is allowed to depend on $\Kopt$ and~\eqref{eqn-lower-ratio} holds vacuously if $|\BB z-\BB w| \leq \Kopt \BB r$. 
We will inductively define a sequence of times 
\eqbn
0 = t_0 < s_1 < t_1 < s_2 < t_2 < \dots < s_J < t_J \leq D_h(\BB z,\BB w) .
\eqen
See Figure~\ref{fig-attained-def} for an illustration.

\begin{figure}[ht!]
\begin{center}
\includegraphics[width=.6\textwidth]{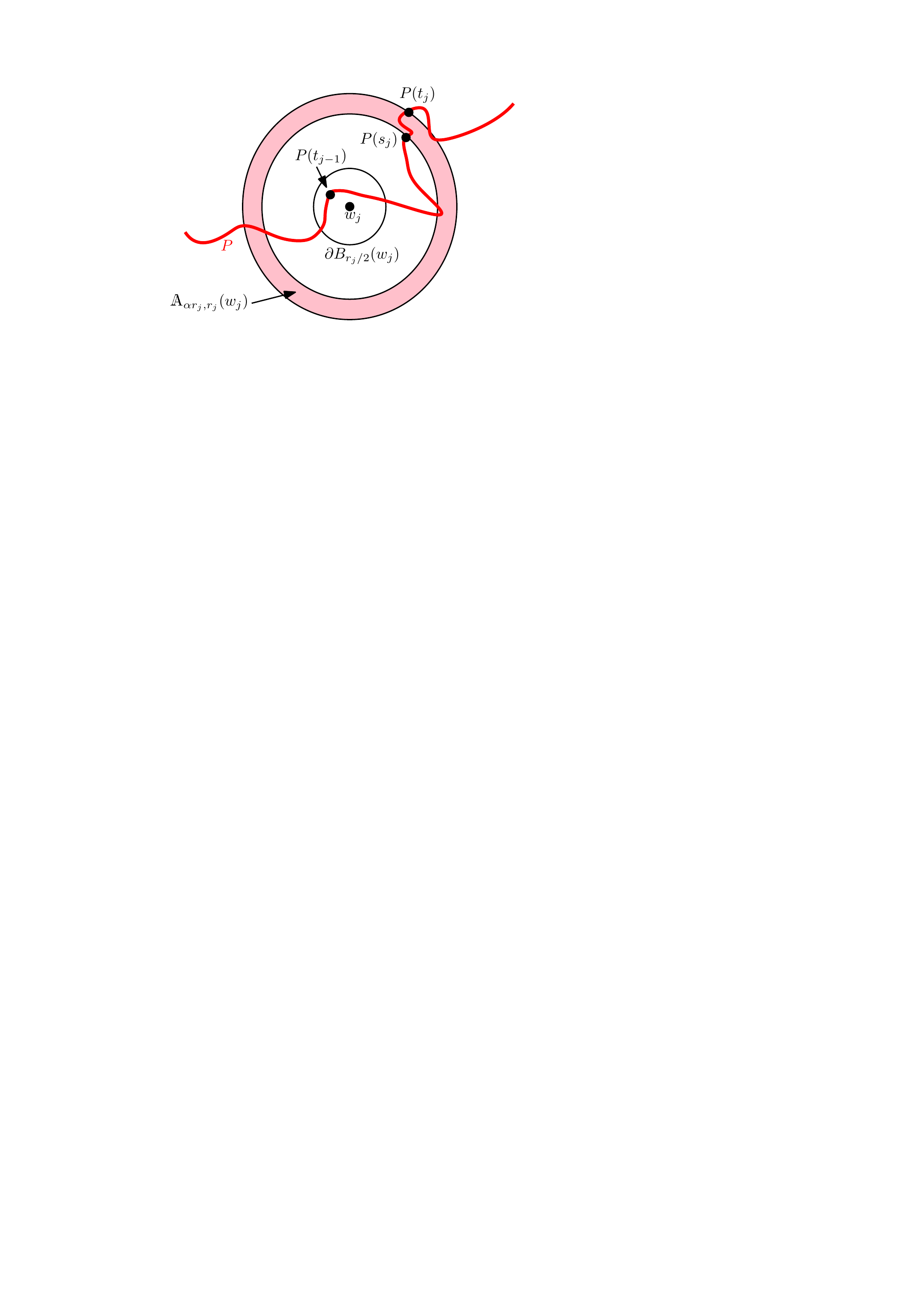}   
\caption{\label{fig-attained-def} Illustration of the definition of the times $s_j$ and $t_j$ and the balls $B_{r_j}(w_j)$.  
}
\end{center}
\end{figure}

Let $t_0  = 0$.
Inductively, assume that $j\in\BB N $ and $t_{j-1}$ has been defined.
By the definition of $F_{\BB r}^\ep$, we have $P(t_{j-1}) \in \BB r U$ and there exists $r_j \in [\ep^2 \BB r , \ep \BB r]$ and $w_j \in B_{r_j/2}(P(t_{j-1}))$ such that $E_{r_j}(w_j)$ occurs. Fix (in some arbitrary manner) a particular choice of $r_j$ and $w_j$ with these properties. 

Let $t_{j}$ be the first time $t\geq t_{j-1}$ for which $P(t) \notin B_{r_j}(w_j)$, or let $t_j = D_h(\BB z,\BB w)$ if no such time exists.  
If $t_j < D_h(\BB z,\BB w)$, we also let $s_j$ be the last time before $t_j$ at which $P$ hits $\bdy B_{\Kann r_j}(w_j)$, so that $s_j \in [t_{j-1} , t_j]$ and $P([s_j , t_j]) \subset \ol{ \BB A}_{\Kann r_j , r_j}(w_j) $.
 
Finally, define
\allb \label{eqn-attained-index}
\ul J &:= \max\left\{ j\in \BB N : |\BB z - P(t_{j-1} )| <  2 \ep \BB r \right\}
\quad \text{and} \notag\\
\ol J &:= \min\left\{j\in\BB N : |\BB w - P(t_{j+1} )|  <  2 \ep \BB r \right\} .
\alle
The reason for the definitions of $\ul J$ and $\ol J$ is that $\BB z , \BB w \notin B_{ r_j}(w_j)$ for $j \in [\ul J , \ol J]_{\BB Z}$ (since $r_j \leq \ep \BB r$ and $P(t_j) \in B_{r_j}(w_j)$). 
Whenever $|\BB w - P(t_{j-1})| \geq  \ep \BB r$, we have $t_j  < D_h(\BB z,\BB w)$ and $|P(t_{j-1}) - P(t_j)| \leq 2 \ep \BB r$. 
Therefore,   
\eqb \label{eqn-endpoint-contain}
P(t_{\ul J}) \in B_{4\ep \BB r}(\BB z) \quad \text{and} \quad P(t_{\ol J}) \in B_{4\ep\BB r}(\BB w) .
\eqe

The most important estimate that we need for the times $s_j$ and $t_j$ is the following lemma.

\begin{lem} \label{lem-attained-increment} 
For each $j\in [\ul J , \ol J]_{\BB Z}$,  
\eqb \label{eqn-attained-increment}
\wt D_h\left( P(s_j) , P(t_j)  \right) \leq \Cmed (t_j - s_j) 
\quad \text{and} \quad
\wt D_h\left( P(t_{j-1}) , P(s_j) \right) \leq \Cupper (s_j - t_{j-1}  )  .
\eqe 
\end{lem}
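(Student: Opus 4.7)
The second inequality is immediate: since $P$ is a $D_h$-geodesic we have $D_h(P(t_{j-1}),P(s_j))=s_j-t_{j-1}$, and combining with the bi-Lipschitz bound~\eqref{eqn-bilip} gives $\wt D_h(P(t_{j-1}),P(s_j))\le\Cupper(s_j-t_{j-1})$.

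For the first inequality, the plan is to apply condition~\ref{item-attained-geo} of the event $E_{r_j}(w_j)$ with $u:=P(s_j)\in\bdy B_{\Kann r_j}(w_j)$, $v:=P(t_j)\in\bdy B_{r_j}(w_j)$, and $P':=P|_{[s_j,t_j]}$. Being a sub-path of a $D_h$-geodesic, $P'$ is itself a $D_h$-geodesic from $u$ to $v$, so $D_h(u,v)=t_j-s_j$ and both $u$ and $v$ are non-singular. If the hypotheses of condition~\ref{item-attained-geo} can be verified, we obtain $\wt D_h(u,v)\le\Cmed(t_j-s_j)$, as desired.

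The bulk of the work is verifying these hypotheses. The memberships $u\in\bdy B_{\Kann r_j}(w_j)$, $v\in\bdy B_{r_j}(w_j)$ are by construction; the Euclidean bound $|u-v|\le\Kep r_j$ and condition~(ii) of Definition~\ref{def-annulus-geo} both follow from condition~\ref{item-attained-leb} applied to $P'$ (using $\Kep<1/100$); and condition~(i) is built into the choices of $s_j,t_j$. For condition~(iii), the defining property of the range $j\in[\ul J,\ol J]_{\BB Z}$ (see~\eqref{eqn-attained-index}) places $\BB z$ and $\BB w$ sufficiently far from $w_j$ that every loop $\pi\subset\BB A_{3r_j/2,2r_j}(w_j)$ disconnecting the inner and outer boundaries must be crossed by $P$ at least once before $s_j$ and once after $t_j$; the $D_h$-geodesic property then forces $t_j-s_j\le\op{len}(\pi;D_h)$, and taking the infimum over $\pi$ and invoking the first clause of condition~\ref{item-attained-cross} gives $D_h(u,v)\le(1-\Kann)^{-1}r_j^{\xi Q}e^{\xi h_{r_j}(w_j)}$.

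To handle condition~(iv) and the auxiliary requirement~\eqref{eqn-attained-geo}, I would first enlarge $P'$ to the sub-geodesic $P''$ of $P$ on the maximal interval $[\alpha,\beta]\supset[s_j,t_j]$ with $P''\subset\ol{\BB A}_{r_j/2,2r_j}(w_j)$ and $P''(\alpha),P''(\beta)\in\bdy\BB A_{r_j/2,2r_j}(w_j)$; as a restriction of a $D_h$-geodesic, $P''$ is automatically a $D_h(\cdot,\cdot;\ol{\BB A}_{r_j/2,2r_j}(w_j))$-geodesic, so condition~\ref{item-attained-reg} applies to it. Applying this condition at $x=u$ and $x=v$ (both of which lie in $\BB A_{3r_j/4,3r_j/2}(w_j)$ and are visited by $P''$) yields the ``around'' parts of~\eqref{eqn-annulus-geo-loop'}, and applying it at $x=v$, $\delta=\Kep$ together with the second clause of condition~\ref{item-attained-cross} produces~\eqref{eqn-attained-geo}. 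For the ``$D_h(u,\bdy B_{\delta r_j}(u))$'' half of~\eqref{eqn-annulus-geo-loop'}, the key observation is that $|u-v|\ge(1-\Kann)r_j$ by the triangle inequality, so $\delta^{1/2}r_j\le|u-v|$ whenever $\delta\le(1-\Kann)^2$; hence $P'$ must exit $B_{\delta^{1/2}r_j}(u)$ before reaching $v$, and the $D_h$-distance from $u$ to $\bdy B_{\delta r_j}(u)$ can be controlled by chaining the short loops provided by condition~\ref{item-attained-reg} with a short arc reaching $\bdy B_{\delta r_j}(u)$. The hard part will be the bookkeeping in this last step---ensuring that the various $\delta^\geoExp$ factors combine to give exactly the exponent $\geoExp$ on the right-hand side of~\eqref{eqn-annulus-geo-loop'}---while the remaining hypotheses reduce to direct applications of the listed conditions of $E_{r_j}(w_j)$.
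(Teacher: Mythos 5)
Your overall strategy matches the paper's: apply condition~\ref{item-attained-geo} of $E_{r_j}(w_j)$ to the sub-geodesic $P|_{[s_j,t_j]}$ with $u=P(s_j)$, $v=P(t_j)$, and the enlargement to a $D_h(\cdot,\cdot;\ol{\BB A}_{r_j/2,2r_j}(w_j))$-geodesic $P''$ (the paper's $P|_{[\sigma_j,\tau_j]}$) so that condition~\ref{item-attained-reg} applies is also what the paper does. Your treatments of the membership and diameter requirements (via condition~\ref{item-attained-leb}), of the upper bound $D_h(u,v)\le(1-\Kann)^{-1}r_j^{\xi Q}e^{\xi h_{r_j}(w_j)}$ via trapping the geodesic inside $\BB A_{3r_j/2,2r_j}(w_j)$ and the first clause of condition~\ref{item-attained-cross}, and of the ``around'' bounds and~\eqref{eqn-attained-geo} (condition~\ref{item-attained-reg} at $\delta=\Kep$ combined with the second clause of condition~\ref{item-attained-cross}) are all in line with the paper.

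Where you have a genuine gap is the bound $D_h(u,\bdy B_{\delta r_j}(u))\le\delta^\geoExp(t_j-s_j)$: ``chaining the short loops provided by condition~\ref{item-attained-reg} with a short arc reaching $\bdy B_{\delta r_j}(u)$'' will not give the clean exponent $\geoExp$, and you yourself flag the bookkeeping as problematic. There is no chaining needed. Once condition~\ref{item-attained-reg} and the crossing bound supply a loop $\pi_\delta\subset\BB A_{\delta r_j,\delta^{1/2}r_j}(u)$ with $\op{len}(\pi_\delta;D_h)\le(\delta^\geoExp+\zeta)(t_j-s_j)$, one argues directly with the geodesic: since the endpoints $\BB z,\BB w$ lie outside $B_{\delta^{1/2}r_j}(u)$, the $D_h$-geodesic $P$ must hit $\pi_\delta$ at a first time $a_\delta<s_j$ and a last time $b_\delta>s_j$, and the geodesic property forces $b_\delta-a_\delta\le\op{len}(\pi_\delta;D_h)$. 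The segment $P|_{[s_j,b_\delta]}$ starts at $u$ and ends on $\pi_\delta$, hence outside $B_{\delta r_j}(u)$, so $D_h(u,\bdy B_{\delta r_j}(u))\le b_\delta-s_j\le b_\delta-a_\delta\le(\delta^\geoExp+\zeta)(t_j-s_j)$; sending $\zeta\to0$ gives the exact bound in one step, with no accumulation of $\delta^\geoExp$ factors to manage.
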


The second inequality in~\eqref{eqn-attained-increment} is immediate from the definition~\eqref{eqn-bilip-def} of $\Cupper$. 
We will prove the first inequality in~\eqref{eqn-attained-increment} by applying condition~\ref{item-attained-geo} in the definition of $E_{r_j}(w_j)$ with $u = P(s_j)$ and $v = P(t_j)$. 
The following lemma will be used in conjunction with condition~\ref{item-attained-reg} in the definition of $E_{r_j}(w_j)$ to check the requirement~\eqref{eqn-attained-geo} from condition~\ref{item-attained-geo}.

\begin{figure}[ht!]
\begin{center} 
\includegraphics[width=.8\textwidth]{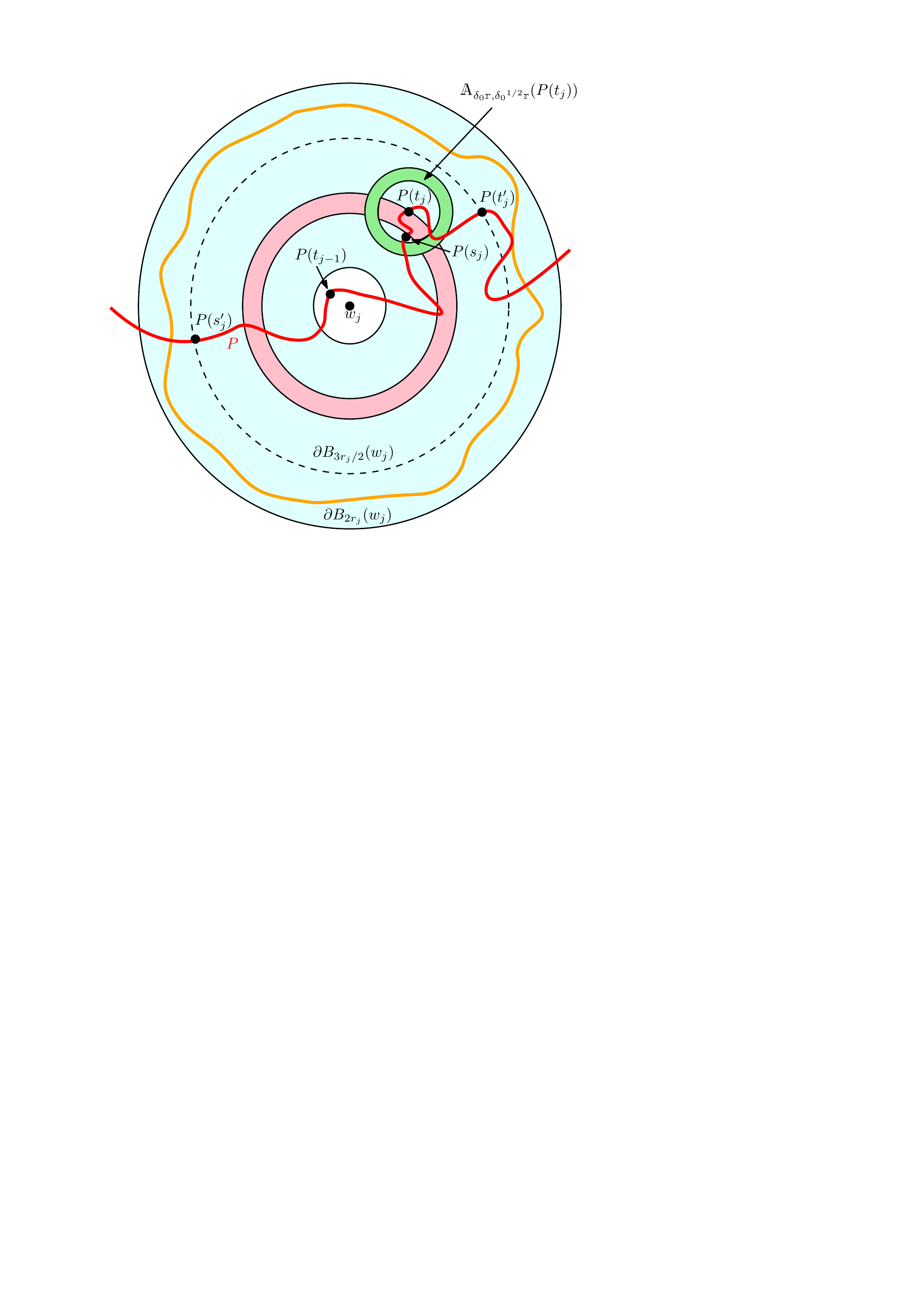}
\caption{\label{fig-attained-loop} Illustration of the proof of Lemma~\ref{lem-attained-loop}. We upper-bound $t_j - s_j$ and $D_h\left(\text{across $\BB A_{\Kep r_j , \Kep^{1/2} r_j }(P(t_j))$} \right) $ in terms of $t_j'-s_j'$, upper-bound $t_j'-s_j'$ in terms of the $D_h$-length of the orange loop, and upper-bound the $D_h$-length of the orange loop using condition~\ref{item-attained-cross} in the definition of $E_{r_j}(w_j)$. Note that the picture is not to scale. For example, in actuality the inner radius of $\BB A_{\Kep r_j , \Kep^{1/2} r_j }(P(t_j))$ is much smaller than its outer radius. 
}
\end{center}
\end{figure}

\begin{lem} \label{lem-attained-loop}
For each $j\in [\ul J ,\ol J ]_{\BB Z}$, we have 
\eqb \label{eqn-attained-time}
t_j - s_j \leq (1-\Kann)^{-1}  r_j^{\xi Q} e^{\xi h_{r_j}(w_j)} 
\eqe
and
\eqb \label{eqn-attained-loop}
D_h\left(\text{across $\BB A_{\Kep r_j , \Kep^{1/2} r_j }(P(t_j))$} \right)  
\leq   \frac{\Clower}{2\Cupper}  \delta^{-\geoExp}  D_h\left(  \BB A_{3r_j /4 , 3r_j /2}(z) ,  \bdy \BB A_{r_j /2,2r_j}(z) \right)  .
\eqe 
\end{lem}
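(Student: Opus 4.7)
Both bounds are instances of a single ``geodesic shortcut via a short loop'' argument. Given $\eta>0$, condition~\ref{item-attained-cross} in the definition of $E_{r_j}(w_j)$ yields a loop $\pi \subset \BB A_{3r_j/2,2r_j}(w_j)$ that disconnects the inner and outer boundaries of this annulus and satisfies
\[
\op{len}(\pi; D_h) \leq \min\!\Bigl\{ (1-\Kann)^{-1} r_j^{\xi Q} e^{\xi h_{r_j}(w_j)}, \; \tfrac{\Clower}{2\Cupper} \Kep^{-\geoExp} D_h\bigl(\BB A_{3r_j/4,3r_j/2}(w_j), \bdy\BB A_{r_j/2,2r_j}(w_j)\bigr) \Bigr\} + \eta.
\]
I first check that $\BB z,\BB w$ lie outside $\pi$. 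Since $\pi$ encloses $B_{3r_j/2}(w_j)$, and since $w_j\in B_{r_j/2}(P(t_{j-1}))$ with $r_j\leq\ep\BB r$, it suffices that $|\BB z - P(t_{j-1})|, |\BB w - P(t_{j-1})|$ be at least a uniform multiple of $\ep\BB r$, which is exactly what the definitions~\eqref{eqn-attained-index} of $\ul J,\ol J$ give (together with~\eqref{eqn-pt-ep-assume}) for $j \in [\ul J,\ol J]_{\BB Z}$, modulo a harmless adjustment of numerical constants. With $\BB z,\BB w$ outside $\pi$, the shortcut principle then reads: whenever $0\leq s<t\leq D_h(\BB z,\BB w)$ with $P(s),P(t)$ enclosed by $\pi$, the times $\tau_1 := \max\{u\leq s : P(u)\in\pi\}$ and $\tau_2 := \min\{u\geq t : P(u)\in\pi\}$ exist by continuity, and the geodesic property of $P$ together with the fact that $P(\tau_1),P(\tau_2)\in\pi$ can be joined by an arc of $\pi$ gives
\[
t-s \leq \tau_2 - \tau_1 = D_h(P(\tau_1),P(\tau_2)) \leq \op{len}(\pi; D_h).
\]

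For~\eqref{eqn-attained-time}, apply the principle with $(s,t) = (s_j,t_j)$: both $P(s_j)\in\bdy B_{\Kann r_j}(w_j)$ and $P(t_j)\in\bdy B_{r_j}(w_j)$ lie inside $\pi$, so combining with the first entry in the minimum and sending $\eta\to 0$ yields~\eqref{eqn-attained-time}.

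For~\eqref{eqn-attained-loop} (where I read the $\delta^{-\geoExp}$ in the statement as $\Kep^{-\geoExp}$ to match condition~\ref{item-attained-cross}), define $s' := \max\{u\leq t_j : P(u)\in\bdy B_{\Kep^{1/2}r_j}(P(t_j))\}$ and $t' := \min\{u\geq t_j : P(u)\in\bdy B_{\Kep^{1/2}r_j}(P(t_j))\}$; these times exist since $|P(t_{j-1})-P(t_j)|\geq r_j/2 > \Kep^{1/2}r_j$ and $\BB w$ is far from $P(t_j)$ for $j\in[\ul J,\ol J]_{\BB Z}$. Because $\Kep<1/100$, the disk $\ol B_{\Kep^{1/2}r_j}(P(t_j))$ lies inside $B_{3r_j/2}(w_j)$ and hence inside $\pi$, so the shortcut principle with $(s,t)=(s',t')$ gives $t'-s'\leq\op{len}(\pi;D_h)$. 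Letting $\sigma\in(s',t_j]$ be the first time $P$ hits $\bdy B_{\Kep r_j}(P(t_j))$, the subpath $P|_{[s',\sigma]}$ is a path from the outer to the inner boundary of $\BB A_{\Kep r_j,\Kep^{1/2}r_j}(P(t_j))$ with $D_h$-length $\sigma-s'\leq t'-s'\leq\op{len}(\pi;D_h)$. Combined with the second entry in the minimum and sending $\eta\to 0$, this yields~\eqref{eqn-attained-loop}. The only delicate step in the whole argument is the bookkeeping that places $\BB z,\BB w$ strictly outside $\pi$ uniformly over $j\in[\ul J,\ol J]_{\BB Z}$; once that is in place, both bounds are immediate from the one-line shortcut principle applied to two different pairs of times.
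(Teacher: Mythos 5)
Your proof is correct and follows essentially the same approach as the paper: both arguments bound $t_j-s_j$ and the crossing distance by the $D_h$-length of a near-minimal loop in $\BB A_{3r_j/2,2r_j}(w_j)$, using the geodesic property of $P$ together with condition~\ref{item-attained-cross}. You have also correctly identified that the $\delta^{-\geoExp}$ in~\eqref{eqn-attained-loop} should read $\Kep^{-\geoExp}$ (this is a typo in the lemma statement, consistent with how~\eqref{eqn-attained-loop} is later applied together with~\eqref{eqn-attained-use-reg} at $\delta=\Kep$ to produce~\eqref{eqn-attained-check-loop}).
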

\begin{proof}
See Figure~\ref{fig-attained-loop} for an illustration. 
Let $s_j'$ be the first time that $P$ enters $B_{3r_j/2}(w_j)$ and let $t_j'$ be the last time that $P$ exits $B_{3r_j/2}(w_j)$. 
Then $s_j' < s_j < t_j < t_j'$. 
The definitions~\eqref{eqn-attained-index} of $\ul J$ and $\ol J$ show that the endpoints $\BB z,\BB w$ of $P$ are not in $B_{2r_j}(w_j)$, so $P$ must cross between the inner and outer boundaries of the annulus $\BB A_{3r_j/2,2r_j}(w_j)$ before time $s_j'$ and after time $t_j'$. 
By considering the segment of $P$ between two consecutive times when it hits a path around $\BB A_{3r_j/2,2r_j}(w_j)$ of near-minimal length and using the fact that $P$ is a $D_h$-geodesic, we see that
\eqb \label{eqn-attained-use-around}
t_j' - s_j' \leq D_h\left( \text{around $\bdy \BB A_{3r_j/2,2r_j}(z)$}\right) .
\eqe

By~\eqref{eqn-attained-use-around}, followed by condition~\ref{item-attained-cross} in the definition of $E_{r_j}(w_j)$, we obtain
\eqbn
t_j - s_j \leq t_j' - s_j' \leq D_h\left( \text{around $\bdy \BB A_{3r_j/2,2r_j}(z)$}\right) \leq (1-\Kann)^{-1} r_j^{\xi Q} e^{\xi h_{r_j}(w_j)} ,
\eqen
which is~\eqref{eqn-attained-time}. 

The path $P$ must cross between the inner and outer boundaries of the annulus $\BB A_{\Kep r_j, \Kep^{1/2} r_j}(P(t_j))$ between times $t_j'$ and $s_j'$. 
By~\eqref{eqn-attained-use-around} followed by condition~\ref{item-attained-cross} in the definition of $E_{r_j}(w_j)$,  
\alb
D_h\left(\text{across $\BB A_{\Kep r_j, \Kep^{1/2} r_j}(P(t_j))$} \right) 
&\leq t_j' - s_j' \notag\\ 
&\leq D_h\left( \text{around $\bdy \BB A_{3r_j/2,2r_j}(z)$}\right) \notag\\ 
&\leq   \frac{\Clower}{2\Cupper}  \delta^{-\geoExp}  D_h\left(  \BB A_{3r_j /4 , 3r_j /2}(z) ,  \bdy \BB A_{r_j/2,2r_j}(z) \right)  .
\ale
This gives~\eqref{eqn-attained-loop}. 
\end{proof}

\begin{proof}[Proof of Lemma~\ref{lem-attained-increment}] 
The second inequality in~\eqref{eqn-attained-increment} is immediate from the definition~\eqref{eqn-bilip-def} of $\Cupper$. 
To get the first inequality, we want to apply condition~\ref{item-attained-geo} in the definition of $E_{r_j}(w_j)$ to the points $u = P(s_j) \in \bdy B_{\Kann r_j}(w_j)$ and $v = P(t_j) \in \bdy B_{r_j}(w_j)$. To do this, we need to check the hypotheses of condition~\ref{item-attained-geo} in the definition of $E_{r_j}(w_j)$. 

To this end, let $\sigma_j$ be the last time before $s_j$ at which $P$ enters $\BB A_{r_j/2,2r_j}(w_j)$ and let $\tau_j$ be the first time after $t_j$ at which $P$ exits $\BB A_{r_j/2,2r_j}(w_j)$.  
Then $P|_{[\sigma_j ,\tau_j]}$ is a $D_h(\cdot,\cdot;\ol{\BB A}_{r_j/2,2r_j}(w_j))$-geodesic between two points of $\bdy\BB A_{r_j/2,2r_j}(w_j)$ and $\sigma_j < s_j < t_j < \tau_j$. 
By the definitions of $s_j$ and $t_j$, we have
\eqb \label{eqn-attained-check-subset}
P|_{[s_j,t_j]} \subset \ol{\BB A}_{\Kann r_j , r_j}(w_j) .
\eqe
By~\eqref{eqn-attained-check-subset} and condition~\ref{item-attained-leb} in the definition of $E_{r_j}(w_j)$, 
\eqb \label{eqn-attained-check-eucl}
\left(\text{Euclidean diameter of $P([s_j,t_j])$}\right)    \leq \Kep r_j  \leq \frac{r_j}{100} .
\eqe
 
By condition~\ref{item-attained-reg} in the definition of $E_{r_j}(w_j)$, 
\allb \label{eqn-attained-use-reg}
D_h\left(\text{around $\BB A_{\delta r_j ,\delta^{1/2} r_j}(P(t_j) )$} \right) 
&\leq \delta^\geoExp D_h\left(\text{across $\BB A_{\delta  r_j , \delta^{1/2} r_j}(P(t_j) )$} \right) , \notag\\
&\qquad\qquad \forall \delta \in (0,\Kep]  ;
\alle
and the same is true with $P(s_j)$ in place of $P(t_j)$.  
By definition, $|P(t_j) - P(s_j)| \geq (1-\Kann) r_j$ so for each $\delta \in (0,(1-\Kann)^2]$, the path $P|_{[s_j,t_j]}$ crosses between the inner and outer boundaries of the annuli $\BB A_{\delta r_j ,\delta^{1/2} r_j}(P(s_j))$ and $\BB A_{\delta r_j ,\delta^{1/2} r_j}(P(t_j))$. 
Since $1-\Kann < \Kep$,~\eqref{eqn-attained-use-reg} implies that 
\allb \label{eqn-attained-check-reg0}
D_h\left(\text{around $\BB A_{\delta r_j ,\delta^{1/2} r_j}(P(t_j) )$} \right) 
\leq \delta^\geoExp (t_j - s_j)  
&= \delta^\geoExp D_h(P(s_j) , P(t_j)) , \notag\\
&\quad \forall \delta \in (0,(1-\Kann)^2 ]  ;
\alle
and the same is true with $P(s_j)$ in place of $P(t_j)$ on the left side. 

By~\eqref{eqn-attained-check-reg0}, for each $\zeta > 0$ and each $\delta  \in (0,(1-\Kann)^2]$ we can find a path $\pi_\delta$  in $\BB A_{\delta r_j ,\delta^{1/2} r_j}(P(t_j) )$ which disconnects the inner and outer boundaries and has $D_h$-length at most $( \delta^\geoExp + \zeta) (t_j - s_j)$. If we let $a_\delta$ (resp.\ $b_\delta$) be the first (resp.\ last) time that $P$ hits $\pi_\delta$, then $a_\delta \leq t_j \leq  b_\delta$ and since $P$ is a $D_h$-geodesic we must have $b_\delta - a_\delta \leq \op{len}(\pi_\delta ; D_h)$. 
Furthermore, the segment $P|_{[t_j , b_\delta]}$ hits $\bdy B_{\delta \BB r}(P(t_j))$, so for each $\delta \in (0,(1-\Kann)^2 ] $, 
\allb \label{eqn-attained-check-reg1}
D_h\left( P(t_j) , \bdy B_{\delta \BB r}(P(t_j)) \right) 
\leq b_\delta - t_j
\leq b_\delta - a_\delta
\leq \op{len}(\pi_\delta ; D_h) 
\leq ( \delta^\geoExp + \zeta) (t_j - s_j) .
\alle
Sending $\zeta \rta 0$ and recalling that $P$ is a $D_h$-geodesic gives
\allb \label{eqn-attained-check-reg}
D_h\left( P(t_j) , \bdy B_{\delta \BB r}(P(t_j)) \right)  
\leq  \delta^\geoExp  D_h\left(P(s_j) , P(t_j) \right)  ,\quad \forall \delta \in (0,(1-\Kann)^2 ] .
\alle
We similarly obtain~\eqref{eqn-attained-check-reg} with the roles of $P(s_j)$ and $P(t_j)$ interchanged.

Finally, by Lemma~\ref{lem-attained-loop} and~\eqref{eqn-attained-use-reg} (with $\delta=\Kep$),
\eqb \label{eqn-attained-check-loop}
D_h\left(\text{around $\BB A_{\Kep r_j ,\Kep^{1/2} r_j}(P(t_j) )$} \right)
\leq \frac{\Clower}{2\Cupper} D_h\left(  \BB A_{3r_j /4 , 3r_j /2}(z) ,  \bdy \BB A_{r_j /2,2r_j}(z) \right) .
\eqe

We are now ready to explain why we can apply condition~\ref{item-attained-geo} with $u=P(s_j)$ and $v = P(t_j)$. 
The hypothesis~\eqref{item-H-subset} follows from~\eqref{eqn-attained-check-subset}. 
The condition~\eqref{eqn-attained-geo} and the hypothesis~\eqref{item-H-diam} for the Euclidean diameter of $P|_{[s_j,t_j]}$ follow from~\eqref{eqn-attained-check-eucl}.
The needed upper bound~\eqref{item-H-dist} for $D_h(P(s_j) , P(t_j))$ follows from~\eqref{eqn-attained-time}
The hypothesis~\eqref{item-H-loop} follows from~\eqref{eqn-attained-check-reg0} and~\eqref{eqn-attained-check-reg}. 
The hypothesis~\eqref{eqn-annulus-geo-loop'} follows from~\eqref{eqn-attained-check-loop}.
Hence we can apply condition~\ref{item-attained-geo} in the definition of $E_{r_j}(w_j)$ to $P|_{[s_j,t_j]}$ to get $\wt D_h\left( P(s_j) , P(t_j)  \right) \leq \Cmed (t_j - s_j) $, as required.
\end{proof}

The last lemma we need for the proof of Proposition~\ref{prop-attained-bad} tells us that the time intervals $[s_j,t_j]$ occupy a positive fraction of the total $D_h$-length of the path $P$. 

\begin{lem} \label{lem-attained-compare} 
For each $j\in [\ul J , \ol J]_{\BB Z}$,  
\eqb \label{eqn-attained-compare}
s_j - t_{j-1} \leq  \frac{\Karound}{\Karound+1} (t_j - t_{j-1}) .
\eqe
\end{lem}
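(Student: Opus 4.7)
The plan is to rewrite the claimed inequality in the equivalent form $s_j - t_{j-1} \le \Karound (t_j - s_j)$ (obtained from $t_j - t_{j-1} = (s_j - t_{j-1}) + (t_j - s_j)$ and simple algebra), and then bound the left-hand side by constructing a competing path from $P(t_{j-1})$ to $P(s_j)$ whose $D_h$-length is at most $\Karound (t_j - s_j)$, up to a vanishing error. Since $P|_{[t_{j-1}, s_j]}$ is a $D_h$-geodesic segment, $s_j - t_{j-1} = D_h(P(t_{j-1}), P(s_j))$, so any such competing path immediately yields the desired bound.

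The first preliminary observation is that, because $P|_{[s_j,t_j]}$ is a path in $\ol{\BB A}_{\Kann r_j, r_j}(w_j)$ from its inner boundary $\bdy B_{\Kann r_j}(w_j)$ (at $P(s_j)$) to its outer boundary $\bdy B_{r_j}(w_j)$ (at $P(t_j)$), and $P$ is a $D_h$-geodesic,
\[
t_j - s_j = \op{len}(P|_{[s_j,t_j]}; D_h) \ge D_h\bigl(\text{across } \BB A_{\Kann r_j, r_j}(w_j)\bigr).
\]
Combined with condition~\ref{item-attained-dist} in the definition of $E_{r_j}(w_j)$, this yields
\[
D_h\bigl(\text{around } \BB A_{\Kann r_j, r_j}(w_j)\bigr) \le \Karound\, D_h\bigl(\text{across } \BB A_{\Kann r_j, r_j}(w_j)\bigr) \le \Karound(t_j - s_j).
\]
Hence, for each $\zeta > 0$, I can fix a loop $\pi \subset \BB A_{\Kann r_j, r_j}(w_j)$ disconnecting the two boundary components with $\op{len}(\pi; D_h) \le \Karound(t_j - s_j) + \zeta$, and the strategy is to use $\pi$ as a detour in the competing path.

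The main obstacle is the construction of the competing path. My plan is to use $P|_{[s_j,t_j]}$ as a bridge: since it crosses the annulus, it hits $\pi$ at some first time $u \in [s_j, t_j]$, and $P|_{[s_j,u]}$ then gives a path from $P(s_j)$ to $P(u) \in \pi$ of length $u - s_j \le t_j - s_j$. To close the loop back to $P(t_{j-1})$, I would split into cases according to whether $P|_{[t_{j-1}, s_j]}$ meets $\pi$. If it does not, then $P|_{[t_{j-1},s_j]}$ lies on the ``inside'' of $\pi$ and one can replace it by the concatenation of $P|_{[t_{j-1},u]}$ with a short arc of $\pi$ back to a point near $P(s_j)$ and finally $P|_{[s_j,u]}$ reversed; if it does, then between any two consecutive hits of $\pi$ by $P$ the geodesic property of $P$ forces the segment of $P$ to have length at most the shorter arc of $\pi$ between its endpoints, which is $\le \op{len}(\pi)/2$. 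Summing these arc contributions over the (boundedly many) crossings and combining with the bridge $P|_{[s_j,u]}$ produces a path from $P(t_{j-1})$ to $P(s_j)$ of total $D_h$-length $\Karound(t_j - s_j) + O(\zeta)$. Sending $\zeta \searrow 0$ gives $s_j - t_{j-1} \le \Karound(t_j - s_j)$, which is the desired inequality.
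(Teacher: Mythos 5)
Your preliminary reductions are correct: the target inequality is equivalent to $s_j - t_{j-1} \le \Karound(t_j - s_j)$, and condition~\ref{item-attained-dist} together with the fact that $P|_{[s_j,t_j]}$ crosses $\ol{\BB A}_{\Kann r_j, r_j}(w_j)$ yields, for each $\zeta > 0$, a loop $\pi$ in the annulus separating its boundary components with $\op{len}(\pi; D_h) \le \Karound(t_j - s_j) + \zeta$; you also correctly note that $P$ hits $\pi$ at some time $u \in [s_j, t_j]$. However, there is a genuine gap in the step where you try to turn this into a competing path from $P(t_{j-1})$ to $P(s_j)$. The crucial fact you are missing is that $P$ also hits $\pi$ at some time $a$ \emph{strictly before} $t_{j-1}$. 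For $j \in [\ul J, \ol J]_{\BB Z}$ one has $|\BB z - P(t_j)| \ge 2\ep\BB r$ and $r_j \le \ep\BB r$, hence $\BB z = P(0) \notin B_{r_j}(w_j)$, while $P(t_{j-1}) \in B_{r_j/2}(w_j) \subset B_{\Kann r_j}(w_j)$ lies strictly inside the inner boundary of the annulus; so $P|_{[0,t_{j-1}]}$ crosses $\BB A_{\Kann r_j, r_j}(w_j)$ and must meet $\pi$. With $a < t_{j-1} < s_j < u$ and $P(a), P(u) \in \pi$ in hand, the geodesic property applied to $P|_{[a,u]}$ gives directly that $s_j - t_{j-1} \le u - a = D_h(P(a), P(u)) \le \op{len}(\pi; D_h) \le \Karound(t_j - s_j) + \zeta$, and sending $\zeta \to 0$ finishes. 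No auxiliary path from $P(t_{j-1})$ to $P(s_j)$ is needed.

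Your case split on whether $P|_{[t_{j-1},s_j]}$ meets $\pi$ is not the right dichotomy and does not close as written. In case (a), the concatenation of $P|_{[t_{j-1},u]}$, an arc of $\pi$, and $P|_{[s_j,u]}$ reversed has length at least $(u - t_{j-1}) + (u - s_j) \ge s_j - t_{j-1}$, so it offers no improvement on the geodesic segment; moreover the arc of $\pi$ terminates at a point of $\pi$ rather than at $P(s_j)$, which need not lie on $\pi$, so the concatenation does not actually end at the intended endpoint. In case (b), bounding each excursion of $P$ between consecutive hits of $\pi$ by an arc of $\pi$ says nothing about the initial stretch from $P(t_{j-1})$ to the first hit of $\pi$, which is exactly where control is lacking. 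The single observation that $P$, coming from $\BB z$ outside $B_{r_j}(w_j)$, crosses the annulus before time $t_{j-1}$ repairs the argument and renders the case analysis unnecessary.
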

\begin{proof}
By the definition of $r_j$ and the definitions of $\ul J$ and $\ol J$ in~\eqref{eqn-attained-index}, 
for $j \in [\ul J , \ol J]_{\BB Z}$ we have $r_j \leq \ep \BB r$ and $|P(t_j) -\BB z| \wedge |P(t_j) - \BB w| \geq 2\ep \BB r$.
Since $P(t_{j-1}) \in B_{r_j/2}(w_j)$ and $P(s_j) \in \bdy B_{\Kann r_j}(w_j)$, we infer that the $D_h$-geodesic $P$ must cross between the inner and outer boundaries of the annulus $\BB A_{\Kann r_j,r_j}(w_j)$ at least once before time $t_{j-1}$ and at least once after time $s_j$. 
By condition~\ref{item-attained-dist} in the definition of $ E_{r_j}(w_j)$, there is a path  in $\BB A_{\Kann r_j,r_j}(w_j)$ disconnecting the inner and outer boundaries of this annulus with $D_h$-length arbitrarily close to $\Karound D_h\left(\bdy B_{\Kann r_j}(w_j) , \bdy B_{  r_j}(w_j) \right)$. 
The geodesic $P$ must hit this path at least once before time $t_{j-1}$ and at least once after time $s_j$. 
Since $P$ is a $D_h$-geodesic and $P(s_j) \in \bdy B_{\Kann r_j}(w_j)$, $P(t_j) \in \bdy B_{ r_j}(w_j)$, it follows that 
\eqbn 
s_j - t_{j-1}  \leq \Karound D_h\left(\bdy B_{\Kann r_j}(w_j) , \bdy B_{  r_j}(w_j) \right) \leq \Karound (t_j - s_j) .
\eqen
Adding $\Karound (s_j - t_{j-1})$ to both sides of this inequality, then dividing by $\Karound+1$, gives~\eqref{eqn-attained-compare}.
\end{proof}

\begin{proof}[Proof of Proposition~\ref{prop-attained-bad}]
Our above estimates show that if the event $F_{\BB r}^\ep$ of~\eqref{eqn-attained-reg-event} occurs, then we have the following string of inequalities:
\allb \label{eqn-attained-sum}
&\wt D_h\left( B_{4 \ep \BB r}(\BB z) , B_{4 \ep \BB r}(\BB w)  \right) \notag\\
&\qquad \leq \sum_{j=\ul J+1}^{\ol J} \left[ \wt D_h\left( P(t_{j-1}) , P(s_j) \right) +   \wt D_h\left( P(s_j) , P(t_j)  \right)  \right] \quad  \text{(by \eqref{eqn-endpoint-contain})} \notag \\
&\qquad \leq \sum_{j=\ul J+1}^{\ol J} \left[ \Cupper( s_j - t_{j-1})  +   \Cmed (t_j - s_j)  \right] \quad  \text{(by Lemma~\ref{lem-attained-increment})} \notag \\
&\qquad = \sum_{j=\ul J+1}^{\ol J} \left[   \Cmed (t_j - t_{j-1} )  + (\Cupper - \Cmed) (s_j - t_{j-1}) \right] \notag \\
&\qquad\leq \left( \Cmed  + \frac{\Karound}{\Karound+1}(\Cupper - \Cmed) \right) \sum_{j=\ul J+1}^{\ol J}  (t_j - t_{j-1} )  \quad \text{(by Lemma~\ref{lem-attained-compare})} \notag \\
&\qquad\leq \left( \Cmed  + \frac{\Karound}{\Karound+1}(\Cupper - \Cmed) \right) D_h(\BB z,\BB w) \quad \text{(since $P$ is a $D_h$-geodesic)} \notag\\
&\qquad\leq \Cmed'  D_h(\BB z,\BB w) \quad \text{(by~\eqref{eqn-C''-choice})}   .
\alle

By~\eqref{eqn-attained-reg-event}, we have $\BB P[F_{\BB r}^\ep] \geq 1 - \Kopt /2 - o_\ep(1)$, with the rate of convergence of the $o_\ep(1)$ uniform in the choice of $\BB r$. 
Hence we can choose $\ep_0 = \ep_0( \Kopt,\Cmed )  > 0$ small enough so that $4\ep_0 \leq \Kopt$ and $\BB P[F_{\BB r}^\ep]  >  1 - \Kopt$ for each $\ep \in (0,\ep_0]$. By~\eqref{eqn-attained-sum} and Definition~\ref{def-opt-event} of $G_{\BB r}(\Kopt , \Cmed')$, we see that for $\ep\in (0,\ep_0]$, the condition~\eqref{item-attained-bad} implies that $\BB P[G_{\BB r}(\Kopt , \Cmed')]  < \Kopt$, as required. 
\end{proof}

\section{The core argument}
\label{sec-counting}

\subsection{Properties of events and bump functions} 
\label{sec-counting-setup}

In this section, we will assume the existence of events and smooth bump functions which satisfy certain conditions. We will then use these objects to prove Theorem~\ref{thm-weak-uniqueness}. The objects will be constructed in Section~\ref{sec-construction} and are illustrated in Figure~\ref{fig-counting-def}.  

To state the conditions which our events and bump functions need to satisfy, we define the optimal upper and lower bi-Lipschitz constants $\Cupper$ and $\Clower$ as in Section~\ref{sec-attained} and we set
\eqb \label{eqn-Cmid-choice}
\Cmid  := \frac{\Clower + \Cupper}{2}  ,
\eqe
which belongs to $(\Clower,\Cupper)$ if $\Clower < \Cupper$. 

We will consider a set of admissible radii $\mcl R \subset (0,1)$ which is required to satisfy
\eqb \label{eqn-admissible-radii}
r' / r \geq 8,\quad \forall r,r' \in \mcl R \quad \text{such that} \quad r' > r .
\eqe 
The reason for restricting attention to a set of radii as in~\eqref{eqn-admissible-radii} is that in Section~\ref{sec-construction}, we will need to use Proposition~\ref{prop-attained-good'} in order to construct our events. 

We also fix a number $\BB p \in (0,1)$, which we will choose later in a manner depending only on $D_h$ and $\wt D_h$ (the parameter $\BB p$ is chosen in Lemma~\ref{lem-main-extra} below). 

Finally, we fix numbers $\Cmax,\Cacross,\Caround,\Crn,\Ctime,\Cinc ,  \Ctube  >0$, which we require to satisfy the relations 
\eqb  \label{eqn-parameter-relation}
\Caround > \Cacross \quad \text{and} \quad \Cacross - 4 e^{-\xi \Cmax} \Ctube > \frac{ 2\Caround }{ \Cacross } \Ctime . 
\eqe
We henceforth refer to these numbers as the \emph{parameters}. Most constants in our proofs will be allowed to depend on the parameters. 
The parameters will be chosen in Section~\ref{sec-construction}, in a manner depending only on $\BB p$ and the laws of $D_h$ and $\wt D_h$ (see also Proposition~\ref{prop-objects-exist}). 

Throughout this section, we will assume that for each $r \in\mcl R$ and each $z\in\BB C$, we have defined the following objects.
\begin{itemize}
\item An event $\Er_{z,r} = \Er_{z,r}(h) $ such that $\Er_{z,r}$ is a.s.\ determined by $h |_{\ol{\BB A}_{r,4r}(z)}$, viewed modulo additive constant (recall~\eqref{eqn-closed-restrict}), $\BB P[\Er_{z,r}] \geq \BB p$, and $\Er_{z,r}$ satisfies the three hypotheses listed just below. 
\item Deterministic open sets $\Ur_{z,r} , \Vr_{z,r} \subset \BB A_{r,3r}(z)$, each of which has the topology of an open Euclidean annulus and disconnects the inner and outer boundaries of $\BB A_{r,3r}(z)$, such that $\ol \Ur_{z,r} \subset \Vr_{z,r}$ and $\ol \Vr_{z,r} \subset \BB A_{r,3r}(z)$. 
\item A deterministic smooth function $\fr_{z,r} : \BB C\rta [0,\Cmax ]  $ such that $\fr_{z,r} \equiv \Cmax $ on $\Ur_{z,r}$ and $\fr_{z,r} \equiv 0$ on $\BB C\setminus \Vr_{z,r}$. 
\end{itemize} 
To state the needed hypotheses for the event $\Er_{z,r}$, we make the following definition.  

\begin{defn} \label{def-excursion}
Let $P : [0,T] \rta \BB C$ be a path and let $O,V \subset \BB C$ be open sets with $\ol V\subset O$. 
A \emph{$(O,V)$-excursion} of $P$ is a 4-tuple of times $(\tau' , \tau , \sigma,\sigma')$ such that 
\eqbn
P(\tau') , P(\sigma') \in \bdy O, \quad P((\tau',\sigma')) \subset O ,
\eqen
$\tau$ is the first time after $\tau'$ that $P$ enters $\ol V$, and $\sigma$ is the last time before $\sigma'$ at which $P$ exits $\ol V$. 
\end{defn}

An $(O,V)$ excursion is illustrated in Figure~\ref{fig-counting-def}. 
We assume that on the event $\Er_{z,r}$, the following is true. 
\begin{enumerate}[A.]
\item \label{item-Ehyp-dist} We have 
\alb
D_h(\Vr_{z,r} , \bdy \BB A_{r,3r}(z) ) &\geq \Cacross r^{\xi Q} e^{\xi h_r(z)} ,\notag\\ 
 D_h(\text{around $\BB A_{3r,4r}(z)$} ) &\leq \Caround r^{\xi Q} e^{\xi h_r(z)} ,\quad \text{and} \notag\\
  D_h(\text{around $\Ur_{z,r}$} ) &\leq \Ctube r^{\xi Q} e^{\xi h_r(z)}  .
\ale
\item \label{item-Ehyp-rn} The Radon-Nikodym derivative of the law of $h + \fr_{z,r}$ w.r.t.\ the law of $h$, with both distributions viewed modulo additive constant, is bounded above by $\Crn$ and below by $1/\Crn$. 
\item \label{item-Ehyp-inc} Let $P' : [0,T] \rta \BB C$ be a $D_{h - \fr_{z,r} }$-geodesic between two points which are not in $B_{4r}(z)$, parametrized by its $D_{h-\fr_{z,r} }$-length. 
Assume that (in the terminology of Definition~\ref{def-excursion}), there is a $(B_{4r}(z) , \Vr_{z,r})$-excursion $(\tau',\tau,\sigma,\sigma')$ for $P'$ such that
\eqb \label{eqn-Ehyp-exc}
  D_h\left( P'(\tau) , P'(\sigma)   ; B_{4r}(z)  \right) \geq \Ctime r^{\xi Q} e^{\xi h_r(z)}  . 
\eqe 
Then there are times $\tau \leq s < t \leq \sigma$ such that 
\eqb  \label{eqn-Ehyp-inc}
t-s \geq \Cinc r^{\xi Q} e^{\xi h_r(z)}   \quad \text{and} \quad 
\wt D_{h-\fr_{z,r} }\left(P'(s) , P'(t)  ; B_{4r}(z)  \right) \leq \Cmid (t-s) .
\eqe   
\end{enumerate}

Constructing objects which satisfy the above conditions (especially hypothesis~\ref{item-Ehyp-inc}) will require a lot of work. The proof of the following proposition will occupy all of Section~\ref{sec-construction}. 

\begin{prop} \label{prop-objects-exist}
Assume that $\Clower < \Cupper$. For each $\BB p \in (0,1)$, there exist $\Cmid'  \in (\Clower , \Cmid)$ and a set of radii $\mcl R $ as in~\eqref{eqn-admissible-radii}, depending only on $\BB p$ and the laws of $D_h$ and $\wt D_h$, with the following properties.
\begin{itemize}
\item There is a choice of parameters depending only on $\BB p$ and the laws of $D_h$ and $\wt D_h$, such that for each $r\in\mcl R$ and each $z\in\BB C$, there exist an event $\Er_{z,r}$, open sets $\Ur_{z,r} , \Vr_{z,r}$, and a function $\fr_{z,r}$ satisfying the above hypotheses. 
\item For each $\wt \Kopt > 0$, there exists $\ep_0  > 0$, depending only on $\BB p$, $\wt\Kopt$, and the laws of $D_h$ and $\wt D_h$, such that the following holds for each $\ep \in (0,\ep_0]$. If $\BB r > 0$ and that the event of Definition~\ref{def-opt-event'} satisfies $\BB P[\wt G_{\BB r}(\wt\Kopt , \Cmid')] \geq \wt\Kopt$, then the cardinality of $\mcl R \cap  [\ep^2 \BB r , \ep \BB r]$ is at least $ \frac58 \log_8 \ep^{-1}$. 
\end{itemize}
\end{prop}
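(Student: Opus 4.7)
The plan is to combine the quantitative bi-Lipschitz estimate of Proposition~\ref{prop-attained-good'} with an explicit geometric construction of a narrow ``tube'' $\Ur_{z,r}$ containing many good pairs of non-singular points. Fix an intermediate value $\Cmid_0\in(\Clower,\Cmid)$ (e.g.\ $\Cmid_0=(\Clower+\Cmid)/2$) and let $\Cmid'\in(\Clower,\Cmid_0)$ be the output of Proposition~\ref{prop-attained-good'} applied with $\Cmid_0$ in place of $\Cmid$; this will play the role of the $\Cmid'$ appearing in Proposition~\ref{prop-objects-exist}. Under the hypothesis $\BB P[\wt G_{\BB r}(\wt\Kopt,\Cmid')]\geq\wt\Kopt$, Proposition~\ref{prop-attained-good'} supplies at least $\frac34\log_8\ep^{-1}$ scales in $[\ep^2\BB r,\ep\BB r]\cap\{8^{-k}\BB r\}_{k\in\BB N}$ at which $\BB P[\wt H_r(\Kann,\Cmid_0)]\geq p$, for some $p\in(0,1)$ and $\Kann\in(3/4,1)$ depending only on the laws of $D_h,\wt D_h$. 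We take $\mcl R$ to be the set of such good scales; the spacing condition~\eqref{eqn-admissible-radii} holds automatically, and the cardinality bound in Proposition~\ref{prop-objects-exist} is inherited from Proposition~\ref{prop-attained-good'}.

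For fixed $r\in\mcl R$ and $z\in\BB C$, we define $\Ur_{z,r}\subset\Vr_{z,r}\subset\BB A_{r,3r}(z)$ to be narrow annular neighborhoods of the circle $\bdy B_{2r}(z)$, with widths chosen small enough that the distance around $\Ur_{z,r}$ can be made as small as desired and the compatibility relations~\eqref{eqn-parameter-relation} can be satisfied by taking $\Cmax$ large. Let $\fr_{z,r}$ be a smooth deterministic bump function equal to $\Cmax$ on $\Ur_{z,r}$ and vanishing outside $\Vr_{z,r}$. Hypothesis~\ref{item-Ehyp-rn} then follows from the Cameron--Martin theorem for the whole-plane GFF modulo additive constant, since $\fr_{z,r}$ is smooth, deterministic, and compactly supported. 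The distance bounds in hypothesis~\ref{item-Ehyp-dist} hold with high probability by Lemma~\ref{lem-two-set-dist} and Lemma~\ref{lem-ball-bdy-union} for suitable $\Cacross,\Caround,\Ctube$.

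To populate $\Ur_{z,r}$ with good pairs, we place candidate centers $w_1,\ldots,w_N\in\Ur_{z,r}$ along the tube at a much smaller scale $r'=\rho r$, where $\rho$ is a small constant chosen so that $r'$ also belongs to the good-scale set of Proposition~\ref{prop-attained-good'} and the annuli $\ol{\BB A}_{\Kann r',r'}(w_i)$ are pairwise disjoint and contained in $\Ur_{z,r}$. By translation invariance of the law of $h$ modulo additive constant together with Axiom~\reftranslate, the translation by $w_i$ of $\wt H_{r'}(\Kann,\Cmid_0)$ has the same probability as $\wt H_{r'}(\Kann,\Cmid_0)$, hence at least $p$. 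Using the near-independence of the GFF on disjoint annuli (Lemma~\ref{lem-annulus-offcenter} together with Lemma~\ref{lem-spatial-ind}), we take $N$ large enough that with probability at least $\BB p$, a definite fraction of these translated events occur simultaneously. The event $\Er_{z,r}$ will then be the intersection of this event, the distance bounds of hypothesis~\ref{item-Ehyp-dist}, and auxiliary regularity events supplied by Lemma~\ref{lem-hit-ball-phi} and Lemma~\ref{lem-geodesic-leb} that will be used to control $D_{h-\fr_{z,r}}$-geodesics; since all constituent events are determined by $h|_{\ol{\BB A}_{r,4r}(z)}$ modulo additive constant, so is $\Er_{z,r}$.

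The main obstacle is verifying hypothesis~\ref{item-Ehyp-inc}. Heuristically, since $\fr_{z,r}=\Cmax$ on $\Ur_{z,r}$, Weyl scaling (Axiom~\ref{item-metric-f}) shrinks $D_{h-\fr_{z,r}}$-distances through the tube by a factor $e^{-\xi\Cmax}$, so a $D_{h-\fr_{z,r}}$-geodesic $P'$ whose excursion into $\Vr_{z,r}$ satisfies the lower bound~\eqref{eqn-Ehyp-exc} must enter the tube: otherwise a rerouting using the short around-$\Ur_{z,r}$ path afforded by hypothesis~\ref{item-Ehyp-dist} together with the parameter relation~\eqref{eqn-parameter-relation} would yield a strictly shorter path between the endpoints of the excursion, contradicting the geodesic property. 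Once inside, $P'$ must pass Euclidean-close to at least one good pair $(u,v)$ coming from a translated $\wt H_{r'}$ event, where Lemmas~\ref{lem-hit-ball-phi} and~\ref{lem-geodesic-leb} are used to exclude wild behavior of $P'$. The key difficulty, and the source of most of the technical work in Section~\ref{sec-construction}, is upgrading this Euclidean closeness to closeness in $\wt D_{h-\fr_{z,r}}$: in the subcritical case H\"older continuity would do this automatically, but here we instead exploit the short-loop bound~\eqref{eqn-annulus-geo-loop-wt} in Definition~\ref{def-annulus-geo'} of $\wt H_{r'}$. Concatenating $P'$ with a disconnecting loop in $\BB A_{\delta r',\delta^{1/2}r'}(u)\subset\Ur_{z,r}$ of $\wt D_h$-length at most $\delta^\geoExp\wt D_h(u,v)$, and applying Weyl scaling on this loop, gives $\wt D_{h-\fr_{z,r}}(P'(s),u)\leq e^{-\xi\Cmax}\delta^\geoExp\wt D_h(u,v)$ and analogously for $v$; together with $\wt D_{h-\fr_{z,r}}(u,v)\leq e^{-\xi\Cmax}\wt D_h(u,v)\leq e^{-\xi\Cmax}\Cmid_0 D_h(u,v)$, the triangle inequality yields
\[
\wt D_{h-\fr_{z,r}}(P'(s),P'(t);B_{4r}(z))\leq e^{-\xi\Cmax}(1+2\delta^\geoExp)\Cmid_0 D_h(u,v).
\]
Since $P'$ is itself a $D_{h-\fr_{z,r}}$-geodesic through the tube, $(t-s)$ is comparable to $e^{-\xi\Cmax}D_h(u,v)$, so the factors of $e^{-\xi\Cmax}$ cancel and the gap $\Cmid_0<\Cmid$ absorbs $(1+2\delta^\geoExp)$ for small $\delta$. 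Choosing $\delta,\rho,\Cmax,\Cacross,\Caround,\Ctube,\Ctime,\Cinc$ simultaneously compatible with~\eqref{eqn-parameter-relation} and the above estimates, and rigorously forcing $P'$ to reach both $u$ and $v$ in the absence of Euclidean continuity of $D_h$, constitutes the substantial work of Section~\ref{sec-construction}.
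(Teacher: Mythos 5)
Your broad strategy — take $\Cmid_0\in(\Clower,\Cmid)$, feed it into Proposition~\ref{prop-attained-good'} to get $\Cmid'$, harvest a set $\mcl R$ of good scales where $\BB P[\wt H_r(\Kann,\Cmid_0)]\geq p$, build $\Ur_{z,r}$ as a tube near $\bdy B_{2r}(z)$, populate it with good pairs at a sub-scale $\rho r$, and verify hypothesis~\ref{item-Ehyp-inc} via Weyl scaling plus the triangle inequality — is indeed the right skeleton and matches the paper's Section~\ref{sec-construction}. However, two issues warrant attention.

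First, a small but real bookkeeping gap: you want $r\in\mcl R$ and simultaneously $\rho r$ to be a good scale. If $\mcl R$ is itself defined as the set of good scales from Proposition~\ref{prop-attained-good'}, there is no reason a fixed rescaling $\rho$ preserves goodness. The fix in the paper is to let $\mcl R_0$ be the good-scale set and then take $\mcl R := \rho^{-1}\mcl R_0$, so that $r\in\mcl R$ is \emph{defined} to make $\rho r\in\mcl R_0$. This also explains why the cardinality bound degrades from $\frac34\log_8\ep^{-1}$ to $\frac58\log_8\ep^{-1}$: the dilation shifts the window $[\ep^2\BB r,\ep\BB r]$ and one loses up to $\log_8\rho^{-1}$ scales, which your proposal does not account for.

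Second and more substantially, your mechanism for upgrading Euclidean closeness of $P'$ to $u$ into $\wt D_{h-\fr_{z,r}}$-closeness does not work as written. You propose to concatenate $P'$ with a disconnecting loop in $\BB A_{\delta r',\delta^{1/2}r'}(u)$ of small $\wt D_h$-length supplied by condition~\eqref{eqn-annulus-geo-loop-wt} of $\wt H_{r'}$. But (i) there is no guarantee that $\BB A_{\delta r',\delta^{1/2}r'}(u)\subset\Ur_{z,r}$ — the tube geometry gives no a priori thickness of $\Ur_{z,r}$ near the random point $u$ — so the $e^{-\xi\Cmax}$ Weyl factor cannot be applied to the loop; and (ii) even assuming containment, one needs $P'$ to actually hit that loop, i.e., come within Euclidean distance $\delta^{1/2}r'$ of $u$, which is far stronger than the macroscopic closeness (of order $\rho r$ or $\sr_{\rho r}$) that the bottleneck argument can supply. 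The paper circumvents both problems by changing the definition of the building-block event: rather than relying solely on $\wt H_{\rho r}$, it requires a Lebesgue-measure condition (condition~\ref{item-endpt-ball-leb} of Lemma~\ref{lem-endpt-ball}) that \emph{most} points $x$ of a deterministic boundary circle $\bdy B_{\sr_{\rho r}}(\ur_{z,\rho r})$ are joined to $u$ by a $D_h$-short path \emph{inside the ball}, and it builds $\Ur_{z,r}$ with explicit bottleneck balls $B_{\sr_{\rho r}}(\ur_{z,\rho r})$, $B_{\sr_{\rho r}}(\vr_{z,\rho r})$ so that any path threading the tube must traverse those balls. Even then, one must rule out the pathology that the geodesic threads the bottleneck while managing to avoid all the good boundary points; this is the content of the accessibility-set analysis in Section~\ref{sec-endpt-close} (Lemmas~\ref{lem-dc-set} through~\ref{lem-arc-loop}), which has no counterpart in your proposal. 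So while you correctly flag the upgrading step as the main difficulty, the tool you name for it is not adequate, and the missing ideas (the Lebesgue condition on the boundary circle, the bottleneck balls in the definition of $\Ur_{z,r}$, and the accessible-arc decomposition) are precisely what the proof requires.
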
 
 
\begin{figure}[ht!]
\begin{center}
\includegraphics[width=.8\textwidth]{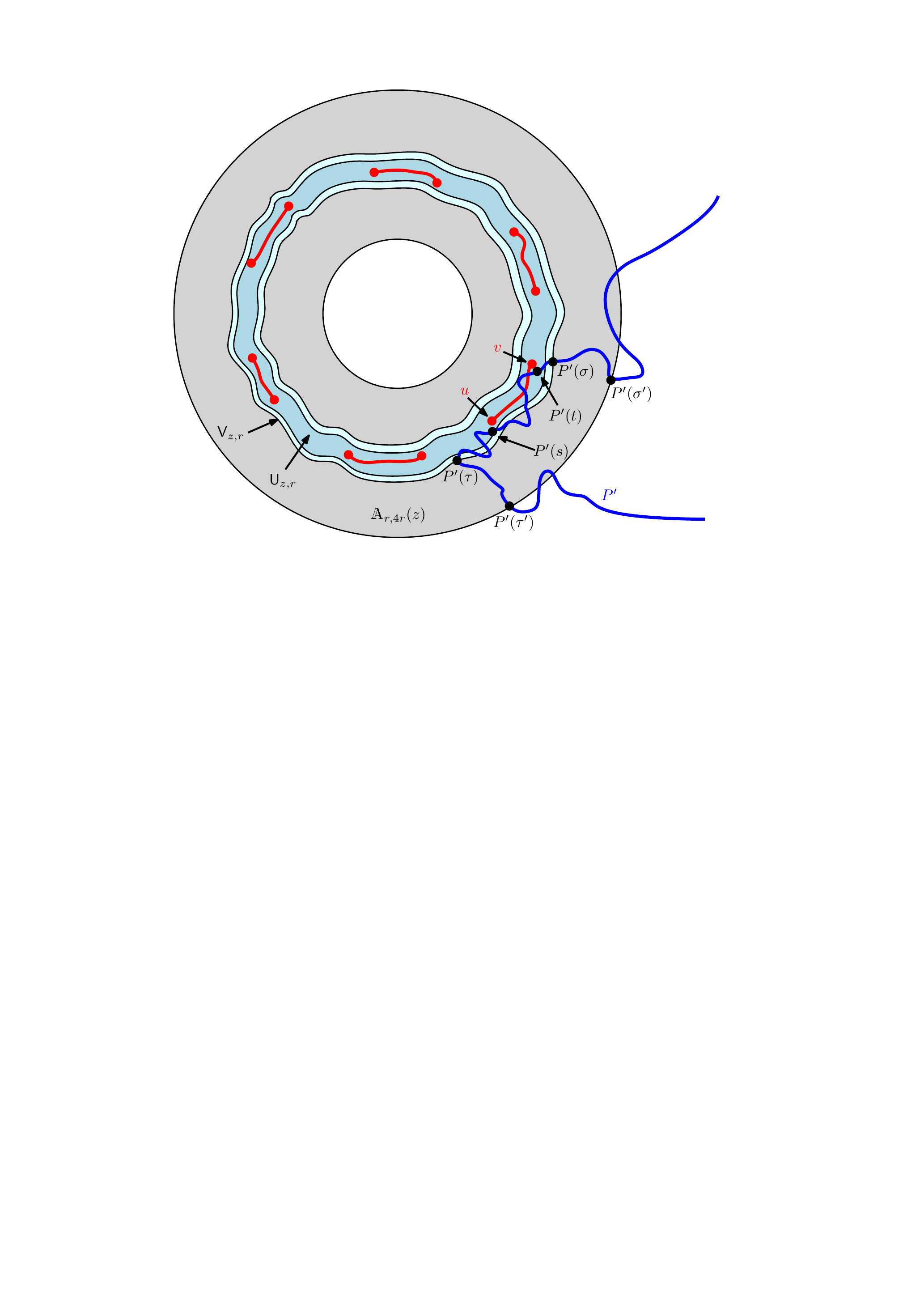} 
\caption{\label{fig-counting-def} 
Illustration of the objects defined in Section~\ref{sec-counting-setup}. The bump function $\fr_{z,r}$ is supported on $\Vr_{z,r}$ and identically equal to $\Cmax$ on $\Ur_{z,r}$. 
The figure shows a $D_{h-\fr_{z,r}}$-geodesic $P'$ (blue) and a $(B_{4r}(z) , \Vr_{z,r})$-excursion $( \tau' ,\tau   , \sigma ,\sigma' )$ for $P'$.  
On the event $\Er_{z,r}$, there are many ``good" pairs of points $u,v\in \Ur_{z,r}$ such that $\wt D_h(u,v) \leq \Cmid D_h(u,v)$ and there is a $\wt D_h$-geodesic from $u$ to $v$ which is contained in $\Ur_{z,r}$ (several such geodesics are shown in red). We obtain hypothesis~\ref{item-Ehyp-inc} for $\Er_{z,r}$ by forcing $P'$ to get close to $u$ and $v$ for one such ``good" pair of points. 
}
\end{center}
\end{figure}

The proof of Proposition~\ref{prop-objects-exist} in Section~\ref{sec-construction} will be via an intricate explicit construction. 
To give the reader some intuition, we will now explain roughly what is involved in this construction, without any quantitative estimates. 
The reader may want to look at Figure~\ref{fig-counting-def} while reading the explanation. 

The set $\Ur_{z,r}$ where $\fr_{z,r}$ attains its maximal possible value will be a long narrow ``tube" which disconnects the inner and outer boundaries of $\BB A_{r,3r}(z)$ and is contained in a small Euclidean neighborhood of $\bdy B_{2r}(z)$. The set $\Vr_{z,r}$ where $\fr_{z,r}$ is supported will be a slightly larger tube containing $\Ur_{z,r}$. 
The event $\Er_{z,r}$ corresponds, roughly speaking, to the event that there are many ``good" pairs of non-singular points $u,v \in \Ur_{z,r}$ with the following properties (plus a long list of regularity conditions): 
\begin{itemize}
\item $\wt D_h(u,v) \leq \Cmid_0 D_h(u,v)$, where $\Cmid_0 \in (\Clower,\Cmid)$ is fixed. 
\item $|u-v|$ is bounded below by a constant times $r$. 
\item There is a $\wt D_h$-geodesic from $u$ to $v$ which is contained in $\Ur_{z,r}$.  
\end{itemize} 

Hypotheses~\ref{item-Ehyp-dist} and~\ref{item-Ehyp-rn} for $\Er_{z,r}$ will be immediate consequences of the regularity conditions in the definition of $\Er_{z,r}$. Hypothesis~\ref{item-Ehyp-inc} will be obtained as follows. Suppose that $P'$ is a $D_{h-\fr_{z,r}}$-geodesic as in hypothesis~\ref{item-Ehyp-inc}. Since the bump function $\fr_{z,r}$ is very large on $\Ur_{z,r}$, we infer that if $x,y\in \Vr_{z,r}$, then the $D_{h-\fr_{z,r}}$-length of any path between $x$ and $y$ which spends a lot of time outside of $\Ur_{z,r}$ is much greater than the $D_{h-\fr_{z,r}}$-length of a path between $x$ and $y$ which spends most of its time in $\Ur_{z,r}$. By applying this with $x = P'(\tau)$ and $y = P'(\sigma)$, we find that $P'|_{[\tau,\sigma]}$ has to spend most of its time in $\Ur_{z,r}$. 

This will allow us to find a ``good" pair of points $u,v\in \Ur_{z,r}$ as above such that $P'|_{[\tau,\sigma]}$ gets very $D_{h-\fr_{z,r}}$-close to each of $u$ and $v$. Since the $\wt D_h$-geodesic between $u$ and $v$ is contained in $\Ur_{z,r}$ and $\fr_{z,r}$ attains its maximal possible value on $\Ur_{z,r}$, subtracting $\fr_{z,r}$ from $h$ reduces $\wt D_h(u,v)$ by at least as much as $D_h(u,v)$. Consequently, one has $\wt D_{h-\fr_{z,r}}(u,v) \leq \Cmid_0 D_{h-\fr_{z,r}}(u,v)$. We will then obtain~\eqref{eqn-Ehyp-inc} by choosing $s$ and $t$ such that $P'(s)$ and $P'(t)$ are close to $u$ and $v$, respectively, and applying the triangle inequality. 

In order to produce lots of ``good" pairs of points $u,v\in \Ur_{z,r}$, we will apply Proposition~\ref{prop-attained-good'} together with a local independence argument based on Lemma~\ref{lem-spatial-ind} (to upgrade from a single pair of points with positive probability to many pairs of points with high probability). 
This application of Proposition~\ref{prop-attained-good'} is the reason why we need to assume that $\BB P[\wt G_{\BB r}(\wt\Kopt , \Cmid')] \geq \wt\Kopt$ in the second part of Proposition~\ref{prop-objects-exist}; and why we need to restrict to a set of admissible radii $\mcl R  $, instead of defining our events for every $r >0$.

\subsection{Estimate for ratios of $D_h$ and $\wt D_h$ distances} 
\label{sec-counting-main}
 
We now state the main estimate which we will prove using the events $\Er_{z,r}$. 
In particular, we will show that the probability of a certain ``bad" event, which we now define, is small. 
For $\BB r > 0$, $\ep > 0$, and disjoint compact sets $K_1,K_2 \subset B_{2\BB r}(0)$, let $  \mcl G_{\BB r}^\ep =  \mcl G_{\BB r}^\ep( K_1,K_2 )$ be the event that the following is true.
\begin{enumerate}
\item $\wt D_h(K_1,K_2) \geq \Cupper D_h(K_1,K_2) - \frac12 \ep^{2\xi(Q+3)} \BB r^{\xi Q} e^{\xi h_{\BB r}(0)}$. \label{item-main-bad} 
\item \label{item-main-exponent} For each $z\in B_{3\BB r}(0)$ and each $r\in [\ep^2 \BB r , \ep \BB r] \cap \mcl R$, we have 
\eqbn
r^{\xi Q} e^{\xi h_r(z)} \in \left[\ep^{2\xi(Q+3)} \BB r^{\xi Q} e^{\xi h_{\BB r}(0)} , \ep^{ \xi(Q-3)} \BB r^{\xi Q} e^{\xi h_{\BB r}(0)} \right] . 
\eqen
\item For each $z \in B_{3\BB r}(0) $, there exists $r \in \mcl R \cap [\ep^2 \BB r ,\ep \BB r]$ and $w \in \left( \frac{r}{100} \BB Z^2 \right) \cap B_{r/25}(z)$ such that $\Er_{w,r}$ occurs. \label{item-main-cover}
\end{enumerate}
The most important condition in the definition of $\mcl G_{\BB r}^\ep$ is condition~\ref{item-main-bad}. We want to show that if $\Clower < \Cupper$, then this condition is extremely unlikely. The motivation for this is that it will eventually be used in Section~\ref{sec-counting-conclusion} to derive a contradiction to Proposition~\ref{prop-geo-annulus-prob}. Indeed, Proposition~\ref{prop-geo-annulus-prob} gives a \emph{lower} bound for the probability that there exist points $u,v \in \ol B_{\BB r}(0)$ satisfying certain conditions such that $\wt D_h(u,v)$ is ``close" to $\Cupper D_h(u,v)$. We will show that this lower bound is incompatible with our upper bound for the probability of condition~\ref{item-main-bad} in the definition of $\mcl G_{\BB r}^\ep$.  

Conditions~\ref{item-main-exponent} and~\ref{item-main-cover} in the definition of $\mcl G_{\BB r}^\ep$ are global regularity conditions. We will show in Lemma~\ref{lem-main-extra} below that Proposition~\ref{prop-objects-exist} implies that these two conditions occur with high probability. This, in turn, means that an upper bound for $\BB P[\mcl G_{\BB r}^\ep]$ implies an upper bound for the probability of condition~\ref{item-main-bad}. 
The next three subsections are devoted to the proof of the following proposition. 

\newcommand{\Ceucl}{\eta}

\begin{prop} \label{prop-counting}
Assume that $\Clower < \Cupper$ and we have constructed a set of admissible radii $\mcl R$ as in~\eqref{eqn-admissible-radii} and events $\Er_{z,r}$, sets $\Ur_{z,r}$ and $\Vr_{z,r}$, and bump functions $\fr_{z,r}$ for $z\in\BB C$ and $r\in\mcl R$ which satisfy the conditions of Section~\ref{sec-counting-setup}. Let $\Ceucl  \in (0,1)$ and $\BB r>0$. Also let $K_1,K_2 \subset B_{2\BB r}(0)$ be disjoint compact sets such that $\op{dist}(K_1,K_2) \geq \Ceucl \BB r$ and $\op{dist}(K_1,\bdy B_{\BB r}(0)) \geq \Ceucl \BB r$, where $\op{dist}$ denotes Euclidean distance.\footnote{\label{footnote-eucl} The reason why we require that  $\op{dist}(K_1,\bdy B_{\BB r}(0)) \geq \Ceucl \BB r $ in Proposition~\ref{prop-counting} is as follows.
Our events involve the circle average $h_{\BB r}(0)$. We only want to add to or subtract from $h$ functions of the form $\fr_{z,r}$ whose supports are disjoint from $\bdy B_{\BB r}(0)$, so that adding or subtracting $\fr_{z,r}$ does not change $h_{\BB r}(0)$.  
The condition that $\op{dist}(K_1,\bdy B_{\BB r}(0)) \geq \Ceucl \BB r$ ensures that there is a segment of the $D_h$-geodesic from $K_1$ to $K_2$ of Euclidean length at least $\Ceucl \BB r$ which is disjoint from $\bdy B_{\BB r}(0)$. We will eventually choose to subtract functions $\fr_{z,r}$ whose supports are close to such a segment, see the proof of Proposition~\ref{prop-choices} at the end of Section~\ref{sec-counting-choices}.
}
 Then
\eqb \label{eqn-counting}
\BB P\left[ \mcl G_{\BB r}^\ep(K_1,K_2)  \right] = O_\ep(\ep^\mu) ,\quad \forall \mu > 0 
\eqe 
with the implicit constant in the $O_\ep(\cdot)$ depending only on $\mu , \Ceucl$, and the parameters (not on $\BB r, K_1,K_2$).
\end{prop}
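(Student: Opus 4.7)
The plan is to establish Proposition~\ref{prop-counting} by a contradiction-cum-counting argument following the outline in Section~\ref{sec-outline}. Throughout, we fix $\mu > 0$ and argue that $\BB P[\mcl G_{\BB r}^\ep] = O_\ep(\ep^\mu)$.

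First I would dispense with conditions (2) and (3) in the definition of $\mcl G_{\BB r}^\ep$ by showing that they occur with superpolynomially high probability as $\ep\to 0$. Condition (2) is a tail bound on circle averages of the GFF, which follows from standard Gaussian estimates; condition (3), the covering, follows from the hypothesis $\BB P[\Er_{z,r}] \geq \BB p$ combined with Lemma~\ref{lem-annulus-offcenter} applied to logarithmically many radii in $\mcl R \cap [\ep^2\BB r, \ep\BB r]$ (whose cardinality is at least $\tfrac58 \log_8 \ep^{-1}$ by Proposition~\ref{prop-objects-exist}) and a union bound over $O(\ep^{-2})$ grid points in $B_{3\BB r}(0)$. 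Thus only condition (1) needs serious attention.

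Next, for each $r \in \mcl R \cap [\ep^2\BB r, \ep\BB r]$, let $\mcl Z_r$ be the set of grid points $z \in (\tfrac{r}{100}\BB Z^2) \cap B_{3\BB r}(0)$ whose balls $B_{4r}(z)$ are disjoint from $K_1, K_2, \bdy B_{\BB r}(0)$ (using the $\Ceucl\BB r$ buffer). For a collection $Z \subset \mcl Z_r$ with $\#Z = k$ and pairwise disjoint $\{B_{4r}(z)\}_{z\in Z}$, set $\fr_{Z,r} := \sum_{z\in Z} \fr_{z,r}$ and define
\begin{itemize}
\item $F_{Z,r}$: for every $z \in Z$, the event $\Er_{z,r}(h)$ occurs, the $D_h$-geodesic from $K_1$ to $K_2$ hits $B_r(z)$, and the $D_{h-\fr_{Z,r}}$-geodesic from $K_1$ to $K_2$ admits a $(B_{4r}(z), \Vr_{z,r})$-excursion with internal $D_h$-distance between entry and exit of $\Vr_{z,r}$ at least $\Ctime r^{\xi Q} e^{\xi h_r(z)}$;
\item $F_{Z,r}'$: the analogous event for the field $h + \fr_{Z,r}$ in place of $h$ (so in particular the last bullet becomes a condition on the $D_h$-geodesic, since $(h+\fr_{Z,r}) - \fr_{Z,r} = h$).
\end{itemize}
Because the annuli $\BB A_{r,4r}(z)$ for $z \in Z$ are disjoint and each $\Er_{z,r}$ is measurable with respect to $h|_{\ol{\BB A}_{r,4r}(z)}$ modulo additive constant, iterating hypothesis~\ref{item-Ehyp-rn} gives the Radon–Nikodym bound $\Crn^{-k}\BB P[F_{Z,r}] \leq \BB P[F_{Z,r}'] \leq \Crn^k \BB P[F_{Z,r}]$.

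The heart of the argument is a deterministic consequence of $F_{Z,r}'$ with $\#Z = k$. By hypothesis~\ref{item-Ehyp-inc} applied to the field $h + \fr_{Z,r}$ (with $h$ replaced by $h+\fr_{Z,r}$, so $h - \fr_{z,r}$ becomes $h + \fr_{Z\setminus\{z\},r}$, which agrees with $h$ on a neighborhood of $B_{4r}(z)$ since the other bumps have disjoint support), one extracts for each $z \in Z$ times $s_z < t_z$ along the $D_h$-geodesic $P$ from $K_1$ to $K_2$ with $t_z - s_z \geq \Cinc r^{\xi Q} e^{\xi h_r(z)}$ and $\wt D_h(P(s_z), P(t_z)) \leq \Cmid (t_z - s_z)$. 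Concatenating, and using $\wt D_h \leq \Cupper D_h$ on the complementary segments, yields
\[
\wt D_h(K_1,K_2) \leq \Cupper D_h(K_1,K_2) - (\Cupper - \Cmid)\,\Cinc\, k\, \ep^{2\xi(Q+3)} \BB r^{\xi Q} e^{\xi h_{\BB r}(0)},
\]
where condition~(2) of $\mcl G_{\BB r}^\ep$ provides the lower bound on $r^{\xi Q} e^{\xi h_r(z)}$. Choosing $k = k_0$ large enough (depending only on the parameters and on $\Cupper - \Cmid$) so that $(\Cupper-\Cmid)\Cinc k_0 > \tfrac12$ forces $\mcl G_{\BB r}^\ep \cap F_{Z,r}' = \emptyset$ for every admissible $Z$ of size $k_0$; in other words, on $\mcl G_{\BB r}^\ep$ the counting variable $N_{r,k_0}' := \sum_Z \mathbf 1_{F_{Z,r}'}$ vanishes.

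The final step, which is where I expect the main technical obstacle, is to show that $\BB P[N_{r,k}' = 0] = O_\ep(\ep^\mu)$ for arbitrarily large $k$ (we will take $k$ depending on $\mu$). For this, I would: (i) show by an iterative removal argument that on a superpolynomially high probability event, $N_{r,k}$ is at least a positive power of $\ep^{-k}$ — starting from all $z \in \mcl Z_r$ whose ball is hit by $P$ and for which $\Er_{z,r}$ holds (many exist by Lemma~\ref{lem-annulus-offcenter} applied along a $D_h$-geodesic segment of Euclidean diameter $\geq \Ceucl\BB r$ staying at positive distance from $\bdy B_{\BB r}(0)$), then iteratively discarding points where the $D_{h-\fr_{Z,r}}$-geodesic spends too little time in $\Vr_{z,r}$, while controlling the total number of removals by a length–budget argument using hypothesis~\ref{item-Ehyp-dist}; (ii) transfer this count to $N_{r,k}'$ by combining the Radon–Nikodym bound with a second-moment or Paley–Zygmund-style estimate, whose covariance input comes again from the disjointness of the annuli. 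Getting the quantitative form of step (i) sharp enough so that, after paying the factor $\Crn^k$ and summing over a handful of admissible radii $r$, the resulting bound on $\BB P[N_{r,k_0}' = 0]$ beats $\ep^\mu$ is the delicate part of the proof; it is the place where the hypothesis $\Cacross - 4 e^{-\xi\Cmax}\Ctube > 2\Caround\Ctime/\Cacross$ from~\eqref{eqn-parameter-relation} will be used to rule out the possibility that the $D_{h-\fr_{Z,r}}$-geodesic systematically avoids the supports of the $\fr_{z,r}$.
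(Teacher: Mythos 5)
Your proposal gets some of the skeleton right — the events $F_{Z,r}$ and $F'_{Z,r}$, the Radon--Nikodym comparison from hypothesis~\ref{item-Ehyp-rn}, the iterative-removal argument for producing many admissible $Z$, and the observation that shortcuts from hypothesis~\ref{item-Ehyp-inc} on $F'_{Z,r}$ give $\wt D_h(K_1,K_2)\le\Cupper D_h(K_1,K_2)-c\,k\,\ep^{2\xi(Q+3)}\BB r^{\xi Q}e^{\xi h_{\BB r}(0)}$ — but the overall strategy diverges from the paper's and has two genuine gaps.

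First, the opening move does not work here. You propose to discard conditions~(2) and~(3) of $\mcl G_{\BB r}^\ep$ by noting they hold with superpolynomially high probability, citing Proposition~\ref{prop-objects-exist} for the lower bound on $\#(\mcl R\cap[\ep^2\BB r,\ep\BB r])$. But that cardinality bound requires the hypothesis $\BB P[\wt G_{\BB r}(\wt\Kopt,\Cmid')]\ge\wt\Kopt$, which is \emph{not} assumed in Proposition~\ref{prop-counting}; the paper is explicit that Proposition~\ref{prop-objects-exist} is not needed here and that $\mcl R$ can be arbitrarily small. Conditions~(2) and~(3) must therefore be kept inside the event $\mcl G_{\BB r}^\ep$ and used as hypotheses in the counting, not deleted up front. (The high-probability statement is Lemma~\ref{lem-main-extra}, which does carry the $\wt G$-hypothesis, and is only invoked in the later step of Section~\ref{sec-counting-conclusion}.)

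Second, and more seriously, the final step of your plan — bounding $\BB P[N'_{r,k}=0]$ by a Radon--Nikodym transfer plus a second-moment/Paley--Zygmund estimate — has no viable route through the available lemmas. All the events $F'_{Z,r}$ for varying $Z$ reference the \emph{same} $D_h$-geodesic and the same global distances $D_h(K_1,K_2)$, $\wt D_h(K_1,K_2)$; they are heavily positively correlated through this shared object, and there is no decorrelation statement (nor is one established) that would control $\BB E[(N'_{r,k})^2]$ by $(\BB E[N'_{r,k}])^2$. The paper sidesteps the need for any second-moment bound entirely. It keeps the ``bad inequality'' $\wt D_h(K_1,K_2)\ge\Cupper D_h(K_1,K_2)-q\BB r^{\xi Q}e^{\xi h_{\BB r}(0)}$ \emph{inside} the event $F_{Z,r}^{q,\BB r}$ (your $F_{Z,r}$ omits this) and then proves a deterministic upper bound (Proposition~\ref{prop-card}, essentially the same shortcut argument you set up, run as a counting bound rather than a contradiction): almost surely there are at most $\Ccard^k$ sets $Z$ of size $\le k$ with $F_{Z,r}^{q,\BB r}(h+\fr_{Z,r})$ occurring. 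With this in hand the conclusion follows from the first-moment identity
\[
(\log\ep^{-1})^2 \succeq \sum_{r,q,Z}\BB E\!\left[\frac{\BB 1_{F_{Z,r}^{q,\BB r}(h+\fr_{Z,r})}}{\#\{Z':F_{Z',r}^{q,\BB r}(h+\fr_{Z',r})\}}\right]
\succeq \Ccard^{-k}\Crn^{-k}\,\BB E\!\left[\sum_{r,q}\#\{Z:F_{Z,r}^{q,\BB r}(h)\}\right]
\succeq \Ccard^{-k}\Crn^{-k}\ep^{-\Cexp k}\,\BB P[\mcl G_{\BB r}^\ep],
\]
and taking $k$ slightly larger than $\mu/\Cexp$. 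This ``$\BB E[\mathbf 1/\text{count}]$'' device and the \emph{a.s.} upper bound on the count are the two ideas your proposal is missing, and without them the step you flagged as delicate remains a genuine gap.
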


It is crucial for our purposes that the implicit constant in the $O_\ep(\cdot)$ in~\eqref{eqn-counting} does not depend on $\BB r , K_1,K_2$. This is because we will eventually take $K_1$ and $K_2$ to be Euclidean balls whose radii are a power of $\ep$ times $\BB r$ (see Lemma~\ref{lem-end-union}). 
Proposition~\ref{prop-objects-exist} is not needed for the proof of Proposition~\ref{prop-counting}. Rather, all we need is the statement that $\Er_{z,r}, \Ur_{z,r} , \Vr_{z,r} $, and $\fr_{z,r}$ exist and satisfy the required properties for each $r\in\mcl R$ (we do not care how large $\mcl R$ is). 
Proposition~\ref{prop-objects-exist} is just needed to check that the auxiliary condition~\ref{item-main-cover} in the definition $\mcl G_{\BB r}^\ep$ occurs with high probability. 

We will now explain how to prove Proposition~\ref{prop-counting} conditional on two propositions (Propositions~\ref{prop-choices} and~\ref{prop-card}) whose proofs will occupy most of this section. The proof will be based on counting the number of events of a certain type which occur. Let us now define these events. 
 
Assume that $\Clower < \Cupper$. Also fix $\BB r > 0$ and disjoint compact sets $K_1,K_2 \subset B_{2\BB r}(0)$. 
For $r\in \mcl R$ (which we will eventually take to be much smaller than $\BB r$), let $\mcl Z_r = \mcl Z_r^{\BB r}(K_1,K_2)$ be the set of non-empty subsets $Z \subset \frac{r}{100} \BB Z^2$ such that\footnote{\label{footnote-circle}
The reason why we require that $B_{4r}(z) \cap  \bdy B_{\BB r}(0)  = \emptyset$ in~\eqref{eqn-pt-set} is to ensure that adding or subtracting the function $\fr_{z,r}$ for $z\in Z$ (which is supported on $B_{4r}(z)$) does not change the circle average $h_{\BB r}(0)$ (c.f.\ Footnote~\ref{footnote-eucl}). This fact is used in the proof of Lemma~\ref{lem-event-good} below. 
}
\allb \label{eqn-pt-set}
B_{4r}(z) \cap B_{4r}(z') = \emptyset \quad \text{and} \quad   
&B_{4r}(z) \cap \left(K_1\cup K_2 \cup \bdy B_{\BB r}(0)\right) = \emptyset ,\notag\\
&\qquad\qquad \text{$\forall$ distinct $z,z' \in Z$}  .
\alle
For a set $Z \in \mcl Z_r$, we define 
\eqbn
\fr_{Z,r} = \sum_{z \in Z } \fr_{z,r} .
\eqen 

By Lemma~\ref{lem-geo-unique}, a.s.\ there is a unique $D_h$-geodesic from $K_1$ to $K_2$. 
Since the laws of $h$ and $h-\fr_{Z,r}$ are mutually absolutely continuous~\cite[Proposition 3.4]{ig1}, for each $r\in\mcl R$ and each $Z\in \mcl Z_r$, a.s.\ there is a unique $D_{h-\fr_{Z,r}}$-geodesic from $K_1$ to $K_2$. Hence the following definition makes sense. 
For $Z \in \mcl Z_r$ and $q > 0$ we define $F_{Z,r}^{q,\BB r}  = F_{Z,r}^{q,\BB r}(h ; K_1,K_2) $ to be the event that the following is true.
\begin{enumerate} 
\item $\wt D_h(K_1,K_2) \geq \Cupper D_h(K_1,K_2) - q \BB r^{\xi Q} e^{\xi h_{\BB r}(0)}$. \label{item-ball-set-bad}
\item The event $\Er_{z,r}(h)$ occurs for each $z \in Z$. \label{item-ball-set-good}
\item \label{item-ball-set-compare} We have
\eqbn
r^{\xi Q}  e^{\xi h_r(z)} \in \left[q \BB r^{\xi Q} e^{\xi h_{\BB r}(0)}  , 2q \BB r^{\xi Q} e^{\xi h_{\BB r}(0)}\right] , \quad \forall z \in Z .
\eqen
\item For each $z \in Z$, the $D_h$-geodesic from $K_1$ to $K_2$ hits $B_r(z)$.  \label{item-ball-set-hit}
\item \label{item-ball-set-stable} For each $z \in Z$, the $D_{h-\fr_{Z,r}}$-geodesic $P_Z$ from $K_1$ to $K_2$ has a $(B_{4r}(z) , \Vr_{z,r})$-excursion $(\tau_z',\tau_z,\sigma_z , \sigma_z')$ such that
\eqbn 
  D_h\left( P_Z(\tau_z) , P_Z(\sigma_z)   ; B_{4r}(z) \right) \geq \Ctime r^{\xi Q} e^{\xi h_r(z)}  . 
\eqen 
\end{enumerate}
See Figure~\ref{fig-counting-F} for an illustration of the definition. 
Condition~\ref{item-ball-set-bad} for $F_{Z,r}^{q,\BB r}$ is closely related to the main condition~\ref{item-main-bad} in the definition of $\mcl G_{\BB r}^\ep$. 
The purpose of conditions~\ref{item-ball-set-good} and~\ref{item-ball-set-hit} is to allow us to apply our hypotheses for $\Er_{z,r}$ to study $D_h$-distances on the event $F_{Z,r}^{q,\BB r}$. Condition~\ref{item-ball-set-compare} provides up-to-constants comparisons of the ``LQG sizes" of different balls $B_r(z)$ for $z\in Z$. 
Finally, condition~\ref{item-ball-set-stable} will enable us to apply hypothesis~\ref{item-Ehyp-inc} for $\Er_{z,r}$ to each $z\in Z$.

\begin{figure}[ht!]
\begin{center}
\includegraphics[width=.85\textwidth]{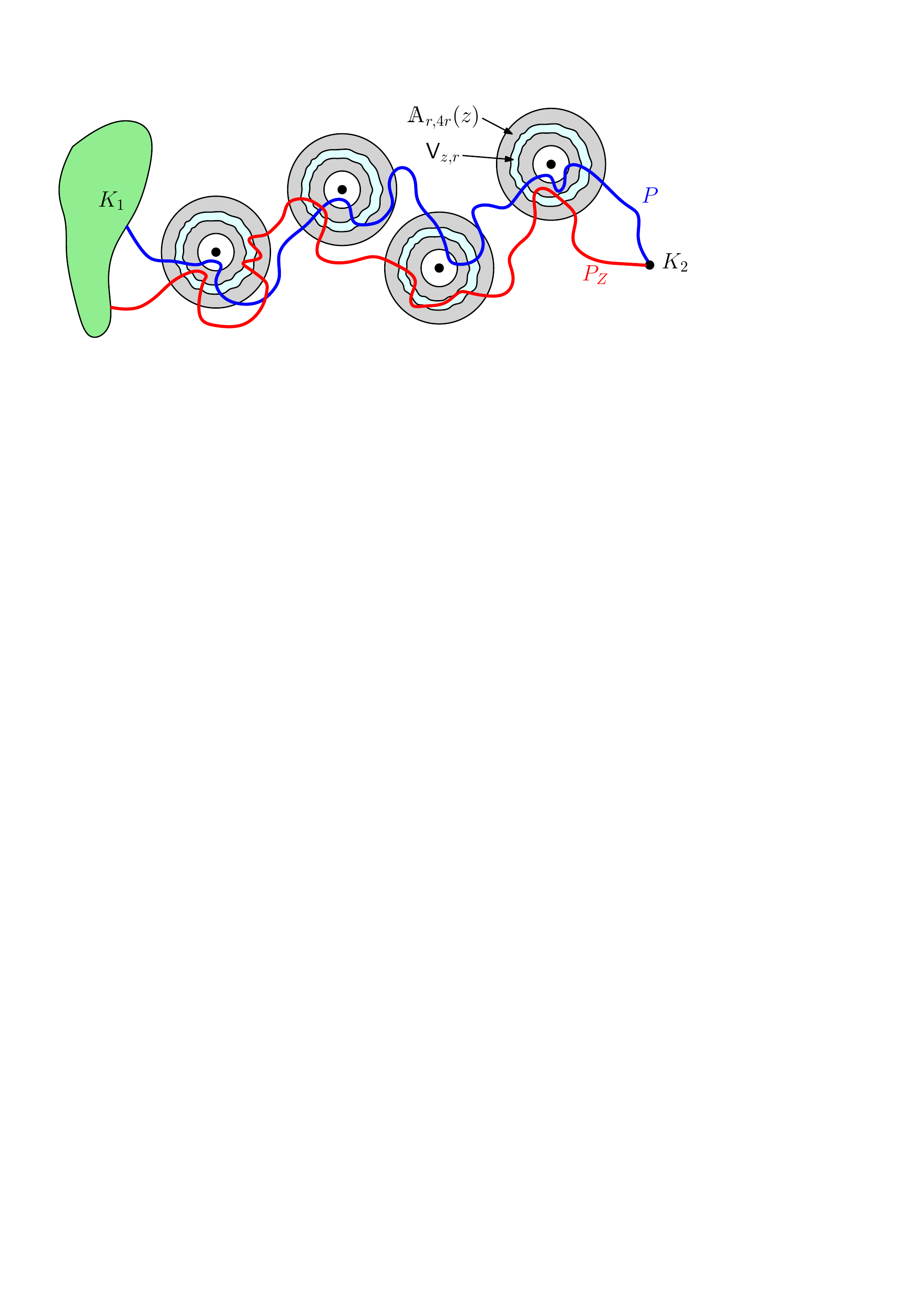} 
\caption{\label{fig-counting-F} 
Illustration of the definition of $F_{Z,r}^{q,\BB r}$. Here, we have shown $K_1$ as a non-singleton set and $K_2$ as a point, but $K_1$ and $K_2$ can be any disjoint compact sets. The set $Z$ consists of the four center points of the annuli in the figure. For each of these points, we have shown the set $\Vr_{z,r}$ (i.e., the support of $\fr_{z,r}$) in light blue and the annulus $\BB A_{r,4r}(z)$ in grey. 
On $F_{Z,r}^{q,\BB r}$, the $D_h$-geodesic from $K_1$ to $K_2$ (blue) hits each of the balls $B_r(z)$ for $z\in Z$. Moreover, the $D_{h-\fr_{Z,r}}$-geodesic from $K_1$ to $K_2$ (red) has a ``large" $(B_{4r}(z) , \Vr_{z,r})$-excursion for each $z\in Z$. 
}
\end{center}
\end{figure}

Proposition~\ref{prop-counting} will turn out to be a straightforward consequence of three estimates for the events $F_{z,r}^{q,\BB r}$, which we now state.
Our first estimate follows from a standard formula for the Radon-Nikodym derivative between the laws of $h$ and $h + \fr_{Z,r}$.

\begin{lem} \label{lem-rn}
For $r\in\mcl R$, $Z\in \mcl Z_r$, and $q > 0$, let $F_{Z,r}^{q,\BB r}(h + \fr_{Z,r})$ be the event $F_{Z,r}^{q,\BB r}(h)$ defined with $h+\fr_{Z,r}$ in place of $h$. 
For each $Z\subset\mcl Z_r$,  
\eqb \label{eqn-rn}
\Crn^{-\#Z} \BB P\left[ F_{Z,r}^{q,\BB r}(h ) \right] \leq \BB P\left[ F_{Z,r}^{q,\BB r}(h + \fr_{Z,r}) \right] \leq \Crn^{ \#Z} \BB P\left[ F_{Z,r}^{q,\BB r}(h) \right] .
\eqe 
\end{lem}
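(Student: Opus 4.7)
The strategy is to exploit the disjoint-support structure of the bump functions $\{\fr_{z,r}\}_{z \in Z}$ — which is built into the definition~\eqref{eqn-pt-set} of $\mcl Z_r$ — to factorize the Radon-Nikodym derivative of $\mcl L(h + \fr_{Z,r})$ with respect to $\mcl L(h)$ (both viewed modulo additive constant) into a product of the $\#Z$ single-bump Radon-Nikodym derivatives, each of which is already controlled by hypothesis~\ref{item-Ehyp-rn}.

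First, I would observe that the requirement $B_{4r}(z) \cap B_{4r}(z') = \emptyset$ for distinct $z, z' \in Z$ in~\eqref{eqn-pt-set}, combined with $\op{supp}(\fr_{z,r}) \subset \Vr_{z,r} \subset \BB A_{r,3r}(z) \subset B_{4r}(z)$, implies that the functions $\fr_{z,r}$ for $z \in Z$ have pairwise disjoint supports. Hence $\nabla \fr_{z,r} \cdot \nabla \fr_{z',r} \equiv 0$ pointwise whenever $z \ne z'$, so these gradients are pairwise orthogonal in $L^2(\BB C)$, which is exactly the Cameron-Martin inner product for the whole-plane GFF.

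Next, I would invoke the Cameron-Martin theorem for the whole-plane GFF modulo additive constant. This expresses $d\mcl L(h+\fr_{Z,r})/d\mcl L(h)$ as $\exp$ of a sum of a linear functional of $h$ (a distributional pairing involving $-\Delta \fr_{Z,r}$) and a term proportional to $-\|\nabla \fr_{Z,r}\|_{L^2}^2$. The orthogonality of the individual gradients splits both contributions additively over $z \in Z$, so the exponential factorizes as
\[
\frac{d\mcl L(h + \fr_{Z,r})}{d\mcl L(h)} = \prod_{z\in Z} \frac{d\mcl L(h + \fr_{z,r})}{d\mcl L(h)} .
\]
By hypothesis~\ref{item-Ehyp-rn}, each factor on the right is a.s.\ in $[\Crn^{-1}, \Crn]$, so the product is a.s.\ in $[\Crn^{-\#Z}, \Crn^{\#Z}]$.

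Finally, writing $\BB P[F_{Z,r}^{q,\BB r}(h + \fr_{Z,r})] = \int \mathbf{1}_{F_{Z,r}^{q,\BB r}} \, d\mcl L(h + \fr_{Z,r})$ and applying the two-sided pointwise bound on the Radon-Nikodym derivative immediately yields~\eqref{eqn-rn}. I do not expect any real obstacle here: once the disjoint-support observation is in place, the remainder is bookkeeping with the Cameron-Martin formula. The only mild subtlety is that the calculation must be carried out in the modulo-additive-constant setting rather than a bounded-domain setting with zero boundary conditions, but for bump functions supported in a bounded region this presents no essential difficulty.
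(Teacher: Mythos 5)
Your factorization step is exactly the paper's: the disjoint-support condition built into~\eqref{eqn-pt-set} makes the Cameron-Martin inner products $(\fr_{z,r},\fr_{z',r})_\nabla$ vanish for $z\ne z'$, so the Radon-Nikodym derivative of $\mcl L(h+\fr_{Z,r})$ with respect to $\mcl L(h)$ splits as $\prod_{z\in Z}\exp\bigl((h,\fr_{z,r})_\nabla - \tfrac12(\fr_{z,r},\fr_{z,r})_\nabla\bigr)$.

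The gap is in the sentence ``each factor on the right is a.s.\ in $[\Crn^{-1},\Crn]$.'' Hypothesis~\ref{item-Ehyp-rn} is a bound valid only \emph{on the event} $\Er_{z,r}$, not almost surely, and in fact the a.s.\ bound is false: each factor equals $\exp\bigl((h,\fr_{z,r})_\nabla - \tfrac12\|\fr_{z,r}\|_\nabla^2\bigr)$, and $(h,\fr_{z,r})_\nabla$ is an unbounded Gaussian, so the factor ranges over $(0,\infty)$. The constant $\Crn$ only controls the factor when $\Er_{z,r}$ occurs (it is derived from condition~\ref{item-E-rn} in the definition of $\Er_r$, which bounds $|(h,\fr_r)_\nabla|$ on $\Er_r$ and says nothing off that event). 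The repair is short but necessary: condition~\ref{item-ball-set-good} in the definition of $F_{Z,r}^{q,\BB r}(h)$ explicitly requires $\Er_{z,r}(h)$ to occur for every $z\in Z$, so the product bound $\Crn^{-\#Z}\le\prod_z(\cdots)\le\Crn^{\#Z}$ \emph{does} hold on the event $F_{Z,r}^{q,\BB r}(h)$. Since
\[
\BB P\left[F_{Z,r}^{q,\BB r}(h+\fr_{Z,r})\right]
=\BB E\!\left[\mathbf 1_{F_{Z,r}^{q,\BB r}(h)}\cdot\frac{d\mcl L(h+\fr_{Z,r})}{d\mcl L(h)}(h)\right],
\]
the integral only sees the Radon-Nikodym derivative on that event, and both inequalities in~\eqref{eqn-rn} follow. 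Once this restriction to the event is made explicit, your argument coincides with the paper's.
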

\begin{proof}
By Weyl scaling (Axiom~\ref{item-metric-f}) and the fact that $\Er_{z,r}(h)$ is a.s.\ determined by $h$, viewed modulo additive constant, we get that the event $F_{Z,r}^{q,\BB r}(h)$ is a.s.\ determined by $h$, viewed modulo additive constant. 
By a standard calculation for the GFF (see, e.g., the proof of~\cite[Proposition 3.4]{ig1}), the Radon-Nikodym derivative of the law of $h + \fr_{Z,r}$ with respect to the law of $h$, with both distributions viewed modulo additive constant, is equal to
\eqbn
\exp\left(  (h , \fr_{Z,r})_\nabla - \frac12 (\fr_{Z,r} ,\fr_{Z,r})_\nabla \right) 
\eqen
where $(f,g)_\nabla = \int_{\BB C} \nabla f(z) \cdot \nabla g(z) \, d^2 z$ denotes the Dirichlet inner product. 
Recall that each $\fr_{z,r}$ for $z\in Z$ is supported on the annulus $\BB A_{r,4r}(z)$. 
Since $Z \in \mcl Z_r$, the definition~\eqref{eqn-pt-set} shows that the balls $B_{4r}(z)$ for $z\in Z$ are disjoint. 
Hence, the random variables $(h , \fr_{Z,r})_\nabla$ are independent, so the above Radon-Nikodym derivative factors as the product
\eqb \label{eqn-rn-prod}
\prod_{z\in Z} \exp\left(   (h , \fr_{z,r})_\nabla - \frac12 (\fr_{z,r} ,\fr_{z,r})_\nabla \right) .
\eqe 
By condition~\ref{item-ball-set-good} in the definition of $F_{Z,r}^{q,\BB r}(h  )$, on this event $\Er_{z,r}(h  )$ occurs for each $z\in Z$. Consequently, hypothesis~\ref{item-Ehyp-rn} for $\Er_{z,r}(h)$ shows that on $F_{z,r}^{q,\BB r}(h )$, each of the factors in the product~\eqref{eqn-rn-prod} is bounded above by $\Crn$ and below by $\Crn^{-1}$. 
This implies~\eqref{eqn-rn}. 
\end{proof}


Our next estimate tells us that on $\mcl G_{\BB r}^\ep$, there are many choices of $Z$ for which $F_{Z,r}^{q,\BB r}(h)$ occurs.

\newcommand{\Cexp}{c_1}

\begin{prop} \label{prop-choices}
There exists $\Cexp > 0$, depending only on the parameters and $\Ceucl$, such that for each $k\in\BB N$, there exists $\ep_*  > 0$, depending only on $k$, the parameters, and $\Ceucl$, such that the following is true for each $\BB r > 0 $ and each $\ep \in (0,\ep_*]$. 
Assume that $\op{dist}(K_1,K_2) \geq \Ceucl \BB r$ and $\op{dist}(K_1,\bdy B_{\BB r}(0) ) \geq \Ceucl \BB r$.
If $\mcl G_{\BB r}^\ep(K_1,K_2) $ occurs, then there exists a random $r \in [\ep^2 \BB r , \ep \BB r]$ and a random $q \in \left[\frac12 \ep^{2\xi (Q+3)}   , \ep^{ \xi(Q-3 )} \right] \cap \{2^{-\el}\}_{\el\in\BB N} $ such that
\eqb \label{eqn-choices}
\#\left\{Z \in \mcl Z_r : \text{$\# Z \leq k$ and $F_{Z,r}^{q,\BB r}(h)$ occurs} \right\} \geq \ep^{-\Cexp k}  .
\eqe 
\end{prop}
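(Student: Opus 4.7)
My plan is to use $\mcl G_{\BB r}^\ep$ to build a large ``source set'' $Z_0$ of candidate lattice centers, and then extract from it many subsets of size at most $k$ on which all five conditions defining $F_{Z,r}^{q,\BB r}$ hold. Work on $\mcl G_{\BB r}^\ep$, and let $P$ denote the (a.s.\ unique, by Lemma~\ref{lem-geo-unique}) $D_h$-geodesic from $K_1$ to $K_2$. The hypotheses $\op{dist}(K_1,K_2)\geq\Ceucl\BB r$ and $\op{dist}(K_1,\bdy B_{\BB r}(0))\geq\Ceucl\BB r$ let me extract a middle portion $P^{\mathrm{mid}}\subset P$ of Euclidean diameter at least a constant multiple of $\Ceucl\BB r$, contained in $B_{3\BB r}(0)$ and staying at Euclidean distance at least a constant multiple of $\Ceucl\BB r$ from $K_1\cup K_2\cup\bdy B_{\BB r}(0)$. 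At every $z\in P^{\mathrm{mid}}$, condition~(3) of $\mcl G_{\BB r}^\ep$ supplies some $r(z)\in\mcl R\cap[\ep^2\BB r,\ep\BB r]$ and some $w(z)\in(r(z)/100)\BB Z^2\cap B_{r(z)/25}(z)$ with $\Er_{w(z),r(z)}$ occurring; the $\Ceucl\BB r$ margin makes $B_{4r(z)}(w(z))$ automatically disjoint from $K_1\cup K_2\cup\bdy B_{\BB r}(0)$, so $w(z)$ satisfies the membership condition~\eqref{eqn-pt-set}.

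\textbf{Two pigeonhole reductions.} Taking well-separated points $z_1,z_2,\dots$ along $P^{\mathrm{mid}}$ at Euclidean spacing $\gtrsim\ep\BB r$ produces $\gtrsim\Ceucl\ep^{-1}$ candidate pairs $(r(z_i),w(z_i))$ with pairwise distinct $w(z_i)$. Since $\mcl R$ satisfies $r'/r\geq 8$ by~\eqref{eqn-admissible-radii}, there are at most $\log_8\ep^{-1}+1$ admissible scales in $[\ep^2\BB r,\ep\BB r]$, and by condition~(2) of $\mcl G_{\BB r}^\ep$ the quantity $r^{\xi Q}e^{\xi h_r(w)}$ lies in a fixed interval of multiplicative length $\ep^{-O(1)}$, covered by $O(\log\ep^{-1})$ dyadic bins $[q,2q]$ with $q\in\{2^{-\el}\}_{\el\in\BB N}$. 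Two pigeonhole reductions then select a single $r$ and a single $q\in[\tfrac12\ep^{2\xi(Q+3)},\ep^{\xi(Q-3)}]\cap\{2^{-\el}\}_{\el\in\BB N}$ for which the set
\begin{equation*}
Z_0 := \bigl\{w\in(r/100)\BB Z^2 \,:\, \text{\eqref{eqn-pt-set} holds},\ \Er_{w,r}\text{ occurs},\ B_r(w)\cap P\neq\emptyset,\ r^{\xi Q}e^{\xi h_r(w)}\in[q,2q]\bigr\}
\end{equation*}
has $\#Z_0\geq\ep^{-\nu}$ for some exponent $\nu=\nu(\text{parameters},\Ceucl)>0$, provided $\ep$ is small enough. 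Conditions (1)--(4) of $F_{Z,r}^{q,\BB r}$ are then automatic for every $Z\subset Z_0$: (1) is inherited from $\mcl G_{\BB r}^\ep$, and (2)--(4) hold for every $w\in Z_0$ by construction.

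\textbf{Condition~(5) via iterative removal.} The heart of the proof lies in condition~(5): for every $z\in Z$, the (a.s.\ unique) $D_{h-\fr_{Z,r}}$-geodesic $P_Z$ from $K_1$ to $K_2$ must make a $(B_{4r}(z),\Vr_{z,r})$-excursion with internal $D_h$-distance at least $\Ctime r^{\xi Q}e^{\xi h_r(z)}$. I plan to handle this by an iterative removal argument, as sketched in the overview of Section~\ref{sec-counting}. For $W\subset Z_0$, call $z\in W$ \emph{stable in $W$} if $P_W$ makes the required excursion near $z$; starting from $W_0:=Z_0$ and setting $W_{j+1}:=\{z\in W_j : z\text{ stable in }W_j\}$, I let $Z^*\subset Z_0$ be the stabilized limit, so that every $z\in Z^*$ is stable in $Z^*$ and $F_{Z^*,r}^{q,\BB r}$ holds. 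The two facts that I need are (i) $\#Z^*\geq c\,\#Z_0$, and (ii) a \emph{monotonicity under removal}: for every $Z\subset Z^*$, every $z\in Z$ remains stable in $Z$, so that $F_{Z,r}^{q,\BB r}$ holds. Both (i) and (ii) should follow from hypothesis~A for $\Er_{z,r}$ combined with the parameter inequality $\Cacross-4e^{-\xi\Cmax}\Ctube>(2\Caround/\Cacross)\Ctime$ of~\eqref{eqn-parameter-relation}: this inequality forces any $D_{h-\fr_{W,r}}$-geodesic that must cross $\BB A_{r,3r}(z)$ to do so through $\Ur_{z,r}$ rather than around it, thereby consuming at least $\Ctime r^{\xi Q}e^{\xi h_r(z)}$ of internal $D_h$-length inside $B_{4r}(z)$, regardless of which other $\fr_{w,r}$ are subtracted.

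\textbf{Main obstacle and counting.} Establishing (i) and (ii) rigorously --- in particular, controlling how the $D_{h-\fr_{W,r}}$-geodesic $P_W$ reshapes as points are added to or removed from $W$, so that stability does not deteriorate under further removals --- is the main obstacle of the argument. Once (i) and (ii) are in hand, the counting is immediate:
\begin{equation*}
\#\left\{Z\in\mcl Z_r : \#Z\leq k,\ F_{Z,r}^{q,\BB r}\text{ occurs}\right\} \,\geq\, \binom{\#Z^*}{k} \,\geq\, \left(\frac{c\,\ep^{-\nu}}{k}\right)^k \,\geq\, \ep^{-\Cexp k}
\end{equation*}
for $\Cexp:=\nu/2$ and all $\ep$ sufficiently small (depending on $k$, $\Ceucl$, and the parameters), as desired.
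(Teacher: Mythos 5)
Your setup, source-set construction, and two pigeonhole reductions all track the paper's own proof closely and correctly. The gap is in the final counting step, precisely at the point you flag as ``the main obstacle.''

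Your plan hinges on claim (ii): that every subset $Z\subset Z^*$ of the stabilized set again satisfies $F_{Z,r}^{q,\BB r}$. This monotonicity under removal is genuinely doubtful, and the paper does not prove it (nor need it). The issue: when you pass from $Z^*$ to a strict subset $Z$, the field changes from $h-\fr_{Z^*,r}$ to $h-\fr_{Z,r}$, so the $D_{h-\fr_{Z,r}}$-geodesic $P_Z$ can be a completely different path from $P_{Z^*}$. Removing $\fr_{w,r}$ for some $w\in Z^*\setminus Z$ makes the region near $w$ relatively more expensive, which can reroute the geodesic away from the remaining annuli $\BB A_{r,4r}(z)$ for $z\in Z$ and destroy the excursion condition~(5) there. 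The iterative-removal argument you invoke (the paper's Lemma~\ref{lem-stable-subset}) controls only what happens when you remove the \emph{unstable} points, and even then only shows that $D_{h-\fr_{Z_m,r}}(K_1,K_2)$ does not increase by too much overall (via Lemma~\ref{lem-new-dist-lower}); it does not yield stability of arbitrary subsets of the terminal set. So $\binom{\#Z^*}{k}$ does not lower-bound the count of sets $Z$ with $F_{Z,r}^{q,\BB r}$ occurring, and the argument does not close.

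The paper avoids needing (ii) by a different counting scheme. Lemma~\ref{lem-stable-subset} is stated as a map: \emph{for each} $Z\subset\mcl S$ there is some $Z'\subset Z$ (which you do not get to choose, and which is not just ``$Z$'') with $\#Z'\geq\Csub\#Z$ and $F_{Z',r}^{q,\BB r}$ occurring. One then counts the \emph{image} of the map $Z\mapsto Z'$ over $k$-element starting sets $Z\subset\mcl S$: each $\wt Z$ in the image has at most $\binom{\#\mcl S}{\lfloor(1-\Csub)k\rfloor}$ preimages (since $Z\supset\wt Z$ and $\#(Z\setminus\wt Z)\leq(1-\Csub)k$), while there are $\binom{\#\mcl S}{k}$ starting sets, giving at least $\binom{\#\mcl S}{k}\binom{\#\mcl S}{\lfloor(1-\Csub)k\rfloor}^{-1}\succeq(\#\mcl S)^{\Csub k}$ distinct good sets $\wt Z$ of size in $[\Csub k,k]$. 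This replaces your (unproven) subset-closure of $Z^*$ with a preimage-counting bound that needs only the per-$Z$ existence statement. I'd recommend restructuring your final step along these lines; your points (i) and the iterative-removal mechanism are sound, but they must be applied to each starting $Z$ separately rather than to a single terminal $Z^*$.
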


Proposition~\ref{prop-choices} will be proven in Section~\ref{sec-counting-choices}. 
Our final estimate gives an unconditional upper bound for the number of $Z$ for which $F_{Z,r}^{q,\BB r}(h + \fr_{Z,r})$ occurs.

\newcommand{\Ccard}{C_2}

\begin{prop} \label{prop-card}
There is a constant $\Ccard > 0$, depending only on the parameters, such that the following is true. 
For each $r\in\mcl R$, each $q > 0$, and each $k\in \BB N$, a.s.\ 
\eqb \label{eqn-card}
\#\left\{ Z\in \mcl Z_r \: : \: \text{$\# Z \leq k$ and $F_{Z,r}^{q,\BB r}(h + \fr_{Z,r})$ occurs} \right\} \leq \Ccard^k .
\eqe
\end{prop}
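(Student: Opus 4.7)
The plan is, on the a.s.\ event that the $D_h$-geodesic $P$ from $K_1$ to $K_2$ is unique (Lemma~\ref{lem-geo-unique}), to associate to each valid $Z$ a family of $\#Z$ pairwise disjoint ``long'' excursions of $P$, and then count the number of such families. Parametrize $P$ by $D_h$-length on $[0,D_h(K_1,K_2)]$.

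First I would record the circle-average identities $(h + \fr_{Z,r})_r(z) = h_r(z)$ for each $z \in Z$ and $(h + \fr_{Z,r})_{\BB r}(0) = h_{\BB r}(0)$. These hold because each $\fr_{z',r}$ is supported in $B_{4r}(z')$, because the balls $\{B_{4r}(z')\}_{z'\in Z}$ are pairwise disjoint and avoid $\bdy B_{\BB r}(0)$ (by definition of $\mcl Z_r$), and because $\Vr_{z,r} \subset \BB A_{r,3r}(z)$ is automatically disjoint from $\bdy B_r(z)$. Consequently the exponential normalizations appearing in $F_{Z,r}^{q,\BB r}(h + \fr_{Z,r})$ agree with the ones defined in terms of $h$, so that in particular condition~5 reads as an inequality for the $D_h$-geodesic $P$ in terms of $h_r(z)$, and condition~3 gives $r^{\xi Q} e^{\xi h_r(z)} \geq q \BB r^{\xi Q} e^{\xi h_{\BB r}(0)}$.

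Next, for each $z \in Z$, I would use Weyl scaling (Axiom~\ref{item-metric-f}) together with the bound $0 \leq \fr_{Z,r} \leq \Cmax$ to show that the $D_{h+\fr_{Z,r}}$-length of $P|_{[\tau_z,\sigma_z]}$ is at most $e^{\xi \Cmax}(\sigma_z - \tau_z)$. Since $P|_{[\tau_z,\sigma_z]} \subset B_{4r}(z)$ is a candidate path realizing $D_{h+\fr_{Z,r}}(P(\tau_z),P(\sigma_z);B_{4r}(z))$, condition~5 of $F_{Z,r}^{q,\BB r}(h+\fr_{Z,r})$ yields
\[
\sigma_z - \tau_z \;\geq\; e^{-\xi \Cmax}\Ctime\, r^{\xi Q} e^{\xi h_r(z)} \;\geq\; L \;:=\; e^{-\xi \Cmax}\Ctime\, q \BB r^{\xi Q} e^{\xi h_{\BB r}(0)}.
\]
The pairwise disjointness of $\{B_{4r}(z)\}_{z\in Z}$ then forces the intervals $\{[\tau_z',\sigma_z']\}_{z\in Z}$ to be pairwise disjoint subintervals of $[0, D_h(K_1,K_2)]$.

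The main obstacle is producing the combinatorial bound $\Ccard^k$ from this disjointness picture. My approach would exploit that each $z \in Z$ lies at Euclidean distance exactly $4r$ from both $P(\tau_z')$ and $P(\sigma_z')$, so that $z$ is determined by its excursion interval up to at most two lattice choices. The bound therefore reduces to showing that the candidate set
\[
\mcl L \;:=\; \left\{ z \in \tfrac{r}{100}\BB Z^2 \;:\; \text{$P$ has an excursion in $B_{4r}(z)$ of $D_h$-length at least $L$} \right\}
\]
has cardinality bounded by a constant depending only on the parameters. I would attempt this via a packing argument, using that only $O(1)$ lattice points $z \in \tfrac{r}{100}\BB Z^2$ can satisfy $B_{4r}(z) \ni P(t)$ for a given $t$ (by the spacing of the lattice), combined with hypothesis~\ref{item-Ehyp-dist} bounding $D_h$-distances around $\BB A_{3r,4r}(z)$ and the bracketing of $r^{\xi Q}e^{\xi h_r(z)}$ from condition~3, to control the total $D_h$-length that $P$ can spend in $\bigcup_{z\in \mcl L} B_{4r}(z)$ relative to $L$ itself. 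Once $|\mcl L| \leq \Ccard - 1$ is established, the desired bound follows from $\sum_{m=0}^k \binom{2|\mcl L|}{m} \leq \Ccard^k$ after accounting for the factor-of-two ambiguity in recovering $z$ from its excursion interval.
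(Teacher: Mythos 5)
Your proposal has a genuine gap at the crucial step. The packing argument cannot bound $|\mcl L|$ by a constant depending only on the parameters. The total $D_h$-length of $P$ is $D_h(K_1,K_2)$, while $L = e^{-\xi\Cmax}\Ctime q \BB r^{\xi Q}e^{\xi h_{\BB r}(0)}$ can be arbitrarily small relative to it, since $q$ ranges down to roughly $\ep^{2\xi(Q+3)}$. The disjointness of the intervals $[\tau_z',\sigma_z']$ together with $\sigma_z - \tau_z \geq L$ therefore only bounds $\#Z$ by roughly $D_h(K_1,K_2)/L$, which has nothing to do with the parameters. Hypothesis~\ref{item-Ehyp-dist} gives each excursion a $D_h$-length comparable to $q\BB r^{\xi Q}e^{\xi h_{\BB r}(0)}$ from above as well as below, but this does not help: it is a per-ball bound, not a bound on the number of balls. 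Nothing in your argument prevents a long geodesic from visiting thousands of disjoint balls $B_{4r}(z)$ with excursions of $D_h$-length $\asymp L$.

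The ingredient you are not using is condition~\ref{item-ball-set-bad} in the definition of $F^{q,\BB r}_{Z,r}$, namely that $\wt D_h(K_1,K_2) \geq \Cupper D_h(K_1,K_2) - q\BB r^{\xi Q}e^{\xi h_{\BB r}(0)}$: the geodesic is within error $q\BB r^{\xi Q}e^{\xi h_{\BB r}(0)}$ of achieving the optimal ratio $\Cupper$. This is what makes the cardinality bound possible. The paper's argument (Lemma~\ref{lem-old-geo-count}) shows that each ``$r,q$-good'' point $z$ yields, via hypothesis~\ref{item-Ehyp-inc}, a pair of times $s_z < t_z$ on $P$ with $\wt D_h(P(s_z),P(t_z)) \leq \Cmid(t_z - s_z)$ and $t_z - s_z \geq \Cinc q\BB r^{\xi Q}e^{\xi h_{\BB r}(0)}$. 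Stringing these shortcuts together along $P$ shows that if the number of good points avoiding a fixed reference set $Z_0$ exceeds $\Ccount \#Z_0$, then $\wt D_h(K_1,K_2)$ drops below $\Cupper D_h(K_1,K_2)$ by more than the allowed error $q\BB r^{\xi Q}e^{\xi h_{\BB r}(0)}$, contradicting condition~\ref{item-ball-set-bad}. Combined with a maximality/packing argument for lattice points near $Z_0\cup Z_1$, this gives $\Ccard^k$. Your plan of associating a family of disjoint long excursions to each $Z$ is fine as far as it goes, and your preliminary observations (the circle-average identities, the lower bound $\sigma_z - \tau_z \geq L$) are correct, but the combinatorial heart of the argument must go through the near-optimality of $P$, not through metric packing.
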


We will give the proof of Proposition~\ref{prop-card} in Section~\ref{sec-counting-card}. 
The proofs of Propositions~\ref{prop-choices} and~\ref{prop-card} are both via elementary deterministic arguments based on the hypotheses for $\Er_{z,r}$ and the definition of $\Fr_{Z,r}^{q,\BB r}$. See the beginnings of Sections~\ref{sec-counting-choices} and~\ref{sec-counting-card} for overviews of the proofs.

Let us now explain how to deduce Proposition~\ref{prop-counting} from the above three estimates. 

\begin{proof}[Proof of Proposition~\ref{prop-counting}] 
Throughout the proof, all implicit constants are required to depend only on $\xi$ and the parameters. 
Fix $\BB r > 0$ and disjoint compact sets $K_1,K_2\subset B_{2\BB r}(0)$ such that $\op{dist}(K_1,K_2) \geq \Ceucl \BB r$ and $\op{dist}(K_1,\bdy B_{\BB r}(0) ) \geq \Ceucl \BB r$.
For $\ep > 0$, let
\eqbn
\mathbf R_\ep := \mcl R \cap [\ep^2 \BB r ,\ep \BB r] \quad \text{and} \quad \mathbf Q_\ep := \left[\frac12 \ep^{2\xi (Q+3)}   , \ep^{ \xi(Q-3)} \right] \cap \{2^{-\el}\}_{\el\in\BB N} .
\eqen 
The cardinality of $\mathbf R_\ep \times \mathbf Q_\ep$ is at most a $\xi$-dependent constant times $(\log \ep^{-1})^2$. 
By interchanging the order of summation and expectation, then applying Proposition~\ref{prop-card} and Lemma~\ref{lem-rn}, we get that for each $k\in\BB N$, 
\allb \label{eqn-counting-sum}
(\log\ep^{-1})^2 
&\succeq \sum_{r\in \mathbf R_\ep} \sum_{q \in \mathbf Q_\ep} \sum_{\substack{Z \in \mcl Z_r \\ \# Z  \leq k} } \BB E\left[ \frac{\BB 1_{F_{Z,r}^{q,\BB r}(h + \fr_{Z,r})} }{ \# \{Z' \in \mcl Z_r : \# Z' \leq k ,\, F_{Z',r}^{q,\BB r}(h + \fr_{Z',r}) \: \text{occurs} \} } \right]  \notag\\
&\succeq \Ccard^{-k} \sum_{r\in \mathsf R_\ep} \sum_{q \in \mathsf Q_\ep} \sum_{\substack{Z \in \mcl Z_r \\ \# Z  \leq k} }  \BB P\left[ F_{Z,r}^{q,\BB r}(h + \fr_{Z,r}) \right]  \notag\qquad \text{(Proposition~\ref{prop-card})} \notag \\
&\succeq \Ccard^{-k} \Crn^{-k} \sum_{r\in \mathsf R_\ep} \sum_{q \in \mathsf Q_\ep} \sum_{\substack{Z \in \mcl Z_r \\ \# Z  \leq k} }  \BB P\left[ F_{Z,r}^{q,\BB r}(h ) \right]  \qquad \text{(Lemma~\ref{lem-rn})}  \notag \\ 
&= \Ccard^{-k} \Crn^{-k} \BB E\left[ \sum_{r\in \mathsf R_\ep} \sum_{q \in \mathsf Q_\ep} \#\left\{Z \in \mcl Z_r : \# Z \leq k ,\:  F_{Z,r}^{q,\BB r}(h ) \: \text{occurs} \right\} \right] .
\alle
By Proposition~\ref{prop-choices}, for each small enough $\ep > 0$ (how small depends on $k$) on the event $\mcl G_{\BB r}^\ep(K_1,K_2)$ the double sum inside the expectation in the last line of~\eqref{eqn-counting-sum} is at least $\ep^{-\Cexp k}$. Hence for each small enough $\ep > 0$ (depending on $k$), 
\allb
(\log\ep^{-1})^2  \succeq \Ccard^{-k} \Crn^{-k} \ep^{-\Cexp k} \BB P\left[ \mcl G_{\BB r}^\ep(K_1,K_2) \right] .
\alle
Re-arranging this inequality and choosing $k$ to be slightly larger than $\mu  / \Cexp$ yields~\eqref{eqn-counting}.
\end{proof}

\subsection{Proof of Proposition~\ref{prop-choices}}
\label{sec-counting-choices}

Fix $\BB r > 0$ and compact sets $K_1,K_2\subset B_{\BB r}(0)$ such that $\op{dist}(K_1,K_2) \geq \Ceucl \BB r$ and $\op{dist}(K_1,\bdy B_{\BB r}(0) ) \geq \Ceucl \BB r$. 
It is straightforward to show from the definition of $\mcl G_{\BB r}^\ep$ that if $\mcl G_{\BB r}^\ep$ occurs, then there are many 3-tuples $(Z,r,q)$ with $r\in \mcl R \cap [\ep \BB r ,\ep^2 \BB r]$, $q \in [\ep^{2\xi (Q+3)} / 2 , \ep^{ \xi(Q-3)}] \cap \{2^{-\el}\}_{\el\in\BB N}$, and $Z\in\mcl Z_r$ for which all of the conditions in the definition of $F_{Z,r}^{q,\BB r}$ occur except possibly condition~\ref{item-ball-set-stable}, i.e., the event of the following definition occurs.

\begin{defn} \label{def-simpler-F}
For $r\in\mcl R$, $Z\in \mcl Z_r$, and $q>0$, we define $\ol F_{Z,r}^{q,\BB r}(h) = \ol F_{Z,r}^{q,\BB r}(h ; K_1,K_2)$ to be the event that all of the conditions in the definition of $F_{Z,r}^{q,\BB r}(h)$ occur except possibly condition~\ref{item-ball-set-stable}, i.e., $\ol F_{Z,r}^{q,\BB r}(h)$ is the event that the following is true. 
\begin{enumerate} 
\item $\wt D_h(K_1,K_2) \geq \Cupper D_h(K_1,K_2) -q \BB r^{\xi Q} e^{\xi h_{\BB r}(0)}$. \label{item-ball-set-bad'}
\item The event $\Er_{z,r}$ occurs for each $z \in Z$. \label{item-ball-set-good'}
\item \label{item-ball-set-compare'} We have
\eqbn
r^{\xi Q}  e^{\xi h_r(z)} \in \left[q \BB r^{\xi Q} e^{\xi h_{\BB r}(0)}    ,2q \BB r^{\xi Q} e^{\xi h_{\BB r}(0)} \right] ,\quad \forall z \in  Z .
\eqen 
\item For each $z \in Z$, the $D_h$-geodesic from $K_1$ to $K_2$ hits $B_r(z)$.  \label{item-ball-set-hit'} 
\end{enumerate}
\end{defn}

Recall that condition~\ref{item-ball-set-stable} asserts that for each $z\in Z$, the $D_{h-\fr_{Z,r}}$-geodesic $P_Z$ from $K_1$ to $K_2$ has a $(B_{4r}(z), \Vr_{z,r})$-excursion $(\tau_z',\tau_z,\sigma_z,\sigma_z')$ such that $D_h\left( P_Z(\tau_z) , P_Z(\sigma_z)   ; B_{4r}(z) \right) \geq \Ctime r^{\xi Q} e^{\xi h_r(z)} $.
The difficulty with checking condition~\ref{item-ball-set-stable} is that the $D_{h-\fr_{Z,r}}$-geodesic from $K_1$ to $K_2$ could potentially spend a very small amount of time in $\Vr_{z,r}$ for some of the points $z \in Z$, or possibly even avoid some of the sets $\Vr_{z,r}$ altogether. To deal with this, we will show that if $Z\in\mcl Z_r$ and $\ol F_{Z,r}^{q,\BB r}$ occurs, then there is a subset $Z' \subset Z$ such that $\# Z' $ is at least a constant times $\# Z$ and $F_{Z',r}^{q,\BB r}$ occurs (Lemma~\ref{lem-stable-subset}). 

The idea for constructing $Z'$ is as follows. In Lemma~\ref{lem-new-dist-upper} we show that $D_{h-\fr_{Z,r}}(K_1,K_2)$ is smaller than $D_h(K_1,K_2)$ minus a constant times $q \BB r^{\xi Q} e^{\xi h_{\BB r}(0)} \# Z$. Intuitively, subtracting $\fr_{Z,r}$ substantially reduces the distance from $K_1$ to $K_2$. Since $\fr_{Z,r}$ is supported on $\bigcup_{z\in Z} \Vr_{z,r}$, this implies that the $D_{h-\fr_{Z,r}}$-geodesic $P_Z$ from $K_1$ to $K_2$ has to spend at least a constant times $q \BB r^{\xi Q} e^{\xi h_{\BB r}(0)} \# Z$ units of time in $\bigcup_{z\in Z} \Vr_{z,r}$ (otherwise, its length would have to be larger than $D_{h-\fr_{Z,r}}(K_1,K_2)$). We then iteratively remove the ``bad" points $z\in Z$ for which there does \emph{not} exist a $(B_{4r}(z) , \Vr_{z,r})$-excursion $(\tau_z',\tau_z,\sigma_z , \sigma_z')$ for $P_Z$ such that
\eqbn 
 D_h\left( P_Z(\tau_z) , P_Z(\sigma_z)   \right) \geq \Ctime r^{\xi Q} e^{\xi h_r(z)}  . 
\eqen  

For each of the above ``bad" points $z\in Z$, the intersection of $P_Z$ with $\Vr_{z,r}$ is in some sense small. 
Since the function $\fr_{z,r}$ is supported on $V_{z,r}$, removing the ``bad" points from $Z$ does not increase $D_{h-\fr_{Z,r}}(K_1,K_2)$ by very much.
Consequently, at each stage of the iterative procedure it will still be the case that $D_{h-\fr_{Z,r}}(K_1,K_2)$ is substantially smaller than $D_h(K_1,K_2)$. As above, this implies that $P_Z$ spends a substantial amount of time in $\bigcup_{z\in Z} \Vr_{z,r}$. We show in Lemma~\ref{lem-new-dist-spent} that the amount of time that $P_Z$ spends in each $\Vr_{z,r}$ is at most a constant times $q \BB r^{\xi Q} e^{\xi h_{\BB r}(0)}  $. This allows us to show that the iterative procedure has to terminate before we have removed too many points from $Z$. 
 
To begin the proof, we establish an upper bound for $D_{h-\fr_{Z,r}}(K_1,K_2)$ in terms of $D_h(K_1,K_2)$ on the event $\ol F_{Z,r}^{q,\BB r}(h)$. 
The reason why this bound holds is that the $D_h$-geodesic from $K_1$ to $K_2$ has to cross the regions $\Ur_{z,r}$ for $z\in Z$. Since $\fr_{Z,r}$ is very large on $\Ur_{z,r}$ and by hypothesis~\ref{item-Ehyp-dist} for $\Er_{z,r}$, the $D_{h-\fr_{Z,r}}$-distances around the regions $\Ur_{z,r}$ for $z\in Z$ is small. This allows us to find $\# Z$ ``shortcuts" along the $D_h$-geodesic with small $D_{h-\fr_{Z,r}}$-length.

\begin{figure}[ht!]
\begin{center}
\includegraphics[width=.75\textwidth]{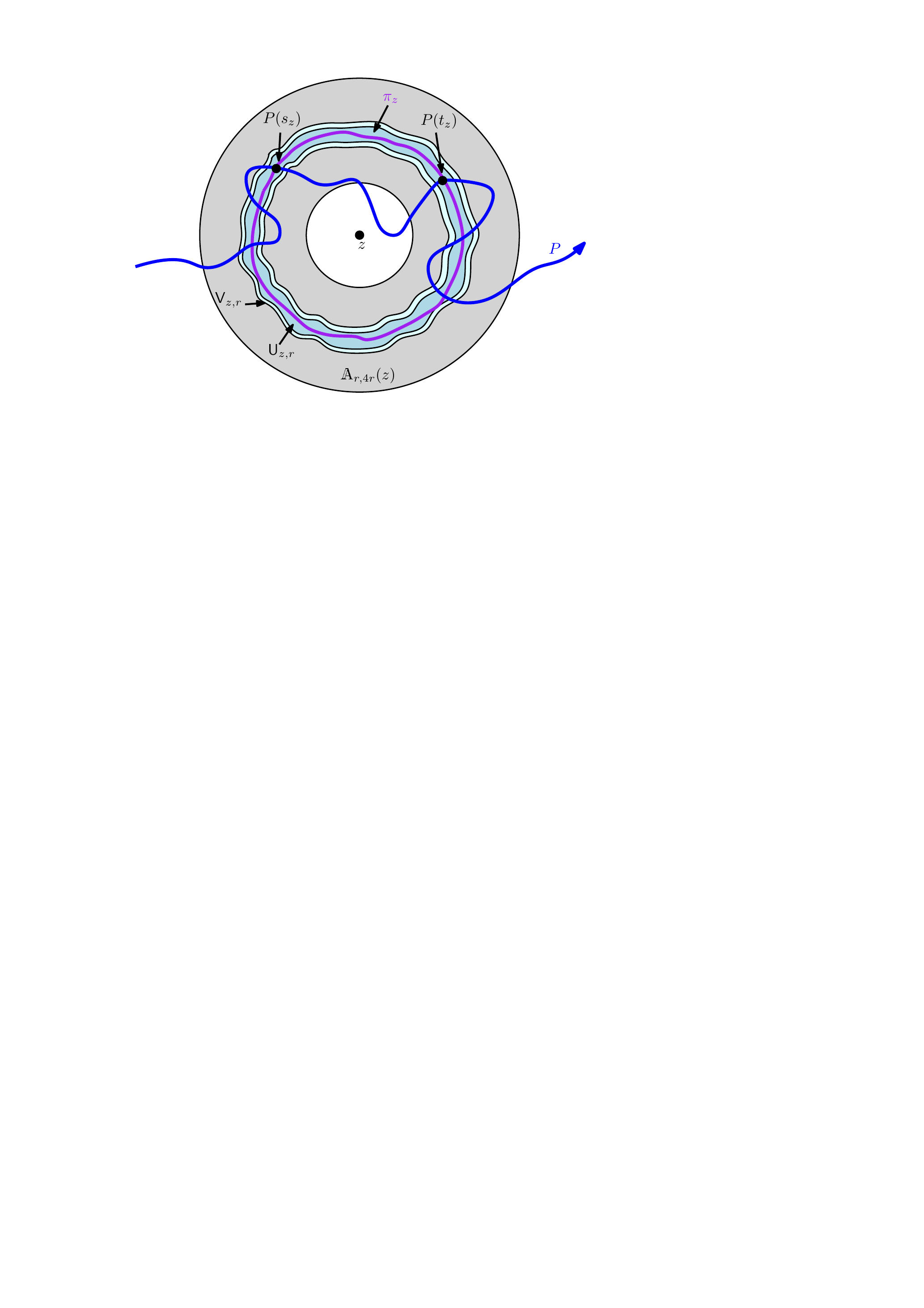} 
\caption{\label{fig-new-dist-upper} 
Illustration of the proof of Lemma~\ref{lem-new-dist-upper}. Since $\fr_{z,r}$ is very large on $\Ur_{z,r}$, the $D_{h-\fr_{Z,r}}$-length of the purple path $\pi_z$ is very short. By replacing the segment $P|_{[s_z,t_z]}$ by a segment of $\pi_z$ for each $z\in Z$, we obtain a new path from $K_1$ to $K_2$ whose $D_{h-\fr_{Z,r}}$-length is substantially smaller than $D_h(K_1,K_2)$. 
}
\end{center}
\end{figure}

\newcommand{\Cnew}{C_3}

\begin{lem} \label{lem-new-dist-upper}
There is a constant $\Cnew > 2\Caround \Ctime / \Cacross$, depending only on the parameters, such that the following is true. 
Let $r\in\mcl R$, $Z\subset \mcl Z_r$, and $q >0$ and assume that $\ol F_{Z,r}^{q,\BB r}(h)$ occurs.
Then
\eqb \label{eqn-new-dist-upper}
D_{h-\fr_{Z,r}}(K_1,K_2) 
\leq D_h(K_1,K_2) -   \Cnew   q \BB r^{\xi Q} e^{\xi h_{\BB r}(0)} \# Z .
\eqe
\end{lem}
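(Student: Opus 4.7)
The plan is to construct a competitor path from $K_1$ to $K_2$ by following the unique $D_h$-geodesic $P$ from $K_1$ to $K_2$ (existence and uniqueness from Lemma~\ref{lem-geo-unique}), parametrized by $D_h$-length, and for each $z\in Z$ splicing in a short ``detour'' through the region $\Ur_{z,r}$ on which the bump function $\fr_{z,r}$ attains its maximal value $\Cmax$. Since $\fr_{Z,r}\equiv\Cmax$ on $\Ur_{z,r}$, Weyl scaling (Axiom~\ref{item-metric-f}) will make such detours cheap with respect to $D_{h-\fr_{Z,r}}$.

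First, I would use hypothesis~\ref{item-Ehyp-dist} for $\Er_{z,r}$ to choose, for each $z\in Z$ and each $\epsilon>0$, a path $\pi_z\subset \Ur_{z,r}$ disconnecting the inner and outer boundaries of $\Ur_{z,r}$ with $\op{len}(\pi_z; D_h)\leq \Ctube r^{\xi Q} e^{\xi h_r(z)}+\epsilon$. Because $\Ur_{z,r}$ disconnects the inner and outer boundaries of $\BB A_{r,3r}(z)$, so does $\pi_z$; in particular, every path from $B_r(z)$ to $\BB C\setminus B_{3r}(z)$ must cross $\pi_z$, and the latter set contains $K_1\cup K_2$ (by $Z\in\mcl Z_r$). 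Condition~\ref{item-ball-set-hit'} in Definition~\ref{def-simpler-F} guarantees $P$ enters $B_r(z)$; let $t_z^\ast$ be the first entry time, let $a_z$ be the last time before $t_z^\ast$ that $P$ hits $\pi_z$, and let $b_z$ be the first time after $t_z^\ast$ that $P$ hits $\pi_z$. Both times exist by the separating property. If $P(t)\notin B_{3r}(z)$ for some $t\in(a_z,b_z)$, then $P$ would cross $\pi_z$ at least twice more between $a_z$ and $b_z$ (once to leave $B_{3r}(z)$ and once to re-enter, so as eventually to reach $B_r(z)$ at $t_z^\ast$ or return to $\pi_z$ at $b_z$), contradicting the extremality of $a_z$ and $b_z$. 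Hence $P([a_z,b_z])\subset B_{3r}(z)\subset B_{4r}(z)$, and since the balls $B_{4r}(z)$ for $z\in Z$ are pairwise disjoint (by $Z\in\mcl Z_r$), so are the intervals $[a_z,b_z]$ for $z\in Z$.

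Order $Z=\{z_1,\dots,z_k\}$ so that $a_{z_1}<\cdots<a_{z_k}$, and form the competitor path by concatenating the segments $P|_{[b_{z_{i-1}},a_{z_i}]}$ (with $b_{z_0}:=0$ and $a_{z_{k+1}}:=D_h(K_1,K_2)$) interleaved with sub-arcs of $\pi_{z_i}$ joining $P(a_{z_i})$ to $P(b_{z_i})$. The retained $P$-segments have total $D_h$-length $D_h(K_1,K_2)-\sum_{z\in Z}(b_z-a_z)$, and hence total $D_{h-\fr_{Z,r}}$-length at most this quantity since $\fr_{Z,r}\geq 0$. Each inserted arc lies in $\pi_z\subset\Ur_{z,r}$, where $\fr_{Z,r}=\fr_{z,r}\equiv\Cmax$ (the other summands $\fr_{z',r}$ vanish on $\BB A_{r,4r}(z)\supset\pi_z$ because $Z\in\mcl Z_r$ forces disjoint supports), so by Weyl scaling its $D_{h-\fr_{Z,r}}$-length is at most $e^{-\xi\Cmax}\op{len}(\pi_z;D_h)$. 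Sending $\epsilon\to 0$ yields
\[
D_{h-\fr_{Z,r}}(K_1,K_2)\leq D_h(K_1,K_2)-\sum_{z\in Z}(b_z-a_z)+e^{-\xi\Cmax}\Ctube\sum_{z\in Z}r^{\xi Q}e^{\xi h_r(z)}.
\]

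To lower-bound $b_z-a_z$, I would observe that $P(a_z),P(b_z)\in\pi_z\subset\Vr_{z,r}$ and $P$ visits $\bdy B_r(z)\subset\bdy\BB A_{r,3r}(z)$ at the intermediate time $t_z^\ast$, so hypothesis~\ref{item-Ehyp-dist} gives that both $t_z^\ast-a_z$ and $b_z-t_z^\ast$ are at least $D_h(\Vr_{z,r},\bdy\BB A_{r,3r}(z))\geq\Cacross r^{\xi Q}e^{\xi h_r(z)}$, hence $b_z-a_z\geq 2\Cacross r^{\xi Q}e^{\xi h_r(z)}$. Invoking condition~\ref{item-ball-set-compare'} in Definition~\ref{def-simpler-F}, $r^{\xi Q}e^{\xi h_r(z)}\geq q\BB r^{\xi Q}e^{\xi h_{\BB r}(0)}$, and combining everything gives
\[
D_{h-\fr_{Z,r}}(K_1,K_2)\leq D_h(K_1,K_2)-(2\Cacross-e^{-\xi\Cmax}\Ctube)q\BB r^{\xi Q}e^{\xi h_{\BB r}(0)}\#Z.
\]
Setting $\Cnew:=2\Cacross-e^{-\xi\Cmax}\Ctube$ works: from the parameter relation~\eqref{eqn-parameter-relation} one has $\Cacross-4e^{-\xi\Cmax}\Ctube>2\Caround\Ctime/\Cacross$, and since $\Cacross>0$ also $\Cnew=2\Cacross-e^{-\xi\Cmax}\Ctube>\Cacross-4e^{-\xi\Cmax}\Ctube$, so $\Cnew>2\Caround\Ctime/\Cacross$. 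The main subtlety will be the topological argument that $P([a_z,b_z])\subset B_{3r}(z)$, which is what makes the intervals $[a_z,b_z]$ pairwise disjoint and lets the detours be pieced together into a single path; everything else is a direct computation using the hypotheses on $\Er_{z,r}$.
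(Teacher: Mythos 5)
Your proof is correct and follows essentially the same approach as the paper: splice near-minimal loops $\pi_z\subset\Ur_{z,r}$ into the $D_h$-geodesic $P$ at each $z\in Z$, bound the retained segments of $P$ by their $D_h$-length, bound the inserted arcs via Weyl scaling, and convert $r^{\xi Q}e^{\xi h_r(z)}$ to $q\BB r^{\xi Q}e^{\xi h_{\BB r}(0)}$ via condition~\ref{item-ball-set-compare'}. You track the constants a bit more carefully (counting both the inbound and outbound crossings, taking $\epsilon\to 0$ loops, and combining the two sums before invoking the $q$-comparison), which yields $\Cnew = 2\Cacross - e^{-\xi\Cmax}\Ctube$ in place of the paper's $\Cnew = \Cacross - 4e^{-\xi\Cmax}\Ctube$; both exceed $2\Caround\Ctime/\Cacross$ by~\eqref{eqn-parameter-relation}, so the conclusion is the same.
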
 
\begin{proof}
See Figure~\ref{fig-new-dist-upper} for an illustration. 
By condition~\ref{item-ball-set-good'} in the definition of $\ol F_{Z,r}^{q,\BB r}(h)$, the event $\Er_{z,r}(h)$ occurs for each $z \in Z$. 
So, by hypothesis~\ref{item-Ehyp-dist} for $\Er_{z,r}$ and condition~\ref{item-ball-set-compare'} in the definition of $\ol F_{Z,r}^{q,\BB r}(h)$, we can find for each $z\in Z$ a path $\pi_z$ in $\Ur_{z,r}$ which disconnects the inner and outer boundaries of $\Ur_{z,r}$ such that
\eqb \label{eqn-new-dist-around0}
\op{len}\left( \pi_z ; D_h \right) \leq 2 D_h\left(\text{around $\Ur_{z,r}$} \right) \leq 4 \Ctube  q \BB r^{\xi Q} e^{\xi h_{\BB r}(0)} .
\eqe

By condition~\ref{item-ball-set-hit'} in the definition of $\ol F_{Z,r}^{q,\BB r}(h)$, the $D_h$-geodesic $P$ from $K_1$ to $K_2$ hits $B_r(z)$ for each $z\in Z$. Furthermore, $ B_{4r}(z) \cap (K_1\cup K_2) = \emptyset$ for each $z\in Z$ (recall~\eqref{eqn-pt-set}) and $\pi_z$ disconnects the inner and outer boundaries of $\BB A_{r,4r}(z)$ for each $z\in Z$. It follows that for each $z\in Z$, we can find times $s_z < t_z$ such that $P(s_z) , P(t_z) \in \pi_z$, the path $P|_{[s_z,t_z]}$ hits $B_r(z)$, and $P((s_z,t_z))$ lies in the open region which is disconnected from $\infty$ by $\pi_z$. 
Since the balls $B_{4r}(z)$ for $z\in Z$ are disjoint (again by~\eqref{eqn-pt-set}), the time intervals $[s_z,t_z]$ for $z\in Z$ are disjoint. 

The path $P$ must cross from $\Vr_{z,r}$ to $\bdy B_r(z)$ between times $s_z$ and $t_z$, so by hypothesis~\ref{item-Ehyp-dist} for $\Er_{z,r}$ and condition~\ref{item-ball-set-compare'} in the definition of $\ol F_{Z,r}^{q,\BB r}(h)$,  
\eqb \label{eqn-new-dist-across}
t_z - s_z \geq D_h\left(\Vr_{z,r} , \bdy B_r(z) \right) \geq \Cacross q \BB r^{\xi Q} e^{\xi h_{\BB r}(0)} .
\eqe 

Let $P'$ be the path obtained from $P$ by excising each segment $P|_{[s_z,t_z]}$ and replacing it by a segment of $\pi_z$ with the same endpoints. 
Since $\fr_{Z,r}$ is non-negative, Weyl scaling (Axiom~\ref{item-metric-f}) shows that 
\allb \label{eqn-new-dist-between}
\op{len}\left( P' \setminus \bigcup_{z\in Z} \pi_z ;  D_{h-\fr_{Z,r}}  \right) 
&\leq \op{len}\left( P' \setminus \bigcup_{z\in Z} \pi_z ; D_h \right) \notag\\
&= \op{len}\left( P ; D_h \right) - \sum_{z\in Z} (t_z - s_z) \notag\\
&\leq  D_h( K_1,K_2)  - \Cacross q \BB r^{\xi Q} e^{\xi h_{\BB r}(0)} \# Z \quad \text{(by~\eqref{eqn-new-dist-across})} . 
\alle
Furthermore, since $\fr_{Z,r}$ is identically equal to $\Cmax$ on each of the sets $\Ur_{z,r}$ for $z\in Z$ (which contains $\pi_z$) we get from~\eqref{eqn-new-dist-around0} that 
\allb \label{eqn-new-dist-pi}
\op{len}\left(  \pi_z ;   D_{h-\fr_{Z,r}} \right) \leq 4 e^{-\xi \Cmax} \Ctube q \BB r^{\xi Q} e^{\xi h_{\BB r}(0)}  .
\alle
Combining~\eqref{eqn-new-dist-between} and~\eqref{eqn-new-dist-pi} shows that
\eqbn
D_{h-\fr_{Z,r}}(K_1,K_2) 
\leq \op{len}\left(P' ; D_{h-\fr_{Z,r}}  \right)
\leq D_h(K_1,K_2)  - \left( \Cacross - 4 e^{-\xi \Cmax} \Ctube \right) q \BB r^{\xi Q} e^{\xi h_{\BB r}(0)} \# Z  .
\eqen
This gives~\eqref{eqn-new-dist-upper} with $\Cnew =  \Cacross - 4 e^{-\xi \Cmax} \Ctube$. We note that $\Cnew > 2\Caround \Ctime / \Cacross$ due to~\eqref{eqn-parameter-relation}. 
\end{proof}

We next establish an inequality in the opposite direction from the one in Lemma~\ref{lem-new-dist-upper}, i.e., an upper bound for $D_h(K_1,K_2)$ in terms of $D_{h-\fr_{Z,r}}(K_1,K_2)$. This latter estimate holds unconditionally (i.e., we do not need to truncate on any event).

\begin{figure}[ht!]
\begin{center}
\includegraphics[width=.9\textwidth]{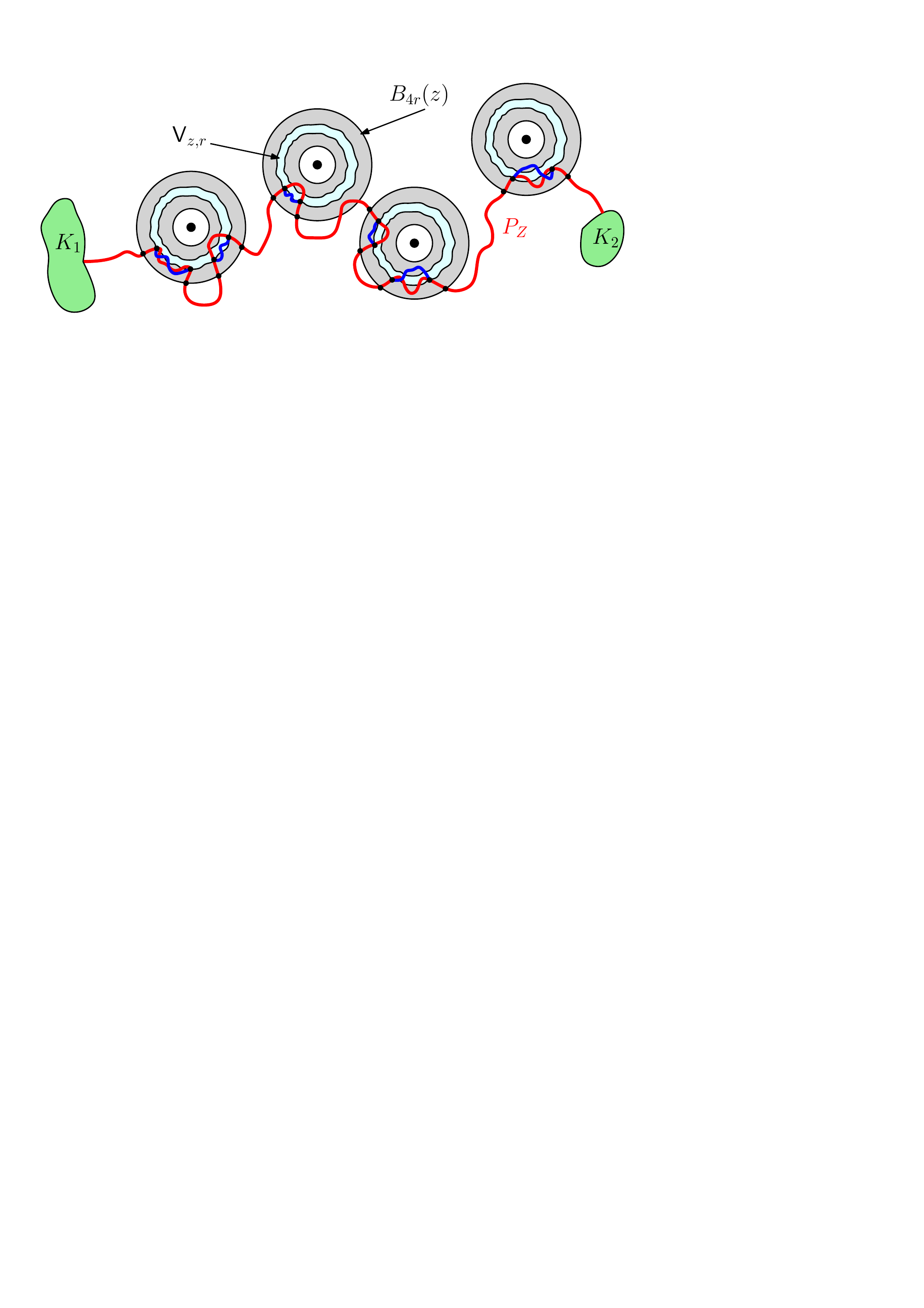} 
\caption{\label{fig-new-dist-lower0} 
Illustration of the proof of Lemma~\ref{lem-new-dist-lower0}. 
The set $Z$ consists of the four center points of the annuli in the figure.
For each $z\in Z$, we have indicated each of the points $P_Z(\tau'), P_Z(\tau) , P_Z(\sigma) , P_Z(\sigma')$ for the $(B_{4r}(z) , \Vr_{z,r})$-excursions $(\tau',\tau,\sigma,\sigma') \in \mcl T_{z,r}(P_Z)$ with a black dot. The proof proceeds by replacing each of the segments $P_Z|_{[\tau,\sigma]}$ by a $D_h$-geodesic with the same endpoints (shown in blue). 
}
\end{center}
\end{figure}

\begin{lem} \label{lem-new-dist-lower0}   
Let $r\in\mcl R$ and $Z \in  \mcl Z_r$.  
Let $P_Z$ be the $D_{h-\fr_{Z,r}}$-geodesic from $K_1$ to $K_2$. 
For $z\in Z$, let $ \mcl T_{z,r}(P_Z)$ be the set of $(B_{4r}(z) , \Vr_{z,r})$-excursions of $P_Z$ (Definition~\ref{def-excursion}). 
Then
\eqb \label{eqn-new-dist-lower0}
     D_h(K_1,K_2) \leq  D_{h-\fr_{Z,r}}(K_1,K_2) +    \sum_{z\in Z} \sum_{(\tau',\tau,\sigma,\sigma') \in \mcl T_{z,r}(P_Z)} D_h\left( P_Z(\tau) , P_Z(\sigma)  \right)      .
\eqe 
\end{lem}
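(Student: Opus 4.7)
The plan is to construct a candidate path from $K_1$ to $K_2$ by splicing together pieces of the $D_{h-\fr_{Z,r}}$-geodesic $P_Z$ (on the parts where $\fr_{Z,r}$ vanishes, so the two length measurements agree) with $D_h$-nearly-geodesic bridges between the pairs $(P_Z(\tau), P_Z(\sigma))$. The $D_h$-length of this spliced path will be at most the right-hand side of~\eqref{eqn-new-dist-lower0}, and since it joins $K_1$ to $K_2$ it dominates $D_h(K_1,K_2)$.

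More concretely, I would first observe that all intervals $[\tau,\sigma]$, ranging over $z\in Z$ and excursions $(\tau',\tau,\sigma,\sigma')\in\mcl T_{z,r}(P_Z)$, are pairwise disjoint: for a fixed $z$ this follows from Definition~\ref{def-excursion}, and for distinct $z,z'\in Z$ the intervals $[\tau',\sigma']$ lie inside the disjoint balls $B_{4r}(z),B_{4r}(z')$ (which are disjoint by~\eqref{eqn-pt-set}), so the $[\tau,\sigma]$ sub-intervals are as well. Moreover, the union $\mathcal{I} := \bigcup_{z,\text{exc.}}[\tau,\sigma]$ is precisely the set of times when $P_Z$ lies in $\bigcup_{z\in Z}\ol{\Vr_{z,r}}$, which is the support of $\fr_{Z,r}$: indeed, if $P_Z(t)\in\ol{\Vr_{z,r}}$, then $P_Z(t)\in B_{4r}(z)$, so $t$ belongs to a $(B_{4r}(z),\Vr_{z,r})$-excursion $(\tau',\tau,\sigma,\sigma')$ and by the definition of $\tau$ and $\sigma$ we have $t\in[\tau,\sigma]$. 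Hence $\fr_{Z,r}\equiv 0$ along $P_Z|_{[0,T]\setminus\mathcal{I}}$, and by Weyl scaling (Axiom~\ref{item-metric-f}) the $D_h$-length of $P_Z$ restricted to $[0,T]\setminus\mathcal I$ equals its $D_{h-\fr_{Z,r}}$-length there, which is at most $D_{h-\fr_{Z,r}}(K_1,K_2)$.

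Next, for each excursion $(\tau',\tau,\sigma,\sigma')$ and each $\eps>0$, the length space property (Axiom~\ref{item-metric-length}) for $D_h$ supplies a curve from $P_Z(\tau)$ to $P_Z(\sigma)$ of $D_h$-length at most $D_h(P_Z(\tau),P_Z(\sigma))+\eps 2^{-n}$, where we index the excursions by $n\in\BB N$ (note that if some $D_h(P_Z(\tau),P_Z(\sigma))=+\infty$, the inequality~\eqref{eqn-new-dist-lower0} is trivial). Concatenating the pieces of $P_Z$ on the maximal subintervals of $[0,T]\setminus\mathcal I$ with these bridges yields a curve $P'$ from $K_1$ to $K_2$ with
\[
\op{len}(P';D_h)\leq D_{h-\fr_{Z,r}}(K_1,K_2)+\sum_{z\in Z}\sum_{(\tau',\tau,\sigma,\sigma')\in\mcl T_{z,r}(P_Z)}D_h(P_Z(\tau),P_Z(\sigma))+\eps,
\]
and sending $\eps\to 0$ gives~\eqref{eqn-new-dist-lower0}.

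The only step that needs any care is the verification that the excursion intervals $[\tau,\sigma]$ capture exactly the times when $P_Z$ lies in the support of $\fr_{Z,r}$; this is really just bookkeeping from Definition~\ref{def-excursion} and the disjointness of the balls $B_{4r}(z)$ from~\eqref{eqn-pt-set}. One minor subtlety is that $P'$ must be a genuine continuous curve — since the bridges match $P_Z$ at the splice points $P_Z(\tau)$ and $P_Z(\sigma)$, continuity is automatic, so no further work is needed.
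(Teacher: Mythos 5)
Your argument follows the same strategy as the paper's: splice $D_h$-bridges between $P_Z(\tau)$ and $P_Z(\sigma)$ into $P_Z$ over the excursion intervals, and use the fact that $\fr_{Z,r}\equiv 0$ along the unspliced pieces. There is one gap worth flagging. You index the excursions by $n\in\BB N$, allowing for countably many, and then assert the spliced path $P'$ is ``automatically'' continuous because the endpoints of each bridge match $P_Z$. That is not automatic: if infinitely many excursion intervals $[\tau_n,\sigma_n]$ accumulate at some time $t_0$, the bridges are not forced to have small diameter near $t_0$, so the concatenation could fail to be continuous there, in which case $P'$ is not a valid competitor in the infimum defining $D_h(K_1,K_2)$. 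The paper avoids this by observing that $P_Z$ is a continuous path of finite $D_{h-\fr_{Z,r}}$-length, and every excursion forces $P_Z$ to cross between $\bdy B_{4r}(z)$ and $\ol{\Vr}_{z,r}$, which lie at positive $D_{h-\fr_{Z,r}}$-distance from each other; hence there are only finitely many excursions, and with finitely many splices the continuity of $P'$ is indeed immediate. Once you know there are only finitely many excursions, the $\eps\, 2^{-n}$ device is also unnecessary: the points $P_Z(\tau),P_Z(\sigma)$ lie on a geodesic so are non-singular, and Lemma~\ref{lem-geodesics} supplies genuine $D_h$-geodesics between them. A smaller point: you claim $\mcl I$ is ``precisely'' the set of times $P_Z$ lies in $\bigcup_z\ol{\Vr}_{z,r}$, but you only prove (and only need) the forward inclusion --- the reverse fails in general, since $P_Z$ can exit and re-enter $\ol{\Vr}_{z,r}$ within a single interval $[\tau,\sigma]$.
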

\begin{proof}
See Figure~\ref{fig-new-dist-lower0} for an illustration. 
By the definition~\eqref{eqn-pt-set} of $\mcl Z_r$, we have $  B_{4r}(z) \cap (K_1\cup K_2) = \emptyset$ for each $z\in Z$. 
From this and Definition~\ref{def-excursion}, we see that for each $z\in Z$, the set $P_Z^{-1}(\Vr_{z,r})$ is contained in the union of the excursion intervals $[\tau,\sigma]$ for $ (\tau',\tau,\sigma,\sigma') \in \bigcup_{z\in Z} \mcl T_{z,r}(P_Z)$. 
Furthermore, since the balls $B_{4r}(z)$ for $z\in Z$ are disjoint, it follows that the excursion intervals $[\tau,\sigma]$ for $ (\tau',\tau,\sigma,\sigma') \in \bigcup_{z\in Z} \mcl T_{z,r}(P_Z)$ are disjoint. 
Since $P_Z$ is continuous, there are only finitely many such intervals. 

Let $P_Z'$ be the path from $K_1$ to $K_2$ obtained from $P_Z$ by replacing each of the segments $P_Z|_{[\sigma,\tau]}$ for $ (\tau',\tau,\sigma,\sigma') \in \bigcup_{z\in Z} \mcl T_{z,r}(P_Z)$ by a $D_h$-geodesic from $P_Z(\tau)$ to $P_Z(\sigma)$. 
The function $\fr_{Z,r}$ is supported on $\bigcup_{z\in Z} \Vr_{z,r}$ and the path $P_Z$ does not hit $\bigcup_{z\in Z} \Vr_{z,r}$ except during the above excursion intervals $[\sigma,\tau]$. 
Hence the $D_h$-length of each of the segments of $P_Z$ which are not replaced when we construct $P_Z'$ is the same as its $D_{h-\fr_{Z,r}}$-length. 
From this, we see that the $D_h$-length of $P_Z'$ is at most $\op{len}(P_Z ; D_{h-\fr_{Z,r}})$ plus the sum of the $D_h$-lengths of the replacement segments. 
In other words, $\op{len}(P_Z' ; D_h)$ is at most the right side of~\eqref{eqn-new-dist-lower0}. 
\end{proof}

If we assume that $\bigcap_{z\in Z} \Er_{z,r}$ occurs, then we can replace the second sum on the right side of~\eqref{eqn-new-dist-lower0} by a maximum.  

\begin{lem} \label{lem-new-dist-lower}   
Let $r\in\mcl R$ and $Z \in  \mcl Z_r$.  
Assume that $\bigcap_{z\in Z} \Er_{z,r}$ occurs and let $P_Z$ be the $D_{h-\fr_{Z,r}}$-geodesic from $K_1$ to $K_2$. 
For $z\in Z$, let $  \mcl T_{z,r}(P_Z)$ be as in Lemma~\ref{lem-new-dist-lower0}. 
Then
\eqb \label{eqn-new-dist-lower}
D_h(K_1,K_2) \leq  D_{h-\fr_{Z,r}}(K_1,K_2) +   \frac{\Caround}{\Cacross} \sum_{z\in Z} \max_{(\tau',\tau,\sigma,\sigma') \in \mcl T_{z,r}(P_{Z })} D_h\left( P_Z(\tau) , P_Z(\sigma)  \right)      .
\eqe 
\end{lem}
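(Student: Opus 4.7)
The plan is to refine the construction from Lemma~\ref{lem-new-dist-lower0} by exploiting the around/across distance bounds in hypothesis~\ref{item-Ehyp-dist}. In Lemma~\ref{lem-new-dist-lower0}, every excursion segment $P_Z|_{[\tau,\sigma]}$ was replaced by a $D_h$-geodesic of length $D_h(P_Z(\tau), P_Z(\sigma))$, producing the naive sum. Here, for each $z\in Z$ we will replace only one excursion (the maximum) by a $D_h$-geodesic and bypass all other excursions using short loops around $\BB A_{3r,4r}(z)$, whose $D_h$-lengths are controlled by $\Caround r^{\xi Q} e^{\xi h_r(z)}$.

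Concretely, for each $z \in Z$ I would identify the excursion $(\tau_{i^*}', \tau_{i^*}, \sigma_{i^*}, \sigma_{i^*}') \in \mcl T_{z,r}(P_Z)$ that achieves the maximum $M_z := \max_{(\tau',\tau,\sigma,\sigma')} D_h(P_Z(\tau), P_Z(\sigma))$. I would then construct a modified path $P'$ from $K_1$ to $K_2$ as follows. For the dominant excursion, replace $P_Z|_{[\tau_{i^*}, \sigma_{i^*}]}$ by a $D_h$-geodesic of length $M_z$, exactly as in Lemma~\ref{lem-new-dist-lower0}. For each non-dominant excursion with index $i \ne i^*$, replace the whole outer segment $P_Z|_{[\tau_i', \sigma_i']}$ (whose endpoints lie on $\bdy B_{4r}(z)$) by a path in $\ol{\BB A}_{3r,4r}(z)$ of $D_h$-length at most $\Caround r^{\xi Q} e^{\xi h_r(z)}$, obtained by combining a near-minimal loop around $\BB A_{3r,4r}(z)$ (guaranteed by hypothesis~\ref{item-Ehyp-dist}) with short connectors to $P_Z(\tau_i'), P_Z(\sigma_i')$. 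Since $\ol{\BB A}_{3r,4r}(z)$ is disjoint from $\Vr_{z,r}$, the bypass path does not intersect the support of $\fr_{z,r}$.

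To bound $\op{len}(P'; D_h)$, I would use two key facts. First, the non-modified segments of $P_Z$ lie outside $\bigcup_{z\in Z}\Vr_{z,r}$, where $\fr_{Z,r}$ vanishes, so their $D_h$-length equals their $D_{h-\fr_{Z,r}}$-length. Second, each non-dominant excursion has $\sigma_i' - \tau_i' \geq 2\Cacross r^{\xi Q} e^{\xi h_r(z)}$: the excursion segment from time $\tau_i'$ to $\tau_i$ lies in $B_{4r}(z)\setminus\ol\Vr_{z,r}$ where $\fr_{Z,r}=0$, so its $D_{h-\fr_{Z,r}}$-length equals its $D_h$-length, which is at least $D_h(\bdy B_{4r}(z), \Vr_{z,r}) \geq \Cacross r^{\xi Q} e^{\xi h_r(z)}$ by the across-distance bound in hypothesis~\ref{item-Ehyp-dist}, and similarly for the segment from $\sigma_i$ to $\sigma_i'$. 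Combining: each bypass removes at least $2\Cacross r^{\xi Q} e^{\xi h_r(z)}$ of $D_{h-\fr_{Z,r}}$-length from $P_Z$ and adds at most $\Caround r^{\xi Q} e^{\xi h_r(z)}$ of $D_h$-length to $P'$.

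The main obstacle will be the bookkeeping that extracts exactly the factor $\Caround/\Cacross$ in front of $M_z$ rather than a weaker bound of the form $M_z + (n_z-1)\Caround r^{\xi Q} e^{\xi h_r(z)}$. I expect this to work as follows: the net $D_h$-cost at $z$ is $M_z + \sum_{i\ne i^*} [\Caround r^{\xi Q} e^{\xi h_r(z)} - (\sigma_i' - \tau_i')]$, where the terms in brackets are at most $(\Caround - 2\Cacross)/(2\Cacross)$ times $(\sigma_i' - \tau_i')$, so they can be absorbed into the $D_{h-\fr_{Z,r}}$-length of the removed portion and rewritten using the bound $\sum_i D_h(P_Z(\tau_i), P_Z(\sigma_i)) \leq n_z M_z$ together with the per-excursion lower bound on $\sigma_i'-\tau_i'$ to produce the multiplicative factor $\Caround/\Cacross$. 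Summing over $z \in Z$ will then yield the claimed inequality~\eqref{eqn-new-dist-lower}.
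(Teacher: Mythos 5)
Your proposal takes a genuinely different route from the paper, and it contains a gap that I don't see how to close. The paper's proof is much shorter: starting from Lemma~\ref{lem-new-dist-lower0}, it simply bounds the \emph{number} of excursions by $\# \mcl T_{z,r}(P_Z) \leq \Caround/\Cacross$ (using that each excursion's entry and exit segments through $B_{3r}(z)\setminus\Vr_{z,r}$ have $D_{h-\fr_{Z,r}}$-length $\geq \Cacross r^{\xi Q} e^{\xi h_r(z)}$, that these time intervals are disjoint and lie in $[S_z,T_z]$, and that $T_z - S_z \leq \Caround r^{\xi Q}e^{\xi h_r(z)}$ by Lemma~\ref{lem-new-dist-around}), and then replaces the sum in Lemma~\ref{lem-new-dist-lower0} by (count)$\times$(max). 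No path surgery is needed.

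The gap in your bypass argument is that it produces an \emph{additive} cost, not a \emph{multiplicative} one. Your own accounting gives, per $z$, a net $D_h$-cost of at most $M_z + (n_z-1)(\Caround - 2\Cacross)\,r^{\xi Q}e^{\xi h_r(z)}$. The parameter constraints~\eqref{eqn-parameter-relation} allow $\Caround > 2\Cacross$ (indeed $\Caround/\Cacross$ can be arbitrarily large since $\Aaround$ comes from a tightness upper bound and $\Aacross$ from a tightness lower bound), so the second term is positive. The target is $\frac{\Caround}{\Cacross} M_z$, which is proportional to $M_z$; but $M_z = \max_i D_h(P_Z(\tau_i),P_Z(\sigma_i))$ can be arbitrarily small (e.g., if every excursion merely grazes $\ol\Vr_{z,r}$ so that $\tau_i=\sigma_i$, then $M_z=0$ even if $n_z\geq 2$). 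There is no lower bound on $M_z$ in terms of $r^{\xi Q}e^{\xi h_r(z)}$, so the additive term cannot be ``absorbed'' as you propose. There is a secondary issue as well: the ``short connectors'' from $P_Z(\tau_i'),P_Z(\sigma_i')\in\bdy B_{4r}(z)$ to a loop inside $\BB A_{3r,4r}(z)$ are not automatically short; you must take them along $P_Z$ itself, which only lets you bypass the portion of $P_Z$ between its first and last intersections with the loop, reducing the claimed savings. The clean fix is to abandon the bypass and directly establish $n_z \leq \Caround/\Cacross$ via the excursion-counting argument, then invoke Lemma~\ref{lem-new-dist-lower0}.
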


For the proof of Lemma~\ref{lem-new-dist-lower}, we will need an upper bound for the amount of time that $P_Z$ can spend in $\Vr_{Z,r}$. 
This upper bound is a straightforward consequence of the upper bound for $D_h(\text{around $\BB A_{3r,4r}(z)$})$ from hypothesis~\ref{item-Ehyp-dist} for $\Er_{z,r}$.

\begin{lem} \label{lem-new-dist-around}  
Let $r\in\mcl R$, let $Z\subset \mcl Z_r$, and assume that $\bigcap_{z\in Z} \Er_{z,r}$ occurs. 
Let $P_Z$ be the $D_{h-\fr_{Z,r}}$-geodesic from $K_1$ to $K_2$. 
For $z\in Z$ such that $P_Z \cap V_{z,r} \not=\emptyset$, let $S_z$ (resp.\ $T_z$) be the first time that $P_Z$ enters $\ol \Vr_{z,r}$ (resp.\ the last time that $P_Z$ exits $  \Vr_{z,r}$).
Then
\eqb \label{eqn-new-dist-around}
T_z - S_z   \leq \Caround  r^{\xi Q} e^{\xi h_r(z)}   .
\eqe
\end{lem}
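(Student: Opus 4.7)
The plan is to use hypothesis~\ref{item-Ehyp-dist} for $\Er_{z,r}$ to produce a short loop in $\BB A_{3r,4r}(z)$ around $\ol\Vr_{z,r}$, and then argue that if $T_z - S_z$ were larger than the length of this loop, we could shortcut $P_Z$ across the loop, contradicting the geodesic property.

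More precisely, since $\Er_{z,r}$ occurs, for any $\delta > 0$ there is a path $\pi_z \subset \BB A_{3r,4r}(z)$ which disconnects the inner and outer boundaries of this annulus and satisfies $\op{len}(\pi_z ; D_h) \leq \Caround r^{\xi Q} e^{\xi h_r(z)} + \delta$. The first key observation is that $\pi_z$ lies in the support complement of $\fr_{Z,r}$: the function $\fr_{z,r}$ is supported on $\Vr_{z,r} \subset \BB A_{r,3r}(z)$, which is disjoint from $\BB A_{3r,4r}(z)$; and for any other $z' \in Z$, the support $\Vr_{z',r}$ lies inside $B_{4r}(z')$, which is disjoint from $B_{4r}(z) \supset \pi_z$ by the definition~\eqref{eqn-pt-set} of $\mcl Z_r$. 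Therefore $\fr_{Z,r} \equiv 0$ on $\pi_z$, and Weyl scaling (Axiom~\ref{item-metric-f}) gives $\op{len}(\pi_z ; D_{h-\fr_{Z,r}}) = \op{len}(\pi_z ; D_h)$.

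Next I would locate two crossings of $\pi_z$ by $P_Z$ that bracket the interval $[S_z, T_z]$. By the definition~\eqref{eqn-pt-set} of $\mcl Z_r$ we have $K_1 \cup K_2 \subset \BB C \setminus B_{4r}(z)$, while $P_Z(S_z), P_Z(T_z) \in \ol\Vr_{z,r} \subset \BB A_{r,3r}(z)$. Since $\pi_z$ disconnects these regions, $P_Z$ must cross $\pi_z$ at some time $\alpha \in [0, S_z]$ and again at some time $\beta \in [T_z, \op{len}(P_Z; D_{h-\fr_{Z,r}})]$. In particular $\beta - \alpha \geq T_z - S_z$.

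Finally, replacing the segment $P_Z|_{[\alpha,\beta]}$ by a sub-arc of $\pi_z$ from $P_Z(\alpha)$ to $P_Z(\beta)$ produces a path from $K_1$ to $K_2$ whose $D_{h-\fr_{Z,r}}$-length equals $\op{len}(P_Z ; D_{h-\fr_{Z,r}}) - (\beta-\alpha) + \op{len}(\pi_z|_{[\alpha,\beta]} ; D_{h-\fr_{Z,r}})$. Since $P_Z$ is a $D_{h-\fr_{Z,r}}$-geodesic, this length must be at least $\op{len}(P_Z ; D_{h-\fr_{Z,r}})$, yielding
\[
T_z - S_z \leq \beta - \alpha \leq \op{len}(\pi_z ; D_{h-\fr_{Z,r}}) = \op{len}(\pi_z ; D_h) \leq \Caround r^{\xi Q} e^{\xi h_r(z)} + \delta.
\]
Sending $\delta \to 0$ gives~\eqref{eqn-new-dist-around}. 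There is no real obstacle here; the only point that requires care is verifying that the loop $\pi_z$ avoids every $\Vr_{z',r}$ for $z' \in Z$, which follows from the disjointness condition built into $\mcl Z_r$.
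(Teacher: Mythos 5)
Your proof is correct and follows essentially the same approach as the paper: use hypothesis~\ref{item-Ehyp-dist} to find a short loop $\pi_z \subset \BB A_{3r,4r}(z)$, note that $P_Z$ must cross $\pi_z$ before $S_z$ and after $T_z$ because $K_1,K_2$ lie outside $B_{4r}(z)$, and invoke the geodesic property to bound $T_z - S_z$ by the $D_{h-\fr_{Z,r}}$-length of $\pi_z$. The one small difference is that you carefully verify $\fr_{Z,r} \equiv 0$ on $\pi_z$ (so its $D_{h-\fr_{Z,r}}$-length equals its $D_h$-length), whereas the paper simply observes that $\fr_{Z,r} \geq 0$ implies $\op{len}(\pi_z; D_{h-\fr_{Z,r}}) \leq \op{len}(\pi_z; D_h)$, which is all that is needed and avoids the disjointness check.
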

\begin{proof} 
By hypothesis~\ref{item-Ehyp-dist} for $\Er_{z,r} $, for each $\zeta > 0$ there is a path $\pi_z$ in $\BB A_{3r,4r}(z)$ which disconnects the inner and outer boundaries of $\BB A_{3r,4r}(z)$ such that
\eqb \label{eqn-old-geo-around}
\op{len}\left(  \pi_z  ;    D_h  \right) 
\leq (\Caround  +\zeta)  r^{\xi Q} e^{\xi h_r(z)} .
\eqe 
Since $\fr_{Z,r}$ is non-negative, the $D_{h-\fr_{Z,r}}$-length of $\pi_z$ is at most its $D_h$-length. 

Since $  B_{4r}(z) \cap (K_1\cup K_2) = \emptyset$ (recall~\eqref{eqn-pt-set}), the path $P_Z$ must hit $\pi_z$ before time $S_z$ and again after time $T_z$. Since $P_Z$ is a $D_{h-\fr_{Z,r}}$-geodesic, the $D_{h-\fr_{Z,r}}$-length of the segment of $P_Z$ between any two times when it hits $\pi_z$ is at most the $D_{h-\fr_{Z,r}}$-length of $\pi_z$ (otherwise, concatenating two segments of $P_Z$ with a segment of $\pi_z$ would produce a path with the same endpoints as $P_Z$ which is $D_{h-\fr_{Z,r}}$-shorter than $P_Z$).
Therefore,~\eqref{eqn-old-geo-around} gives
\eqb 
T_z - S_z 
\leq \op{len}\left(  \pi_z  ;    D_{h-\fr_{Z,r}}  \right) 
\leq \op{len}\left(  \pi_z  ;    D_h  \right) 
\leq (\Caround  +\zeta)  r^{\xi Q} e^{\xi h_r(z)}  .
\eqe 
Sending $\zeta \rta 0$ now concludes the proof.
\end{proof}

\begin{proof}[Proof of Lemma~\ref{lem-new-dist-lower}]
In light of Lemma~\ref{lem-new-dist-lower0}, it suffices to show that for each $z\in Z$, the number of $(B_{4r}(z) , \Vr_{z,r})$-excursions satisfies
\eqb \label{eqn-dist-lower-show}
\# \mcl T_{z,r}(P_Z) \leq \frac{\Caround}{\Cacross} .
\eqe
To obtain~\eqref{eqn-dist-lower-show}, we first note that for each $(\tau',\tau,\sigma,\sigma') \in \mcl T_{z,r}(P_Z)$, the path $P_Z$ crosses between $\bdy B_{3r}(z)$ and $\Vr_{z,r}$ during each of the time intervals $[\tau',\tau]$ and $[\sigma,\sigma']$. Since $\fr_{Z,r}$ vanishes in $B_{3r}(z) \setminus \Vr_{z,r}$ and by hypothesis~\ref{item-Ehyp-dist} for $\Er_{z,r}$, 
\eqb \label{eqn-dist-lower-across}
\min\{\tau - \tau' , \sigma' - \sigma\} 
\geq D_{h-\fr_{Z,r}}(\bdy B_{3r}(z) , \Vr_{z,r})
\geq D_h(\bdy B_{3r}(z) , \Vr_{z,r})
\geq \Cacross r^{\xi Q} e^{\xi h_r(z)} .
\eqe

Let $S_z$ and $T_z$ be the first time that $P_Z$ enters $V_{z,r}$ and the last time that $P_Z$ exits $V_{z,r}$, as in Lemma~\ref{lem-new-dist-around}. 
If $(\tau_0',\tau_0,\sigma_0,\sigma_0') \in \mcl T_{z,r}(P_Z)$ and $(\tau_1',\tau_1,\sigma_1,\sigma_1') \in \mcl T_{z,r}(P_Z)$ are the first and last excursions in chronological order, then $S_z = \tau_0$ and $T_z = \sigma_1$. 
Hence, for each excursion $(\tau',\tau,\sigma,\sigma') \in \mcl T_{z,r}(P_Z)$ which is not the first (resp.\ last) excursion in chronological order, the time interval $[\tau',\tau]$ (resp.\ $[\sigma,\sigma']$) is contained in $[S_z,T_z]$. 
Furthermore, these time intervals for different excursions are disjoint. By summing the estimate~\eqref{eqn-dist-lower-across} over all elements of $\mcl T_{z,r}(P_Z)$, we get that if $\#\mcl T_{z,r}(P_Z) \geq 2$, then
\eqb \label{eqn-dist-lower-sum}
T_z - S_z  \geq \Cacross r^{\xi Q} e^{\xi h_r(z)}  \#\mcl T_{z,r}(P_Z) .
\eqe
Combining~\eqref{eqn-dist-lower-sum} and~\eqref{eqn-new-dist-around} gives~\eqref{eqn-dist-lower-show} in the case when $\#\mcl T_{z,r}(P_Z) \geq 2$. If $\#\mcl T_{z,r}(P_Z) \leq 1$, then~\eqref{eqn-dist-lower-show} holds vacuously since $\Caround / \Cacross \geq 1$.  
\end{proof}

For the proof of Proposition~\ref{prop-choices}, we will need a slightly different upper bound for the amount of time that the $D_{h-\fr_{Z,r}}$-geodesic can spend in $\Vr_{z,r}$ as compared to the one in Lemma~\ref{lem-new-dist-around}.

\newcommand{\Cspent}{C_4}

\begin{lem} \label{lem-new-dist-spent} 
There is a constant $\Cspent > 0$, depending only on the parameters, such that the following is true. 
Let $r\in\mcl R$, $Z\subset \mcl Z_r$, and $q>0$ and assume that $\ol F_{Z,r}^{q,\BB r}(h)$ occurs.
Let $P_Z$ be the $D_{h-\fr_{Z,r}}$-geodesic from $K_1$ to $K_2$. 
For each $z \in Z$,  
\eqb \label{eqn-new-dist-spent}
\max\left\{ \sup_{u,v\in P_Z \cap \Vr_{z,r}} D_h\left(u,v  \right) , \op{len}\left( P_Z \cap \Vr_{z,r} ; D_h \right) \right\}   \leq  \Cspent q \BB r^{\xi Q} e^{\xi h_{\BB r}(0)} .
\eqe
\end{lem}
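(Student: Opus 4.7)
The plan is to deduce both bounds from the simple observation that $\fr_{Z,r}$ is globally bounded above by $\Cmax$ (the supports of $\fr_{z',r}$ for $z'\in Z$ are contained in the disjoint balls $B_{4r}(z')$, and each $\fr_{z',r}$ takes values in $[0,\Cmax]$), combined with the already-established Lemma~\ref{lem-new-dist-around}. Applying Weyl scaling (Axiom~\ref{item-metric-f}) with $f=-\fr_{Z,r}\in[-\Cmax,0]$ yields, for every path $P$,
\eqb \label{eqn-length-compare}
e^{-\xi \Cmax} \op{len}(P ; D_h) \leq \op{len}(P ; D_{h-\fr_{Z,r}}) \leq \op{len}(P ; D_h),
\eqe
so $D_h$- and $D_{h-\fr_{Z,r}}$-lengths of paths differ by at most the multiplicative factor $e^{\xi \Cmax}$. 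This is the key tool.

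Following the proof of Lemma~\ref{lem-new-dist-around}, I would let $S_z$ and $T_z$ be the first entry and last exit times of $P_Z$ to/from $\ol \Vr_{z,r}$. Since $\ol F_{Z,r}^{q,\BB r}(h)\subset\bigcap_{z\in Z}\Er_{z,r}$ by condition~\ref{item-ball-set-good'}, Lemma~\ref{lem-new-dist-around} gives $T_z-S_z\leq\Caround r^{\xi Q} e^{\xi h_r(z)}$; and since $P_Z$ is parametrized by $D_{h-\fr_{Z,r}}$-length, one has $\op{len}(P_Z|_{[S_z,T_z]} ; D_{h-\fr_{Z,r}})=T_z-S_z$. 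Combining with~\eqref{eqn-length-compare} yields
\eqb \label{eqn-len-in-V}
\op{len}(P_Z|_{[S_z,T_z]} ; D_h) \leq e^{\xi \Cmax}\,\Caround\, r^{\xi Q} e^{\xi h_r(z)}.
\eqe

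To conclude, every point of $P_Z\cap \Vr_{z,r}$ is of the form $P_Z(t)$ with $t\in[S_z,T_z]$, so for any two such points $u=P_Z(t_u),v=P_Z(t_v)$ the subsegment of $P_Z$ joining them is an admissible path of $D_h$-length at most the right side of~\eqref{eqn-len-in-V}, which bounds $D_h(u,v)$. The bound on $\op{len}(P_Z\cap \Vr_{z,r} ; D_h)$ follows immediately since this quantity is dominated by $\op{len}(P_Z|_{[S_z,T_z]} ; D_h)$. Finally, condition~\ref{item-ball-set-compare'} in the definition of $\ol F_{Z,r}^{q,\BB r}(h)$ converts $r^{\xi Q} e^{\xi h_r(z)}$ into at most $2q\BB r^{\xi Q} e^{\xi h_{\BB r}(0)}$, so one may take $\Cspent=2e^{\xi \Cmax}\Caround$. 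I do not foresee any serious obstacle: the whole argument is a short combination of Weyl scaling with a lemma already in hand, and the only point requiring mild care is keeping the parametrization of $P_Z$ by $D_{h-\fr_{Z,r}}$-length straight when transferring length bounds between the two metrics via~\eqref{eqn-length-compare}.
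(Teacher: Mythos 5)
Your proof is correct and follows essentially the same route as the paper's: bound $T_z-S_z$ via Lemma~\ref{lem-new-dist-around}, convert to $q\BB r^{\xi Q}e^{\xi h_{\BB r}(0)}$ via condition~\ref{item-ball-set-compare'}, and transfer from $D_{h-\fr_{Z,r}}$ to $D_h$ using $\fr_{Z,r}\leq\Cmax$ and Weyl scaling, arriving at the same constant $\Cspent=2e^{\xi\Cmax}\Caround$.
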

\begin{proof} 
By condition~\ref{item-ball-set-good'} in the definition of $\ol F_{Z,r}^{q,\BB r}(h)$, the event $\bigcap_{z\in Z} \Er_{z,r} $ occurs.
The bound~\eqref{eqn-new-dist-spent} holds vacuously if $P_Z\cap \Vr_{z,r} = \emptyset$, so assume that $P_Z\cap \Vr_{z,r} \not=\emptyset$. 
For $z\in Z$, let $S_z$ (resp.\ $T_z$) be the first time that $P_Z$ enters $\ol \Vr_{z,r}$ (resp.\ the last time that $P_Z$ exits $\ol \Vr_{z,r}$), as in Lemma~\ref{lem-new-dist-around}. By Lemma~\ref{lem-new-dist-around} followed by condition~\ref{item-ball-set-compare'} in the definition of $\ol F_{Z,r}(h)$, 
\eqbn
T_z - S_z \leq  \Caround  r^{\xi Q} e^{\xi h_r(z)}  \leq 2\Caround q \BB r^{\xi Q} e^{\xi h_{\BB r}(0)} 
\eqen
 Furthermore, $P_Z^{-1}(\Vr_{Z,r}) \subset [S_z,T_z]$, so
\alb
\max\left\{ \sup_{u,v\in P_Z\cap \Vr_{z,r}}  D_{h-\fr_{Z,r}}(u,v)  , \op{len}\left(P_Z\cap \Vr_{z,r} ;  D_{h-\fr_{Z,r}} \right) \right\}
&\leq T_z - S_z \notag\\
&\leq 2\Caround q \BB r^{\xi Q} e^{\xi h_{\BB r}(0)}  .
\ale 
Since $\fr_{Z,r} \leq \Cmax$, the bound~\eqref{eqn-new-dist-upper} combined with Weyl scaling (Axiom~\ref{item-metric-f}) gives~\eqref{eqn-new-dist-spent} with $\Cspent = 2 e^{\xi \Cmax} \Caround$. 
\end{proof}

The following lemma is the main input in the proof of Proposition~\ref{prop-choices}. It allows us to produce configurations $Z$ for which $F_{Z,r}^{q,\BB r}(h)$, instead of just $\ol F_{Z,r}^{q,\BB r}(h)$, occurs.

\newcommand{\Csub}{c_5}

\begin{lem} \label{lem-stable-subset}
There is a constant $\Csub > 0$, depending only on the parameters, such that the following is true. 
Let $r\in\mcl R$, $Z\in\mcl Z_r$, and $q>0$ and assume that $\ol F_{Z,r}^{q,\BB r}(h)$ occurs. 
There exists $Z'\subset Z$ such that $F_{Z',r}^{q,\BB r}(h)$ occurs and $\#Z' \geq \Csub \# Z  $. 
\end{lem}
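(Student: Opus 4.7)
My plan is to construct $Z'$ by an iterative removal procedure. Set $Z_0 := Z$, and given $Z_j$, let $B_j$ denote the set of $z \in Z_j$ for which the $D_{h-\fr_{Z_j,r}}$-geodesic $P_{Z_j}$ from $K_1$ to $K_2$ admits no $(B_{4r}(z),\Vr_{z,r})$-excursion $(\tau',\tau,\sigma,\sigma')$ with $D_h(P_{Z_j}(\tau),P_{Z_j}(\sigma);B_{4r}(z)) \geq \Ctime r^{\xi Q} e^{\xi h_r(z)}$. If $B_j=\emptyset$, terminate and set $Z':=Z_j$; otherwise set $Z_{j+1}:=Z_j\setminus B_j$. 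The sequence must stop at some finite step $J$. Conditions 1--4 of $F_{Z',r}^{q,\BB r}(h)$ pass from $\ol F_{Z,r}^{q,\BB r}(h)$ to any subset of $Z$ (condition~\ref{item-ball-set-hit'} uses that the $D_h$-geodesic already hits $B_r(z)$ for every $z\in Z$), while condition~\ref{item-ball-set-stable} holds by the termination criterion $B_J=\emptyset$. So $F_{Z',r}^{q,\BB r}(h)$ holds; the remaining task is to lower bound $\#Z'$.

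The key quantitative input is that $\#B_j\leq \alpha\,\#Z_j$ for a universal $\alpha\in(0,1)$. Since $\ol F_{Z_j,r}^{q,\BB r}(h)$ holds at every step, Lemmas~\ref{lem-new-dist-upper} and~\ref{lem-new-dist-lower} applied to $Z_j$ combine to give
\[
\Cnew\,q\BB r^{\xi Q}e^{\xi h_{\BB r}(0)}\,\#Z_j \;\leq\; \frac{\Caround}{\Cacross}\sum_{z\in Z_j} M_j(z),
\]
where $M_j(z):=\max_{(\tau',\tau,\sigma,\sigma')\in\mcl T_{z,r}(P_{Z_j})} D_h(P_{Z_j}(\tau),P_{Z_j}(\sigma))$. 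I split the sum: for $z\in B_j$, the monotonicity $D_h(\cdot,\cdot)\leq D_h(\cdot,\cdot;B_{4r}(z))$ propagates badness in the internal metric to the external one, yielding $M_j(z) < \Ctime r^{\xi Q}e^{\xi h_r(z)}\leq 2\Ctime q\BB r^{\xi Q}e^{\xi h_{\BB r}(0)}$ by condition~\ref{item-ball-set-compare'}; for $z\in Z_j\setminus B_j$, Lemma~\ref{lem-new-dist-spent} bounds the terms uniformly by $\Cspent q\BB r^{\xi Q}e^{\xi h_{\BB r}(0)}$. Substituting and dividing gives
\[
\Cnew\,\#Z_j \;\leq\; \frac{\Caround}{\Cacross}\bigl[\,2\Ctime\,\#B_j + \Cspent(\#Z_j-\#B_j)\,\bigr].
\]
The parameter relation~\eqref{eqn-parameter-relation} combined with $\Cnew=\Cacross-4e^{-\xi\Cmax}\Ctube$ from the proof of Lemma~\ref{lem-new-dist-upper} gives $\Cacross\Cnew/\Caround>2\Ctime$, so rearranging produces $\#B_j\leq\alpha\,\#Z_j$ with $\alpha:=(\Cspent-\Cacross\Cnew/\Caround)/(\Cspent-2\Ctime)\in(0,1)$, hence $\#Z_{j+1}\geq(1-\alpha)\#Z_j$.

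The hard part will be bounding the number of iterations $J$ by a constant depending only on the parameters, so that $\#Z'=\#Z_J\geq(1-\alpha)^J\#Z\geq\Csub\#Z$. I expect this to come from a monotonicity/stability observation: passing from $h-\fr_{Z_j,r}$ to $h-\fr_{Z_{j+1},r}$ \emph{increases} the metric on the cheap tubes $\Ur_{z',r}$ for $z'\in B_j$ (because $\fr_{z',r}$ is being added back), so the new geodesic $P_{Z_{j+1}}$ is pushed away from those tubes and, by exclusion, spends at least as much effective time in the remaining tubes $\Vr_{z,r}$ for $z\in Z_{j+1}$ as $P_{Z_j}$ did. Propagating this through Definition~\ref{def-excursion} should show that if $z\in Z_{j+1}$ was good for $P_{Z_j}$ (outside $B_j$), then it remains good for $P_{Z_{j+1}}$, giving $B_{j+1}=\emptyset$ and hence $J=1$. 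The technical delicacy here is that geodesics in two different metrics can reroute globally even when the metrics agree near the set one cares about, so one must combine the excursion structure with the distance bounds in hypothesis~\ref{item-Ehyp-dist} (which prevent $P_{Z_{j+1}}$ from bypassing the tubes without incurring the full $\Cacross$-cost) to make the stability rigorous.
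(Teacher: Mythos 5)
Your iterative-removal scheme, the definition of the "bad" set $B_j$, and the preliminary combination of Lemmas~\ref{lem-new-dist-upper} and~\ref{lem-new-dist-lower} all match the paper's strategy, and the per-step bound $\#B_j \leq \alpha\,\#Z_j$ with $\alpha\in(0,1)$ is a correct and useful consequence of the split you set up. However, this only yields $\#Z_J \geq (1-\alpha)^J\#Z$, and you have no a priori bound on the number of iterations $J$ — it could be as large as order $\log\#Z$, which would send $(1-\alpha)^J$ to zero and destroy the claim that $\Csub$ is independent of $\#Z$. You correctly flag this as "the hard part," but your proposed resolution ($J = 1$ via a monotonicity heuristic) is both unproven and almost certainly false: when you pass from $Z_j$ to $Z_{j+1}$ the metric $D_{h-\fr_{Z_{j+1},r}}$ becomes \emph{larger} near the removed tubes, so the geodesic $P_{Z_{j+1}}$ reroutes globally, and there is no mechanism forcing it to keep spending sufficient time in the tubes that were good for $P_{Z_j}$. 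Nothing in hypotheses~\ref{item-Ehyp-dist}--\ref{item-Ehyp-inc} rules out a good tube becoming bad after a global reroute, and the paper does not attempt to prove this.

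The missing idea is a \emph{telescoping accounting of the geodesic-length increase across iterations}, which bypasses any bound on $J$ entirely. Concretely, one shows (using Lemma~\ref{lem-new-dist-lower} applied to $h-\fr_{Z_{m+1},r}$ in place of $h$ and $Z_m\setminus Z_{m+1}$ in place of $Z$, together with the per-point smallness of $M_m(z)$ for $z\in Z_m\setminus Z_{m+1}$ via condition~\ref{item-ball-set-compare'}) that
\begin{equation*}
D_{h-\fr_{Z_{m+1},r}}(K_1,K_2) \leq D_{h-\fr_{Z_m,r}}(K_1,K_2) + \frac{2\Caround\Ctime}{\Cacross}\,q\,\BB r^{\xi Q}e^{\xi h_{\BB r}(0)}\,(\#Z_m - \#Z_{m+1}).
\end{equation*}
Summing this telescopes: the total increase of $D_{h-\fr_{Z_m,r}}(K_1,K_2)$ from $m=0$ to any later $m$ is at most $\frac{2\Caround\Ctime}{\Cacross}q\BB r^{\xi Q}e^{\xi h_{\BB r}(0)}\#Z$, while Lemma~\ref{lem-new-dist-upper} gives the starting point $D_{h-\fr_{Z,r}}(K_1,K_2)\leq D_h(K_1,K_2) - \Cnew q\BB r^{\xi Q}e^{\xi h_{\BB r}(0)}\#Z$. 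Hence $D_h(K_1,K_2) - D_{h-\fr_{Z_m,r}}(K_1,K_2) \geq (\Cnew - \frac{2\Caround\Ctime}{\Cacross})q\BB r^{\xi Q}e^{\xi h_{\BB r}(0)}\#Z$ for \emph{every} $m$. Pairing this lower bound with the upper bound from Lemmas~\ref{lem-new-dist-lower} and~\ref{lem-new-dist-spent} applied at step $m$ (namely $D_h(K_1,K_2) - D_{h-\fr_{Z_m,r}}(K_1,K_2) \leq \frac{\Caround}{\Cacross}\Cspent q\BB r^{\xi Q}e^{\xi h_{\BB r}(0)}\#Z_m$) gives $\#Z_m \geq \Csub\#Z$ with $\Csub = \frac{\Cacross}{\Caround\Cspent}(\Cnew - \frac{2\Caround\Ctime}{\Cacross})$, uniformly in $m$, and in particular at the terminating step. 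This direct bound on $\#Z_m$ is what replaces any attempt to count iterations.
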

\begin{proof}
\noindent\textit{Step 1: iteratively removing ``bad" points.}
It is immediate from Definition~\ref{def-simpler-F} that if $\ol F_{Z,r}^{q,\BB r}(h)$ occurs and $Z'\subset Z$ is non-empty, then $Z' \in \mcl Z_r$ and $\ol F_{Z' , r}^{q,\BB r}(h)$ occurs.
So, we need to produce a set $Z'\subset Z$ such that $\#Z'$ is at least a constant times $\# Z$ and condition~\ref{item-ball-set-stable} in the definition of $F_{Z' , r}^{q,\BB r}(h)$ occurs. Since $D_h(u,v ; B_{4r}(z)) \geq D_h(u,v)$ for all $u,v\in\BB C$, it suffices to find $Z'\subset Z$ such that if $P_{Z'}$ is the $D_{h-\fr_{Z',r}}$-geodesic from $K_1$ to $K_2$ and $\mcl T_{z,r}(P_{Z'})$ denotes the set of $(B_{4r}(z) , \Vr_{z,r})$-excursions for $P_{Z'}$, then 
\eqb  \label{eqn-stable-subset-show}
\max_{(\tau',\tau,\sigma,\sigma') \in \mcl T_{z,r}(P_{Z'})}  D_h\left( P_{Z'}(\tau) , P_{Z'}(\sigma)   \right) \geq \Ctime r^{\xi Q} e^{\xi h_r(z)}  . 
\eqe 

We will construct $Z'$ by iteratively removing the ``bad" points $z\in Z'$ such that the condition of~\eqref{eqn-stable-subset-show} does not hold. To this end, let $Z_0 := Z$. Inductively, suppose that $m\in \BB N_0$ and $Z_m \subset Z$ has been defined. Let $P_{Z_m}$ be the $D_{h-\fr_{Z_m,r}}$-geodesic from $K_1$ to $K_2$ and let $Z_{m+1}$ be the set of $z\in Z_m$ such that 
\eqb  \label{eqn-stable-subset-iterate}
\max_{(\tau',\tau,\sigma,\sigma') \in \mcl T_{z,r}(P_{Z_m})}  D_h\left( P_{Z_m}(\tau) , P_{Z_m}(\sigma)   \right)  \geq \Ctime r^{\xi Q} e^{\xi h_r(z)}   . 
\eqe 

If $Z_{m+1} = Z_m$, then~\eqref{eqn-stable-subset-show} holds with $Z' = Z_m$, so the event $F_{Z_m,r}^{q,\BB r}(h)$ occurs. So, to prove the lemma it suffices to show that the above procedure stabilizes before $\# Z_m$ gets too much smaller than $\# Z$. More precisely, we will show that there exists $\Csub > 0$ as in the lemma statement such that 
\eqb \label{eqn-stable-subset-lower}
\# Z_m \geq \Csub   \# Z ,\quad \forall m \in \BB N .
\eqe 
Since $Z_{m+1} \subset Z_m$ for each $m\in\BB N_0$ and $Z_0$ is finite, it follows that there must be some $m \in \BB N$ such that $Z_m = Z_{m+1}$. We know that $F_{Z_m,r}^{q,\BB r}(h)$ occurs for any such $m$, so~\eqref{eqn-stable-subset-lower} implies the lemma statement. 

It remains to prove~\eqref{eqn-stable-subset-lower}. The idea of the proof is as follows. At each step of our iterative procedure, we only remove points $z\in Z_m$ for which $P_{Z_m} \cap \Vr_{z,r}$ is small, in a certain sense.
Using this, we can show that $D_{h-\fr_{Z_{m+1},r}}(K_1,K_2)$ is not too much bigger than $D_{h-\fr_{Z_m,r}}(K_1,K_2)$ (see~\eqref{eqn-dist-increment}). Iterating this leads to an upper bound for $D_{h-\fr_{Z_m,r}}(K_1,K_2)$ in terms of $D_{h-\fr_{Z,r}}(K_1,K_2)$ (see~\eqref{eqn-dist-iterate}). 
We then use the fact that $D_{h-\fr_{Z,r}}(K_1,K_2)$ has to be substantially smaller than $D_h(K_1,K_2)$ (Lemma~\ref{lem-new-dist-upper}) together with our upper bound for the amount of time that $P_{Z_m}$ spends in each of the $\Vr_{z,r}$'s (Lemma~\ref{lem-new-dist-spent}) to obtain~\eqref{eqn-stable-subset-lower}. 
\medskip

\noindent\textit{Step 2: comparison of $D_{h-\fr_{Z_m,r}}(K_1,K_2)$ and $D_h(K_1,K_2)$.}
Let us now proceed with the details. 
Let $m\in \BB N_0$. 
By the definition~\eqref{eqn-stable-subset-iterate} of $Z_{m+1}$ and condition~\ref{item-ball-set-compare'} in the definition of $\ol F_{Z,r}^{q,\BB r}(h)$, 
\eqb \label{eqn-subset-diam}
\max_{(\tau',\tau,\sigma,\sigma') \in \mcl T_{z,r}(P_{Z_m})}  D_h\left( P_{Z_m}(\tau) , P_{Z_m}(\sigma)   \right) 
\leq 2 \Ctime q \BB r^{\xi Q} e^{\xi h_{\BB r}(0)}    ,\quad \forall z \in Z_m \setminus Z_{m+1} .
\eqe

We have $Z_m \setminus Z_{m+1} \in \mcl Z_r$ and $h -  \fr_{Z_m,r} =  h - \fr_{Z_{m+1},r}   -  \fr_{Z_m \setminus Z_{m+1} ,r}$. 
Since we are assuming that $\ol F_{Z,r}^{q,\BB r}(h)$ occurs and $Z_m\setminus Z_{m+1} \subset Z$, condition~\ref{item-ball-set-good'} of Definition~\ref{def-simpler-F} implies that $ \bigcap_{z\in Z_m \setminus Z_{m+1}} \Er_{z,r}$ occurs.
Since $\Er_{z,r}$ depends only on $h |_{\ol{\BB A}_{r,4r}(z)}$ and the support of $\fr_{Z_{m+1},r}$ is disjoint from $\ol{\BB A}_{r,4r}(z)$ for $z\in Z_m\setminus Z_{m+1}$, we get that $ \bigcap_{z\in Z_m \setminus Z_{m+1}} \Er_{z,r}$ also occurs with $h - \fr_{Z_{m+1},r}$ in place of $h$. 
We may therefore apply Lemma~\ref{lem-new-dist-lower} with $h - \fr_{Z_{m+1},r}$ in place of $h$ and $Z_m \setminus Z_{m+1}$ in place of $Z$ to get that
\allb  \label{eqn-dist-increment}
&D_{h-\fr_{Z_{m+1},r}}(K_1,K_2)    \notag\\ 
&\qquad \leq D_{h-\fr_{Z_{m },r}}(K_1,K_2)   \notag\\
&\qquad\qquad+  \frac{\Caround}{\Cacross} \sum_{z\in Z_m \setminus Z_{m+1}} \max_{(\tau',\tau,\sigma,\sigma') \in \mcl T_{z,r}(P_{Z_m})} D_{h-\fr_{Z_{m+1},r}}\left( P_{Z_m}(\tau) , P_{Z_m}(\sigma)  \right)   \notag\\
&\qquad\qquad\qquad\qquad \text{(by Lemma~\ref{lem-new-dist-lower})}\notag \\
&\qquad \leq D_{h-\fr_{Z_{m },r}}(K_1,K_2) \notag\\
&\qquad\qquad +  \frac{\Caround}{\Cacross} \sum_{z\in Z_m \setminus Z_{m+1}} \max_{(\tau',\tau,\sigma,\sigma') \in \mcl T_{z,r}(P_{Z_m})} D_h\left( P_{Z_m}(\tau) , P_{Z_m}(\sigma)  \right)   \notag\\
&\qquad\qquad\qquad\qquad \text{(since $\fr_{Z_{m+1},r} \geq 0$)} \notag \\      
&\qquad \leq D_{h-\fr_{Z_{m },r}}(K_1,K_2)   +    \frac{2 \Caround \Ctime}{\Cacross} q \BB r^{\xi Q} e^{\xi h_{\BB r}(0)}  (\# Z_m - \# Z_{m+1}) \quad \text{(by \eqref{eqn-subset-diam})} .    
\alle   

Iterating the inequality~\eqref{eqn-dist-increment} $m$ times, then applying Lemma~\ref{lem-new-dist-upper} to $Z = Z_0 \in \mcl Z_r$ gives
\allb \label{eqn-dist-iterate}
D_{h-\fr_{Z_m,r}}(K_1,K_2 )  
&\leq  D_{h-\fr_{Z ,r}}(K_1,K_2) + \frac{2 \Caround \Ctime}{\Cacross} q \BB r^{\xi Q} e^{\xi h_{\BB r}(0)}    (\# Z - \# Z_m) \notag\\
&\leq  D_h(K_1,K_2) - \left( \Cnew - \frac{2 \Caround \Ctime}{\Cacross} \right) q \BB r^{\xi Q} e^{\xi h_{\BB r}(0)}  \# Z \notag\\
&\qquad\qquad\qquad\qquad - \frac{2 \Caround \Ctime}{\Cacross} q \BB r^{\xi Q} e^{\xi h_{\BB r}(0)}   \# Z_m \notag\\
&\leq D_h(K_1,K_2) - \left( \Cnew - \frac{2 \Caround \Ctime}{\Cacross} \right) q \BB r^{\xi Q} e^{\xi h_{\BB r}(0)}  \# Z .
\alle
Note that in the last line, we simply dropped a negative term. 
\medskip

\noindent\textit{Step 3: conclusion.}
By Lemma~\ref{lem-new-dist-lower} (with $Z_m$ in place of $Z$), followed by~\eqref{eqn-dist-iterate},
\allb \label{eqn-inc-sum-lower}
\frac{\Caround}{\Cacross} \sum_{z\in Z_m} \max_{(\tau',\tau,\sigma,\sigma') \in \mcl T_{z,r}(P_{Z_m}) } D_h\left( P_{Z_m} (\tau) , P_{Z_m}(\sigma)  \right)   
&\geq  D_h(K_1,K_2)  -   D_{h-\fr_{Z_m,r}}(K_1,K_2 ) \notag\\
&\geq  \left( \Cnew - \frac{2 \Caround \Ctime}{\Cacross} \right) q \BB r^{\xi Q} e^{\xi h_{\BB r}(0)}  \# Z  .
\alle
As explained above, since $Z_m \subset Z$ we know that $\ol F_{Z_m,r}^{q,\BB r}(z)$ occurs. 
Hence we can apply Lemma~\ref{lem-new-dist-spent} (with $Z_m$ in place of $Z$), then sum over all $z\in Z_m$, to get 
\allb \label{eqn-one-inc-upper}
\sum_{z\in Z_m} \max_{(\tau',\tau,\sigma,\sigma') \in \mcl T_{z,r}(P_{Z_m}) } D_h\left( P_{Z_m} (\tau) , P_{Z_m}(\sigma)  \right) 
\leq \Cspent q \BB r^{\xi Q} e^{\xi h_{\BB r}(0)} \# Z_m ,\quad \forall z\in Z_m .
\alle
Combining~\eqref{eqn-inc-sum-lower} and~\eqref{eqn-one-inc-upper} yields
\eqb
\# Z_m \geq \Csub   \# Z \quad \text{with} \quad \Csub = \frac{\Cacross}{\Caround \Cspent}  \left( \Cnew - \frac{2 \Caround \Ctime}{\Cacross} \right) .
\eqe
That is,~\eqref{eqn-stable-subset-lower} holds with this choice of $\Csub$. 
Note that $\Csub > 0$ since $\Cnew > 2\Caround \Ctime / \Cacross$ (Lemma~\ref{lem-new-dist-upper}).  
\end{proof}

\begin{proof}[Proof of Proposition~\ref{prop-choices}]
Fix $\BB r > 0$ and compact sets $K_1,K_2 \in B_{2\BB r}(0)$ with $\op{dist}(K_1,K_2) \geq \Ceucl \BB r$. Assume that $\mcl G_{\BB r}^\ep  = \mcl G_{\BB r}^\ep(K_1,K_2)$ occurs and let $P  $ be the $D_h$-geodesic from $K_1$ to $K_2$. 
We first produce an $r\in \mcl R \cap [\ep^2 \BB r , \ep \BB r] $, a $q>0$, and a large collection of sets $Z \in \mcl Z_r$ for which $\ol F_{Z,r}^{q,\BB r}(h)$ occurs. 
 
To this end, let $T$ be the first exit time of $P$ from $B_{3\BB r}(0)$, or $T = D_h(K_1,K_2)$ if $P\subset B_{3\BB r}(0)$ (the reason why we consider $T$ is that conditions~\ref{item-main-exponent} and~\ref{item-main-cover} in the definition of $\mcl G_{\BB r}^\ep$ are only required to hold on $B_{3\BB r}(0)$). 
By condition~\ref{item-main-cover} in the definition of $\mcl G_{\BB r}^\ep$, for each point $w \in P([0,T])$ there exists  $r \in \mcl R \cap [\ep^2 \BB r ,\ep \BB r]$ and $z \in \left(\frac{r}{100} \BB Z^2 \right) \cap B_{3\BB r}(0)$ such that $\Er_{z,r}$ occurs and $w \in B_{r/25}(z)$. 

Since $\op{dist}(K_1,K_2)  \geq \Ceucl \BB r$ and $\op{dist}(K_1,\bdy B_{3\BB r}(0)) \geq \BB r$, it follows that $P([0,T])$ is a connected set of Euclidean diameter at least $\Ceucl \BB r$. Furthermore, since $\op{dist}(K_1,\bdy B_{\BB r}(0) ) \geq \Ceucl \BB r$, there must be a segment of $P|_{[0,T]}$ of Euclidean diameter at least $\Ceucl \BB r$ which is disjoint from $\bdy B_{\BB r}(0)$.

Hence we can find a constant $x > 0$, depending only on $\Ceucl$, with the following property. 
There are at least $\lfloor x / \ep \rfloor$ pairs $(z_1 , r_1) ,\dots , (z_{\lfloor x /\ep \rfloor} , r_{\lfloor x /\ep \rfloor})$, each consisting of a radius $r_j \in \mcl R \cap [\ep^2 \BB r , \ep \BB r] $ and a point $z_j \in \left(\frac{r}{100} \BB Z^2 \right) \cap B_{3\BB r}(0)$, such that the following is true.
\begin{enumerate}[($i$)]
\item The balls $B_{4r_j}(z_j)$ for $j=1,\dots,\lfloor x / \ep \rfloor$ are disjoint and none of these balls intersects $K_1 \cup K_2 \cup \bdy B_{\BB r}(0) $. \label{item-choice-disjoint}
\item $\Er_{z_j,r_j}$ occurs for each $j=1,\dots,\lfloor x / \ep \rfloor$. \label{item-choice-good}
\item The path $P$ hits $B_{r_j/25}(z_j)$ for each $j=1,\dots,\lfloor x /\ep\rfloor$. \label{item-choice-hit}
\end{enumerate} 

By condition~\ref{item-main-exponent} in the definition of $\mcl G_{\BB r}^\ep$, for each $j \in [1,\lfloor x/\ep \rfloor]_{\BB Z}$ there exists $q \in [\ep^{2\xi (Q+3)} / 2 , \ep^{ \xi(Q-3)}] \cap \{2^{-\el}\}_{\el\in\BB N} $ such that $r_j^{\xi Q} e^{\xi h_{r_j}(z_j)} \in \left[ q \BB r^{\xi Q} e^{\xi h_{\BB r}(0)}   ,2q \BB r^{\xi Q} e^{\xi h_{\BB r}(0)} \right]$. 
The cardinality of the set 
\eqbn
\left( \mcl R \cap [\ep^2 \BB r ,\ep \BB r] \right) \times \left( \left[ \frac12 \ep^{2\xi (Q+3)}  , \ep^{ \xi(Q-3)} \right] \cap \{2^{-\el}\}_{\el\in\BB N} \right)
\eqen
is at most a constant (depending only on $\xi$) times $(\log\ep^{-1})^2$.  
So, there must be some $r \in \mcl R \cap [\ep^2 \BB r ,\ep \BB r]$ and $q \in [\ep^{2\xi (Q+3)} / 2 , \ep^{ \xi(Q-3)}] \cap \{2^{-\el}\}_{\el\in\BB N} $ such that 
\allb \label{eqn-pidgeonhole}
&\# \mcl J \succeq \frac{1}{\ep (\log\ep^{-1})^2}  \quad \text{where} \notag\\ 
&\qquad \mcl J := \left\{ j \in [1,\lfloor x\ep^{-1} \rfloor]_{\BB Z} \: : \: r_j = r , \: r_j^{\xi Q} e^{\xi h_{r_j}(z_j)}  \in \left[ q \BB r^{\xi Q} e^{\xi h_{\BB r}(0)}   ,2q \BB r^{\xi Q} e^{\xi h_{\BB r}(0)} \right] \right\} ,
\alle
with the implicit constant depending only on $x$ (hence only on $\Ceucl$). Henceforth fix such an $r$ and $q$ and let $\mcl J$ be as in~\eqref{eqn-pidgeonhole}. Also define
\eqb \label{eqn-index-to-pt} 
\mcl S := \left\{z_j : j \in \mcl J \right\} ,\quad \text{so that} \quad \# \mcl S \succeq \frac{1}{\ep (\log\ep^{-1})^2} .
\eqe 

If $Z \subset \mcl S$, then property~\eqref{item-choice-hit} above implies that $Z \in \mcl Z_r$, where $\mcl Z_r$ is defined as in~\eqref{eqn-pt-set}. 
Furthermore, since $q \geq \ep^{2\xi (Q+3)}/2$, condition~\ref{item-main-bad} in the definition of $\mcl G_{\BB r}^\ep $ implies that $\wt D_h(\op{dist}(K_1,K_2)) \geq \Cupper D_h(\op{dist}(K_1,K_2)) - q\BB r^{\xi Q} e^{\xi h_{\BB r}(0)} $. From this together with properties~\eqref{item-choice-good} and~\eqref{item-choice-hit} above and our choice of $\mcl J$ in~\eqref{eqn-pidgeonhole}, we see that the event $\ol F_{Z,r}^{q,\BB r}(h)$ of Definition~\ref{def-simpler-F} occurs.  

By Lemma~\ref{lem-stable-subset}, for each $Z\subset \mcl S$ there exists $Z'\subset Z$ such that $F_{Z',r}^{q,\BB r} (h)$ occurs and $\# Z' \geq \Csub \# Z$. Fix (in some arbitrary manner) a choice of $Z'$ for each $Z$, so that $Z\mapsto Z'$ is a function from subsets of $\mcl S$ to subsets of $\mcl S$ for which $F_{Z',r}^{q,\BB r}(h)$ occurs.  We will now lower-bound the cardinality of the set
\eqb \label{eqn-set-to-count}
\left\{Z' : \# Z = k \right\} .
\eqe 

To this end, consider a set $\wt Z \subset \mcl S$ for which $F_{\wt Z,r}^{q,\BB r}(h)$ occurs and $\# \wt Z \in [\Csub k , k]$ (i.e., $\wt Z$ is a possible choice of the set $Z'$ when $\# Z  =k$).  
Since $Z' \subset Z$ for each $Z\subset \mcl S$, the number of $Z \subset \mcl S$ such that $\# Z = k$ and $Z' = \wt Z$ is at most the number of possibilities for the set $Z \setminus \wt Z$ (subject to $\# Z = k$ and $Z' = \wt Z$), which is at most
\eqbn
\binom{\# \mcl S}{k - \# \wt Z} \leq \binom{\# \mcl S}{\lfloor (1-\Csub) k \rfloor } .
\eqen
On the other hand, for each $k \in\BB N$, the number of sets $Z \subset \mcl S$ such that $\# Z = k$ is $\binom{\#\mcl S}{k}$. 

The cardinality of the set~\eqref{eqn-set-to-count} is least the number of $Z \subset \mcl S$ with $\# Z = k$, divided by the maximal cardinality of the pre-image of a set $\wt Z$ under $Z\mapsto Z'$. Hence, by combining the two counting formulas from the previous paragraph, we get that the cardinality of the set in~\eqref{eqn-set-to-count}, and hence the number of sets $\wt Z \subset \mcl S$ for which $F_{\wt Z,r}^{q,\BB r}(h)$ occurs and $\# \wt Z \in [\Csub k , k]$, is at least
\eqbn
\binom{\#\mcl S}{k}   \binom{\# \mcl S}{\lfloor (1-\Csub) k \rfloor }^{-1} \succeq (\#\mcl S)^{\Csub k} \succeq  \ep^{-\Csub k} (\log\ep^{-1})^{-2\Csub k}
\eqen
with the implicit constant depending only on the parameters and $k$ (in the last inequality we used~\eqref{eqn-index-to-pt}). This gives~\eqref{eqn-choices} for $\Cexp$ slightly smaller than $\Csub$. 
\end{proof}

\subsection{Proof of Proposition~\ref{prop-card}}
\label{sec-counting-card}

The proof of Proposition~\ref{prop-card} is based on counting the number of points $z\in \frac{r}{100} \BB Z^2$ which could possibly be an element of some $Z\in\mcl Z_r$ for which $F_{Z,r}^{q,\BB r}(h+\fr_{Z,r})$ occurs. To this end, we make the following definition. 

\begin{defn} \label{def-good-count}
For $r\in \mcl R$ and $q > 0$, we say that $z \in \frac{r}{100} \BB Z^2$ is \emph{$r,q$-good} if the following conditions are satisfied. 
\begin{enumerate}[($i$)]
\item The event $\Er_{z,r}(h + \fr_{z,r})$ occurs. \label{item-good-count-good}
\item $r^{\xi Q}  e^{\xi h_r(z)} \in \left[ q  \BB r^{\xi Q} e^{\xi h_{\BB r}(0)} ,2q  \BB r^{\xi Q} e^{\xi h_{\BB r}(0)} \right]$. \label{item-good-count-compare}
\item Let $P$ be the $D_h$-geodesic from $K_1$ to $K_2$. There is a $(B_{4r}(z) , \Vr_{z,r})$-excursion $(\tau_z',\tau_z ,\sigma_z ,\sigma_z')$ for $P$ such that \label{item-good-count-stable}
\eqb  \label{eqn-good-count-stable}
  D_{h+\fr_{z,r}}\left( P (\tau_z) , P (\sigma_z) ; B_{4r}(z)    \right) \geq \Ctime r^{\xi Q} e^{\xi h_r(z)}  . 
\eqe 
\end{enumerate}
\end{defn}

\begin{lem} \label{lem-event-good}
Let $r\in \mcl R$ and $q > 0$. 
If $Z\in\mcl Z_r$ and $F_{Z,r}^{q,\BB r}(h+\fr_{Z,r})$ occurs, then every $z\in Z$ is $r,q$-good.
\end{lem}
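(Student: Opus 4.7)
The plan is to verify each of the three conditions in Definition~\ref{def-good-count} for an arbitrary $z \in Z$ by unpacking what conditions \ref{item-ball-set-good}, \ref{item-ball-set-compare}, and~\ref{item-ball-set-stable} of $F_{Z,r}^{q,\BB r}(\cdot)$ say when evaluated at $h + \fr_{Z,r}$, and then passing from $\fr_{Z,r}$ to $\fr_{z,r}$ via two elementary structural observations about the bump functions.

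The first observation is that $\fr_{Z,r} - \fr_{z,r} = \sum_{z' \in Z \setminus \{z\}} \fr_{z',r}$ is supported outside $B_{4r}(z)$. Indeed, each $\fr_{z',r}$ is supported in $\ol{\Vr_{z',r}} \subset \BB A_{r,3r}(z') \subset B_{4r}(z')$, and for distinct $z, z' \in Z$ the definition~\eqref{eqn-pt-set} of $\mcl Z_r$ forces $B_{4r}(z) \cap B_{4r}(z') = \emptyset$. Consequently $(h + \fr_{Z,r})|_{B_{4r}(z)} = (h + \fr_{z,r})|_{B_{4r}(z)}$, which gives two payoffs: by the measurability of $\Er_{z,r}$ with respect to $h|_{\ol{\BB A}_{r,4r}(z)}$ modulo additive constant, we get $\Er_{z,r}(h + \fr_{Z,r}) = \Er_{z,r}(h + \fr_{z,r})$, yielding condition~\eqref{item-good-count-good}; and by locality (Axiom~\ref{item-metric-local}), the internal metrics $D_{h + \fr_{Z,r}}(\cdot,\cdot; B_{4r}(z))$ and $D_{h + \fr_{z,r}}(\cdot,\cdot; B_{4r}(z))$ agree.

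The second observation is that the smooth function $\fr_{Z,r}$ vanishes identically on $\bdy B_r(z)$ and on $\bdy B_{\BB r}(0)$. For the term $\fr_{z,r}$ itself this follows because $\ol{\Vr_{z,r}} \subset \BB A_{r,3r}(z)$, which is disjoint from both circles (for $\bdy B_{\BB r}(0)$ we use $B_{4r}(z) \cap \bdy B_{\BB r}(0) = \emptyset$ from~\eqref{eqn-pt-set}); for the terms $\fr_{z',r}$ with $z' \neq z$ both $\bdy B_r(z) \subset B_{4r}(z)$ and $\bdy B_{\BB r}(0)$ are disjoint from the support of $\fr_{z',r} \subset B_{4r}(z')$, again by~\eqref{eqn-pt-set}. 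Averaging over these circles then gives $(h + \fr_{Z,r})_r(z) = h_r(z)$ and $(h + \fr_{Z,r})_{\BB r}(0) = h_{\BB r}(0)$, which converts condition~\ref{item-ball-set-compare} of $F_{Z,r}^{q,\BB r}(h+\fr_{Z,r})$ verbatim into condition~\eqref{item-good-count-compare} of $r,q$-goodness.

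Finally, condition~\ref{item-ball-set-stable} of $F_{Z,r}^{q,\BB r}(h + \fr_{Z,r})$ refers to the $D_{(h+\fr_{Z,r}) - \fr_{Z,r}}$-geodesic from $K_1$ to $K_2$, which is just the $D_h$-geodesic $P$ of Definition~\ref{def-good-count}; and it asserts the existence of a $(B_{4r}(z), \Vr_{z,r})$-excursion $(\tau_z', \tau_z, \sigma_z, \sigma_z')$ with $D_{h + \fr_{Z,r}}(P(\tau_z), P(\sigma_z); B_{4r}(z)) \geq \Ctime\, r^{\xi Q} e^{\xi (h+\fr_{Z,r})_r(z)}$. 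Substituting the two identifications from the preceding paragraphs turns the right-hand side into $\Ctime\, r^{\xi Q} e^{\xi h_r(z)}$ and the left-hand side into $D_{h + \fr_{z,r}}(P(\tau_z), P(\sigma_z); B_{4r}(z))$, which is exactly~\eqref{eqn-good-count-stable}. No genuine obstacle arises: the whole argument is a bookkeeping exercise showing that the ``$z$-th coordinate'' of the vector-valued event $F_{Z,r}^{q,\BB r}(h + \fr_{Z,r})$ depends only on $h + \fr_{z,r}$, so implies goodness of that single point $z$.
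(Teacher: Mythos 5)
Your proof is correct and follows essentially the same reasoning as the paper's: in both, one observes that $\fr_{Z,r}-\fr_{z,r}$ is supported away from $B_{4r}(z)$ (using disjointness of the balls from the definition of $\mcl Z_r$), that the support of $\fr_{Z,r}$ is disjoint from $\bdy B_r(z)$ and $\bdy B_{\BB r}(0)$ so the relevant circle averages are unchanged, and then unpacks the three conditions of $F_{Z,r}^{q,\BB r}(h+\fr_{Z,r})$ accordingly. Your version is somewhat more explicit in spelling out the support observations and the identification of the geodesic in condition~\ref{item-ball-set-stable}, but there is no substantive difference.
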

\begin{proof}
Let $z \in Z$ and assume that $F_{Z,r}^{q,\BB r}(h + \fr_{Z,r})$ occurs. 
By condition~\ref{item-ball-set-good} in the definition of $F_{Z,r}^{q,\BB r}(h+\fr_{Z,r})$, the event $\Er_{z,r}(h + \fr_{Z,r})$ occurs. 
Since $\Er_{z,r}(h + \fr_{z,r})$ depends only on $(h+\fr_{z,r})|_{\BB A_{r,4r}(z)}$ and $\fr_{Z,r}  -\fr_{z,r} \equiv 0$ outside of $B_{4r}(z)$, it follows that $\Er_{z,r}(h + \fr_{Z,r}) = \Er_{z,r}(h+\fr_{z,r})$. 
This gives condition~\eqref{item-good-count-good} in Definition~\ref{def-good-count}.

Condition~\eqref{item-good-count-compare} in Definition~\ref{def-good-count} follows from condition~\ref{item-ball-set-compare} in the definition of $F_{Z,r}^{q,\BB r}(h+\fr_{Z,r})$ and the fact that the support of $\fr_{Z,r}$ is disjoint from $\bdy B_{\BB r}(0)$ and from $\bdy B_r(z)$ for each $z\in Z$ (recall~\eqref{eqn-pt-set}). 
By condition~\ref{item-ball-set-stable} in the definition of $F_{Z,r}^{q,\BB r}(h+\fr_{Z,r})$, we get that $z$ satisfies condition~\eqref{item-good-count-stable} of Definition~\ref{def-good-count} but with $D_{h+\fr_{Z,r}  }$ instead of $D_{h+\fr_{z,r}}$ in~\eqref{eqn-good-count-stable}. Since the support of $\fr_{Z,r} - \fr_{z,r}$ is disjoint from  $B_{4r}(z)$, the internal distances of $D_{h+\fr_{Z,r}}$ and $D_{h+\fr_{z,r}}$ on $B_{4r}(z)$ are identical. Hence condition~\eqref{item-good-count-stable} holds. 
\end{proof}

In light of Lemma~\ref{lem-event-good}, we seek to upper-bound the number of $r,q$-good points $z\in \frac{r}{100} \BB Z^2$. 
When doing so, we can assume without loss of generality that $F_{Z_0, r}^{q,\BB r}(h + \fr_{Z_0 ,r})$ occurs for some $Z_0 \in\mcl Z_r$ with $\#Z_0 \leq k$ (otherwise, the proposition statement is vacuous). 
The main input in the proof of Proposition~\ref{prop-card} is the following lemma.

\newcommand{\Ccount}{C_6}

\begin{lem} \label{lem-old-geo-count}
There is a constant $\Ccount > 0$, depending only on the parameters and the laws of $D_h$ and $\wt D_h$, such that the following is true. 
Let $r\in \mcl  R$ and let $Z_0 , Z_1 \in \mcl Z_r$. 
Assume that the event $F_{Z_0 ,r}^{q,\BB r}(h+\fr_{Z_0 ,r})$ occurs, each $z \in Z_1$ is $r,q$-good, and each ball $B_{4r}(z)$ for $z \in Z_1$ is disjoint from $\bigcup_{z' \in Z_0}  B_{4r}(z')$ (equivalently, $Z_0 \cup Z_1 \in \mcl Z_r$). 
Then
\eqbn
\# Z_1 \leq  \Ccount  \# Z_0 .
\eqen
\end{lem}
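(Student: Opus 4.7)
The plan is to combine two opposing bounds on the quantity $\Cupper D_h(K_1,K_2) - \wt D_h(K_1,K_2)$: a lower bound proportional to $\# Z_0 + \# Z_1$ obtained by exploiting hypothesis~\ref{item-Ehyp-inc} at every point of $Z_0 \cup Z_1$, and an upper bound proportional to $\# Z_0$ obtained by combining condition~\ref{item-ball-set-bad} of $F_{Z_0,r}^{q,\BB r}(h+\fr_{Z_0,r})$ with Lemma~\ref{lem-new-dist-upper} and a $\wt D_h$-analog of Lemma~\ref{lem-new-dist-around}. Comparing the two bounds then yields $\# Z_1 \leq \Ccount \# Z_0$ (recall $\# Z_0 \geq 1$ since elements of $\mcl Z_r$ are non-empty).

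For the \emph{lower bound}, let $P$ be the (a.s.\ unique, by Lemma~\ref{lem-geo-unique}) $D_h$-geodesic from $K_1$ to $K_2$. For each $z \in Z_0 \cup Z_1$, I want to apply hypothesis~\ref{item-Ehyp-inc} of $\Er_{z,r}$ to the field $h + \fr_{z,r}$ and the $D_{(h+\fr_{z,r})-\fr_{z,r}} = D_h$-geodesic $P$. The event $\Er_{z,r}(h+\fr_{z,r})$ holds: for $z \in Z_1$ this is condition~\eqref{item-good-count-good} of Definition~\ref{def-good-count}, while for $z \in Z_0$ it follows from condition~\ref{item-ball-set-good} of $F_{Z_0,r}^{q,\BB r}(h+\fr_{Z_0,r})$ and the observation that $\fr_{Z_0,r}$ agrees with $\fr_{z,r}$ on $\ol{\BB A}_{r,4r}(z)$. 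Similarly, the required excursion with $D_{h+\fr_{z,r}}(P(\tau_z),P(\sigma_z);B_{4r}(z)) \geq \Ctime r^{\xi Q} e^{\xi h_r(z)}$ is provided by condition~\eqref{item-good-count-stable} of Definition~\ref{def-good-count} (for $z \in Z_1$) or by condition~\ref{item-ball-set-stable} of $F_{Z_0,r}^{q,\BB r}(h+\fr_{Z_0,r})$ together with $D_{h+\fr_{Z_0,r}}(\cdot,\cdot;B_{4r}(z)) = D_{h+\fr_{z,r}}(\cdot,\cdot;B_{4r}(z))$ (for $z \in Z_0$). Hypothesis~\ref{item-Ehyp-inc} then yields, for each such $z$, disjoint (since $Z_0 \cup Z_1 \in \mcl Z_r$) time intervals $[s_z,t_z]$ with $t_z - s_z \geq \Cinc q \BB r^{\xi Q} e^{\xi h_{\BB r}(0)}$ and $\wt D_h(P(s_z),P(t_z);B_{4r}(z)) \leq \Cmid(t_z-s_z)$. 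Replacing each segment $P|_{[s_z,t_z]}$ by a $\wt D_h$-shortcut and bounding the remaining portions of $P$ by $\wt D_h \leq \Cupper D_h$ gives the key inequality $\wt D_h(K_1,K_2) \leq \Cupper D_h(K_1,K_2) - (\Cupper-\Cmid)\Cinc q\BB r^{\xi Q} e^{\xi h_{\BB r}(0)}(\# Z_0 + \# Z_1)$.

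For the \emph{upper bound}, condition~\ref{item-ball-set-bad} of $F_{Z_0,r}^{q,\BB r}(h+\fr_{Z_0,r})$ gives $\wt D_{h+\fr_{Z_0,r}}(K_1,K_2) \geq \Cupper D_{h+\fr_{Z_0,r}}(K_1,K_2) - q\BB r^{\xi Q} e^{\xi h_{\BB r}(0)}$ (note $h_{\BB r}(0)$ is unchanged since $\fr_{Z_0,r}$ is supported away from $\bdy B_{\BB r}(0)$ by the definition of $\mcl Z_r$). Checking the hypotheses of $\ol F_{Z_0,r}^{q,\BB r}(h+\fr_{Z_0,r})$ from the conditions of the current lemma and applying Lemma~\ref{lem-new-dist-upper} with $h+\fr_{Z_0,r}$ in place of $h$ gives $D_{h+\fr_{Z_0,r}}(K_1,K_2) \geq D_h(K_1,K_2) + \Cnew q\BB r^{\xi Q} e^{\xi h_{\BB r}(0)} \# Z_0$. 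Using the $\wt D_h$-geodesic $\wt P$ from $K_1$ to $K_2$ as a test path for $\wt D_{h+\fr_{Z_0,r}}$, I will bound $\wt D_{h+\fr_{Z_0,r}}(K_1,K_2) \leq \wt D_h(K_1,K_2) + (e^{\xi \Cmax}-1) \sum_{z \in Z_0}(\wt T_z - \wt S_z)$, where $[\wt S_z, \wt T_z]$ is the enveloping interval during which $\wt P$ visits $\Vr_{z,r}$. The proof of Lemma~\ref{lem-new-dist-around} adapts to $\wt D_h$: the around-path from hypothesis~\ref{item-Ehyp-dist} lies in $\BB A_{3r,4r}(z)$ where $\fr_{z,r}$ vanishes, so its $D_h$-length is at most $\Caround r^{\xi Q} e^{\xi h_r(z)}$ and therefore its $\wt D_h$-length is at most $\Cupper \Caround r^{\xi Q} e^{\xi h_r(z)} \leq 2\Cupper \Caround q \BB r^{\xi Q} e^{\xi h_{\BB r}(0)}$, bounding $\wt T_z - \wt S_z$. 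Chaining these three inequalities gives $\Cupper D_h(K_1,K_2) - \wt D_h(K_1,K_2) \leq A q \BB r^{\xi Q} e^{\xi h_{\BB r}(0)}\# Z_0 + q \BB r^{\xi Q} e^{\xi h_{\BB r}(0)}$ for a constant $A$ depending on the parameters and $\Cupper$.

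Comparing the two bounds yields $(\Cupper-\Cmid)\Cinc(\# Z_0 + \# Z_1) \leq A \# Z_0 + 1$, which gives $\# Z_1 \leq \Ccount \# Z_0$ for $\Ccount$ depending only on the parameters and the laws of $D_h, \wt D_h$ (using $\# Z_0 \geq 1$ to absorb the additive constant). The main subtle point in the argument is the upper bound: producing a lower bound on $D_{h+\fr_{Z_0,r}}(K_1,K_2) - D_h(K_1,K_2)$ via the $\Ur_{z,r}$-shortcuts of Lemma~\ref{lem-new-dist-upper}, together with a matching upper bound on $\wt D_{h+\fr_{Z_0,r}}(K_1,K_2) - \wt D_h(K_1,K_2)$ controlling only the ``time in supports'' at the $\wt D_h$-geodesic level, which requires adapting Lemma~\ref{lem-new-dist-around} to $\wt D_h$ using bi-Lipschitz equivalence.
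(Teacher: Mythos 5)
Your proof is correct, but it takes a genuinely different route from the paper's. The paper's proof works entirely in the ``shifted'' metrics $D_{h+\fr_{Z_0,r}}$ and $\wt D_{h+\fr_{Z_0,r}}$: hypothesis~\ref{item-Ehyp-inc} is applied only at points of $Z_1$ (where the shortcuts $P|_{[s_z,t_z]}$ lie outside the support of $\fr_{Z_0,r}$, so the estimates transfer verbatim to the shifted metrics via Weyl scaling), the single purpose-built Lemma~\ref{lem-old-geo-length} gives $\op{len}(P;D_{h+\fr_{Z_0,r}}) \leq D_h(K_1,K_2) + \Cspent q\BB r^{\xi Q}e^{\xi h_{\BB r}(0)}\#Z_0$, and the conclusion is read off directly from condition~\ref{item-ball-set-bad} of $F_{Z_0,r}^{q,\BB r}(h+\fr_{Z_0,r})$. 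You instead stay in the original $(D_h,\wt D_h)$ picture and convert the constraint from condition~\ref{item-ball-set-bad} back via a three-step chain: condition~\ref{item-ball-set-bad}, then Lemma~\ref{lem-new-dist-upper} (applied to $h+\fr_{Z_0,r}$) to lower-bound the gap in $D$, then a bi-Lipschitz adaptation of Lemma~\ref{lem-new-dist-around} to upper-bound the gap in $\wt D$. Each step is valid as stated: in particular your derivation of hypothesis~\ref{item-Ehyp-inc} at points $z\in Z_0$ is correct (via condition~\ref{item-ball-set-stable} of $F_{Z_0,r}^{q,\BB r}(h+\fr_{Z_0,r})$ and the local identification $D_{h+\fr_{Z_0,r}}(\cdot,\cdot;B_{4r}(z))=D_{h+\fr_{z,r}}(\cdot,\cdot;B_{4r}(z))$), but it is redundant --- a lower bound proportional to $\#Z_1$ alone already yields $\#Z_1\leq\Ccount\#Z_0$. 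What the paper's route buys is economy: Lemma~\ref{lem-old-geo-length} plays the role of both your Lemma~\ref{lem-new-dist-upper} step and your ad hoc $\wt D_h$-adaptation of Lemma~\ref{lem-new-dist-around}, and the $Z_0$-shortcuts are never needed. Your route is longer but does establish the same estimate, at the cost of invoking a lemma (Lemma~\ref{lem-new-dist-upper}) that the paper introduced for the unrelated Proposition~\ref{prop-choices}.
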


We now explain the idea of the proof of Lemma~\ref{lem-old-geo-count}. 
By condition~\ref{item-ball-set-bad} in the definition of $F_{Z_0 ,r}^{q,\BB r}(h + \fr_{Z_0,r})$, on this event,
\eqb  \label{eqn-use-ball-set-bad}
\wt D_{h + \fr_{Z_0,r}}(K_1,K_2) \geq \Cupper D_{h + \fr_{Z_0,r}}(K_1,K_2) -   q \BB r^{\xi Q} e^{\xi h_{\BB r}(0)}  . 
\eqe
We will show that if $\# Z_1$ is too much larger than $\# Z_0$, then~\eqref{eqn-use-ball-set-bad} cannot hold. The reason for this is as follows.
Let $P$ be the $D_h$-geodesic from $K_1$ to $K_2$.
By condition~\eqref{item-good-count-stable} in Definition~\ref{def-good-count}, each $z\in Z_1$ satisfies the condition of hypothesis~\ref{item-Ehyp-inc} for the event $\Er_{z,r}(h + \fr_{z,r})$. Hypothesis~\ref{item-Ehyp-inc} therefore gives us a pair of times $s_z  , t_z \in P^{-1}( B_{4r}(z) )$ such that $t_z - s_z \geq \Cinc q \BB r^{\xi Q} e^{\xi h_{\BB r}(0)}$ and 
\eqb  \label{eqn-old-geo-count-shortcut}
\wt D_h(P(s_z) , P(t_z) ; B_{4r}(z)) \leq \Cmid (t_z - s_z) = \Cmid D_h(P(s_z) , P(t_z)) . 
\eqe 
Since $\fr_{Z_0,r}$ vanishes on $B_{4r}(z)$ for each $z\in Z_1$ and $\fr_{Z_0,r}$ is non-negative, the relation~\eqref{eqn-old-geo-count-shortcut} implies that also 
\eqbn
\wt D_{h+\fr_{Z_0,r}}(P(s_z) , P(t_z) ; B_{4r}(z)) \leq  \Cmid D_{h+\fr_{Z_0,r}}(P(s_z) , P(t_z)) .  
\eqen
In other words, we have at least $\# Z_1$ ``shortcuts" along $P$ where the $\wt D_{h+\fr_{Z_0,r}}$-distance is at most $\Cmid$ times the $D_{h+\fr_{Z_0,r}}$-distance. 
By following $P$ and taking these shortcuts, we obtain a path from $K_1$ to $K_2$ whose $\wt D_{h+\fr_{Z_0,r}}$-length is at most $\Cupper$ times the $D_{h+\fr_{Z_0,r}}$-length of $P$ minus a positive constant times $q \BB r^{\xi Q} e^{\xi h_{\BB r}(0)} \# Z_1$ (see~\eqref{eqn-subtract-inc}). We then use Lemma~\ref{lem-old-geo-length} just below to upper-bound the $D_{h+\fr_{Z_0,r}}$-length of $P$ in terms of $\# Z_0$. This leads to an upper bound for $\wt D_{h + \fr_{Z_0,r}}(K_1,K_2)$ which is inconsistent with~\eqref{eqn-use-ball-set-bad} unless $\# Z_1$ is bounded above by a constant times $\# Z_0$. 

We need the following lemma for the proof of Lemma~\ref{lem-old-geo-count}. 

\begin{lem} \label{lem-old-geo-length}
Let $\Cspent > 0$ be as in Lemma~\ref{lem-new-dist-spent}.  
Let $r\in \mcl  R$, $Z \in \mcl Z_r$, and $q > 0$ and assume that $F_{Z,r}^{q,\BB r}(h + \fr_{Z,r})$ occurs.  
Then the $D_h$-geodesic $P$ from $K_1$ to $K_2$ satisfies
\eqb \label{eqn-old-geo-length}
\op{len}\left(   P  ;   D_{h+\fr_{Z,r}} \right) \leq D_h(K_1,K_2 )   +    \Cspent q \BB r^{\xi Q} e^{\xi h_{\BB r}(0)}   \# Z .
\eqe 
\end{lem}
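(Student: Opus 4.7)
The plan is to exploit the fact that $\fr_{Z,r}$ has disjoint, bounded, localized support to show that $\op{len}(P;D_{h+\fr_{Z,r}})$ differs from $\op{len}(P;D_h) = D_h(K_1,K_2)$ only through a bounded correction coming from the portion of $P$ inside $\bigcup_{z\in Z}\Vr_{z,r}$. First, parametrize $P$ by its $D_h$-length. By Weyl scaling (Axiom~\ref{item-metric-f}) applied to $f = \fr_{Z,r}$,
\eqbn
\op{len}(P; D_{h+\fr_{Z,r}}) = \int_0^{D_h(K_1,K_2)} e^{\xi \fr_{Z,r}(P(t))}\,dt .
\eqen
Since $\fr_{Z,r}$ takes values in $[0,\Cmax]$ and is supported on the disjoint union $\bigcup_{z\in Z}\Vr_{z,r}$ (disjointness follows from $\ol\Vr_{z,r}\subset B_{4r}(z)$ and the definition~\eqref{eqn-pt-set} of $\mcl Z_r$), this gives
\eqbn
\op{len}(P; D_{h+\fr_{Z,r}}) - D_h(K_1,K_2) \leq (e^{\xi\Cmax}-1)\sum_{z\in Z} \op{len}(P\cap \Vr_{z,r}; D_h) .
\eqen

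The second step is to bound $\op{len}(P\cap \Vr_{z,r};D_h)$ for each $z\in Z$ by imitating the argument of Lemma~\ref{lem-new-dist-around}. Let $S_z$ (resp.\ $T_z$) be the first time $P$ enters $\ol\Vr_{z,r}$ (resp.\ the last time $P$ exits $\ol\Vr_{z,r}$), so that $\op{len}(P\cap\Vr_{z,r};D_h) \leq T_z - S_z$. Since $\ol\Vr_{z,r}\subset \BB A_{r,3r}(z)$ and $B_{4r}(z)\cap (K_1\cup K_2) = \emptyset$, the geodesic $P$ must cross $\BB A_{3r,4r}(z)$ before time $S_z$ and after time $T_z$. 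Now, condition~\ref{item-ball-set-good} in the definition of $F_{Z,r}^{q,\BB r}(h+\fr_{Z,r})$ combined with the identification $\Er_{z,r}(h+\fr_{Z,r})=\Er_{z,r}(h+\fr_{z,r})$ (see the proof of Lemma~\ref{lem-event-good}) tells us that $\Er_{z,r}(h+\fr_{z,r})$ holds. Hypothesis~\ref{item-Ehyp-dist} for $\Er_{z,r}(h+\fr_{z,r})$ then gives a loop in $\BB A_{3r,4r}(z)$ disconnecting the inner and outer boundaries with $D_{h+\fr_{z,r}}$-length at most $\Caround r^{\xi Q} e^{\xi (h+\fr_{z,r})_r(z)}$; since $\fr_{z,r}$ vanishes on $\ol{\BB A}_{3r,4r}(z)$ and on $\bdy B_r(z)$ (as $\ol\Vr_{z,r}\subset \BB A_{r,3r}(z)$), this loop has $D_h$-length at most $\Caround r^{\xi Q} e^{\xi h_r(z)}$. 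Since $P$ is a $D_h$-geodesic crossing such a loop both before $S_z$ and after $T_z$, we obtain $T_z - S_z \leq \Caround r^{\xi Q} e^{\xi h_r(z)}$.

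To convert this into the form stated in the lemma, I invoke condition~\ref{item-ball-set-compare} in the definition of $F_{Z,r}^{q,\BB r}(h+\fr_{Z,r})$. This condition, read with $h+\fr_{Z,r}$ in place of $h$, becomes $r^{\xi Q} e^{\xi h_r(z)} \leq 2q \BB r^{\xi Q} e^{\xi h_{\BB r}(0)}$ after observing that $(h+\fr_{Z,r})_r(z) = h_r(z)$ and $(h+\fr_{Z,r})_{\BB r}(0) = h_{\BB r}(0)$, the first because $\fr_{Z,r}$ vanishes on $\bdy B_r(z)$ and the second because $Z\in\mcl Z_r$ forbids $B_{4r}(z)\cap \bdy B_{\BB r}(0)\ne\emptyset$ (see Footnote~\ref{footnote-circle}). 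Combining,
\eqbn
\op{len}(P; D_{h+\fr_{Z,r}}) \leq D_h(K_1,K_2) + 2(e^{\xi\Cmax}-1)\Caround \, q\BB r^{\xi Q} e^{\xi h_{\BB r}(0)}\#Z ,
\eqen
and since $2(e^{\xi\Cmax}-1)\Caround < 2e^{\xi\Cmax}\Caround = \Cspent$ (the constant produced in Lemma~\ref{lem-new-dist-spent}), this yields~\eqref{eqn-old-geo-length}. There is no substantive obstacle: the proof is an elementary combination of Weyl scaling and the annular distance hypothesis for $\Er_{z,r}$, and the only bookkeeping point is verifying that the relevant circle averages of $h+\fr_{Z,r}$ reduce to those of $h$.
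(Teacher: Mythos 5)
The proposal is correct and takes essentially the same approach as the paper. The paper's proof simply invokes Lemma~\ref{lem-new-dist-spent} with $h + \fr_{Z,r}$ in place of $h$ after splitting $\op{len}(P;D_{h+\fr_{Z,r}})$ into contributions inside and outside $\bigcup_{z\in Z}\Vr_{z,r}$, whereas you unroll that lemma inline (re-deriving the $T_z - S_z$ bound via the argument of Lemma~\ref{lem-new-dist-around}) and work with the integral form of Weyl scaling, which incidentally yields the marginally sharper constant $2(e^{\xi\Cmax}-1)\Caround$ in place of $\Cspent = 2e^{\xi\Cmax}\Caround$; the underlying ingredients (Weyl scaling, hypothesis~\ref{item-Ehyp-dist}, condition~\ref{item-ball-set-compare}, and the fact that $\fr_{Z,r}$ vanishes on $\bdy B_r(z)$ and $\bdy B_{\BB r}(0)$) are identical.
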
  
\begin{proof} 
The function $\fr_{Z,r}$ is supported on $\bigcup_{z\in Z} \Vr_{z,r}$. By Weyl scaling (Axiom~\ref{item-metric-f}), 
\eqb \label{eqn-old-geo-btwn}
\op{len}\left(  P \setminus \bigcup_{z\in Z} \Vr_{z,r} ;     D_{h+\fr_{Z,r}}   \right) 
= \op{len} \left( P \setminus \bigcup_{z\in Z} \Vr_{z,r} ;   D_h  \right) 
\leq D_h(K_1,K_2).
\eqe
By Lemma~\ref{lem-new-dist-spent}, applied with $h + \fr_{Z,r}$ in place of $h$, 
\eqb \label{eqn-old-geo-in}
\op{len} \left( P\cap \Vr_{z,r} ;     D_{h+\fr_{Z,r}}  \right) \leq \Cspent q  \BB r^{\xi Q} e^{\xi h_{\BB r}(0)} , \quad \forall z \in Z .   
\eqe
Combining~\eqref{eqn-old-geo-btwn} and~\eqref{eqn-old-geo-in} yields~\eqref{eqn-old-geo-length}.
\end{proof}

\begin{proof}[Proof of Lemma~\ref{lem-old-geo-count}]
Let $P$ be the $D_h$-geodesic from $K_1$ to $K_2$.
By conditions~\eqref{item-good-count-good} and~\eqref{item-good-count-stable} in Definition~\ref{def-good-count} together with hypothesis~\ref{item-Ehyp-inc} for the event $\Er_{z,r}(h + \fr_{z,r})$, for each $z\in Z_1$, there are times $0 < s_z < t_z < D_h(K_1,K_2)$ such that $P([s_z,t_z]) \subset B_{4r}(z)$,
\eqb \label{eqn-use-Ehyp-inc0}
t_z - s_z  \geq \Cinc r^{\xi Q} e^{\xi h_r(z)} \geq \Cinc q \BB r^{\xi Q} e^{\xi h_{\BB r}(0)} ,\quad \text{and} \quad
\wt D_h\left(P(s_z) , P(t_z) ; B_{4r}(z) \right) \leq \Cmid (t_z - s_z) .
\eqe
Note that to get $r^{\xi Q} e^{\xi h_r(z)} \geq  q \BB r^{\xi Q} e^{\xi h_{\BB r}(0)}$, we used condition~\eqref{item-good-count-compare} from Definition~\ref{def-good-count} and to get that $P([s_z,t_z]) \subset B_{4r}(z)$, we used Definition~\ref{def-excursion}.  

If $z \in Z_1$, then by hypothesis $B_{4r}(z)$ is disjoint from $\bigcup_{z' \in Z_0}  B_{4r}(z')$. Hence $B_{4r}(z)$ and $P([s_z,t_z])$ are disjoint from the support of $\fr_{Z_0,r}$. We can therefore deduce from~\eqref{eqn-use-Ehyp-inc0} and Weyl scaling (Axiom~\ref{item-metric-f}) that for each $z\in Z_1$, 
\allb \label{eqn-use-Ehyp-inc}
&\op{len}\left( P|_{[s_z,t_z]} ; D_{h+\fr_{Z_0 ,r}}    \right)  =t_z - s_z  \geq \Cinc q \BB r^{\xi Q} e^{\xi h_{\BB r}(0)} \quad \text{and} \quad \notag\\
&\wt D_{h + \fr_{Z_0 ,r}}\left(P(s_z) , P(t_z) ; B_{4r}(z) \right) \leq   \Cmid (t_z - s_z) \leq \Cmid  D_{h+\fr_{Z_0 ,r}} \left( P(s_z) , P(t_z) \right) .
\alle

Let $N = \# Z_1$ and let $z_1,\dots,z_N$ be the elements of $Z_1$, ordered so that 
\eqbn
s_{z_1} < t_{z_1} < s_{z_2} < t_{z_2} < \dots < s_{z_N} < t_{z_N} .
\eqen 
Such an ordering is possible since $P([s_z,t_z]) \subset B_{4r}(z)$, so these path increments are disjoint. 
For notational simplicity, we also define $t_{z_0} = 0$ and $s_{z_{N+1}} = D_h(K_1,K_2)$, so that $P(t_{z_0}) \in K_1$ and $P(t_{z_{N+1}}) \in K_2$. 

By the bi-Lipschitz equivalence of $D_h$ and $\wt D_h$~\eqref{eqn-bilip} and Weyl scaling,  
\eqb \label{eqn-good-count-between}
\wt D_{h+\fr_{Z_0,r}}(P(t_{z_n}) , P(s_{z_{n+1}})) \leq \Cupper D_{h+\fr_{Z_0,r}}(P(t_{z_n}) , P(s_{z_{n+1}}))  ,\quad\forall n \in [0,N]_{\BB Z}.
\eqe
We now have the following estimate:
\allb \label{eqn-subtract-inc}
&\wt D_{h + \fr_{Z_0,r}}(K_1,K_2) \notag\\
&\qquad \leq \sum_{n=1}^N \wt D_{h + \fr_{Z_0,r}}(P(s_{z_n}) , P(t_{z_n})) + \sum_{n=0}^N \wt D_{h + \fr_{Z_0,r}}(P(t_{z_n}) , P(s_{z_{n+1}})) \notag\\
&\qquad\qquad\qquad \text{(triangle inequality)} \notag \\
&\qquad \leq \Cmid \sum_{n=1}^N  D_{h + \fr_{Z_0,r}}(P(s_{z_n}) , P(t_{z_n})) + \Cupper \sum_{n=0}^N  D_{h + \fr_{Z_0,r}}(P(t_{z_n}) , P(s_{z_{n+1}})) \notag\\
&\qquad\qquad\qquad \text{(by~\eqref{eqn-use-Ehyp-inc} and~\eqref{eqn-good-count-between})} \notag  \\
&\qquad =   \Cupper \left[ \sum_{n=1}^N  D_{h + \fr_{Z_0,r}}(P(s_{z_n}) , P(t_{z_n}))  +     \sum_{n=0}^N  D_{h + \fr_{Z_0,r}}(P(t_{z_n}) , P(s_{z_{n+1}})) \right] \notag\\
&\qquad\qquad\qquad  -   (\Cupper - \Cmid) \sum_{n=1}^N  D_{h + \fr_{Z_0,r}}(P(s_{z_n}) , P(t_{z_n}))   \notag  \\
&\qquad\leq   \Cupper  \op{len}\left( P ;  D_{h+\fr_{Z_0,r}}  \right)    -   (\Cupper - \Cmid)  \Cinc q \BB r^{\xi Q} e^{\xi h_{\BB r}(0)} \# Z_1 \quad \text{(by~\eqref{eqn-use-Ehyp-inc})}  \notag  \\
&\qquad\leq   \Cupper  D_h(K_1,K_2) +   \Cupper \Cspent q \BB r^{\xi Q} e^{\xi h_{\BB r}(0)}  \# Z_0   -   (\Cupper - \Cmid)  \Cinc q \BB r^{\xi Q} e^{\xi h_{\BB r}(0)} \# Z_1 \notag\\
&\qquad\qquad\qquad \text{(by Lemma~\ref{lem-old-geo-length})}  \notag  \\
&\qquad\leq   \Cupper  D_{h + \fr_{Z_0,r}}(K_1,K_2) +   \Cupper \Cspent q \BB r^{\xi Q} e^{\xi h_{\BB r}(0)}  \# Z_0   -   (\Cupper - \Cmid)  \Cinc q \BB r^{\xi Q} e^{\xi h_{\BB r}(0)} \# Z_1 \notag\\
&\qquad\qquad\qquad \text{(since $\fr_{Z_0,r} \geq 0$)} .    
\alle

By combining~\eqref{eqn-use-ball-set-bad} and~\eqref{eqn-subtract-inc}, we obtain
\alb
&(\Cupper - \Cmid)  \Cinc q \# Z_1  - \Cupper \Cspent q  \BB r^{\xi Q} e^{\xi h_{\BB r}(0)} \# Z_0  \leq  q \BB r^{\xi Q} e^{\xi h_{\BB r}(0)} \leq q \BB r^{\xi Q} e^{\xi h_{\BB r}(0)} \# Z_0  \notag \\
&\qquad \text{which implies} \quad \# Z_1 \leq \Ccount \# Z \quad \text{where} \quad \Ccount :=  \frac{ 1 +  \Cupper \Cspent }{ (\Cupper - \Cmid) \Cinc} .
\ale
\end{proof}

\begin{proof}[Proof of Proposition~\ref{prop-card}]
We can assume that there exists some $Z_0 \in \mcl Z_r$ with $\# Z_0 \leq k$ such that $F_{Z_0,r}^{q,\BB r}(h + \fr_{Z_0 ,r})$ occurs (otherwise,~\eqref{eqn-card} holds vacuously).  
Let $Z_1 \in \mcl Z_r$ be a set such that each $z \in Z_1$ is $r,q$-good (Definition~\ref{def-good-count}) and each $B_{4r}(z)$ for $z\in Z_1$ is disjoint from $\bigcup_{z'\in Z_0} B_{4r}(z')$. 
We assume that $\# Z_1$ is maximal among all subsets of $\mcl Z_r$ with this property. 
By Lemma~\ref{lem-old-geo-count}, we have $\# Z_1 \leq \Ccount k$. 

Now let $Z\in \mcl Z_r$ such that $F_{Z,r}^{q,\BB r}(h + \fr_{Z,r})$ occurs. 
We claim that for each $z\in Z$, the ball $B_{4r}(z)$ intersects $B_{4r}(z')$ for some $z' \in Z_0 \cup Z_1$. 
Indeed, by Lemma~\ref{lem-event-good}, each $z \in Z$ is $r,q$-good. 
So, if there is a $z \in Z$ such that $B_{4r}(z)$ is disjoint from $B_{4r}(z')$ for each $z' \in Z_0 \cup Z_1$, then $Z_1 \cup \{z\}$ satisfies the conditions in the definition of $Z_1$. This contradicts the maximality of $\# Z_1$. 

Each $z\in Z$ belongs to $\frac{r}{100} \BB Z^2$. Hence, for each $z' \in Z_0\cup Z_1$, the number of $z\in Z$ for which $B_{4r}(z) \cap B_{4r}(z') \not=\emptyset$ is at most some universal constant $R$. By the preceding paragraph, any $Z\in \mcl Z_r$ such that $F_{Z,r}^{q,\BB r}(h + \fr_{Z,r})$ occurs can be obtained by the following procedure. For each $z' \in Z_0 \cup Z_1$, we either choose a point $z\in \frac{r}{100} \BB Z^2$ such that $B_{4r}(z) \cap B_{4r}(z') \not=\emptyset$; or we choose no point (so we have at most $R+1$ choices for each $z' \in Z_0\cup Z_1$). Then, we take $Z$ to be the set of points that we have chosen. Therefore, 
\allb
\#\left\{ Z\in \mcl Z_r \: : \: \text{$\# Z \leq k$ and $F_{Z,r}^{q,\BB r}(h + \fr_{Z,r})$ occurs} \right\} 
&\leq (R+1)^{\# Z_0 + \# Z_1} \notag\\
&\leq (R+1)^{(\Ccount + 1) k} .
\alle  
This gives~\eqref{eqn-card} with $\Ccard = (R+1)^{\Ccount+1}$. 
\end{proof}

\subsection{Proof of uniqueness assuming Proposition~\ref{prop-objects-exist}}
\label{sec-counting-conclusion}

In this subsection, we will prove Theorem~\ref{thm-weak-uniqueness}, which asserts the uniqueness of weak LQG metrics, assuming Proposition~\ref{prop-objects-exist}. 
As explained in Section~\ref{sec-bilip}, it suffices to show that the optimal bi-Lipschitz constants satisfy $\Clower = \Cupper$. 
To accomplish this, we will assume by way of contradiction that $\Clower < \Cupper$. We also assume the conclusion of Proposition~\ref{prop-objects-exist} (whose proof has been postponed). 
Throughout this subsection, we fix $\BB p\in (0,1)$ (which will be chosen in Lemma~\ref{lem-main-extra} below) and we let $\Cmid' \in (\Clower , \Cupper)$ and $\mcl R\subset  (0,1)$ be as in Proposition~\ref{prop-objects-exist} for this choice of $\BB p$. We also assume that the parameters of Section~\ref{sec-counting-setup} have been chosen as in Proposition~\ref{prop-objects-exist} for our given choice of $\BB p$.  

We first check that the auxiliary conditions in the definition of the event $\mcl G_{\BB r}^\ep(K_1,K_2)$ of Section~\ref{sec-counting-main} occur with high probability when $\ep$ is small, which together with Proposition~\ref{prop-counting} leads to an upper bound for the probability of the main condition
\eqbn
\wt D_h(K_1,K_2) \geq \Cupper D_h(K_1,K_2) - \frac12 \ep^{2\xi(Q+3)} \BB r^{\xi Q} e^{\xi h_{\BB r}(0)} .
\eqen
We note that the auxiliary conditions do not depend on $K_1$ and $K_2$.

\begin{lem} \label{lem-main-extra}
There is a universal choice of the parameter $\BB p \in (0,1)$ such that the following is true. 
Let $\wt\Kopt > 0$ and let $\BB r > 0$ such that $\BB P[\wt G_{\BB r}(\wt\Kopt , \Cmid')] \geq \wt\Kopt$. 
It holds with probability tending to 1 as $\ep\rta 0$ (at a rate depending only on $\wt\Kopt$ and the laws of $D_h$ and $\wt D_h$, not on $\BB r$) that conditions~\ref{item-main-exponent} and~\ref{item-main-cover} in the definition of $\mcl G_{\BB r}^\ep$ occur, i.e.,
\begin{enumerate} 
\addtocounter{enumi}{1}
\item \label{item-main-exponent'} For each $z\in B_{3\BB r}(0)$ and each $r\in [\ep^2 \BB r , \ep \BB r] \cap \mcl R$, we have 
\eqbn
r^{\xi Q} e^{\xi h_r(z)} \in \left[\ep^{2\xi(Q+3)} \BB r^{\xi Q} e^{\xi h_{\BB r}(0)} , \ep^{ \xi(Q-3)} \BB r^{\xi Q} e^{\xi h_{\BB r}(0)} \right] . 
\eqen
\item For each $z \in B_{3\BB r}(0) $, there exist $r \in \mcl R \cap [\ep^2 \BB r ,\ep \BB r]$ and $w \in \left( \frac{r}{100} \BB Z^2 \right) \cap B_{r/25}(z)$ such that $\Er_{w,r}$ occurs. \label{item-main-cover'}
\end{enumerate}
\end{lem}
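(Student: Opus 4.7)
The plan is to treat conditions~\ref{item-main-exponent'} and~\ref{item-main-cover'} separately, combining the results via a union bound; throughout, all estimates must be uniform in $\BB r$, which we achieve by exploiting the translation invariance of $h$ modulo additive constant and the scale invariance $h(\BB r\,\cdot\,) - h_{\BB r}(0) \overset{d}{=} h$ of the whole-plane GFF modulo additive constant.

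For condition~\ref{item-main-exponent'}, taking logarithms reduces the prescribed range for $r^{\xi Q} e^{\xi h_r(z)} / (\BB r^{\xi Q} e^{\xi h_{\BB r}(0)})$ to a window of width $(Q+9)\log\ep^{-1}$ on the quantity $h_r(z) - h_{\BB r}(0) + Q\log(r/\BB r)$. After rescaling by $\BB r$ and centering at the origin, this becomes an estimate on the whole-plane GFF with $h_1(0) = 0$, and standard circle-average estimates -- Gaussian concentration combined with the thickness bound $\limsup_{r\rta 0} h_r(z)/\log r^{-1} \leq 2$ uniformly over $z$ in bounded sets -- show that the window is inhabited with polynomially high probability for each fixed pair $(z,r)$. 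A union bound over an $\ep^2\BB r$-spaced grid of $O(\ep^{-4})$ pairs $(z,r)$, together with H\"older continuity of $(z,r)\mapsto h_r(z)$ to pass from grid pairs to arbitrary ones, yields the desired uniform statement. The count $O(\ep^{-4})$ is independent of $\BB r$ because the grid spacing scales like $\ep^2\BB r$.

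For condition~\ref{item-main-cover'}, fix $z \in B_{3\BB r}(0)$ and, for each $r \in \mcl R$, let $w(z,r)$ be the lattice point in $(r/100)\BB Z^2$ nearest to $z$, so that $|w(z,r)-z| \leq \sqrt{2}\,r/200$. The event $\Er_{w(z,r),r}$ is a.s.\ determined by $h|_{\ol{\BB A}_{r,4r}(w(z,r))}$ modulo additive constant, and this annulus is contained in $\ol{\BB A}_{0.99 r,4.01 r}(z)$. Translating by $-z$ (using Axiom~\reftranslate) and rescaling by the factor $\tilde r := 5r$ places us in the framework of Lemma~\ref{lem-annulus-offcenter} with $s_1 \approx 0.198$, $s_2 \approx 0.808$, and room for a small $s_0 > 0$ satisfying $s_1 - s_0 > 1/8$; this last inequality is compatible with the spacing constraint $r_{k+1}/r_k \leq 1/8$ built into $\mcl R$. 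By the second part of Proposition~\ref{prop-objects-exist}, for $\ep$ small (depending on $\wt\Kopt$) the set $\mcl R\cap[\ep^2\BB r,\ep\BB r]$ has at least $K := \lceil \tfrac{5}{8}\log_8\ep^{-1}\rceil$ elements, and each corresponding event has probability at least $\BB p$. Applying assertion~\ref{item-annulus-offcenter-pos} of Lemma~\ref{lem-annulus-offcenter} with universal $\BB p$ chosen close enough to $1$ makes the exponential decay rate $a$ arbitrarily large, so that the probability that $\Er_{w(z,r),r}$ fails for every $r$ in this range is at most $c e^{-aK} = O_\ep(\ep^N)$ for any prescribed $N$.

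To conclude, cover $B_{3\BB r}(0)$ by $O(\ep^{-4})$ balls of radius $\ep^2\BB r/100$ centered at grid points $z^*$. For each $z^*$ the argument above gives failure probability $\leq \ep^N$ with $N$ chosen $> 10$, say. Moreover, if $(r,w(z^*,r))$ witnesses condition~\ref{item-main-cover'} at $z^*$, then for any $z \in B_{\ep^2\BB r/100}(z^*)$ one has $|z - w(z^*,r)| \leq \ep^2\BB r/100 + \sqrt{2}\,r/200 \leq r/25$, so the same $(r,w(z^*,r))$ witnesses the condition at $z$. A union bound over grid points yields the desired $\BB r$-uniform high-probability statement. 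I expect the main obstacle to be the geometric step of rescaling the wide annulus $\BB A_{r,4r}(w)$ so that Lemma~\ref{lem-annulus-offcenter} applies: the outer-to-inner radius ratio $4$ forces a rescaling by roughly the factor $5$, after which one must verify that the resulting $s_1$ still leaves $s_1 - s_0 > 1/8$ for a positive $s_0$ that accommodates the off-center displacement of $w(z,r)$ from $z$.
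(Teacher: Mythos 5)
Your overall strategy matches the paper's: treat conditions~\ref{item-main-exponent'} and~\ref{item-main-cover'} separately, apply Lemma~\ref{lem-annulus-offcenter} (plus the radii count from Proposition~\ref{prop-objects-exist}) for condition~\ref{item-main-cover'}, and finish with a union bound over a grid and a transfer from grid points to arbitrary $z$. Your explicit bookkeeping of the rescaling needed to fit the annulus $\ol{\BB A}_{r,4r}(w)$ into the $(s_0,s_1,s_2)$ format of Lemma~\ref{lem-annulus-offcenter} is a useful detail that the paper glosses over, and the transfer estimate $|z - w(z^*,r)| \leq \ep^2\BB r/100 + \sqrt{2}\,r/200 \leq r/25$ is correct.

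However, your treatment of condition~\ref{item-main-exponent'} has a real gap. You take a grid of $O(\ep^{-4})$ pairs $(z,r)$ (with $z$-spacing $\sim\ep^2\BB r$ to accommodate the smallest radius $r\sim\ep^2\BB r$) and union-bound the per-pair failure probabilities. But for $r = \ep^2\BB r$ the quantity $h_r(z)-h_{\BB r}(0)$ has variance $\approx 2\log\ep^{-1}$, while the upper edge of the admissible window is only $(Q+3)\log\ep^{-1}$ above the mean, so Gaussian concentration gives a per-pair failure probability of order $\ep^{(Q+3)^2/4}$. When $Q$ is small (the supercritical regime allows $Q$ arbitrarily close to $0$), this exponent approaches $9/4 < 4$, and the union bound over $O(\ep^{-4})$ grid points yields $O(\ep^{-7/4})$, which does not tend to zero. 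The missing ingredient is a \emph{simultaneous} estimate over $z\in B_3(0)$ at a single scale $r$, as in the circle-average bound the paper cites (which is a nontrivial chaining statement, not a naive union bound over a grid); with such an estimate one only needs to union over the $O(\log\ep^{-1})$ radii in $\mcl R\cap[\ep^2\BB r,\ep\BB r]$, each contributing a polynomially small error.

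One minor misattribution for condition~\ref{item-main-cover'}: to get the decay rate $a$ in $ce^{-aK}$ arbitrarily large by pushing $\BB p$ close to $1$, one should invoke assertion~\ref{item-annulus-offcenter} of Lemma~\ref{lem-annulus-offcenter} (which fixes $a,b$ and produces the threshold $p$), not assertion~\ref{item-annulus-offcenter-pos} (which goes in the opposite direction, fixing $p$ and producing some $a,b$ without control over their sizes). This does not change the substance of the argument.
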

\begin{proof}
By a standard estimate for the circle average process of the GFF (see, e.g.,~\cite[Proposition 2.4]{lqg-tbm2}), it holds with polynomially high probability as $r\rta 0$ that $|h_r(z) | \leq 3\log r^{-1}$ for all $z \in B_3(0)$. By the scale invariance of the law of $h$, modulo additive constant, we get that with polynomially high probability as $r\rta 0$ (at a universal rate) we have $|h_r(z)  - h_{\BB r}(0)| \leq 3\log(\BB r / r)$ for all $z\in B_{3\BB r}(0)$. 
By a union bound over logarithmically many values of $r\in \mcl R\cap [\ep^2 \BB r, \ep\BB r]$, we get that with probability tending to 1 as $\ep\rta 0$, 
\allb \label{eqn-circle-avg3}
&|h_r(z) - h_{\BB r}(0)| \leq 3\log(\BB r/r) \in [3\log \ep^{-2} , 3\log \ep^{-1} ] , \notag\\
&\qquad\qquad \forall r \in \mcl R\cap [\ep^2 \BB r , \ep\BB r] ,\quad \forall z \in B_{3\BB r}(0) .
\alle
The bound~\eqref{eqn-circle-avg3} immediately implies condition~\ref{item-main-exponent'} in the lemma statement. 

We now turn our attention to condition~\ref{item-main-cover'}.   
By the properties of the events $\Er_{z,r}$, we know that $\Er_{z,r} $ is a.s.\ determined by $ h |_{\ol{\BB A}_{r,4r}(z)} $, viewed modulo additive constant, and $\BB P[\Er_{z,r}] \geq \BB p$ for each $z\in\BB C$ and $r\in\mcl R$. Furthermore, by Proposition~\ref{prop-objects-exist} our hypothesis that $\BB P[\wt G_{\BB r}(\wt\Kopt , \Cmid')] \geq \wt\Kopt$ implies that for each small enough $\ep > 0$ (how small depends only on $\wt\Kopt$ and the laws of $D_h$ and $\wt D_h$), 
\eqbn
\#\left( \mcl R \cap [\ep^2 \BB r , \ep \BB r] \right) \geq \frac58 \log_8\ep^{-1} .
\eqen
We may therefore apply Lemma~\ref{lem-annulus-offcenter} with the radii $r_k \in \mcl R\cap [\ep^2 \BB r , \ep\BB r]$, the points $z_k \in \frac{r_k}{100} \BB Z^2   $ chosen so that $|z-z_k| \leq r_k/50$, and the events $E_{r_k}(z_k) = \Er_{z_k , r_k}$. 
From Lemma~\ref{lem-annulus-offcenter}, we obtain that if $\BB p$ is chosen to be sufficiently close to 1, in a universal manner, then for each $z \in \BB C$, it holds with probability at least $1-O_\ep(\ep^5)$ (at a rate depending only on the laws of $D_h$ and $\wt D_h$) that there exist $r\in \mcl R\cap [\ep^2 \BB r , \ep\BB r]$ and $w \in \left(\frac{r}{100} \BB Z^2 \right) \cap B_{r/50}(z)$ such that $\Er_{w,r}$ occurs. 

By a union bound, it holds with probability tending to 1 as $\ep\rta 0$ (at a rate depending only on $\wt\Kopt$ and the laws of $D_h$ and $\wt D_h$) that for each $z\in \left( \frac{\ep^2 \BB r}{100} \BB Z^2 \right) \cap B_{3\BB r}(0)$, there exist $r\in \mcl R\cap [\ep^2 \BB r , \ep\BB r]$ and $w \in \left(\frac{r}{100} \BB Z^2 \right) \cap B_{r/50}(z)$ such that $\Er_{w,r}$ occurs.
Henceforth assume that this is the case. 
For a general choice of $z\in B_{3\BB r}(0)$, we choose $z' \in \left( \frac{\ep^2 \BB r}{100} \BB Z^2 \right) \cap B_{3\BB r}(0)$ such that $|z-z'| \leq \ep^2 \BB r/50$, then we choose $r\in \mcl R\cap [\ep^2 \BB r , \ep\BB r]$ and $w \in  \left(\frac{r}{100} \BB Z^2 \right) \cap B_{r/50}(z')$ such that $\Er_{w,r}$ occurs.
Then $|w-z'| \leq (\ep^2 \BB r + r) / 50 \leq r/ 25$. 
Hence condition~\ref{item-main-cover'} in the lemma statement holds with probability tending to 1 as $\ep\rta 0$. 
\end{proof}

We henceforth assume that the parameter $\BB p$ is chosen as in Lemma~\ref{lem-main-extra}. 
By combining Proposition~\ref{prop-counting} with Lemma~\ref{lem-main-extra}, we obtain the following.

\begin{lem} \label{lem-end-union}
Let $\wt\Kopt > 0$ and let $\BB r > 0$ such that $\BB P[\wt G_{\BB r}(\wt\Kopt , \Cmid')] \geq \wt\Kopt$. 
Also let $\nu > 0$ and $\Kopt > 0$. It holds with probability tending to 1 as $\delta \rta 0$ (at a rate depending only on $\nu ,\wt\Kopt, \Kopt$ and the laws of $D_h$ and $\wt D_h$) that
\allb
&\wt D_h\left( B_{\delta^\nu \BB r  }(  z) , B_{\delta^\nu \BB r}(  w) \right)  
 \leq  \Cupper  D_h\left( B_{\delta^\nu \BB r  }(  z) , B_{\delta^\nu \BB r}(  w) \right) - \delta \BB r^{\xi Q} e^{\xi h_{\BB r}(0)} , \notag\\
&\qquad \forall z,w\in \left(\frac{\delta^\nu \BB r}{100} \BB Z^2  \right) \cap B_{\BB r}(0) \quad \text{such that} \quad |z-w| \geq  \Kopt \BB r \notag\\
&\qquad\qquad  \text{and} \quad \op{dist}(z,\bdy B_{\BB r}(0)) \geq \Kopt \BB r .
\alle
\end{lem}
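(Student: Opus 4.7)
The plan is to deduce the lemma from Proposition~\ref{prop-counting} combined with Lemma~\ref{lem-main-extra}, by taking a union bound over pairs of points on the grid $\left(\tfrac{\delta^\nu \BB r}{100}\BB Z^2\right)\cap B_{\BB r}(0)$. The key observation is that the event whose low probability we need is precisely the failure of condition~\ref{item-main-bad} in the definition of $\mcl G_{\BB r}^\ep$, once $\ep$ is tuned correctly to $\delta$.

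First, given $\delta \in (0,1)$ I choose $\ep = \ep(\delta) := (2\delta)^{1/(2\xi(Q+3))}$, so that $\tfrac{1}{2}\ep^{2\xi(Q+3)} = \delta$; in particular $\ep\rta 0$ as $\delta\rta 0$. For each pair $z,w$ satisfying the hypotheses of the lemma, set $K_1 := B_{\delta^\nu \BB r}(z)$ and $K_2 := B_{\delta^\nu \BB r}(w)$. Since $|z-w|\geq \Kopt \BB r$ and $\op{dist}(z,\bdy B_{\BB r}(0))\geq \Kopt \BB r$, for $\delta$ sufficiently small (depending only on $\nu$ and $\Kopt$) we have $\op{dist}(K_1,K_2)\geq \tfrac{\Kopt}{2}\BB r$ and $\op{dist}(K_1,\bdy B_{\BB r}(0))\geq \tfrac{\Kopt}{2}\BB r$, and $K_1\cup K_2 \subset B_{2\BB r}(0)$. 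So Proposition~\ref{prop-counting} applies with $\Ceucl = \Kopt/2$, yielding, for every $\mu>0$,
\eqbn
\BB P\left[\mcl G_{\BB r}^\ep(K_1,K_2)\right]\leq C(\mu,\Kopt)\,\ep^\mu = C(\mu,\Kopt)\,(2\delta)^{\mu/(2\xi(Q+3))},
\eqen
with a constant independent of $\BB r$, $z$, and $w$.

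Next, the number of pairs $(z,w)$ in the grid $\left(\tfrac{\delta^\nu \BB r}{100}\BB Z^2\right)\cap B_{\BB r}(0)$ is $O_\delta(\delta^{-4\nu})$. Choosing $\mu$ large enough that $\mu/(2\xi(Q+3)) > 4\nu + 1$ and taking a union bound,
\eqbn
\BB P\Bigl[\exists\,(z,w)\text{ as in the statement with }\mcl G_{\BB r}^\ep(K_1,K_2)\text{ occurring}\Bigr]\rta 0 \quad \text{as } \delta\rta 0,
\eqen
at a rate depending only on $\nu, \Kopt$ and the laws of $D_h$ and $\wt D_h$. Simultaneously, since $\BB P[\wt G_{\BB r}(\wt\Kopt,\Cmid')]\geq \wt\Kopt$ by hypothesis, Lemma~\ref{lem-main-extra} gives that conditions~\ref{item-main-exponent'} and~\ref{item-main-cover'} (which are the only conditions in $\mcl G_{\BB r}^\ep$ besides condition~\ref{item-main-bad}, and which do not depend on $K_1,K_2$) hold simultaneously with probability tending to $1$ as $\ep\rta 0$, uniformly in $\BB r$.

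On the intersection of these two high-probability events, for every admissible pair $(z,w)$ the auxiliary conditions in the definition of $\mcl G_{\BB r}^\ep(K_1,K_2)$ hold but $\mcl G_{\BB r}^\ep(K_1,K_2)$ fails, forcing the negation of condition~\ref{item-main-bad}:
\eqbn
\wt D_h(K_1,K_2) < \Cupper D_h(K_1,K_2) - \tfrac{1}{2}\ep^{2\xi(Q+3)}\BB r^{\xi Q}e^{\xi h_{\BB r}(0)} = \Cupper D_h(K_1,K_2) - \delta\BB r^{\xi Q}e^{\xi h_{\BB r}(0)},
\eqen
which is the desired bound. The only mild obstacle is the bookkeeping: one must verify that all constants from Proposition~\ref{prop-counting} and Lemma~\ref{lem-main-extra} are independent of $\BB r$, and that the choice of $\mu$ (and hence the rate of convergence) depends only on $\nu,\Kopt,\wt\Kopt$ and the laws of $D_h,\wt D_h$; both facts are built into the statements we are citing.
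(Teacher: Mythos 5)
Your proof is correct and follows essentially the same route as the paper: tune $\ep$ so that $\tfrac12\ep^{2\xi(Q+3)}=\delta$, apply Proposition~\ref{prop-counting} with $\Ceucl=\Kopt/2$ to the balls around grid points, take a union bound over the $O(\delta^{-4\nu})$ pairs, and then intersect with the high-probability event from Lemma~\ref{lem-main-extra} so that the failure of $\mcl G_{\BB r}^\ep(K_1,K_2)$ forces the negation of condition~\ref{item-main-bad}. Two small remarks. First, Proposition~\ref{prop-counting} is stated for compact $K_1,K_2$, so you should take $K_1=\ol B_{\delta^\nu\BB r}(z)$ and $K_2=\ol B_{\delta^\nu\BB r}(w)$ (closed balls), as the paper does; otherwise the hypotheses of the proposition are not literally met (the passage from closed-ball distances back to the open-ball distances in the lemma statement is elided in the paper's own proof as well, and is harmless for the downstream application in Lemma~\ref{lem-opt-event-prob}). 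Second, you handle the factor of $\tfrac12$ in condition~\ref{item-main-bad} more explicitly than the paper does by choosing $\ep=(2\delta)^{1/(2\xi(Q+3))}$ rather than $\ep=\delta^{1/(2\xi(Q+3))}$; that is a small but genuine improvement in precision.
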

\begin{proof} 
Fix $\nu' > 0$ to be chosen later, in a manner depending only on $\nu$ and $\xi$. 
By Proposition~\ref{prop-counting} (applied with $\Ceucl  =  \Kopt/2$) and a union bound, it holds with superpolynomially high probability as $\ep\rta 0$ that the event $\mcl G_{\BB r}^\ep\left( \ol B_{\ep^{\nu'} \BB r}(z) ,  \ol B_{\ep^{\nu'} \BB r }(w) \right) $ does not occur for any pair of points $z,w\in \left(\frac{\ep^{\nu'}  \BB r}{100} \BB Z^2  \right) \cap B_{\BB r}(0)$ with $|z-w| \geq  \Kopt \BB r$ and $\op{dist}(z,\bdy B_{\BB r}(0)) \geq \Kopt \BB r$. By combining this with Lemma~\ref{lem-main-extra} and recalling the definition of $\mcl G_{\BB r}^\ep$ (in particular, condition~\ref{item-main-bad}), we get that with probability tending to 1 as $\ep\rta 0$,  
\allb \label{eqn-end-union0}
& \wt D_h\left( B_{ \ep^{\nu'} \BB r  }(  z) , B_{ \ep^{\nu'} \BB r}(  w) \right)  
 \leq \Cupper D_h\left( B_{\ep^{\nu'} \BB r   }(  z) , B_{\ep^{\nu'} \BB r}(  w) \right)  
 -   \ep^{2\xi(Q+3)} \BB r^{\xi Q} e^{\xi h_{\BB r}(0)} ,\quad \notag\\
&\qquad \forall z,w \in \left(\frac{\ep^{\nu'}  \BB r}{100} \BB Z^2  \right) \cap B_{\BB r}(0)  \quad \text{such that} \quad |z-w| \geq  \Kopt \BB r \notag\\
&\qquad\qquad \text{and} \quad \op{dist}(z,\bdy B_{\BB r}(0)) \geq \Kopt \BB r .
\alle 
We now conclude the proof by applying the above estimate with $\ep = \ep(\delta) > 0$ chosen so that $ \ep^{2\xi(Q+3) } = \delta$ and with $\nu' = \nu / (2\xi(Q+3))$. 
\end{proof}

Recall the definition of the event $H_{\BB r}(\Kann , \Cmed)$ from Definition~\ref{def-annulus-geo}, which says that there is a point $u\in \bdy B_{\Kann \BB r}(0)$ and a point $v\in \bdy B_{\BB r}(0)$ satisfying certain conditions such that $\wt D_h(u,v) \leq \Cmed D_h(u,v)$.  
From Lemma~\ref{lem-end-union} and a geometric argument, we obtain the following, which will eventually be used to get a contradiction to Proposition~\ref{prop-geo-annulus-prob}. 

\begin{lem} \label{lem-opt-event-prob}
Let $\wt\Kopt > 0$ and let $\BB r > 0$ such that $\BB P[\wt G_{\BB r}(\wt\Kopt , \Cmid')] \geq \wt\Kopt$. 
For each $\Kann \in (3/4,1)$, we have 
\eqbn
\lim_{\delta \rta 0} \BB P\left[ H_{\BB r}(\Kann , \Cupper - \delta) \right] = 0 
\eqen
at a rate depending only on $\Kann , \wt\Kopt$, and the laws of $D_h$ and $\wt D_h$. 
\end{lem}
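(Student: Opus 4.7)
The plan is to show that the event $H_{\BB r}(\Kann,\Cupper-\delta)$ is incompatible with the high-probability event provided by Lemma~\ref{lem-end-union}. Fix $\Kann \in (3/4,1)$, set $\Kopt := (1-\Kann)/2$ and $\nu := 3/\geoExp$ (where $\geoExp$ is the exponent from Definition~\ref{def-annulus-geo}), and for each $\delta > 0$ put $\delta_L := 4\delta/(1-\Kann)$ and $\eta := \delta_L^\nu$. Applying Lemma~\ref{lem-end-union} with parameters $\nu, \Kopt, \delta_L$ yields an event $\mcl E_\delta$ with $\BB P[\mcl E_\delta^c]\to 0$ as $\delta \to 0$ on which every admissible lattice pair $(\BB z,\BB w)$ satisfies
\begin{equation*}
\Cupper D_h(B_{\eta\BB r}(\BB z),B_{\eta\BB r}(\BB w)) - \wt D_h(B_{\eta\BB r}(\BB z),B_{\eta\BB r}(\BB w)) \geq \delta_L\,\BB r^{\xi Q} e^{\xi h_{\BB r}(0)}.
\end{equation*}
The core step is to show that $H_{\BB r}(\Kann,\Cupper-\delta)\cap \mcl E_\delta = \emptyset$ for all sufficiently small $\delta$, after which $\BB P[H_{\BB r}(\Kann,\Cupper-\delta)]\leq \BB P[\mcl E_\delta^c]\to 0$ gives the claim.

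Suppose $H_{\BB r}(\Kann,\Cupper-\delta)$ occurs, and extract $u\in \bdy B_{\Kann\BB r}(0)$ and $v\in \bdy B_{\BB r}(0)$ satisfying $\wt D_h(u,v)\geq(\Cupper-\delta)D_h(u,v)$, the distance bound $D_h(u,v)\leq(1-\Kann)^{-1}\BB r^{\xi Q}e^{\xi h_{\BB r}(0)}$, and the loop bound~\eqref{eqn-annulus-geo-loop}. For $\delta$ so small that $\eta\leq(1-\Kann)^4$, the verbatim concatenation argument from the proof of Lemma~\ref{lem-opt-events} (a $\wt D_h$-path between the two balls joined to radial segments from $u,v$ to $\bdy B_{\eta^{1/2}\BB r}$ and loops around $\BB A_{\eta\BB r,\eta^{1/2}\BB r}$), together with the bi-Lipschitz bound~\eqref{eqn-bilip}, yields
\begin{equation*}
\wt D_h(u,v) \leq \wt D_h(B_{\eta\BB r}(u),B_{\eta\BB r}(v)) + 4\Cupper\,\eta^{\geoExp/2} D_h(u,v).
\end{equation*}
Now choose lattice points $\BB z,\BB w \in (\tfrac{\eta\BB r}{100}\BB Z^2)\cap B_{\BB r}(0)$ with $|u-\BB z|\leq \eta\BB r/50$ and $|v-\BB w|\leq \eta\BB r/20$, the latter picking $\BB w$ slightly inward from the boundary since $v\in \bdy B_{\BB r}(0)$. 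Then $u\in B_{\eta\BB r}(\BB z)$, $v\in B_{\eta\BB r}(\BB w)$, and these lattice balls are sandwiched inside $B_{2\eta\BB r}(u)$, $B_{2\eta\BB r}(v)$; the same concatenation argument then transfers the loop estimate to $\BB z,\BB w$ with the exponent $\geoExp/2$ unchanged up to an absolute constant. The regularity conditions $|\BB z-\BB w|\geq \Kopt\BB r$ and $\op{dist}(\BB z,\bdy B_{\BB r}(0))\geq \Kopt\BB r$ needed for Lemma~\ref{lem-end-union} follow from $|u-v|\geq(1-\Kann)\BB r$ and $\op{dist}(u,\bdy B_{\BB r}(0))=(1-\Kann)\BB r$.

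Combining the loop estimate with the inclusion $D_h(B_{\eta\BB r}(\BB z),B_{\eta\BB r}(\BB w))\leq D_h(u,v)$ and the trivial rewriting $\Cupper D_h(u,v)-\wt D_h(u,v)\leq \delta D_h(u,v)$, we obtain
\begin{equation*}
\Cupper D_h(B_{\eta\BB r}(\BB z),B_{\eta\BB r}(\BB w))-\wt D_h(B_{\eta\BB r}(\BB z),B_{\eta\BB r}(\BB w)) \leq (\delta + 4\Cupper\,\eta^{\geoExp/2})(1-\Kann)^{-1}\BB r^{\xi Q}e^{\xi h_{\BB r}(0)}.
\end{equation*}
Since $\nu=3/\geoExp$ gives $\eta^{\geoExp/2}=\delta_L^{3/2}=o(\delta_L)=o(\delta)$, for all $\delta$ sufficiently small the right-hand side is strictly less than $\delta_L\,\BB r^{\xi Q}e^{\xi h_{\BB r}(0)}$, contradicting the defining property of $\mcl E_\delta$. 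The hard part is the loop argument in the second paragraph: one must keep track of the fact that the lattice balls only approximately agree with $B_{\eta\BB r}(u),B_{\eta\BB r}(v)$ and that $v$ lies on $\bdy B_{\BB r}(0)$, both of which are absorbed into the explicit power-law loop error $\eta^{\geoExp/2}$; the remainder is a bookkeeping exercise in balancing exponents so that this error is swamped by $\delta$.
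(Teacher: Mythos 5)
Your proof is correct and follows essentially the same route as the paper: apply Lemma~\ref{lem-end-union} with $\Kopt = (1-\Kann)/2$, use the loop bound~\eqref{eqn-annulus-geo-loop} from Definition~\ref{def-annulus-geo} to concatenate a path between the lattice balls with radial segments and annular loops around $u,v$, and choose $\nu$ large enough that the resulting $\eta^{\geoExp/2}$ error is $o(\delta)$. The paper performs the concatenation in a single step, working directly with $B_{\delta^\nu\BB r}(z)$, $B_{\delta^\nu\BB r}(w)$ and loops at radii $2\delta^\nu$, $\sqrt{2\delta^\nu}$ around $u,v$, rather than your two-step ``first estimate $\wt D_h(B_{\eta\BB r}(u),B_{\eta\BB r}(v))$, then transfer to lattice balls,'' but your observation that the transfer only degrades the error by an absolute constant is precisely what makes the one-step version work; the reparametrization $\delta_L = 4\delta/(1-\Kann)$ is the inverse of the relabeling the paper applies at the end.
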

\begin{proof}
Let $\nu > 0$ to be chosen later, in a manner depending only on the laws of $D_h$ and $\wt D_h$. 
By Lemma~\ref{lem-end-union} applied with $\Kopt = (1-\Kann)/2$, it holds with probability tending to 1 as $\delta \rta 0$ that
\allb \label{eqn-use-end-union}
&\wt D_h\left( B_{\delta^\nu \BB r  }(  z) , B_{\delta^\nu \BB r}(  w) \right)  
 \leq  \Cupper  D_h\left( B_{\delta^\nu \BB r  }(  z) , B_{\delta^\nu \BB r}(  w) \right) - \delta \BB r^{\xi Q} e^{\xi h_{\BB r}(0)} , \notag\\
&\qquad  \forall z,w\in \left(\frac{\delta^\nu \BB r}{100} \BB Z^2  \right) \cap B_{\BB r}(0) \quad \text{such that} \quad |z-w| \geq  \frac{1-\Kann}{2} \BB r \notag\\
&\qquad\qquad \text{and} \quad \op{dist}(z,\bdy B_{\BB r}(0)) \geq \frac{1-\Kann}{2} \BB r .
\alle
Henceforth assume that that~\eqref{eqn-use-end-union} holds.

Recalling Definition~\ref{def-annulus-geo}, we consider points $u \in \bdy B_{\Kann \BB r}(0)$ and $v\in \bdy B_{\BB r}(0)$ such that 
\begin{itemize}
\item $D_h(u,v) \leq (1-\Kann)^{-1} \BB r^{\xi Q} e^{\xi h_{\BB r}(0)}$; and 
\item For each $ \delta \in \left(0,(1-\Kann)^2\right]$, we have
\eqb \label{eqn-end-geo-loop} 
\max\left\{    D_h\left( u , \bdy B_{\delta \BB r}(u) \right)  ,   D_h\left(\text{around $\BB A_{\delta \BB r , \delta^{1/2} \BB r}(u)$} \right)   \right\} \leq \delta^\geoExp D_h(u,v)
\eqe 
and the same is true with the roles of $u$ and $v$ interchanged. 
\end{itemize}
We will show that if $\nu$ is chosen to be large enough (depending only on the laws of $D_h$ and $\wt D_h$), then for each small enough $\delta >0$ (depending only on $\Kann , \wt\Kopt$, and the laws of $D_h$ and $\wt D_h$), we have
\eqb \label{eqn-opt-event-show}
\wt D_h(u,v) \leq \left( \Cupper  -  \frac{1-\Kann}{4} \delta   \right)  D_h(u,v)  , \quad \text{$\forall u,v$ satisfying the above conditions} .
\eqe
By Definition~\ref{def-annulus-geo}, the relation~\eqref{eqn-opt-event-show} implies that $H_{\BB r}\left(\Kann , \Cupper - \frac{1-\Kann}{4} \delta   \right)$ does not occur. Since $\delta$ can be made arbitrarily small, this implies the lemma statement.

\begin{figure}[ht!]
\begin{center}
\includegraphics[width=.8\textwidth]{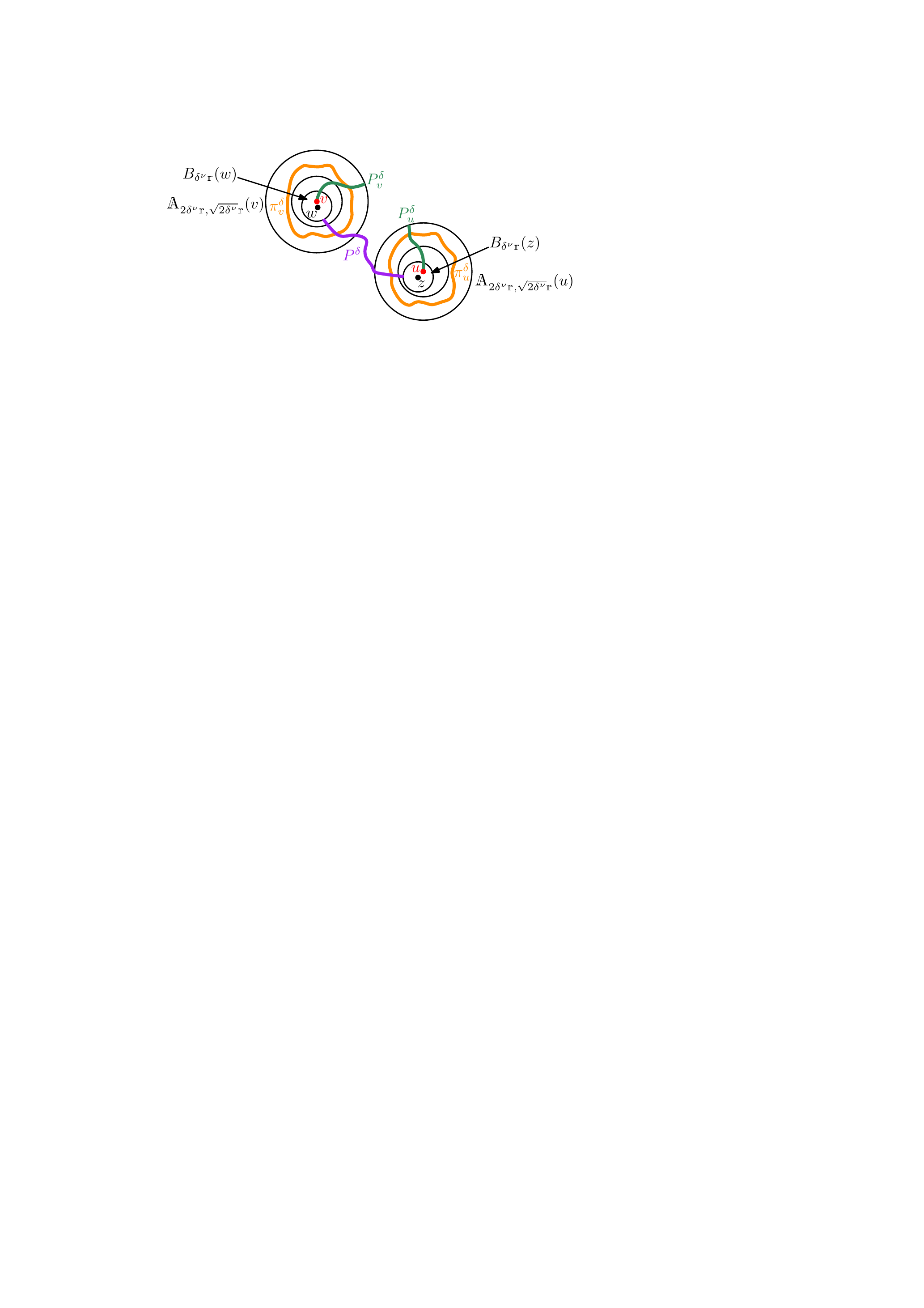} 
\caption{\label{fig-opt-event-prob} 
Illustration of the five paths used to get an upper bound for $\wt D_h(u,v)$ in the proof of Lemma~\ref{lem-opt-event-prob}. The $\wt D_h$-length of $P^\delta$ is bounded above using~\eqref{eqn-use-end-union} and the $\wt D_h$-lengths of the other four paths are bounded above using~\eqref{eqn-end-geo-loop}. 
}
\end{center}
\end{figure}

See Figure~\ref{fig-opt-event-prob} for an illustration of the proof of~\eqref{eqn-opt-event-show}. Let $z \in  \left(\frac{\delta^\nu \BB r}{100} \BB Z^2  \right) \cap B_{\delta^\nu \BB r }(u)$ and $w \in \left(\frac{\delta^\nu \BB r}{100} \BB Z^2  \right) \cap B_{\delta^\nu \BB r }(v)$. If $\delta$ is small enough, then $|z-w| \geq (1-\Kann) \BB r/2$ and $\op{dist}(z,\bdy B_{\BB r}(0)) \geq ( 1-\Kann)\BB r / 2$. By~\eqref{eqn-use-end-union}, there is a path $P^\delta$ from $B_{\delta^\nu \BB r}(z)$ to $B_{\delta^\nu \BB r}(w)$ such that
\allb \label{eqn-opt-path-main}
\op{len}\left( P^\delta ; \wt D_h\right) 
&\leq \Cupper  D_h\left( B_{\delta^\nu \BB r  }(  z) , B_{\delta^\nu \BB r}(  w) \right) - \frac{\delta}{2}  \BB r^{\xi Q} e^{\xi h_{\BB r}(0)} \notag\\
&\leq \Cupper  D_h\left( u , v \right) - \frac{\delta}{2}  \BB r^{\xi Q} e^{\xi h_{\BB r}(0)} \quad \text{(since $u \in B_{\delta^\nu \BB r}(z)$ and $v\in B_{\delta^\nu \BB r}(w)$)} \notag\\
&\leq \left( \Cupper -  \frac{1-\Kann}{2} \delta \right) D_h(u,v) \quad \text{(since $D_h(u,v) \leq (1-\Kann)^{-1} \BB r^{\xi Q} e^{\xi h_{\BB r}(0)}$)} . 
\alle

By~\eqref{eqn-end-geo-loop} (applied with $\sqrt{2\delta^\nu}$ in place of $\delta$), if $\delta$ is small enough (depending on $\Kann$) then there are paths $P_u^\delta$ and $P^\delta_v$ from $u$ and $v$ to $\bdy B_{ \sqrt{2 \delta^\nu} \BB r }(u)$ and $\bdy B_{\sqrt{2 \delta^\nu} \BB r }(v)$, respectively, such that
\eqb \label{eqn-opt-path-circle}
\max\left\{\op{len}(P_u^\delta  ; D_h) ,\op{len}( P_v^\delta ; D_h) \right\} \leq 2^{\geoExp/2} \delta^{\nu \geoExp/2} D_h(u,v) .
\eqe
Furthermore, by~\eqref{eqn-end-geo-loop} applied with $2\delta^\nu$ in place of $\delta$, there are paths $\pi_u^\delta$ and $\pi_v^\delta$ in $\BB A_{2\delta^\nu \BB r ,  \sqrt{2\delta^\nu} \BB r}(u)$ and  $\BB A_{2\delta^\nu \BB r ,  \sqrt{2\delta^\nu} \BB r}(u)$, respectively, which disconnect the inner and outer boundaries and satisfy
\eqb \label{eqn-opt-path-around}
\max\left\{\op{len}(\pi_u^\delta  ; D_h) , \op{len}(\pi_v^\delta ; D_h) \right\} \leq 2^{\geoExp } \delta^{\nu \geoExp } D_h(u,v) .
\eqe

Since $\max\{|z-u| , |w-v|\} \leq \delta^\nu \BB r$, the union $P^\delta \cup P_u^\delta \cup P_v^\delta \cup \pi_u^\delta \cup \pi_v^\delta$ contains a path from $u$ to $v$. 
Therefore, combining~\eqref{eqn-opt-path-main}, \eqref{eqn-opt-path-circle}, and~\eqref{eqn-opt-path-around}, then using the bi-Lipschitz equivalence of $D_h$ and $\wt D_h$~\eqref{eqn-bilip} gives 
\alb
\wt D_h(u,v) 
&\leq \left( \Cupper  -  \frac{1-\Kann}{2} \delta  \right)  D_h(u,v)  + \sum_{x\in \{u,v\}} \left( \op{len}(P_x^\delta  ;\wt D_h) + \op{len}(\pi_x^\delta  ; \wt D_h) \right) \notag\\
&\leq \left( \Cupper  -  \frac{1-\Kann}{2} \delta +  2^{\geoExp/2 + 1} \Cupper \delta^{\nu \geoExp/2} + 2^{\geoExp +1 } \Cupper \delta^{\nu \geoExp }        \right)  D_h(u,v)  .
\ale
If $\nu > 2/\geoExp$ and $\delta$ is small enough, then this implies~\eqref{eqn-opt-event-show}. 
\end{proof}

\begin{proof}[Proof of Theorem~\ref{thm-weak-uniqueness}]
By Proposition~\ref{prop-geo-annulus-prob}, there exist $\Kann \in (3/4,1)$ and $p\in (0,1)$, depending only on the laws of $D_h$ and $\wt D_h$, such that for each $\delta > 0$ and each small enough $\ep > 0$ (depending only on $\delta$ and the laws of $D_h$ and $\wt D_h$), there are at least $\frac34\log_8\ep^{-1}$ values of $r\in [\ep^2,\ep] \cap \{8^{-k}\}_{k\in\BB N}$ such that 
\eqb \label{eqn-use-geo-annulus-prob}
\BB P[H_r(\Kann , \Cupper - \delta )] \geq p 
\eqe

Let $\Cmid'$ be as in Proposition~\ref{prop-objects-exist}, so that $\Cmid'$ depends only on the laws of $D_h$ and $\wt D_h$. By Proposition~\ref{prop-opt-prob'} (applied with $\Cmid'$ in place of $\Cmid$), there exist $\wt\Kopt > 0$ and $\ep_0 > 0$ (depending only on the laws of $D_h$ and $\wt D_h$) such that for each $\ep \in (0,\ep_0]$, there are at least $\frac34\log_8\ep^{-1}$ values of $r\in [\ep^2 , \ep] \cap \{8^{-k}\}_{k\in\BB N}$ for which $\BB P[\wt G_r(\wt\Kopt , \Cmid')] \geq \wt\Kopt$.  
By combining this with Lemma~\ref{lem-opt-event-prob}, we get that if $\Kann$ and $p$ are as in~\eqref{eqn-use-geo-annulus-prob}, then there exists $\delta > 0$, depending only on $\Kann , p$, and the laws of $D_h$ and $\wt D_h$, such that for each $\ep\in (0,\ep_0]$, there are at least $\frac34 \log_8\ep^{-1}$ values of $r \in [\ep^2 , \ep] \cap \{8^{-k}\}_{k\in\BB N}$ for which
\eqb \label{eqn-use-opt-event-prob}
 \BB P\left[ H_r(\Kann , \Cupper - \delta) \right]  \leq \frac{p}{2 }. 
\eqe  

The total number of radii $r\in [\ep^2 ,\ep]\cap \{8^{-k}\}_{k\in\BB N}$ is at most $\log_8\ep^{-1}$, so there cannot be at least $\frac34 \log_8\ep^{-1}$ values of  $r\in [\ep^2 ,\ep]\cap \{8^{-k}\}_{k\in\BB N}$ for which~\eqref{eqn-use-geo-annulus-prob} holds and at least $\frac34 \log_8\ep^{-1}$ values of  $r\in [\ep^2 ,\ep]\cap \{8^{-k}\}_{k\in\BB N}$ for which~\eqref{eqn-use-opt-event-prob} holds. We thus have a contradiction, so we conclude that $\Clower = \Cupper$. 
\end{proof}

\section{Constructing events and bump functions}
\label{sec-construction}

\subsection{Setup and outline}
\label{sec-construction-outline}
 
The goal of this section is to prove Proposition~\ref{prop-objects-exist}. 
Extending~\eqref{eqn-Cmid-choice}, we define
\eqb \label{eqn-Cmid0-choice}
\Cmid := \frac{\Clower + \Cupper}{2} \quad\text{and} \quad \Cmid_0 := \frac{\Clower + \Cmid}{2} ,
\eqe  
so that if $\Clower < \Cupper$, then $\Clower < \Cmid_0 < \Cmid < \Cupper$. 

Throughout this section, we fix $\BB p \in (0,1)$ as in Proposition~\ref{prop-objects-exist}. 
Note that $\BB p$ is allowed to be arbitrarily close to $1$.
We seek to construct a set of radii $\mcl R \subset (0,1)$ and, for each $z\in \BB C$ and $r\in\mcl R$, open sets $\Ur_{z,r} \subset \Vr_{z,r} \subset \BB A_{r,4r}(z)$, a smooth bump function $\fr_{z,r}$ supported on $\Vr_{z,r}$, and an event $\Er_{z,r}$ with $\BB P[\Er_{z,r}] \geq \BB p$ which satisfy the conditions in Section~\ref{sec-counting-setup}.
 
For simplicity, for most of this section we will take $z = 0$ and remove $z$ from the notation, so we will call our objects $\Ur_r , \Vr_r , \fr_r , \Er_r$. 
At the very end of the proof, we will define objects for a general choice of $z$ by translating space. 

Let $\Kann \in (3/4,1)$ and $p_0 = p \in (0,1)$ be as in Proposition~\ref{prop-attained-good'}, so that $\Kann$ and $p_0$ depend only on the laws of $D_h$ and $\wt D_h$. 
We define our initial set of ``good" radii 
\eqb \label{eqn-initial-radii}
\mcl R_0 := \left\{ r  \in \{8^{-k}\}_{k\in \BB N} : \BB P[\wt H_r(\Kann , \Cmid_0 )] \geq p_0 \right\} .
\eqe 
By Proposition~\ref{prop-attained-good'}, there exists $\Cmid' > 0$, depending only on the laws of $D_h$ and $\wt D_h$, such that if $\BB r > 0$ and $\wt\Kopt > 0$ such that $\BB P[ \wt G_{\BB r}(\wt\Kopt, \Cmid') ]  \geq \wt\Kopt$, then for each small enough $\ep  > 0$ (how small is independent of $\BB r$),  
\eqbn
\#\left( \mcl R_0 \cap [\ep^2 \BB r , \ep \BB r ]\right) \geq \frac34 \log_8\ep^{-1} .
\eqen
We will eventually establish Proposition~\ref{prop-objects-exist} with the set of admissible radii given by $\mcl R = \rho^{-1} \mcl R_0$, where $\rho \in (0,1)$ is a constant depending only on $\BB p$ and the laws of $D_h$ and $\wt D_h$. 

Recall the basic idea of the construction as explained just after Proposition~\ref{prop-objects-exist}. We will take $\Ur_r$ to be a narrow ``tube" with the topology of a Euclidean annulus which is contained in a small neighborhood of $\bdy B_{2r}(0)$, and $\Vr_r$ to be a small Euclidean neighborhood of $\Ur_r$. 
We will then take $\Er_r$ to be the event that there are many ``good" pairs of points $u,v\in \Ur_r$ such that $\wt D_h(u,v) \leq \Cmid_0 D_h(u,v)$, plus a long list of regularity conditions. The idea for checking hypothesis~\ref{item-Ehyp-inc} for $\Er_r$ is that by Weyl scaling (Axiom~\ref{item-metric-f}), the $D_{h-\fr_r}$-lengths of paths contained in $\Ur_r$ tend to be much shorter than the $D_{h-\fr_r}$-lengths of paths outside of $\Vr_r$. We will use this fact to force a $D_{h-\fr_r}$-geodesic $P_r$ to get $D_{h-\fr_r}$-close to each of $u$ and $v$ for one of our good pairs of points $u,v$. We will then apply the triangle inequality to find times $s,t$ such that $\wt D_{h-\fr_r}(P_r(s) , P_r(t)) \leq \Cmid (t-s)$. Note that the application of the triangle inequality here is the reason why we need to require that $\wt D_h(u,v) \leq \Cmid_0 D_h(u,v)$ for $\Cmid_0 < \Cmid$. 

The broad ideas of this section are similar to those of~\cite[Section 5]{gm-uniqueness}, which performs a similar construction in the subcritical case. However, the details are quite different from~\cite[Section 5]{gm-uniqueness}, for three reasons. First, the conditions which we need our event to satisfy are slightly different from the ones needed in~\cite{gm-uniqueness} since our argument in Section~\ref{sec-counting} is completely different from the argument of~\cite[Section 4]{gm-uniqueness}. Second, we make some minor simplifications to various steps of the construction as compared to~\cite{gm-uniqueness}. Third, and most importantly, we want to treat the supercritical case so there are a number of additional difficulties arising from the fact that the metric does not induce the Euclidean topology. These difficulties necessitate additional conditions on the events and additional arguments as compared to the subcritical case. Especially, many of the conditions in the definition of $\Er_r$ and all of arguments of Section~\ref{sec-endpt-close} can be avoided in the subcritical case.
We will now give a more detailed outline of our construction.

In \textbf{Section~\ref{sec-endpt-ball}}, we will consider an event for a single ``good" pair of points $u,v$ and show that for $r\in\mcl R_0$, the probability of this event is bounded below by a constant $\pr$ depending only on the laws of $D_h$ and $\wt D_h$. See Lemma~\ref{lem-endpt-ball} for a precise statement and Figure~\ref{fig-endpt-ball} for an illustration of the event. 

The event we consider is closely related to the event $\wt H_r(\Kann , \Cmid_0)$ of Definition~\ref{def-annulus-geo'}. We require that there is a point $u\in \bdy B_{\Kann r}(0)$ and a point $v \in \bdy B_r(0)$ such that $\wt D_h(u,v) \leq \Cmid_0 D_h(u,v)$ and  a $\wt D_h$-geodesic $\wt P$ from $u$ to $v$ which is contained in a specified deterministic half-annulus $\Hr_r \subset \BB A_{\Kann r , r}(0)$. We also impose two additional constraints on $u$ and $v$ which will be important later:
\begin{enumerate}[(i)]
\item We require that $u$ is contained in a certain small \emph{deterministic} ball $B_{\sr_r}(\ur_r)$ centered at a point $\ur_r \subset \bdy B_{\Kann r}(0)$ and $v$ is contained in a small deterministic ball $B_{\sr_r}(\vr_r)$ centered at a point $\vr_r \in \bdy B_r(0)$, where $\sr_r$ is deterministic number which is comparable to a small constant times $r$.
The reason for this condition is that we will eventually define our set $\Ur_r$ so that it has a ``bottleneck" at several translated and scaled copies of the balls $B_{\sr_r}(\ur_r)$ and $B_{\sr_r}(\vr_r)$ (i.e., removing these balls disconnects $\Ur_r$; see Figure~\ref{fig-U-def}), and we need $\Ur_r$ to be deterministic. We will show that this condition happens with positive probability by considering finitely many possible choices for the balls $B_{\sr_r}(\ur_r)$ and $B_{\sr_r}(\vr_r)$ and using a pigeonhole argument. 
\item We require that the internal distance $D_h\left(u , x ; \ol B_{\sr_r}(\ur_r)\right)$ is small for ``most" points $x\in \bdy B_{\sr_r}(\ur_r)$, and we impose a similar condition for $v$. The purpose of this condition is to upper-bound the $D_{h-\fr_r}$-distance from a $D_{h-\fr_r}$-geodesic to $u$, once we have forced it to get Euclidean-close to $u$. The condition will be shown to occur with high probability using Lemma~\ref{lem-ball-bdy-union}. \label{item-outline-leb}
\end{enumerate}

In \textbf{Section~\ref{sec-block-event}}, we will define $\Fr_{z,r}$ for $z\in \BB C$ and $r\in\mcl R_0$ to be the event of Section~\ref{sec-endpt-ball}, but translated so that we are working with annuli centered at $z$ rather than 0. We will then show that $\Fr_{z,r}$ is locally determined by $h$ (Lemma~\ref{lem-endpt-event-msrble}). 

In \textbf{Section~\ref{sec-tube-def}}, we will introduce several parameters to be chosen later, including the parameter $\rho \in (0,1)$ mentioned above. We will then define the open sets $\Ur_r$ and $\Vr_r$ and the bump function $\fr_r$ for $r\in \rho^{-1} \mcl R_0$ in terms of these parameters. More precisely:
\begin{itemize}
\item The set $\Ur_r$ will be the union of a large finite number of disjoint sets of the form $\Hr_{\rho r} \cup B_{\sr_{\rho r} }(\ur_{\rho r}) \cup B_{\sr_{\rho r}}(\vr_{\rho r} ) + z$ for $z \in \bdy B_{2r}(0)$ (i.e., the sets appearing in the definition of $\Fr_{z,\rho r}$), together with long narrow ``tubes" linking these sets together into an annular region. See Figure~\ref{fig-U-def} for an illustration.
\item The set $\Vr_r$ will be a small Euclidean neighborhood of $\Ur_r$.
\item The function $\fr_r$ will attain its maximal value at each point of $\Ur_r$ and will be supported on $\Vr_r$. 
\end{itemize}
The reason for our definition of $\Ur_r$ is as follows. Since $r\in \rho^{-1} \mcl R_0$, for each of the sets  $\Hr_{\rho r} \cup B_{\sr_{\rho r} }(\ur_{\rho r}) \cup B_{\sr_{\rho r}}(\vr_{\rho r} ) + z$ in the definition of $\Ur_r$, there is a positive chance that the event $\Fr_{z,\rho r}$ of Section~\ref{sec-block-event} occurs. 
Hence, by the long-range independence properties of the GFF (Lemma~\ref{lem-spatial-ind}), it is very likely that $\Fr_{z,\rho r}$ occurs for many of the points $z$. 
This gives us the desired large collection of ``good" pairs of points $u,v \in \Ur_r$. See Lemma~\ref{lem-E-event}. 

In \textbf{Section~\ref{sec-E}}, we will define the event $\Er_r$. The event $\Er_r$ includes the condition that $\Fr_{z,\rho r}$ occurs for many of the points $z\in \bdy B_{2r}(0)$ involved in the definition of $\Ur_r$ (condition~\ref{item-E-event}), plus a large number of additional high-probability regularity conditions. 
Then, in \textbf{Section~\ref{sec-E-prob}}, we will show that we can choose the parameters of Section~\ref{sec-tube-def} in such a way that $\Er_r$ occurs with probability at least $\BB p$ (Proposition~\ref{prop-E-prob}). We will also show that $\Er_r$ satisfies hypotheses~\ref{item-Ehyp-dist} and~\ref{item-Ehyp-rn} of Section~\ref{sec-counting-setup} (Proposition~\ref{prop-E-hyp0}). 
In \textbf{Section~\ref{sec-objects-exist}}, we will explain how to conclude the proof of Proposition~\ref{prop-objects-exist} assuming that our objects also satisfy hypothesis~\ref{item-Ehyp-inc} of Section~\ref{sec-counting-setup}. 

The rest of the section is then devoted to checking that our objects satisfy hypothesis~\ref{item-Ehyp-inc} of Section~\ref{sec-counting-setup} (Proposition~\ref{prop-shortcut}). Recalling the statement of hypothesis~\ref{item-Ehyp-inc}, we will assume that $\Er_r$ occurs and consider a $D_{h-\fr_r}$-geodesic $P_r$ between two points of $\BB C\setminus B_{4r}(0)$. We will further assume that $P_r$ has a $(B_{4r}(0) , \Vr_r)$-excursion  $(\tau',\tau,\sigma,\sigma')$ such that $  D_h\left( P_r(\tau) , P_r(\sigma)   ; B_{4r}(0)  \right)$ is bounded below by an appropriate constant times $r^{\xi Q} e^{\xi h_r(0)}$ (recall Definition~\ref{def-excursion}). We aim to find times $s < t$ for $P_r$ such that $t-s$ is not too small and $\wt D_{h-\fr_{z,r} }\left(P_r(s) , P_r(t)  ; B_{4r}(0)  \right) \leq \Cmid (t-s) $. 

In \textbf{Section~\ref{sec-excursion}}, we will show that the \emph{Euclidean} distance between the points $P_r(\tau) , P_r(\sigma) \in \bdy \Vr_r$ is bounded below by a constant times $r$ (Lemma~\ref{lem-excursion-sp}) and that $P_r|_{[\tau,\sigma]}$ is contained in a small Euclidean neighborhood of $\Vr_r$ (Lemma~\ref{lem-excursion-tube}). These statements are proven using the regularity conditions in the definition of $\Er_r$. In particular, the lower bound for $|P_r(\tau) - P_r(\sigma)|$ comes from the regularity of $D_h$-distances along a geodesic (Lemma~\ref{lem-hit-ball-phi}). The statement that $P_r|_{[\tau,\sigma]}$ is contained in a small Euclidean neighborhood of $\Vr_r$ is proven as follows. Since $\fr_r$ is very large on $\Ur_r$, we know that $D_{h-\fr_r}$-distances inside $\Ur_r$ are very small, which leads to a very small upper bound for $\sigma -\tau = D_{h-\fr_r}(P_r(\tau) , P_r(\sigma))$ (Lemma~\ref{lem-excursion-length}). Since $\fr_r$ is supported on $\Vr_r$, the $D_{h-\fr_r}$-length of any segment of $P_r$ which is disjoint from $\Vr_r$ is the same as its $D_h$-length, which will be larger than our upper bound for $\sigma-\tau$ unless the Euclidean diameter of the segment is very small. 

In \textbf{Section~\ref{sec-excursion-hit}}, we will use the results of Section~\ref{sec-excursion} and the definition of $\Ur_r$ to show that the following is true. 
There is a point $z \in \bdy B_{2r}(0)$ as in the definition of $\Ur_r$ such that $\Fr_{z,\rho r}$ occurs and $P_r$ gets Euclidean-close to each of the ``good" points $u$ and $v$ in the definition of $\Fr_{z,\rho r}$ (Lemma~\ref{lem-excursion-hit}). 
The reason why this is true is that, by the results of Section~\ref{sec-excursion}, $P_r([\tau,\sigma])$ is contained in a small neighborhood of $\Ur_r$ and has Euclidean diameter of order $r$, and the definition of $\Ur_r$ implies that removing small neighborhoods of the points $u$ and $v$ disconnects $\Ur_r$ (see Figure~\ref{fig-U-def}). 

Showing that $P_r$ gets Euclidean-close to $u$ and $v$ is not enough for our purposes since $D_{h-\fr_r}$ is not Euclidean-continuous, so it is possible for two points to be Euclidean-close but not $D_{h-\fr_r}$-close. Therefore, further arguments are needed to show that $P_r$ gets $D_{h-\fr_r}$-close to each of $u$ and $v$. We remark that this is one of the main reasons why the argument in this section is more difficult than the analogous argument in the subcritical case~\cite[Section 5]{gm-uniqueness}. 

In \textbf{Section~\ref{sec-endpt-close}}, we will show that there are times $s$ and $t$ for $P_r$ such that $D_{h-\fr_r}(P_r(t) , u)$ and $D_{h-\fr_r}(P_r(s) , v)$ are each much smaller than $D_{h-\fr_r}(u,v)$ (Lemma~\ref{lem-endpt-close}). 
The key tool which allows us to do this is the condition in the definition of  $\Fr_{z,\rho r}$ which says that $D_h\left(u , x ; \ol B_{\sr_{\rho r}}(\ur_{\rho r}) + z\right)$ is small for ``most" points of $\bdy B_{\sr_{\rho r} }(\ur_{\rho r}) + z$ (recall point~\eqref{item-outline-leb} in the summary of Section~\ref{sec-endpt-ball}). 
However, this condition is not sufficient for our purposes since it is possible that the ``Euclidean size" of $P_r \cap (B_{\sr_{\rho r} }(\ur_{\rho r}) + z)$ is small, and hence $P_r$ manages not to hit a geodesic from $u$ to $x$ for any of the ``good" points $x \in \bdy B_{\sr_{\rho r}}(\ur_{\rho r}) + z $ such that $D_h\left(u , x ; B_{\sr_{\rho r}}(\ur_{\rho r}) + z\right)$ is small. 
To avoid this difficulty, we will need to carry out a careful analysis of, roughly speaking, the ``excursions" that $P_r$ makes in and out of the ball $B_{\sr_{\rho r}}(\ur_{\rho r}) + z$. 

In \textbf{Section~\ref{sec-shortcut}}, we will conclude the proof that $\Er_r$ satisfies hypothesis~\ref{item-Ehyp-inc} using the result of Section~\ref{sec-endpt-close} and the triangle inequality. 

\subsection{Existence of a shortcut with positive probability}
\label{sec-endpt-ball}

Throughout the rest of this section, we let
\eqb \label{eqn-small-const}
\llambda  \in \left( 0 , 10^{-100} \min \left\{ \Clower, 1/\Cupper, (\Clower/\Cupper)^2 \right\}  \right)  
\eqe
be a small constant to be chosen later, in a manner depending only on the laws of $D_h$ and $\wt D_h$ (not on $\BB p$).
We will frequently use $\llambda$ in the definitions of events and other objects when we need a small constant whose particular value is unimportant.   
 
In this subsection, we will prove that for each $r\in\mcl R_0$, it holds with positive probability (uniformly in $r\in\mcl R_0$) that there is a ``good" pair of non-singular points $u,v\in  \ol B_r(0)$ such that $\wt D_h(u,v) \leq \Cmid_0  D_h(u,v)$ and certain regularity conditions hold. In later subsections, we will use the long-range independence of the GFF to say that with high probability, there are many such pairs of points contained in our open set $\Ur_r$. 
To state our result, we need the following definition.

\begin{defn}
Let $z\in\BB C$ and $b > a > 0$. 
A \emph{horizontal or vertical half-annulus} $H\subset \BB A_{a,b}(z)$ is the intersection of $\BB A_{a,b}(z)$ with one of the four half-planes
\alb
&\left\{w \in\BB C : \re w > \re z\right\} ,\quad 
 \left\{w \in\BB C : \re w < \re z\right\} ,\notag\\
&\left\{w \in\BB C : \im w > \im z\right\} ,\quad \text{or} \quad 
 \left\{w \in\BB C : \im w < \im z\right\}  .
\ale
\end{defn}

\begin{figure}[ht!]
\begin{center}
\includegraphics[width=.5\textwidth]{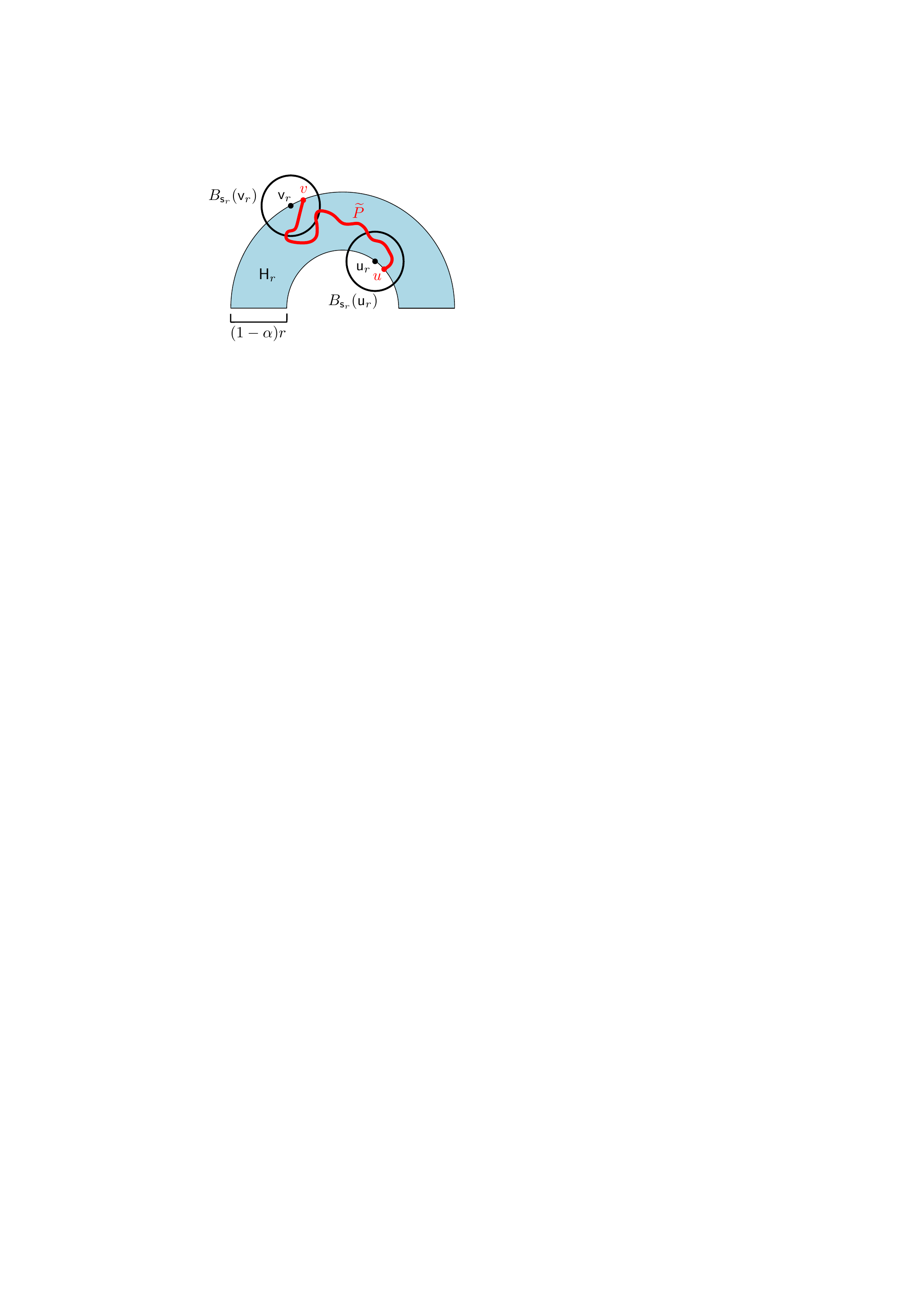} 
\caption{\label{fig-endpt-ball} Illustration of the objects involved in Lemma~\ref{lem-endpt-ball}. 
}
\end{center}
\end{figure}

\begin{lem} \label{lem-endpt-ball}
Let $\Kann$ and $\mcl R_0$ be as in~\eqref{eqn-initial-radii}.
There exists $\Aendpt  \in \left(0, \llambda (1-\Kann)^2 \right]$, $\Aloc  > 3$, and $\pr \in (0,1)$ (depending only on $\llambda$ and the laws of $D_h$ and $\wt D_h$) such that for each $r\in\mcl R_0$, there exists a deterministic horizontal or vertical half-annulus $\Hr_r  \subset \BB A_{\Kann r , r}(0)$, a deterministic radius $\sr_r \in [\Aendpt  r , \Aendpt^{1/2} r] \cap \{4^{-k} r\}_{k\in\BB N}$, and deterministic points 
\allb \label{eqn-endpt-centers}
\ur_r &\in \bdy \Hr_r \cap \left\{ \Kann r e^{i \llambda \Aendpt k} : k \in [1,2\pi \llambda^{-1} \Aendpt^{-1} ]_{\BB Z} \right\} \quad \text{and} \notag\\
\vr_r &\in \bdy \Hr_r \cap \left\{  r e^{i \llambda \Aendpt k} : k \in [1,2\pi\llambda^{-1} \Aendpt^{-1} ]_{\BB Z} \right\}
\alle
such that with probability at least $\pr$, the following is true.
There exist non-singular points $ u \in \bdy B_{\Kann r}(0) \cap B_{\sr_r/2}(\ur_r) $ and $v \in \bdy B_r(0)\cap B_{\sr_r / 2}(\vr_r)$ with the following properties.  
\begin{enumerate}
\item $\wt D_h(u,v) \leq \Cmid_0 D_h(u,v)$. \label{item-endpt-ball-dist} 
\item There is a $\wt D_h$-geodesic $\wt P$ from $u$ to $v$ which is contained in $\ol{\Hr}_r$. \label{item-endpt-ball-annulus}  
\item The one-dimensional Lebesgue measure of the set \label{item-endpt-ball-leb} 
\eqbn
\left\{x  \in \bdy B_{ \sr_r}(\ur_r) \, : \,   D_h\left( x , u  ; \ol B_{ \sr_r}(\ur_r) \right)  >  \llambda \wt D_h(u,v) \right\} 
\eqen
is at most $(\llambda/2) \sr_r$. Moreover, the same is true with $v$ and $\vr_r$ in place of $u$ and $\ur_r$.
\item There exists $t \in [3r , \Aloc r]$ such that \label{item-endpt-ball-around}
\eqbn
 D_h\left(\text{around $\BB A_{t,2t}(0)$} \right) \leq  \llambda  D_h\left(\text{across $\BB A_{2t,3t}(0)$} \right) .
\eqen 
\end{enumerate}  
\end{lem}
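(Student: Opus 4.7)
The plan is to build the desired $(u,v)$ from the event $\wt H_r(\Kann, \Cmid_0)$, which by the definition~\eqref{eqn-initial-radii} of $\mcl R_0$ has probability at least $p_0$. On this event, the random pair $(u,v)$ already satisfies conditions \eqref{item-endpt-ball-dist} and (a version of) \eqref{item-endpt-ball-annulus}. Hence the task reduces to (a) pinning the random locations of $u$, $v$, and the $\wt D_h$-geodesic $\wt P$ to deterministic sets by pigeonholing, and (b) arranging that the boundary regularity condition~\eqref{item-endpt-ball-leb} and the annular ratio condition~\eqref{item-endpt-ball-around} hold on a jointly positive-probability event.

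For the pigeonhole step, I would first use the diameter bound $\op{diam}(\wt P) \leq r/100$ from Definition~\ref{def-annulus-geo'} to conclude that $\wt P$ is contained in one of four fixed horizontal or vertical half-annuli tiling $\BB A_{\Kann r, r}(0)$; choosing $\Hr_r$ to be the one for which this occurs most often loses only a factor of $4$. Next, for a constant $\Aendpt \in (0, \llambda(1-\Kann)^2]$ and $\sr_r \in [\Aendpt r, \Aendpt^{1/2} r] \cap \{4^{-k} r\}_{k \in \BB N}$ to be chosen later, the candidate points in~\eqref{eqn-endpt-centers} give finite coverings of $\bdy B_{\Kann r}(0) \cap \bdy \Hr_r$ and $\bdy B_r(0) \cap \bdy \Hr_r$ by $O(\llambda^{-1} \Aendpt^{-1})$ Euclidean arcs of length $\sr_r/2$ each. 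A second pigeonhole then lets me fix $\ur_r, \vr_r$ such that $u \in B_{\sr_r/2}(\ur_r)$ and $v \in B_{\sr_r/2}(\vr_r)$ with probability bounded below by a constant depending only on $p_0, \llambda, \Aendpt$.

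Condition~\eqref{item-endpt-ball-leb} is the delicate step. Lemma~\ref{lem-ball-bdy-union} gives an upper bound for the Lebesgue measure of $x \in \bdy B_{\sr_r}(\ur_r)$ with $D_h(x, \ur_r; \ol B_{\sr_r}(\ur_r)) > R \sr_r^{\xi Q} e^{\xi h_{\sr_r}(\ur_r)}$, but the lemma statement requires distance to the \emph{random} point $u$ rather than the deterministic center $\ur_r$. I would add two auxiliary conditions to the high-probability event: first, that $\sr_r^{\xi Q} e^{\xi h_{\sr_r}(\ur_r)} \leq \llambda^{100} r^{\xi Q} e^{\xi h_r(0)}$, which holds with probability close to $1$ once $\Aendpt$ is small enough, by Gaussian concentration of GFF circle averages; and second, a uniform bound $D_h(u, \bdy B_{\sr_r}(\ur_r); \ol B_{\sr_r}(\ur_r)) \leq \llambda \wt D_h(u,v)$, which I would extract by combining Lemma~\ref{lem-ball-bdy-moment} applied to a concentric ball of radius $\sr_r/2$ (which contains $u$) with a loop-wrapping argument in the spirit of the proof of Lemma~\ref{lem-opt-events}, using the bound $\wt D_h(u,v) \leq (1-\Kann)^{-1} r^{\xi Q} e^{\xi h_r(0)}$ from Definition~\ref{def-annulus-geo'}. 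Given these two auxiliary conditions and the triangle inequality $D_h(x, u; \ol B_{\sr_r}(\ur_r)) \leq D_h(x, \ur_r; \ol B_{\sr_r}(\ur_r)) + D_h(\ur_r, u; \ol B_{\sr_r}(\ur_r))$, choosing $R$ to depend only on $\llambda$ yields the required bound $(\llambda/2)\sr_r$ on the bad Lebesgue measure. The main obstacle I anticipate is precisely this transfer from a deterministic center to a random one; a careful accounting of how small $\Aendpt$ must be relative to the constants in Lemma~\ref{lem-ball-bdy-union} and in the bi-Lipschitz bound~\eqref{eqn-bilip} is necessary.

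Finally, condition~\eqref{item-endpt-ball-around} follows from tightness across scales: for each individual $t$, an absolute-continuity argument (cf.\ the proof of Lemma~\ref{lem-geo-compact}) combined with Axiom~\refcoord gives a fixed positive probability, uniformly in $r$, that $D_h(\text{around } \BB A_{t,2t}(0)) \leq \llambda D_h(\text{across } \BB A_{2t,3t}(0))$. Applying this to the $O(\log \Aloc)$ values $t \in [3r, \Aloc r]$ of the form $3 \cdot 2^k r$ and invoking assertion~\ref{item-annulus-iterate-pos} of Lemma~\ref{lem-annulus-iterate}, I can make the probability of~\eqref{item-endpt-ball-around} as close to $1$ as desired by taking $\Aloc$ large. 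Intersecting the pigeonhole event, the boundary regularity event, and the annular-ratio event yields conditions~\eqref{item-endpt-ball-dist}--\eqref{item-endpt-ball-around} simultaneously with probability at least some $\pr > 0$ depending only on $\llambda$ and the laws of $D_h, \wt D_h$.
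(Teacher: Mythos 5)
Your overall structure — pigeonholing on half-annuli, radii, and candidate centers, a high-probability regularity event proved via Lemma~\ref{lem-annulus-iterate}, and a separate treatment of the annular-ratio condition~\eqref{item-endpt-ball-around} via Lemma~\ref{lem-annulus-iterate}\ref{item-annulus-iterate-pos} — matches the paper, and the pigeonhole and annular-ratio steps are sound. The gap is in your argument for condition~\eqref{item-endpt-ball-leb}.

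You never use condition~(iv) of Definition~\ref{def-annulus-geo'}, the pointwise regularity bound $\wt D_h(u, \bdy B_{\delta r}(u)) \leq \delta^\geoExp \wt D_h(u,v)$, and this is exactly the input your argument is missing. You propose two auxiliary conditions: a GFF concentration bound $\sr_r^{\xi Q} e^{\xi h_{\sr_r}(\ur_r)} \leq \llambda^{100} r^{\xi Q} e^{\xi h_r(0)}$, and a bound $D_h(u, \bdy B_{\sr_r}(\ur_r); \ol B_{\sr_r}(\ur_r)) \leq \llambda \wt D_h(u,v)$. The first one compares the LQG size of the small ball to that of the large ball, but you then need to relate $r^{\xi Q} e^{\xi h_r(0)}$ to $\wt D_h(u,v)$, and condition~(iii) of Definition~\ref{def-annulus-geo'} gives only the \emph{lower} bound $r^{\xi Q} e^{\xi h_r(0)} \geq (1-\Kann) \wt D_h(u,v)$ — the wrong direction for what you need. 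The second auxiliary condition is the crux, and it cannot be extracted from Lemma~\ref{lem-ball-bdy-moment}: that lemma controls $D_h(w, 0; \ol B_s(0))$ from a boundary point $w$ to the (deterministic) center $0$, whereas you need an upper bound on the distance from the \emph{random} point $u$ to a nearby circle. Since $u$ could a priori be close to a singular point, there is no generic reason for $D_h(u, \bdy B_{\sr_r}(\ur_r))$ to be small; the whole point of condition~(iv) in the definition of $\wt H_r$ is to rule this out.

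The paper's proof resolves this by using condition~(iv) to deduce $\wt D_h(u, \bdy B_s(z_1)) \leq (2\Aendpt^{1/2})^\geoExp \wt D_h(u,v)$ (since $B_s(z_1) \subset B_{2\Aendpt^{1/2} r}(u)$), and then, because $u \in B_{s/2}(z_1)$ forces any such path to cross $\BB A_{s/2,s}(z_1)$, concludes that the across-distance of that annulus is controlled by $\wt D_h(u,v)$. Combined with a lower bound for the across-distance in terms of $s^{\xi Q} e^{\xi h_s(z_1)}$ (condition~\ref{item-endpt-across} of the paper's auxiliary event $G_s(z;R)$), this links $s^{\xi Q} e^{\xi h_s(z_1)}$ to $\wt D_h(u,v)$ in the correct direction. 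With that link in place, the Lebesgue-measure bound of Lemma~\ref{lem-ball-bdy-union} and the around-annulus bound can both be converted to $\llambda^2\wt D_h(u,v)$ and then to $\llambda\wt D_h(u,v)$ for $D_h$ via bi-Lipschitz. If you add condition~(iv) to your toolkit and replace the Gaussian-concentration auxiliary with this across-distance mechanism, your approach becomes essentially the paper's.
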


See Figure~\ref{fig-endpt-ball} for an illustration of the statement of Lemma~\ref{lem-endpt-ball}. 
Most of this subsection is devoted to the proof of Lemma~\ref{lem-endpt-ball}. Before discussing the proof, we will first discuss the motivation for the various conditions in the lemma statement. 

In Section~\ref{sec-tube-def}, we will consider a small but fixed constant $\rho  \in (0,1)$. In order to build the set $\Ur_r = \Ur_{0,r}$ appearing in Section~\ref{sec-counting}, we will use long narrow tubes to ``link up" several sets of the form $\Hr_{\rho r} \cup B_{\sr_{\rho r}}(\ur_{\rho r}) \cup B_{\sr_{\rho r}}(\vr_{\rho r}) + z$, for varying choices of $z\in \bdy B_{2r}(0)$. We need $\Ur_r$ to be deterministic, which is why we need to make a deterministic choice of the half-annulus $\Hr_r$, the radius $\sr_r$, and the points $\ur_r$ and $\vr_r$ in Lemma~\ref{lem-endpt-ball}. Furthermore, we want there to be only finitely many possibilities for the set $r^{-1} \Ur_r$, which allows us to get certain estimates for $\Ur_r$ trivially by taking a maximum over the possibilities. This is why we require that $\Hr_r$ is a vertical or horizontal half-annulus and why we require that the points $\ur_r$ and $\vr_r$ belong to the finite sets in~\eqref{eqn-endpt-centers}.
  
Our set $\Ur_r$ will have ``bottlenecks" at the balls $B_{\sr_{\rho r}}(\ur_{\rho r}) + z$ and $  B_{\sr_{\rho r}}(\vr_{\rho r}) + z$, so that any path which travels more than a constant-order Euclidean distance inside the set $\Ur_r$ will have to enter many of these balls. 
The requirement that $u \in B_{\sr_{\rho r}/2}(\ur_{\rho r})$ and $v\in B_{\sr_{\rho r}/2}(\vr_{\rho r})$ is needed to force a path which spends a lot of time in $\Ur_r$ to get close to $u$ and $v$. The requirement that $\wt P \subset \ol{\Hr}_r$ in condition~\ref{item-endpt-ball-annulus} is needed to ensure that subtracting from $h$ a large bump function which attains its maximal value at each point of $\Ur_r$ decreases $\wt D_h(u,v)$ by at least as much as $D_h(u,v)$, so the condition $\wt D_h(u,v) \leq \Cmid_0 D_h(u,v)$ is preserved.

Condition~\ref{item-endpt-ball-leb} in Lemma~\ref{lem-endpt-ball} is needed to upper-bound the LQG distance from a path to each of $u$ and $v$, once we know that it gets Euclidean-close to $u$ and $v$ (this is done in Section~\ref{sec-endpt-close}). 
The reason why our distance bound is in terms of $\wt D_h(u,v)$ is that we eventually want to show that the $\wt D_{h-\fr_r}$-distance from a $D_{h-\fr_r}$-geodesic to each of $u$ and $v$ is at most a small constant times $\wt D_{h-\fr_r}(u,v)$. We will then use condition~\ref{item-endpt-ball-dist} in Lemma~\ref{lem-endpt-ball} and the triangle inequality to deduce hypothesis~\ref{item-Ehyp-inc}. Note that condition~\ref{item-endpt-ball-leb} includes a bound on $D_h$-distances, but this immediately implies a bound for $\wt D_h$-distances due to the bi-Lipschitz equivalence of $D_h$ and $\wt D_h$~\eqref{eqn-bilip}.

The only purpose of condition~\ref{item-endpt-ball-around} is to ensure that the event in the lemma statement depends locally on $h$ (see Lemma~\ref{lem-endpt-event-msrble}).
This local dependence is not automatically true since a $D_h$-geodesic from $u$ to $v$ could get very Euclidean-far away from $u$ and $v$.

We now turn our attention to the proof of Lemma~\ref{lem-endpt-ball}. 
To this end, let us first record what we get from the Definition~\ref{def-annulus-geo'} of $\wt H_r(\Kann,\Cmid_0)$ and the Definition~\eqref{eqn-initial-radii} of $\mcl R_0$. 
 
\begin{lem} \label{lem-endpt-geodesic}
For each $r\in \mcl R_0$, there is a deterministic horizontal or vertical half-annulus $\Hr_r \subset \BB A_{\Kann r , r}(0)$ such that with probability at least $p_0/4$, there exist non-singular points $u \in \bdy B_{\Kann r}(0)$ and $v \in \bdy B_r(0)$ with the following properties.  
\begin{enumerate}
\item $\wt D_h(u,v) \leq \Cmid_0 D_h(u,v)$. \label{item-endpt-geodesic-dist} 
\item There is a $\wt D_h$-geodesic $\wt P$ from $u$ to $v$ which is contained in $\ol{\Hr}_r$.  \label{item-endpt-geodesic-contained} 
\item With $\geoExp = \geoExp(1/2)$ as in Lemma~\ref{lem-hit-ball-phi}, for each $\delta \in (0,(1-\Kann)^2]$,   \label{item-endpt-geodesic-initial}
\eqbn
\max\left\{ \wt D_h\left( u , \bdy B_{\delta r}(u) \right) , \wt D_h\left( v , \bdy B_{\delta r}(v) \right) \right\}  \leq \delta^{\geoExp} \wt D_h(u,v) .
\eqen
\end{enumerate}  
\end{lem}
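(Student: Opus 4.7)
The plan is to read off Lemma~\ref{lem-endpt-geodesic} from the definition of $\mcl R_0$ in~\eqref{eqn-initial-radii} together with Definition~\ref{def-annulus-geo'} of $\wt H_r(\Kann, \Cmid_0)$, losing only a factor of $4$ in probability via a pigeonhole over the four candidate horizontal/vertical half-annuli. By construction of $\mcl R_0$, $\BB P[\wt H_r(\Kann,\Cmid_0)] \geq p_0$ for every $r \in \mcl R_0$. On this event, Definition~\ref{def-annulus-geo'} supplies non-singular points $u \in \bdy B_{\Kann r}(0)$ and $v \in \bdy B_r(0)$ and a $\wt D_h$-geodesic $\wt P$ from $u$ to $v$ with $\wt P \subset \ol{\BB A}_{\Kann r, r}(0)$ and Euclidean diameter at most $r/100$. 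Properties~\ref{item-endpt-geodesic-dist} and~\ref{item-endpt-geodesic-initial} of the lemma are already baked into $\wt H_r(\Kann,\Cmid_0)$: property~\ref{item-endpt-geodesic-dist} is the inequality in Definition~\ref{def-annulus-geo'}, and property~\ref{item-endpt-geodesic-initial} is exactly the first clause of condition~$(iv)$ there. So the only real work is to upgrade the containment ``$\wt P \subset \ol{\BB A}_{\Kann r, r}(0)$'' to ``$\wt P \subset \ol{\Hr}_r$'' for some \emph{deterministic} half-annulus.

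The key geometric observation I would use is that any connected set $A \subset \ol{\BB A}_{\Kann r, r}(0)$ of Euclidean diameter at most $r/100$ is contained in the closure of at least one of the four half-annuli $\Hr^E, \Hr^W, \Hr^N, \Hr^S$ obtained by intersecting $\BB A_{\Kann r, r}(0)$ with the half-planes $\{\re w > 0\}$, $\{\re w < 0\}$, $\{\im w > 0\}$, $\{\im w < 0\}$ respectively. Indeed, if $A$ failed to lie in any of these four closures, then by connectedness $A$ would contain a point $w_1$ on the imaginary axis and a point $w_2$ on the real axis, both of modulus at least $\Kann r$, forcing $|w_1 - w_2| \geq \sqrt{2}\,\Kann r > r/100$ since $\Kann > 3/4$ -- contradicting the diameter bound on $A$.

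Applying this observation with $A = \wt P$ and letting, for each $D \in \{E,W,N,S\}$, $E_r^D$ denote the event that there exist $u,v,\wt P$ satisfying all conditions of $\wt H_r(\Kann,\Cmid_0)$ together with $\wt P \subset \ol{\Hr^D}$, the observation gives $\wt H_r(\Kann,\Cmid_0) \subset \bigcup_D E_r^D$, so a union bound over the four choices of $D$ produces a deterministic $D^\star$ (depending on $r$ and the laws of $D_h,\wt D_h$) with $\BB P[E_r^{D^\star}] \geq p_0/4$. Taking $\Hr_r := \Hr^{D^\star}$ completes the argument. I do not anticipate any serious obstacle: the whole argument is a deterministic pigeonhole over four candidates, and all the analytic content is already encoded in the definition of $\wt H_r$. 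The implicit compatibility $\Kann \geq 99/100$, forced by combining $|u-v| \geq (1-\Kann)r$ with the diameter bound $r/100$, is automatic since otherwise $\mcl R_0$ would be empty and the lemma would hold vacuously.
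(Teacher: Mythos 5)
Your proof is correct and follows essentially the same route as the paper: read off conditions 1 and 3 directly from the definition of $\wt H_r(\Kann,\Cmid_0)$, note the geodesic is contained in $\ol{\BB A}_{\Kann r, r}(0)$ with small diameter, and pigeonhole over the four half-annuli. The paper compresses the geometric step into the phrase ``trivial geometric considerations''; your explicit argument via connectedness and the distance $\sqrt{2}\Kann r > r/100$ between a point on the real axis and a point on the imaginary axis is exactly what is meant, and the final sanity remark about $\Kann \geq 99/100$ is a correct (if unnecessary) observation.
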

\begin{proof}
By Definition~\ref{def-annulus-geo'} of $\wt H_r(\Kann,\Cmid_0)$ and the definition~\eqref{eqn-initial-radii} of $\mcl R_0$, for each $r\in\mcl R_0$ it holds with probability at least $p_0$ that there exist $u \in \bdy B_{\Kann r}(0)$ and $v \in \bdy B_r(0)$ such that conditions~\ref{item-endpt-geodesic-dist} and~\ref{item-endpt-geodesic-initial} in the lemma statement hold and there is a $\wt D_h$-geodesic $\wt P$ from $u$ to $v$ which is contained in $\ol{\BB A}_{\Kann r , r}(0)$ and has Euclidean diameter at most $r/100$. 
Since $\wt P \subset \ol{\BB A}_{\Kann r , r}(0)$ and $\wt P$ has Euclidean diameter at most $r/100$, trivial geometric considerations show that $\wt P$ must be contained in the closure of one of the four horizontal or vertical half-annuli of $\BB A_{\Kann r , r}(0)$. Hence we can choose one such half-annulus $\Hr_r$ in a deterministic manner such that with probability at least $p_0/4$, conditions~\ref{item-endpt-geodesic-dist} and~\ref{item-endpt-geodesic-initial} in the lemma statement hold and $\wt P \subset \ol{\Hr}_r$, i.e., condition~\ref{item-endpt-geodesic-contained} holds. 
\end{proof}

Lemma~\ref{lem-endpt-geodesic} gives us a pair of points $u,v$ satisfying conditions~\ref{item-endpt-ball-dist} and~\ref{item-endpt-ball-annulus} in Lemma~\ref{lem-endpt-ball}.
We still need to check conditions~\ref{item-endpt-ball-leb} and~\ref{item-endpt-ball-around}.
Condition~\ref{item-endpt-ball-leb} will require the most work. 
To get this condition, we want to apply Lemma~\ref{lem-ball-bdy-union}. 
However, the points $u$ and $v$ are random, so we cannot just apply the lemma directly. Instead, we will apply Lemma~\ref{lem-ball-bdy-union} in conjunction with Lemma~\ref{lem-annulus-iterate} (independence across concentric annuli) and a union bound to cover space by balls where an event occurs which is closely related to the one in Lemma~\ref{lem-ball-bdy-union}. 
Then, we will use a geometric argument based on condition~\ref{item-endpt-geodesic-initial} of Lemma~\ref{lem-endpt-geodesic} to transfer from an estimate for balls containing $u$ and $v$ to an estimate for $u$ and $v$ themselves.
 
Let us now define the event to which we will apply Lemma~\ref{lem-annulus-iterate}. 
For $z \in \BB C$, $s>0$, and $R > 0$, let $G_s(z;R)$ be the event that the following is true.
\begin{enumerate}
\item The one-dimensional Lebesgue measure of the set of $x \in \bdy B_s(z)$ for which 
\eqbn
\wt D_h\left( x , \bdy B_{s/2}(z) ; \ol{\BB A}_{s/2,s}(z)  \right)  > R s^{\xi Q} e^{\xi h_s(z)} 
\eqen
is at most $(\llambda/2) s$.  \label{item-endpt-leb}
\item $\wt D_h(\text{around $\BB A_{s/2 ,  s}(z)$} ) \leq R s^{\xi Q} e^{\xi h_s(z)}$.  \label{item-endpt-around}
\item $\wt D_h\left(\text{across $\BB A_{s/2 , s}(z)$} \right) \geq (1/R)s^{\xi Q} e^{\xi h_s(z)}$. \label{item-endpt-across}
\end{enumerate}
Since the event $G_s(z;R)$ involves only internal distances in $\ol{\BB A}_{s/2,s}(z)$, the locality property (Axiom~\ref{item-metric-local}; see also Section~\ref{sec-closed}) implies that $G_s(z;R)$ is a.s.\ determined by $h|_{\ol{\BB A}_{s/2,s}(z)}$. 
Furthermore, by Weyl scaling (Axioms~\ref{item-metric-f}), the occurrence of $G_s(z;R)$ is unaffected by adding a constant to $h$. 
Therefore,
\eqb \label{eqn-endpt-event-msrble}
G_{s}(z ; R) \in \sigma\left( (h-h_{2s}(z))|_{\ol{\BB A}_{s/2 , s}(z)} \right) .
\eqe
We can also arrange that the probability of $G_s(z;R)$ is close to 1 by making $R$ large.

\begin{lem} \label{lem-endpt-event-prob}
For each $p \in (0,1)$, there exists $R  > 0$, depending only on $p,\llambda$ and the law of $\wt D_h$, such that for each $z\in\BB C$ and each $s > 0$, we have $\BB P[G_s(z; R)] \geq p$.
\end{lem}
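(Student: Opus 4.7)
The strategy is to reduce to $z=0$ and then control each of the three conditions in the definition of $G_s(z;R)$ separately, each with probability at least $1-p/3$ for $R$ large, and conclude by a union bound. By translation invariance (Axiom~\reftranslate) together with the translation invariance of the law of $h$ modulo additive constant, it suffices to treat the case $z=0$. Moreover, all three conditions defining $G_s(0;R)$ are unaffected by adding a constant to $h$: for conditions~\ref{item-endpt-around} and~\ref{item-endpt-across} this is immediate from Weyl scaling (Axiom~\ref{item-metric-f}) since adding $c$ to $h$ multiplies both sides of the inequality by $e^{\xi c}$; for condition~\ref{item-endpt-leb} the set appearing on the left is unchanged for the same reason, so we may (and will) verify everything with respect to the normalized field, extracting a factor $s^{\xi Q} e^{\xi h_s(0)}$ as needed.

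For conditions~\ref{item-endpt-around} and~\ref{item-endpt-across}, apply Lemma~\ref{lem-two-set-dist} together with the remark immediately following it (which gives the analogous uniform tail bounds for the distance around and across an annulus). This yields tightness of both
\eqbn
s^{-\xi Q} e^{-\xi h_s(0)}\wt D_h\left(\text{around }\BB A_{s/2,s}(0)\right) \quad\text{and}\quad s^{-\xi Q} e^{-\xi h_s(0)}\wt D_h\left(\text{across }\BB A_{s/2,s}(0)\right)
\eqen
together with the reciprocal of the latter, uniformly in $s>0$. Hence there exists $R_{2,3}=R_{2,3}(p)$, depending only on $p$ and the law of $\wt D_h$, such that each of conditions~\ref{item-endpt-around} and~\ref{item-endpt-across} holds with probability at least $1-p/3$ for $R\geq R_{2,3}$.

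Condition~\ref{item-endpt-leb} is the substantive point. I would establish a variant of Lemma~\ref{lem-ball-bdy-moment} for the internal metric $\wt D_h(\cdot,\cdot;\ol{\BB A}_{s/2,s}(0))$: for every $q\in(0,2Q/\xi)$ there is $C_q$, depending only on $q$ and the law of $\wt D_h$, such that
\eqbn
\BB E\left[\left(s^{-\xi Q}e^{-\xi h_s(0)} \wt D_h\left(x,\bdy B_{s/2}(0);\ol{\BB A}_{s/2,s}(0)\right)\right)^q\right] \leq C_q, \quad \forall x\in\bdy B_s(0),\;\forall s>0.
\eqen
This follows by the same string-of-loops construction used in the proof of Lemma~\ref{lem-ball-bdy-moment}, with the concentric loops $A_{\ep_k}$ chosen along the radial segment from $x$ to the nearest point of $\bdy B_{s/2}(0)$ (rather than the segment from $0$ to $w$); these loops stay inside $\ol{\BB A}_{s/2,s}(0)$ provided they are taken small enough, and the H\"older argument estimating the moments of distances around each small annulus using Lemma~\ref{lem-two-set-dist} goes through verbatim.

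Given this moment bound, Fubini's theorem and Markov's inequality give
\eqbn
\BB E\left[\left|\left\{x\in\bdy B_s(0)\,:\,\wt D_h\left(x,\bdy B_{s/2}(0);\ol{\BB A}_{s/2,s}(0)\right) > R s^{\xi Q} e^{\xi h_s(0)}\right\}\right|\right] \leq C_q R^{-q}\cdot 2\pi s,
\eqen
and a second application of Markov's inequality yields
\eqbn
\BB P\left[\left|\{\cdots\}\right| > (\llambda/2)s\right] \leq 4\pi C_q R^{-q}/\llambda.
\eqen
Choosing $q$ fixed in $(0,2Q/\xi)$ and then $R_1=R_1(p,\llambda)$ large enough, this probability is at most $p/3$. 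Taking $R=\max\{R_1,R_{2,3}\}$ and applying a union bound gives $\BB P[G_s(0;R)]\geq p$, uniformly in $s>0$, as desired. The main (though routine) obstacle is verifying that the string-of-loops argument of Lemma~\ref{lem-ball-bdy-moment} adapts to give a moment bound for the internal metric in the annulus; everything else is a direct application of existing tightness-across-scales estimates.
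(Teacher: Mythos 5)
Your plan is correct, and for conditions~\ref{item-endpt-around} and~\ref{item-endpt-across} it matches the paper's use of tightness across scales (via the remark after Lemma~\ref{lem-two-set-dist}) exactly. The difference is in how you handle condition~\ref{item-endpt-leb}: you propose to re-run the string-of-loops argument from Lemma~\ref{lem-ball-bdy-moment} inside the annulus $\ol{\BB A}_{s/2,s}(0)$ to produce a fresh moment bound for $\wt D_h(x,\bdy B_{s/2}(0);\ol{\BB A}_{s/2,s}(0))$, whereas the paper sidesteps this with a one-line monotonicity observation. Namely, any path from $x\in\bdy B_s(z)$ to the center $z$ inside $\ol B_s(z)$ must first cross $\bdy B_{s/2}(z)$, and the initial segment of such a path (up to the first hitting time of $\bdy B_{s/2}(z)$) lies entirely in $\ol{\BB A}_{s/2,s}(z)$. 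Hence
\eqbn
\wt D_h\left(x,\bdy B_{s/2}(z);\ol{\BB A}_{s/2,s}(z)\right) \leq \wt D_h\left(x , z ; \ol B_s(z)\right),
\eqen
so the ``bad'' set in condition~\ref{item-endpt-leb} is contained in the bad set already controlled (in expectation) by Lemma~\ref{lem-ball-bdy-union}, and Markov finishes the job without any new estimate. Your route is sound --- the loops along the radial segment from $x$ to $x/2$ do stay inside $\ol{\BB A}_{s/2,s}(0)$ once their radii are shrunk near both ends of the segment, and fixing one $q\in(0,2Q/\xi)$ is enough since you only need polynomial decay in $R$ --- but it rebuilds a nontrivial moment estimate that the domain-monotonicity of internal metrics lets you inherit for free from the already-proven disk version. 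The paper's shortcut is worth internalizing: internal distances can only decrease when the target set moves closer, so bounds on distances to a point inside a ball immediately transfer to distances to an intermediate circle.
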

\begin{proof}
By Lemma~\ref{lem-ball-bdy-union} (and the fact that a path from $x \in \bdy B_s(z)$ to $z$ must hit $\bdy B_{s/2}(z)$), if $R$ is chosen to be sufficiently large, depending only on $p$ and the law of $\wt D_h$, then the first condition in the definition of $G_s(z;R)$ has probability at least $1-p/3$. By tightness across scales (Axiom~\refcoord), after possibly increasing $R$ we can arrange that the other two conditions in the definition of $G_s(z;R)$ also have probability at least $p$. 
\end{proof}

Let us now apply Lemma~\ref{lem-annulus-iterate} to get the following.

\begin{lem} \label{lem-endpt-event-union}
There exists $R > 0$, depending only on $\llambda$ and the law of $\wt D_h$, such that for each $r > 0$, it holds with polynomially high probability as $\ep\rta 0$ (at a rate depending only on $\llambda$ and the law of $\wt D_h$) such that the following is true. For each point
\eqb \label{eqn-endpt-union-set}
z \in \left\{ \Kann r e^{i \llambda \ep k} : k \in [1,2\pi\llambda^{-1} \ep^{-1} ]_{\BB Z} \right\} \cup \left\{  r e^{i \llambda \ep k} : k \in [1,2\pi\llambda^{-1} \ep^{-1} ]_{\BB Z} \right\}
\eqe 
we have
\eqb
\#\left\{ k\in \left[ \frac12 \log_4 \ep^{-1}  , \log_4 \ep^{-1} \right]_{\BB Z}  : G_{4^{-k}  r}(z;R) \: \text{occurs} \right\} \geq \frac38 \log_4 \ep^{-1} .
\eqe
\end{lem}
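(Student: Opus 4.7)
The plan is to apply Lemma~\ref{lem-annulus-iterate} (independence across concentric annuli) to the events $G_{r_k}(z;R)$ at the dyadic scales $r_k := 4^{-k} r$ for $k \in [\tfrac12 \log_4 \ep^{-1}, \log_4 \ep^{-1}]_{\BB Z}$, separately for each point $z$ in the set~\eqref{eqn-endpt-union-set}, and then to conclude by a union bound over $z$. Note that the set~\eqref{eqn-endpt-union-set} has cardinality $O_\ep(\ep^{-1})$.

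First I would fix $p \in (0,1)$ (to be chosen close to $1$ at the end) and invoke Lemma~\ref{lem-endpt-event-prob} to obtain $R > 0$ depending only on $p$, $\llambda$, and the law of $\wt D_h$ such that $\BB P[G_s(z;R)] \geq p$ for every $z \in \BB C$ and $s > 0$. Next I would verify the measurability hypothesis of Lemma~\ref{lem-annulus-iterate} (applied at the point $z$ rather than at $0$, which is allowed by translation invariance of the law of $h$ modulo additive constant): by~\eqref{eqn-endpt-event-msrble}, $G_{r_k}(z;R) \in \sigma((h-h_{2r_k}(z))|_{\ol{\BB A}_{r_k/2, r_k}(z)})$, and since $h_{2r_k}(z) - h_{r_k}(z)$ is a scalar, this $\sigma$-algebra coincides with $\sigma((h-h_{r_k}(z))|_{\ol{\BB A}_{r_k/2, r_k}(z)})$. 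Thus the hypotheses of Lemma~\ref{lem-annulus-iterate} are met with $s_1 = 1/2$, $s_2 = 1$, and $r_{k+1}/r_k = 1/4 \leq s_1$.

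Now fix $z$ in the set~\eqref{eqn-endpt-union-set} and set $K := \lfloor \tfrac12 \log_4 \ep^{-1} \rfloor$, so we are looking at $K$ scales. Applying assertion~\ref{item-annulus-iterate} of Lemma~\ref{lem-annulus-iterate} with $b = 3/4$ and with $a > 0$ arbitrary, we get constants $p_*(a) \in (0,1)$ and $c(a) > 0$ such that if $p \geq p_*(a)$ then
\eqbn
\BB P\bigl[ \#\{k : G_{r_k}(z;R) \text{ occurs}\} < \tfrac34 K \bigr] \leq c(a)\, e^{-aK} \leq c(a)\, \ep^{a/(2\log 4)}.
\eqen
Since $\tfrac34 K \geq \tfrac38 \log_4 \ep^{-1}$ (for $\ep$ small), the complement of our target event on each fixed $z$ is covered by this bound. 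Taking a union bound over the $O_\ep(\ep^{-1})$ choices of $z$ in~\eqref{eqn-endpt-union-set} yields a failure probability of order $\ep^{a/(2\log 4) - 1}$.

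The only (very minor) obstacle is choosing $a$ large enough to beat the union-bound factor $\ep^{-1}$: we simply pick $a > 2 \log 4$, then choose $p$ close enough to $1$ (and hence $R$ large enough) that $p \geq p_*(a)$, via Lemma~\ref{lem-endpt-event-prob}. This gives the desired polynomially high probability bound, with exponent depending only on $\llambda$ and the law of $\wt D_h$.
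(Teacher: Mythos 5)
Your proof is essentially the paper's proof: apply the independence-across-annuli Lemma~\ref{lem-annulus-iterate} at each point of~\eqref{eqn-endpt-union-set}, using Lemma~\ref{lem-endpt-event-prob} to make $\BB P[G_s(z;R)]$ close to $1$, then a union bound over the $O_\ep(\ep^{-1})$ points. Two small technical wrinkles in your verification are worth fixing, though neither affects the validity of the approach.

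First, with $r_k = 4^{-k} r$ the annulus $\ol{\BB A}_{r_k/2,\, r_k}(z)$ corresponds to $s_1 = 1/2$, $s_2 = 1$, but Lemma~\ref{lem-annulus-iterate} requires $s_2 < 1$. The clean fix is to set $\tilde r_k := 2 \cdot 4^{-k} r$, so that $\ol{\BB A}_{r_k/2,\, r_k}(z) = \ol{\BB A}_{\tilde r_k / 4,\, \tilde r_k/2}(z)$ and one can take $s_1 = 1/4$, $s_2 = 1/2$ with $\tilde r_{k+1}/\tilde r_k = 1/4 = s_1$. This re-indexing is, incidentally, the reason the measurability statement~\eqref{eqn-endpt-event-msrble} normalizes by $h_{2s}(z)$ rather than $h_s(z)$: it matches the $h_{\tilde r_k}(z)$ normalization Lemma~\ref{lem-annulus-iterate} expects.

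Second, your claim that $\sigma\bigl((h-h_{2r_k}(z))|_{\ol{\BB A}_{r_k/2,\,r_k}(z)}\bigr) = \sigma\bigl((h-h_{r_k}(z))|_{\ol{\BB A}_{r_k/2,\,r_k}(z)}\bigr)$ because $h_{2r_k}(z) - h_{r_k}(z)$ is a scalar is not correct as a statement about $\sigma$-algebras: neither circle average is measurable with respect to the other normalized restriction. What is true (and what the paper uses) is that $G_s(z;R)$ is determined by $h|_{\ol{\BB A}_{s/2,s}(z)}$ viewed modulo additive constant, by locality and Weyl scaling, and hence lies in $\sigma((h-c)|_{\ol{\BB A}_{s/2,s}(z)})$ for \emph{any} choice of normalizing constant $c$. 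With the re-indexing above, this is exactly the measurability hypothesis Lemma~\ref{lem-annulus-iterate} requires, and the rest of your argument (choosing $a$ to beat the $\ep^{-1}$ union-bound factor, then $p$ and $R$ depending on $a$) is correct.
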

\begin{proof}
By~\eqref{eqn-endpt-event-msrble} and Lemma~\ref{lem-endpt-event-prob} (applied with $p$ sufficiently close to 1), we can apply Lemma~\ref{lem-annulus-iterate} (independence across concentric annuli) to get the following. There exists $R > 0$ as in the lemma statement such that for each $z\in\BB C$ and each $r > 0$, 
\eqbn
\BB P\left[ \#\left\{ k\in \left[ \frac12 \log_4 \ep^{-1}  , \log_4 \ep^{-1} \right]_{\BB Z}  : G_{4^{-k}   r}(z;R) \: \text{occurs} \right\} \geq \frac38 \log_4 \ep^{-1} \right] \geq 1 - O_\ep(\ep^2) .
\eqen
The lemma follows from this and a union bound over the $O_\ep(\ep^{-1})$ points in the set~\eqref{eqn-endpt-union-set}. 
\end{proof}

The following lemma is the main step in the proof of Lemma~\ref{lem-endpt-ball}.

\begin{lem} \label{lem-endpt-ball0}
There exist $\Aendpt  \in (0,\llambda (1-\Kann)^2]$ and $\pr \in (0,1)$ (depending only on $\llambda$ and the laws of $D_h$ and $\wt D_h$) such that for each $r\in\mcl R_0$, there exist a deterministic vertical or horizontal half-annulus $\Hr_r  \subset \BB A_{\Kann r , r}(0)$, a deterministic radius $\sr_r \in [\Aendpt  r , \Aendpt^{1/2} r]   \cap \{4^{-k} r\}_{k\in\BB N}$, and deterministic points $\ur_r , \vr_r \in \bdy\Hr_r$ as in~\eqref{eqn-endpt-centers} such that with probability at least $2 \pr$, the following is true.
There exist non-singular points $u \in \bdy B_{\Kann r}(0) \cap B_{\sr_r}(\ur_r)$ and $v \in \bdy B_r(0) \cap B_{\sr_r}(\vr_r)$ such that conditions~\ref{item-endpt-ball-dist}, \ref{item-endpt-ball-annulus}, and~\ref{item-endpt-ball-leb} from Lemma~\ref{lem-endpt-ball} hold. 
\end{lem}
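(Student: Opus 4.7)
The plan is to combine the random pair of points $u, v$ furnished by Lemma~\ref{lem-endpt-geodesic} with a pigeonhole step to localize them at deterministic balls, and then to impose the annular regularity events $G_s(\cdot; R)$ so that the short-geodesic information for $u, v$ can be spread out to condition~\ref{item-endpt-ball-leb}.

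First, for each $r \in \mcl R_0$ I would invoke Lemma~\ref{lem-endpt-geodesic} to fix a deterministic half-annulus $\Hr_r$ on which, with probability at least $p_0/4$, there exist non-singular $u, v$ satisfying conditions~\ref{item-endpt-ball-dist} and~\ref{item-endpt-ball-annulus} of Lemma~\ref{lem-endpt-ball} together with the auxiliary regularity bound~\ref{item-endpt-geodesic-initial}. Fix $\Aendpt \in (0, \llambda(1-\Kann)^2]$ to be chosen later in terms of $\llambda$ and the laws of $D_h, \wt D_h$. The collection of candidate triples $(\sr, \ur, \vr)$ with $\sr \in [\Aendpt r, \Aendpt^{1/2} r] \cap \{4^{-k} r\}_{k \in \BB N}$ and $\ur, \vr$ taken from the finite lattices in~\eqref{eqn-endpt-centers} has cardinality $N_\Aendpt$ depending only on $\Aendpt$ and $\llambda$, and for each realization of $u, v$ at least one triple satisfies $u \in B_{\sr/2}(\ur)$ and $v \in B_{\sr/2}(\vr)$. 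A pigeonhole argument then yields a deterministic choice $(\sr_r, \ur_r, \vr_r)$ for which, with probability at least $p_1 := p_0 / (4 N_\Aendpt^2)$, the above event occurs together with $u \in B_{\sr_r/2}(\ur_r)$ and $v \in B_{\sr_r/2}(\vr_r)$.

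Next, I would intersect with two high-probability auxiliary events. By Lemma~\ref{lem-endpt-event-prob} applied with $p$ sufficiently close to $1$ depending on $p_1$, there exists $R$ depending only on $\llambda$ and the laws such that $\BB P[G_{\sr_r}(\ur_r; R) \cap G_{\sr_r}(\vr_r; R)] \geq 1 - p_1/10$. By Lemma~\ref{lem-two-set-dist} applied to $\bdy B_{\Kann r}(0) \cap \ol\Hr_r$ and $\bdy B_r(0) \cap \ol\Hr_r$, with probability at least $1 - p_1/10$ and uniformly in $r$, the lower bound $\wt D_h(u, v) \geq c_0 r^{\xi Q} e^{\xi h_r(0)}$ holds for some $c_0 > 0$ depending only on the laws. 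The intersection has probability at least $p_1/2 =: 2\pr$, and on it all of conditions~\ref{item-endpt-ball-dist}, \ref{item-endpt-ball-annulus}, the localization of $u, v$, and the $G$-events hold simultaneously.

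Finally, on this event I would verify condition~\ref{item-endpt-ball-leb} for $u$ (the argument for $v$ is identical) by routing short paths through a disconnecting loop. Using the ``around'' clause of $G_{\sr_r}(\ur_r; R)$, select a path $\pi$ in $\ol{\BB A}_{\sr_r/2, \sr_r}(\ur_r)$ disconnecting the two boundaries of the annulus and having $\wt D_h$-length at most $R \sr_r^{\xi Q} e^{\xi h_{\sr_r}(\ur_r)}$. Taking $\delta = 2\sr_r/r$ in~\ref{item-endpt-geodesic-initial}, a $\wt D_h$-geodesic $\gamma$ from $u$ to $\bdy B_{2\sr_r}(u)$ has length at most $(2\Aendpt^{1/2})^\geoExp \wt D_h(u, v)$; since $|u - \ur_r| < \sr_r/2$, its endpoint lies outside $\ol B_{\sr_r}(\ur_r)$ and $\gamma$ must cross $\pi$, so truncating at the first crossing confines the path to the bounded complementary component of $\pi$, which is contained in $\ol B_{\sr_r}(\ur_r)$, yielding $\wt D_h(u, \pi; \ol B_{\sr_r}(\ur_r)) \leq (2\Aendpt^{1/2})^\geoExp \wt D_h(u, v)$. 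Applying the same truncation to the short $x \to \bdy B_{\sr_r/2}(\ur_r)$ paths in $\ol{\BB A}_{\sr_r/2, \sr_r}(\ur_r)$ furnished by condition~\ref{item-endpt-leb} of $G_{\sr_r}(\ur_r;R)$ gives $\wt D_h(x, \pi; \ol B_{\sr_r}(\ur_r)) \leq R \sr_r^{\xi Q} e^{\xi h_{\sr_r}(\ur_r)}$ for all $x$ outside a set of one-dimensional Lebesgue measure at most $(\llambda/2) \sr_r$. The triangle inequality through $\pi$, whose $\wt D_h$-diameter via paths in $\ol B_{\sr_r}(\ur_r)$ is bounded by its own $\wt D_h$-length, then combines with the comparison of $\sr_r^{\xi Q} e^{\xi h_{\sr_r}(\ur_r)}$ to $\wt D_h(u, v)$ (via the lower bound $\wt D_h(u,v) \geq c_0 r^{\xi Q} e^{\xi h_r(0)}$ together with tightness across scales, Axiom~\refcoord) to give $\wt D_h(x, u; \ol B_{\sr_r}(\ur_r)) \leq \Clower \llambda \wt D_h(u, v)$ for $\Aendpt$ chosen sufficiently small depending only on $\llambda, R, c_0, \Clower, \geoExp$; bi-Lipschitz equivalence~\eqref{eqn-bilip} then converts this to the required $D_h$ bound. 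The main technical obstacle is ensuring that every glued path segment remains inside $\ol B_{\sr_r}(\ur_r)$: unlike in the subcritical analysis of~\cite{gm-uniqueness}, Euclidean closeness does not imply $D_h$-closeness in the supercritical regime, and it is precisely the topological detour through the disconnecting loop $\pi$ that makes the argument robust enough to cover all $\xi > 0$.
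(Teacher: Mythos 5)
Your overall geometric and metric strategy is close in spirit to the paper's (same lemmas, same idea of routing through a disconnecting loop in $\ol{\BB A}_{\sr_r/2,\sr_r}(\ur_r)$), but the order in which you fix the parameters creates a genuine circular dependency that sinks the argument as written. You pigeonhole \emph{first} to obtain a deterministic triple $(\sr_r,\ur_r,\vr_r)$ with probability $p_1 = p_0/(4N_\Aendpt^2)$, and only \emph{then} choose $R$ via Lemma~\ref{lem-endpt-event-prob} large enough that the $G$-events have probability at least $1-p_1/10$. Since $p_1$ depends on $N_\Aendpt$, hence on $\Aendpt$, this makes $R = R(\Aendpt)$ — your assertion that $R$ depends only on $\llambda$ and the laws contradicts your own next clause ``depending on $p_1$''. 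But your final verification of condition~\ref{item-endpt-ball-leb} needs $\Aendpt$ small in terms of $R$ (to kill the factor $R^2$ against $(2\Aendpt^{1/2})^\geoExp$), i.e.\ $\Aendpt = \Aendpt(R)$. Since the failure probability of $G_s(\cdot;R)$ decays only polynomially in $R$ (Lemma~\ref{lem-ball-bdy-union}), and $N_\Aendpt$ grows polynomially as $\Aendpt\to 0$, whether the system $\Aendpt \mapsto R(\Aendpt) \mapsto \Aendpt(R)$ admits a solution comes down to an uncontrolled comparison of the exponent $\geoExp$ (which is small and implicit) with $\xi Q$, and there is no reason for it to close. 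This is exactly the obstacle the paper's proof is designed to avoid: it uses Lemma~\ref{lem-endpt-event-union} (concentric-annulus iteration via Lemma~\ref{lem-annulus-iterate}) so that $R$ is fixed \emph{first}, independently of $\Aendpt$ — the iteration supplies many independent attempts at a fixed-probability $G$-event, giving a good \emph{random} scale $s=4^{-k_*}r$ with polynomially high probability — and only at the very end does it pigeonhole over the finitely many values of $(s,z_1,z_2)$ to make the choice deterministic.

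There is a second, smaller soft spot: to bound the output of $G_{\sr_r}(\ur_r;R)$ (which comes in units of $\sr_r^{\xi Q} e^{\xi h_{\sr_r}(\ur_r)}$) against $\wt D_h(u,v)$, you invoke Lemma~\ref{lem-two-set-dist} plus ``tightness across scales'' to relate $\sr_r^{\xi Q} e^{\xi h_{\sr_r}(\ur_r)}$ to $r^{\xi Q} e^{\xi h_r(0)}$. Tightness gives control of the \emph{law} of the ratio, but not smallness of the ratio on a given high-probability event; $h_{\sr_r}(\ur_r)-h_r(0)$ is a Gaussian of growing variance, so one would need a separate Gaussian-tail/union-bound event to make this comparison rigorous. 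The paper sidesteps this entirely through the clean chain~\eqref{eqn-endpt-len}: condition~\ref{item-endpt-geodesic-initial} controls $\wt D_h(u,\bdy B_{2\Aendpt^{1/2}r}(u))$, the ball inclusions control $\wt D_h(u,\bdy B_s(z_1))$, and condition~\ref{item-endpt-across} in $G_s(z_1;R)$ converts the across-distance into a lower bound on $s^{\xi Q}e^{\xi h_s(z_1)}$ — no scale-change for circle averages is needed. I would recommend restructuring along these lines: choose $R$ first via the iteration lemma, do the analysis at a random scale $k_*$ common to $z_1$ and $z_2$, derive~\eqref{eqn-endpt-len}-style bounds, and only then pigeonhole.
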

 \begin{proof}
\noindent\textit{Step 1: setup.}
Let $\Kann$ and $p_0$ be as in the definition of $\mcl R_0$ from~\eqref{eqn-initial-radii}. Let the half-annulus $\Hr_r$ for $r\in\mcl R_0$ be as in Lemma~\ref{lem-endpt-geodesic} and let $R > 0$ be as in Lemma~\ref{lem-endpt-event-union}.
Also let $\Aendpt > 0$ be small enough so that the event of Lemma~\ref{lem-endpt-event-union} with $\Aendpt$ in place of $\ep$ occurs with probability at least $1-p_0/8$. We can arrange that $\Aendpt$ is small enough so that 
\eqb \label{eqn-Aendpt-choice} 
\Aendpt \leq \llambda (1- \Kann)^2 \quad \text{and} \quad (2R^2 + 1) (2\Aendpt)^{\geoExp} \leq  \llambda^2 ,
\eqe 
where $\geoExp$ is as in Lemma~\ref{lem-endpt-geodesic}. 
Then with probability at least $p_0/8$, the event of Lemma~\ref{lem-endpt-geodesic} and the event of Lemma~\ref{lem-endpt-event-union} with $\ep = \Aendpt$ both occur. Henceforth assume that these two events occur.

Let $\wt P$ be the $\wt D_h$-geodesic from $u$ to $v$ which is contained in $\ol{\Hr}_r$, as in Lemma~\ref{lem-endpt-geodesic}.
By the conditions in Lemma~\ref{lem-endpt-geodesic}, the conditions~\ref{item-endpt-ball-dist} and~\ref{item-endpt-ball-annulus} in the statement of Lemma~\ref{lem-endpt-ball} hold for this choice of $u,v$, and $\wt P$. 
It remains to deal with condition~\ref{item-endpt-ball-leb}. 
\medskip

\noindent\textit{Step 2: reducing to a statement for a random radius and pair of points.}
We can choose random points
\alb
z_1 &\in \bdy \Hr_r \cap \left\{ \Kann r e^{i \llambda \Aendpt k} : k \in [1,2\pi \llambda^{-1} \Aendpt^{-1} ]_{\BB Z} \right\} \quad \text{and} \notag\\
z_2 &\in \bdy \Hr_r \cap \left\{  r e^{i \llambda \Aendpt k} : k \in [1,2\pi\llambda^{-1} \Aendpt^{-1} ]_{\BB Z} \right\}
\ale
such that 
\eqb \label{eqn-endpt-z}
|u-z_1| \leq \Aendpt r/50 \quad \text{and} \quad |v-z_2| \leq \Aendpt r/50 . 
\eqe
The event of Lemma~\ref{lem-endpt-event-union} (with $\ep = \Aendpt$) implies that for each $i\in \{1,2\}$, there are at least $\frac38 \log_4\Aendpt^{-1}$ values of $k\in \left[ \frac12 \log_4 \Aendpt^{-1}  , \log_4 \Aendpt^{-1} \right]_{\BB Z}$ such that $ G_{4^{-k}  r}(z_i ;R)$ occurs. Since the number of choices for $k$ is at most $\frac12 \log_4 \Aendpt^{-1}$, there must be some (random) $k_* \in \left[ \frac12 \log_4 \Aendpt^{-1}  , \log_4 \Aendpt^{-1} \right]_{\BB Z}$ such that $ G_{4^{-k_*}  r}(z_1 ;R) \cap G_{4^{-k_*} r}(z_2 ; R)$ occurs. 
We pick one such value of $k_*$ in a measurable manner and set
\eqb \label{eqn-endpt-s}
s := 4^{-k_*} r,\quad \text{so that} \quad s \in [\Aendpt  r , \Aendpt^{1/2} r]   \cap \{4^{-k} r\}_{k\in\BB N} .
\eqe 

We claim that condition~\ref{item-endpt-ball-leb} in Lemma~\ref{lem-endpt-ball} holds with $s$ in place of $\sr_r$ and $z_1,z_2$ in place of $\ur_r , \vr_r$. 
Once the claim has been proven, we have that with probability at least $p_0/8$, the conditions in the lemma statement hold with the random variables $s,z_1,z_2$ in place of the deterministic parameters $\sr_r , \ur_r , \vr_r$. The number of possible choices for $s$ is at most $\frac12 \log_4 \Aendpt^{-1}$ and the number of possible choices for each of $z_1,z_2$ is at most a constant (depending only on $\llambda$ and the laws of $D_h$ and $\wt D_h$) times $\Aendpt^{-1}$. Therefore, our claim implies that there is some constant $\pr > 0$ (which depends only on $p_0$ and $\Aendpt$, hence only on the laws of $D_h$ and $\wt D_h$) and a \emph{deterministic} choice of parameters $\sr_r , \ur_r$, and $\vr_r$ such that with probability at least $2\pr$, the conditions of the lemma statement hold for $\sr_r , \ur_r$, and $\vr_r$. 
\medskip

\noindent\textit{Step 3: estimates for distances in $B_s(z_1)$ and $B_s(z_2)$.}
It remains to prove the claim in the preceding paragraph. 
By our choices of $z_1 , z_2 $~\eqref{eqn-endpt-z} and $s$~\eqref{eqn-endpt-s},  
\eqb \label{eqn-endpt-ball-include}
u \in B_{s/2}(z_1)  \subset B_s(z_1) \subset B_{2\Aendpt^{1/2} r}(u) \quad \text{and} \quad v\in B_{s/2}(z_2) \subset B_s(z_2) \subset B_{2\Aendpt^{1/2} r}(v). 
\eqe
From this, condition~\ref{item-endpt-geodesic-initial} from Lemma~\ref{lem-endpt-geodesic} (with $\delta = 2\Aendpt^{1/2}$), and the definition of $G_s(z_i ; R)$, we obtain 
\allb \label{eqn-endpt-len}
(2\Aendpt^{1/2})^\geoExp  \wt D_h(u,v)
&\geq  \max\left\{ \wt D_h\left( u , \bdy B_{2\Aendpt^{1/2} r}(u) \right) , \wt D_h\left( v , \bdy B_{2\Aendpt^{1/2} r}(v) \right) \right\}         \quad \text{(by Lemma~\ref{lem-endpt-geodesic})} \notag \\ 
&\geq \max\left\{ \wt D_h\left( u , \bdy B_{s}(z_1) \right) , \wt D_h\left( v , \bdy B_{s}(z_2) \right) \right\}   \quad \text{(by~\eqref{eqn-endpt-ball-include})}\notag \\
&\geq \max_{i\in \{1,2\}} \wt D_h\left(\text{across $\BB A_{s/2 , s}(z_i)$} \right) \notag\\
&\qquad\qquad\qquad \text{(since $u \in B_{s/2}(z_1)$ and $v\in B_{s/2}(z_2)$)} \notag \\
&\geq \frac{1}{R} \max_{i\in\{1,2\}} s^{\xi Q} e^{\xi h_s(z_i)}   \quad \text{(by condition~\ref{item-endpt-across} for $G_s(z_i ;R)$)} . 
\alle
We now apply~\eqref{eqn-endpt-len} to upper-bound the quantities $s^{\xi Q} e^{\xi h_s(z_i)}$ appearing in conditions~\ref{item-endpt-leb} and~\ref{item-endpt-around} in the definition of $G_s(z_i ; R)$. Upon doing so, we obtain the following observations for $i=1,2$.
\begin{enumerate}[(i)]
\item The one-dimensional Lebesgue measure of the set of $x \in \bdy B_{ s}(z_i)$ for which 
\eqbn
 \wt D_h\left( x , \bdy B_{s/2}(z_i) ; \ol B_{ s}(z_i)  \right)  > R^2 (2\Aendpt^{1/2})^\geoExp  \wt D_h(u,v)
\eqen
is at most $(\llambda/2) s$.  \label{item-use-endpt-leb}
\item We have \label{item-use-endpt-around} 
\eqb \label{eqn-use-endpt-around}
\wt D_h\left(\text{around $\BB A_{s/2 , s}(z_i)$} \right)
\leq R^2 (2\Aendpt^{1/2})^\geoExp  \wt D_h(u,v) .
\eqe
\end{enumerate}
\medskip

\begin{figure}[ht!]
\begin{center}
\includegraphics[width=.5\textwidth]{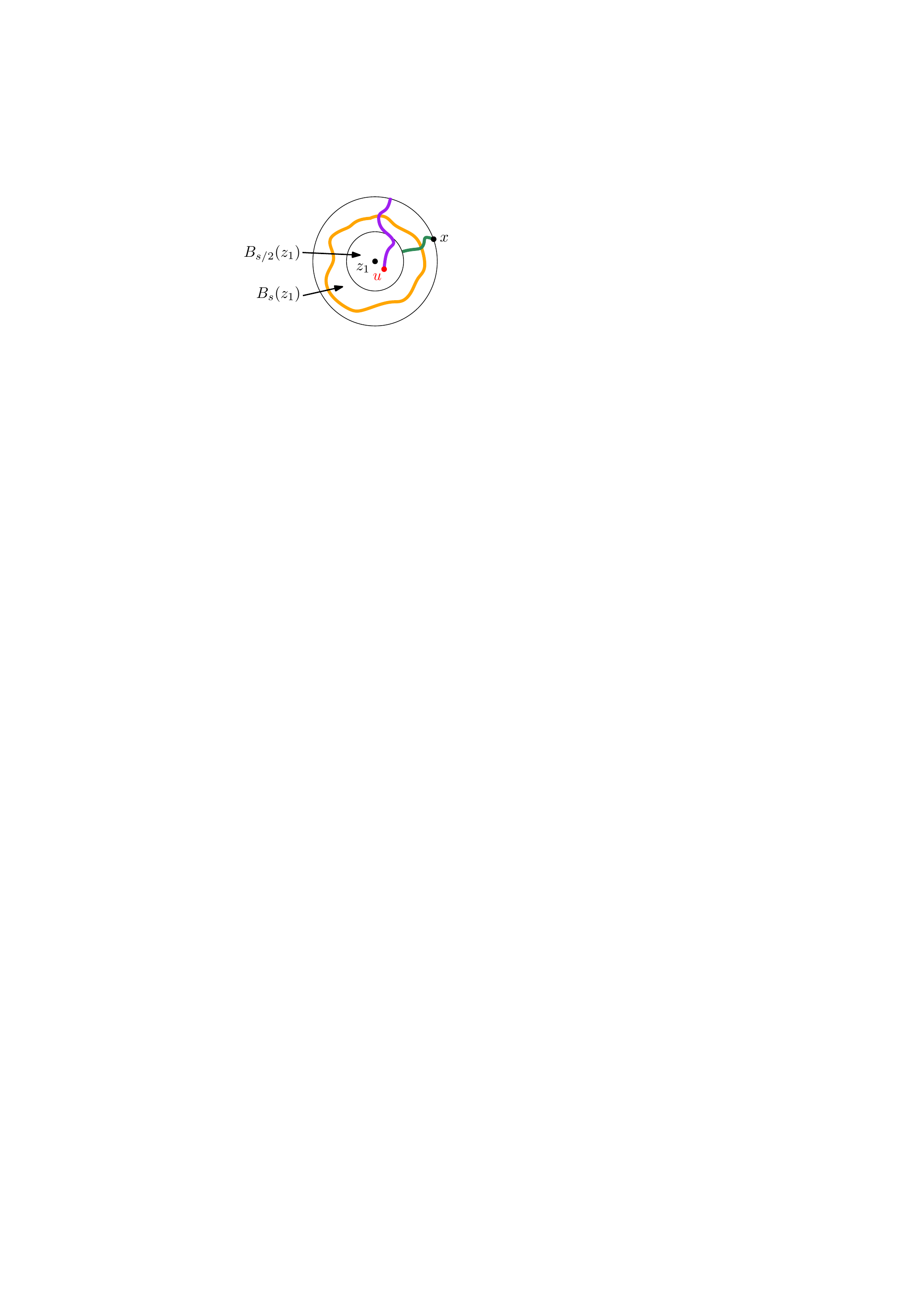} 
\caption{\label{fig-endpt-ball0} Illustration of the proof of condition~\ref{item-endpt-ball-leb} in Lemma~\ref{lem-endpt-ball} with $(s,z_1)$ in place of $(\sr_r , \ur_r)$. 
The concatenation of the purple, orange, and green paths in the figure contains a path from $u$ to $x$. The $\wt D_h$-length of the purple path can be bounded above in terms of $\wt D_h(u,v)$ by condition~\ref{item-endpt-geodesic-initial} from Lemma~\ref{lem-endpt-geodesic}. The $\wt D_h$-length of the orange path can be bounded above in terms of $\wt D_h(u,v)$ using~\eqref{eqn-use-endpt-around}, which in turn is proven using conditions~\ref{item-endpt-around} and~\ref{item-endpt-across} in the definition of $G_s(z_1 ; R)$. 
For most points $x\in \bdy B_s(z_1)$, the $\wt D_h$-length of the green path can be bounded above in terms of $\wt D_h(u,v)$ by condition~\ref{item-endpt-leb} in the definition of $G_s(z_1;R)$. 
}
\end{center}
\end{figure}

\noindent\textit{Step 4: checking condition~\ref{item-endpt-ball-leb}.} 
If $x \in \bdy B_{ s}(z_1)$, then the union of any path from $x$ to $\bdy B_{s/2}(z_1)$, any path in $\BB A_{s/2, s}(z_1)$ which disconnects the inner and outer boundaries of $\BB A_{s/2,s}(z_i)$, and any path from $u$ to $\bdy B_s(z_1)$ must contain a path from $u$ to $x$ (see Figure~\ref{fig-endpt-ball0}). 
By~\eqref{eqn-use-endpt-around} and the second inequality in~\eqref{eqn-endpt-len}, we therefore have 
\allb \label{eqn-endpt-conc}
\wt D_h\left( x , u  ; \ol B_{ s}(z_1) \right)
&\leq \wt D_h\left( x , \bdy B_{s/2}(z_1) ; \ol B_{  s}(z_1)  \right) +  \wt D_h\left(\text{around $\BB A_{s/2 , s}(z_1)$} \right) \notag\\
&\qquad \qquad \qquad +  \wt D_h\left( u , \bdy B_s(z_1) \right) \notag\\ 
&\leq \wt D_h\left( x , \bdy B_{s/2}(z_1) ; \ol B_{ s}(z_1)  \right) +  \left( R^2   + 1 \right) (2\Aendpt^{1/2})^\geoExp  \wt D_h(u,v)  . 
\alle

By combining~\eqref{eqn-endpt-conc} with observation~\eqref{item-use-endpt-leb} above, we get that for all $x\in \bdy B_s(z_1)$ except on a set of one-dimensional Lebesgue measure at most $(\llambda/2) s $, 
\eqb \label{eqn-endpt-bdy-A}
\wt D_h\left( x , u  ; \ol B_{ s}(z_1) \right) 
 \leq (2R^2 + 1) (2 \Aendpt)^{\geoExp }  \wt D_h(u,v)  .
\eqe
By~\eqref{eqn-endpt-bdy-A} and our choice of $\Aendpt$ in~\eqref{eqn-Aendpt-choice}, we get that for all $x\in \bdy B_s(z_1)$ except on a set of one-dimensional Lebesgue measure at most $(\llambda/2) s$,  
\eqb \label{eqn-endpt-bdy-wt}
\wt D_h\left( x , u  ; \ol B_{ s}(z_1) \right)  \leq       \llambda^2 \wt D_h(u,v)      .
\eqe
Since $\llambda < \Clower  $, the estimate~\eqref{eqn-endpt-bdy-wt} together with the bi-Lipschitz equivalence of $D_h$ and $\wt D_h$ implies that
\eqb
  D_h\left( x , u  ; \ol B_{ s}(z_1) \right)  \leq       \llambda \wt D_h(u,v)  . 
\eqe
This gives condition~\ref{item-endpt-ball-leb} in Lemma~\ref{lem-endpt-ball} with $z_1$ in place of $\ur_r$ and $s$ in place of $\sr_r$. The analagous bound with $z_2$ in place of $\vr_r$ and $s$ in place of $\sr_r$ is proven similarly. 
\end{proof}

\begin{proof}[Proof of Lemma~\ref{lem-endpt-ball}]
Let $\pr$ be as in Lemma~\ref{lem-endpt-ball0}. In light of Lemma~\ref{lem-endpt-ball0}, it suffices to find $\Aloc > 3$ such that with probability at least $1-\pr$, condition~\ref{item-endpt-ball-around} in the lemma statement holds, i.e., there exists $t \in [3r , \Aloc r]$ such that 
\eqb \label{eqn-endpt-ball-around}
  D_h\left(\text{around $\BB A_{t,2t}(0)$} \right) \leq  \llambda D_h\left(\text{across $\BB A_{2t,3t}(0)$} \right) .
\eqe 
One can easily check using a ``subtracting a bump function" argument and Weyl scaling (Axiom~\ref{item-metric-f}) that there exists $q  \in (0,1)$ (depending only on $\llambda$ and the law of $D_h$) such that for each fixed $t > 0$, the probability of the event in~\eqref{eqn-endpt-ball-around} is at least $q$. See~\cite[Lemma 6.1]{gwynne-ball-bdy} for similar argument. We can then apply assertion~\ref{item-annulus-iterate-pos} of Lemma~\ref{lem-annulus-iterate} to a collection of logarithmically many evenly spaced radii $t_k \in [3r , \Aloc r]$ to find that the probability that there does not exist $t\in [3r , \Aloc r]$ such that~\eqref{eqn-endpt-ball-around} holds decays like a negative power of $\Aloc$ as $\Aloc \rta\infty$, at a rate which depends only on the laws of $D_h$ and $\wt D_h$.  
We can therefore choose $\Aloc$ large enough so that this probability is at most $\pr$, as required.
\end{proof}

\subsection{Building block event}
\label{sec-block-event}

We will use Lemma~\ref{lem-endpt-ball} to define an event which will be the ``building block" for the event $\Er_r = \Er_{0,r}$. 
Let the parameters $  \Aloc , \pr > 0$, the half-annulus $\Hr_r \subset \BB A_{\Kann r , r}(0)$, the radius $\sr_r \in [\Aendpt r , \Aendpt^{1/2} r] \cap \{4^{-k} r\}_{k\in\BB N}$, and the points
\alb
\ur_r &\in \bdy \Hr_r \cap \left\{ \Kann r e^{i \llambda \Aendpt k} : k \in [1,2\pi \llambda^{-1} \Aendpt^{-1} ]_{\BB Z} \right\}   \quad \text{and} \notag\\
\vr_r &\in \bdy \Hr_r \cap \left\{  r e^{i \llambda \Aendpt k} : k \in [1,2\pi\llambda^{-1} \Aendpt^{-1} ]_{\BB Z} \right\}
\ale
be as in Lemma~\ref{lem-endpt-ball}. 

For $z\in \BB C$, let 
\alb
\Hr_{z,r} 
&:= \Hr_r + z \subset \BB A_{\Kann r , r}(z) ,\notag\\ 
\ur_{z,r} &:= \ur_r + z \in \bdy\Hr_{z,r} \cap \bdy B_{\Kann r}(z) ,\quad \text{and}\notag\\
\vr_{z,r} &:= \vr_r + z \in \bdy\Hr_{z,r} \cap \bdy B_r(z)  .
\ale
We also let $\Fr_{z,r}$ be the event of Lemma~\ref{lem-endpt-ball} with the translated field $h(\cdot - z)$ in place of $h$.  
That is, $\Fr_{z,r}$ is the event that there exist non-singular points $u \in \bdy B_{\Kann r}(z) \cap B_{\sr_r/2}(\ur_{z,r})$ and $v \in \bdy B_r(z) \cap B_{\sr_r/2}(\vr_{z,r})$ with the following properties.  
\begin{enumerate}
\item $\wt D_h(u,v) \leq \Cmid_0  D_h(u,v)$. \label{item-endpt-ball-dist'} 
\item There is a $\wt D_h$-geodesic $\wt P$ from $u$ to $v$ which is contained in $\ol{\Hr}_{z,r}$. \label{item-endpt-ball-annulus'}  
\item The one-dimensional Lebesgue measure of the set \label{item-endpt-ball-leb'} 
\eqbn
\left\{x \in \bdy B_{  \sr_r}(\ur_{z,r} ) \, : \,   D_h\left( x , u  ; \ol B_{ \sr_r}(\ur_{z,r}) \right)  > \llambda \wt D_h(u,v) \right\} 
\eqen
is at most $(\llambda/2) \sr_r $ and the same is true with $v$ and $\vr_{z,r}$ in place of $u$ and $\ur_{z,r}$.
\item There exists $t \in [3r , \Aloc r]$ such that \label{item-endpt-ball-around'}
\eqbn
 D_h\left(\text{around $\BB A_{t,2t}(z)$} \right) \leq  \llambda D_h\left(\text{across $\BB A_{2t,3t}(z)$} \right) .
\eqen 
\end{enumerate}  

By Lemma~\ref{lem-endpt-ball}, the translation invariance of the law of $h$, viewed modulo additive constant, and the translation invariance of $D_h$ and $\wt D_h$ (Axiom~\reftranslate), we have 
\eqb \label{eqn-endpt-prob}
\BB P[\Fr_{z,r}] \geq \pr ,\quad \forall z\in\BB C , \quad \forall r\in\mcl R_0 . 
\eqe 
The other property of $\Fr_{z,r}$ which we need is that it depends locally on $h$. 

\begin{lem} \label{lem-endpt-event-msrble} 
The event $\Fr_{z,r}$ is a.s.\ determined by the restriction of $h$ to $B_{3\Aloc r}(z)$, viewed modulo additive constant.
\end{lem}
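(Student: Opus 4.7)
By Weyl scaling (Axiom~\ref{item-metric-f}), adding a constant to $h$ scales both $D_h$ and $\wt D_h$ by the same factor, so all four numbered conditions in the definition of $\Fr_{z,r}$ are invariant under adding a constant to $h$. It therefore suffices to exhibit $\Fr_{z,r}$ as measurable with respect to $h|_{B_{3\Aloc r}(z)}$. The plan is to show that on $\Fr_{z,r}$, all distances that appear in the definition coincide with their internal versions on $B_{3\Aloc r}(z)$, after which the locality axiom (Axiom~\ref{item-metric-local}) finishes the job.

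The two ``local'' conditions~\ref{item-endpt-ball-leb'} and~\ref{item-endpt-ball-around'} are easy: they involve only internal distances of $D_h$ on the closed balls $\ol B_{\sr_r}(\ur_{z,r}), \ol B_{\sr_r}(\vr_{z,r})$ and on the annuli $\BB A_{t,2t}(z)$, $\BB A_{2t,3t}(z)$ for $t\in[3r,\Aloc r]$. Since $\sr_r\leq r$ and $\Aloc>3$, all of these sets are contained in $B_{3\Aloc r}(z)$, so their internal $D_h$-distances are measurable w.r.t.\ $h|_{B_{3\Aloc r}(z)}$ by locality (as discussed in Section~\ref{sec-closed}). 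Moreover, the existence of a $t$ for which~\ref{item-endpt-ball-around'} holds can be written as a countable union over rational $t\in[3r,\Aloc r]$ after slightly enlarging the annuli (or more cleanly via a simple continuity argument), so this condition is measurable.

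The main work lies in conditions~\ref{item-endpt-ball-dist'} and~\ref{item-endpt-ball-annulus'}, which a priori involve $D_h(u,v)$ and $\wt D_h(u,v)$ — quantities defined via paths that could in principle leave $B_{3\Aloc r}(z)$. I will show that on $\Fr_{z,r}$, every $D_h$-geodesic and every $\wt D_h$-geodesic from $u$ to $v$ is contained in $\ol B_{2t}(z)\subset B_{3\Aloc r}(z)$, where $t$ is as in condition~\ref{item-endpt-ball-around'}. The argument is the standard shortcut argument: pick a loop $\pi$ in $\BB A_{t,2t}(z)$ disconnecting the inner and outer boundaries with $D_h$-length at most $\llambda D_h(\text{across } \BB A_{2t,3t}(z)) + \varepsilon$. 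If a $D_h$-geodesic $P$ from $u$ to $v$ exits $\ol B_{2t}(z)$, then, since $u,v\in\ol B_r(z)$ with $r<t$, $P$ must fully cross $\BB A_{2t,3t}(z)$ and hit $\pi$ both before and after doing so; replacing the segment of $P$ between its first and last visits to $\pi$ by a sub-arc of $\pi$ yields a strictly shorter path (since $\llambda<1$), contradicting the geodesic property. For $\wt D_h$-geodesics, one applies the same argument to the analog of~\ref{item-endpt-ball-around'} with $\wt D_h$, which follows from the bi-Lipschitz equivalence~\eqref{eqn-bilip}: indeed, $\wt D_h(\text{around }\BB A_{t,2t}(z))\leq \Cupper D_h(\text{around})\leq \llambda(\Cupper/\Clower)\wt D_h(\text{across }\BB A_{2t,3t}(z))$, and by our choice $\llambda<10^{-100}\Clower/\Cupper$ the prefactor is far smaller than $1$.

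Consequently, on $\Fr_{z,r}$ one has $D_h(u,v)=D_h(u,v;B_{3\Aloc r}(z))$ and $\wt D_h(u,v)=\wt D_h(u,v;B_{3\Aloc r}(z))$, and the existence of a $\wt D_h$-geodesic contained in $\ol\Hr_{z,r}$ is equivalent to $\wt D_h(u,v;\ol\Hr_{z,r})=\wt D_h(u,v)$ (a geodesic realizing this internal distance exists by the same proof as Lemma~\ref{lem-geo-unique} applied to the closed set $\ol\Hr_{z,r}$, since $\ol\Hr_{z,r}$ is compact). Since $\ol\Hr_{z,r}\subset B_{3\Aloc r}(z)$, locality gives measurability of all of these internal distances w.r.t.\ $h|_{B_{3\Aloc r}(z)}$. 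Finally, the existential quantifier over $u\in\bdy B_{\Kann r}(z)\cap \ol B_{\sr_r/2}(\ur_{z,r})$ and $v\in\bdy B_r(z)\cap \ol B_{\sr_r/2}(\vr_{z,r})$ preserves measurability: the conditions are lower/upper semicontinuous in $(u,v)$ via lower semicontinuity of $D_h$ and $\wt D_h$, and the parameter space is compact, so the set of admissible $(u,v)$ is closed and its nonemptiness is a measurable event. The main technical obstacle is the shortcut argument for $\wt D_h$-geodesics, which requires the careful choice of $\llambda$ in~\eqref{eqn-small-const}; everything else is bookkeeping with Axiom~\ref{item-metric-local}.
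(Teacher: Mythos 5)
Your proof takes essentially the same route as the paper's: observe that Weyl scaling handles the additive constant, use condition~\ref{item-endpt-ball-around'} to run a shortcut argument confining $D_h$- and $\wt D_h$-geodesics from $u$ to $v$, and then apply locality (Axiom~\ref{item-metric-local}) to internal distances on $B_{3\Aloc r}(z)$. The treatment of $\wt D_h$ via the bi-Lipschitz bound and the choice of $\llambda$ in~\eqref{eqn-small-const} also matches the paper. One minor slip: you claim a geodesic that exits $\ol B_{2t}(z)$ must ``fully cross $\BB A_{2t,3t}(z)$,'' which is not correct (it could reach $\bdy B_{2t}(z)$ but not $\bdy B_{3t}(z)$); the argument as written only establishes that the geodesic cannot exit $B_{3t}(z)$, not that it stays in $\ol B_{2t}(z)$. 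Since $B_{3t}(z)\subset B_{3\Aloc r}(z)$ this still gives the needed conclusion, but you should state the containment as $B_{3t}(z)$ rather than $\ol B_{2t}(z)$. The extra paragraph on semicontinuity and compactness for the existential quantifier over $(u,v)$ is a useful supplement that the paper does not spell out.
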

\begin{proof}
It is clear from Weyl scaling (Axiom~\ref{item-metric-f}) that adding a constant to $h$ does not affect the occurrence of $\Fr_{z,r}$, so $\Fr_{z,r}$ is a.s.\ determined by $h$, viewed modulo additive constant. It therefore suffices to show that $\Fr_{z,r}$ is a.s.\ determined by $h|_{B_{3\Aloc r}(z)}$. 

To this end, we first observe that by locality (Axiom~\ref{item-metric-local}), the condition~\ref{item-endpt-ball-around'} in the definition of $\Fr_{z,r}$ is a.s.\ determined by $h|_{B_{3\Aloc r}(z)}$. We claim that if this condition holds, then
\eqb \label{eqn-endpt-msrble-claim}
  D_h(x,y) =  D_h\left(x,y ; B_{3\Aloc r}(z) \right) ,\quad \forall x ,y \in B_{3r}(z) ; 
\eqe 
and the same is true with $\wt D_h$ in place of $ D_h$. 

Indeed, it is clear that~\eqref{eqn-endpt-msrble-claim} holds if $x=y$ or if either $x$ or $y$ is a singular point. Hence we can assume that $x\not=y$ and that $x$ and $y$ are not singular points. To prove~\eqref{eqn-endpt-msrble-claim}, it suffices to show that each $  D_h$-geodesic from $x$ to $y$ is contained in $B_{3\Aloc r}(z)$. To see this, let $P$ be a path from $x$ to $y$ which exits $B_{3\Aloc r}(z)$. Let $t \in [3r, \Aloc r]$ be as in condition~\ref{item-endpt-ball-around'} in the definition of $\Fr_{z,r}$. We can find a path $\pi \subset \BB A_{t,2t}(z)$ which disconnects the inner and outer boundaries of $\BB A_{t,2t}(z)$ such that
\eqbn
\op{len}\left( \pi ;   D_h \right) <  D_h\left(\text{across $\BB A_{2t,3t}(z)$} \right) .
\eqen
Since $x,y\in B_{3r}(z)$ and $P$ exists $B_{3t}(z)$, the path $P$ must hit $\pi$, then cross between the inner and outer boundaries of $\BB A_{2t,3t}(z)$, then subsequently hit $\pi$ again. This means that there are two points of $P\cap \pi$ such that $D_h$-length of the segment of $P$ between the two points is at least $ D_h\left(\text{across $\BB A_{2t,3t}(z)$} \right)$. The $ D_h$-distance between these two points is at most the $  D_h$-length of $\pi$, which by our choice of $\pi$ is strictly less than $  D_h\left(\text{across $\BB A_{2t,3t}(z)$} \right)$. Hence $P$ cannot be a $ D_h$-geodesic. We therefore obtain~\eqref{eqn-endpt-msrble-claim} for $ D_h$. 

To prove~\eqref{eqn-endpt-msrble-claim} with $\wt D_h$ in place of $ D_h$, we observe that if $t$ is as in condition~\ref{item-endpt-ball-around'} in the definition of $\Fr_{z,r}$, then
\alb 
\wt D_h\left(\text{around $\BB A_{t,2t}(z)$} \right) 
\leq \Cupper   D_h\left(\text{around $\BB A_{t,2t}(z)$} \right) 
&\leq  \llambda \Cupper D_h\left(\text{across $\BB A_{2t,3t}(z)$} \right) \notag\\
&\leq \llambda (\Cupper/\Clower) \wt D_h\left(\text{across $\BB A_{2t,3t}(z)$} \right) .
\ale
We have $\llambda (\Cupper / \Clower) < 1$, so we can now prove~\eqref{eqn-endpt-msrble-claim} with $\wt D_h$ in place of $  D_h$ via exactly the same argument given above. 

Due to~\eqref{eqn-endpt-msrble-claim}, the definition of $\Fr_{z,r}$ is unaffected if we require that $\wt P$ is a $\wt D_h(\cdot,\cdot;B_{3\Aloc r}(z))$-geodesic instead of a $\wt D_h$-geodesic and we replace $D_h$-distances and $\wt D_h$-distances by $ D_h(\cdot,\cdot;B_{3\Aloc r}(z))$-distances and $\wt D_h(\cdot,\cdot;B_{3\Aloc r}(z))$-distances throughout. It then follows from locality (Axiom~\ref{item-metric-local}) that $\Fr_{z,r}$ is a.s.\ determined by $h|_{B_{3\Aloc r}(z)}$, as required.
\end{proof}

\subsection{Definitions of $\Ur_r$, $\Vr_r$, and $\fr_r$}
\label{sec-tube-def}

\newcommand{\Kr}{{\mathsf K}}
\newcommand{\Lr}{{\mathsf L}}
\newcommand{\Zr}{{\hyperref[eqn-test-pts]{\mathsf Z}}}

\newcommand{\Aacross}{{\hyperref[item-E-across]{\mathsf a_1}}}
\newcommand{\Aaround}{{\hyperref[item-E-around]{\mathsf A_2}}}
\newcommand{\Aep}{{\hyperref[item-E-reg]{\mathsf a_3}}}
\newcommand{\Asp}{{\hyperref[item-E-event]{\mathsf a_4}}} 
\newcommand{\Anar}{{\hyperref[item-E-narrow]{\mathsf a_5}}}  
\newcommand{\Aset}{{\hyperref[item-E-internal]{\mathsf a_6}}}
\newcommand{\Ainternal}{{\hyperref[item-E-internal]{\mathsf A_7}}}   
\newcommand{\Amax}{{\hyperref[eqn-f-def]{\mathsf A_8}}}      
\newcommand{\Asup}{{\hyperref[item-E-sup]{\mathsf a_9}}}    
\newcommand{\Arn}{{\hyperref[item-E-rn]{\mathsf A_{10}}}}  

The definitions of $\Er_r , \Ur_r,\Vr_r$, and $\fr_r$ will depend on parameters
\eqb \label{eqn-A-parameters}
1 > \Aacross  > \frac{1}{\Aaround} > \Aep > \Asp > \Anar > \Aset > \frac{1}{\Ainternal}  > \frac{1}{\Amax} > \Asup > \frac{1}{\Arn}  ,
\eqe 
which will be chosen in Section~\ref{sec-E} in a manner depending only on $\BB p,\llambda$, and the laws of $D_h$ and $\wt D_h$. 
The parameters are listed in~\eqref{eqn-A-parameters} in the order in which they will be chosen. 
Each parameter will be allowed to depend on the earlier parameters as well as the number $\llambda$ from~\eqref{eqn-small-const} (which is allowed to depend only on the laws of $D_h$ and $\wt D_h$, not on $\BB p$).
Each parameter will also be allowed to depend on the numbers $\Kann , \Aendpt, \Aloc, \pr$ appearing in Lemma~\ref{lem-endpt-ball} (which have already been fixed, in a manner depending only on $\llambda$ and the laws of $D_h$ and $\wt D_h$).  

Also let $\rho \in (0,1)$ be a small parameter which will also be chosen in Section~\ref{sec-E} in a manner depending only on $\llambda$ and the laws of $D_h$ and $\wt D_h$.
We will have
\eqb \label{eqn-rho-parameter}
\Asp > \rho > \Anar ,
\eqe
and $\rho$ will be allowed to depend on $\llambda,\Aacross,\Aaround,\Aep,\Asp$ and the numbers appearing in Lemma~\ref{lem-endpt-ball}.

\begin{figure}[ht!]
\begin{center}
\includegraphics[width=.7\textwidth]{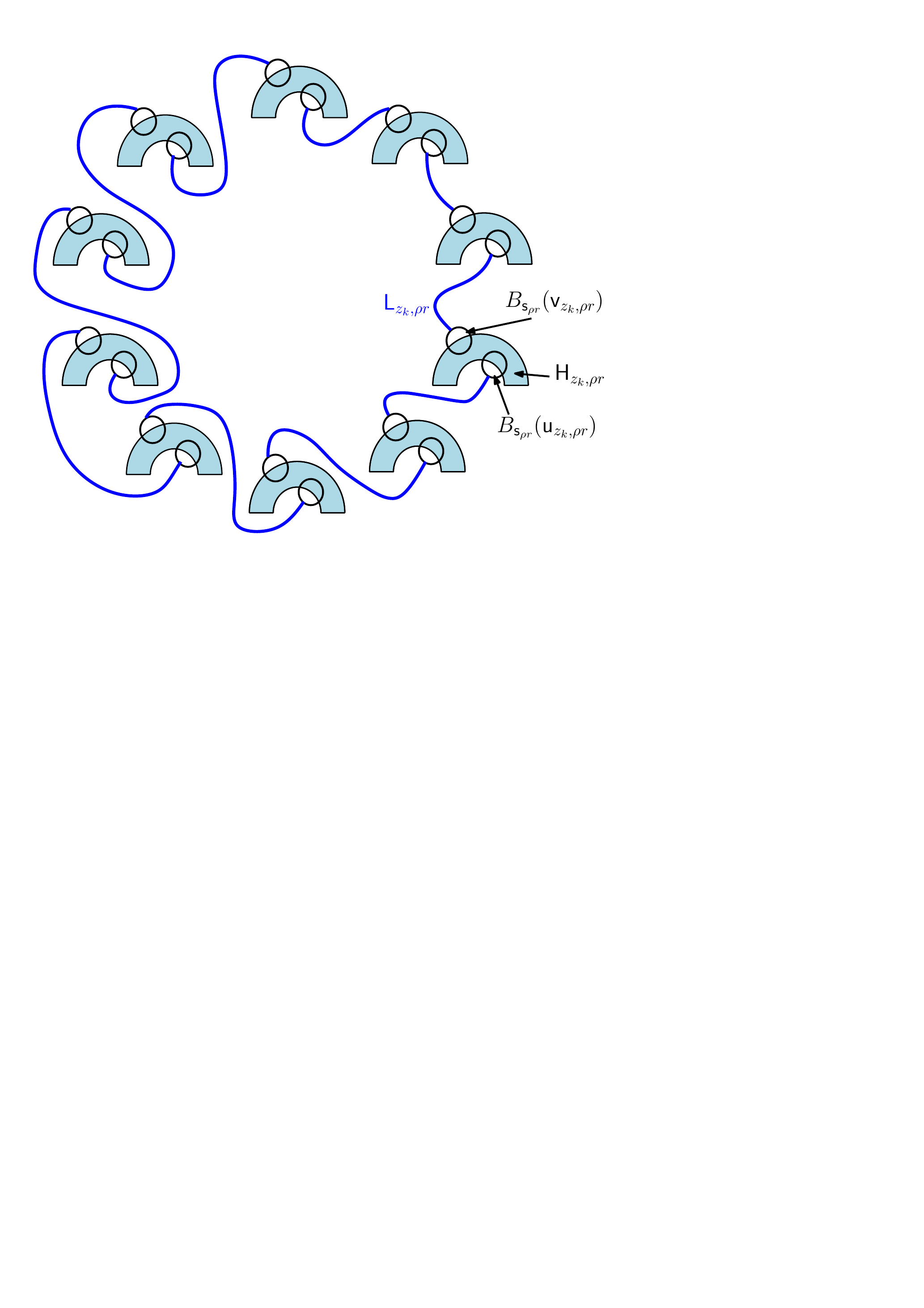} 
\caption{\label{fig-U-def} The figure shows the sets $\Hr_{z,\rho r}$, $ B_{\sr_{\rho r}}(\ur_{z,\rho r})$, $B_{\sr_{\rho r}}(\vr_{z,\rho r})$, and $\Lr_{z,\rho r}$ for $z\in \Zr_r$.  
We define $\Ur_r$ to be the union of $\Hr_{z,\rho r}$, $ B_{\sr_{\rho r}}(\ur_{z,\rho r})$, $B_{\sr_{\rho r}}(\vr_{z,\rho r})$ and $B_{\llambda \Aendpt \rho r}(\Lr_{z,\rho r})$ for $z\in\Zr_r$. We define $\Vr_r :=  B_{\Asup r}(\Ur_r) $. 
The bump function $\fr_r$ is supported on $\Vr_r$ and attains its maximal value $\Amax$ at every point of $\Ur_r$. 
}
\end{center}
\end{figure}

In the rest of this subsection, we will give the definition of the open sets $\Ur_r$ and $\Vr_r$ and the bump function $\fr_r$ in terms of $\rho$ and the parameters from~\eqref{eqn-A-parameters}. See Figure~\ref{fig-U-def} for an illustration. 
For $r\in \rho^{-1} \mcl R_0$, let 
\eqb \label{eqn-test-pts-count}
\Kr_\rho := \left\lceil  \frac{\llambda}{\Aloc \rho} \right\rceil ,
\eqe  
where $\Aloc$ is as in Lemma~\ref{lem-endpt-ball}. 
We define the set of ``test points" 
\eqb \label{eqn-test-pts}
\Zr_r = \Zr_r(\rho) := \left\{ 2 r  \exp\left( 2 \pi i k / \Kr_\rho  \right)  : k \in  \left[1,    \Kr_\rho  \right]_{\BB Z} \right\}  \subset \bdy B_{2r}(0) .
\eqe 
The event $\Er_r$ will include the condition that the event $\Fr_{z,\rho r}$ of Section~\ref{sec-block-event} occurs for ``many" of the points $z\in \Zr_r$. 

Recall the half-annuli $\Hr_{z, \rho r}$ and the balls $B_{\sr_{\rho r} }(\ur_{z , \rho r} )$ and $B_{\sr_{\rho r}}(\vr_{z , \rho r})$ from the definition of $\Fr_{z,\rho r}$.
We emphasize that by Lemma~\ref{lem-endpt-ball}, the number of possible choices for the half-annulus $(\rho r)^{-1} [\Hr_{z, \rho r}  - z]$ and the balls $(\rho r)^{-1} [ B_{\sr_{\rho r} }(\ur_{z , \rho r} ) -z]$ and $(\rho r)^{-1}[ B_{\sr_{\rho r}}(\vr_{z , \rho r}) - z]$ is at most a constant depending only on $\llambda$ and the laws of $D_h$ and $\wt D_h$. 

We will now construct a ``tube" which links up the sets $\Hr_{z, \rho r}  \cup B_{\sr_{\rho r} }(\ur_{z , \rho r} ) \cup B_{\sr_{\rho r}}(\vr_{z , \rho r})$ for $z\in \Zr_r$. 
For $k \in [1,\Kr_\rho]_{\BB Z}$, let $z_k :=  2 r  \exp\left( 2 \pi i k / \Kr_r  \right)$ be the $k$th element of $\Zr_r$. We also set $z_{\Kr_\rho + 1}  := z_{1}$. 
We choose for each $k\in [1,\Kr_\rho]_{\BB Z}$ a smooth simple path $\Lr_{z_k , \rho r}$ from the point of $B_{\sr_{\rho r}}(\vr_{z_k , \rho r})$ which is furthest from $\Hr_{z_k ,\rho r}$ to the point of $ B_{\sr_{\rho r} }(\ur_{z_{k+1} , \rho r} ) $ which is furthest from $\Hr_{z_{k+1} , \rho r} $. We can arrange that these paths have the following properties. 
\begin{enumerate}[(i)] 
\item Each $\Lr_{z_k,\rho r}$ is contained in the $10\rho r$-neighborhood of $\bdy B_{2r}(0)$. 
\item The Euclidean distance from $\Lr_{z_k,\rho r}$ to each of the half-annuli $\Hr_{z_k,\rho r}$ and $\Hr_{z_{k+1} ,\rho r}$ is at least $\sr_{\rho r} / 2$. 
\item The Euclidean distance from $\Lr_{z_k,\rho r}$ to each of the following sets is at least $(1-\Kann) \rho r / 4$: 
\begin{itemize}
\item The sets $\Hr_{w,\rho r}$ for $w\in \Zr_r\setminus \{z_k,z_{k+1}\}$;
\item The sets $\Lr_{w,\rho r}$ for $w \in \Zr_r\setminus \{ z_k\}$;
\item The sets $B_{\sr_{\rho r}}(\vr_{w,\rho r})$ for $w \in \Zr_r\setminus \{ z_k\}$;
\item The sets $B_{\sr_{\rho r}}(\ur_{w,\rho r})$ for $w \in \Zr_r\setminus \{ z_{k+1}\}$. 
\end{itemize} 
\item The number of possibilities for the path $(\rho r)^{-1}(\Lr_{z_k , \rho r} - z_k)$ is at most a constant depending only on $\rho$, $\llambda$, and the laws of $D_h$ and $\wt D_h$. 
\end{enumerate}
 
With $\Aendpt$ as in Lemma~\ref{lem-endpt-ball}, we define
\eqb \label{eqn-U-def}
\Ur_r = \Ur_r(\rho) := \bigcup_{z \in \Zr_r(\rho)} \left[ \Hr_{z,\rho r} \cup B_{\sr_{\rho r}}(\ur_{z,\rho r}) \cup B_{\sr_{\rho r}}(\vr_{z,\rho r}) \cup B_{\llambda \Aendpt \rho r }(\Lr_{z,\rho r}  )   \right]  
\eqe 
and 
\eqb \label{eqn-V-def}
\Vr_r = \Vr_r(\Ur_r , \Asup) := B_{\Asup r}(\Ur_r) .
\eqe
We emphasize that $\Vr_r$ is determined by $\Ur_r$ and $\Asup$ and (once $\Asup$ is fixed) the number of possible choices for the set $r^{-1} \Ur_r$ is at most a finite constant depending only on $\rho,\llambda$, and the laws of $D_h$ and $\wt D_h$. We cannot take $r^{-1} \Ur_r$ to be independent from $r$ since the radius $\sr_{\rho r}$ and the half-annulus $\Hr_{\rho r}$ from Lemma~\ref{lem-endpt-ball} are allowed to depend on $\rho r$. This is a consequence of the fact that we only have tightness across scales, not exact scale invariance. However, a constant upper bound for the number of possibilities for $r^{-1} \Ur_r$ will be enough for our purposes.

Let 
\eqb \label{eqn-f-def}
\fr_r : \BB C\rta [0,\Amax]
\eqe
 be a smooth bump function which is identically equal to $\Amax$ on $\Ur_r$ and which is supported on $\Vr_r $. 
We can choose $\fr_r$ in such a way that $\fr_r(r\cdot)$ depends only on $r^{-1} \Ur_r$, which means that the number of possible choices for $\fr_r(r\cdot)$ is at most a finite constant depending only on $\Aendpt,\rho,\llambda$, and the laws of $D_h$ and $\wt D_h$.

\subsection{Definition of $\Er_r$}
\label{sec-E}

We will now define the event $\Er_r = \Er_{0,r}$ appearing in Section~\ref{sec-counting-setup}. 
Recall the parameters from~\eqref{eqn-A-parameters} and~\eqref{eqn-rho-parameter}.  
For $r\in \rho^{-1} \mcl R_0$, let $\Er_r$ be the event that the following is true. We will discuss the purpose of each condition just after the definition. 
\begin{enumerate}
\item \label{item-E-across} \textit{(Bound for distance across)} We have  
\eqbn
\min\left\{ D_h\left(\text{across $\BB A_{ r , 1.5 r}(0)$}\right) , \: D_h\left(\text{across $\BB A_{2.5 r , 3r}(0)$}\right) \right\} 
\geq  \Aacross  r^{\xi Q} e^{\xi h_r(0)}  .
\eqen
\item  \label{item-E-around} \textit{(Bound for distance around)} We have 
\eqbn
 D_h\left(\text{around $\BB A_{3r , 4r }(0)$}\right)\leq   \Aaround  r^{\xi Q} e^{\xi h_r(0)}   .
\eqen 
\item \label{item-E-reg} \textit{(Regularity along geodesics)} The event of Lemma~\ref{lem-hit-ball-phi} occurs with $U = \BB A_{1,4}(0)$, $\chi =1/2$, and $\ep_0 = \Aep$. That is, for each $\ep \in (0,\Aep]$, the following is true. Let $V\subset \BB A_{r,4r}(0)$ and let $f : \BB C\rta [0,\infty)$ be a non-negative continuous function which is identically zero outside of $ V$. 
Let $z\in  \BB A_{r + \ep^{1/2} ,4r - \ep^{1/2} }(0)$, $x,y\in \ol{\BB A}_{r,4r}(0) \setminus ( V \cup B_{\ep^{1/2} r}(z))$, and $s>0$ such that there is a $D_{h-f}(\cdot,\cdot ; \ol{\BB A}_{r,4r}(0))$-geodesic $P_f$ from $x$ to $y$ with $P_f(s) \in B_{\ep r}(z)$. Assume that $s \leq \inf\{t > 0 : P_f(t) \in V \}$. 
Then with $\geoExp = \geoExp(1/2)>0$ as in Lemma~\ref{lem-hit-ball-phi},
\eqb \label{eqn-E-reg}
D_h\left( \text{around $\BB A_{ \ep r , \ep^{1/2} r}(z)$} \right) 
\leq \ep^\geoExp s.
\eqe
\item \label{item-E-event} \textit{(Existence of shortcuts)} Let $\Zr_r$ be the set of test points as in~\eqref{eqn-test-pts}. 
For each connected circular arc $I \subset \bdy B_{2r}(0)$ with Euclidean length at least $\Asp r / 2$, there exists $z \in I \cap \Zr_r $ such that the event $\Fr_{z,\rho r}$ of Section~\ref{sec-block-event} occurs.
\item \label{item-E-compare} \textit{(Comparison of distances in small annuli)} For each $z\in \BB A_{1.5 r, 3 r}(0)$ and each $\delta \in (0,\Anar]$, 
\eqb \label{eqn-E-compare}
D_h\left(\text{around $\BB A_{\delta r / 4 , \delta r / 2}(z)$}\right) \leq \delta^{-1/4} D_h\left(\text{across $\BB A_{2 \delta r, 3\delta r}(z)$}\right) .
\eqe
\item \label{item-E-narrow} \textit{(Reverse H\"older continuity)} For each $z,w\in \BB A_{1.5 r, 3 r}(0)$ with $|z - w| \leq \llambda^{-1} \Anar r$,  
\eqbn
D_h\left(z,w  ; \BB A_{r,4r}(0) \right) \geq \left( \frac{|z-w|}{r}  \right)^{\xi (Q+3)}  r^{\xi Q} e^{\xi h_r(0)} .
\eqen
\item \label{item-E-internal} \textit{(Internal distance in $\Ur_r$)} We have
\eqb \label{eqn-E-internal-around}
D_h\left(\text{around $\Ur_r$} \right) \leq \Ainternal r^{\xi Q} e^{\xi h_r(0)} .
\eqe
More strongly, there is a path $\Pi  \subset \Ur_r$ which disconnects the inner and outer boundaries of $\Ur_r$ and has $D_h$-length at most $\Ainternal r^{\xi Q} e^{\xi h_r(0)}$ such that each point of the outer boundary\footnote{The set $\Ur_r$ has the topology of a Euclidean annulus, so its boundary has two connected components, one of which disconnects the other from $\infty$. The outer boundary is the outer of these two components.} of $\Ur_r$ lies at Euclidean distance at most $\Aset r$ from $\Pi$. 
\item \label{item-E-sup} \textit{(Intersections of geodesics with a small neighborhood of the boundary)} Let $f : \BB C\rta [0 , \Amax]$ be a continuous function and let $P_f$ be a $D_{h - f}(\cdot,\cdot ; \ol{\BB A}_{r,4r}(0))$-geodesic between two points of $\bdy B_{4r}(0)$. The one-dimensional Lebesgue measure of the set of $x\in\bdy \Ur_r$ such that $P_f \cap B_{2\Asup r}(x) \not=\emptyset$ is at most $\llambda \Aendpt \rho r $. Moreover, the same is true with $\bdy \Ur_r$ replaced by each of the circles $\bdy B_{\sr_{\rho r}}(\ur_{z,\rho r})$ and $\bdy B_{\sr_{\rho r}}(\vr_{z,\rho r})$ for $z\in\Zr_r$. 
\item \label{item-E-rn} \textit{(Radon-Nikodym derivative bound)} The Dirichlet inner product of $h$ with $\fr_r$ satisfies
\eqb
|(h,\fr_r)_\nabla| \leq \Arn .
\eqe
\end{enumerate} 

We will eventually show that $\Er_r$ satisfies the hypotheses for $\Er_{0,r}$ listed in Section~\ref{sec-counting-setup}. 
Before beginning the proof of this fact, we discuss the various conditions in the definition of $\Er_r$.  

Conditions~\ref{item-E-across} and~\ref{item-E-around}  occur with high probability due to tightness across scales (Axiom~\refcoord). These conditions are needed to ensure that hypothesis~\ref{item-Ehyp-dist} from Section~\ref{sec-counting-setup} is satisfied. 
Condition~\ref{item-E-around} is also useful for upper-bounding the amount of time that a $D_h$-geodesic or a $D_{h-\fr_r}$-geodesic between points outside of $B_{4r}(0)$ can spend in $\Vr_r$. Indeed, if $\pi$ is a path in $\BB A_{3r,4r}(0)$ which disconnects the inner and outer boundaries of near-minimal $D_h$-length (equivalently, near-minimal $D_{h-\fr_r}$-length since $\Vr_r\cap \BB A_{3r,4r}(0) = \emptyset$), then any such geodesic must hit $\pi$ both before and after hitting $\Vr_r$. The length of the geodesic segment between these hitting times is at most the length of $\pi$. See Lemma~\ref{lem-E-loop} for an application of this argument. 

Condition~\ref{item-E-reg} holds with high probability due to Lemma~\ref{lem-hit-ball-phi}. This condition will eventually be applied with $V = \Vr_r$ and $f  = \fr_r$. We allow a general choice of $V$ and $f$ in the condition statement since we will choose the parameter $\Aep$ in condition~\ref{item-E-reg} before we choose the parameters $\rho ,  \Amax , \Asup$ involved in the definitions of $\Vr_r$ and  $\fr_r$. 
The condition will be used in two places: to lower-bound the Euclidean distance between two points on a $D_{h-\fr_r}$-geodesic in terms of their $D_h$-distance (Lemma~\ref{lem-E-sp}); and to link up a point on a $D_{h-\fr_r}$-geodesic which is close to $\bdy\Ur_r$ with a path in $\Ur_r$ (Lemma~\ref{lem-excursion-length}). 

Condition~\ref{item-E-event} is in some sense the most important condition in the definition of $\Er_r$. Due to the definition of $\Fr_{z,\rho r}$ from Section~\ref{sec-block-event}, this condition provides a large collection of ``good" pairs of points $u,v\in \Ur_r$ such that $\wt D_h(u,v) \leq \Cmid_0 D_h(u,v)$. 
The fact that we consider the event $\Fr_{z,\rho r}$ in this condition is the reason why we need to require that $r\in\rho^{-1} \mcl R_0$. 
We will need to make $\rho$ small in order to make the set of test points $z \in \Zr_r$ of~\eqref{eqn-test-pts} large, so that we can apply a long-range independence result for the GFF (Lemma~\ref{lem-spatial-ind}) to say that condition~\ref{item-E-event} occurs with high probability. See Lemma~\ref{lem-E-event}. 

Condition~\ref{item-E-compare} has high probability due to Lemma~\ref{lem-annulus-union}, and will be used in Section~\ref{sec-endpt-close}. More precisely, we will consider a segment of a $D_{h-\fr_r}$-geodesic which is contained in a small Euclidean neighborhood of the ball $B_{\sr_{\rho r}}(\ur_{z,\rho r})$ in the definition of $\Fr_{z,\rho r}$.
We will use the paths around annuli provided by condition~\ref{item-E-compare} to ``link up" this geodesic segment to a short path from $u$ to the boundary of this ball, as provided by condition~\ref{item-endpt-ball-leb'} in the definition of $\Fr_{z,\rho r}$ (see Lemma~\ref{lem-arc-loop}). 

Condition~\ref{item-E-narrow} has high probability due to the local reverse H\"older continuity of $D_h$ w.r.t.\ the Euclidean metric~\cite[Proposition 3.8]{pfeffer-supercritical-lqg}. This condition will be used in several places, e.g., to force a $D_{h-\fr_r}$-geodesic between two points of $\bdy \Vr_r$ to stay in a small Euclidean neighborhood of $\Vr_r$ (Lemma~\ref{lem-excursion-tube}). See also the summary of Section~\ref{sec-excursion} in Section~\ref{sec-construction-outline}. The requirement that $|z - w| \leq \llambda^{-1} \Anar r$ is needed to make the condition occur with high probability (c.f.~\cite[Proposition 3.8]{pfeffer-supercritical-lqg}).

Condition~\ref{item-E-internal} has high probability due to a straightforward argument based on tightness across scales and the fact that there are only finitely many possibilities for $r^{-1} \Ur_r$ (see Lemma~\ref{lem-E-internal}). This condition will be used to check the condition on $D_h(\text{around $\Ur_r$})$ in hypothesis~\ref{item-Ehyp-dist} for $\Er_r$. The reason why we need to require that each point of the outer boundary of $\Ur_r$ is close to the path $\Pi$ is as follows. In the proof of Lemma~\ref{lem-excursion-length}, we will consider a $D_{h-\fr_r}$-geodesic $P_r$ and times $\tau  <\sigma$ at which it hits $\bdy \Vr_r$. We will upper-bound $\sigma - \tau = D_{h-\fr_r}(P_r(\tau) , P_r(\sigma))$ by concatenating a segment of $\Pi$ with segments of small loops surrounding $P_r(\tau)$ and $P_r(\sigma)$ which are provided by condition~\ref{item-E-reg}. The condition on $\Pi$ is needed to ensure that these small loops actually intersect $\Pi$. 

Recall that $\fr_r : \BB C\rta [0,\Amax]$. Condition~\ref{item-E-sup} has high probability due to Lemma~\ref{lem-geodesic-leb}. We will eventually apply this condition with $f = \fr_r$ in order to say that a $D_{h-\fr_r}$-geodesic cannot spend much time in the region $\Vr_r\setminus \Ur_r$ where $\fr_r$ takes values strictly between $0$ and $\Amax$ (see Lemmas~\ref{lem-dc-set} and~\ref{lem-macro-arcs}). The reason why we allow a general choice of $f$ in the condition statement is that $\Vr_r = B_{\Asup r}(\Ur_r)$, and hence also $\fr_r$, depends on the parameter $\Asup$, which needs to be made small enough to make the probability of condition~\ref{item-E-sup} close to 1. 

The purpose of condition~\ref{item-E-rn} is to check the Radon-Nikodym derivative hypothesis~\ref{item-Ehyp-rn} from Section~\ref{sec-counting-setup}, see Proposition~\ref{prop-E-hyp0}. This condition occurs with high probability due to the scale invariance of the law of $h$, modulo additive constant, and the fact that there are only finitely many possibilities for $\fr_r(r\cdot)$ (Lemma~\ref{lem-E-rn}).

\subsection{Properties of $\Er_r$}
\label{sec-E-prob}

We first check that $\Er_r$ satisfies an appropriate measurability condition.

\begin{lem} \label{lem-E-msrble}
The event $\Er_r$ is a.s.\ determined by $h|_{\ol{\BB A}_{r , 4r}(0)}$, viewed modulo additive constant.
\end{lem}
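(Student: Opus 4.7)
The plan is to verify, condition by condition, two facts: $(a)$ the condition is invariant under the replacement $h \mapsto h+c$ for any $c\in\BB R$, and $(b)$ the condition is measurable with respect to $\sigma(h|_{\ol{\BB A}_{r,4r}(0)})$ (recall~\eqref{eqn-closed-restrict}). For $(a)$, note that by Weyl scaling (Axiom~\ref{item-metric-f}) applied to the constant function $c$, both $D_h$ and $\wt D_h$ get multiplied by $e^{\xi c}$ when we shift $h$ by $c$, while the normalizing factor $e^{\xi h_r(0)}$ in conditions~\ref{item-E-across}, \ref{item-E-around}, \ref{item-E-narrow}, \ref{item-E-internal} also gets multiplied by $e^{\xi c}$. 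Condition~\ref{item-E-rn} is unaffected because $(c,\fr_r)_\nabla = 0$. The remaining conditions involve only ratios of distances (or distances with $D_{h-f}$ where $f$ ranges over a fixed class), and these are unchanged if we add the same constant to $h$ and to the test function.

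For $(b)$, we choose $\rho$ small enough (depending on $\Aloc$ from Lemma~\ref{lem-endpt-ball} and on $\Asup,\sr_{\rho r}/r$) so that the sets used in each condition are contained in $\ol{\BB A}_{r,4r}(0)$. Concretely:
\begin{itemize}
\item Conditions~\ref{item-E-across} and~\ref{item-E-around} involve distances across/around Euclidean annuli contained in $\ol{\BB A}_{r,4r}(0)$, hence are $\sigma(h|_{\ol{\BB A}_{r,4r}(0)})$-measurable by locality (Axiom~\ref{item-metric-local}; see Section~\ref{sec-closed}).
\item Condition~\ref{item-E-reg} concerns $D_{h-f}(\cdot,\cdot;\ol{\BB A}_{r,4r}(0))$-geodesics and distances around annuli $\BB A_{\ep r,\ep^{1/2}r}(z)$ with $z \in \BB A_{r+\ep^{1/2},4r-\ep^{1/2}}(0)$; both quantities are internal distances in $\ol{\BB A}_{r,4r}(0)$ and depend on the test function $f$ only through its values on $V\subset \BB A_{r,4r}(0)$, so again locality applies.
\item Condition~\ref{item-E-event} is handled by Lemma~\ref{lem-endpt-event-msrble}, which tells us that $\Fr_{z,\rho r}$ is a.s.\ determined by $h|_{B_{3\Aloc\rho r}(z)}$ viewed modulo additive constant. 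Since $|z|=2r$ for $z\in\Zr_r$, we have $B_{3\Aloc\rho r}(z)\subset \ol{\BB A}_{r,4r}(0)$ provided $\rho<1/(3\Aloc)$, which we may arrange (cf.~\eqref{eqn-rho-parameter}).
\item Conditions~\ref{item-E-compare} and~\ref{item-E-narrow} involve distances within annuli $\BB A_{\cdot,\cdot}(z)$ and internal distances $D_h(z,w;\BB A_{r,4r}(0))$ with $z,w\in\BB A_{1.5r,3r}(0)$; all relevant sets lie in $\ol{\BB A}_{r,4r}(0)$ once $\Anar$ is small enough.
\item Condition~\ref{item-E-internal} depends on $\Ur_r$, which by construction lies in $\bigcup_{z\in\Zr_r}B_{(1+\llambda\Aendpt)\rho r}(z)\cup B_{10\rho r}(\bdy B_{2r}(0))\subset \BB A_{r,3r}(0)$ when $\rho$ is small, so the existence of the path $\Pi$ with the stated $D_h$-length and covering property is $\sigma(h|_{\ol{\BB A}_{r,4r}(0)})$-measurable via locality.
\item Condition~\ref{item-E-sup} involves $D_{h-f}(\cdot,\cdot;\ol{\BB A}_{r,4r}(0))$-geodesics between points of $\bdy B_{4r}(0)$, with $f$ supported in $\Vr_r\subset\BB A_{r,4r}(0)$ (by choice of $\Asup<1/2$, say), so locality gives measurability.
\item Condition~\ref{item-E-rn} is a Dirichlet pairing with the deterministic function $\fr_r$, supported on $\Vr_r\subset\BB A_{r,4r}(0)$; this pairing depends only on $h|_{\Vr_r}$, and the computation $(h,\fr_r)_\nabla = -(h,\Delta\fr_r)$ makes the measurability explicit.
\end{itemize}

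The only delicate point is that conditions~\ref{item-E-event} and~\ref{item-E-sup} involve geodesics for modified metrics ($D_{h-f}$ and the various $\wt D_h$-geodesics hidden inside $\Fr_{z,\rho r}$); here we use locality of internal metrics (Section~\ref{sec-closed}) together with Lemma~\ref{lem-endpt-event-msrble} to reduce everything to measurability with respect to $h|_{\ol{\BB A}_{r,4r}(0)}$ modulo additive constant. Combining $(a)$ and $(b)$ yields the claim.
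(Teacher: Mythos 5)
Your proof is correct and takes essentially the same approach as the paper's: invariance under $h \mapsto h + c$ is checked via Weyl scaling, conditions~\ref{item-E-across} through~\ref{item-E-rn} other than~\ref{item-E-event} are shown to be $\sigma(h|_{\ol{\BB A}_{r,4r}(0)})$-measurable via locality and the fact that all relevant sets lie in $\ol{\BB A}_{r,4r}(0)$, and condition~\ref{item-E-event} is reduced to Lemma~\ref{lem-endpt-event-msrble}. The paper dispatches all but condition~\ref{item-E-event} with a single ``immediate from locality,'' while you spell out each condition individually (including the Dirichlet-pairing condition~\ref{item-E-rn}, which is strictly speaking not a consequence of Axiom~\ref{item-metric-local} but rather of the fact that $(h,\fr_r)_\nabla = -(h,\Delta\fr_r)$ is a pairing against a function supported in $\Vr_r$); this extra care is harmless and arguably clarifies a point the paper glosses over.
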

\begin{proof}
By Weyl scaling (Axiom~\ref{item-metric-f}) that the occurrence of $\Er_r$ is unaffected by adding a constant to $h$, so $\Er_r$ is a.s.\ determined by $h$ viewed modulo additive constant. It is immediate from locality (Axiom~\ref{item-metric-local}; see also Section~\ref{sec-closed}) that each condition in the definition of $\Er_r$ except possibly condition~\ref{item-E-event} is a.s.\ determined by $h|_{\ol{\BB A}_{r,4r}(0)}$. Lemma~\ref{lem-endpt-event-msrble} implies that condition~\ref{item-E-event} is a.s.\ determined by $h|_{\ol{\BB A}_{r,4r}(0)}$ as well. 
\end{proof}

Most of the rest of this subsection is devoted to proving the following. 

\begin{prop} \label{prop-E-prob}
For each $\BB p\in (0,1)$, we can choose the parameters in~\eqref{eqn-A-parameters} and~\eqref{eqn-rho-parameter} in such a way that 
\eqb
\BB P\left[ \Er_r \right] \geq \BB p,\quad \forall r \in \rho^{-1} \mcl R_0 .
\eqe
\end{prop}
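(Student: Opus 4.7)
The plan is to show that each of the nine conditions in the definition of $\Er_r$ occurs with probability at least $1 - (1-\BB p)/9$, uniformly in $r \in \rho^{-1} \mcl R_0$, and then take a union bound. The parameters are chosen one at a time in the order specified in~\eqref{eqn-A-parameters}--\eqref{eqn-rho-parameter}. A key point throughout is that, by the translation invariance of the law of $h$ modulo additive constant and by tightness across scales (Axiom~\refcoord), together with Weyl scaling (Axiom~\ref{item-metric-f}), each probability estimate can be made uniform in the choice of $r>0$.

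First, conditions~\ref{item-E-across} and~\ref{item-E-around} are handled by tightness across scales alone: $\Aacross$ is taken sufficiently small and $\Aaround$ sufficiently large so that the respective probabilities exceed $1 - (1-\BB p)/9$. Condition~\ref{item-E-reg} is exactly the statement of Lemma~\ref{lem-hit-ball-phi} with $U = \BB A_{1,4}(0)$ and $\chi = 1/2$, so one takes $\Aep$ small enough that the polynomially-high-probability event there occurs with probability at least $1 - (1-\BB p)/9$. Condition~\ref{item-E-compare} follows from Lemma~\ref{lem-annulus-union} applied with $\zeta = 1/4$, on the annulus $U = \BB A_{1.5,3}(0)$, by choosing $\Anar$ small enough. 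Condition~\ref{item-E-narrow} is the reverse H\"older estimate for $D_h$ from~\cite[Proposition~3.8]{pfeffer-supercritical-lqg}, which becomes a very-high-probability event once $\Anar$ is taken small (and this is compatible with the choice made for condition~\ref{item-E-compare}, provided $\Anar$ is then shrunk further if necessary).

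The main obstacle is condition~\ref{item-E-event}, which dictates the choice of $\Asp$ and of $\rho$. For this one fixes $\Asp \in (0,1)$ so that every circular arc of length $\Asp r/2$ on $\bdy B_{2r}(0)$ contains at least some large number $N(\Asp,\rho)$ of points of $\Zr_r$; by~\eqref{eqn-test-pts-count} this number grows like $\Asp / \rho$ as $\rho \to 0$, and by~\eqref{eqn-endpt-prob} the event $\Fr_{z,\rho r}$ has probability at least $\pr$ with $\pr$ independent of $r$. By Lemma~\ref{lem-endpt-event-msrble}, $\Fr_{z,\rho r}$ is determined by $h|_{B_{3\Aloc \rho r}(z)}$ viewed modulo additive constant, so one can apply Lemma~\ref{lem-spatial-ind} (after a standard rescaling, with $s$ chosen so that the relevant balls around distinct $z \in \Zr_r$ on any arc of length $\Asp r/2$ are separated by a Euclidean distance $\geq 2(1+s)$ times their radius) to conclude that with probability at least $1-(1-\BB p)/9$, the event $\bigcup_{z \in I \cap \Zr_r} \Fr_{z,\rho r}$ occurs for a given arc $I$. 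Taking a union bound over the $O(\rho^{-1})$ arcs in a fine cover of $\bdy B_{2r}(0)$ and then shrinking $\rho$ further so the resulting failure probability is at most $(1-\BB p)/9$ completes this step; this is the only place where $\rho$ must be taken small depending on the other parameters.

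For the remaining conditions, one first chooses $\Aset$ small, then $\Ainternal$ large. Condition~\ref{item-E-internal} is proved by applying Lemma~\ref{lem-two-set-dist} (and its Euclidean-neighborhood variant) to finitely many deterministic sub-arcs of $\bdy B_{2r}(0)$ whose union, together with short paths across the sets $\Hr_{z,\rho r}$, $B_{\sr_{\rho r}}(\ur_{z,\rho r})$, $B_{\sr_{\rho r}}(\vr_{z,\rho r})$, and $B_{\llambda \Aendpt \rho r}(\Lr_{z,\rho r})$, forms the desired path $\Pi \subset \Ur_r$; crucially, by the construction in Section~\ref{sec-tube-def} the rescaled set $r^{-1} \Ur_r$ takes only finitely many values, so tightness across scales applied to each can be promoted to a uniform estimate, and $\Ainternal$ is chosen large enough that the resulting high-probability event has probability $\geq 1 - (1-\BB p)/9$. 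The parameter $\Amax$ is then chosen to accommodate Weyl scaling in condition~\ref{item-E-sup}: applying Lemma~\ref{lem-geodesic-leb} with $M = \Amax$ and with $\eta$ ranging over the finitely many deterministic parametrized curves $\bdy \Ur_r$, $\bdy B_{\sr_{\rho r}}(\ur_{z,\rho r})$, $\bdy B_{\sr_{\rho r}}(\vr_{z,\rho r})$ (again using that there are only finitely many possibilities for $r^{-1} \Ur_r$), one gets that for $\Asup$ small enough the Lebesgue measure bound in condition~\ref{item-E-sup} holds with probability at least $1 - (1-\BB p)/9$. Finally condition~\ref{item-E-rn}: since $\fr_r(r \cdot)$ takes only finitely many possible forms and the Dirichlet pairing $(h,\fr_r)_\nabla$ is, for each fixed $\fr_r$, a centered Gaussian with variance $(\fr_r,\fr_r)_\nabla$ which is invariant under the rescaling, $\Arn$ is chosen large enough that condition~\ref{item-E-rn} fails with probability at most $(1-\BB p)/9$. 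Summing the failure probabilities over the nine conditions yields $\BB P[\Er_r] \geq \BB p$, uniformly in $r \in \rho^{-1} \mcl R_0$.
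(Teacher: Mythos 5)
Your proposal is correct and follows essentially the same approach as the paper: the paper's proof of Proposition~\ref{prop-E-prob} is literally a one-line union bound combining Lemmas~\ref{lem-E-reg}, \ref{lem-E-event}, \ref{lem-E-short}, \ref{lem-E-internal}, and~\ref{lem-E-rn}, each of which bounds the failure probability of one or two of the nine conditions by $(1-\BB p)/10$ by choosing the parameters in the prescribed order, exactly as you do (with $1/9$ in place of $1/10$, an immaterial difference). One small inaccuracy: the value of $\Asp$ is not chosen to control how many points of $\Zr_r$ lie on a given arc (that is governed by $\rho$); rather, $\Asp$ is fixed deterministically via Lemma~\ref{lem-E-sp} for use in the later geometric argument of Proposition~\ref{prop-shortcut}, and does not affect the probability estimate here — but this does not affect the correctness of the union bound.
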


To prove Proposition~\ref{prop-E-prob}, we will treat the conditions in the definition of $\Er_r$ in order. For each condition, we will choose the parameters involved in the condition, in a manner depending only on $\BB p,\llambda$, and the laws of $D_h$ and $\wt D_h$, in such a way that the condition occurs with high probability. 
For some of the conditions, we will impose extra constraints on the parameters beyond just the numerical ordering in~\eqref{eqn-A-parameters} and~\eqref{eqn-rho-parameter}. 
These constraints will be stated and referenced as needed in the later part of the proof.

\begin{lem} \label{lem-E-reg}
There exists $\Aacross > 1/ \Aaround > \Aep > 0$ depending only on $\BB p,\llambda$, and the laws of $D_h$ and $\wt D_h$ such that for each $r>0$, the probability of each of conditions~\ref{item-E-across}, \ref{item-E-around}, and~\ref{item-E-reg} in the definition of $\Er_r$ is at least $1- (1-\BB p)/10$.
\end{lem}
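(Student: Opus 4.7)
The plan is to choose the three parameters in the order $\Aacross, \Aaround, \Aep$ dictated by~\eqref{eqn-A-parameters}, handling each condition separately since conditions~\ref{item-E-across} and~\ref{item-E-around} are pure tightness-across-scales statements and condition~\ref{item-E-reg} is a direct application of Lemma~\ref{lem-hit-ball-phi}. All three conditions must hold with probability $1 - (1-\BB p)/10$, so by a union bound it suffices to make each hold with probability at least $1 - (1-\BB p)/30$.

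First, for condition~\ref{item-E-across}, observe that $D_h(\text{across } \BB A_{r,1.5r}(0)) = D_h(\bdy B_r(0), \bdy B_{1.5r}(0); \ol{\BB A}_{r,1.5r}(0))$, and similarly for the other annulus. By Lemma~\ref{lem-set-tightness} applied to the compact sets $\bdy B_1(0), \bdy B_{1.5}(0) \subset \ol{\BB A}_{1,1.5}(0)$, the random variables $r^{-\xi Q}e^{-\xi h_r(0)} D_h(\text{across } \BB A_{r,1.5r}(0))$ and their reciprocals are tight as $r$ varies; the same holds for $\BB A_{2.5r,3r}(0)$. Hence we may choose $\Aacross \in (0,1)$ small enough (depending only on $\BB p$ and the law of $D_h$) that each of the two ``across'' distances exceeds $\Aacross r^{\xi Q} e^{\xi h_r(0)}$ with probability at least $1 - (1-\BB p)/60$ for all $r > 0$, so that condition~\ref{item-E-across} holds with probability at least $1 - (1-\BB p)/30$.

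Next, for condition~\ref{item-E-around}, the same tightness argument applied via Lemma~\ref{lem-set-tightness} (or, equivalently, the uniform tightness of $r^{-\xi Q} e^{-\xi h_r(0)} D_h(\text{around } \BB A_{3r,4r}(0))$ that follows from it, as noted right after Lemma~\ref{lem-two-set-dist}) lets us choose $\Aaround > 1/\Aacross$ large enough so that condition~\ref{item-E-around} holds with probability at least $1 - (1-\BB p)/30$ uniformly in $r$.

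Finally, condition~\ref{item-E-reg} is precisely the event of Lemma~\ref{lem-hit-ball-phi} applied with $U = \BB A_{1,4}(0)$, $\chi = 1/2$, and $\BB r = r$ (the condition $z \in \BB r[U \setminus B_{\ep^{1/2}}(\bdy U)]$ in Lemma~\ref{lem-hit-ball-phi} corresponds to $z \in \BB A_{r+\ep^{1/2},4r-\ep^{1/2}}(0)$ here). That lemma asserts this event holds with polynomially high probability as $\ep_0 \rta 0$, uniformly in $\BB r$. Therefore, after $\Aaround$ is fixed, we choose $\Aep \in (0, 1/\Aaround)$ small enough that condition~\ref{item-E-reg} with $\ep_0 = \Aep$ holds with probability at least $1 - (1-\BB p)/30$, again uniformly in $r$. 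The only mild subtlety is verifying that the parameter constraints $\Aacross > 1/\Aaround > \Aep$ are compatible with the required probability bounds, but since each choice can be made freely subject only to a one-sided inequality (small, then large, then small), this is automatic, completing the proof.
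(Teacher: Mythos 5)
Your proof is correct and follows exactly the paper's approach: tightness across scales (Lemma~\ref{lem-set-tightness}) for conditions~\ref{item-E-across} and~\ref{item-E-around}, and Lemma~\ref{lem-hit-ball-phi} for condition~\ref{item-E-reg}, choosing the parameters in the order and directions (small, then large, then small) dictated by~\eqref{eqn-A-parameters}. One small note: the lemma only asserts that \emph{each} condition individually has probability at least $1-(1-\BB p)/10$, not that all three hold simultaneously, so the union bound and the $1-(1-\BB p)/30$ targets are unnecessary (though harmless, since they prove something stronger).
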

\begin{proof}
By tightness across scales (Axiom~\refcoord), we can choose $\Aacross , \Aaround > 0$ such that the probabilities of conditions~\ref{item-E-across} and~\ref{item-E-around} are each at least $1-(1-\BB p)/10$. By Lemma~\ref{lem-hit-ball-phi}, we can choose $\Aep  > 0$ such that the probability of condition~\ref{item-E-reg} is at least $1-(1-\BB p)/10$. 
\end{proof}

We henceforth fix $\Aacross,\Aaround,\Aep $ as in Lemma~\ref{lem-E-reg}. 
Our next task is to make an appropriate choice of the parameter $\Asp$ appearing in condition~\ref{item-E-event}.

\begin{lem} \label{lem-E-sp}
Let $r >0$ and assume that conditions~\ref{item-E-across}, \ref{item-E-around}, and~\ref{item-E-reg} in the definition of $\Er_r$ occur.
Let $V\subset \BB A_{r,3r}(0)$ and let $f : \BB C\rta [0,\infty)$ be a non-negative continuous function which is identically zero outside of $ V$.
Also let $P_f$ be a $D_{h-f}(\cdot,\cdot ; \ol{\BB A}_{r,4r}(0))$-geodesic between two points of $\bdy B_{4r}(0)$ and define the times
\eqb \label{eqn-E-sp-times}
\tau := \inf\{t > 0 : P_f(t) \in V\} \quad \text{and} \quad \sigma := \sup\{t > 0 : P_f(t) \in V\} . 
\eqe
There exists $\Asp > 0$ depending only on $\BB p,\llambda$, and the laws of $D_h$ and $\wt D_h$ such that the following is true.  
If
\eqb \label{eqn-E-sp-assume}
D_h\left( P_f(\tau) , P_f(\sigma) ; B_{4r}(0) \right) \geq \frac{\Aacross^2}{4\Aaround} r^{\xi Q} e^{\xi h_r(0)} ,
\eqe 
then
\eqb \label{eqn-E-sp}
|P_f(\tau) - P_f(\sigma)| \geq   \Asp r. 
\eqe
\end{lem}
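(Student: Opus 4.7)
I plan to prove the contrapositive: assume $|P_f(\tau) - P_f(\sigma)| < \Asp r$ for $\Asp$ to be chosen, and construct a path in $B_{4r}(0)$ from $P_f(\tau)$ to $P_f(\sigma)$ whose $D_h$-length is strictly less than $\frac{\Aacross^2}{4\Aaround} r^{\xi Q}e^{\xi h_r(0)}$. The key opening observation is that, by the very definition of $\tau$ and $\sigma$, the sub-paths $P_f|_{[0,\tau]}$ and $P_f|_{[\sigma,T]}$ (where $T$ is the total parameter length of $P_f$) are disjoint from $V$; hence $f\equiv 0$ along them, and each is not only a $D_{h-f}(\cdot,\cdot;\ol{\BB A}_{r,4r}(0))$-geodesic but also a $D_h(\cdot,\cdot;\ol{\BB A}_{r,4r}(0))$-geodesic, with $D_h$-length equal to its parameter length.

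Next I would use condition~\ref{item-E-around} to bound the size of the relevant portion of the geodesic. Let $\ell\subset\BB A_{3r,4r}(0)$ disconnect the inner and outer boundaries with $\op{len}(\ell;D_h)\leq \Aaround r^{\xi Q}e^{\xi h_r(0)}$; since $\ell\cap V=\emptyset$, the $D_{h-f}$- and $D_h$-lengths of $\ell$ agree. Because $P_f$ joins two points of $\bdy B_{4r}(0)$ while visiting $V\subset B_{3r}(0)$, the times $t_1^\ell$ (first crossing of $\ell$) and $t_2^\ell$ (last crossing of $\ell$) satisfy $t_1^\ell\leq \tau\leq \sigma\leq t_2^\ell$, and the $D_{h-f}$-geodesic property gives $t_2^\ell-t_1^\ell\leq \op{len}(\ell;D_h)\leq \Aaround r^{\xi Q}e^{\xi h_r(0)}$. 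In particular $\tau-t_1^\ell\leq \Aaround r^{\xi Q}e^{\xi h_r(0)}$, and the subsegment $P_f|_{[t_1^\ell,\tau]}$ is a $D_h(\cdot,\cdot;\ol{\BB A}_{r,4r}(0))$-geodesic of length at most $\Aaround r^{\xi Q}e^{\xi h_r(0)}$.

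The heart of the argument is now to invoke condition~\ref{item-E-reg} (applied in its trivial $V=\emptyset$, $f\equiv 0$ form) to the $D_h$-geodesic $P_f|_{[t_1^\ell,\tau]}$, at some scale $\ep\in(0,\Aep]$, with $z$ chosen near $P_f(\tau)$ and $s=\tau-t_1^\ell$. This yields a loop $\pi_\ep\subset\BB A_{\ep r,\ep^{1/2}r}(z)$ separating the inner and outer circles with $\op{len}(\pi_\ep;D_h)\leq \ep^\geoExp(\tau-t_1^\ell)\leq \ep^\geoExp\Aaround r^{\xi Q}e^{\xi h_r(0)}$. Provided $\Asp$ is small enough relative to $\ep$, both $P_f(\tau)$ and $P_f(\sigma)$ lie inside $B_{\ep r}(z)$, while the two endpoints of $P_f$ lie on $\bdy B_{4r}(0)$ and so outside $B_{\ep^{1/2}r}(z)$. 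Consequently each of $P_f|_{[0,\tau]}$ and $P_f|_{[\sigma,T]}$ must cross $\pi_\ep$; let $t_1^\pi$ denote the last crossing in $[0,\tau]$ and $t_2^\pi$ the first crossing in $[\sigma,T]$. The concatenation of $P_f|_{[t_1^\pi,\tau]}$ reversed, a subarc of $\pi_\ep$ from $P_f(t_1^\pi)$ to $P_f(t_2^\pi)$, and $P_f|_{[\sigma,t_2^\pi]}$ is a path from $P_f(\tau)$ to $P_f(\sigma)$ contained in $B_{\ep^{1/2}r}(z)\subset B_{4r}(0)$.

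Its $D_h$-length is $(\tau-t_1^\pi)+\op{len}(\pi_\ep\text{-arc};D_h)+(t_2^\pi-\sigma)$. Using that $P_f|_{[t_1^\pi,t_2^\pi]}$ is a $D_{h-f}(\cdot,\cdot;\ol{\BB A}_{r,4r}(0))$-geodesic joining two points of $\pi_\ep$, one gets $t_2^\pi-t_1^\pi\leq \op{len}(\pi_\ep;D_{h-f})\leq \op{len}(\pi_\ep;D_h)$, whence $(\tau-t_1^\pi)+(t_2^\pi-\sigma)=(t_2^\pi-t_1^\pi)-(\sigma-\tau)\leq \op{len}(\pi_\ep;D_h)$. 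Thus the total $D_h$-length is at most $2\op{len}(\pi_\ep;D_h)\leq 2\ep^\geoExp\Aaround r^{\xi Q}e^{\xi h_r(0)}$. Picking $\ep$ so that $2\ep^\geoExp\Aaround<\frac{\Aacross^2}{4\Aaround}$, i.e.\ $\ep<\bigl(\Aacross^2/(8\Aaround^2)\bigr)^{1/\geoExp}$, and setting $\Asp$ proportionally smaller, delivers the contradiction. The main technical obstacle I anticipate is the boundary case where $P_f(\tau)$ is within $\ep^{1/2}r$ of $\bdy B_r(0)$ (so that a point $z$ close to $P_f(\tau)$ cannot lie in $\BB A_{r+\ep^{1/2}r,4r-\ep^{1/2}r}(0)$ as required by condition~\ref{item-E-reg}); this should be handled by translating $z$ slightly inward and replacing the scale $\ep$ by a modestly larger scale, using condition~\ref{item-E-across} to guarantee that the Euclidean shift costs only a controlled multiplicative constant in the bound.
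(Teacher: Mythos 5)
Your overall strategy matches the paper's: prove the contrapositive by producing a short loop around $P_f(\tau)$ (and $P_f(\sigma)$) via condition~\ref{item-E-reg}, then splice a path from $P_f(\tau)$ to $P_f(\sigma)$ out of the two geodesic tails on either side of $[\tau,\sigma]$ (which have $D_{h-f}=D_h$ length) and a sub-arc of the loop. The paper organizes this into Lemma~\ref{lem-E-loop} plus a short endgame, but the mechanism is the same.

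However, the crucial application of condition~\ref{item-E-reg} does not go through as you have set it up. You apply condition~\ref{item-E-reg} with $V=\emptyset$, $f\equiv 0$ to the $D_h(\cdot,\cdot;\ol{\BB A}_{r,4r}(0))$-geodesic $P_f|_{[t_1^\ell,\tau]}$, with $z$ near $P_f(\tau)$ and $s=\tau-t_1^\ell$, so that the point hit at time $s$ is $P_f(\tau)$ itself. But condition~\ref{item-E-reg} requires the \emph{endpoints} $x,y$ of the geodesic to lie outside $B_{\ep^{1/2}r}(z)$ while the point $P_f(s)$ lies in $B_{\ep r}(z)$. Since $P_f(\tau)$ is both the endpoint $y$ and the target $P_f(s)$, you would need $P_f(\tau)\in B_{\ep r}(z)$ and $P_f(\tau)\notin B_{\ep^{1/2}r}(z)$ simultaneously, which is impossible because $B_{\ep r}(z)\subset B_{\ep^{1/2}r}(z)$. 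Moving $z$ or shrinking $s$ slightly does not help: if $z$ is close enough to $P_f(\tau)$ that $P_f(\tau),P_f(\sigma)\in B_{\ep r}(z)$ (which you need for the separating argument downstream), then $P_f(\tau)\in B_{\ep^{1/2}r}(z)$ automatically and the endpoint exclusion is violated; if $z$ is far enough that the exclusion holds, you lose $P_f(\tau)\in B_{\ep r}(z)$.

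The fix is precisely what the ``$V\neq\emptyset$, $s\leq\inf\{t:P_f(t)\in V\}$'' clause of condition~\ref{item-E-reg} is designed for, and it is what the paper does in Lemma~\ref{lem-E-loop}: do not truncate the geodesic at $\tau$. Instead keep $V$ and $f$ as given and apply condition~\ref{item-E-reg} to the longer $D_{h-f}(\cdot,\cdot;\ol{\BB A}_{r,4r}(0))$-geodesic segment whose endpoints lie on $\bdy B_{3r}(0)$ (the paper takes $\tau_0$ = last time before $\tau$ hitting $\bdy B_{3r}(0)$, $\sigma_0$ = first time after $\sigma$; your $t_1^\ell,t_2^\ell$ would serve equally well), with $z=P_f(\tau)$ and $s=\tau-\tau_0$. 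Then the endpoints lie in $\ol{\BB A}_{3r,4r}(0)$, hence in $\ol{\BB A}_{r,4r}(0)\setminus(V\cup B_{\ep^{1/2}r}(z))$ since $z\in\ol V\subset\ol{\BB A}_{r,3r}(0)$, and $s$ is exactly the first entry time of $P_f|_{[\tau_0,\sigma_0]}$ into $V$, so the hypothesis $s\leq\inf\{t:P_f(t)\in V\}$ holds with equality. With this change, the rest of your argument — bounding $s$ via the separating loop in $\BB A_{3r,4r}(0)$ from condition~\ref{item-E-around}, then splicing a subarc of $\pi_\ep$ with $P_f|_{[t_1^\pi,\tau]}$ and $P_f|_{[\sigma,t_2^\pi]}$ — closes correctly and is identical in substance to the paper's. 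Finally, your anticipated obstacle (that $P_f(\tau)$ could lie within $\ep^{1/2}r$ of $\bdy B_r(0)$) is a genuine looseness in the lemma statement as written, but it is harmless: the lemma is applied with $V=\Vr_r\subset\BB A_{1.5r,2.5r}(0)$, and your proposed workaround using condition~\ref{item-E-across} is not what is needed there.
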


The motivation for our choice of $\Asp$ comes from hypothesis~\ref{item-Ehyp-inc} for $\Er_r$ from Section~\ref{sec-counting-setup}.
We will eventually apply Lemma~\ref{lem-E-sp} with $V = \Vr_r$, $f = \fr_r$, and $P_f$ equal to a $(B_{4r}(0) , \Vr_r)$-excursion of a $D_{h-\fr_r}$-geodesic between two points of $\BB C\setminus B_{4r}(0)$ (recall Definition~\ref{def-excursion}). 
The assumption~\eqref{eqn-E-sp-assume} is closely related to the condition~\eqref{eqn-Ehyp-exc} from hypothesis~\ref{item-Ehyp-inc}. 
The lower bound for $|P_f(\tau) - P_f(\sigma)|$ from~\eqref{eqn-E-sp} will eventually be combined with condition~\ref{item-E-event} in the definition of $\Er_r$ to ensure that there is a $z\in\Zr_r$ such that $\Fr_{z,r}$ occurs and our $D_{h-\fr_r}$-geodesic gets Euclidean-close to each of the points $u,v$ appearing in the definition of $\Fr_{z,r}$ (see Section~\ref{sec-excursion-hit}). 

For the proof of Lemma~\ref{lem-E-sp}, we need the following lemma. 

\begin{lem} \label{lem-E-loop} 
Assume we are in the setting of Lemma~\ref{lem-E-sp} and let $V,f,P_f,\tau$, and $\sigma$ be as in that lemma. 
For each $\ep  \in (0,\Aep]$, one has
\allb \label{eqn-E-loop}
&\max\left\{D_h\left(\text{around $\BB A_{\ep r , \ep^{1/2} r}(P_f(\tau))$} \right) , D_h\left(\text{around $\BB A_{\ep r , \ep^{1/2} r}(P_f(\sigma))$} \right) \right\} \notag\\
&\qquad\qquad\qquad\qquad \leq 2 \Aaround \ep^\geoExp  r^{\xi Q} e^{\xi h_r(0)}  .
\alle
\end{lem}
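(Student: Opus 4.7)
\textbf{Proof plan for Lemma~\ref{lem-E-loop}.} The strategy is to apply condition~\ref{item-E-reg} (the regularity-along-geodesics condition) at the points $P_f(\tau)$ and $P_f(\sigma)$, while using condition~\ref{item-E-around} (the distance-around bound) to control the relevant prefix/suffix lengths of $P_f$ via a shortcut argument around a short loop in $\BB A_{3r,4r}(0)$.

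First, fix $\zeta > 0$ and use condition~\ref{item-E-around} to choose a loop $\pi \subset \BB A_{3r, 4r}(0)$ disconnecting the inner and outer boundaries with $\op{len}(\pi;D_h) \leq (1+\zeta) \Aaround r^{\xi Q} e^{\xi h_r(0)}$. Since $V \subset \BB A_{r,3r}(0)$ and $\pi \subset \BB A_{3r,4r}(0)$, the sets $\pi$ and $V$ are disjoint, so $f$ vanishes on $\pi$ and the $D_{h-f}$-length and $D_h$-length agree on any path in $\pi$. Let $\tau_1 := \inf\{t \in [0,T] : P_f(t) \in \pi\}$ and $\tau_2 := \sup\{t \in [0,T] : P_f(t) \in \pi\}$; these are well defined because $P_f$ starts and ends at $\bdy B_{4r}(0)$ (outside $\pi$) and passes through $V$ (inside $\pi$). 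By construction $\tau_1 \leq \tau \leq \sigma \leq \tau_2$, and since $P_f$ avoids $V$ on $[0,\tau_1] \cup [\tau_2,T]$ we have $\op{len}(P_f|_{[0,\tau_1]};D_h) = \tau_1$ and $\op{len}(P_f|_{[\tau_2,T]};D_h) = T - \tau_2$.

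Next, execute the shortcut argument: concatenate $P_f|_{[0,\tau_1]}$, the shorter of the two arcs of $\pi$ from $P_f(\tau_1)$ to $P_f(\tau_2)$ (of $D_h$-length at most $\tfrac12 \op{len}(\pi;D_h)$), and $P_f|_{[\tau_2,T]}$. This yields a path in $\ol{\BB A}_{r,4r}(0)$ from $x$ to $y$ whose $D_{h-f}$-length equals the sum of $D_h$-lengths (each segment avoids $V$). Since $P_f$ is a $D_{h-f}(\cdot,\cdot;\ol{\BB A}_{r,4r}(0))$-geodesic of total length $T$, we get
\begin{equation*}
T \leq \tau_1 + \tfrac{1+\zeta}{2} \Aaround r^{\xi Q} e^{\xi h_r(0)} + (T - \tau_2),
\end{equation*}
hence $\tau_2 - \tau_1 \leq \tfrac{1+\zeta}{2} \Aaround r^{\xi Q} e^{\xi h_r(0)}$. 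Sending $\zeta \to 0$ gives $\tau_2 - \tau_1 \leq \tfrac12 \Aaround r^{\xi Q} e^{\xi h_r(0)}$.

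Finally, to obtain the bound at $P_f(\tau)$, observe that the restriction $P_f|_{[\tau_1,T]}$ is itself a $D_{h-f}(\cdot,\cdot;\ol{\BB A}_{r,4r}(0))$-geodesic from $P_f(\tau_1)$ to $y$ (otherwise concatenating a shorter replacement with $P_f|_{[0,\tau_1]}$ would contradict the geodesy of $P_f$). Apply condition~\ref{item-E-reg} to this reparametrized geodesic with $z = P_f(\tau)$ and $s = \tau - \tau_1$, noting that this $s$ equals the first entry time of the reparametrized geodesic into $V$. This yields
\begin{equation*}
D_h(\text{around } \BB A_{\ep r, \ep^{1/2} r}(P_f(\tau))) \leq \ep^\geoExp (\tau - \tau_1) \leq \tfrac{1}{2} \Aaround \ep^\geoExp r^{\xi Q} e^{\xi h_r(0)},
\end{equation*}
which is stronger than the claimed bound. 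The analogous bound at $P_f(\sigma)$ is obtained symmetrically by reversing the parametrization of $P_f$ (so that the roles of $\tau_1 \leftrightarrow T - \tau_2$ and $\tau \leftrightarrow T - \sigma$ are swapped) and applying condition~\ref{item-E-reg} again.

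The main technical obstacle is verifying the hypotheses of condition~\ref{item-E-reg}: namely that $z = P_f(\tau)$ lies in the interior sub-annulus $\BB A_{r + \ep^{1/2} r, 4r - \ep^{1/2} r}(0)$ and that $P_f(\tau_1), y \notin B_{\ep^{1/2} r}(z)$. Both hold once $\ep$ is small compared to the positive Euclidean distance between $V$ and $\pi \cup \bdy B_r(0)$ (which follows from $V \subset \BB A_{r,3r}(0)$, though in the actual application $V = \Vr_r$ is concentrated near $\bdy B_{2r}(0)$, where these conditions hold trivially for all $\ep \in (0,\Aep]$ by the parameter hierarchy~\eqref{eqn-A-parameters}--\eqref{eqn-rho-parameter}).
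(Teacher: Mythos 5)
Your proof is correct and follows essentially the same approach as the paper's: use condition~\ref{item-E-around} to produce a cheap loop in $\BB A_{3r,4r}(0)$, use geodesy to conclude that the time $P_f$ spends between its first and last visits to that loop is bounded by the loop's $D_{h-f}$-length, and then feed this bound into condition~\ref{item-E-reg} with $z=P_f(\tau)$ (and symmetrically for $\sigma$). The only cosmetic differences are that the paper uses the hitting times of $\bdy B_{3r}(0)$ as anchor points rather than the hitting times of the loop itself and doesn't bother with the shorter-arc refinement (it simply takes a loop of $D_h$-length at most $2\Aaround r^{\xi Q}e^{\xi h_r(0)}$), obtaining the constant $2\Aaround$ directly; your version improves the constant to $\tfrac12\Aaround$, which is harmless but unnecessary.
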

\begin{proof}
Let $\tau_0$ (resp.\ $\sigma_0$) be the last time before $\tau$ (resp.\ the first time after $\sigma$) at which $P_f$ hits $\bdy B_{3r}(0)$. 
By condition~\ref{item-E-around} in the definition of $\Er_r$, there is a path $\Pi \subset \BB A_{3r,4r}(0)$ with $D_h$-length at most $2\Aaround r^{\xi Q} e^{\xi h_r(0)}$ which disconnects the inner and outer boundaries of $\BB A_{3r,4r}(0)$.  
Since $f$ is supported on $\BB A_{r,3r}(0)$, the $D_{h-f}$-length of $\Pi$ is the same as its $D_h$-length. 
The path $P_f$ must hit $\Pi$ before time $\tau_0$ and after time $\sigma_0$. Since $P_f$ is a $D_{h-f}(\cdot,\cdot ; \ol{\BB A}_{r,4r}(0))$-geodesic, we infer that 
\eqb \label{eqn-E-sp-time}
\sigma_0 - \tau_0 \leq \op{len}\left(\Pi ; D_{h-f}\right) \leq 2\Aaround r^{\xi Q} e^{\xi h_r(0)} .
\eqe 
Indeed, otherwise we could replace a segment of $P_f$ by a segment of $\Pi$ to get a path in $\ol{\BB A}_{r,4r}(0)$ with the same endpoints as $P_f$ but shorter $D_{h-f}$-length. 

By condition~\ref{item-E-reg} in the definition of $\Er_r$ applied to the $D_{h-f }(\cdot,\cdot ;\ol{\BB A}_{r,4r}(0))$-geodesic $P_f|_{[\tau_0 , \sigma_0]}$ and with $z = P_f(\tau)$ and $s = \tau - \tau_0$, for each $\ep \in (0,\Aep]$, 
\eqb 
D_h\left(\text{around $\BB A_{\ep r , \ep^{1/2} r}(P_f(\tau))$} \right)
\leq  \ep^\geoExp (\tau-\tau_0) 
\leq  \ep^\geoExp (\sigma_0-\tau_0) 
\leq 2\ep^\geoExp \Aaround r^{\xi Q} e^{\xi h_r(0)}  ,
\eqe 
where the last inequality is by~\eqref{eqn-E-sp-time}.
The analogous bound with $\sigma$ in place of $\tau$ follows from the same argument applied with $P_f$ replaced by its time reversal. 
\end{proof}

\begin{figure}[ht!]
\begin{center}
\includegraphics[width=.5\textwidth]{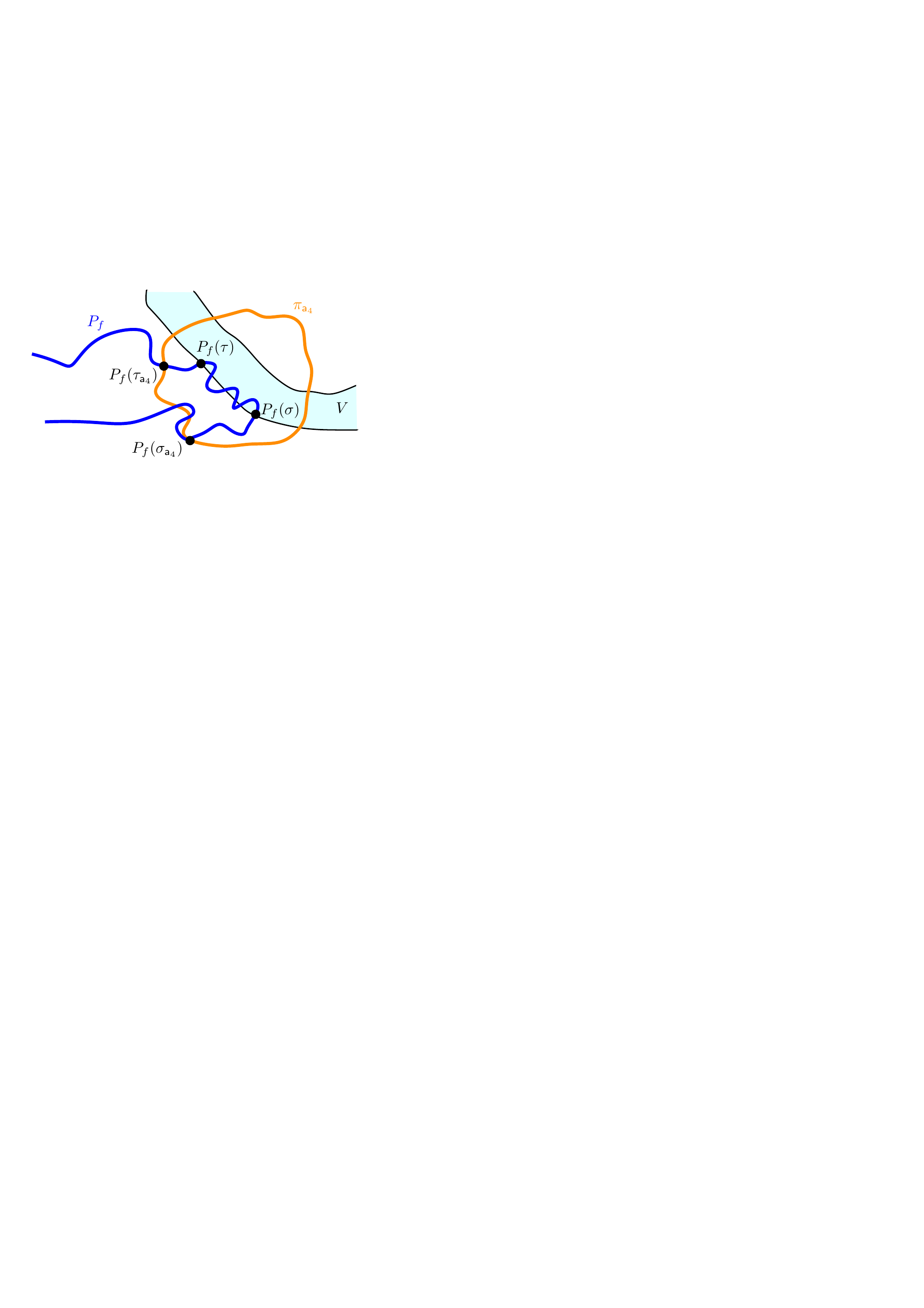} 
\caption{\label{fig-E-sp} Illustration of the proof of Lemma~\ref{lem-E-sp}. If $|P_f(\tau) - P_f(\sigma)| <   \Asp r$, then the union of the orange loop $\pi_\Asp$ and the segments $P_f|_{[\tau_\Asp,\tau]}$ and $P_f|_{[\sigma , \sigma_\Asp]}$ contains a path from $P_f(\tau)$ to $P_f(\sigma)$ of $D_{h-f}$-length less than $\frac{\Aacross^2}{4\Aaround}r^{\xi Q} e^{\xi h_r(0)}$. This yields the contrapositive of the lemma statement. 
}
\end{center}
\end{figure}

\begin{proof}[Proof of Lemma~\ref{lem-E-sp}] 
See Figure~\ref{fig-E-sp} for an illustration. 
By Lemma~\ref{lem-E-loop}, for each $\ep \in (0,\Aep]$ there is a path $\pi_\ep \subset \BB A_{\ep r , \ep^{1/2} r}(P_f(\tau))$ such that
\eqb \label{eqn-E-sp-loop}
\op{len}\left( \pi_\ep ; D_h \right) \leq 4\ep^\geoExp \Aaround r^{\xi Q} e^{\xi h_r(0)}   . 
\eqe  
Let $\Asp   \in (0,\Aep]$ be chosen so that 
\eqb
4 \Asp^\geoExp \Aaround   < \frac{\Aacross^2}{16\Aaround}  . 
\eqe
By~\eqref{eqn-E-sp-loop} and since $f$ is non-negative,
\eqb \label{eqn-E-sp-loop'}
\op{len}\left( \pi_\Asp ; D_{h-f} \right)  \leq \op{len}\left( \pi_\Asp ; D_h \right)  < \frac{\Aacross^2}{16\Aaround} r^{\xi Q} e^{\xi h_r(0)}  .
\eqe
We will prove the contrapositive of the lemma statement with this choice of $\Asp$, i.e., we will show that if $|P_f(\tau) - P_f(\sigma)| < \Asp r$, then $D_h\left( P_f(\tau) , P_f(\sigma) ; B_{4r}(0) \right) < \frac{\Aacross^2}{4\Aaround} r^{\xi Q} e^{\xi h_r(0)}$. 

If $|P_f(\tau) - P_f(\sigma)| < \Asp r$, then $P_f(\sigma) \in B_{\Asp r}(P_f(\tau))$. Since the endpoints of $P_f$ lie in $\bdy B_{4r}(0)$, which is disjoint from $B_{\Asp^{1/2} r}(P_f(\tau))$, it follows that $P_f$ hits $\pi_\Asp$ before time $\tau$ and after time $\sigma$.
Let $\tau_\Asp$ (resp.\ $\sigma_\Asp$) be the last time before time $\tau $ (resp.\ the first time after time $\sigma $) at which $P_f$ hits $\pi_\Asp$. 
Since $P_f$ is a $D_{h-f}(\cdot,\cdot ; \ol{\BB A}_{r,4r}(0))$-geodesic, 
\eqbn
\sigma_\Asp -\tau_\Asp \leq \op{len}\left( \pi_\Asp ; D_{h-f} \right)  < \frac{\Aacross^2}{16\Aaround} r^{\xi Q} e^{\xi h_r(0)}  .
\eqen

By the definitions~\eqref{eqn-E-sp-times} of $\tau$ and $\sigma$, the path segments $P_f|_{[\tau_\Asp,\tau]}$ and $P_f|_{[\sigma,\sigma_\Asp]}$ are disjoint from the support of $f$. So, the $D_{h-f}$-lengths of these segments are the same as their $D_h$-lengths. 
Consequently,
\allb \label{eqn-E-sp-segments}
\op{len}\left( P_f|_{[\tau_\Asp,\tau]} ; D_h \right) +\op{len}\left( P_f|_{[\sigma,\sigma_\Asp]} ; D_h \right)
&\leq \op{len}\left( P_f|_{[\tau_\Asp , \sigma_\Asp]} ; D_{h-f} \right)  \notag\\
&= \sigma_\Asp - \tau_\Asp  
 < \frac{\Aacross^2}{16\Aaround} r^{\xi Q} e^{\xi h_r(0)}  .
\alle 

The union of $P_f([\tau_\Asp,\tau])$, $P_f([\sigma , \sigma_\Asp])$, and $\pi_\Asp$ contains a path from $P_f(\tau)$ to $P_f(\sigma)$. 
Since $V\subset B_{3r}(0)$, this path is contained in $B_{4r}(0)$. We therefore infer from~\eqref{eqn-E-sp-loop'} and~\eqref{eqn-E-sp-segments} that
\eqbn
D_h\left( P_f(\tau) , P_f(\sigma)  ; B_{4r}(0) \right) \leq \frac{3\Aacross^2}{16\Aaround} r^{\xi Q} e^{\xi h_r(0)}  < \frac{\Aacross^2}{4\Aaround}r^{\xi Q} e^{\xi h_r(0)}
\eqen
as required.
\end{proof}

Henceforth fix $\Asp$ as in Lemma~\ref{lem-E-sp}. 
We will now choose $\rho$ so that condition~\ref{item-E-event} in the definition of $\Er_r$ occurs with high probability.

\begin{lem} \label{lem-E-event} 
There exists $\rho \in (0, \llambda \Asp)$, depending only on $\BB p,\llambda$, and the laws of $D_h$ and $\wt D_h$, such that
\eqb \label{eqn-Arho}
\rho^\geoExp \Aaround \leq \llambda \Aacross 
\eqe 
and the following is true.  For each $r \in \rho^{-1} \mcl R_0$, it holds with probability at least $1-(1-\BB p)/10$ that condition~\ref{item-E-event} in the definition of $\Er_r$ occurs.  
\end{lem}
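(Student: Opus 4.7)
The plan is to reduce condition~\ref{item-E-event} to a statement about finitely many arcs and then apply the long-range independence of the GFF (Lemma~\ref{lem-spatial-ind}) together with the positive lower bound $\BB P[\Fr_{z,\rho r}] \geq \pr$ from~\eqref{eqn-endpt-prob}. First, I would observe that every connected circular arc $I \subset \bdy B_{2r}(0)$ of Euclidean length at least $\Asp r/2$ contains an arc from a finite deterministic family $\mcl I$ of arcs on $\bdy B_{2r}(0)$ of length $\Asp r/4$, with $\# \mcl I \leq C / \Asp$ for a universal constant $C$. It therefore suffices to show that for a suitable $\rho$, with probability at least $1 - (1-\BB p)/10$, every arc $I_0 \in \mcl I$ contains a point $z \in I_0 \cap \Zr_r$ with $\Fr_{z,\rho r}$ occurring.

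Second, I would apply Lemma~\ref{lem-spatial-ind} to the events $\Fr_{z,\rho r}$ for $z$ ranging over $I_0 \cap \Zr_r$. By Lemma~\ref{lem-endpt-event-msrble}, each $\Fr_{z,\rho r}$ is a.s.\ determined by the restriction of $h$ to $B_{3\Aloc \rho r}(z)$, viewed modulo additive constant. Rescaling space by $3\Aloc \rho r$, these events become events of the form considered in Lemma~\ref{lem-spatial-ind} on unit balls centered at the rescaled points $\wt z := z/(3\Aloc \rho r)$. Consecutive points of $\Zr_r$ are at Euclidean distance at least $4\pi r/\Kr_\rho \geq 4\pi \Aloc \rho r / (2\llambda)$, so the rescaled spacing is at least $2\pi/(3\llambda)$. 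Since $\llambda < 10^{-100}$ by~\eqref{eqn-small-const}, this is much larger than $2(1+s)$ for any $s$ of order unity, so we may apply Lemma~\ref{lem-spatial-ind} with, say, $s = 1$, $p = \pr$, and $q = 1 - (1-\BB p) \Asp / (10 C)$. That lemma furnishes an integer $n_* = n_*(1, \pr, q)$, depending only on $\BB p, \llambda$, and the laws of $D_h, \wt D_h$, such that if each arc $I_0 \in \mcl I$ contains at least $n_*$ points of $\Zr_r$, then $\BB P[\exists z \in I_0 \cap \Zr_r \text{ s.t.\ } \Fr_{z,\rho r} \text{ occurs}] \geq q$. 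Since $\# \Zr_r \asymp \llambda/(\Aloc \rho)$ and each arc of length $\Asp r/4$ contains a fraction $\asymp \Asp$ of these points, we can ensure this by choosing $\rho$ sufficiently small in terms of $\BB p, \llambda$, $\Asp$, $\Aloc$, and $n_*$.

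Third, I would take a union bound over the $\# \mcl I \leq C/\Asp$ arcs in $\mcl I$ to conclude that the total probability that condition~\ref{item-E-event} fails is at most $C \Asp^{-1} (1-q) = (1 - \BB p)/10$. Finally, I would shrink $\rho$ further if necessary to ensure both $\rho < \llambda \Asp$ and $\rho^{\geoExp} \Aaround \leq \llambda \Aacross$; this is a numerical constraint involving only previously fixed parameters, and $\geoExp > 0$, so it can always be arranged by taking $\rho$ small. The main obstacle is really only bookkeeping: ensuring that the final choice of $\rho$ depends only on the allowed parameters ($\BB p, \llambda$, and the laws of $D_h, \wt D_h$), which works because $\Asp, \Aacross, \Aaround, \Aep$ and $\pr, \Aloc, \Aendpt, \Kann$ have all already been fixed in a manner depending only on those quantities.
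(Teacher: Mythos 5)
Your proposal is correct and follows essentially the same route as the paper's proof: reduce to a finite deterministic family of arcs of length $\Asp r/4$, invoke the measurability and probability bound from Lemma~\ref{lem-endpt-event-msrble} and~\eqref{eqn-endpt-prob}, apply the long-range independence estimate Lemma~\ref{lem-spatial-ind} after an appropriate spatial rescaling, and take a union bound over the arcs. The only cosmetic difference is that you rescale by $3\Aloc\rho r$ so that the relevant balls become exactly unit balls, whereas the paper rescales by $r$ and uses that the balls $B_{3\Aloc\rho}(\cdot)$ sit inside unit balls for small $\rho$; both are valid. Your handling of the auxiliary constraints $\rho < \llambda\Asp$ and $\rho^\geoExp\Aaround \leq \llambda\Aacross$ by shrinking $\rho$ at the end is also fine, since $\geoExp > 0$ and all other quantities are already fixed.
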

\begin{proof}
By the definition of $\Kr_\rho$ in~\eqref{eqn-test-pts-count} and the definition of $\Zr_r(\rho)$ in~\eqref{eqn-test-pts}, there is a constant $c > 0$ depending only on $ \Aloc , \Asp,$ and $\llambda$ (hence only on $\BB p,\llambda,$ and the laws of $D_h$ and $\wt D_h$) such that for each $\rho\in (0,\llambda/\Aloc)$ and each $r\in \rho^{-1} \mcl R_0$, the set $\Zr_r = \Zr_r(\rho)$ satisfies the following properties. 
\begin{enumerate}[(i)]
\item We have $|z-w| \geq 50 \Aloc \rho r$ for each distinct $z,w \in \Zr_r(\rho)$ (note that $\llambda $ is much smaller than $1/50$, see~\eqref{eqn-small-const}). 
\item Each connected circular arc $J \subset \bdy B_{2r}(0)$ with Euclidean length at least $\Asp r / 4$ contains at least $\lfloor c \rho^{-1} \rfloor$ points of $\Zr_r(\rho)$. 
\end{enumerate} 
Furthermore, there is a constant $C > 0$ depending only on $\Asp$ and a deterministic collection $\mcl J$ of arcs $J\subset \bdy B_{2r}(0)$ such that $\# \mcl J \leq C$, each $J\in\mcl J$ has Euclidean length $\Asp r / 4$, and each arc $I\subset \bdy B_{2r}(0)$ with Euclidean length at least $\Asp r / 2$ contains some $J\in\mcl J$. 

By~\eqref{eqn-endpt-prob}, for each $r\in \rho^{-1} \mcl R_0$ and each $z\in \Zr_r(\rho)$, we have $\BB P[\Fr_{z,\rho r}] \geq \pr$. 
By Lemma~\ref{lem-endpt-event-msrble}, each $\Fr_{z,\rho r}$ is a.s.\ determined by $h|_{B_{3\rho r}(z)}$, viewed modulo additive constant.  
Therefore, we can apply Lemma~\ref{lem-spatial-ind} with $h$ replaced by the the re-scaled field $h(r\cdot)$, which agrees in law with $h$ modulo additive constant, and $\mcl Z = r^{-1}(J \cap \Zr_r)$ to get the following. If $\rho$ is chosen to be sufficiently small (depending on $\pr$ and $C$, hence only on $\BB p,\llambda,$ and the laws of $D_h$ and $\wt D_h$), then 
\eqbn
\BB P\left[ \bigcup_{z\in \Zr_r \cap J} \Fr_{z,\rho r} \right] \geq 1 - \frac{1- \BB p }{10C}  , \quad \forall J \in \mcl J .
\eqen
By a union bound over all $J\in\mcl J$, we get that with probability at least $1-(1-\BB p)/10$, each $J \in \mcl J$ contains a point $z \in \Zr_r(\rho)$ such that $\Fr_{z,\rho r}$ occurs. By the defining property of $\mcl J$, this concludes the proof.
\end{proof}

We next deal with conditions~\ref{item-E-compare} and~\ref{item-E-narrow} in the definition of $\Er_r$, which amounts to citing some already-proven lemmas.

\begin{lem} \label{lem-E-short}
There exists $\Anar \in (0, \llambda (1-\Kann) \Aendpt \rho ]$ (where $\Aendpt$ is as in Lemma~\ref{lem-endpt-ball}), depending only on $\BB p,\llambda,$ and the laws of $D_h$ and $\wt D_h$, such that for each $r  > 0$, the probability of each of conditions~\ref{item-E-compare} and~\ref{item-E-narrow} in the definition of $\Er_r$ is at least $1-(1-\BB p)/10$. 
\end{lem}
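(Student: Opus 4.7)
The plan is to treat conditions~\ref{item-E-compare} and~\ref{item-E-narrow} independently, verify each has high probability when $\Anar$ is small enough, then take a union bound. Since both estimates concern LQG distances between Euclidean-nearby points in a fixed macroscopic region, the key is to obtain bounds which are uniform in the macroscopic scale $r$ (as in the other lemmas in Section~\ref{sec-prelim-dist}).

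For condition~\ref{item-E-compare}, I will apply Lemma~\ref{lem-annulus-union} with $U = \BB A_{1.5,3}(0)$, $\BB r = r$, $(a,b) = (1/4,1/2)$, $(c,d) = (2,3)$, and $\zeta = 1/4$. That lemma asserts that with superpolynomially high probability as $\delta_0 \to 0$, uniformly in $r$, the annular comparison holds for all $z \in \BB A_{1.5 r,3 r}(0)$ and all $\delta \in (0,\delta_0]$. Hence choosing $\Anar$ small enough (depending on $\BB p$ and the law of $D_h$) and setting $\delta_0 = \Anar$ makes the probability of condition~\ref{item-E-compare} at least $1-(1-\BB p)/20$ for every $r>0$.

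For condition~\ref{item-E-narrow}, I will apply the reverse H\"older continuity estimate for $D_h$ from~\cite[Proposition~3.8]{pfeffer-supercritical-lqg}, which gives a polynomial-in-Euclidean-distance lower bound for internal LQG distances between points at small but positive Euclidean separation. Specifically, for each fixed $r$, this yields that with probability tending to $1$ as the maximum allowed Euclidean separation shrinks, every pair $z,w \in \BB A_{1.5r,3r}(0)$ with $|z-w| \leq \llambda^{-1} \Anar r$ satisfies the desired lower bound with exponent $\xi(Q+3)$. Uniformity in $r$ follows from the scale invariance of the law of $h$ modulo additive constant together with Weyl scaling (Axiom~\ref{item-metric-f}) and tightness across scales (Axiom~\refcoord), by the same discretize-and-rescale argument used in the proof of Lemma~\ref{lem-annulus-union}. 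This lets us choose $\Anar$ small enough (in a manner depending only on $\BB p,\llambda$, and the laws of $D_h,\wt D_h$) so that condition~\ref{item-E-narrow} has probability at least $1-(1-\BB p)/20$ for every $r>0$.

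Taking the minimum of the two choices of $\Anar$, and further shrinking if necessary to guarantee $\Anar \leq \llambda(1-\Kann)\Aendpt\rho$, a union bound over the two conditions gives probability at least $1-(1-\BB p)/10$. The main technical hurdle is confirming that the reverse H\"older estimate from~\cite{pfeffer-supercritical-lqg} can be applied uniformly in $r$ with the specific exponent $\xi(Q+3)$ required here; this is where most of the care is needed, but once uniformity across scales is established the rest of the argument reduces to selecting $\Anar$.
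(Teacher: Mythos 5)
Your proposal matches the paper's own proof: condition~\ref{item-E-compare} is handled via Lemma~\ref{lem-annulus-union} (with exactly the parameter choices you list), and condition~\ref{item-E-narrow} via the reverse H\"older estimate of \cite[Proposition 3.8]{pfeffer-supercritical-lqg}, after which $\Anar$ is shrunk to fit the required range. The paper asserts each condition with probability $\geq 1-(1-\BB p)/10$ separately rather than targeting $1-(1-\BB p)/20$ and union-bounding, but your version is stronger and equally valid; the extra remarks on uniformity in $r$ are sensible sanity checks that the paper leaves implicit.
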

\begin{proof}
The existence of $\Anar\in (0, \llambda \Aendpt \rho ]$ such that condition~~\ref{item-E-compare} in the definition of $\Er_r$ each occur with probability at least $1-(1-\BB p)/10$ follows from Lemma~\ref{lem-annulus-union}. 
By the local reverse H\"older continuity of $D_h$ w.r.t.\ the Euclidean metric~\cite[Proposition 3.8]{pfeffer-supercritical-lqg}, after possibly shrinking $\Anar$ we can arrange that condition~\ref{item-E-narrow} also occurs with probability at least $1-(1-\BB p)/10 $. 
\end{proof}

We henceforth fix $\Anar$ as in Lemma~\ref{lem-E-short}. 
We also let $\Aset \in (0, \min\{\llambda \Aep , \Anar\} )$ be chosen (in a manner depending only on $\BB p\llambda,$ and the laws of $D_h$ and $\wt D_h$) so that
\eqb \label{eqn-Aset}
(2\Aset)^{\geoExp } \Aaround  \leq \llambda \Anar^{ \xi(Q+3) }   .
\eqe
The particular choice of $\Aset$ from~\eqref{eqn-Aset} will be important in the proof of Lemma~\ref{lem-excursion-length} below.

\begin{lem} \label{lem-E-internal}
There exists $\Ainternal > 1/\Aset$, depending only on $\BB p,\llambda,$ and the laws of $D_h$ and $\wt D_h$, such that for each $r \in \rho^{-1} \mcl R_0$, the probability of condition~\ref{item-E-internal} in the definition of $\Er_r$ is at least $1-(1-\BB p)/10$. 
\end{lem}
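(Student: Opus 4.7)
The plan is to exploit the fact that, by the construction in Section~\ref{sec-tube-def}, the rescaled set $r^{-1}\Ur_r$ takes only finitely many values $U^{(1)},\dots,U^{(N_0)} \subset \ol{\BB A}_{1,3}(0)$ as $r$ varies over $\rho^{-1}\mcl R_0$, where $N_0$ depends only on $\rho,\llambda$, and the already-fixed parameters. It therefore suffices, for each fixed shape $U = U^{(j)}$, to construct a random path $\Pi$ satisfying the analog of condition~\ref{item-E-internal} with $\Ainternal = \Ainternal(U)$ with probability at least $1-(1-\BB p)/10$, uniformly in $r$; one then sets $\Ainternal := \max_j \Ainternal(U^{(j)})$.

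Fix such a shape $U$. I would first choose a deterministic smooth loop $\gamma \subset U$ which disconnects the inner and outer boundaries of $U$ and which at every point lies within Euclidean distance at most $\llambda \Aendpt \rho/2$ of the outer boundary $\bdy^{\op{out}} U$. Such a $\gamma$ is built by cyclically concatenating cores of the constituent pieces of $U$ (the half-annuli, the small linking balls, and the narrow tubes of Euclidean thickness $2\llambda\Aendpt\rho$), each core running close to the outer edge of its piece. Then I would fix points $w_1,\dots,w_N \in \gamma$ in cyclic order with $|w_i - w_{i+1}| \leq \Aset/4$ and with $N$ depending only on $\Aset$ and $U$. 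Because $\Aset \leq \llambda(1-\Kann)\Aendpt\rho$, every point of $\bdy^{\op{out}}\Ur_r$ lies within Euclidean distance $\Aset r$ of one of the rescaled points $rw_i$.

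The path $\Pi$ is then built by linking consecutive $rw_i$'s inside $\Ur_r$ via Lemma~\ref{lem-two-set-dist}. For each $i$, I would pick a deterministic open thin tubular neighborhood $\wh K_i \subset U$ of the arc of $\gamma$ from $w_i$ to $w_{i+1}$, of Euclidean diameter at most $\Aset/2$, chosen thin enough that $\wh K_i$ minus this arc has exactly two connected components; together with disjoint connected non-singleton compact subsets $K_i, K_i' \subset \wh K_i$ containing $w_i$ and $w_{i+1}$ respectively. Lemma~\ref{lem-two-set-dist} then gives that for each $R>0$ and each $r >0$,
\eqbn
\BB P\left[ D_h(rK_i, rK_i' ; r\wh K_i) \leq R\, r^{\xi Q} e^{\xi h_r(0)} \right] \geq 1 - O_R(R^{-\mu})
\eqen
for some $\mu>0$, uniformly in $r$ and in $i \in \{1,\dots,N\}$. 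Choosing $R$ large enough (depending only on $N,\BB p$, and the laws of $D_h, \wt D_h$) and union-bounding over $i$ produces, with probability at least $1-(1-\BB p)/10$, paths $\pi_i \subset r\wh K_i \subset \Ur_r$ of $D_h$-length at most $R r^{\xi Q} e^{\xi h_r(0)}$ joining $rK_i$ to $rK_i'$; gluing these together using short arcs inside the $rK_i$'s at the joins yields a loop $\Pi$ of total $D_h$-length at most $2NR\, r^{\xi Q} e^{\xi h_r(0)}$, so $\Ainternal(U) := 2NR$ suffices.

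The main obstacle is verifying that $\Pi$ actually disconnects the inner and outer boundaries of $\Ur_r$ rather than being null-homotopic in $\Ur_r$, since each $\pi_i$ is only constrained to lie in $r\wh K_i$ and not to follow $r\gamma$. I would handle this via the thin-tubular choice of $\wh K_i$ above: because $\wh K_i$ minus the $w_i w_{i+1}$-arc of $\gamma$ has exactly two components and the sets $K_i, K_i'$ sit at the two endpoints of this arc, every path from $K_i$ to $K_i'$ inside $\wh K_i$ is homotopic rel endpoints in $\wh K_i$ to the arc itself. Chaining these homotopies cyclically shows that $\Pi$ is homotopic in $U$ to $\gamma$, so $\Pi$ disconnects the inner and outer boundaries of $U$; rescaling by $r$ gives the analogous conclusion for $\Ur_r$ and $r\gamma$, completing the proof.
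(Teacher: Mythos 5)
Your overall strategy — reduce to finitely many rescaled shapes $U = r^{-1}\Ur_r$, then build $\Pi$ in each fixed shape by patching together local pieces — is reasonable, but differs from the paper's and has a genuine gap in the gluing step. The paper instead considers the thin annular region $B_{\ep r}(\bdy^{\op{out}}\Ur_r)\cap\Ur_r$ for a fixed small $\ep$ chosen so that this region has Euclidean width at most $\Aset r$, and applies tightness across scales (Lemma~\ref{lem-set-tightness}, in the ``distance around'' form discussed after Lemma~\ref{lem-two-set-dist}) directly to this one annular region to produce a single loop $\Pi$ disconnecting its two boundary components of controlled $D_h$-length. This avoids any gluing entirely, and the requirement that $\bdy^{\op{out}}\Ur_r$ lies within Euclidean distance $\Aset r$ of $\Pi$ is automatic: any $x\in\bdy^{\op{out}}\Ur_r$ is within $\Aset r$ of the inner boundary of the thin region, and the straight segment joining them must cross $\Pi$.

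The concrete gap in your argument: once you have $\pi_i \subset r\wh K_i$ joining $rK_i$ to $rK_i'$ with controlled $D_h$-length, the endpoints of $\pi_i$ and $\pi_{i+1}$ on $rK_i'$ (even taking $K_i' = K_{i+1}$) are random points that need not coincide, and you propose to bridge them by ``short arcs inside the $rK_i$'s.'' But the $D_h$-length of such an arc is not controlled — this is precisely the obstacle that haunts the whole supercritical setting, where $D_h$ is not Euclidean-continuous and the bridging endpoints may be close to singular points. Your claimed bound $\op{len}(\Pi;D_h)\leq 2NR\,r^{\xi Q}e^{\xi h_r(0)}$ silently assumes those arcs contribute at most $NR\,r^{\xi Q}e^{\xi h_r(0)}$ in total, but no estimate is given. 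The standard fix is to replace the $K_i,K_i'$ by overlapping annular slices and use $D_h(\text{around})$ estimates so that consecutive loops are forced to intersect (the chaining argument of Lemma~\ref{lem-ball-bdy-moment}); this would make your approach correct but even more elaborate, while the paper's single-annulus argument sidesteps all of this. Your homotopy argument for why $\Pi$ disconnects the two boundary components is sound, but it addresses a secondary issue; the uncontrolled bridging arcs are the main problem.
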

\begin{proof} 
The set $\Ur_r$ has the topology of a Euclidean annulus and its boundary consists of two piecewise smooth Jordan loops. 
Write $\bdy^{\op{out}} \Ur_r$ for the outer boundary of $\Ur_r$, i.e., the outer of the two loops. 
If $r \in \rho^{-1}\mcl R_0$ is fixed, then as $\ep\rta 0$ the Euclidean Hausdorff distance between the following two sets tends to zero: $\bdy^{\op{out}} \Ur_r$ and $\bdy B_{\ep r}(\bdy^{\op{out}} \Ur_r) \cap \Ur_r$ (i.e., the intersection with $\Ur_r$ of the boundary of the Euclidean $\ep$-neighborhood of $\bdy^{\op{out}} \Ur_r$). 

Since we have already chosen $\rho$ in a manner depending only on $\BB p,\llambda,$ and the laws of $D_h$ and $\wt D_h$, the number of possible choices for $r^{-1} \Ur_r$ is at most a constant depending only on $\BB p,\llambda,$ and the laws of $D_h$ and $\wt D_h$.
By combining this with the preceding paragraph, we find that there exists $\ep  > 0$, depending only on $\BB p,\llambda,$ and the laws of $D_h$ and $\wt D_h$, such that for each $r \in \rho^{-1}\mcl R_0$, the Euclidean Hausdorff distance between $\bdy^{\op{out}} \Ur_r$ and $\bdy B_{\ep r}(\bdy^{\op{out}} \Ur_r) \cap \Ur_r$ is at most $\Aset r$.

By tightness across scales (in the form of Lemma~\ref{lem-set-tightness}) and the fact that there are only finitely many possibilities for $r^{-1} \Ur_r$, there exists $\Ainternal > 0$ such that for each $r\in\rho^{-1}\mcl R_0$, it holds with probability at least $1-(1-\BB p)/10$ that the following is true. 
There is a path $\Pi  \subset   B_{\ep r}(\bdy^{\op{out}} \Ur_r) \cap \Ur_r$ which disconnects $\bdy^{\op{out}} \Ur_r$ from $\bdy B_{\ep r}(\bdy^{\op{out}} \Ur_r) \cap \Ur_r$ and has $D_h$-length at most $\Ainternal r^{\xi Q} e^{\xi h_r(0)}$. 

The path $\Pi$ disconnects the inner and outer boundaries of $\Ur_r$, so the existence of $\Pi$ immediately implies~\eqref{eqn-E-internal-around}. Furthermore, by our choice of $\ep$, each point $x\in  \bdy^{\op{out}} \Ur_r $ lies at Euclidean distance at most $\Aset r$ from a point of $\bdy B_{\ep r}(\bdy^{\op{out}} \Ur_r) \cap \Ur_r$. Since $\Pi$ disconnects $\bdy^{\op{out}} \Ur_r$ from $\bdy B_{\ep r}(\bdy^{\op{out}} \Ur_r) \cap \Ur_r$, the line segment from $x$ to this point of $\bdy B_{\ep r}(\bdy^{\op{out}} \Ur_r) \cap \Ur_r$ intersects $\Pi$. 
Consequently, the Euclidean distance from $x$ to $\Pi$ is at most $\Aset r$.  
\end{proof}

We henceforth fix $\Ainternal$ as in Lemma~\ref{lem-E-internal} and define  
\eqb \label{eqn-Amax}
\Amax :=  \frac{1}{\xi} \max\left\{ \log \frac{  \Ainternal}{ \llambda \Anar^{ \xi(Q+3) } } , \log \frac{  \Ainternal}{ \llambda \Aacross} \right\} .
\eqe 
Recall from~\eqref{eqn-f-def} that $\Amax$ is the maximal value attained by $\fr_r$. We now treat the remaining two conditions in the definition of $\fr_r$. 

\begin{lem} \label{lem-E-rn}
There exists $\Asup \in (0,  \llambda /\Amax)$ and $\Arn > 1/\Asup$, depending only on $\BB p,\llambda,$ and the laws of $D_h$ and $\wt D_h$, such that for each $r\in\rho^{-1}\mcl R_0$, the probability of each of conditions~\ref{item-E-sup} and~\ref{item-E-rn} in the definition of $\Er_r$ is at least $1-(1-\BB p)/10$. 
\end{lem}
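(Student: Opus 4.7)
The plan is to handle conditions~\ref{item-E-sup} and~\ref{item-E-rn} separately. I will use Lemma~\ref{lem-geodesic-leb} to pick $\Asup$ so that condition~\ref{item-E-sup} holds with probability at least $1 - (1-\BB p)/20$, and then a Gaussian tail estimate to pick $\Arn$ (depending on the now-fixed $\Asup$) so that condition~\ref{item-E-rn} holds with probability at least $1 - (1-\BB p)/20$. The key enabling fact, built into the construction of Section~\ref{sec-tube-def}, is that for $r \in \rho^{-1}\mcl R_0$ the rescaled objects $r^{-1}\Ur_r$, the $\Kr_\rho$ rescaled circles $r^{-1}\bdy B_{\sr_{\rho r}}(\ur_{z,\rho r})$ and $r^{-1}\bdy B_{\sr_{\rho r}}(\vr_{z,\rho r})$, and the rescaled bump function $\fr_r(r\cdot)$ all take only finitely many possible values, the number of possibilities depending only on parameters already fixed before $\Asup$ enters the picture.

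For condition~\ref{item-E-sup}, I use this finiteness to decompose $r^{-1}\bdy\Ur_r$ together with each of the rescaled circles into a uniformly bounded collection of smooth arcs, and parameterize each by arc length as a deterministic curve $\eta : [0,T] \to \BB A_{3/2, 5/2}(0)$ (using that $\Ur_r$ lies in a small neighborhood of $\bdy B_{2r}(0)$, well away from $\bdy B_r(0) \cup \bdy B_{4r}(0)$). For each such $\eta$ I apply Lemma~\ref{lem-geodesic-leb} with $M = \Amax$, $\BB r = r$, and $U$ a slight enlargement of $\BB A_{1,4}(0)$ so that $\bdy B_{4r}(0) \subset \BB r U$. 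This furnishes an exponent $\lebExp = \lebExp(\Amax) > 0$ such that for every $\ep > 0$, with probability $1 - O_\ep(\ep^\lebExp)$ and simultaneously over all continuous $f : \BB C \to [0,\Amax]$ and all $D_{h-f}(\cdot,\cdot;\ol{\BB A}_{r,4r}(0))$-geodesics $P_f$ between two points of $\bdy B_{4r}(0)$, one has $|\{t \in [0,T] : P_f \cap B_{\ep r}(r\eta(t)) \neq \emptyset\}| \leq \ep^\lebExp T$; the simultaneity over $f$ and $P_f$ is already part of the statement of Lemma~\ref{lem-geodesic-leb}. Setting $\ep = 2\Asup$ with $\Asup$ small enough in terms of $\BB p$, $\llambda$, $\Amax$, and the other already-fixed parameters, and union-bounding over the finitely many arcs, delivers condition~\ref{item-E-sup}.

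For condition~\ref{item-E-rn}, the standard characterization of the whole-plane GFF modulo additive constant gives that $(h,\fr_r)_\nabla$ is a centered Gaussian with variance $(\fr_r,\fr_r)_\nabla = \int_{\BB C}|\nabla\fr_r|^2$. Since two-dimensional Dirichlet energy is invariant under the rescaling $\fr_r \mapsto \fr_r(r\cdot)$ and $\fr_r(r\cdot)$ takes only finitely many values, this variance is bounded above by a deterministic constant $\sigma^2$ depending only on parameters already fixed. A standard Gaussian tail bound then lets me pick $\Arn > 1/\Asup$ large enough, depending only on $\sigma^2$ and $\BB p$, so that $\BB P[|(h,\fr_r)_\nabla| > \Arn] \leq (1-\BB p)/20$. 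There are no substantive obstacles; the only thing requiring any care is confirming that the finiteness of configurations really does make all implicit constants depend only on already-fixed parameters, which is precisely what the construction of Section~\ref{sec-tube-def} was engineered to ensure.
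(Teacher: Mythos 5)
Your proof is correct and takes essentially the same approach as the paper: both exploit the finiteness of configurations for $r^{-1}\Ur_r$ and $\fr_r(r\cdot)$ (a consequence of $\rho$ and the earlier parameters being fixed before $\Asup$), apply Lemma~\ref{lem-geodesic-leb} with $M = \Amax$ to the boundary curves and circles and union-bound to obtain condition~\ref{item-E-sup}, and use scale invariance of the Dirichlet inner product plus the Gaussian nature of $(h,\fr_r)_\nabla$ for condition~\ref{item-E-rn}. The only cosmetic difference is that you split the boundary curves into arcs before applying Lemma~\ref{lem-geodesic-leb}, whereas the paper parametrizes each full boundary loop of $\Ur_r$ directly; this makes no substantive difference.
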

\begin{proof}
Since we have already chosen $\rho$ in a manner depending only on $\BB p,\llambda,$ and the laws of $D_h$ and $\wt D_h$, the number of possible choices for $r^{-1} \Ur_r$ is at most a constant depending only on $\BB p,\llambda,$ and the laws of $D_h$ and $\wt D_h$.
The set $\Ur_r$ has the topology of a Euclidean annulus and its boundary consists of two piecewise smooth Jordan loops. By the preceding sentence, the Euclidean length of each of the two boundary loops of $\Ur_r$ is at most a constant (depending only on $\BB p,\llambda,$ and the laws of $D_h$ and $\wt D_h$) times $r$. 
We can therefore apply Lemma~\ref{lem-geodesic-leb} with $M  =\Amax$ and the curve $\eta$ given by each of the two boundary loops of $\Ur_r$, parametrized by its Euclidean length. This shows that there exists $\Asup \in (0,  \llambda /\Amax) $ depending only on $\BB p,\llambda,$ and the laws of $D_h$ and $\wt D_h$ such that the event of condition~\ref{item-E-sup} in the definition of $\Er_r$ for the set $\bdy \Ur_r$ occurs with probability at least $1-(1-\BB p)/20$. 

By a union bound over at most a universal constant times $(\llambda \Aendpt \rho)^{-1}$ points $z\in \Zr_r$, after possibly decreasing $\Asup$ we can also arrange that with probability at least $1-(1-\BB p)/20$, the event of condition~\ref{item-E-sup} occurs for each of the circles $\bdy B_{\sr_{\rho r}}(\ur_{z,\rho r})$ and $\bdy B_{\sr_{\rho r}}(\vr_{z,\rho r})$ for $z\in\Zr_r$. Combining this with the preceding paragraph shows that condition~\ref{item-E-sup} has probability at least $1-(1-\BB p)/10$. 

The number of possible choices for the function $\fr_r(r\cdot)$ is at most a constant depending only on $\BB p,\llambda,$ and the laws of $D_h$ and $\wt D_h$.
By the conformal invariance of the Dirichlet inner product and the scale invariance of the law of $h$, viewed modulo additive constant,  
\eqbn
(h , \fr_r)_\nabla = (h(r\cdot) , \fr_r(r\cdot))_\nabla \eqD (h , \fr_r(r\cdot))_\nabla .
\eqen
Therefore, we can find $\Arn > 1/\Asup$ depending only on $\BB p,\llambda,$ and the laws of $D_h$ and $\wt D_h$ such that the probability of condition~\ref{item-E-rn} is at least $1-(1-\BB p)/10$.
\end{proof}

\begin{proof}[Proof of Proposition~\ref{prop-E-prob}]
Combine Lemmas~\ref{lem-E-reg}, \ref{lem-E-event}, \ref{lem-E-short}, \ref{lem-E-internal}, and~\ref{lem-E-rn}.
\end{proof}

We can also easily check the first two of the three hypotheses for $\Er_r$ from Section~\ref{sec-counting-setup}. 
 
\begin{prop} \label{prop-E-hyp0}
Let $r\in \rho^{-1} \mcl R_0$. On the event $\Er_r$, hypotheses~\ref{item-Ehyp-dist} and~\ref{item-Ehyp-rn} in Section~\ref{sec-counting-setup} hold for $\Er_{0,r} = \Er_r$ with
\eqb \label{eqn-Ehyp0-parameters}
\Cacross = \Aacross, \quad \Caround = \Aaround,\quad \Ctube = \Ainternal  ,
\eqe 
and an appropriate choice of $\Crn > 0$ depending only on the parameters from~\eqref{eqn-A-parameters} and~\eqref{eqn-rho-parameter} (hence only on $\BB p,\llambda,$ and the laws of $D_h$ and $\wt D_h$). 
That is, on $\Er_r$, the following is true. 
\begin{enumerate}[A.]
\item \label{item-Ehyp-dist0} We have 
\alb
D_h(\Vr_r , \bdy \BB A_{r,3r}(0) ) &\geq \Aacross r^{\xi Q} e^{\xi h_r(0)} ,\notag\\ 
 D_h(\text{around $\BB A_{3r,4r}(0)$} ) &\leq \Aaround r^{\xi Q} e^{\xi h_r(0)} ,\quad \text{and} \notag\\
  D_h(\text{around $\Ur_r$} ) &\leq \Ainternal r^{\xi Q} e^{\xi h_r(0)}  .
\ale
\item \label{item-Ehyp-rn0} There is a constant $\Crn > 0$, depending only on  the parameters from~\eqref{eqn-A-parameters} and~\eqref{eqn-rho-parameter}, such that the Radon-Nikodym derivative of the law of $h + \fr_r$ w.r.t.\ the law of $h$, with both distributions viewed modulo additive constant, is bounded above by $\Crn$ and below by $\Crn^{-1}$. 
\end{enumerate}
\end{prop}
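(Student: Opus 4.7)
The plan is to verify each of the three distance bounds in part A directly from the conditions in the definition of $\Er_r$, and to verify the Radon--Nikodym bound in part B via the standard Cameron--Martin formula for the GFF.

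For part A, two of the three bounds are immediate. The bound $D_h(\text{around } \BB A_{3r,4r}(0)) \leq \Aaround r^{\xi Q} e^{\xi h_r(0)}$ is condition~\ref{item-E-around}, and the bound $D_h(\text{around } \Ur_r) \leq \Ainternal r^{\xi Q} e^{\xi h_r(0)}$ is exactly inequality~\eqref{eqn-E-internal-around} of condition~\ref{item-E-internal}. For the remaining bound $D_h(\Vr_r, \bdy \BB A_{r,3r}(0)) \geq \Aacross r^{\xi Q} e^{\xi h_r(0)}$, the key observation is that $\Vr_r \subset \BB A_{1.5 r, 2.5 r}(0)$. Indeed, recall from~\eqref{eqn-U-def} that $\Ur_r$ is a union of the half-annuli $\Hr_{z,\rho r} \subset B_{\rho r}(z)$, the balls $B_{\sr_{\rho r}}(\ur_{z,\rho r})$ and $B_{\sr_{\rho r}}(\vr_{z,\rho r})$ of radius $\sr_{\rho r} \leq \rho r$ centered in $\ol B_{\rho r}(z)$, and the $\llambda \Aendpt \rho r$-neighborhoods of the linking paths $\Lr_{z,\rho r}$ (which lie within Euclidean distance $10\rho r$ of $\bdy B_{2r}(0)$), taken over $z \in \Zr_r \subset \bdy B_{2r}(0)$. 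Hence $\Ur_r$ is contained in the $12\rho r$-neighborhood of $\bdy B_{2r}(0)$, so $\Vr_r = B_{\Asup r}(\Ur_r)$ lies within the $(12\rho + \Asup)r$-neighborhood of $\bdy B_{2r}(0)$. Since $\rho < \Asp < \llambda < 10^{-100}$ and $\Asup < \llambda/\Amax$, this neighborhood is well inside $\BB A_{1.5 r, 2.5 r}(0)$. Any path from $\Vr_r$ to $\bdy \BB A_{r,3r}(0)$ must therefore cross one of the annuli $\BB A_{r,1.5r}(0)$ or $\BB A_{2.5 r, 3r}(0)$, and condition~\ref{item-E-across} yields the desired lower bound.

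For part B, recall (as in the proof of Lemma~\ref{lem-rn}, or by a standard GFF calculation as in~\cite[Proposition 3.4]{ig1}) that the Radon--Nikodym derivative of the law of $h + \fr_r$ with respect to the law of $h$, with both viewed modulo additive constant, is
\[
\exp\!\Bigl( (h, \fr_r)_\nabla - \tfrac12 (\fr_r, \fr_r)_\nabla \Bigr),
\]
where $(\cdot,\cdot)_\nabla$ is the Dirichlet inner product. On $\Er_r$, condition~\ref{item-E-rn} directly gives $|(h,\fr_r)_\nabla| \leq \Arn$. The quantity $(\fr_r,\fr_r)_\nabla$ is deterministic once $\fr_r$ is fixed; and by construction in Section~\ref{sec-tube-def}, the rescaled bump function $\fr_r(r\cdot)$ takes only finitely many possible forms (depending only on $\rho,\llambda$, and the laws of $D_h,\wt D_h$), each bounded by $\Amax$ with support of bounded Euclidean diameter. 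By the conformal invariance of the Dirichlet inner product, $(\fr_r,\fr_r)_\nabla = (\fr_r(r\cdot),\fr_r(r\cdot))_\nabla$, and this quantity is therefore bounded by some constant depending only on the parameters in~\eqref{eqn-A-parameters} and~\eqref{eqn-rho-parameter}. Setting $\Crn := \exp\!\bigl(\Arn + \tfrac12 \sup (\fr_r(r\cdot),\fr_r(r\cdot))_\nabla\bigr)$ with the supremum taken over the finitely many choices then gives the required two-sided bound on $\Er_r$.

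Overall this is a direct verification once the definitions in Sections~\ref{sec-tube-def} and~\ref{sec-E} are unpacked; there is no substantial obstacle. The main care point is the geometric inclusion $\Vr_r \subset \BB A_{1.5 r, 2.5 r}(0)$, which requires tracking the nesting of the various small parameters $\rho, \Asp, \Anar, \Asup$ chosen in Section~\ref{sec-E-prob} to confirm that condition~\ref{item-E-across} may be applied.
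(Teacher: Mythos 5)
Your proposal is correct and follows essentially the same approach as the paper's own proof: verify the three distance bounds in part A directly from conditions 1, 2, and 7 in the definition of $\Er_r$ (with the key geometric observation $\Vr_r \subset \BB A_{1.5r,2.5r}(0)$), and verify part B via the Cameron--Martin formula combined with condition 9 and the fact that there are only finitely many possibilities for $\fr_r(r\cdot)$. The only difference is that you spell out the geometric inclusion $\Vr_r \subset \BB A_{1.5r,2.5r}(0)$ in more detail than the paper, which simply asserts it.
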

\begin{proof}
We have $\Vr_r \subset \BB A_{1.5 r , 2.5 r}(0)$, so hypothesis~\ref{item-Ehyp-dist0} follows immediately from conditions~\ref{item-E-across}, \ref{item-E-around}, and~\ref{item-E-internal} in the definition of $\Er_r$. By a standard calculation for the GFF (see, e.g., the proof of~\cite[Proposition 3.4]{ig1}), the Radon-Nikodym derivative of the law of $h + \fr_r$ with respect to the law of $h$, with both distributions viewed modulo additive constant, is equal to
\eqbn
\exp\left(  (h , \fr_r)_\nabla - \frac12 (\fr_r ,\fr_r)_\nabla \right)   
\eqen
where $(\cdot,\cdot)_\nabla$ is the Dirichlet inner product. 
Since the number of possibilities for $\fr_r(r\cdot)$ is at most a constant depending only on $\BB p,\llambda,$ and the laws of $D_h$ and $\wt D_h$, we infer that $(\fr_r ,\fr_r)_\nabla$ is bounded above by a constant $C$ depending only on $\BB p,\llambda,$ and the laws of $D_h$ and $\wt D_h$ (c.f.\ the proof of Lemma~\ref{lem-E-rn}). 
By combining this with condition~\ref{item-E-rn} in the definition of $\Er_r$, we get that on $\Er_r$, we have the Radon-Nikodym derivative bounds
\eqbn
\exp\left( - \Arn - \frac12 C \right) \leq  \exp\left(  (h , \fr_r)_\nabla - \frac12 (\fr_r ,\fr_r)_\nabla \right) \leq \exp\left(\Arn \right) .
\eqen
This gives hypothesis~\ref{item-Ehyp-rn0} with $\Crn = \exp(\Arn + C/2)$. 
\end{proof}

Most of the rest of this section is devoted to checking hypothesis~\ref{item-Ehyp-inc} of Section~\ref{sec-counting-setup} for the events $\Er_r$.

\begin{prop} \label{prop-shortcut}
Fix $\Cmid  > \Cmid_0$. 
If $\llambda$ is chosen to be small enough (in a manner depending only on the laws of $D_h$ and $\wt D_h$) and the parameters from~\eqref{eqn-A-parameters} and~\eqref{eqn-rho-parameter} are chosen appropriately, subject to the constraints stated in the discussion around~\eqref{eqn-A-parameters} and~\eqref{eqn-rho-parameter}, then hypothesis~\ref{item-Ehyp-inc} holds for the events $\Er_r$ with
\eqb \label{eqn-Cinc-choice}
\Ctime := \frac{\Aacross^2}{4\Aaround} \quad \text{and} \quad \Cinc := \Anar^{\xi(Q+3)}  e^{-\xi \Amax} .
\eqe
That is, let $r\in\rho^{-1} \mcl R_0$ and assume that $\Er_r$ occurs. 
Let $P_r  $ be a $D_{h-\fr_r }$-geodesic between two points of $\BB C\setminus B_{4r}(0)$, parametrized by its $D_{h-\fr_r}$-length. Assume that there is a $(B_{4r}(0) , \Vr_r)$-excursion $(\tau' , \tau , \sigma, \sigma')$ for $P_r$ (Definition~\ref{def-excursion}) such that 
\eqb \label{eqn-shortcut-hyp}
D_h\left( P_r(\tau) , P_r(\sigma) ; B_{4r}(0) \right) \geq \Ctime r^{\xi Q} e^{\xi h_r(0)} . 
\eqe
There exist times $\tau \leq s < t \leq \sigma $ such that 
\eqb \label{eqn-shortcut}
t-s \geq  \Cinc  r^{\xi Q} e^{\xi h_r(0)}  \quad \text{and} \quad
\wt D_{h - \fr_r  }\left( P_r(s) , P_r(t) ; \BB A_{r,4r}(0)  \right) \leq \Cmid  (t-s) .
\eqe
\end{prop}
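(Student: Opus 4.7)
The proof follows the four-stage blueprint already sketched in the outline (Sections~\ref{sec-excursion}--\ref{sec-shortcut}). At a high level, the bump function $\fr_r$ is so large on $\Ur_r$ that a $D_{h-\fr_r}$-geodesic which starts a $(B_{4r}(0),\Vr_r)$-excursion is forced to hug $\Ur_r$; the annular structure of $\Ur_r$ together with condition~\ref{item-E-event} then produces a ``good pair'' $(u,v)$ from some $\Fr_{z,\rho r}$ that $P_r$ passes near, and a triangle inequality routed through $u$ and $v$ manufactures the promised shortcut $[s,t]$.

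For the first stage, the Euclidean lower bound $|P_r(\tau)-P_r(\sigma)|\geq \Asp r$ is immediate from Lemma~\ref{lem-E-sp} applied with $V=\Vr_r$ and $f=\fr_r$, since hypothesis~\eqref{eqn-shortcut-hyp} is precisely~\eqref{eqn-E-sp-assume}. Next I would upper bound $\sigma-\tau=D_{h-\fr_r}(P_r(\tau),P_r(\sigma);B_{4r}(0))$ by the concrete path made of (i) a small loop around $P_r(\tau)$ provided by Lemma~\ref{lem-E-loop}, (ii) a segment of the disconnecting path $\Pi\subset\Ur_r$ from condition~\ref{item-E-internal}, whose $D_{h-\fr_r}$-length is at most $e^{-\xi\Amax}\Ainternal\,r^{\xi Q}e^{\xi h_r(0)}$ because $\fr_r\equiv\Amax$ on $\Ur_r$, and (iii) a symmetric loop around $P_r(\sigma)$; the loops are linked to $\Pi$ using the second clause of condition~\ref{item-E-internal}, which places every point of $\bdy\Ur_r$ within Euclidean distance $\Aset r$ of $\Pi$. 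Combined with the reverse-H\"older bound~\ref{item-E-narrow}, this upper bound forces $P_r|_{[\tau,\sigma]}$ to be contained in a small Euclidean neighborhood of $\Vr_r$: any segment wandering Euclidean-far from $\Vr_r$ lies outside $\op{supp}\fr_r$ and, by~\ref{item-E-narrow}, accumulates $D_{h-\fr_r}=D_h$-length exceeding the upper bound; the choice of $\Aset$ via~\eqref{eqn-Aset} is exactly tuned to close this contradiction.

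With $P_r|_{[\tau,\sigma]}$ trapped in a thin neighborhood of $\Vr_r$, Euclidean diameter at least $\Asp r$, and endpoints on $\bdy\Vr_r$, the explicit union-of-blocks structure of $\Ur_r$ forces $P_r|_{[\tau,\sigma]}$ to come within Euclidean distance $O(\Asup r)$ of every block $\Hr_{z,\rho r}\cup B_{\sr_{\rho r}}(\ur_{z,\rho r})\cup B_{\sr_{\rho r}}(\vr_{z,\rho r})$ indexed by some circular arc $I\subset\bdy B_{2r}(0)$ of Euclidean length at least $\Asp r/2$. Condition~\ref{item-E-event} then delivers a $z\in I\cap\Zr_r$ for which $\Fr_{z,\rho r}$ occurs, together with non-singular points $u,v$ satisfying $\wt D_h(u,v)\leq\Cmid_0 D_h(u,v)$ and whose $\wt D_h$-geodesic lies entirely in $\ol{\Hr}_{z,\rho r}\subset\Ur_r$.

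The main obstacle, and the step for which I expect to expend the most effort, is to upgrade the Euclidean closeness of $P_r$ to $u$ and $v$ into $D_{h-\fr_r}$-closeness: because $D_h$ is not Euclidean-continuous in the supercritical phase, knowing that $P_r$ visits a small Euclidean neighborhood of $u$ does not by itself give $D_{h-\fr_r}(P_r(t),u)$ small. The plan is to exploit condition~\ref{item-endpt-ball-leb'} from $\Fr_{z,\rho r}$, which says most points $x\in\bdy B_{\sr_{\rho r}}(\ur_{z,\rho r})$ satisfy $D_h(x,u;\ol B_{\sr_{\rho r}}(\ur_{z,\rho r}))\leq\llambda\wt D_h(u,v)$. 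Paired with condition~\ref{item-E-sup}, which bounds the one-dimensional Lebesgue measure of $\{x\in\bdy B_{\sr_{\rho r}}(\ur_{z,\rho r}):P_r\cap B_{2\Asup r}(x)\neq\emptyset\}$, a careful analysis of the excursions of $P_r$ in and out of $B_{\sr_{\rho r}}(\ur_{z,\rho r})$ should produce a time $t\in[\tau,\sigma]$ and a boundary point $x$ for which $D_{h-\fr_r}(P_r(t),x)$ is tiny and $D_{h-\fr_r}(x,u)$ is tiny, where condition~\ref{item-E-compare} is used to link the two via short loops around small annuli; the symmetric construction near $v$ produces $s$. The triangle inequality
\begin{equation*}
\wt D_{h-\fr_r}(P_r(s),P_r(t);\BB A_{r,4r}(0))\leq \wt D_{h-\fr_r}(P_r(s),v)+\wt D_{h-\fr_r}(v,u)+\wt D_{h-\fr_r}(u,P_r(t))
\end{equation*}
then finishes the job: the outer two terms are a small (controllable by $\llambda$, $\Asup$, $\Amax$, $\Cupper$) fraction of $(t-s)$ by construction, while the middle term satisfies
\begin{equation*}
\wt D_{h-\fr_r}(u,v)\leq e^{-\xi\Amax}\wt D_h(u,v)\leq e^{-\xi\Amax}\Cmid_0 D_h(u,v)\leq\Cmid_0 D_{h-\fr_r}(u,v)\leq\Cmid_0(t-s)
\end{equation*}
by Weyl scaling applied to the $\wt D_h$-geodesic (inside $\Ur_r$ where $\fr_r\equiv\Amax$) and the pointwise bound $\fr_r\leq\Amax$. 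Choosing $\llambda$ small enough that $\Cmid_0+O(\llambda)\leq\Cmid$ yields the claimed ratio bound, and the lower bound $t-s\geq\Cinc r^{\xi Q}e^{\xi h_r(0)}$ follows from $t-s\gtrsim D_{h-\fr_r}(u,v)\geq e^{-\xi\Amax}D_h(u,v)$ combined with condition~\ref{item-E-narrow} applied to the pair $u,v$, using the geometric lower bound $|u-v|\gtrsim(1-\Kann)\rho r$.
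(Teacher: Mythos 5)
Your proposal follows the paper's strategy very closely, with the same four stages and the same key ingredients, so in broad outline it is on track. Stages 1 and 2 faithfully reproduce the paper's Lemmas~\ref{lem-excursion-sp}, \ref{lem-excursion-length}, \ref{lem-excursion-tube}, and~\ref{lem-excursion-hit}, down to the concrete paths used. The final triangle-inequality computation in Stage 4 also matches the paper's Section~\ref{sec-shortcut}, apart from one inaccuracy: the chain $\Cmid_0 D_{h-\fr_r}(u,v)\le\Cmid_0(t-s)$ is not literally an inequality, because $D_{h-\fr_r}(u,v)$ exceeds $t-s=D_{h-\fr_r}(P_r(s),P_r(t))$ by the two error terms $D_{h-\fr_r}(u,P_r(t))$ and $D_{h-\fr_r}(v,P_r(s))$; the paper instead runs the triangle inequality twice (once to upper bound $\wt D_{h-\fr_r}(P_r(s),P_r(t))$ in terms of $\wt D_h(u,v)$, once to lower bound $t-s$ in terms of $D_h(u,v)$) and divides, giving the factor $\frac{1+2\Cupper\Cclose\llambda}{1-2\Cupper\Cclose\llambda}\Cmid_0$. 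Your parenthetical ``$\Cmid_0+O(\llambda)$'' shows you are aware of this, but as written the displayed inequality is wrong.

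The genuine gap is Stage 3, which is exactly the paper's Lemma~\ref{lem-endpt-close} and is by far the hardest part of the whole section. You correctly name the inputs (conditions~\ref{item-endpt-ball-leb'}, \ref{item-E-sup}, \ref{item-E-compare}) and the goal ($D_{h-\fr_r}$-closeness of $P_r$ to $u$ and to $v$), but ``a careful analysis of the excursions of $P_r$ in and out of $B_{\sr_{\rho r}}(\ur_{z,\rho r})$ should produce a time $t$'' is a placeholder, not a proof. What actually makes that analysis work in the paper is a specific structure that your sketch does not contain: one defines the accessible set $X_{\op{acc}}\subset\Imid$ of boundary points reachable from $\Iout$ without crossing $P_r$, and splits into two cases according to whether $|X_{\op{acc}}|$ is large (Lemma~\ref{lem-dc-set}: condition~\ref{item-endpt-ball-leb'} plus the topological Lemma~\ref{lem-dc-join} produce a time $t$ directly) or small. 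In the small case one decomposes $\Imid\setminus\ol X_{\op{acc}}$ into arcs $\mcl I$, shows each arc's endpoints are joined by a segment of $P_r$ outside $\Vr_r$ (Lemma~\ref{lem-arc-path}), bounds the arcs' sizes (Lemmas~\ref{lem-micro-arcs}, \ref{lem-macro-arcs}, \ref{lem-good-arcs}), and for each good arc builds a short linking loop via condition~\ref{item-E-compare} (Lemma~\ref{lem-arc-loop}); a final pigeonhole over dyadic arc sizes (Lemma~\ref{lem-arc-collection} and the computation at the end of Section~\ref{sec-endpt-close}) finds an arc whose associated geodesic increment $t_I-s_I$ is small enough. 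Without the $X_{\op{acc}}$ dichotomy and the arc-plus-pigeonhole machinery, there is no way to turn ``$P_r$ comes Euclidean-close to $u$'' into ``$P_r$ comes $D_{h-\fr_r}$-close to $u$''; this is precisely the obstruction created by the discontinuity of $D_h$ in the supercritical phase, and it is the content you would still need to supply.

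One smaller issue in your Stage 4 lower bound for $t-s$: you propose to apply condition~\ref{item-E-narrow} to the pair $u,v$ directly with $|u-v|\gtrsim(1-\Kann)\rho r$, but that condition only applies to pairs with $|z-w|\le\llambda^{-1}\Anar r$, and $(1-\Kann)\rho$ is much larger than $\llambda^{-1}\Anar$ by the parameter hierarchy~\eqref{eqn-A-parameters},~\eqref{eqn-rho-parameter}. The paper avoids this by applying~\ref{item-E-narrow} to $P_r(t),P_r(s)$ instead, and even there one implicitly passes to a sub-segment of any competitor path where the two endpoints are at distance exactly $\Anar r$. Your route would need the same trick, but applied inside a chain of triangle inequalities that also needs the (as yet missing) bounds from Stage 3, so the dependencies would need to be checked carefully.
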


The proof of Proposition~\ref{prop-shortcut} will occupy Sections~\ref{sec-excursion} through~\ref{sec-shortcut}.

\subsection{Proof of Proposition~\ref{prop-objects-exist} assuming Proposition~\ref{prop-shortcut}}
\label{sec-objects-exist}

In this subsection, we will assume Proposition~\ref{prop-shortcut} and deduce Proposition~\ref{prop-objects-exist}. As explained in Section~\ref{sec-counting}, this gives us a proof of our main results modulo Proposition~\ref{prop-shortcut}. 

Assume that the parameters from~\eqref{eqn-A-parameters} and~\eqref{eqn-rho-parameter} are chosen so that the conclusions of Propositions~\ref{prop-E-prob} and~\ref{prop-shortcut} are satisfied. 
Let $\mcl R_0$ be as in~\eqref{eqn-initial-radii} and let $\mcl R := \rho^{-1} \mcl R_0$. 
Since $\mcl R_0 \subset \{8^{-k}\}_{k\in\BB N}$, we have $r'/r \geq 8$ whenever $r,r' \in \mcl R$ with $r' > r$, so~\eqref{eqn-admissible-radii} holds.  

The event $\Er_r$ is defined for each $r\in\mcl R$. 
By Lemma~\ref{lem-E-msrble}, the event $\Er_r$ is a.s.\ determined by $h|_{\ol{\BB A}_{r,4r}(0)}$, viewed modulo additive constant.
By Proposition~\ref{prop-E-prob}, $\BB P[\Er_r] \geq \BB p$ for each $r\in\mcl R$.
By the definitions in Section~\ref{sec-tube-def}, the sets $\Ur_r$ and $\Vr_r$ and the functions $\fr_r$ satisfy the requirements for $\Ur_{0,r}$, $\Vr_{0,r}$, and $\fr_{0,r}$ from Section~\ref{sec-counting-setup}, with the maximal value of $\fr_r$ given by $\Cmax = \Amax$. 
By Propositions~\ref{prop-E-hyp0} and~\ref{prop-shortcut}, the event $\Er_r$ satisfies hypotheses~\ref{item-Ehyp-dist}, \ref{item-Ehyp-rn}, and~\ref{item-Ehyp-inc} from Section~\ref{sec-counting-setup} for $z=0$, with the parameters $\Cacross,\Caround,\Ctube,\Crn,\Ctime,\Cinc$ depending on the parameters from~\eqref{eqn-A-parameters} and~\eqref{eqn-rho-parameter}. 

To check the needed parameter relation~\eqref{eqn-parameter-relation}, we observe that Proposition~\ref{prop-E-hyp0} gives $\Cacross = \Aacross$, $\Caround = \Aaround$, and $\Ctube = \Ainternal$. 
By~\eqref{eqn-A-parameters}, we immediately get $\Caround \geq \Cacross$. 
Furthermore, by~\eqref{eqn-Cinc-choice},  
\eqb  \label{eqn-parameter-relation0}
 \frac{ 2\Caround }{ \Cacross } \Ctime 
 = \frac{2\Aaround}{\Aacross} \times  \frac{\Aacross^2}{4\Aaround} 
 = \frac{\Aacross}{2}  .
\eqe
Moreover, by~\eqref{eqn-Amax},
\eqb \label{eqn-parameter-relation1}
 \Cacross - 4 e^{-\xi \Cmax} \Ctube  
 = \Aacross - 4 e^{-\xi \Amax} \Ainternal
 \geq \Aacross - 4 \llambda \Aacross > \frac{\Aacross}{2} .
\eqe
Combining~\eqref{eqn-parameter-relation0} and~\eqref{eqn-parameter-relation1} gives the second inequality in~\eqref{eqn-parameter-relation}. 
 
For $r\in \mcl R$ and $z\in\BB C$, we define $\Er_{z,r}$ to be the event $\Er_r$ of Section~\ref{sec-E} with the translated field $h(\cdot - z) - h_1(-z) \eqD h$ in place of $h$. 
We also define $\Ur_{z,r} := \Ur_r + z$, $\Vr_{z,r} := \Vr_r + z$, and $\fr_{z,r}(\cdot) := \fr_r(\cdot-z)$. 
By the translation invariance property of weak LQG metrics (Axiom~\reftranslate), the objects $\Er_{z,r}, \Ur_{z,r}, \Vr_{z,r}$, and $\fr_{z,r}$ satisfy the hypotheses of Section~\ref{sec-counting-setup}. 

It remains to prove the asserted lower bound for $\#\left( \mcl R \cap  [\ep^2 \BB r , \ep \BB r] \right) $ under the assumption that $\BB P[\wt G_{\BB r}(\wt\Kopt , \Cmid')] \geq \wt\Kopt$. 
By Proposition~\ref{prop-attained-good'} (applied with $\Cmid_0$ instead of $\Cmid$), the definition~\eqref{eqn-initial-radii}, of $\mcl R_0$, and our choice of $\Kann$ and $p_0$ immediately preceding~\eqref{eqn-initial-radii}, there exists $\Cmid' \in (\Clower,\Cupper)$ depending only on $\Cmid_0$ and the laws of $D_h$ and $\wt D_h$ such that the following is true. For each $\wt\Kopt > 0$ there exists $\ep_1  > 0$, depending only on $\BB p , \wt\Kopt$, and the laws of $D_h$ and $\wt D_h$, such that for each $\ep \in (0,\ep_1]$ and each $\BB r > 0$ such that $\BB P[\wt G_{\BB r}(\wt\Kopt , \Cmid')] \geq \wt\Kopt$, the cardinality of $\mcl R_0 \cap  [\ep^2 \BB r , \ep \BB r]$ is at least $\frac34 \log_8\ep^{-1}$.
This implies that if $\ep \in (0,\ep_1]$,
\alb
 \#\left( \mcl R \cap  [\ep^2 \BB r , \ep \BB r] \right) 
&= \#\left( \mcl R_0 \cap [\rho \ep^2 \BB r , \rho \ep \BB r] \right)  \quad \text{(since $\mcl R = \rho^{-1} \mcl R_0$)} \notag\\
&\geq \#\left( \mcl R_0 \cap [(\rho \ep)^2 \BB r , \rho \ep \BB r] \right)  - \#\left( \mcl R_0 \cap [(\rho\ep)^2 \BB r , \rho \ep^2 \BB r] \right) \notag\\
&\geq \#\left( \mcl R_0 \cap [(\rho \ep)^2 \BB r , \rho \ep \BB r] \right)  - \log_8 \rho^{-1} \quad \text{(since $\mcl R_0 \subset \{8^{-k}\}_{k\in\BB N}$)} \notag\\
&\geq \frac34 \log_8\ep^{-1}  - \log_8 \rho^{-1} \quad \quad \text{(since $ \rho \ep \leq \ep_1$)} \notag\\
&\geq \frac58 \log_8 \ep^{-1}  \quad \quad \text{(for small enough $\ep>0$, depending on $\rho$)} .
\ale
Thus, Proposition~\ref{prop-objects-exist} has been proven.
\qed

\subsection{Initial estimates for a geodesic excursion}
\label{sec-excursion}

To prove our main results, it remains to prove Proposition~\ref{prop-shortcut}. 
In the rest of this section, we will assume that we are in the setting of Proposition~\ref{prop-shortcut}, i.e., we assume that $\Er_r$ occurs, $P_r$ is a $D_{h-\fr_r}$-geodesic between two points of $\BB C\setminus B_{4r}(0)$, and $(\tau',\tau,\sigma,\sigma')$ is a $(B_{4r}(0) , \Vr_r)$-excursion satisfying~\eqref{eqn-shortcut-hyp}. 
It follows from Definition~\ref{def-excursion} that
\allb \label{eqn-excursion-def}
&P_r(\tau') , P_r(\sigma') \in \bdy B_{4r}(0),  \quad 
P_r(\tau) , P_r(\sigma) \in \bdy \Vr_r, \quad
P_r((\tau',\sigma')) \subset B_{4r}(0) , \notag\\
&\qquad \text{and} \quad P_r((\tau' , \tau)) \cup P_r((\sigma,\sigma')) \subset B_{4r}(0) \setminus \ol\Vr_r  .
\alle
We will prove~\eqref{eqn-shortcut} via a purely deterministic argument. 
We first check the following lemma, which will enable us to apply conditions~\ref{item-E-reg} and~\ref{item-E-sup} in the definition of $\Er_r$ to $P_r|_{[\tau',\sigma']}$. 

\begin{lem} \label{lem-excursion-annulus}
The path $P_r|_{[\tau' , \sigma']}$ is contained in $\ol{\BB A}_{r,4r}(0)$ and is a $D_{h-\fr_r}(\cdot,\cdot ;\ol{\BB A}_{r,4r}(0))$-geodesic between two points of $\bdy B_{4r}(0)$. 
\end{lem}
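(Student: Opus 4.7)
The plan has two steps: first establish the containment $P_r|_{[\tau',\sigma']}\subset\ol{\BB A}_{r,4r}(0)$, from which the internal-geodesic property follows automatically. For Step 1, since $P_r((\tau',\sigma'))\subset B_{4r}(0)$ and $P_r(\tau'),P_r(\sigma')\in\bdy B_{4r}(0)$, the segment already lies in $\ol B_{4r}(0)$, so it suffices to rule out that it ever enters $B_r(0)$. I will argue by contradiction. Condition~\ref{item-E-around} in the definition of $\Er_r$ gives, for any $\zeta>0$, a loop $\Pi\subset\BB A_{3r,4r}(0)$ disconnecting the inner and outer boundaries of $\BB A_{3r,4r}(0)$ with $\op{len}(\Pi;D_h)\leq(1+\zeta)\Aaround\, r^{\xi Q}e^{\xi h_r(0)}$; because the parameters in Section~\ref{sec-E-prob} are chosen so that $\Vr_r\subset\BB A_{1.5r,2.5r}(0)$, the bump function $\fr_r$ vanishes on $\Pi$, so $\op{len}(\Pi;D_{h-\fr_r})=\op{len}(\Pi;D_h)$.

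Assume now that $P_r|_{[\tau',\sigma']}$ meets $B_r(0)$. Since $\Pi$ separates $B_r(0)$ from $\bdy B_{4r}(0)$, the compact set $P_r^{-1}(\Pi)\cap[\tau',\sigma']$ has a first element $a$ and a last element $b$ with $a<b$. Between times $a$ and $b$ the path travels from $\Pi\subset\BB A_{3r,4r}(0)$ into $B_r(0)$ and back, so $P_r|_{[a,b]}$ contains four disjoint subsegments crossing the annuli $\BB A_{r,1.5r}(0)$ and $\BB A_{2.5r,3r}(0)$, one inward and one outward each. These annuli are disjoint from $\Vr_r$, so Weyl scaling (Axiom~\ref{item-metric-f}) together with condition~\ref{item-E-across} yields $b-a\geq 4\Aacross\, r^{\xi Q}e^{\xi h_r(0)}$. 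Replacing $P_r|_{[a,b]}$ by the appropriate segment of $\Pi$ produces a path with the same endpoints as $P_r$ whose $D_{h-\fr_r}$-length is at most $\op{len}(P_r;D_{h-\fr_r})-(b-a)+\op{len}(\Pi;D_{h-\fr_r})$, and provided the parameters satisfy $4\Aacross>\Aaround$ (a constraint easily compatible with and to be imposed alongside~\eqref{eqn-A-parameters} when $\Aacross$ and $\Aaround$ are selected in Lemma~\ref{lem-E-reg}), this length is strictly less than $\op{len}(P_r;D_{h-\fr_r})$ for small $\zeta$, contradicting the fact that $P_r$ is a $D_{h-\fr_r}$-geodesic.

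Once the containment is established, the internal-geodesic claim is immediate. Because $P_r$ is a $D_{h-\fr_r}$-geodesic parametrized by length, $\op{len}(P_r|_{[\tau',\sigma']};D_{h-\fr_r})=\sigma'-\tau'=D_{h-\fr_r}(P_r(\tau'),P_r(\sigma'))$, and since $P_r|_{[\tau',\sigma']}$ is a path in $\ol{\BB A}_{r,4r}(0)$ joining $P_r(\tau')$ to $P_r(\sigma')$, this gives $D_{h-\fr_r}(P_r(\tau'),P_r(\sigma');\ol{\BB A}_{r,4r}(0))\leq\sigma'-\tau'$; the reverse inequality is trivial, so equality holds and $P_r|_{[\tau',\sigma']}$ is a $D_{h-\fr_r}(\cdot,\cdot;\ol{\BB A}_{r,4r}(0))$-geodesic. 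The endpoints $P_r(\tau'),P_r(\sigma')$ are in $\bdy B_{4r}(0)$ by the definition of a $(B_{4r}(0),\Vr_r)$-excursion.

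The main obstacle lies in Step 1, specifically in the quantitative relation $4\Aacross>\Aaround$ that the shortcut argument requires, which goes beyond the bare ordering in~\eqref{eqn-A-parameters}. The cleanest remedy is to impose this relation when $\Aacross$ and $\Aaround$ are chosen in Lemma~\ref{lem-E-reg} --- a constraint consistent with tightness across scales (Axiom~\refcoord), since one can pick each of these constants arbitrarily close to $1$ while keeping the probabilities of the corresponding conditions close to $1$. Alternatively, one can sharpen the crossing count by extracting additional length from small-scale loops near $P_r(a)$ and $P_r(b)$ provided by condition~\ref{item-E-reg}, producing a stronger lower bound on $b-a$ without tightening the parameter hierarchy.
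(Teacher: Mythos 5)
Your Step 1 has a genuine gap: the parameter relation $4\Aacross>\Aaround$ that your shortcut argument requires is not achievable. Conditions~\ref{item-E-across} and~\ref{item-E-around} in the definition of $\Er_r$ each need probability at least $1-(1-\BB p)/10$, and $\BB p$ can be taken arbitrarily close to $1$. By tightness across scales the normalized random variables $r^{-\xi Q}e^{-\xi h_r(0)}D_h(\text{across }\BB A_{r,1.5r}(0))$ and $r^{-\xi Q}e^{-\xi h_r(0)}D_h(\text{around }\BB A_{3r,4r}(0))$ are tight but not bounded away from $0$ and $\infty$, so to make the probability of condition~\ref{item-E-across} (resp.~\ref{item-E-around}) close to $1$ you are forced to push $\Aacross$ down (resp.\ push $\Aaround$ up), and the ratio $\Aaround/\Aacross$ diverges as $\BB p\to 1$. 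Your stated remedy --- that one can pick both constants close to $1$ while keeping the probabilities close to $1$ --- is therefore false, and the alternative suggestion of extracting extra length from loops provided by condition~\ref{item-E-reg} does not help either, since that condition gives \emph{upper} bounds on the $D_h$-length of small loops, not lower bounds on $b-a$.

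The paper's proof evades this conflict by taking the competing loop \emph{inside} $\Ur_r$ via condition~\ref{item-E-internal} rather than in $\BB A_{3r,4r}(0)$. On $\Ur_r$ the bump function equals $\Amax$, so the loop's $D_{h-\fr_r}$-length is only $e^{-\xi\Amax}$ times its $D_h$-length, at most $2e^{-\xi\Amax}\Ainternal\, r^{\xi Q}e^{\xi h_r(0)}$. On the other side, if $P_r$ enters $B_r(0)$ it must cross the single annulus $\BB A_{r,1.5r}(0)$ where $\fr_r\equiv 0$, costing $\Aacross\, r^{\xi Q}e^{\xi h_r(0)}$. The needed inequality $2e^{-\xi\Amax}\Ainternal<\Aacross$ is then satisfied by the choice~\eqref{eqn-Amax} of $\Amax$, and $\Amax$ is free to take as large as desired because it parametrizes the deterministic bump function rather than any probabilistic event. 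This is exactly the kind of maneuver your approach is missing: you should get leverage from $\Amax$, which costs no probability, instead of from a ratio of two probability-constrained parameters. Your Step 2 (deducing the internal-geodesic property once the containment is known) is fine.
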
 
\begin{proof}
We have $P_r|_{(\tau',\sigma')}\subset   B_{4r}(0)$ and $P_r(\tau') , P_r(\sigma') \in \bdy B_{4r}(0)$ by~\eqref{eqn-excursion-def}. We claim that $P_r$ does not enter $B_r(0)$. Assume the claim for the moment. Then $P_r|_{(\tau',\sigma')} \subset \ol{\BB A}_{r,4r}(0)$. Since $P_r$ is a $D_{h-\fr_r}$-geodesic, the $D_{h-\fr_r}$-length of $P_r|_{[\tau',\sigma']}$ is the same as the $D_{h-\fr_r  }$-distance between its endpoints. We conclude that $P_r|_{(\tau',\sigma')}$ is a path in $\ol{\BB A}_{r,4r}(0)$ whose $D_{h-\fr_r}$-length is the same as the $D_{h-\fr_r}$-distance between its endpoints, which is at most the $D_{h-\fr_r}(\cdot,\cdot ;\ol{\BB A}_{r,4r}(0))$-distance between its endpoints. 
Hence, $P_r|_{[\tau',\sigma']}$ is a $D_{h-\fr_r}(\cdot,\cdot ;\ol{\BB A}_{r,4r}(0))$-geodesic. 

It remains to show that $P_r$ does not enter $B_r(0)$. 
Assume by way of contradiction that $P_r\cap B_r(0) \not=\emptyset$. 
By condition~\ref{item-E-internal} (internal distance in $\Ur_r$) in the definition of $\Er_r$, there is a path $\Pi$ in $\Ur_r$ which disconnects the inner and outer boundaries of $\Ur_r$ such that
\eqbn
\op{len}\left( \Pi ; D_h \right) \leq 2 \Ainternal r^{\xi Q} e^{\xi h_r(0)} .
\eqen 
Let $\tau_0$ (resp.\ $\sigma_0$) be the first (resp.\ last) time that $P_r$ hits $\Pi$.

Since $P_r$ is a $D_{h-\fr_r}$-geodesic and $\fr_r \equiv \Amax$ on $\Ur_r$,
\eqb  \label{eqn-exc-ann-around}
\sigma_0 - \tau_0
= D_{h-\fr_r}(P_r(\tau_0) , P_r(\sigma_0)) 
\leq \op{len}\left( \Pi ; D_{h-\fr_r} \right) 
\leq 2  e^{-\xi \Amax} \Ainternal r^{\xi Q} e^{\xi h_r(0)} .
\eqe  
On the other hand, since $\Ur_r\subset \BB A_{1.5 r, 2.5 r}(0)$ and we are assuming that $P_r$ hits $B_r(0)$, it follows that $P_r$ must cross between the inner and outer boundaries of the annulus $\BB A_{r,1.5 r}(0)$ between time $\tau_0$ and time $\sigma_0$. Since $\fr_r\equiv 0$ on $\BB A_{r,1.5 r}(0)$ and by condition~\ref{item-E-across} (lower bound for distance across) in the definition of $\Er_r$, 
\eqb \label{eqn-exc-ann-across}
\sigma_0 - \tau_0
= \op{len}\left( P_r|_{[  \tau_0 , \sigma_0]} ; D_{h-\fr_r} \right) 
\geq D_h\left(\text{across $\BB A_{r,1.5 r}(0)$}\right)
\geq \Aacross r^{\xi Q} e^{\xi h_r(0)} .
\eqe
By our choice of $\Amax$ in~\eqref{eqn-Amax}, the right side of~\eqref{eqn-exc-ann-around} is smaller than the right side of~\eqref{eqn-exc-ann-across}, which supplies the desired contradiction.
\end{proof}

From Lemma~\ref{lem-E-sp}, we now obtain the following.

\begin{lem} \label{lem-excursion-sp}
We have 
\eqbn
|P_r(\sigma) - P_r(\tau)| \geq   \Asp r .
\eqen
\end{lem}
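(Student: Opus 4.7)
The plan is to apply Lemma~\ref{lem-E-sp} directly to the restricted geodesic $P_r|_{[\tau',\sigma']}$, with $V := \Vr_r$ and $f := \fr_r$. The parameter choice~\eqref{eqn-Cinc-choice} sets $\Ctime = \Aacross^2/(4\Aaround)$, so the hypothesis~\eqref{eqn-shortcut-hyp} is exactly the hypothesis~\eqref{eqn-E-sp-assume} needed to invoke Lemma~\ref{lem-E-sp}, and the conclusion $|P_r(\sigma) - P_r(\tau)| \geq \Asp r$ is precisely what we want.

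To make the application legitimate I need to check three things. First, by Lemma~\ref{lem-excursion-annulus}, the restriction $P_r|_{[\tau',\sigma']}$ is a $D_{h-\fr_r}(\cdot,\cdot;\ol{\BB A}_{r,4r}(0))$-geodesic between two points of $\bdy B_{4r}(0)$, as required. Second, the function $f = \fr_r$ is non-negative, continuous, and supported in $\Vr_r$; and $\Vr_r \subset \BB A_{r,3r}(0)$ because $\Ur_r$ lies in a thin neighborhood of $\bdy B_{2r}(0)$ (by construction in Section~\ref{sec-tube-def}, involving $\Hr_{z,\rho r}$ and the linking tubes, all contained in an $O(\rho r)$-neighborhood of $\bdy B_{2r}(0)$) and $\Vr_r = B_{\Asup r}(\Ur_r)$ with $\Asup$ and $\rho$ chosen small enough so that this neighborhood is comfortably inside $\BB A_{r,3r}(0)$. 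Third, I need the times $\tau$ and $\sigma$ from the excursion to match the first-entry/last-exit times defined in Lemma~\ref{lem-E-sp} applied to the restricted geodesic. This is immediate from Definition~\ref{def-excursion}: on $[\tau',\sigma']$, the time $\tau$ is the first time after $\tau'$ at which $P_r$ enters $\ol\Vr_r$, and $\sigma$ is the last time before $\sigma'$ at which $P_r$ exits $\ol\Vr_r$, which, after reparametrizing $P_r|_{[\tau',\sigma']}$ to start at time $0$, are exactly the infimum and supremum appearing in~\eqref{eqn-E-sp-times}.

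With these three verifications in place, Lemma~\ref{lem-E-sp} yields $|P_r(\tau) - P_r(\sigma)| \geq \Asp r$, which is the desired bound. None of these steps involve any real obstacle; the argument is essentially a bookkeeping check that the hypotheses of the single-geodesic statement (Lemma~\ref{lem-E-sp}) transfer to the excursion setting. The only mildly delicate point is confirming that $\Vr_r \subset \BB A_{r,3r}(0)$, but this is built into the ordering of parameter choices in~\eqref{eqn-A-parameters} and~\eqref{eqn-rho-parameter}.
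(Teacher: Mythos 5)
Your proof is correct and follows essentially the same route as the paper's: both apply Lemma~\ref{lem-E-sp} to the restricted geodesic $P_r|_{[\tau',\sigma']}$ with $V = \Vr_r$, $f = \fr_r$, invoking Lemma~\ref{lem-excursion-annulus} for the geodesic property and~\eqref{eqn-shortcut-hyp} (with $\Ctime = \Aacross^2/(4\Aaround)$) for the hypothesis~\eqref{eqn-E-sp-assume}. The paper's proof is a one-line citation of these ingredients; you spell out the bookkeeping a bit more, but the argument is the same.
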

\begin{proof}
Due to Lemma~\ref{lem-excursion-annulus} and~\eqref{eqn-shortcut-hyp}, this follows from Lemma~\ref{lem-E-sp} applied with $V = \Vr_r$, $f=\fr_r$, and $P_f$ equal to the $D_{h-\fr_r}$-geodesic $P_r|_{[\tau',\sigma']}$. 
\end{proof}

By~\eqref{eqn-excursion-def}, we have $P_r^{-1}(\ol \Vr_r)\subset [\tau,\sigma]$. 
We will now establish an upper bound for the length of this time interval.

\begin{figure}[ht!]
\begin{center}
\includegraphics[width=.8\textwidth]{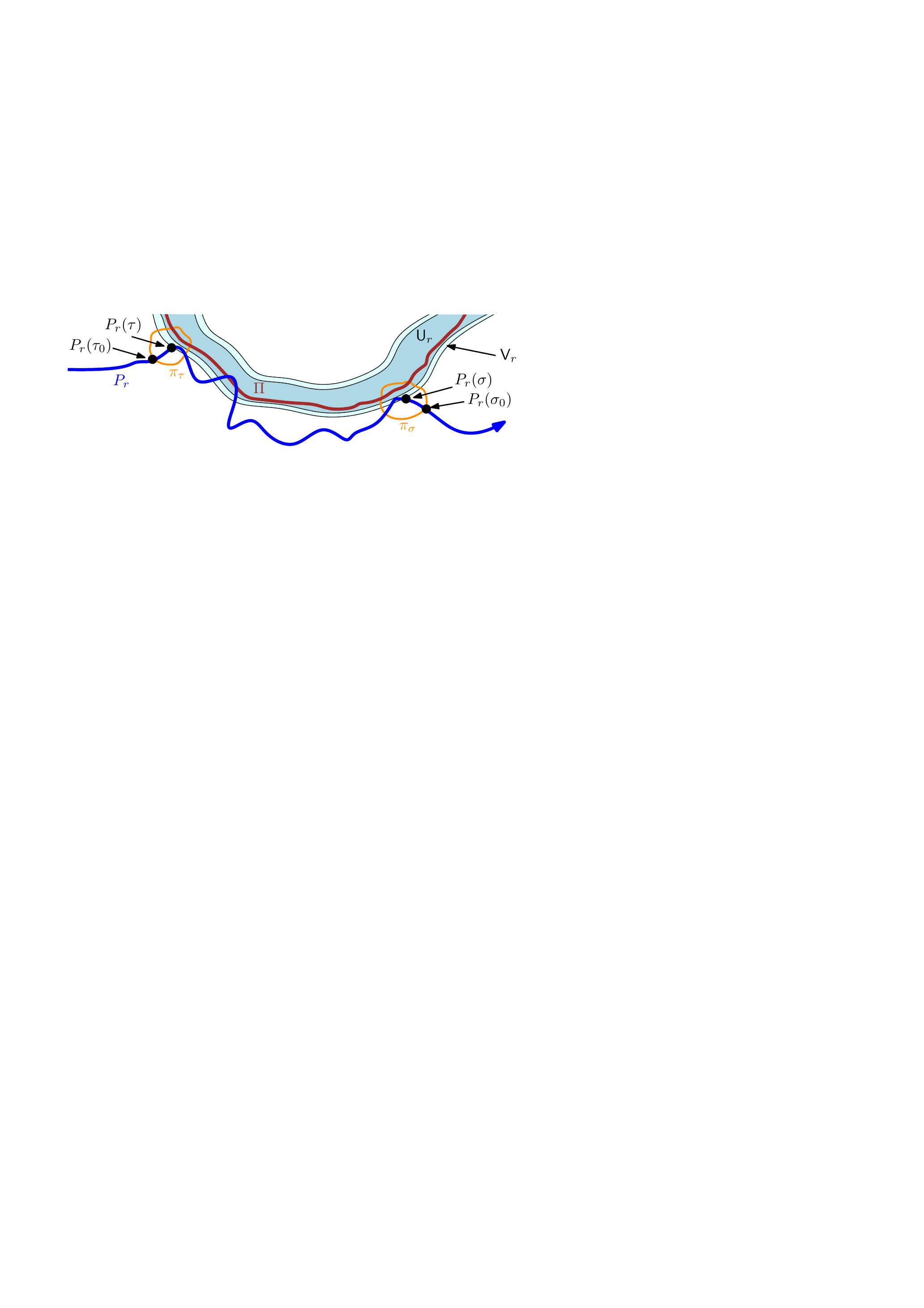} 
\caption{\label{fig-excursion-length} Illustration of the proof of Lemma~\ref{lem-excursion-length}. We obtain a path from a point of $P_r([\tau',\tau])$ to a point of $P_r([\sigma,\sigma'])$ whose $D_{h-\fr_r}$-length is at most the right side of~\eqref{eqn-excursion-length} by concatenating segments of $\pi_\tau, \Pi$, and $\pi_\sigma$. 
This implies an upper bound for $\sigma-\tau$ since $P_r$ is a $D_{h-\fr}$-geodesic. 
}
\end{center}
\end{figure}

\begin{lem} \label{lem-excursion-length}
We have
\eqb \label{eqn-excursion-length}
\sigma - \tau \leq  \frac12 \Anar^{\xi(Q+3)}  r^{\xi Q} e^{\xi h_r(0)} .
\eqe
\end{lem}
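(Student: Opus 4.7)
The plan is to exploit that $P_r$ is a $D_{h-\fr_r}$-geodesic: it suffices to construct a path from some $P_r(s_1)$ with $s_1 \in [\tau',\tau]$ to some $P_r(s_2)$ with $s_2 \in [\sigma,\sigma']$ whose $D_{h-\fr_r}$-length is at most $\tfrac12 \Anar^{\xi(Q+3)} r^{\xi Q} e^{\xi h_r(0)}$, since then $\sigma - \tau \leq s_2 - s_1$ inherits the same bound. As indicated by Figure~\ref{fig-excursion-length}, I would build such a path by concatenating a segment of a short loop $\pi_\tau$ around $P_r(\tau)$, a segment of the path $\Pi \subset \Ur_r$ from condition~\ref{item-E-internal} in the definition of $\Er_r$, and a segment of a short loop $\pi_\sigma$ around $P_r(\sigma)$.

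To produce the loops, I would apply Lemma~\ref{lem-E-loop} to the $D_{h-\fr_r}(\cdot,\cdot;\ol{\BB A}_{r,4r}(0))$-geodesic $P_r|_{[\tau',\sigma']}$ (whose hypotheses are supplied by Lemma~\ref{lem-excursion-annulus}) at scale $\ep := 2\Aset \in (0,\Aep]$. This gives loops $\pi_\tau, \pi_\sigma$ around the annuli $\BB A_{2\Aset r,\, (2\Aset)^{1/2} r}(P_r(\tau))$ and $\BB A_{2\Aset r,\, (2\Aset)^{1/2} r}(P_r(\sigma))$ respectively, each of $D_h$-length at most $2\Aaround (2\Aset)^\geoExp r^{\xi Q} e^{\xi h_r(0)} \leq 2\llambda \Anar^{\xi(Q+3)} r^{\xi Q} e^{\xi h_r(0)}$ by the defining inequality~\eqref{eqn-Aset} for $\Aset$, hence of at most this $D_{h-\fr_r}$-length since $\fr_r \geq 0$. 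For the middle piece, condition~\ref{item-E-internal} bounds $\op{len}(\Pi;D_h) \leq \Ainternal r^{\xi Q} e^{\xi h_r(0)}$; since $\Pi \subset \Ur_r$ and $\fr_r \equiv \Amax$ on $\Ur_r$, Weyl scaling (Axiom~\ref{item-metric-f}) together with the choice~\eqref{eqn-Amax} of $\Amax$ gives $\op{len}(\Pi; D_{h-\fr_r}) \leq e^{-\xi\Amax} \Ainternal r^{\xi Q} e^{\xi h_r(0)} \leq \llambda \Anar^{\xi(Q+3)} r^{\xi Q} e^{\xi h_r(0)}$. Summing the three bounds yields a concatenated path of $D_{h-\fr_r}$-length at most $5\llambda \Anar^{\xi(Q+3)} r^{\xi Q} e^{\xi h_r(0)}$, which is below $\tfrac12 \Anar^{\xi(Q+3)} r^{\xi Q} e^{\xi h_r(0)}$ by~\eqref{eqn-small-const}.

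The main obstacle is checking that the loops actually cross both $\Pi$ and the $P_r$-segments in question, since condition~\ref{item-E-internal} only controls the Euclidean distance from the \emph{outer} boundary of $\Ur_r$ to $\Pi$. I plan to dispatch this via a topological argument: by~\eqref{eqn-excursion-def} and Lemma~\ref{lem-excursion-annulus}, the segment $P_r|_{[\tau',\tau)}$ lies in $B_{4r}(0) \setminus \ol\Vr_r$ and avoids $B_r(0)$, and since $P_r|_{(\tau',\sigma')} \subset B_{4r}(0)$, connectedness forces $P_r|_{[\tau',\tau)}$ to remain in the component of $B_{4r}(0) \setminus \ol\Vr_r$ whose boundary contains $\bdy B_{4r}(0)$. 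This shows $P_r(\tau) \in \bdy^{\op{out}} \Vr_r$, and the time-reversed argument yields $P_r(\sigma) \in \bdy^{\op{out}} \Vr_r$. Consequently the closest point of $\Ur_r$ to $P_r(\tau)$ lies on $\bdy^{\op{out}} \Ur_r$, so condition~\ref{item-E-internal} places a point of $\Pi$ within Euclidean distance $\Asup r + \Aset r \leq 2\Aset r$ of $P_r(\tau)$. Because $\Pi$ is a connected loop separating the two boundary components of the annular set $\Ur_r$, it encloses $B_r(0)$ and has Euclidean diameter at least $2r \gg (2\Aset)^{1/2} r$, so $\Pi$ must also exit $B_{(2\Aset)^{1/2} r}(P_r(\tau))$ and thus cross $\pi_\tau$; the same applies to $\pi_\sigma$. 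Finally, since $|P_r(\tau') - P_r(\tau)| \geq 1.5 r > (2\Aset)^{1/2} r$, the continuous segment $P_r|_{[\tau',\tau]}$ crosses $\pi_\tau$ at some time $s_1$, and analogously $P_r|_{[\sigma,\sigma']}$ crosses $\pi_\sigma$ at some time $s_2$, supplying the endpoints of the concatenated path and completing the proof.
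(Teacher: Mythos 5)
Your argument is correct and follows the same route as the paper's proof: apply Lemma~\ref{lem-E-loop} at scale $\ep = 2\Aset$ to produce the loops $\pi_\tau,\pi_\sigma$, use condition~\ref{item-E-internal} to get the path $\Pi\subset\Ur_r$, argue via the annular topology that $\Pi$ intersects both loops (using that $P_r(\tau),P_r(\sigma)$ lie on the outer side of $\bdy\Vr_r$), concatenate, and bound the three pieces by $O(\llambda)\Anar^{\xi(Q+3)}r^{\xi Q}e^{\xi h_r(0)}$ using~\eqref{eqn-Aset} and~\eqref{eqn-Amax}. The only differences are cosmetic — you make explicit the step that $\Pi$ has large Euclidean diameter so it must exit $B_{(2\Aset)^{1/2}r}(P_r(\tau))$, which the paper leaves implicit, and your numerical constant is $5\llambda$ rather than $3\llambda$, both comfortably below $1/2$ by~\eqref{eqn-small-const}.
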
 
\begin{proof}
See Figure~\ref{fig-excursion-length} for an illustration. 
Let $\Aset \in (0, \llambda \Aep]$ be as in~\eqref{eqn-Aset}. 
By Lemma~\ref{lem-excursion-annulus}, we can apply Lemma~\ref{lem-E-loop} (with $\ep = 2\Aset$) to the $D_h(\cdot,\cdot;\ol{\BB A}_{r,4r}(0))$-geodesic $P_r|_{[\tau',\sigma']}$ to get that there are paths $\pi_\tau \subset \BB A_{ 2\Aset r , (2\Aset)^{1/2} r}(P_r(\tau))$ and $\pi_\sigma \subset \BB A_{ 2\Aset r , (2\Aset)^{1/2} r}(P_r(\sigma))$ which disconnect the inner and outer boundaries of their respective annuli such that
\eqb \label{eqn-excursion-loop}
\max\left\{ \op{len}\left(\pi_\tau ; D_h \right) , \op{len}\left(\pi_\sigma ; D_h \right) \right\} 
\leq (2\Aset)^\geoExp \Aaround r^{\xi Q} e^{\xi h_r(0)} 
\leq  \llambda \Anar^{\xi(Q+3)}   r^{\xi Q} e^{\xi h_r(0)} ,
\eqe 
where the last inequality is by~\eqref{eqn-Aset}. 
Let $\tau_0$ be the last time before $\tau$ that $P_r$ hits $\pi_\tau$ and let $\sigma_0$ be the first time after $\sigma$ that $P_r$ hits $\pi_\sigma$. 
Then $\tau_0 \in [\tau',\tau]$ and $\sigma_0 \in [\sigma,\sigma']$. 

By condition~\ref{item-E-internal} (internal distance in $\Ur_r$) in the definition of $\Er_r$, there is a path $\Pi  \subset \Ur_r$ which disconnects the inner and outer boundaries of $\Ur_r$, has $D_h$-length at most $\Ainternal r^{\xi Q} e^{\xi h_r(0)}$, and such that each point of the outer boundary of $\Ur_r$ lies at Euclidean distance at most $\Aset r$ from $\Pi$. 
We have $P_r(\tau) \in \bdy \Vr_r = \bdy B_{\Asup r}(\Ur_r)$ and $P([\tau',\tau])$ is contained in the unbounded connected component of $\BB C\setminus \Ur_r$. Hence $P_r(\tau)$ lies at Euclidean distance at most $\Asup r$ from the outer boundary of $\Ur_r$. 
Therefore, the Euclidean distance from $P_r(\tau)$ to $\Pi$ is at most $(\Asup + \Aset) r \leq 2\Aset r$, where we use that $\Asup \leq \Aset$ by definition.

Since $\pi_\tau \subset \BB A_{ 2\Aset r , (2\Aset)^{1/2} r}(P_r(\tau))$ and $\pi_\tau$ disconnects the inner and outer boundaries of\\ $\BB A_{ 2\Aset r , (2\Aset)^{1/2} r}(P_r(\tau))$, it follows from the preceding paragraph that $\pi_\tau$ intersects $\Pi$. 
Similarly, $\pi_\sigma$ intersects $\Pi$.   
Hence the union of the loops $\Pi$, $\pi_\tau$, and $\pi_\sigma$ contains a path from $P_r(\tau_0)$ to $P_r(\sigma_0)$. 
Therefore,
\allb \label{eqn-loop-decomp}
\sigma - \tau
 \leq \sigma_0 - \tau_0   
 &= D_{h-\fr_r}\left(P_r(\tau_0) , P_r(\sigma_0) \right)   \notag\\
 &\leq \op{len}\left(\pi_\tau ; D_{h-\fr_r} \right) +  \op{len}\left(\pi_\sigma ; D_{h-\fr_r} \right)  + \op{len}\left( \Pi ; D_{h-\fr_r} \right) 
\alle

Let us now bound the right side of~\eqref{eqn-loop-decomp}.
Since $\fr_r$ is non-negative, the $D_{h-\fr_r}$-length of each of $\pi_\tau$ and $\pi_\sigma$ is at most the right side of~\eqref{eqn-excursion-loop}. 
Since $\fr_r \equiv \Amax$ on $\Ur_r$, 
\eqb 
\op{len}\left( \Pi ; D_{h-\fr_r} \right) 
= e^{-\xi \Amax} \op{len}\left( \Pi ; D_h \right) 
\leq e^{-\xi  \Amax}  \Ainternal r^{\xi Q} e^{\xi h_r(0)} 
\leq \llambda \Anar^{\xi(Q+3)} r^{\xi Q} e^{\xi h_r(0)} ,
\eqe
where the last inequality uses the definition~\eqref{eqn-Amax} of $\Amax$. Plugging these estimates into~\eqref{eqn-loop-decomp} gives
\eqb
\sigma - \tau \leq  3\llambda \Anar^{\xi(Q+3)} r^{\xi Q} e^{\xi h_r(0)}  ,
\eqe 
which is stronger than~\eqref{eqn-excursion-length}. 
\end{proof}

Combining Lemma~\ref{lem-excursion-length} with condition~\ref{item-E-narrow} (reverse H\"older continuity) in the definition of $\Er_r$ allows us to show that any segment of $P_r|_{[\tau,\sigma]}$ which is disjoint from $\Vr_r$ must have small Euclidean diameter.

\begin{lem} \label{lem-excursion-tube}
Each segment of $P_r|_{[\tau,\sigma]}$ which is disjoint from $\Vr_r$ has Euclidean diameter at most $\Anar r$. In particular, 
\eqbn
P_r([\tau,\sigma]) \subset B_{\Anar r  }(\Vr_r)  .
\eqen
\end{lem}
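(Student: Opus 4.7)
The second assertion will follow immediately from the first: any point of $P_r([\tau,\sigma])$ lies either in $\Vr_r$ itself or on some maximal sub-arc of $P_r|_{[\tau,\sigma]}$ disjoint from $\Vr_r$, whose endpoints lie on $\bdy \Vr_r$ by~\eqref{eqn-excursion-def}, so if the arc has Euclidean diameter at most $\Anar r$ then each of its points is Euclidean-distance at most $\Anar r$ from $\Vr_r$. So the substance is the diameter bound, and I would only need to treat a maximal sub-arc $P_r|_{[s_1,s_2]}$ with $P_r((s_1,s_2)) \cap \Vr_r = \emptyset$ and $P_r(s_1), P_r(s_2) \in \bdy\Vr_r$.

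The first key input is that $\fr_r \equiv 0$ along the sub-arc, so its $D_h$-length equals $s_2 - s_1$, and Lemma~\ref{lem-excursion-length} gives $s_2 - s_1 \leq \tfrac{1}{2}\Anar^{\xi(Q+3)} r^{\xi Q} e^{\xi h_r(0)}$. The second key input is that by the definitions of $\Ur_r$ and $\Vr_r$ in Section~\ref{sec-tube-def} together with the smallness of $\rho$ and $\Asup$ (see the parameter hierarchy~\eqref{eqn-A-parameters}--\eqref{eqn-rho-parameter}), the set $\bdy \Vr_r$ is contained in $\BB A_{1.8r,\, 2.2r}(0) \subset \BB A_{1.5 r, 3 r}(0)$, so the endpoints of the sub-arc lie strictly inside the annulus where condition~\ref{item-E-narrow} (reverse H\"older continuity) is available.

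The plan is then to assume for contradiction that the Euclidean diameter exceeds $\Anar r$, and produce times $t_2 < t_3$ on the sub-arc with $P_r(t_2), P_r(t_3) \in \BB A_{1.5 r, 3 r}(0)$ and $|P_r(t_2) - P_r(t_3)| = \Anar r$. Here the main obstacle is that if the sub-arc wanders out of $\BB A_{1.5r, 3r}(0)$, the two points realizing (something close to) the diameter might lie outside the region where condition~\ref{item-E-narrow} applies. To handle this cleanly I would set $t^* := \inf\{t \in [s_1,s_2] : |P_r(t)| \leq 1.5r \text{ or } |P_r(t)| \geq 3r\}$ (with $\inf\emptyset = s_2$) and then split into two cases. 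If $t^* = s_2$, the whole sub-arc lies in $\BB A_{1.5r, 3r}(0)$, and continuity plus the assumed diameter lower bound gives the desired $t_2 < t_3$ by the intermediate value theorem. If $t^* < s_2$, then $P_r(t^*) \in \bdy B_{1.5r}(0) \cup \bdy B_{3r}(0)$ while $P_r(s_1) \in \BB A_{1.8r, 2.2r}(0)$, so the sub-arc $P_r([s_1, t^*])$ already has Euclidean diameter at least $0.3 r \gg \Anar r$; applying IVT to the continuous, non-decreasing function $t \mapsto \operatorname{diam}(P_r([s_1,t]))$ on $[s_1, t^*)$, I can stop strictly before $t^*$ with diameter exactly $\Anar r$, keeping both points in the open annulus.

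Finally, I would close the argument by applying condition~\ref{item-E-narrow} to obtain
\[
D_h\bigl(P_r(t_2), P_r(t_3) ;\, \BB A_{r,4r}(0)\bigr) \;\geq\; \Anar^{\xi(Q+3)}\, r^{\xi Q} e^{\xi h_r(0)},
\]
noting that the sub-segment $P_r|_{[t_2,t_3]}$ stays inside $\BB A_{r,4r}(0)$ (it cannot reach $\bdy B_r(0)$ or $\bdy B_{4r}(0)$ since its $D_h$-length is much smaller than $\Aacross r^{\xi Q} e^{\xi h_r(0)}$, by condition~\ref{item-E-across} combined with Lemma~\ref{lem-excursion-length}). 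But the $D_h$-length of $P_r|_{[t_2,t_3]}$ is $t_3 - t_2 \leq s_2 - s_1 \leq \tfrac{1}{2}\Anar^{\xi(Q+3)} r^{\xi Q} e^{\xi h_r(0)}$, which contradicts the lower bound. Hence the diameter cannot exceed $\Anar r$, as desired.
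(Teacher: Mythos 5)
Your proposal is correct and takes essentially the same approach as the paper: reduce to a maximal sub-arc disjoint from $\Vr_r$ with endpoints on $\bdy\Vr_r$, use the reverse H\"older condition~\ref{item-E-narrow} to lower-bound the $D_h$-length of a piece of the sub-arc, observe this length equals the $D_{h-\fr_r}$-length since $\fr_r$ vanishes off $\Vr_r$, and contradict Lemma~\ref{lem-excursion-length}. The only difference is cosmetic: you are more careful in locating two points at Euclidean distance exactly $\Anar r$ inside $\BB A_{1.5r,3r}(0)$ via an explicit intermediate-value-theorem argument with a case split on the first exit time $t^*$, whereas the paper handles the same point more tersely by ``replacing $P_r|_{[t,s]}$ by a segment which travels from $\bdy\Vr_r$ to $\bdy B_{\Anar r}(\Vr_r)$''; your final remark invoking condition~\ref{item-E-across} to keep the sub-segment in $\BB A_{r,4r}(0)$ is redundant, since your choice of $t_2,t_3 < t^*$ (or the already-established Lemma~\ref{lem-excursion-annulus}) already guarantees this.
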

\begin{proof}
Suppose by way of contradiction that there is a segment $P_r|_{[t,s]}$ for times $\tau \leq t < s \leq \sigma$ which is disjoint from $\Vr_r$ and has Euclidean diameter larger than $\Anar r$. 
By~\eqref{eqn-excursion-def}, $P_r([\tau,\sigma])$ intersects $\ol{\Vr}_r$. Hence, by possibly replacing $P_r|_{[t,s]}$ by a segment of $P_r$ which travels from $\bdy \Vr_r$ to $\bdy B_{\Anar r}(\Vr_r)$, we can assume without loss of generality that $P_r([t,s])$ is contained in $B_{\Anar r}(\Vr_r)$, which in turn is contained in $\BB A_{1.5 r , 3 r}(0)$  by the definition of $\Vr_r$ (Section~\ref{sec-tube-def}).
By the reverse H\"older continuity condition~\ref{item-E-narrow} in the definition of $\Er_r$, the $D_h$-length of $P_r|_{[t,s]}$ is at least $\Anar^{\xi(Q+3)} r^{\xi Q} e^{\xi h_r(0)}$. 
Since $\fr_r$ is supported on $\Vr_r$, the $D_{h-\fr_r }$-length of $P_r|_{[t,s]}$ is equal to its $D_h$-length, so is also at least $\Anar^{\xi(Q+3)}  r^{\xi Q} e^{\xi h_r(0)}$. 
Since $P_r|_{[\tau,\sigma]}$ is a $D_{h-\fr_r }$-geodesic, we therefore have
\eqb
\sigma-\tau \geq s - t \geq \Anar^{\xi(Q+3)}  r^{\xi Q} e^{\xi h_r(0)} .
\eqe
This contradicts Lemma~\ref{lem-excursion-length}. 
\end{proof}

\subsection{Forcing a geodesic to enter balls centered at $\ur_{z,\rho r}$ and $\vr_{z,\rho r}$}
\label{sec-excursion-hit}

Recall the balls $B_{\sr_{\rho r}}(\ur_{z,\rho r})$ and $B_{\sr_{\rho r}}(\vr_{z,\rho r})$ appearing in the definition of the ``building block" event $\Fr_{z,\rho r}$ from Section~\ref{sec-block-event}. On $\Fr_{z,\rho r}$, there are points $u \in B_{\sr_{\rho r}}(\ur_{z,\rho r})$ and $v\in B_{\sr_{\rho r}}(\vr_{z,\rho r})$ which satisfy $\wt D_h(u,v) \leq \Cmid_0 D_h(u,v)$, plus several other conditions. In order to prove Proposition~\ref{prop-shortcut}, we want to force $P_r$ to get $D_{h-\fr_r}$-close to each of $u$ and $v$ for one of these pairs of points $u,v$, then apply the triangle inequality. To do this, the first step is to force $P_r$ to get close to the balls $ B_{\sr_{\rho r}}(\ur_{z,\rho r})$ and $v\in B_{\sr_{\rho r}}(\vr_{z,\rho r})$ for some $z\in\Zr_r$ such that $\Fr_{z,\rho r}$ occurs. We will carry out this step in this subsection.
Our goal is to prove the following lemma.

\begin{lem} \label{lem-excursion-hit}
Let $\Zr_r \subset \bdy B_{2r}(0)$ be as in~\eqref{eqn-test-pts}. 
There exists $z\in \Zr_r$ such that $\Fr_{z,\rho r}$ occurs and the following is true. 
Let $\sr_{\rho r}$, $\ur_{z,\rho r}$, and $\vr_{z,\rho r}$ be the radius and points as in the definition of $\Fr_{z,\rho r}$. 
There exist times $\tau \leq a < b  \leq \sigma$ which satisfy the following conditions: 
\eqb \label{eqn-excursion-hit}
P_r(a), P_r(b) \in \bdy B_{\sr_{\rho r} + \Asup r}(\ur_{z,\rho r})  , \quad 
 |P_r(b) - P_r(a)|  \geq \sr_{\rho r} / 8   ,\quad \text{and} \quad 
\eqe
\eqb \label{eqn-excursion-hit'}
P_r([a,b]) \subset B_{\sr_{\rho r} +  (\Asup + \Anar) r}(\ur_{z,\rho r}) \setminus \left(\Vr_r \setminus B_{\sr_{\rho r} + \Asup r}(\ur_{z,\rho r}) \right) .
\eqe
Moreover, the same is true with $\vr_{z,\rho r}$ in place of $\ur_{z,\rho r}$. 
\end{lem}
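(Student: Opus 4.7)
The proof will combine the Euclidean estimates of Section~\ref{sec-excursion} with the combinatorial structure of $\Ur_r$ and the ``existence of shortcuts'' condition~\ref{item-E-event} in the definition of $\Er_r$. First, I apply Lemmas~\ref{lem-excursion-sp} and~\ref{lem-excursion-tube} to obtain $|P_r(\tau)-P_r(\sigma)| \ge \Asp r$ and $P_r([\tau,\sigma]) \subset B_{\Anar r}(\Vr_r) \subset B_{(\Asup+\Anar)r}(\Ur_r)$. The key parameter input is that the ordering~\eqref{eqn-A-parameters}, together with the lower bound $\sr_{\rho r} \ge \Aendpt \rho r$ from Lemma~\ref{lem-endpt-ball} and the constraint $\Anar \le \llambda(1-\Kann)\Aendpt\rho$ from Lemma~\ref{lem-E-short}, ensures that $(\Asup+\Anar)r$ is much smaller than $\sr_{\rho r}$, so the $(\Asup+\Anar)r$-neighborhood of $\Ur_r$ preserves the bottleneck structure at the balls $B_{\sr_{\rho r}}(\ur_{z,\rho r})$ and $B_{\sr_{\rho r}}(\vr_{z,\rho r})$ for $z \in \Zr_r$.

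Next, since $\Ur_r$ lies in an $O(\rho r)$-neighborhood of $\bdy B_{2r}(0)$, I project $P_r([\tau,\sigma])$ to $\bdy B_{2r}(0)$ by the radial map $w \mapsto 2r\,w/|w|$. The image is a connected arc $I$, and using $|P_r(\tau)-P_r(\sigma)| \ge \Asp r$ and $\rho \ll \Asp$, for $\rho$ small enough I obtain $|I| \ge 0.9\,\Asp r$. I then choose a sub-arc $I' \subset I$ of length $\Asp r/2$ centered within $I$, so that $I'$ is at arc distance at least $0.2\,\Asp r$ from $\bdy I$. Condition~\ref{item-E-event} applied to $I'$ yields $z \in I' \cap \Zr_r$ such that $\Fr_{z,\rho r}$ occurs; provided $\rho$ is chosen small enough relative to $\Asp$, the angular positions of both bottleneck balls $B_{\sr_{\rho r}}(\ur_{z,\rho r})$ and $B_{\sr_{\rho r}}(\vr_{z,\rho r})$ lie strictly in the interior of $I$, with the angular ``neighbors'' (the tube $\Lr_{z_{k-1},\rho r}$ on one side of $\ur_{z,\rho r}$ and the half-annulus $\Hr_{z,\rho r}$ on the other) both projecting into $I$.

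The core step is to show that $P_r$ must cross the sphere $\bdy B_{\sr_{\rho r}+\Asup r}(\ur_{z,\rho r})$ between a ``tube-side'' crossing point and a ``half-annulus-side'' crossing point. By the construction of $\Ur_r$ in Section~\ref{sec-tube-def}, removing a small open neighborhood of $B_{\sr_{\rho r}+\Asup r}(\ur_{z,\rho r})$ locally disconnects $B_{(\Asup+\Anar)r}(\Ur_r)$ into a tube-side component (intersecting a neighborhood of $\Lr_{z_{k-1},\rho r}$) and a half-annulus-side component (intersecting a neighborhood of $\Hr_{z,\rho r}$). Since the projection image covers angles on both sides of the bottleneck by the preceding paragraph, $P_r([\tau,\sigma])$ must visit both local components. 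Taking $a$ (resp.\ $b$) to be a time when $P_r$ hits $\bdy B_{\sr_{\rho r}+\Asup r}(\ur_{z,\rho r})$ at the last tube-side crossing (resp.\ first half-annulus-side crossing), I verify the three conclusions: the containment~\eqref{eqn-excursion-hit'} follows from Lemma~\ref{lem-excursion-tube} combined with the topological separation, and the Euclidean separation $|P_r(b)-P_r(a)| \ge \sr_{\rho r}/8$ follows from the fact that the path $\Lr_{z_{k-1},\rho r}$ is constructed to attach to $B_{\sr_{\rho r}}(\ur_{z,\rho r})$ at the point furthest from $\Hr_{z,\rho r}$, which is antipodal on the ball to the half-annulus attachment, so the tube-side and half-annulus-side crossings of $\bdy B_{\sr_{\rho r}+\Asup r}(\ur_{z,\rho r})$ are on opposite sides of the ball at Euclidean distance roughly $2\sr_{\rho r}$. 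An identical argument at the other bottleneck gives the analogous statement for $\vr_{z,\rho r}$.

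The main obstacle will be the topological claim in the core step: rigorously defining the ``tube side'' and ``half-annulus side'' decomposition and ruling out pathological behavior such as the path wrapping the long way around $\Ur_r$ before crossing the bottleneck, or making many small excursions in and out of the ball. Making this precise will require careful bookkeeping of the parameter hierarchy~\eqref{eqn-A-parameters}--\eqref{eqn-rho-parameter} so that the various neighborhoods are thin enough not to interfere with one another, as well as a clean formulation of the local topological separation statement at each bottleneck.
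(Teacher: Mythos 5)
Your approach (project $P_r([\tau,\sigma])$ radially to $\bdy B_{2r}(0)$, obtain an arc $I$ of length $\geq 0.9\Asp r$, pick a centered sub-arc $I'$, apply condition~\ref{item-E-event} to find a \emph{single} $z \in I' \cap \Zr_r$ with $\Fr_{z,\rho r}$, and then argue the path must cross the ball $\Bhit(\ur_{z,\rho r})$) is genuinely different from the paper's, and the step you label the ``core step'' is exactly where a real gap opens up. The paper instead applies condition~\ref{item-E-event} \emph{twice}, to each of the two arcs $J$ and $J'$ of $\bdy B_{2r}(0)$ separated by the (radially projected) points $P_r(\tau)$ and $P_r(\sigma)$, obtaining points $z$ and $z'$ on opposite sides of the excursion. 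With the four bottleneck balls $\Bhit(\ur), \Bhit(\vr), \Bhit(\ur'), \Bhit(\vr')$ from both $z$ and $z'$, the set $\Vr_r$ minus these four balls has exactly four connected components $V^\tau, V^\sigma, O, O'$, pairwise at Euclidean distance $\geq \sr_{\rho r}/4$. Because $P_r(\tau) \in \bdy V^\tau$ and the path must reach $\ol V^\sigma$ (at time $d_0$), and because each excursion outside $\Vr_r$ has Euclidean diameter $\leq \Anar r \ll \sr_{\rho r}$ by Lemma~\ref{lem-excursion-tube}, the path is \emph{trapped}: it cannot reach $\ol V^\sigma$ without first hitting a neighborhood of $\Bhit(\ur)$ or $\Bhit(\ur')$. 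This is how the paper rules out ``wrapping the long way around.''

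Your single-$z$ setup does not achieve this. The projection of $P_r([\tau,\sigma])$ being a connected arc containing both angular sides of the bottleneck at $\ur_{z,\rho r}$ does not, by itself, force the path to enter the ball $\Bhit(\ur_{z,\rho r})$: the path could traverse a full loop of the annular neighborhood $B_{\Anar r}(\Vr_r)$ (so $I$ is the whole circle) and ``cross'' the angular sector of $z$ by passing through \emph{some other} bottleneck far away, never entering $\Bhit(\ur_{z,\rho r})$. Removing one bottleneck ball from the topological annulus $\Vr_r$ leaves it connected, so there is no topological obstruction forcing the path through that particular ball. You acknowledge this wraparound issue at the end, but the paper's mechanism for excluding it --- the second point $z'$ on the opposite side and the resulting four-component decomposition --- is precisely what you would need to add to make the core step rigorous. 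Relatedly, your argument for the separation $|P_r(b)-P_r(a)| \geq \sr_{\rho r}/8$ (antipodality of the tube attachment and half-annulus attachment) is plausible but not carried out; the paper instead obtains the separation directly from the fact that $V^\tau$ and $O$ lie at distance $\geq \sr_{\rho r}/4$ and that the corrections at the hitting times $a,b$ are $O(\Anar r)$.
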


\begin{figure}[ht!]
\begin{center}
\includegraphics[width=1\textwidth]{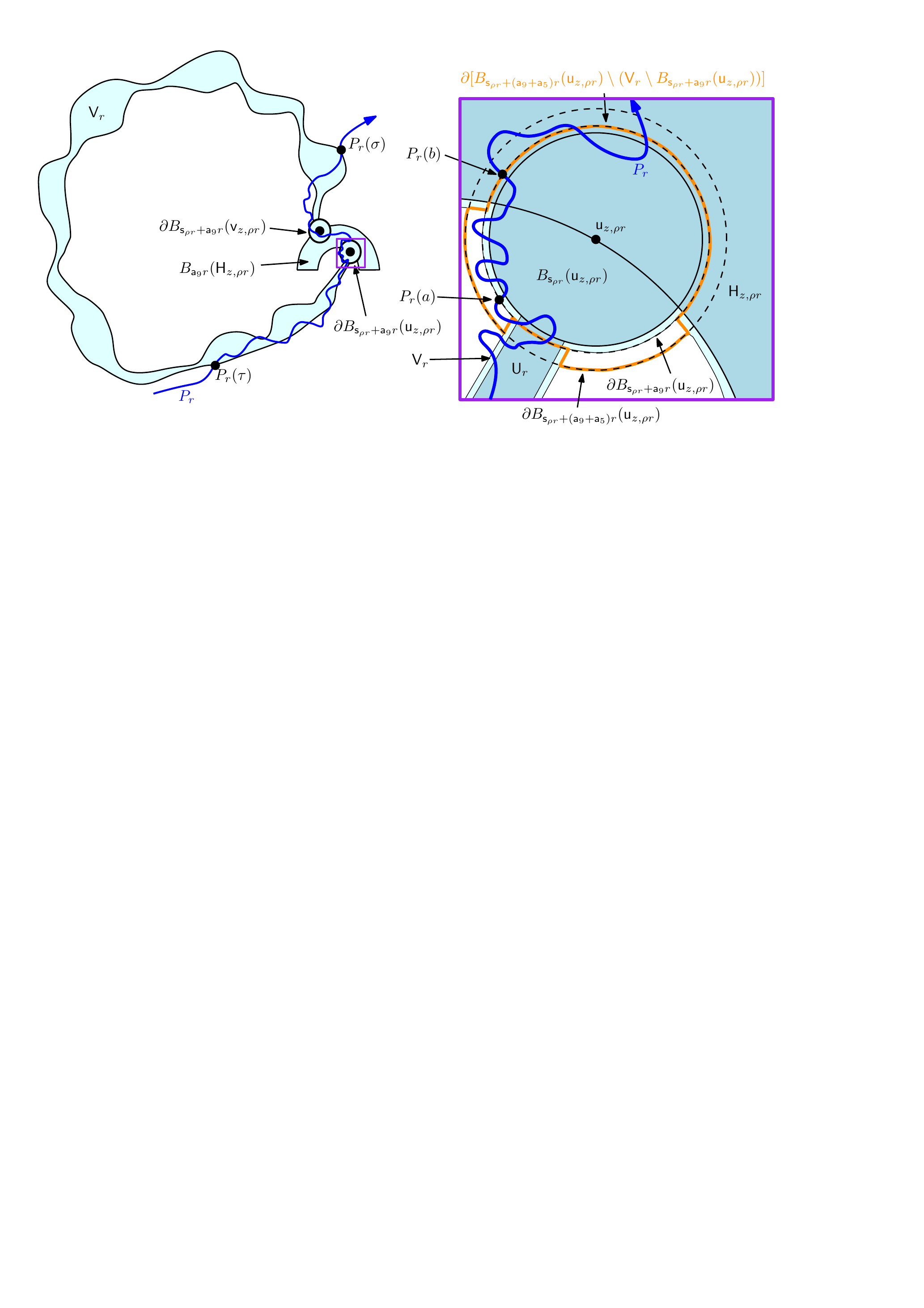} 
\caption{\label{fig-excursion-hit} Illustration of the statement of Lemma~\ref{lem-excursion-hit}. \textbf{Left:} the set $\Vr_r$ (light blue) and the path segment $P_r|_{[\tau,\sigma]}$. For simplicity, we have not drawn the details of $\Vr_r$ except in the $\Asup r$-neighborhood of the set $\Hr_{z,\rho r} \cup B_{\sr_{\rho r}}(\ur_{z,\rho r}) \cup B_{\sr_{\rho r}}(\vr_{z,\rho r})$. The set $\Ur_r$ is not shown. \textbf{Right:} the left panel zoomed in on the purple box. We have shown a subset of $\Ur_r$ (light blue) and a subset of $\Vr_r \setminus \Ur_r$ (lighter blue). By~\eqref{eqn-excursion-hit'}, the path segment $P_r|_{[a,b]}$ is required to stay region outlined in orange. 
}
\end{center}
\end{figure}

See Figure~\ref{fig-excursion-hit} for an illustration of the statement of Lemma~\ref{lem-excursion-hit}. Before discussing the proof, we make some comments on the statement. 
The ball $B_{\sr_{\rho r} + \Asup r}(\ur_{z,\rho r})$ appearing in Lemma~\ref{lem-excursion-hit} is significant because, by the definition of $\Vr_{\rho r}$ in~\eqref{eqn-V-def}, this is the largest Euclidean ball centered at $\ur_{z,\rho r}$ which is contained in $\Vr_{\rho r}$. 
The significance of the ball $B_{\sr_{\rho r} +  (\Asup + \Anar) r}(\ur_{z,\rho r})$ appearing in~\eqref{eqn-excursion-hit'} is that by Lemma~\ref{lem-excursion-tube}, the path $P_r|_{[\tau,\sigma]}$ cannot exit the $\Anar r$-neigbhborhood of $\Vr_r$. 
We note that $\sr_{\rho r} \geq \Aendpt \rho r$ (Lemma~\ref{lem-endpt-ball}), which is much larger than $\Anar r$ (Lemma~\ref{lem-E-short}), which in turn is much larger than $\Asup r$ (recall the discussion surrounding~\eqref{eqn-A-parameters}). So the balls in~\eqref{eqn-excursion-hit} and~\eqref{eqn-excursion-hit'} are only slightly larger than $B_{\sr_{\rho r}}(\ur_{z,\rho r})$. 

Lemma~\ref{lem-excursion-hit} will be a consequence of Lemmas~\ref{lem-excursion-sp} and~\ref{lem-excursion-tube} (which give a lower bound for $|P_r(\tau) - P_r(\sigma)|$ and an upper bound for the Euclidean diameter of any segment of $P_r$ which is disjoint from $\Vr_r$), condition~\ref{item-E-event} in the definition of $\Er_r$ (which gives lots of points $z\in \Zr_r$ for which $\Fr_{z,\rho r}$ occurs), and some basic geometric arguments based on the definition of $\Ur_r$ from Section~\ref{sec-tube-def}. 

We encourage the reader to look at Figure~\ref{fig-excursion-hit-proof} while reading the proof. 
Let us start by explaining why we can apply condition~\ref{item-E-event} in the definition of $\Er_r$. 
We have $P_r(\tau) , P_r(\sigma) \in \bdy \Vr_r$ by~\eqref{eqn-excursion-def} and $|P_r(\sigma) - P_r(\tau)| \geq  \Asp r$ by Lemma~\ref{lem-excursion-sp}. Moreover, by the definition of $\Vr_r$ in Section~\ref{sec-tube-def}, the Euclidean distance from each point of $\Vr_r$ to $\bdy B_{2r}(0)$ is at most $100 \rho r$, which by our choice of $\rho$ in Lemma~\ref{lem-E-event} is at most $100\llambda  \Asp r \leq \Asp r / 100$. 
Therefore, the set $\bdy B_{2r}(0) \setminus [\ol B_{100\rho r}(P_r(\tau)) \cup \ol B_{100\rho r}(P_r(\sigma))]$ consists of two disjoint connected arcs of $\bdy B_{2r}(0)$ which each have Euclidean length at least $ \Asp r /2  $. Let $J$ (resp.\ $J'$) be the one of these two arcs which goes in the counterclockwise (resp.\ clockwise) direction from $\ol B_{100\rho r}(P_r(\tau)) $ to $\ol B_{100\rho r}(P_r(\sigma))$.

\newcommand{\Bhit}{{\mathbf B}}

By condition~\ref{item-E-event} in the definition of $\Er_r$, there exist $z\in J \cap \Zr_r$ and $z' \in J' \cap \Zr_r$ such that $\Fr_{z,\rho r}$ and $\Fr_{z' , \rho r}$ both occur. To lighten notation, we write
\eqbn
\ur := \ur_{z,\rho r} , \quad \vr := \vr_{z,\rho r} , \quad \ur' :=  \ur_{z',\rho r} , \quad \vr' := \vr_{z',\rho r}
\eqen
and
\eqb
\Bhit(w) := B_{\sr_{\rho r} + \Asup r}(w),\quad \forall w \in \{\ur , \vr , \ur' , \vr' \} .
\eqe
The set $\Vr_r\setminus [ \Bhit(\ur) \cup \Bhit(\vr) \cup \Bhit(\vr') \cup \Bhit(\ur')]$ consists of exactly four connected components which each lie at Euclidean distance at least $\sr_{\rho r}/4$ from each other. We call these connected components $V^\tau , V^\sigma , O,O'$. We can choose the labeling so that with $\Hr_{z,\rho r}$ and $\Hr_{z',\rho r}$ the half-annuli as in the definitions of $\Fr_{z,\rho r}$ and $\Fr_{z',\rho r}$, 
\eqb
P_r(\tau) \in \bdy V^\tau ,\quad P_r(\sigma) \in \bdy V^\sigma, \quad O\subset B_{\Asup r}(\Hr_{z,\rho r}) \quad \text{and} \quad O' \subset B_{\Asup r}(\Hr_{z',\rho r}) .
\eqe 
We note that the boundary of each of these connected components intersects exactly two of the boundaries of the balls $\Bhit(w)$ for $w\in \{\ur,\vr,\ur',\vr'\}$.
See Figure~\ref{fig-excursion-hit-proof}, left, for an illustration.

\begin{figure}[ht!]
\begin{center}
\includegraphics[width=1\textwidth]{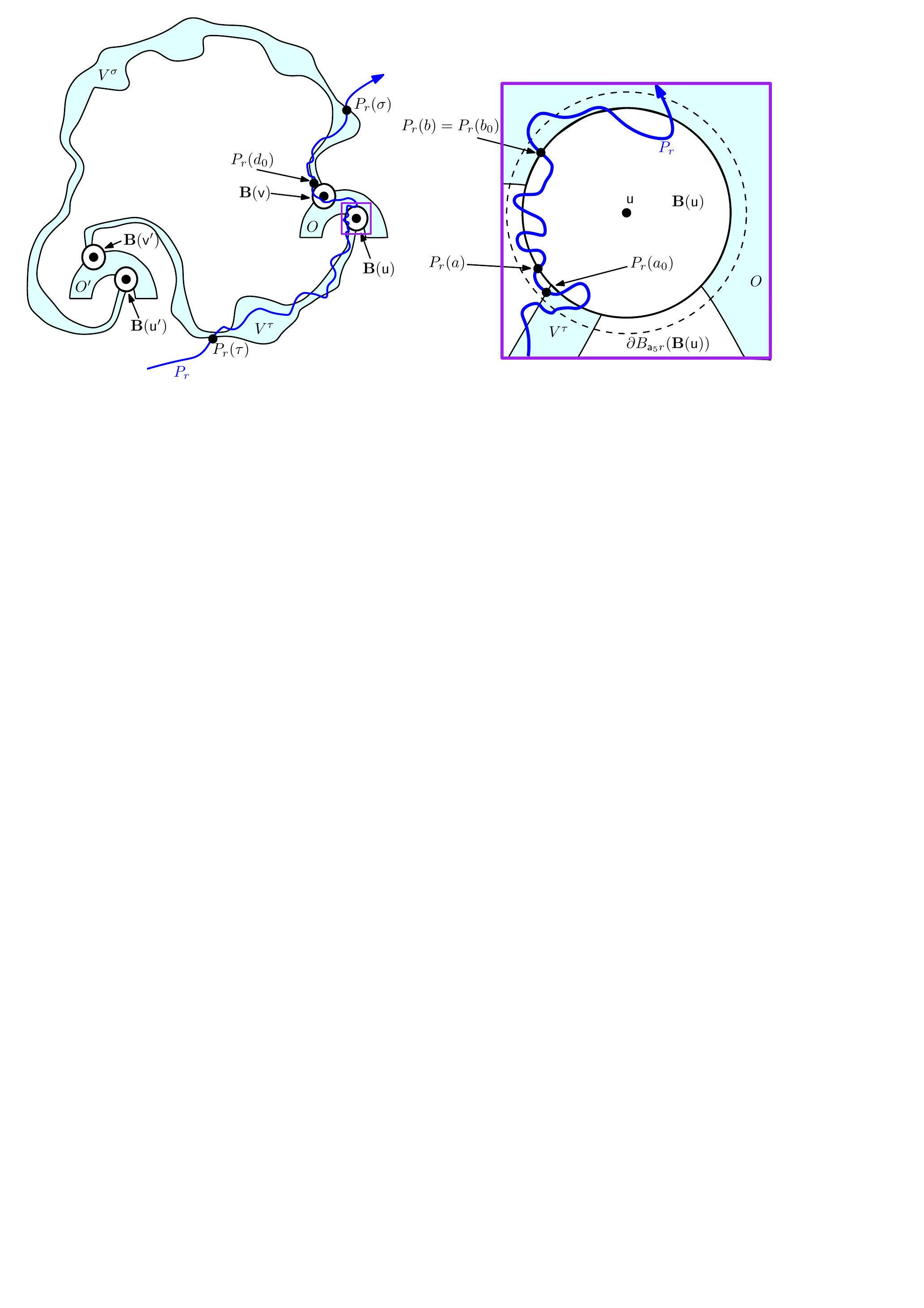} 
\caption{\label{fig-excursion-hit-proof} \textbf{Left:} the connected components $V^\tau,V^\sigma,O,O'$ of $\Vr_r\setminus [ \Bhit(\ur) \cup \Bhit(\vr) \cup \Bhit(\vr') \cup \Bhit(\ur')]$ and the point $P_r(d_0)$ where $P_r$ first enters $V^\sigma$. For simplicity we have drawn $V^\tau$ and $V^\sigma$ as ``blobs" rather than showing the details of how $\Vr_r$ is defined in Section~\ref{sec-tube-def} (c.f.\ Figure~\ref{fig-U-def}).
\textbf{Right:} A zoomed-in view in the purple box from the left figure. Here $b_0$ is the first time that $P_r$ hits $O$, $a_0$ is the last time before $b_0$ at which $P_r$ exits $V^\tau$, $a$ is the first time after $a_0$ at which $P_r$ hits $\bdy \Bhit(\ur)$, and $b$ is the last time before $b_0$ at which $P_r$ exits $\Bhit(\ur)$. In the figure, we have $a\not= a_0$ and $b =b_0$, but any combination of $a = a_0$ or $a\not=a_0$ and/or $b = b_0$ or $b\not=b_0$ is possible. 
}
\end{center}
\end{figure}

Let $d_0$ be the first time that $P_r|_{[\tau ,\sigma ]}$ hits $\ol V^\sigma$ (this time is well-defined since we know that $P_r(\sigma) \in \bdy V^\sigma$).
By Lemma~\ref{lem-excursion-tube}, each segment of $P_r|_{[\tau,\sigma]}$ which is disjoint from $ \Vr_r $ has Euclidean diameter at most $\Anar r$, which is much smaller than $\sr_{\rho r}/4$.
It follows that either $P_r(d_0) \in B_{\Anar r}( \Bhit(\vr) ) \cap \ol V^\sigma $ or $P_r(d_0) \in B_{\Anar r}( \Bhit(\ur') ) \cap \ol V^\sigma $. 
For simplicity, we henceforth assume that 
\eqb \label{eqn-d0-hit}
P_r(d_0) \in  B_{\Anar r}( \Bhit(\vr) ) \cap \ol V^\sigma ;
\eqe
the other case can be treated in an identical manner.

Most of the rest of the proof will focus on what happens near $\Bhit(\ur)$. See Figure~\ref{fig-excursion-hit-proof}, right, for an illustration. 
We first define a time $b_0$ such that $P_r(b_0)$ will be Euclidean-close to the point $P_r(b)$ from Lemma~\ref{lem-excursion-hit}.

\begin{lem} \label{lem-first-ball}
Let $b_0$ be the smallest $t\geq \tau$ for which $P_r(b_0) \in \ol O$. 
Then $b_0 < d_0$ and $P_r(b_0) \in   \bdy O \cap B_{\Anar r}(\Bhit(\ur))$.
\end{lem}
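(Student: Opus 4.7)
The approach is to combine the topological structure of $\Vr_r$ with the short-excursion bound of Lemma~\ref{lem-excursion-tube}. Recall that $\Ur_r$ is an annular region built from the half-annuli $\Hr_{w,\rho r}$ joined by the tubes $\Lr_{w,\rho r}$, and that $\Vr_r$ is its $\Asup r$-neighborhood, so $\Vr_r$ is itself topologically an annulus. Each of the four balls $\Bhit(\ur),\Bhit(\vr),\Bhit(\ur'),\Bhit(\vr')$ is a ``bottleneck'' of this annulus: $\Bhit(\ur)$ and $\Bhit(\vr)$ sit at the two ends of $\Hr_{z,\rho r}$, and similarly for the primed pair. Removing all four thus produces the four components in the cyclic order
\[V^\tau\;\text{--}\;\Bhit(\ur)\;\text{--}\;O\;\text{--}\;\Bhit(\vr)\;\text{--}\;V^\sigma\;\text{--}\;\Bhit(\ur')\;\text{--}\;O'\;\text{--}\;\Bhit(\vr')\;\text{--}\;V^\tau.\]
The plan is to use this cyclic arrangement to pin down the trajectory of $P_r|_{[\tau,d_0]}$.

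The first step is a ``neighbor-only transitions'' observation. By Lemma~\ref{lem-excursion-tube}, every sub-segment of $P_r|_{[\tau,\sigma]}$ disjoint from $\Vr_r$ has Euclidean diameter at most $\Anar r$, while by construction of $\Ur_r$ (and the fact that $\Anar r \ll \sr_{\rho r}$), any two non-cyclically-adjacent regions in the list above lie at Euclidean distance at least $\sr_{\rho r}/4$ from each other. Consequently, transitions of $P_r$ between distinct components of $\Vr_r \setminus [\Bhit(\ur)\cup\Bhit(\vr)\cup\Bhit(\ur')\cup\Bhit(\vr')]$ can only occur between cyclically adjacent neighbors, possibly with a short excursion outside $\Vr_r$ of Euclidean diameter at most $\Anar r$ bridging the transition.

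Next comes the trajectory analysis. The path starts from $P_r(\tau) \in \bdy V^\tau$ and ends at $P_r(d_0) \in \ol V^\sigma \cap B_{\Anar r}(\Bhit(\vr))$ without entering $\ol V^\sigma$ at any earlier time. If $P_r$ took the counterclockwise route $V^\tau \to \Bhit(\vr') \to O' \to \Bhit(\ur') \to V^\sigma$, then its first point of $\ol V^\sigma$ would lie within $\Anar r$ of $\Bhit(\ur')$, which is separated from $\Bhit(\vr)$ by Euclidean distance much greater than $\Anar r$; this contradicts $P_r(d_0) \in B_{\Anar r}(\Bhit(\vr))$. Hence the path must take the clockwise route $V^\tau \to \Bhit(\ur) \to O \to \Bhit(\vr) \to V^\sigma$, which forces it to enter $\ol O$ strictly before time $d_0$, giving $b_0 < d_0$. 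Since $P_r(\tau) \notin \ol O$, continuity of $P_r$ places $P_r(b_0) \in \bdy O$; and applying the neighbor-only logic to the first entry into $\ol O$ (now coming from $V^\tau$ via $\Bhit(\ur)$) forces $P_r(b_0) \in \bdy O \cap B_{\Anar r}(\Bhit(\ur))$, as desired.

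The main obstacle will be making the neighbor-only transitions observation fully rigorous, since $P_r$ can weave between regions and may make many short excursions outside $\Vr_r$. This will reduce to bookkeeping with the Euclidean-distance separations between the various components of $\Vr_r \setminus [\Bhit(\ur)\cup\Bhit(\vr)\cup\Bhit(\ur')\cup\Bhit(\vr')]$ and the thickened balls $B_{\Anar r}(\Bhit(\cdot))$, using the spacing conditions imposed on $\Hr_{w,\rho r}$ and $\Lr_{w,\rho r}$ in Section~\ref{sec-tube-def}. After that, the remaining topological content boils down to the fact that a simple arc in a topological annulus from one point to another must travel one of the two ways around, which selects the clockwise route above.
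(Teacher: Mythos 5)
Your proposal is correct and uses the same key ingredients as the paper---Lemma~\ref{lem-excursion-tube} together with the Euclidean separations between the pieces of $\Vr_r$---and the topological content is the same. The main difference is organizational: you track the full cyclic order of all eight pieces and argue about ``which way around'' the path goes, whereas the paper simply observes that $\Vr_r\setminus(V^\sigma\cup O)$ has two connected components (one containing $V^\tau$, the other being $\Bhit(\ur_{z,\rho r})$'s opposite, namely $\Bhit(\vr_{z,\rho r})$) at Euclidean distance $\geq\Anar r$, so $P_r|_{[\tau,d_0]}$ cannot move between them without hitting $\ol O$ or $\ol V^\sigma$. The paper's coarser decomposition is a bit more efficient: it avoids the need to make your ``neighbor-only transitions'' bookkeeping fully rigorous (which you correctly flag as your main obstacle), since only a single two-component separation estimate is needed rather than tracking admissible transitions among all eight pieces. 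Both routes then finish identically: $P_r([\tau,b_0])$ avoids $V^\sigma\cup O\cup\Bhit(\vr_{z,\rho r})$ and stays within $\Anar r$ of $\Vr_r$, so $P_r(b_0)\in\bdy O$ lies within $\Anar r$ of one of $V^\tau, O', \Bhit(\ur_{z,\rho r}), \Bhit(\ur_{z',\rho r}), \Bhit(\vr_{z',\rho r})$, and only $\Bhit(\ur_{z,\rho r})$ is that close to $O$.
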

\begin{proof}
The path $P_r|_{[\tau, d_0]}$ travels from $\bdy V^\tau$ to $  B_{\Anar r}(\Bhit(\vr)) \cap \ol V^\sigma$ and does not enter $V^\sigma$. 
The set $\Vr_r\setminus ( V^\sigma  \cup O )$ has two connected components which lie at Euclidean distance at least $(1-\Kann) \rho r / 2 \geq   \Anar r$ (recall our choice of $\Anar$ from Lemma~\ref{lem-E-short}) from each other, one of which contains $ \Bhit(\vr)$ and the other of which contains $V^\tau$. 
By Lemma~\ref{lem-excursion-tube}, $P_r|_{[\tau,d_0]}$ cannot travel Euclidean distance more than $\Anar r$ without hitting $\Vr_r$.
Hence $P_r|_{[\tau,d_0]}$ must hit $O$ before it hits $\ol V^\sigma$.  
Therefore, $b_0  < d_0$  and $P_r(b_0) \in \bdy O$. 
Furthermore, since $\Bhit(\vr)$ and $V^\tau$ are contained in different connected components of $\Vr_r\setminus ( V^\sigma  \cup O )$ and by the definitions of $b_0$ and $d_0$, we have $P_r([\tau,b_0]) \cap (V^\sigma \cup O\cup \Bhit(\vr)) =\emptyset$. 

We need to show that $P_r(b_0) \in  B_{\Anar r}(\Bhit(\ur))$. Indeed, since $P_r|_{[\tau,b_0]}$ cannot hit $V^\sigma \cup O \cup \Bhit(\vr)$ and cannot travel Euclidean distance more than $\Anar r$ outside of $\Vr_r$, it must be the case that 
\eqbn
P_r(b_0) \in B_{\Anar r}\left( V^\tau \cup O' \cup  \Bhit(\ur) \cup \Bhit(\ur') \cup \Bhit(\vr')  \right).
\eqen
The sets $V^\tau$, $O'$, $\Bhit(\ur')$, and $\Bhit(\vr')$ each lie at Euclidean distance larger than $\Anar r$ from $O$, so since $P_r(b_0) \in \bdy O$ we must have $P_r(b_0) \in B_{\Anar r}(\Bhit(\ur))$. 
\end{proof}

Next, we define a time $a_0$ such that $P_r(a_0)$ will be Euclidean-close to the point $P_r(a)$ from Lemma~\ref{lem-excursion-hit}.

\begin{lem}\label{lem-entry-time}
Let $a_0$ be the last time $t$ before $b_0$ for which $P_r(t) \in \ol V^\tau$. 
Then
\eqb \label{eqn-entry-time} 
|P_r(b_0) - P_r(a_0)| \geq \sr_{\rho r} / 4 \quad \text{and} \quad 
P_r([a_0,b_0]) \subset B_{\Anar r}(\Bhit(\ur)) \setminus \left( \Vr_r\setminus \Bhit(\ur) \right).
\eqe
\end{lem}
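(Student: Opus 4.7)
Both assertions in~\eqref{eqn-entry-time} will follow from the inter-component separation of $V^\tau, V^\sigma, O, O'$ combined with three exclusion facts already in hand. For the lower bound $|P_r(b_0) - P_r(a_0)| \geq \sr_{\rho r}/4$, I would simply note that by the definition of $a_0$ we have $P_r(a_0) \in \ol V^\tau$, and by Lemma~\ref{lem-first-ball} we have $P_r(b_0) \in \bdy O \subset \ol O$; the pairwise Euclidean separation $\geq \sr_{\rho r}/4$ of the four components (recorded at their introduction) then yields the bound immediately.

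For the containment, I would assemble three exclusions: (a) by the maximality in the definition of $a_0$, $P_r((a_0, b_0]) \cap \ol V^\tau = \emptyset$; (b) the proof of Lemma~\ref{lem-first-ball} gives $P_r([\tau, b_0]) \cap (V^\sigma \cup O \cup \Bhit(\vr)) = \emptyset$; and (c) Lemma~\ref{lem-excursion-tube} gives $P_r([\tau,\sigma]) \subset B_{\Anar r}(\Vr_r)$ with every connected segment of $P_r|_{[\tau,\sigma]}$ disjoint from $\Vr_r$ having Euclidean diameter at most $\Anar r$. Together (a)--(c) already force $P_r((a_0, b_0)) \subset B_{\Anar r}(\Vr_r) \setminus [\ol V^\tau \cup V^\sigma \cup O \cup \Bhit(\vr)]$, while Lemma~\ref{lem-first-ball} pins down the endpoint via $P_r(b_0) \in B_{\Anar r}(\Bhit(\ur))$.

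The main obstacle is to upgrade this to the target containment, i.e., to rule out that $P_r|_{[a_0,b_0]}$ leaks into the other ``pocket'' $\Bhit(\ur') \cup O' \cup \Bhit(\vr')$ inside $\Vr_r$, or drifts far from $\Bhit(\ur)$ along the thin exterior strip $B_{\Anar r}(\Vr_r) \setminus \Vr_r$. The crucial geometric input is that because $z \in J$ and $z' \in J'$ sit on arcs separated by disks of radius $100\rho r$ around $P_r(\tau)$ and $P_r(\sigma)$, and because $\sr_{\rho r} + \Asup r \ll \rho r$, one has $d(\Bhit(\ur), \Bhit(\ur') \cup O' \cup \Bhit(\vr')) \geq c\rho r$ for an absolute constant $c > 0$, while $\Anar r \ll \rho r$ by the choice of $\Anar$ in Lemma~\ref{lem-E-short}. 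Since by (c) each connected external excursion has Euclidean diameter at most $\Anar r$, traversing the gap between the two pockets requires a chain of many external excursions interspersed with re-entries into $\Vr_r$; but by (a)--(b) every such re-entry lands in $\Bhit(\ur)$ or in the other pocket, so once the path commits to a pocket it can only escape via another external excursion. Careful book-keeping using condition~\ref{item-E-narrow} to lower bound the $D_h$-length of each external excursion in terms of its Euclidean extent, and comparing with the $D_{h-\fr_r}$-length budget $b_0 - a_0 \leq \sigma - \tau \leq \tfrac12 \Anar^{\xi(Q+3)} r^{\xi Q} e^{\xi h_r(0)}$ supplied by Lemma~\ref{lem-excursion-length}, forces the chain of excursions to exceed the available length, so the path remains trapped in the pocket containing its endpoint $P_r(b_0) \in B_{\Anar r}(\Bhit(\ur))$. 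The two claims in~\eqref{eqn-entry-time} then follow from the connectedness of $P_r([a_0, b_0])$.
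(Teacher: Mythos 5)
Your derivation of the lower bound $|P_r(b_0) - P_r(a_0)| \geq \sr_{\rho r}/4$ is correct and is exactly the paper's argument, and your facts (a)--(c) are the right ingredients. But the finish for the containment is muddled and does not quite close. The paper's actual argument is a one-line connectedness observation: since $P_r([a_0,b_0])$ is disjoint from $V^\tau \cup O$ (by (a) and (b)) and every excursion of $P_r|_{[a_0,b_0]}$ outside $\Vr_r$ has Euclidean diameter $\leq \Anar r$ (by (c)), one gets $P_r([a_0,b_0]) \subset B_{\Anar r}\bigl( V^\sigma \cup O' \cup \Bhit(\ur) \cup \Bhit(\vr) \cup \Bhit(\vr') \cup \Bhit(\ur') \bigr)$; this open set has exactly two connected components (because $\Bhit(\ur)$ is separated from the other five pieces by more than $2\Anar r$, and those five pieces chain together), one of which is $B_{\Anar r}(\Bhit(\ur))$. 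As $P_r([a_0,b_0])$ is a connected set containing $P_r(b_0) \in B_{\Anar r}(\Bhit(\ur))$ (Lemma~\ref{lem-first-ball}), it lies entirely in that component, giving both the stated inclusion and disjointness from the other five pieces.

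You gesture at exactly this mechanism (``every such re-entry lands in $\Bhit(\ur)$ or in the other pocket'') but then pivot to a length-budget argument invoking condition~\ref{item-E-narrow} and Lemma~\ref{lem-excursion-length}. That detour is both unnecessary and not safely carried out. It is unnecessary because your own observation that re-entries land only in the two pockets means the transition would have to occur in a \emph{single} external excursion of diameter $\geq 2\Anar r$, which fact (c) already forbids; there is nowhere in between the pockets to re-enter $\Vr_r$, so the ``chain'' you worry about cannot exist. It is not safely carried out because condition~\ref{item-E-narrow} lower-bounds $D_h$ between a \emph{pair} of points with $|z-w| \leq \llambda^{-1}\Anar r$, and if you did try to sum these lower bounds over $N$ small excursions of total Euclidean extent $D$, the bound $N(D/N)^{\xi(Q+3)} r^{\xi Q}e^{\xi h_r(0)}$ tends to zero as $N\to\infty$ whenever $\xi(Q+3)>1$, so the ``careful book-keeping'' you defer to would not in general produce the desired contradiction. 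Replace the length argument by the two-component topological observation and the proof closes.
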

\begin{proof}
Since $P_r(b_0) \in \bdy O$ and the Euclidean distance from $V^\tau$ to $O$ is at least $\sr_{\rho r}/4$, we immediately obtain that $|P_r(b_0) - P_r(a_0)| \geq \sr_{\rho r} / 4$. It remains to prove the inclusion in~\eqref{eqn-entry-time}.

By definition, the set $P_r([a_0,b_0])$ is disjoint from $  V^\tau \cup O$. 
Furthermore, by Lemma~\ref{lem-excursion-tube}, each segment of $P_r|_{[a_0,b_0]}$ which is not contained in $\Vr_r$ has Euclidean diameter at most $\Anar r$. 
Therefore,
\eqb \label{eqn-excursion-hit-contain'}
P_r([a_0,b_0]) \subset B_{\Anar r}\left( V^\sigma \cup O' \cup   \Bhit(\ur) \cup \Bhit(\vr) \cup \Bhit(\vr') \cup \Bhit(\ur')  \right) .
\eqe
The set on the right side of~\eqref{eqn-excursion-hit-contain'} has two connected components, one of which is equal to $B_{\Anar r}(\Bhit(\ur))$ and the other of which contains the other five sets in the union. Since $P_r(b_0) \in B_{\Anar r}(\Bhit(\ur))$ (Lemma~\ref{lem-first-ball}), we get that $P_r([a_0,b_0]) \subset B_{\Anar r}(\Bhit(\ur))$ and $P_r([a_0,b_0])$ is disjoint from  $ V^\sigma \cup O' \cup   \Bhit(\vr)  \cup \Bhit(\vr') \cup \Bhit(\ur') $. Since we already know that $P_r([a_0,b_0])$ is disjoint from $V^\tau \cup O$, we obtain the inclusion in~\eqref{eqn-entry-time}.
\end{proof}

\begin{proof}[Proof of Lemma~\ref{lem-excursion-hit}]
Let $a$ be the first time $t\geq a_0$ such that $P_r(t) \in \ol\Bhit(\ur)$ and let $b$ be the last time $t\leq b_0$ such that $P_r(t) \in \ol\Bhit(\ur)$.  
Note that we might have $a = a_0$ and/or $b = b_0$ (see Figure~\ref{fig-excursion-hit-proof}, right). 
By~\eqref{eqn-entry-time}, $P_r|_{[a_0,b_0]}$ cannot hit $\Vr_r \setminus \Bhit(\ur)$. By this and Lemma~\ref{lem-excursion-tube}, $P_r|_{[a_0,b_0]}$ cannot travel Euclidean distance more than $\Anar r$ without entering $\Bhit(\ur)$. Consequently, the times $a$ and $b$ are well-defined and
\eqb \label{eqn-excursion-hit-travel}
\max\left\{ |P_r(a) - P_r(a_0)|  , |P_r(b) - P_r(b_0)| \right\} \leq \Anar r .
\eqe
By~\eqref{eqn-entry-time} and~\eqref{eqn-excursion-hit-travel} and the triangle inequality,
\eqb
|P_r(b) - P_r(a)| \geq \sr_{\rho r}/4 - 2\Anar r ,
\eqe
which is at least $\sr_{\rho r}/8$ since $\sr_{\rho r} \geq \Aendpt \rho r \geq \llambda \Anar$ (by our choice of $\sr_{\rho r}$ in Lemma~\ref{lem-endpt-ball} and our choice of $\Anar$ in Lemma~\ref{lem-E-short}). 
By the definitions of $a$ and $b$, we have $P_r(a),P_r(b) \in \bdy \Bhit(\ur)$. 
Since $a,b\in [a_0,b_0]$ and by Lemma~\ref{lem-entry-time}, we also have the inclusion~\eqref{eqn-excursion-hit'}. 

This gives the lemma statement for $\ur = \ur_{z,\rho r}$. The statement with $\vr = \vr_{z,\rho r}$ in place of $\ur$ follows by repeating Lemma~\ref{lem-entry-time} and the argument above with $d_0$ used in place of $b_0$.
\end{proof}

\subsection{Forcing a geodesic to get close to $u$ and $v$}
\label{sec-endpt-close}

\newcommand{\Bin}{\hyperref[eqn-ball-abbrv]{{\mathbf B}^{\Ur}}}
\newcommand{\Bsup}{\hyperref[eqn-ball-abbrv]{{\mathbf B}^{\Vr}}} 
\newcommand{\Bout}{\hyperref[eqn-ball-abbrv]{{\mathbf B}^{\mathrm{out}}}}
\newcommand{\Imid}{\hyperref[eqn-Imid-def]{{\mathbf I}^{\Vr}}}
\newcommand{\Iout}{\hyperref[eqn-Imid-def]{{\mathbf I}^{\mathrm{out}}}}

We henceforth fix $z \in \Zr_r$ and times $a,b\in [\tau,\sigma]$ as in Lemma~\ref{lem-excursion-hit}. 
We also let $u$ and $v$ be as in the definition of $\Fr_{z,\rho r}$, so that $u \in B_{\sr_{\rho r}/2}(\ur_{z,\rho r} )$, $v \in B_{\sr_{\rho r}/2}(\vr_{z,\rho r} )$, and $\wt D_h(u,v) \leq \Cmid_0 D_h(u,v)$. Recall that we are trying to force the path $P_r$ to get $D_{h-\fr_r}$-close to each of $u$ and $v$. 

Lemma~\ref{lem-excursion-hit} tells us that $P_r$ gets Euclidean-close to each of $u$ and $v$, but this is not sufficient for our purposes since in the supercritical case $D_h$ is not continuous with respect to the Euclidean metric. In order to ensure that $P_r$ gets $D_{h-\fr_r}$-close to each of $u$ and $v$, we will need a careful argument involving several of the conditions in the definitions of $\Fr_{z,\rho r}$ and $\Er_r$.   
The main result of this subsection is the following lemma. 

\newcommand{\Cclose}{{C}}

\begin{lem}  \label{lem-endpt-close}
There is a constant $\Cclose > 0$, depending only on $\xi$, such that the following is true. 
Almost surely, there exists $t \in [\tau,\sigma]$ such that 
\eqb \label{eqn-endpt-close-eucl}
P_r(t) \in B_{\sr_{\rho r} + (3\Anar + \Asup) r}(\ur_{z,\rho r}) \quad\text{and} 
\eqe
\eqb \label{eqn-endpt-close} 
D_{h-\fr_r}\left( P_r(t) , u ; \BB A_{r,4r}(0) \right) \leq \Cclose \llambda e^{-\xi \Amax} \wt D_h(u,v)  .
\eqe
Moreover, the same is true with $v$ and $\vr_{z,\rho r}$ in place of $u$ and $\ur_{z,\rho r}$. 
\end{lem}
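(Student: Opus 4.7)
My plan is to exhibit a time $t\in[\tau,\sigma]$ and an auxiliary ``good'' point $x\in\bdy B_{\sr_{\rho r}}(\ur_{z,\rho r})$ so that $P_r(t)$ satisfies~\eqref{eqn-endpt-close-eucl} and there are short $D_{h-\fr_r}$-paths both from $P_r(t)$ to $x$ and from $x$ to $u$; the triangle inequality then gives~\eqref{eqn-endpt-close}. I focus on the $u$ case, as the $v$ case is identical. By ``good'' I mean $x\in G:=\{y\in\bdy B_{\sr_{\rho r}}(\ur_{z,\rho r}):D_h(y,u;\ol B_{\sr_{\rho r}}(\ur_{z,\rho r}))\leq\llambda\wt D_h(u,v)\}$, which by condition~\ref{item-endpt-ball-leb'} of $\Fr_{z,\rho r}$ fills all but a $(\llambda/2)\sr_{\rho r}$-measure subset of the circle. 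Since $\ol B_{\sr_{\rho r}}(\ur_{z,\rho r})\subset\Ur_r$ and $\fr_r\equiv\Amax$ on $\Ur_r$, Weyl scaling (Axiom~\ref{item-metric-f}) turns the $D_h$-bound for good points into a $D_{h-\fr_r}$-bound of $e^{-\xi\Amax}\llambda\wt D_h(u,v)$, matching the prefactor on the right-hand side of~\eqref{eqn-endpt-close}.

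For the construction of $(t,x)$, I would first argue that $P_r|_{(a,b)}$ must enter $\Bsup=B_{\sr_{\rho r}+\Asup r}(\ur_{z,\rho r})$ at some time $t_*$. By Lemma~\ref{lem-excursion-hit} the path lies in the thin region $B_{\sr_{\rho r}+(\Asup+\Anar)r}(\ur_{z,\rho r})\setminus(\Vr_r\setminus\Bsup)$; were it to stay outside $\Bsup$, the segment $P_r|_{[a,b]}$ would lie in $B_{\sr_{\rho r}+(\Asup+\Anar)r}(\ur_{z,\rho r})\setminus\Bsup$ where $\fr_r\equiv 0$, so its $D_{h-\fr_r}$-length would equal its $D_h$-length, which via condition~\ref{item-E-narrow} applied along a chain of intermediate points and the spread $|P_r(a)-P_r(b)|\geq\sr_{\rho r}/8$ from Lemma~\ref{lem-excursion-hit} is bounded below by a constant multiple of $(\sr_{\rho r}/r)^{\xi(Q+3)}r^{\xi Q}e^{\xi h_r(0)}$. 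On the other hand, an ``inside'' route obtained by concatenating two short transverse crossings of the $\Asup r$-thick annulus $\Bsup\setminus B_{\sr_{\rho r}}(\ur_{z,\rho r})$ with a segment of the loop $\Pi\subset\Ur_r$ from condition~\ref{item-E-internal} has $D_{h-\fr_r}$-length at most $e^{-\xi\Amax}\Ainternal r^{\xi Q}e^{\xi h_r(0)}$ up to lower-order terms, which by the choice of $\Amax$ in~\eqref{eqn-Amax} is strictly smaller than the outside lower bound. Since $P_r|_{[a,b]}$ is a $D_{h-\fr_r}(\cdot,\cdot;\ol{\BB A}_{r,4r}(0))$-geodesic (Lemma~\ref{lem-excursion-annulus}), this forces $P_r|_{(a,b)}$ into $\Bsup$, and any such $t_*$ satisfies $P_r(t_*)\in B_{\sr_{\rho r}+(3\Anar+\Asup)r}(\ur_{z,\rho r})$, giving~\eqref{eqn-endpt-close-eucl}. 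Next, I would pick $x\in G$ at Euclidean distance $O(\Anar r)$ from $P_r(t_*)$, which is possible since $G$ fills almost the entire circle. To build the $D_{h-\fr_r}$-link from $P_r(t_*)$ to $x$, I apply condition~\ref{item-E-compare} at scales between $\Asup r$ and $\Anar r$ to produce annular loops of $D_h$-length at most $\delta^{-1/4}$ times a neighboring across-annulus distance; by Lemma~\ref{lem-two-set-dist} together with the built-in upper bound $\wt D_h(u,v)\leq(1-\Kann)^{-1}(\rho r)^{\xi Q}e^{\xi h_{\rho r}(z)}$ inherited from Definition~\ref{def-annulus-geo'} through $\Fr_{z,\rho r}$, these reference distances are all comparable to $\wt D_h(u,v)$ up to constants depending only on the parameters. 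Keeping the loops inside $\Ur_r$ yields the factor $e^{-\xi\Amax}$, and a careful concatenation produces a $D_{h-\fr_r}$-path from $P_r(t_*)$ to $x$ of length $O(\llambda e^{-\xi\Amax})\wt D_h(u,v)$. The triangle inequality then closes the argument.

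The main obstacle is the linking step at the end of the second paragraph: because $D_{h-\fr_r}$ does not control Euclidean distances in the supercritical regime, Euclidean proximity of $P_r(t_*)$ and $x$ does not by itself yield $D_{h-\fr_r}$-proximity. The annular-loop machinery of condition~\ref{item-E-compare} must be calibrated at the right scales and placed entirely inside $\Ur_r$ so that the $e^{-\xi\Amax}$ factor applies throughout; the hierarchy $\Asup\ll\Anar\ll\Aendpt$ fixed in Section~\ref{sec-E}, together with the choice of $\Amax$ in~\eqref{eqn-Amax}, is precisely what makes the constants balance. A secondary technical issue is the explicit construction of the inside route from $P_r(a)$ to $P_r(b)$ used to force $P_r|_{(a,b)}$ into $\Bsup$: one must link two short transverse crossings of $\Bsup\setminus B_{\sr_{\rho r}}(\ur_{z,\rho r})$ to a segment of the loop $\Pi$, relying on the ``each outer-boundary point of $\Ur_r$ lies within $\Aset r$ of $\Pi$'' clause of~\eqref{eqn-E-internal-around} to guarantee that these crossings actually end on $\Pi$.
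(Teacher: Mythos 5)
Your proposal correctly identifies that the central obstacle is upgrading Euclidean proximity to $D_{h-\fr_r}$-proximity, but the proposed resolution does not close the gap — and this is exactly the difficulty the paper's proof is carefully designed around.

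The key issue is in your linking step. You want to connect $P_r(t_*)$ (a point of $P_r$ on $\bdy\Bsup$) to a nearby good point $x\in\bdy\Bin$ via a short $D_{h-\fr_r}$-path built from loops supplied by condition~\ref{item-E-compare}. But condition~\ref{item-E-compare} only bounds $D_h(\text{around})$ by $\delta^{-1/4}D_h(\text{across})$, so you need an independent upper bound on the local across-annulus distances near $P_r(t_*)$ and $x$. Your proposed source for this bound — ``Lemma~\ref{lem-two-set-dist} together with the built-in upper bound $\wt D_h(u,v)\le(1-\Kann)^{-1}(\rho r)^{\xi Q}e^{\xi h_{\rho r}(z)}$'' — does not work. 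Lemma~\ref{lem-two-set-dist} is a probabilistic tail bound for deterministic sets and cannot be invoked at the random, $h$-dependent location $P_r(t_*)$. Moreover, even a deterministic bound comparable to $r^{\xi Q}e^{\xi h_r(0)}$ would not deliver the factor $\llambda$ in~\eqref{eqn-endpt-close}; you need the annulus distance to be \emph{small} relative to $\wt D_h(u,v)$, not merely comparable, and no a priori upper bound on $\wt D_h(u,v)$ helps with that. This is precisely what the paper flags in the outline of Section~\ref{sec-endpt-close}: ``the Euclidean size of $P_r\cap\Bin$'' may be tiny, so $P_r$ can fail to come $D_h$-close to any good point.

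The paper's resolution is structurally quite different and rests on two ideas your proposal does not use. First, it defines the accessible set $X_{\op{acc}}\subset\Imid$ and proves the topological Lemma~\ref{lem-dc-join}: if $x$ is accessible, then every path in $\ol\Bout$ from $u$ to $x$ must hit $P_r([a',b'])$. When $|X_{\op{acc}}|$ is not too small (Lemma~\ref{lem-dc-set}), one simply takes a short $D_h$-path $L$ from $u$ to $x'$ (provided by condition~\ref{item-endpt-ball-leb'}) and observes that $P_r$ is forced to intersect $L$; the intersection point inherits the required bound with no loop construction at all. This inverts your construction: instead of building a path from $P_r$ out to a good point, the paper forces $P_r$ into a good $D_h$-path. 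Second, in the residual case $|X_{\op{acc}}|\le 3\llambda\sr_{\rho r}$, the paper analyzes the arcs $I$ of $\Imid\setminus\ol X_{\op{acc}}$, and for each arc produces a candidate time $t_I$ via Lemma~\ref{lem-arc-loop} whose quality is controlled by the geodesic increment $t_I-s_I$ — crucially, this increment is a $D_{h-\fr_r}$-length of a segment of $P_r$ outside $\Vr_r$, so it equals the $D_h$-length and bounds the across-annulus distance in condition~\ref{item-E-compare}. The reason a \emph{single} such arc does not suffice (and hence a single time $t_*$ does not suffice) is that an individual increment $t_I-s_I$ need not be small; it is only after the pigeonhole argument of Lemma~\ref{lem-arc-collection} and the final step, which sums increments over a dyadic scale with $\gtrsim 2^{k/2}\Aendpt\rho$ disjoint arcs and compares to $\sigma-\tau$, that one finds an arc whose increment is small enough. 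Your one-shot approach cannot access this averaging, which is where the factor $\llambda$ ultimately comes from.
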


We will eventually choose $\llambda$ to be much smaller than $1/\Cclose$, so that the right side of~\eqref{eqn-endpt-close} is much smaller than $e^{-\xi \Amax} \wt D_h(u,v)$. 
We will only prove Lemma~\ref{lem-endpt-close} for $u$; the statement with $v$ in place of $u$ is proven in an identical manner. 

\subsubsection{Setup}

\begin{figure}[ht!]
\begin{center}
\includegraphics[width=.6\textwidth]{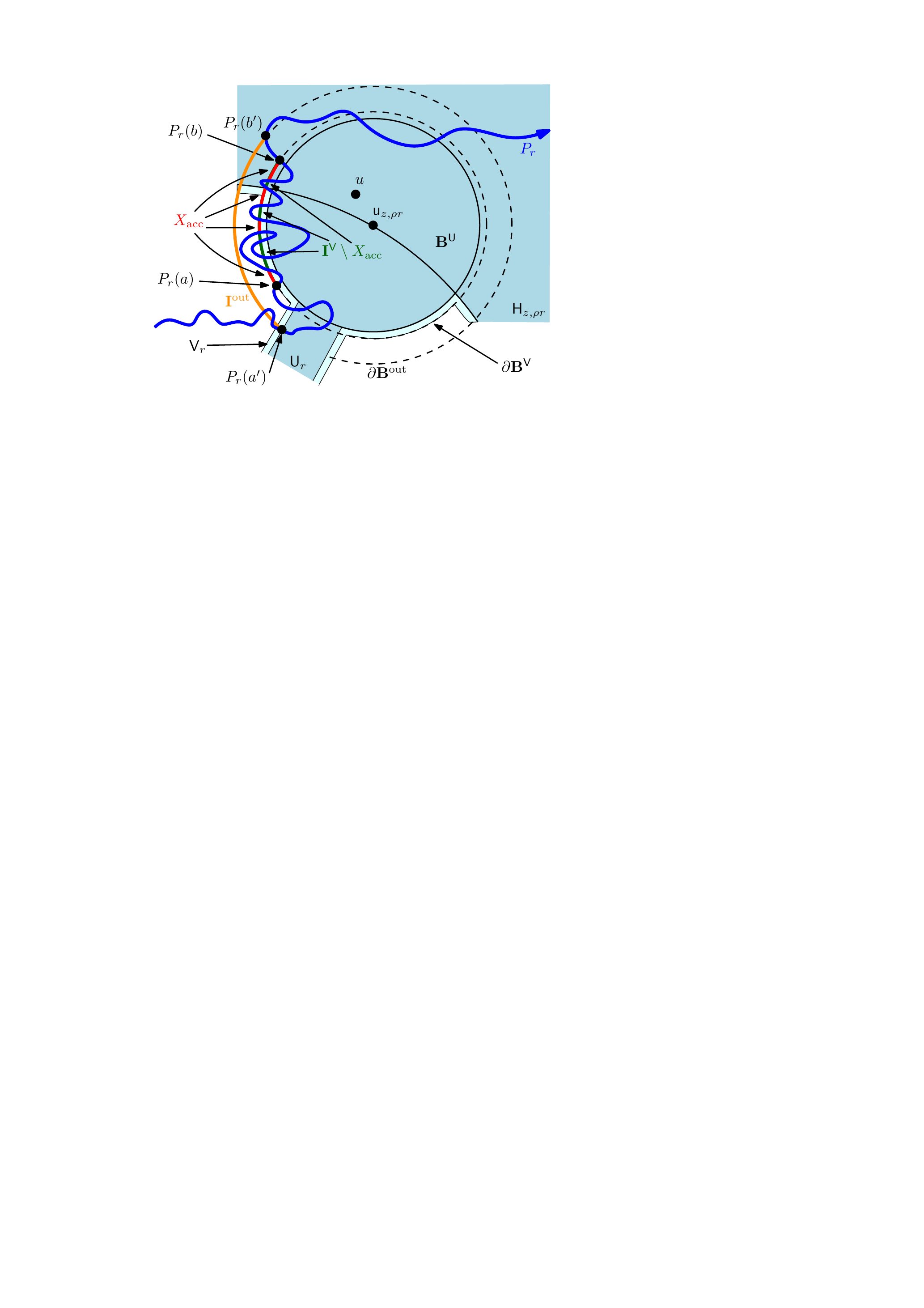} 
\caption{\label{fig-endpt-close-setup} Illustration of several of the objects involved in Section~\ref{sec-endpt-close}. 
The arc $\Imid \subset \bdy \Bsup$  is the union of the red set $X_{\mathrm{acc}}$ consisting of points which are accessible from $\Iout$ in $\ol\Bout\setminus (\Bsup \cup P_r([a',b'])$ and the green set $\Imid \setminus X_{\op{acc}}$. Note that a connected component of $\Imid \setminus X_{\op{acc}}$ can contain points of $P_r([a',b'])$ in its interior (relative to $\Imid$).
}
\end{center}
\end{figure}

Before proceeding with the proof of Lemma~\ref{lem-endpt-close}, we introduce some notation. See Figure~\ref{fig-endpt-close-setup} for an illustration. 
We define the Euclidean balls
\allb \label{eqn-ball-abbrv}
\Bin   := B_{\sr_{\rho r}  }(\ur_{z,\rho r}) ,\quad 
\Bsup := B_{\sr_{\rho r} + \Asup r}(\ur_{z,\rho r}),  
\quad \text{and} \quad \Bout := B_{\sr_{\rho r} + (3\Anar + \Asup) r}(\ur_{z,\rho r}) .
\alle
The reason why we care about $\Bin$ and $\Bsup$ is that by the definitions of $\Ur_r$ and $\Vr_r$, the ball $\Bin$ (resp.\ $\Bsup$) is the largest Euclidean ball centered at $\ur_{z,\rho r}$ which is contained in $\Ur_r$ (resp.\ $\Vr_r$). 
The reason why we care about $\Bout$ is that by Lemma~\ref{lem-excursion-hit}, $P_r|_{[a,b]}$ cannot exit the ball $B_{\sr_{\rho r} + ( \Anar + \Asup) r}(\ur_{z,\rho r}) \subset \Bout$. We need $\Bout$ to have a slightly larger radius than $\sr_{\rho r} + (\Anar  +\Asup ) r $ for the purposes of Lemma~\ref{lem-arc-loop} below. 

We also define
\eqb \label{eqn-ab'}
a' := \sup\{t \leq a : P_r(t) \in \bdy \Bout \} \quad \text{and} \quad b' := \inf\{t\geq b : P_r(t) \in \bdy \Bout\} .
\eqe
Then $a' < a < b < b'$. Furthermore, Lemma~\ref{lem-excursion-hit} implies that $P_r([a,b]) \subset \Bout$, so the definitions of $a'$ and $b'$ show that $P_r([a',b']) \subset \ol\Bout$ and $P_r((a',b')) \subset \Bout$. 

Recall that the point $u$ appearing in Lemma~\ref{lem-endpt-close} is contained in $\Bin$. 
Lemma~\ref{lem-endpt-close} holds vacuously if $u \in P_r([a',b'])$, so we can assume without loss of generality that
\eqb
u \notin P_r([a',b']) .
\eqe
The set  $\bdy \Bout \setminus \{ P_r(a') , P_r(b') \}$ consists of two disjoint arcs. 
Since $P_r|_{[a',b']}$ is a simple curve in $\ol\Bout$ which intersects $\bdy \Bout$ only at its endpoints, it follows that exactly one of these two arcs is disconnected from $u$  by $P_r|_{[a',b']}$. We assume without loss of generality that the clockwise arc of $\bdy\Bout$ from $P_r(a')$ to $P_r(b')$ is disconnected from $u$. 
Let
\allb \label{eqn-Imid-def}
\Iout &:= \left\{ \text{open clockwise arc of $\bdy\Bout$ from $P_r(a')$ to $P_r(b')$} \right\} \notag\\
\Imid &:= \left\{ \text{open clockwise arc of $\bdy\Bsup$ from $P_r(a)$ to $P_r(b)$} \right\} .
\alle
Note that $P_r([a',b'])$ disconnects $\Iout$ from $u$ in $\Bout$, but does not necessarily disconnect $\Imid$ from $u$ in $\Bout$. 
By Lemma~\ref{lem-excursion-hit}, we have $|P_r(b) - P_r(a)| \geq \sr_{\rho r}/8$, so the Euclidean length of $\Imid$ satisfies
\eqb \label{eqn-Imid}
|\Imid| \geq \sr_{\rho r}/8  . 
\eqe

We say that $x\in \Imid$ is \emph{accessible from $\Iout$ in $\ol\Bout\setminus (\Bsup \cup P_r([a',b']))$} if there is a path in $\ol\Bout\setminus (\Bsup \cup P_r([a',b']) ) $ from $x$ to a point of $\Iout$. Let
\eqb \label{eqn-dc-set}
X_{\op{acc}} :=  \left\{ x\in \Imid : \text{$x$ is accessible from $\Iout$ in $\ol\Bout\setminus (\Bsup \cup P_r([a',b']))$} \right\} .
\eqe
See Figure~\ref{fig-endpt-close-setup} for an illustration. 
One of the main reasons why we are interested in the set $X_{\op{acc}}$ is the following elementary topological fact.

\begin{figure}[ht!]
\begin{center}
\includegraphics[width=.6\textwidth]{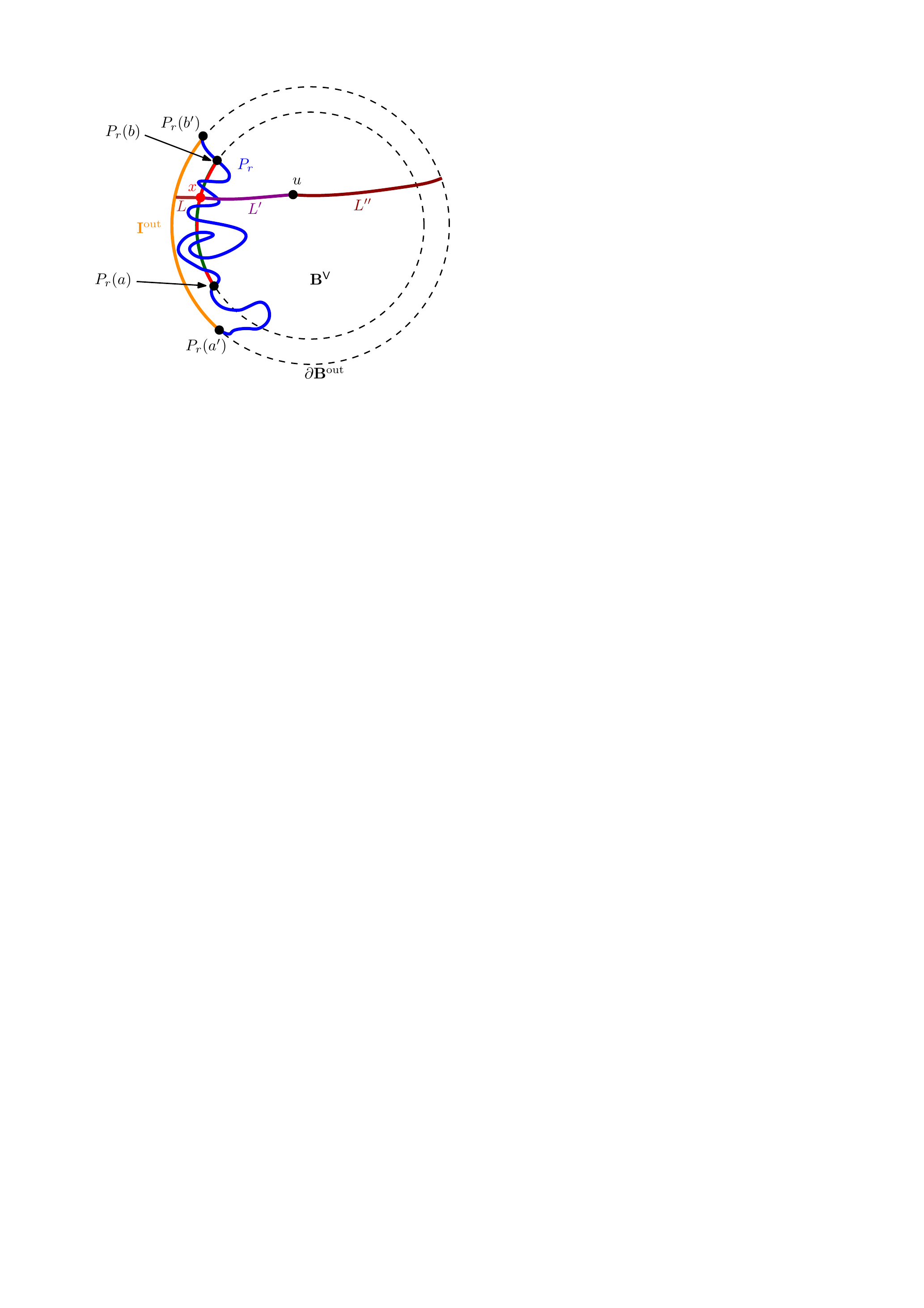} 
\caption{\label{fig-dc-join} Illustration of the proof of Lemma~\ref{lem-dc-join}. The path $P_r|_{[a',b']}$ must intersect $L\cup L' \cup L''$. By our choices of $L$ and $L''$, it must in fact intersect $L'$. 
}
\end{center}
\end{figure}

\begin{lem} \label{lem-dc-join}
If $x\in  X_{\op{acc}}$, then every path in $\ol\Bout$ from $u$ to $x$ hits $P_r([a',b'])$. 
\end{lem}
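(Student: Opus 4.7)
The plan is to reduce the claim to a Jordan-curve-theorem argument inside $\ol\Bout$. By the definition~\eqref{eqn-dc-set} of $X_{\op{acc}}$, I would first fix a path $\gamma : [0,1] \rta \ol\Bout \setminus (\Bsup \cup P_r([a',b']))$ with $\gamma(0) = x$ and $\gamma(1) = y \in \Iout$. Given any path $\alpha$ in $\ol\Bout$ from $u$ to $x$, the concatenation $\beta := \alpha \ast \gamma$ is a path in $\ol\Bout$ from $u$ to $y$, and any intersection of $\beta$ with $P_r([a',b'])$ must come from $\alpha$, since $\gamma$ avoids this set by construction. So it suffices to show that $\beta$ meets $P_r([a',b'])$.

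The key topological input is that $\ol\Bout \setminus P_r([a',b'])$ has exactly two connected components, with $u$ in one of them and all of $\Iout$ in the other. To establish this I would apply the Jordan curve theorem to the simple closed curve $J := P_r([a',b']) \cup J_{\mathrm{ccw}}$, where $J_{\mathrm{ccw}}$ is the closed anticlockwise arc of $\bdy\Bout$ from $P_r(a')$ to $P_r(b')$. Since $P_r|_{[a',b']}$ is a simple curve in $\ol\Bout$ meeting $\bdy\Bout$ only at its endpoints $P_r(a'),P_r(b')$, $J$ is indeed a Jordan curve, and its complement in $\BB C$ has a bounded and an unbounded component. Intersecting with $\ol\Bout$ and removing $P_r([a',b'])$, one finds that $\ol\Bout \setminus P_r([a',b'])$ has exactly two connected components, each of whose closures contains exactly one of the two open arcs of $\bdy\Bout \setminus \{P_r(a'),P_r(b')\}$. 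The convention fixed just before~\eqref{eqn-Imid-def} --- that $\Iout$ is the clockwise arc disconnected from $u$ by $P_r|_{[a',b']}$ --- then places $\Iout$ and $u$ in different components.

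Granting this topological fact, the lemma follows at once: if $\alpha$ were disjoint from $P_r([a',b'])$, then $\beta = \alpha \ast \gamma$ would be a path in $\ol\Bout \setminus P_r([a',b'])$ connecting $u$ to the point $y \in \Iout$, contradicting the fact that $u$ and $\Iout$ lie in different connected components of this set. The only mildly nonroutine step is the Jordan-curve statement itself, since $\ol\Bout$ is a closed disk rather than all of $\BB C$ and one must verify that the endpoints $P_r(a'),P_r(b')$ do not create any pathology; this is standard given that $\bdy\Bout$ is a round circle, and can be made rigorous either by invoking the Jordan curve theorem directly on $J \subset \BB C$ and intersecting with $\ol\Bout$, or by a homeomorphism of $\ol\Bout$ with a closed disk that sends $J$ to a standard diameter-plus-semicircle configuration.
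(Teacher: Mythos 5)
Your proof is correct and rests on the same topological fact as the paper's: the cross-cut $P_r([a',b'])$ separates the closed disk $\ol\Bout$ into two components, with $u$ on one side and $\Iout$ (and hence $x$, via the access path) on the other. The paper reaches the same conclusion by concatenating an auxiliary path $L''$ from $u$ to the other boundary arc with the access path and the candidate path, and then invoking the crossing principle for a path joining the two arcs of $\bdy\Bout\setminus\{P_r(a'),P_r(b')\}$; your version makes the two-component structure explicit and dispenses with $L''$, but the underlying argument is the same.
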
 
\begin{proof}
See Figure~\ref{fig-dc-join} for an illustration. 
Recall that $\Iout$ and $\bdy\Bout\setminus \ol\Iout$ are the open clockwise and counterclockwise arcs of $\bdy\Bout$ from $P_r(a')$ to $P_r(b')$, respectively. 
By the assumption made just before~\eqref{eqn-Imid}, $P_r|_{[a',b']}$ disconnects $\Iout$ but not $\bdy\Bout\setminus \ol\Iout$ from $u$ in $\Bout$. 

By the definition~\eqref{eqn-dc-set} of $X_{\op{acc}}$, there is a path $L$ from $x$ to a point of $\Iout$ in $\ol\Bout$ which is disjoint from $\Bsup \cup P_r([a',b'])$. 
Furthermore, since $P_r|_{[a',b']}$ does not disconnect $\bdy\Bout\setminus \ol\Iout$ from $u$ in $\ol\Bout$, there is a path from $u$ to a point of $\bdy\Bout\setminus \ol\Iout$ in $\ol\Bout$ which is disjoint from $P_r([a',b'])$. 

Now consider a path $L'$ in $\ol\Bout$ from $u$ to $x$.  
The union $L\cup L' \cup L''$ contains a path in $\ol\Bout$ joining the two arcs of $\bdy\Bout \setminus \{P_r(a') , P_r(b')\}$. 
Since $P_r|_{[a',b']}$ is a path in $\ol\Bout$, topological considerations show that $P_r|_{[a',b']}$ must hit $L\cup L' \cup L''$.
Since $P_r|_{[a',b']}$ cannot hit $L $ or $L''$ by definition, we get that $P_r|_{[a',b']}$ must hit $L'$.  
\end{proof}

For $x\in \Imid$, we define
\eqb \label{eqn-in-pt}
x' := \frac{ \sr_{\rho r}  }{ \sr_{\rho r} +  \Asup r} (x - \ur_{z,\rho r}) + \ur_{z,\rho r} \in \bdy \Bin ,
\eqe
so that $x'$ is the unique point of $\bdy \Bin$ which lies on the line segment from the center point $\ur_{z,\rho r}$ to $x$. 
We also let
\eqb \label{eqn-dist-set}
X_{\op{dist}} := 
\left\{x \in \Imid \, : \,  D_h\left( x' , u  ; \ol\Bin \right)  \leq \llambda \wt D_h(u,v)  \right\} .
\eqe  
By condition~\ref{item-endpt-ball-leb'} in the definition of $\Fr_{z,\rho r}$, the set $\{x' \in \bdy \Bin : x \notin X_{\op{dist}}\}$ has one-dimensional Lebesgue measure at most $(\llambda/2)\sr_{\rho r}$. By scaling, we therefore have
\eqb \label{eqn-use-leb}
|X_{\op{dist}}| \geq |\Imid| - \llambda  \sr_{\rho r}    .
\eqe

\subsubsection{Proof of Lemma~\ref{lem-endpt-close} assuming that the accessible set is not too small}

The following lemma tells us that the conclusion of Lemma~\ref{lem-endpt-close} is satisfied provided $X_{\op{acc}}$ is not too small relative to $\sr_{\rho r}$. 
 
\begin{lem} \label{lem-dc-set}
If the one-dimensional Lebesgue measure of $X_{\op{acc}}$ satisfies $|X_{\op{acc}}| > 3\llambda \sr_{\rho r}  $, then there is a time $t \in [a',b'] \subset [\tau,\sigma]$ such that
\eqb \label{eqn-dc-set-conclusion}
 D_{h-\fr_r}\left( P_r(t) , u  ; \ol\Bin \right)  \leq 2\llambda e^{-\xi \Amax} \wt D_h(u,v) .
\eqe
\end{lem}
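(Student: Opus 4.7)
The plan is to find a point $x \in X_{\op{acc}} \cap X_{\op{dist}}$ for which the radial segment from the associated $x' \in \bdy\Bin$ to $x \in \bdy\Bsup$ is disjoint from $P_r([a',b'])$. For any such $x$, Lemma~\ref{lem-dc-join} will force $P_r$ to hit every path in $\ol\Bout$ from $u$ to $x$; taking such a path to be a near-optimal $D_h$-path $\gamma_x \subset \ol\Bin$ from $u$ to $x'$ concatenated with the radial segment, the hit must occur on $\gamma_x$. Since $\ol\Bin\subset\ol{\Ur_r}$ and $\fr_r$ attains its maximal value $\Amax$ on $\Ur_r$, Weyl scaling (Axiom~\ref{item-metric-f}) then converts the $D_h$-length bound coming from $x \in X_{\op{dist}}$ into the desired $D_{h-\fr_r}$-bound.

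The first step is a set-theoretic count. From~\eqref{eqn-use-leb} we have $|\Imid \setminus X_{\op{dist}}| \leq \llambda \sr_{\rho r}$, so combined with the hypothesis $|X_{\op{acc}}| > 3\llambda \sr_{\rho r}$ this gives $|X_{\op{acc}} \cap X_{\op{dist}}| > 2\llambda \sr_{\rho r}$. Let $S \subset \Imid$ denote the set of $x$ for which the radial segment from $x'$ to $x$ meets $P_r([a',b'])$. Any such segment lies in $B_{\Asup r}(x')\subset B_{2\Asup r}(x')$, so $S$ is contained in the radial image on $\bdy\Bsup$ of the set $\{x' \in \bdy\Bin : P_r \cap B_{2\Asup r}(x') \neq \emptyset\}$. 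By Lemma~\ref{lem-excursion-annulus} the restriction $P_r|_{[\tau',\sigma']}$ is a $D_{h-\fr_r}(\cdot,\cdot;\ol{\BB A}_{r,4r}(0))$-geodesic between two points of $\bdy B_{4r}(0)$, and $\fr_r$ takes values in $[0,\Amax]$, so condition~\ref{item-E-sup} in the definition of $\Er_r$ (applied with $f = \fr_r$, to the circle $\bdy B_{\sr_{\rho r}}(\ur_{z,\rho r}) = \bdy\Bin$) bounds the Lebesgue measure of the preimage by $\llambda \Aendpt \rho r$. Using $\sr_{\rho r}\geq \Aendpt \rho r$ and the parameter relation $\Asup r \ll \sr_{\rho r}$ (so that the radial map $\bdy\Bin\rta\bdy\Bsup$ has distortion at most $2$), this yields $|S|\leq 2\llambda \sr_{\rho r}$. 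Hence $(X_{\op{acc}} \cap X_{\op{dist}}) \setminus S$ has positive one-dimensional Lebesgue measure, and in particular is non-empty.

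Now fix any such $x$ and any $\eps > 0$. Since $x \in X_{\op{dist}}$, there is a path $\gamma_x \subset \ol\Bin$ from $u$ to $x'$ with $\op{len}(\gamma_x ; D_h) \leq \llambda \wt D_h(u,v) + \eps$. Its concatenation with the radial segment from $x'$ to $x$ is a path in $\ol\Bout$ from $u$ to $x$, so Lemma~\ref{lem-dc-join} produces a time $t \in [a',b']$ with $P_r(t)$ on this path. Since $x\notin S$, the radial portion is disjoint from $P_r$, so $P_r(t) \in \gamma_x \subset \ol\Bin \subset \Bout$, giving~\eqref{eqn-endpt-close-eucl}. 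Since $\fr_r\equiv\Amax$ on $\ol\Bin$, Weyl scaling yields
\eqbn
D_{h-\fr_r}\bigl(P_r(t),u;\ol\Bin\bigr) \leq e^{-\xi\Amax}\op{len}\bigl(\gamma_x|_{P_r(t)\rta u};D_h\bigr) \leq e^{-\xi\Amax}\bigl(\llambda \wt D_h(u,v) + \eps\bigr),
\eqen
and taking $\eps$ smaller than $\llambda \wt D_h(u,v)$, which is positive because $u\neq v$ are non-singular, yields~\eqref{eqn-dc-set-conclusion}.

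The main technical point will be the Lebesgue-measure accounting: the bound $|S| \leq 2\llambda \sr_{\rho r}$ only beats $|X_{\op{acc}} \cap X_{\op{dist}}| > 2\llambda \sr_{\rho r}$ by the thinnest of margins, so one must pay attention to the precise numerical factor $\llambda \Aendpt \rho r$ provided by condition~\ref{item-E-sup} and to the parameter ordering that guarantees the radial map has bounded distortion. Once this comparison is secured, the topological content of Lemma~\ref{lem-dc-join} combined with Weyl scaling produces~\eqref{eqn-dc-set-conclusion} essentially immediately.
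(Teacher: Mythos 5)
Your proposal is correct and follows essentially the same approach as the paper's proof: you define a set $S$ (the paper calls it $Y$, defined only over $x\in X_{\op{acc}}$ but bounded identically) of radially ``blocked'' points, bound it via condition~\ref{item-E-sup} applied through Lemma~\ref{lem-excursion-annulus}, carry out the same measure-theoretic accounting to find $x\in (X_{\op{acc}}\cap X_{\op{dist}})\setminus S$, and then invoke Lemma~\ref{lem-dc-join} followed by Weyl scaling on $\ol\Bin\subset\ol\Ur_r$. The only cosmetic difference is the order of the set operations (you intersect with $X_{\op{dist}}$ before subtracting the blocked set; the paper subtracts first), and you handle the infimum in $X_{\op{dist}}$ with an explicit $\eps$ rather than the paper's factor-of-$2$ shortcut — both land on the same $2\llambda e^{-\xi\Amax}\wt D_h(u,v)$.
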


We note that Lemma~\ref{lem-dc-set} implies that if $|X_{\op{acc}}| > 3\llambda \sr_{\rho r}$, then the conclusion of Lemma~\ref{lem-endpt-close} holds with $\Cclose = 2$. This is because $P_r([a',b'])\subset \Bout$ and $\ol\Bin \subset \BB A_{r,4r}(0)$. 

The idea of the proof of Lemma~\ref{lem-dc-set} is that if $|X_{\op{acc}}| > 3\llambda \sr_{\rho r}  $, then by~\eqref{eqn-use-leb} there must be a point $x \in X_{\op{acc}} \cap X_{\op{dist}}$. 
By Lemma~\ref{lem-dc-join}, every path in $\Bout$ from $u$ to $x$ must hit $P_r([a',b'])$. 
We then want to use the definition~\eqref{eqn-dist-set} of $X_{\op{dist}}$ to upper-bound the $D_{h-\fr_r}$-distance from $u$ to the intersection point. 
There is a minor technicality arising from the fact that~\eqref{eqn-dist-set} only gives a bound for the distance from $u$ to $x' \in\bdy \Bin$, rather than from $u$ to $x$. 
To deal with this technicality, we will use condition~\ref{item-E-sup} (intersections of geodesics with a small neighborhood of the boundary) in the definition of $\Er_r$ to say that there are not very many points $x\in\Imid$ for which $P_r$ hits the segment $[x,x']$. 

\begin{proof}[Proof of Lemma~\ref{lem-dc-set}]
Define $x' \in \bdy\Bin$ for $x\in \Imid$ as in~\eqref{eqn-in-pt}. Let 
\eqb
Y := \left\{ x\in X_{\op{acc}} : P_r([a',b']) \cap [x,x'] \not=\emptyset \right\} .
\eqe 
If $x\in Y$, then $x'$ lies at Euclidean distance at most $ \Asup r$ from $P_r([a',b'])$. By condition~\ref{item-E-sup} in the definition of $\Er_r$ (in particular, we use the last sentence of the condition),  
the one-dimensional Lebesgue measure of the set $\{ x' \in \bdy \Bin : x \in Y\}$ is at most $\llambda \Aendpt \rho  r  \leq \llambda \sr_{\rho r  } $. 
By scaling, we get that the one-dimensional Lebesgue measure of $Y$ is at most $2 \llambda \sr_{\rho r}  $. 

Hence, if $|X_{\op{acc}}| > 3\llambda \sr_{\rho r}$, then $|X_{\op{acc}} \setminus Y|  > \llambda \sr_{\rho r}$. 
By~\eqref{eqn-use-leb}, this implies that the one-dimensional Lebesgue measure of $X_{\op{dist}} \cap (X_{\op{acc}} \setminus Y) $ is positive, so there exists $x\in X_{\op{dist}} \cap (X_{\op{acc}} \setminus Y)$.   

Since $x\in X_{\op{dist}}$, the definition~\eqref{eqn-dist-set} implies that there is a path $L$ in $\ol\Bin$ from $u$ to $x'$ such that
\eqbn
\op{len}\left( L ;  D_h\right) \leq 2\llambda \wt D_h(u,v) .
\eqen
The union of $L$ and $[x,x']$ gives a path in $\ol\Bsup$ from $u$ to $x$.
Since $x\in X_{\op{acc}}$, Lemma~\ref{lem-dc-join} implies that the path $P_r|_{[a',b']}$ must hit $L \cup [x,x']$. 
Since $x\notin Y$, the path $P_r|_{[a',b']}$ does not hit $[x,x']$. 

Therefore, $P_r|_{[a',b']}$ must hit $L$. Since $L \subset \ol\Bin$ is a path started from $u$ of $D_h$-length at most $2\llambda \wt D_h(u,v)$, we get that
\eqb \label{eqn-dc-set-dist}
 D_h\left( P_r(t) , u  ; \ol\Bin \right)  \leq 2\llambda \wt D_h(u,v) ,
\eqe
where $t\in [a',b']$ is chosen so that $P_r(t) \in L$. 

Since $\fr_r$ attains its maximum value $\Amax$ at each point of $\ol\Ur_r \supset \ol\Bin$, we infer from Weyl scaling (Axiom~\ref{item-metric-f}) that 
\eqbn
D_{h-\fr_r}(P_r(t) , u ; \ol\Bin) = e^{-\xi \Amax} D_h\left(P_r(t) , u ; \ol\Bin\right) .
\eqen
Combining this with~\eqref{eqn-dc-set-dist} gives~\eqref{eqn-dc-set-conclusion}. 
\end{proof}
 
\subsubsection{The set of arcs of $\Imid\setminus \ol X_{\op{acc}}$}

In light of Lemma~\ref{lem-dc-set}, for the rest of the proof of Lemma~\ref{lem-endpt-close} we can assume that
\eqb \label{eqn-dc-set-assume}
|X_{\op{acc}}| \leq 3\llambda \sr_{\rho r}    .
\eqe 
Intuitively, we do not expect~\eqref{eqn-dc-set-assume} to be the typical situation since it implies that $P_r([a',b'])$ disconnects ``most" points of $\Imid$ from $\Iout$ (recall~\eqref{eqn-dc-set}). This, in turn, means that a large portion of $P_r( [a',b'])$ is outside of $\Vr_r$. This is unexpected since $P_r$ is a $D_{h-\fr_r}$-geodesic and $\fr_r$ is non-negative and supported on $\Vr_r$, so $P_r|_{[a',b']}$ should want to spend most of its time in $\Vr_r$.  
However, we are not able to easily rule out~\eqref{eqn-dc-set-assume}. 
We note that Lemma~\ref{lem-excursion-tube} does not rule out~\eqref{eqn-dc-set-assume} since it could be that $P_r|_{[a',b']}$ has many small excursions outside of $\Vr_r$, each of Euclidean diameter at most $\Anar r$. 

Hence, we need to prove Lemma~\ref{lem-endpt-close} under the assumption~\eqref{eqn-dc-set-assume}. 
This will require a finer analysis of the structure of the set $X_{\op{acc}}$. 

The set $\Imid \setminus \ol X_{\op{acc}}$ is a countable union of disjoint open arcs of $\Imid$. 
Let $\mcl I$ be the set of all such arcs and for $I\in\mcl I$, write $|I|$ for its Euclidean length (equivalently, its one-dimensional Lebesgue measure). 
The elements of $\mcl I$ are the green arcs in Figure~\ref{fig-endpt-close-setup}. 

We now give an outline of the proof of Lemma~\ref{lem-endpt-close} subject to the assumption~\eqref{eqn-dc-set-assume}. 
As a consequence of~\eqref{eqn-dc-set-assume}, we get that ``most" points of $\Imid$ are contained in $\Imid \setminus \ol X_{\op{acc}}$, so $\sum_{I\in\mcl I} |I|$ is close to $|\Imid|$ (Lemma~\ref{lem-complement-size}). 
From this and~\eqref{eqn-use-leb}, we see that ``most" of the arcs $I\in \mcl I$ intersect $X_{\op{dist}}$ (Lemma~\ref{lem-good-arcs}). 
From condition~\ref{item-E-compare} (comparison of distances in small annuli) in the definition of $\Er_r$ (applied with $\delta = |I|/r$) and a geometric argument, we get the following. If $I\in\mcl I$ and $y_I$ is one of the endpoints of $I$, then there is a loop in $\BB A_{2|I| , 3|I|}(y_I)$ which disconnects the inner and outer boundaries and whose $ D_h$-length (hence also its $D_{h-\fr_r}$-length) is bounded above by $(|I|/r)^{-1/4}$ times (roughly speaking) the $D_h$-length of the segment of $P_r$ joining the endpoints of $I$.
By concatenating this loop with a path in $\ol\Bin$ from $u$ to $x'$, for a point $x'\in I\cap X_{\op{dist}}$, we obtain an upper bound for $D_{h-\fr_r}(u , P_r([a',b']))$ in terms of $|I|$ and the $D_h$-length of the segment of $P_r$ joining the endpoints of $I$ (Lemma~\ref{lem-arc-loop}). 
We will then use a pidgeonhole argument to say that there exists $I\in\mcl I$ for which this last quantity is much smaller than $e^{-\xi \Amax} \wt D_h(u,v)$. 

Let us now give the details. We start with a lower bound for the sum of the Lebesgue measures of the arcs in $\mcl I$. 

\begin{lem} \label{lem-complement-size}
The total one-dimensional Lebesgue measure of the arcs in $\mcl I $ satisfies
\eqb \label{eqn-complement-size}
\sum_{I\in\mcl I } |I| = |\Imid \setminus \ol X_{\op{acc}}| \geq |\Imid| - 3\llambda \sr_{\rho r}  .
\eqe
\end{lem}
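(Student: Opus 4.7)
The plan is to treat the equation $\sum_{I\in\mcl I}|I| = |\Imid \setminus \ol X_{\op{acc}}|$ and the inequality $|\Imid \setminus \ol X_{\op{acc}}| \geq |\Imid| - 3\llambda \sr_{\rho r}$ separately, with the first reducing to the definition of $\mcl I$ and the second to the standing assumption~\eqref{eqn-dc-set-assume} plus a short topological argument. First I would handle the equation: since $\ol X_{\op{acc}}$ is closed in $\Imid$, the complement $\Imid \setminus \ol X_{\op{acc}}$ is relatively open in the one-dimensional arc $\Imid$ and decomposes as a countable disjoint union of open subarc components, which by definition are precisely the elements of $\mcl I$. Countable additivity of Lebesgue length on $\Imid$ then gives the equation immediately.

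Next I would turn to the inequality, which rearranges to $|\Imid \cap \ol X_{\op{acc}}| \leq 3\llambda \sr_{\rho r}$. Since $X_{\op{acc}} \subset \Imid$, the standing assumption~\eqref{eqn-dc-set-assume} gives $|X_{\op{acc}}| \leq 3\llambda \sr_{\rho r}$ for free, so all that remains is to show that passing to the closure contributes a Lebesgue null set. To get this I would establish that $X_{\op{acc}}$ is relatively open in $\Imid \setminus P_r([a',b'])$: if $x \in X_{\op{acc}}$ with $x \notin P_r([a',b'])$, and $\gamma$ is a witnessing path from $x$ to $\Iout$ in $\ol\Bout \setminus (\Bsup \cup P_r([a',b']))$, then $x$ has positive Euclidean distance from the compact curve $P_r([a',b'])$, so for any $x' \in \Imid$ close enough to $x$ I can prepend to $\gamma$ a short arc on $\bdy\Bsup$ from $x'$ to $x$; this arc automatically stays out of $\Bsup$ and stays out of $P_r([a',b'])$ by the distance hypothesis, so the concatenation witnesses $x' \in X_{\op{acc}}$.

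The main obstacle will then be bounding $|\ol X_{\op{acc}} \setminus X_{\op{acc}}|$, which by the above relative openness is contained in the set $P_r([a',b']) \cap \Imid$ together with the topological boundary in $\Imid$ of the open set $X_{\op{acc}} \setminus P_r([a',b'])$. To bound $|P_r([a',b']) \cap \Imid|$ I would use condition~\ref{item-E-sup} in the definition of $\Er_r$: by Lemma~\ref{lem-excursion-annulus} the path $P_r|_{[\tau',\sigma']}$ is a $D_{h-\fr_r}(\cdot,\cdot;\ol{\BB A}_{r,4r}(0))$-geodesic between two points of $\bdy B_{4r}(0)$, so applying the condition to the circle $\bdy \Bin$ and transferring via the radial projection~\eqref{eqn-in-pt} from $\bdy \Bsup$ to $\bdy \Bin$ controls $|P_r([a',b']) \cap \Imid|$ by a constant times $\llambda \Aendpt \rho r$; since $\sr_{\rho r} \geq \Aendpt \rho r$ this fits comfortably inside the $3\llambda \sr_{\rho r}$ budget (absorbed into implicit constants if need be). The residual topological-boundary piece I expect to be handled similarly, or shown to be genuinely Lebesgue-null after a finer analysis of the connected components of $X_{\op{acc}} \setminus P_r([a',b'])$; this last step is where the bulk of the technical care should be required.
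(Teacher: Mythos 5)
Your decomposition of the claim into the equality (handled by countable additivity) and the inequality is correct, and your opening observation that the key issue is bounding $|\ol X_{\op{acc}} \setminus X_{\op{acc}}|$ is right. However, there are two genuine gaps.

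First, the direction of your topological argument. You prove that $X_{\op{acc}} \setminus P_r([a',b'])$ is \emph{relatively open} in $\Imid \setminus P_r([a',b'])$, by prepending a short arc of $\bdy \Bsup$ to the witnessing path. What is actually needed — and what the paper proves — is that every point of $\ol X_{\op{acc}} \setminus X_{\op{acc}}$ lies in $P_r([a',b']) \cap \Imid$, i.e., that if $x \in \ol X_{\op{acc}}$ and $x \notin P_r([a',b'])$ then $x \in X_{\op{acc}}$. This is the closed direction, not the open one. Openness alone leaves you with a "residual topological-boundary piece" that you explicitly admit you cannot dispose of, and you suggest it will require "the bulk of the technical care." In fact no new idea is needed: the very same path-concatenation you already describe works when run from the closure side. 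Since $x \notin P_r([a',b'])$ and $P_r([a',b'])$ is compact, $x$ has positive Euclidean distance from it; since $x \in \ol X_{\op{acc}}$, there is $y \in X_{\op{acc}}$ so close to $x$ that the arc of $\Imid$ from $x$ to $y$ avoids $P_r([a',b'])$; prepending this arc to a witnessing path for $y$ gives a witnessing path for $x$. This single step directly yields $\ol X_{\op{acc}} \setminus X_{\op{acc}} \subset P_r([a',b']) \cap \Imid$, and your residual boundary piece is then vacuous.

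Second, your measure bound on $P_r([a',b']) \cap \Imid$ does not produce the claimed constant. Using condition~\ref{item-E-sup} with the radial projection~\eqref{eqn-in-pt} yields a bound of roughly $C \llambda \Aendpt \rho r$ for some positive $C$, which is small but not zero. This cannot be "absorbed into implicit constants": the lemma has an exact constant $3$, and the standing assumption~\eqref{eqn-dc-set-assume} already accounts for the entire budget $|X_{\op{acc}}| \le 3\llambda \sr_{\rho r}$, so any strictly positive contribution from $\ol X_{\op{acc}} \setminus X_{\op{acc}}$ would break the stated inequality. The paper instead applies Lemma~\ref{lem-geodesic-leb} to the unit-speed parametrization of $\bdy \Bsup$, which gives that $P_r([a',b']) \cap \Imid$ has Lebesgue measure exactly zero almost surely; this is what makes the arithmetic close with the constant $3$. (One could relax the constant in the lemma and still carry the downstream arguments, but your proposal does not say this and as written does not prove the lemma as stated.)
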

\begin{proof}  
We first claim that each point of $\ol X_{\op{acc}} \setminus X_{\op{acc}}$ belongs to $P_r([a',b']) \cap \Imid$. Indeed, suppose $x\in \ol X_{\op{acc}}  $ and $x\notin P_r([a',b'])$. We need to show that $x\in X_{\op{acc}}$. Since $P_r([a',b'])$ is a Euclidean-closed set, $x$ lies at positive Euclidean distance from $P_r([a',b'])$. Since $x\in \ol X_{\op{acc}}$, there exists $y\in X_{\op{acc}}$ such that the arc of $\Imid$ between $x$ and $y$ is disjoint from $P_r([a',b'])$. By the definition of $X_{\op{acc}}$~\eqref{eqn-dc-set}, there is a path from a point of $\Iout$ to $y$ which is contained in $\ol\Bout \setminus (\Bsup \cup P_r([a',b']))$. The union of this path and the arc of $\Imid$ between $x$ and $y$ gives a path from $\Iout$ to $x$ which is contained in $\ol\Bout \setminus (\Bsup \cup P_r([a',b']))$.

By, e.g., Lemma~\ref{lem-geodesic-leb} (applied to the unit-speed parametrization of the circle $\bdy \Bsup$), a.s.\ the set $P_r([a',b']) \cap \Imid$ has zero one-dimensional Lebesgue measure. 
By this, the previous paragraph, and our assumption~\eqref{eqn-dc-set-assume}, 
\eqbn
\sum_{I\in\mcl I } |I| = |\Imid \setminus \ol X_{\op{acc}}| = |\Imid \setminus  X_{\op{acc}}| \geq |\Imid| - 3\llambda \sr_{\rho r}  .
\eqen
\end{proof}

We will also need the following elementary topological fact.

\begin{figure}[ht!]
\begin{center}
\includegraphics[width=.6\textwidth]{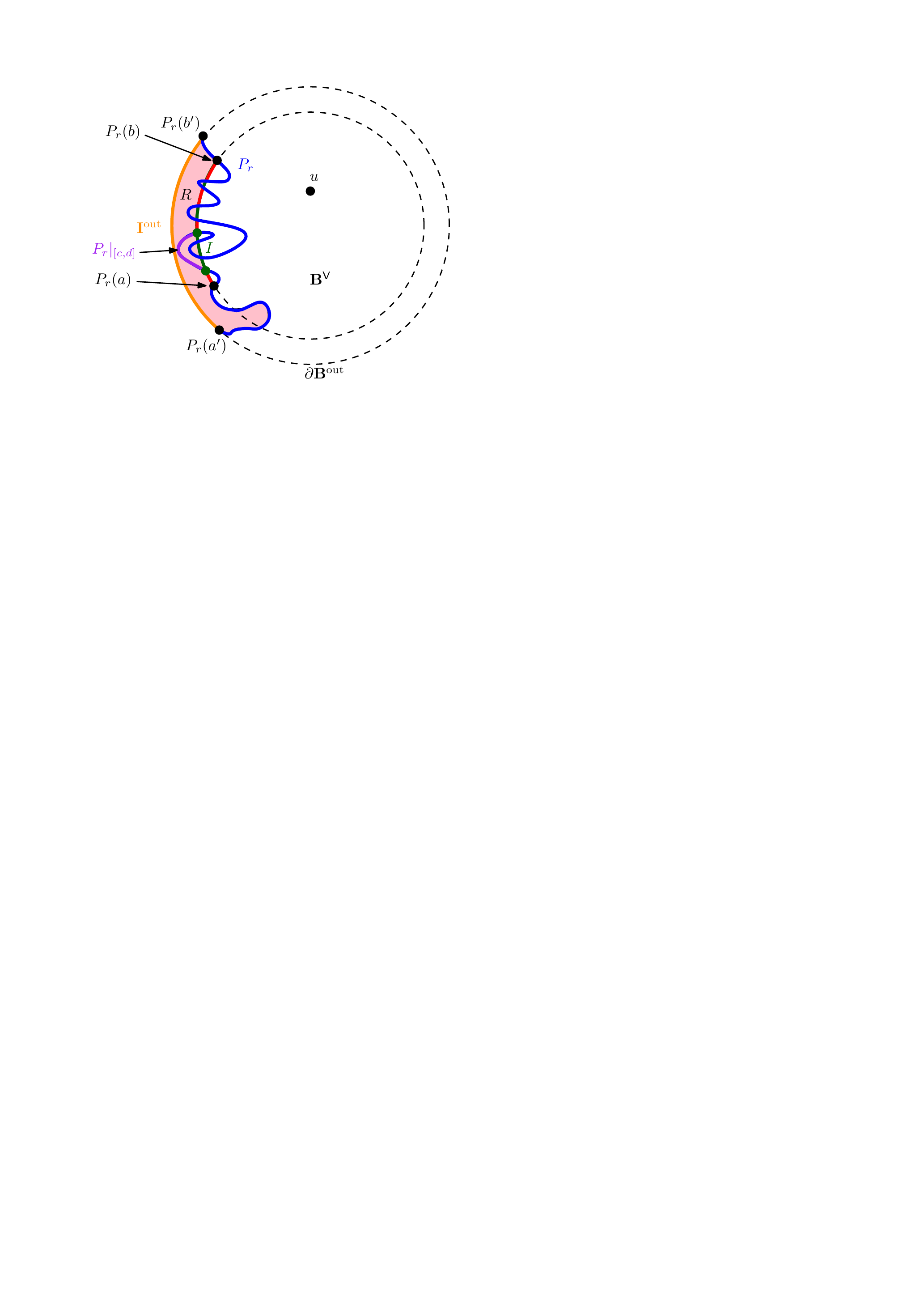} 
\caption{\label{fig-arc-path} Illustration of the proof of Lemma~\ref{lem-arc-path}. The region $R$ is shown in pink and the desired segment $P_r|_{[c,d]}$ of $P$ is shown in purple. 
}
\end{center}
\end{figure}

\begin{lem} \label{lem-arc-path}
For each $I\in \mcl I$, there is a segment of $P_r|_{[a,b]}$ joining the two endpoints of $I$ which is contained in $\Bout \setminus \Bsup$. 
\end{lem}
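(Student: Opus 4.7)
My plan is a Jordan-curve style topological argument in three steps: locate the endpoints of $I$ on $P_r$ at times lying in $[a,b]$; identify a bounded Jordan region $R$ whose boundary is $\ol I$ together with a single sub-arc of $P_r$; and read off the desired segment from $\bdy R$.

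For the first step, by the argument in the proof of Lemma~\ref{lem-complement-size} we have $\ol X_{\op{acc}} \setminus X_{\op{acc}} \subset P_r([a',b']) \cap \Imid$. A short perturbation argument shows that $X_{\op{acc}} \setminus P_r([a',b'])$ is relatively open in $\Imid$: any path witnessing accessibility of a point $x \notin P_r([a',b'])$ can be modified near its endpoint to reach any sufficiently nearby point of $\Imid$, since $P_r([a',b'])$ is closed and avoids a neighborhood of $x$. Hence the endpoints $p, q$ of $I$ lie in $P_r([a',b']) \cap \ol \Imid$; write $p = P_r(t_p)$, $q = P_r(t_q)$ with $t_p < t_q$. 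To verify $t_p, t_q \in [a,b]$, I would combine the definition of $a, b$ from Lemma~\ref{lem-excursion-hit} (first/last hitting times of $\ol\Bsup$ after $a_0$/before $b_0$) with the geometric setup of its proof: any time in $[a', a_0) \cup (b_0, b']$ at which $P_r$ lies on $\bdy\Bsup$ would correspond to a point on the counterclockwise arc of $\bdy\Bsup$ rather than on $\Imid$, by the structure of the connected components $V^\tau, O, V^\sigma, O'$ used to define $a_0, b_0$.

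For the second and third steps, consider the connected component $R$ of the open set $\Bout \setminus (\ol\Bsup \cup P_r([a',b']))$ containing a point located just outside $\Bsup$ on the outward normal to $\bdy\Bsup$ at the midpoint $m$ of $I$. The crucial topological claim is that $R$ does not touch $\Iout$: otherwise, concatenating a path in $R$ from near $\Iout$ to near $m$ with short extensions to $\Iout$ itself and to $m$ would exhibit $m \in X_{\op{acc}}$, contradicting $m \in I \subset \Imid \setminus \ol X_{\op{acc}}$. Hence $R$ is a bounded simply-connected open region in $\Bout \setminus \ol\Bsup$. Applying the Jordan--Sch\"onflies theorem to $\bdy R$, and using the maximality of $I$ as a connected component of $\Imid \setminus \ol X_{\op{acc}}$, one obtains $\bdy R = \ol I \cup \gamma$, where $\gamma$ is a single simple sub-arc of $P_r([a',b'])$ joining $p$ to $q$. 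Since $P_r$ is simple, $\gamma = P_r([t_p, t_q])$, and by construction $\gamma \subset \bdy R \subset \ol\Bout \setminus \Bsup$. The strict containment $P_r([a,b]) \subset B_{\sr_{\rho r} + (\Asup + \Anar)r}(\ur_{z,\rho r}) \Subset \Bout$ from Lemma~\ref{lem-excursion-hit} further implies $\gamma$ does not meet $\bdy\Bout$, so $\gamma \subset \Bout \setminus \Bsup$, which is the desired segment.

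The main obstacle I expect is the careful topological analysis of $\bdy R$, particularly ruling out pathologies where $R$ touches the counterclockwise arc of $\bdy\Bout$ in a way that would cause $\bdy R$ to contain multiple disjoint sub-arcs of $P_r$ rather than a single sub-arc from $p$ to $q$. Addressing this cleanly requires combining the simple-curve structure of $P_r|_{[a',b']}$ with the maximality of $I$ and a careful case analysis of the possible configurations of $\bdy R$ in the closed annulus $\ol\Bout \setminus \Bsup$.
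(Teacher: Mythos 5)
Your approach is in the same broad spirit as the paper's (both are Jordan‐region topology arguments that ultimately exploit the maximality of $I$ as a component of $\Imid \setminus \ol X_{\op{acc}}$), but the specific regions used are genuinely different, and the difference is precisely where your argument has a gap. The paper works inside the \emph{explicitly defined} Jordan region $R_{\text{paper}}$ bounded by $\Iout \cup \Imid \cup P_r([a',a]) \cup P_r([b,b'])$; inside this region $P_r([a',b'])$ decomposes into sub-arcs of $P_r|_{[a,b]}$ with both endpoints on $\Imid$, one of which must disconnect $I$ from $\Iout$ (because, by the definition of $X_{\op{acc}}$, the whole path $P_r([a',b'])$ already does so). The paper then shows that the ``shadow'' arc of \emph{that one chosen segment} is exactly $I$, again using the maximality of $I$. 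It never needs to identify $\bdy R_{\text{paper}}$ precisely, and never needs the statement that $\bdy R \cap \Imid = \ol I$.

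Your proposal instead takes $R$ to be a component of $\Bout \setminus (\ol\Bsup \cup P_r([a',b']))$ and hinges on the claim $\bdy R = \ol I \cup \gamma$ for a single sub-arc $\gamma$ of $P_r$. You flag this as ``the main obstacle,'' and indeed it is a genuine gap, not a technicality. The issue is not just ruling out contact with $\bdy\Bout \setminus \ol\Iout$: even if $\ol R$ avoids $\bdy\Bout$ entirely, there is no reason $\bdy R \cap \Imid$ should equal $\ol I$ rather than a strict superset, nor that $\bdy R \cap P_r([a',b'])$ is a \emph{single} sub-arc. If $P_r|_{[a,b]}$ makes nested excursions outside $\ol\Bsup$ (e.g.\ a ``big loop'' from $y_1$ to $y_2$ on $\Imid$ enclosing a ``small loop'' from $y_3$ to $y_4 \in (y_1,y_2)$), then the component $R$ containing a point near $m \in (y_1,y_3)$ has $\bdy R \cap \Imid = [y_1,y_3] \cup [y_4,y_2]$ (two arcs) and $\bdy R \cap P_r$ consisting of both loops, so $\bdy R$ is not of the form $\ol I \cup \gamma$. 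The lemma's conclusion still holds in this configuration (the big loop is the wanted segment), but your decomposition of $\bdy R$ does not produce it. Also note that $\bdy R$, while connected (since $R$ is simply connected by the argument you give), need not a priori be a Jordan curve, so the Jordan–Sch\"onflies step requires further justification. Simple connectedness of $R$ follows not from ``$\ol R$ avoids $\Iout$'' as you state but from the fact that $\ol\Bsup \cup P_r([a',b'])$ is a connected set touching $\bdy\Bout$, so its complement in $\Bout$ has simply connected components; worth separating these. To repair your argument along its current lines you would need to pass from $R$ to the \emph{outer} face of the sub-configuration, or (more directly) do what the paper does: pick a single sub-arc of $P_r|_{[a,b]}$ with endpoints in $\Imid$ that disconnects $I$ from $\Iout$, and show its shadow arc equals $I$.
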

\begin{proof}
See Figure~\ref{fig-arc-path} for an illustration. 
Let $R \subset \Bout \setminus \ol\Bsup$ be the open region bounded by $\Iout$, $\Imid$, and the segments $P_r([a',a])$ and $P_r([b,b'])$. Then $R$ has the topology of the open unit disk and $I\subset \bdy R$. 
By the definition~\eqref{eqn-dc-set} of $X_{\op{acc}}$ and since $I\subset \Imid \setminus X_{\op{acc}}$, there is no path in $\ol R$ from $I$ to $\Iout$ which is disjoint from $P_r([a',b'])$. 
Hence $P_r([a',b'])$ disconnects $I$ from $\Iout$ in $R$.  

Since $P_r([a',a]) \cup P_r([b,b'])\subset \bdy R$ and $P_r([a,b]) \cap \bdy\Bout = \emptyset$, the set $P_r([a',b']) \cap R$ consists of countably many disjoint segments of $P_r|_{[a,b]}$ with endpoints in $\Imid$.  
Since $P_r$ is continuous, these segments accumulate only at points of $\Imid$. 
Since $I$ is connected and $P_r([a',b'])$ disconnects $I$ from $\Iout$ in $R$, there are times $c,d \in [a,b]$ with $c < d$ such that $P_r(c), P_r(d) \in \Imid$, $P_r((c,d)) \subset R$, and $P_r([c,d])$ disconnects $I$ from $\Iout$ in $R$. 

Let $\wh I$ be the set of points of $\Imid$ which are disconnected from $\Iout$ in $R$ by $P_r([c,d])$ (not including the endpoints of $P_r([c,d])$). Equivalently, $\wh I$ is the segment of $\Imid$ between $P_r(c)$ and $P_r(d)$. Then $\wh I$ is a connected open arc of $\Imid$ which contains $I$. Moreover, every path from $\wh I$ to $\Iout$ in $\ol\Bout \setminus \Bsup$ either hits $P_r([c,d])$ or exits $R$ (in which case it must intersect either $P_r([a',a])$ or $P_r([b,b'])$). Hence no such path can be disjoint from $P_r([a',b'])$. So, by the definition~\eqref{eqn-dc-set} of $X_{\op{acc}}$, we have $\wh I\subset \Imid\setminus X_{\op{acc}} $. Since $\wh I$ is an open arc of $\Imid$, also $\wh I \subset \Imid \setminus \ol X_{\op{acc}}$. Since $I$ is a connected component of $\Imid \setminus \ol X_{\op{acc}}$, it follows that $\wh I = I$. 
\end{proof}

\subsubsection{Regularity of arcs in $\mcl I$}

We will next record some bounds for the sizes of the individual arcs in $\mcl I$, starting with an upper bound. 

\begin{lem} \label{lem-micro-arcs}
For each $I \in \mcl I$, we have $|I| \leq \Anar r$.
\end{lem}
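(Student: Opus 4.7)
\medskip

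The plan is to derive the bound $|I|\leq \Anar r$ from three already-established facts: Lemma~\ref{lem-arc-path} (which supplies a geodesic sub-segment spanning the endpoints of $I$), Lemma~\ref{lem-excursion-hit} (which forces the relevant portion of $P_r$ to stay inside $\Bout$ and avoid $\Vr_r\setminus\Bsup$), and Lemma~\ref{lem-excursion-tube} (which bounds the Euclidean diameter of any segment of $P_r|_{[\tau,\sigma]}$ that is disjoint from $\Vr_r$). The main obstacle will be converting a chord-length bound on $\bdy\Bsup$ into an arc-length bound on $I$, and ruling out the possibility that $I$ wraps the ``long way'' around the circle $\bdy\Bsup$.

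First, given $I\in\mcl I$, apply Lemma~\ref{lem-arc-path} to obtain times $c<d$ in $[a,b]$ such that $\{P_r(c),P_r(d)\}$ are the endpoints of $I$ and $P_r([c,d])\subset\Bout\setminus\Bsup$. Observe that $\Bsup=B_{\sr_{\rho r}+\Asup r}(\ur_{z,\rho r})\subset\Vr_r$, since $\ur_{z,\rho r}\in\Ur_r$ and $\Vr_r=B_{\Asup r}(\Ur_r)$. On the other hand, the last inclusion in Lemma~\ref{lem-excursion-hit} (specifically~\eqref{eqn-excursion-hit'}) gives $P_r([a,b])\cap(\Vr_r\setminus\Bsup)=\emptyset$, and combining this with $P_r([c,d])\cap\Bsup=\emptyset$ shows that the sub-segment $P_r([c,d])$ is entirely disjoint from $\Vr_r$. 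Since $[c,d]\subset[\tau,\sigma]$, Lemma~\ref{lem-excursion-tube} then yields the Euclidean chord bound
\eqbn
|P_r(c)-P_r(d)|\leq \Anar r .
\eqen

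It remains to pass from this chord bound to an arc-length bound for $I$ on the circle $\bdy\Bsup$ of radius $R_{\Bsup}=\sr_{\rho r}+\Asup r\geq\sr_{\rho r}\geq \Aendpt\rho r$. Because $\Anar$ was chosen (Lemma~\ref{lem-E-short}) with $\Anar\leq\llambda(1-\Kann)\Aendpt\rho$, the ratio $\Anar r/R_{\Bsup}$ is tiny, so the shorter arc of $\bdy\Bsup$ joining $P_r(c)$ and $P_r(d)$ has length at most $(\pi/2)|P_r(c)-P_r(d)|$. The one subtlety is ruling out $I$ being the ``long'' arc between its endpoints; this is where I expect most of the real work. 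I plan to handle it by noting that $I\subset\Imid$ and using the clockwise orientation fixed just before~\eqref{eqn-Imid-def} together with Lemma~\ref{lem-arc-path}, which guarantees that the segment $P_r([c,d])$ lies in $\Bout\setminus\Bsup$ on the side of $\bdy\Bsup$ opposite to $\Iout$: the closed curve $P_r([c,d])\cup\ol I$ then bounds a Jordan region disjoint from $\Iout$, forcing $I$ to be the short arc between $P_r(c)$ and $P_r(d)$. (If the resulting bound has a harmless multiplicative constant such as $\pi/2$, one simply shrinks $\Anar$ at the outset by that factor; since $\Anar$ is chosen freely within~\eqref{eqn-A-parameters} after $\llambda,\Aaround$ are fixed, this causes no circularity.)

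The main obstacle, as noted, is the topological argument that $I$ is the short arc rather than the long one. Once that is in place, the chord-to-arc conversion is elementary Euclidean geometry using $\Anar r \ll R_{\Bsup}$, and the conclusion $|I|\leq\Anar r$ follows immediately.
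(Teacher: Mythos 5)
Your overall chain of reasoning matches the paper's: invoke Lemma~\ref{lem-arc-path} to get a segment $P_r|_{[c,d]}$ in $\Bout\setminus\Bsup$ joining the endpoints of $I$, use~\eqref{eqn-excursion-hit'} to upgrade this to disjointness from all of $\Vr_r$, and then apply Lemma~\ref{lem-excursion-tube}. The paper's own proof is terse here — it simply asserts that the Euclidean diameter of $P_r([c,d])$ is at least $|I|$ and stops — so it is reasonable of you to flag the chord-versus-arc issue as a genuine step that needs a word of justification.

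However, the mechanism you propose for ruling out the ``long arc'' case does not work. The observation that $P_r([c,d])\cup\ol I$ bounds a Jordan region disjoint from $\Iout$ is true but topologically compatible with $I$ being the long arc: in that case the Jordan region is a thin crescent hugging most of $\bdy\Bsup$ and still avoids $\Iout$. Nothing in the Jordan-curve picture alone pins down which of the two arcs between $P_r(c)$ and $P_r(d)$ is $I$.

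What does rule it out is a quantitative comparison. Let $I^c$ denote the complementary arc of $\bdy\Bsup$ between $P_r(c)$ and $P_r(d)$. Since $I\subset\Imid$, we have $I^c\supset\bdy\Bsup\setminus\Imid$, so $I^c$ contains both $P_r(a)$ and $P_r(b)$, and hence the Euclidean diameter of $I^c$ is at least $|P_r(a)-P_r(b)|\geq\sr_{\rho r}/8$ by Lemma~\ref{lem-excursion-hit}. If $I^c$ were the arc subtending angle $\leq\pi$, its Euclidean diameter would equal its chord, which is $|P_r(c)-P_r(d)|\leq\Anar r$; but $\Anar r\leq\llambda(1-\Kann)\Aendpt\rho r\ll\sr_{\rho r}/8$, a contradiction. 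Therefore $I$ is the arc subtending angle $\leq\pi$, and the elementary bound $|I|\leq(\pi/2)|P_r(c)-P_r(d)|\leq(\pi/2)\Anar r$ follows (after which, as you note, the harmless factor $\pi/2$ can be absorbed by shrinking $\Anar$). This is the piece that is actually needed; the topological assertion you wrote is not a valid shortcut for it.
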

\begin{proof}
By Lemma~\ref{lem-arc-path}, for each $I\in\mcl I$ there is a segment of $P_r|_{[a,b]}$ joining the endpoints of $I$ which is contained in $\Bout\setminus \Bsup$. 
By Lemma~\ref{lem-excursion-hit}, $P_r|_{[a,b]}$ does not hit $\Vr_r \setminus \Bsup$, so this segment of $P_r|_{[a,b]}$ is disjoint from $\Vr_r$.
The Euclidean diameter of this segment is at least $|I|$. 
By Lemma~\ref{lem-excursion-tube}, the Euclidean diameter of the segment is at most $\Anar r$, so we get $|I| \leq \Anar r$, as required.
\end{proof}

We do not have a uniform lower bound for the sizes of the arcs in $\mcl I$. But, using condition~\ref{item-E-sup} (intersections of geodesics with a small neighborhood of the boundary) in the definition of $\Er_r$, we can say that the small arcs make a negligible contribution to the total one-dimensional Lebesgue measure of $\mcl I$.

\begin{lem} \label{lem-macro-arcs} 
Define the set of small arcs
\eqb \label{eqn-small-arcs}
\mcl I_{\op{small}} := \left\{I \in\mcl I : |I| \leq \Asup r\right\} .
\eqe
Then
\eqb \label{eqn-macro-arcs}
\sum_{I\in \mcl I_{\op{small}} } |I|   \leq   2 \llambda \sr_{\rho r} .
\eqe
\end{lem}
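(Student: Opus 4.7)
The plan is to project each small arc radially onto the inner circle $\bdy\Bin = \bdy B_{\sr_{\rho r}}(\ur_{z,\rho r})$ and show that this projection lands inside the ``bad set'' controlled by condition~\ref{item-E-sup} in the definition of $\Er_r$.

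First I would note that by Lemma~\ref{lem-excursion-annulus}, $P_r|_{[\tau',\sigma']}$ is a $D_{h-\fr_r}(\cdot,\cdot;\ol{\BB A}_{r,4r}(0))$-geodesic between two points of $\bdy B_{4r}(0)$, so condition~\ref{item-E-sup} applies with $f = \fr_r$. Since $\ur_{z,\rho r}$ is one of the centers enumerated in the last sentence of that condition, the set
\[
Y := \left\{ x' \in \bdy\Bin : P_r \cap B_{2\Asup r}(x') \neq \emptyset \right\}
\]
has one-dimensional Lebesgue measure at most $\llambda\Aendpt\rho r$. Next, for each $I \in \mcl I_{\op{small}}$, Lemma~\ref{lem-arc-path} furnishes endpoints $y_1, y_2 \in \bdy \Bsup \cap P_r([a,b])$ of $I$. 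For any $x \in I$, the bound $|I| \leq \Asup r$ gives $|x - y_1| \leq \Asup r$, and since $|x - x'| = \Asup r$ from the radial projection~\eqref{eqn-in-pt}, the triangle inequality yields $|x' - y_1| \leq 2\Asup r$. As $y_1 \in P_r$, this shows $x' \in Y$, so the projected arc $I' := \{x' : x \in I\}$ is contained in $Y$.

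The radial projection is a bijection $\bdy\Bsup \to \bdy\Bin$ that scales arc length by $\sr_{\rho r}/(\sr_{\rho r} + \Asup r)$, so the arcs $I'$ for $I \in \mcl I$ are pairwise disjoint subarcs of $\bdy\Bin$, and
\[
\frac{\sr_{\rho r}}{\sr_{\rho r} + \Asup r} \sum_{I \in \mcl I_{\op{small}}} |I| \;=\; \sum_{I \in \mcl I_{\op{small}}} |I'| \;\leq\; |Y| \;\leq\; \llambda\Aendpt\rho r .
\]
Tracing the parameter hierarchy (shrinking $\Asup$ inside the constraints of Lemma~\ref{lem-E-rn} if necessary) gives $\Asup r \leq \sr_{\rho r}$, since $\sr_{\rho r} \geq \Aendpt\rho r$ by Lemma~\ref{lem-endpt-ball} while $\Asup \leq \llambda/\Amax$ is much smaller than $\Aendpt\rho$; consequently the prefactor above is at least $1/2$, so $\sum_{I \in \mcl I_{\op{small}}} |I| \leq 2\llambda\Aendpt\rho r \leq 2\llambda\sr_{\rho r}$, as required. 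No real obstacle arises; the only delicate points are confirming that $\bdy\Bin$ is among the circles covered by condition~\ref{item-E-sup} (true by construction, since $\ur_{z,\rho r}$ is indexed by $z \in \Zr_r$) and the elementary size comparison $\Asup r \leq \sr_{\rho r}$.
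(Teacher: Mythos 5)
Your proof is correct and follows the same approach as the paper: both use Lemma~\ref{lem-arc-path} to locate the endpoints of each small arc on $P_r$, radially project the arcs onto $\bdy\Bin$ so that the projections lie within Euclidean distance $2\Asup r$ of $P_r$, invoke condition~\ref{item-E-sup} in the definition of $\Er_r$ to bound the measure of the projected set, and finally use $\sr_{\rho r}\geq\Aendpt\rho r$ together with the factor-of-$2$ dilation bound. One small remark: the parameter inequality $\Asup r\leq\sr_{\rho r}$ is already forced by the hierarchy in~\eqref{eqn-A-parameters} and Lemma~\ref{lem-E-short} (which gives $\Asup<\Anar\leq\llambda(1-\Kann)\Aendpt\rho<\Aendpt\rho$), so no further shrinking of $\Asup$ is needed.
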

\begin{proof} 
By Lemma~\ref{lem-arc-path}, for each $I\in\mcl I $ the endpoints of $I$ are hit by $P_r|_{[a',b']}$. 
Hence the Euclidean distance from each point of $I$ to $P_r([a',b'])$ is at most $|I|$. 
In particular, if $I \in \mcl I_{\op{small}}$, then the Euclidean distance from each point of $I$ to $P_r([a',b'])$ is at most $\Asup r$.
This implies that the Euclidean distance from $P_r([a',b'])$ to each point of the arc $I' := \{x' : x\in I\} \subset \bdy\Bin$ is at most $2\Asup r$, where here we use the notation~\eqref{eqn-in-pt}.

The arcs $I' $ for $I\in   \mcl I_{\op{small}}$ are disjoint and we have $|I'| \geq |I| / 2$. 
Therefore, the one-dimensional Lebesgue measure of the set of points $x' \in \bdy \Bin$ which lie at Euclidean distance at most $2\Asup r$ from $P_r([a',b'])$ is at least
\eqbn  
\frac12 \sum_{I\in \mcl I_{\op{small}} } |I| .
\eqen 
By condition~\ref{item-E-sup} in the definition of $\Er_r$ (in particular, we use the last sentence of the condition), the one-dimensional Lebesgue measure of the set of $x' \in \bdy \Bin$ which lie at Euclidean distance at most $2\Asup r$ from $P_r([a',b'])$ is at most $\llambda \Aendpt \rho r $, so
\eqb  \label{eqn-small-arc-sum}
\frac12 \sum_{I\in \mcl I_{\op{small}} } |I|  \leq \llambda \Aendpt \rho r \leq \llambda \sr_{\rho r}  ,
\eqe 
where the last inequality comes from the definition of $\sr_{\rho r}$ (recall Lemma~\ref{lem-endpt-ball}).
\end{proof}

We will now consider a certain ``good" subset of $\mcl I$, and show that the arcs in this subset cover most of $\Imid$. 
Let
\eqb \label{eqn-good-arc-set}
\mcl I^* := \left\{I\in \mcl I : |I| \geq \Asup r \:\text{and}\: I\cap X_{\op{dist}} \not=\emptyset \right\}.
\eqe

\begin{lem} \label{lem-good-arcs}
The total one-dimensional Lebesgue measure of the arcs in $\mcl I^*$ satisfies 
\eqb \label{eqn-good-arcs} 
\sum_{I\in\mcl I^*} |I| \geq |\Imid| - 6 \llambda \sr_{\rho r} .
\eqe
\end{lem}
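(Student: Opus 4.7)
The plan is to prove Lemma~\ref{lem-good-arcs} by a direct counting/measure decomposition, combining the three size estimates that have already been established: Lemma~\ref{lem-complement-size} (which controls the total measure of $\mcl I$ from below), Lemma~\ref{lem-macro-arcs} (which shows the small arcs contribute negligibly), and the bound~\eqref{eqn-use-leb} on $|\Imid \setminus X_{\op{dist}}|$ that comes from condition~\ref{item-endpt-ball-leb'} in the definition of $\Fr_{z,\rho r}$. Since the arcs in $\mcl I$ are pairwise disjoint and each has one-dimensional Lebesgue measure, everything reduces to elementary additivity.

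First I would write $\sum_{I \in \mcl I^*} |I| = \sum_{I \in \mcl I} |I| - \sum_{I \in \mcl I \setminus \mcl I^*} |I|$ and apply Lemma~\ref{lem-complement-size} to lower-bound the first term by $|\Imid| - 3\llambda \sr_{\rho r}$. So it only remains to upper-bound $\sum_{I \in \mcl I\setminus \mcl I^*} |I|$ by $3\llambda \sr_{\rho r}$. By the definition~\eqref{eqn-good-arc-set} of $\mcl I^*$, an arc $I \in \mcl I$ fails to be in $\mcl I^*$ for one (or both) of two reasons: $I \in \mcl I_{\op{small}}$, i.e., $|I| \leq \Asup r$, or $I \cap X_{\op{dist}} = \emptyset$. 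This gives the set inclusion
\[
\mcl I \setminus \mcl I^* \subset \mcl I_{\op{small}} \cup \bigl\{I \in \mcl I : I \cap X_{\op{dist}} = \emptyset\bigr\},
\]
so the sum of the measures of arcs in $\mcl I \setminus \mcl I^*$ is at most the sum of the measures over the two sets on the right.

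For the first set, Lemma~\ref{lem-macro-arcs} gives $\sum_{I \in \mcl I_{\op{small}}} |I| \leq 2\llambda \sr_{\rho r}$. For the second set, I would use that the arcs in $\mcl I$ are pairwise disjoint open subsets of $\Imid$, so that
\[
\sum_{\substack{I \in \mcl I \\ I \cap X_{\op{dist}} = \emptyset}} |I| \leq \bigl| \Imid \setminus X_{\op{dist}} \bigr|,
\]
and the right-hand side is at most $\llambda \sr_{\rho r}$ by~\eqref{eqn-use-leb}. Summing these two bounds yields $\sum_{I\in\mcl I \setminus \mcl I^*} |I| \leq 3\llambda \sr_{\rho r}$, and combining with the lower bound from Lemma~\ref{lem-complement-size} gives the desired inequality~\eqref{eqn-good-arcs}.

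There is essentially no obstacle here; the only thing to be slightly careful about is the boundary case $|I| = \Asup r$, where $I$ lies in both $\mcl I_{\op{small}}$ and potentially in $\mcl I^*$. But since we are only using the inclusion $\mcl I \setminus \mcl I^* \subset \mcl I_{\op{small}} \cup \{I : I \cap X_{\op{dist}} = \emptyset\}$, this overlap at the boundary does not cause any issue; in fact we are being slightly wasteful, which is fine. Thus the proof is a short three-line calculation assembled from the three previously established estimates.
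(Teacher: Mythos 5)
Your proof is correct and uses the same three inputs (Lemma~\ref{lem-complement-size}, Lemma~\ref{lem-macro-arcs}, and the bound~\eqref{eqn-use-leb}) as the paper; the paper organizes the accounting dually, starting from $|X_{\op{dist}}|$ and covering it by $\ol X_{\op{acc}} \cup \bigcup_{\mcl I_{\op{small}}} I \cup \bigcup_{\mcl I^*} I$, which is equivalent to your complementary decomposition of $\mcl I\setminus\mcl I^*$. The two arguments are the same in substance, so no comparison beyond this is needed.
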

\begin{proof}
Let $\mcl I_{\op{small}}$ be as in~\eqref{eqn-small-arcs}. 
We can write $\Imid$ as the disjoint union of $X_{\op{acc}}$, the arcs in $\mcl I_{\op{small}}$, and the arcs in $\mcl I$ with $|I| \geq \Asup r$. 
By the definition~\eqref{eqn-good-arc-set} of $\mcl I^*$, 
\eqb \label{eqn-good-arcs-union}
X_{\op{dist}} \subset \ol X_{\op{acc}} \cup \bigcup_{I\in \mcl I_{\op{small}}} I \cup  \bigcup_{I\in\mcl I^*} I .
\eqe 
We therefore have the following string of inequalities:
\allb \label{eqn-good-arcs-dist}
|\Imid| - \llambda  \sr_{\rho r} 
&\leq |X_{\op{dist}} | \quad \text{(by~\eqref{eqn-use-leb})} \notag\\
&\leq |\ol X_{\op{acc}}| +  \sum_{I\in \mcl I_{\op{small}} } |I|    +    \sum_{I\in\mcl I^*} |I|  \quad \text{(by~\eqref{eqn-good-arcs-union})} \notag\\
&\leq  3 \llambda \sr_{\rho r} + 2 \llambda \sr_{\rho r} +  \sum_{I\in\mcl I^*} |I| \quad \text{(by Lemmas~\ref{lem-complement-size} and~\ref{lem-macro-arcs})} .
\alle
Re-arranging gives~\eqref{eqn-good-arcs}.
\end{proof}

\subsubsection{Building a path from a point of $P_r$ to $u$}

The following lemma is the main quantitative estimate needed for the proof of Lemma~\ref{lem-endpt-close}.

\begin{lem} \label{lem-arc-loop} 
Let $I\in\mcl I^*$ and let $y_I$ be the initial endpoint of $I$. 
There are times $a' < s_I < t_I < b'$ such that 
\eqb \label{eqn-arc-loop0}
P_r([s_I , t_I]) \subset B_{3 |I| }(y_I) ,\quad t_I - s_I \geq \left( \frac{|I|}{4r} \right)^{\xi(Q+2)+1/4} r^{\xi Q} e^{\xi h_r(0)} , \quad \text{and}
\eqe
\eqb \label{eqn-arc-loop} 
D_{h-\fr_r}\left( P_r(t_I) , u ; \BB A_{r,4r}(0) \right) 
\leq 2\llambda e^{-\xi \Amax} \wt D_h(u,v)  + 2(|I|/r)^{-1/4} (t_I - s_I)  .
\eqe
\end{lem}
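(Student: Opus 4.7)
The plan is to construct a detour from $P_r(t_I)$ to $u$ built from three pieces: a short loop around $y_I$ produced by condition~\ref{item-E-compare} in the definition of $\Er_r$, a radial segment of Euclidean length $\Asup r$ from $\bdy\Bsup$ to $\bdy\Bin$, and the short path in $\ol\Bin$ from $\bdy\Bin$ to $u$ provided by the condition $I \cap X_{\op{dist}} \neq \emptyset$. The times $s_I, t_I$ will be chosen so that $P_r|_{[s_I, t_I]}$ is an outward crossing of a thin annulus centered at $y_I$ with outer radius $|I|/2$; this will automatically secure both $P_r([s_I, t_I]) \subset B_{3|I|}(y_I)$ and a lower bound for $t_I - s_I$ via the reverse H\"older condition~\ref{item-E-narrow}.

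First I would set up the times. By Lemma~\ref{lem-arc-path}, there exist $c < d$ in $[a,b]$ with $\{P_r(c), P_r(d)\} = \{y_I, y_I'\}$ (without loss of generality $P_r(c) = y_I$) and $P_r((c,d)) \subset \Bout \setminus \Bsup$. Combining~\eqref{eqn-excursion-hit'} with Lemma~\ref{lem-excursion-tube} gives that $P_r([c,d])$ is disjoint from $\Vr_r$ (hence from the support of $\fr_r$) and has Euclidean diameter at most $\Anar r$. Because $|I|/r$ is much smaller than the radius of $\Bsup$, the chord $|y_I - y_I'|$ is close to $|I|$ and in particular exceeds $|I|/2$, so $P_r|_{[c,d]}$ exits $B_{|I|/2}(y_I)$. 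I would then define $s_I$ and $t_I$ as the first times after $c$ at which $P_r$ reaches $\bdy B_{|I|/4}(y_I)$ and $\bdy B_{|I|/2}(y_I)$, respectively. Then $P_r([s_I, t_I]) \subset \ol B_{|I|/2}(y_I) \subset B_{3|I|}(y_I)$ and $|P_r(s_I) - P_r(t_I)| \geq |I|/4$. Since $P_r([s_I, t_I])$ is disjoint from the support of $\fr_r$, the increment $t_I - s_I$ equals the $D_h$-length of $P_r|_{[s_I, t_I]}$, and condition~\ref{item-E-narrow} applied to $P_r(s_I), P_r(t_I) \in B_{\Anar r}(y_I) \subset \BB A_{1.5r, 3r}(0)$ yields $t_I - s_I \geq (|I|/(4r))^{\xi(Q+3)} r^{\xi Q} e^{\xi h_r(0)}$. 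In the regime $\xi \leq 1/4$ this already dominates the lemma's exponent $\xi(Q+2)+1/4$; in the complementary regime the required bound is recovered by enlarging the outer radius slightly or by invoking condition~\ref{item-E-compare} in addition.

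Next I would build the detour. Pick $x \in I \cap X_{\op{dist}}$ with radial projection $x' \in \bdy\Bin$; then by definition of $X_{\op{dist}}$ there is a path $L \subset \ol\Bin$ from $x'$ to $u$ of $D_h$-length at most $2\llambda \wt D_h(u,v)$. Since $\fr_r \equiv \Amax$ on $\ol\Bin \subset \ol\Ur_r$, Weyl scaling (Axiom~\ref{item-metric-f}) gives $\op{len}(L; D_{h-\fr_r}) \leq 2\llambda e^{-\xi\Amax}\wt D_h(u,v)$, matching the first term on the right of~\eqref{eqn-arc-loop}. I would then apply condition~\ref{item-E-compare} at $z = y_I \in \BB A_{1.5r, 3r}(0)$ with $\delta$ of order $|I|/r$, tuned so that the ``across'' annulus $\BB A_{2\delta r, 3\delta r}(y_I)$ lies strictly between $\bdy B_{|I|/4}(y_I)$ and $\bdy B_{|I|/2}(y_I)$ (for instance $\delta = |I|/(6r)$, giving ``across'' $= \BB A_{|I|/3, |I|/2}(y_I)$). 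Then $P_r|_{[s_I, t_I]}$ crosses this annulus, so $D_h(\text{across}\,\BB A_{2\delta r, 3\delta r}(y_I)) \leq t_I - s_I$, and condition~\ref{item-E-compare} produces a loop $\pi_I$ around $y_I$, contained in a small annulus inside $B_{3|I|}(y_I)$, with $D_h$-length at most an absolute constant times $(|I|/r)^{-1/4}(t_I - s_I)$. Finally, concatenating from $P_r(t_I)$ along $\pi_I$ (plus a short transition) to the radial segment $[x, x']$, then along $[x, x']$ (which lies in $\ol\Bsup$ and has Euclidean length $\Asup r$), and along $L$ to $u$ yields a path in $\BB A_{r,4r}(0)$ whose $D_{h-\fr_r}$-length, bounded term by term, gives~\eqref{eqn-arc-loop}.

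The hard part will be the tension between the open-ball containment $P_r([s_I, t_I]) \subset B_{3|I|}(y_I)$ and the need to apply condition~\ref{item-E-compare} so that the loop's $D_h$-length is controlled by $(t_I - s_I)$ rather than by a fixed-scale ``across'' distance. Resolving this forces one to take $\delta$ in condition~\ref{item-E-compare} strictly smaller than $|I|/r$, so that the ``across'' annulus is traversed by $P_r|_{[s_I,t_I]}$, at the cost of absorbing an absolute constant into the final bound. A secondary subtlety is that the linking segment from $\pi_I$ to $[x, x']$ may traverse $\Vr_r \setminus \Ur_r$, where $0 < \fr_r < \Amax$; this is handled using the trivial upper bound $\fr_r \leq \Amax$ together with the fact that both the Euclidean diameter and the $D_h$-length of this transition are comparable to $|I|$, hence are absorbed into the $(|I|/r)^{-1/4}(t_I - s_I)$ term.
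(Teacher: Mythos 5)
Your overall blueprint — an output time $s_I$, a later time $t_I$, a loop around $y_I$ from condition~\ref{item-E-compare}, and the short path $L\subset\ol\Bin$ from the $X_{\op{dist}}$ condition — is the right one and matches the paper's strategy. But two of your specific choices create genuine gaps that the paper's proof avoids.

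First, you define $t_I$ as the first hitting time of the Euclidean circle $\bdy B_{|I|/2}(y_I)$, whereas the paper defines $t_I$ as the first hitting time of the loop $\pi$ supplied by condition~\ref{item-E-compare}. This matters: the conclusion~\eqref{eqn-arc-loop} is a bound on $D_{h-\fr_r}(P_r(t_I),u;\cdot)$, so the constructed detour must \emph{start at} $P_r(t_I)$. In the paper's version $P_r(t_I)\in\pi$, so the detour is simply a segment of $\pi$ followed by a segment of $L$ (which works because $L$ runs from $x'\in B_{2|I|}(y_I)$ to $u$ outside $B_{3|I|}(y_I)$ and therefore must cross $\pi\subset\BB A_{2|I|,3|I|}(y_I)$); no transition piece appears. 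In your version $P_r(t_I)\in\bdy B_{|I|/2}(y_I)$ is not on the loop, and you have no bound on the $D_{h-\fr_r}$-length of the segment joining $P_r(t_I)$ to the loop. You flag a ``short transition'' as a secondary subtlety, but the claim that ``both the Euclidean diameter and the $D_h$-length of this transition are comparable to $|I|$'' is not correct — the Euclidean diameter being $O(|I|)$ gives no $D_h$-length bound in the supercritical regime, which is precisely the obstruction this whole section is fighting. The same objection applies to the radial segment $[x,x']$: its Euclidean length is $\Asup r$, but you have no control on its $D_h$-length, so it cannot be inserted as a path piece. The paper never uses $[x,x']$ at all; it uses only that $\pi$ intersects $L$.

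Second, your choice $\delta=|I|/(6r)$, combined with the literal statement of condition~\ref{item-E-compare}, places the loop $\pi_I$ in $\BB A_{\delta r/4,\delta r/2}(y_I)=\BB A_{|I|/24,|I|/12}(y_I)$, which sits strictly inside $B_{|I|/4}(y_I)$. That loop neither contains $P_r(t_I)$ (which is on $\bdy B_{|I|/2}(y_I)$) nor is guaranteed to separate $x'$ from $u$, since $|x'-y_I|$ can be as large as $|I|+\Asup r\leq 2|I|$. What the argument actually needs — and what the paper's proof uses — is a loop in $\BB A_{2|I|,3|I|}(y_I)$ whose $D_h$-length is controlled by $(|I|/r)^{-1/4}\,D_h(\text{across }\BB A_{|I|/4,|I|/2}(y_I))$. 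The paper then sets $t_I$ equal to the first hit of that loop, sets $s_I$ equal to the first hit of $\bdy B_{|I|/4}(y_I)$, introduces the auxiliary time $s_I'$ (first exit of $B_{|I|/2}(y_I)$, which plays the role of your $t_I$), shows $P_r|_{[s_I,s_I']}$ is disjoint from $\Vr_r$ using Lemma~\ref{lem-arc-path} and Lemma~\ref{lem-excursion-hit}, and deduces $t_I-s_I\geq s_I'-s_I\geq D_h(\text{across }\BB A_{|I|/4,|I|/2}(y_I))$. The containment $P_r([s_I,t_I])\subset B_{3|I|}(y_I)$ follows because $P_r$ cannot reach $\bdy B_{3|I|}(y_I)$ from $y_I$ without hitting $\pi$, and the lower bound in~\eqref{eqn-arc-loop0} then comes from the reverse H\"older condition~\ref{item-E-narrow}. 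If you redefine $t_I$ as the paper does and drop the $[x,x']$ piece, your argument converges to the paper's proof.
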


\begin{figure}[ht!]
\begin{center}
\includegraphics[width=1\textwidth]{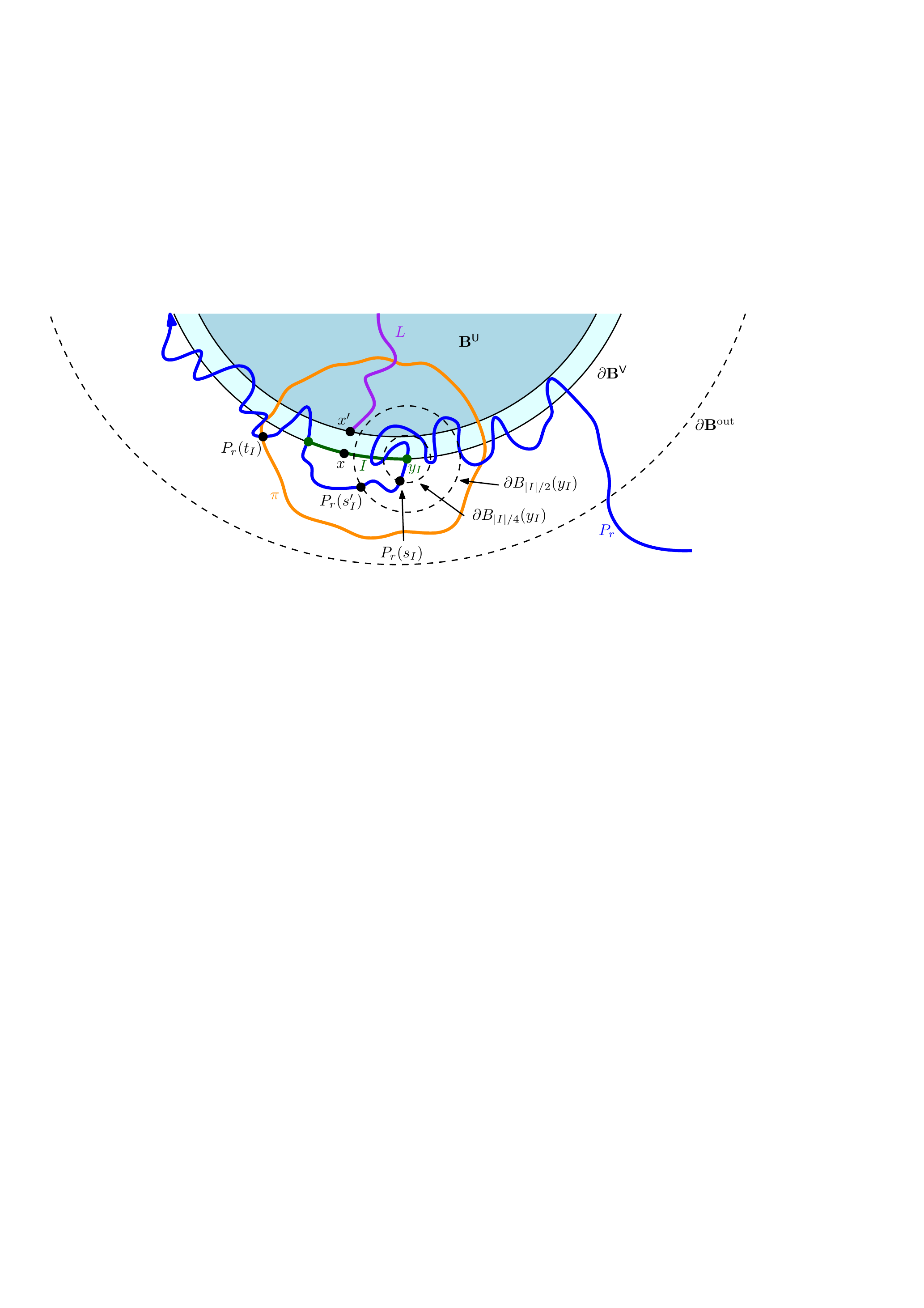} 
\caption{\label{fig-arc-loop} Illustration of the proof of Lemma~\ref{lem-arc-loop}. The orange loop $\pi$ has $D_h$-length at most $2 (|I|/r)^{-1/4} D_h\left(\text{across $\BB A_{ |I| /4 , |I| /2}(y_I)$}\right) $, and is provided by condition~\ref{item-E-compare} (comparison of distance in small annuli) in the definition of $\Er_r$. The point $x$ belongs to $I\cap X_{\op{dist}}$. The purple path $L$ goes from $u$ (not pictured) to $x'$, has $D_h$-length at most $2\llambda \wt D_h(u,v)$, and is provided by the definition~\eqref{eqn-dist-set} of $X_{\op{dist}}$. The bound~\eqref{eqn-arc-loop} is obtained by concatenating a segment of $\pi$ with a segment of $L$, then bounding $D_h\left(\text{across $\BB A_{ |I| /4 , |I| /2}(y_I)$}\right)$ in terms of $t_I - s_I$. 
}
\end{center}
\end{figure}

We will eventually deduce Lemma~\ref{lem-endpt-close} from Lemma~\ref{lem-arc-loop} by showing that there exists an $I\in \mcl I^*$ for which $2|I|^{-1/4} (t_I - s_I) $ is much smaller than $ e^{-\xi \Amax} \wt D_h(u,v)$.

\begin{proof}[Proof of Lemma~\ref{lem-arc-loop}]
See Figure~\ref{fig-arc-loop} for an illustration. 
Throughout the proof we fix $I\in\mcl I^*$. 
\medskip

\noindent\textit{Step 1: definition of $s_I$ and $t_I$.}
By Lemma~\ref{lem-micro-arcs} we have $|I| \leq \Anar r$.
Hence we can apply condition~\ref{item-E-compare} (comparison of distances in small annuli) in the definition of $\Er_r$ with $\delta= |I|/r$ to get that there is a path $\pi \subset \BB A_{2 |I|   , 3 |I| }(y_I)$ such that
\eqb \label{eqn-arc-loop-compare}
\op{len}\left( \pi ; D_h\right) 
\leq 2 (|I|/r)^{-1/4} D_h\left(\text{across $\BB A_{ |I| /4 , |I| /2}(y_I)$}\right)  .
\eqe 

We have $y_I \in \bdy \Bsup$ and $P_r(b') \in \bdy \Bout$. The Euclidean distance from $\bdy \Bout$ to $\bdy \Bsup$ is $  3\Anar   r \geq  3 |I|$. 
Therefore, the path $P_r$ must hit both $\bdy B_{|I| /4}(y_I)$ and $\pi$ between the (unique) time when it hits $y_I$ and the time $b'$.  
Let $s_I$ (resp.\ $t_I$) be the first time that $P_r$ hits $\bdy B_{|I| /4}(y_I)$ (resp.\ $\pi$) after the time when it hits $y_I$. 
Then $a' < s_I < t_I < b'$ and (since $P_r$ cannot travel from $y_I$ to $\bdy B_{3 |I| }(y_I)$ without hitting $\pi$),
\eqbn
P_r([s_I , t_I]) \subset B_{3 |I| }(y_I) . 
\eqen
We will check the other conditions in the lemma statement for this choice of $t_I$ and $s_I$. 
\medskip

\noindent\textit{Step 2: upper-bound for $D_{h-\fr_r}\left( P_r(t_I) , u ; \BB A_{r,4r}(0) \right)$ in terms of $D_h\left(\text{across $\BB A_{|I| /4 ,  |I| /2}(y_I)$}\right)$.}
By the definition~\eqref{eqn-good-arc-set} of $\mcl I^*$, there exists $x \in I \cap X_{\op{dist}}$. By the definition~\eqref{eqn-dist-set} of $X_{\op{dist}}$, if we let $x' \in \bdy \Bin$ be the point corresponding to $x$ as in~\eqref{eqn-in-pt}, then there is a path $L$ from $u$ to $x'$ in $\ol\Bin$ such that
\eqbn
\op{len}\left( L ;  D_h\right) \leq 2\llambda \wt D_h(u,v) .
\eqen 
Since $L$ is contained in $\ol\Bin$, which is contained in $\ol\Ur_r$, and $\fr_r\equiv \Amax$ on $\ol\Ur_r$,
\eqb \label{eqn-arc-loop-path}
\op{len}\left( L ;   D_{h-\fr_r} \right) \leq 2\llambda e^{-\xi \Amax}\wt D_h(u,v) .
\eqe

The definition~\eqref{eqn-good-arc-set} of $\mcl I^*$ gives $|I| \geq \Asup r$, so
\eqbn
|x' - y_I| \leq |I|  + |x-x'| = |I| + \Asup r \leq 2 |I| .
\eqen
Since $\pi \subset \BB A_{2|I| ,3|I|}(y_I)$, it follows that $\pi$ intersects $L$ and (since $3|I| \leq 3\Anar r$) also $\pi\subset \Bout$.  
Since $P_r(t_I) \in \pi$, the path $\pi\cup L$ contains a path from $u$ to $P_r(t_I)$. 
We have $\pi \cup L \subset \Bout \subset \BB A_{r,4r}(0)$. 
By~\eqref{eqn-arc-loop-compare} (and the fact that $\fr_r$ is non-negative) and~\eqref{eqn-arc-loop-path}, 
\allb \label{eqn-arc-loop-conc}
&D_{h-\fr_r}\left( P_r(t_I) , u ; \BB A_{r,4r}(0) \right) \notag\\
&\qquad\qquad \leq   \op{len}\left( L ;  D_{h-\fr_r} \right) + \op{len}\left( \pi ; D_{h-\fr_r} \right)   \notag\\ 
&\qquad\qquad \leq 2\llambda e^{-\xi \Amax} \wt D_h(u,v)
+ 2 (|I|/r)^{-1/4} D_h\left(\text{across $\BB A_{|I| /4 ,  |I| /2}(y_I)$}\right) .
\alle
\medskip

\noindent\textit{Step 3: comparing $t_I-s_I$ to $D_h\left(\text{across $\BB A_{|I| /4 ,  |I| /2}(y_I)$}\right)$.}
We claim that 
\eqb \label{eqn-arc-loop-show}
t_I - s_I \geq D_h\left(\text{across $\BB A_{|I|  /4 , |I| /2}(y_I)$}\right) .
\eqe
Once~\eqref{eqn-arc-loop-show} is established, the bound~\eqref{eqn-arc-loop-conc} immediately gives~\eqref{eqn-arc-loop}.
Furthermore, the lower bound for $t_I - s_I$ in~\eqref{eqn-arc-loop0} also follows from~\eqref{eqn-arc-loop-show} and the reverse H\"older continuity condition~\ref{item-E-narrow} in the definition of $\Er_r$ (applied with $z\in \bdy B_{|I|/4}(y_I)$ and $w\in \bdy B_{|I|/2}(y_I)$), which gives
\eqbn
D_h\left(\text{across $\BB A_{|I|  /4 , |I| /2}(y_I)$}\right) \geq \left( \frac{|I|}{4r} \right)^{\xi(Q+2)+1/4} r^{\xi Q} e^{\xi h_r(0)} .
\eqen

Hence it remains to prove~\eqref{eqn-arc-loop-show}.  
Let $s_I'$ be the first time after $s_I$ at which $P_r$ exits $B_{|I| /2}(y_I)$. 
Then $P_r|_{[s_I , s_I']}$ is a path between the inner and outer boundaries of $\BB A_{|I| /4 , |I|  /2}(y_I)$. 
We claim that 
\eqb \label{eqn-arc-loop-disjoint}
P_r([s_I ,s_I']) \cap \Vr_r =\emptyset .
\eqe
Since $\fr_r$ vanishes outside of $\Vr_r$,~\eqref{eqn-arc-loop-disjoint} implies that
\allb
t_I - s_I \geq s_I' - s_I = \op{len}\left(P_r|_{[s_I , s_I']} ; D_{h-\fr_r} \right) 
&= \op{len}\left(P_r|_{[s_I , s_I']} ; D_h \right) \notag\\
&\geq D_h\left(\text{across $\BB A_{|I|  /4 , |I| /2}(y_I)$}\right) ,
\alle
which is~\eqref{eqn-arc-loop-show}. 
 
To prove~\eqref{eqn-arc-loop-disjoint}, we first note that by Lemma~\ref{lem-arc-path}, the path $P_r$ does not enter $\Bsup$ between the time when it hits $y_I$ and the time when it hits the other endpoint of $ I$.  
Since the Euclidean distance between the endpoints of $I$ is at least $|I|/2$, $s_I'$ must be smaller than the time when $P_r$ hits the other endpoint of $I$.  
Hence $P_r([s_I,s_I']) \cap \Bsup =\emptyset$. 
In particular, Lemma~\ref{lem-arc-path} implies that $[s_I , s_I'] \subset [a,b]$. 
By Lemma~\ref{lem-excursion-length}, $P_r|_{[a,b]}$ does not hit $\Vr_r\setminus \Bsup$. 
Therefore,~\eqref{eqn-arc-loop-disjoint} holds. 
\end{proof}

\subsubsection{Pidgeonhole arguments}

In light of Lemma~\ref{lem-arc-loop}, we seek an arc $I \in \mcl I^*$ for which $t_I - s_I$ is much smaller than $(|I|/r)^{1/4} \wt D_h(u,v)$. 
To find such an arc, we will partition the set $\mcl I^*$ based on the Euclidean sizes of the arcs.
Let
\eqb
\ul K := \lfloor \log_2(1/\Anar)\rfloor  \quad \text{and} \quad \ol K := \lceil \log_2(1/\Asup) \rceil -1  .
\eqe
For $k\in [\ul K , \ol K ]_{\BB Z}$, let 
\eqb \label{eqn-arcs-k}
\mcl I_k^* := \left\{I\in\mcl I^* :   |I| \in [2^{-k-1}  r , 2^{-k }   r) \right\} .
\eqe  
By Lemma~\ref{lem-micro-arcs} and the definition~\eqref{eqn-good-arc-set} of $\mcl I^*$, we have $\Asup r \leq |I| \leq \Anar r$ for each $I\in\mcl I^*$. 
Hence $\mcl I^*$ is the disjoint union of $\mcl I_k^*$ for $k\in [\ul K , \ol K]_{\BB Z}$. 
 
The proof that there exists an arc $I\in\mcl I^*$ for which $t_I - s_I$ is small is based on a pidgeonhole argument. 
Lemma~\ref{lem-good-arcs} implies that the total Euclidean length of the arcs in $\mcl I^*$ is close to $|\Imid|$. 
Hence there must be some $k \in [\ul K , \ol K]_{\BB Z}$ for which $\#\mcl I_k^*$ is larger than a constant times $r^{-1} 2^{k/2} |\Imid|$: otherwise, the sum of the lengths of the arcs in $\mcl I^*$ would be too small (Lemma~\ref{lem-arc-collection}). In the proof of Lemma~\ref{lem-endpt-close}, we will then use an argument based on Lemma~\ref{lem-arc-loop} and Markov's inequality to show that there must be an $I\in\mcl I_k^*$ for which $t_I - s_I$ is sufficiently small.  

Let us start with the pidgeonhole argument for the Euclidean lengths of the arcs in $\mcl I^*$. 

\begin{lem} \label{lem-arc-collection}
Let $\Aendpt > 0$ be the constant appearing in Lemma~\ref{lem-endpt-ball}, so that the radius of $\Bin$ satisfies $\sr_{\rho r} \in [\Aendpt \rho r , \Aendpt^{1/2}\rho r]$. 
Almost surely, there exist a random $k\in [\ul K , \ol K]_{\BB Z}$ and a collection of arcs $\mcl I_k^{**} \subset \mcl I_k^*$ such that $\#\mcl I_k^{**} \succeq    2^{k/2 } \Aendpt \rho$, with a deterministic universal implicit constant, and the balls $B_{3 |I|}(y_I)$ for $I \in\mcl I_k^{**}$ are disjoint (here $y_I$ is the first endpoint of $I$ hit by $P_r$, as in Lemma~\ref{lem-arc-loop}).  
\end{lem}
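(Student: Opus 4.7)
The plan is to combine a lower bound on the total length $\sum_{I\in\mcl I^*}|I|$, a scale-based pigeonhole, and a greedy disjointification.

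First, combining Lemma~\ref{lem-good-arcs} with $|\Imid|\geq \sr_{\rho r}/8$ from~\eqref{eqn-Imid} and the smallness of $\llambda$ (recall $\llambda\leq 10^{-100}$ from~\eqref{eqn-small-const}), I get
\[
\sum_{I\in\mcl I^*}|I| \geq \bigl(\tfrac{1}{8}-6\llambda\bigr)\sr_{\rho r}\geq \tfrac{\sr_{\rho r}}{16}\geq \tfrac{\Aendpt\rho r}{16},
\]
where the last inequality uses $\sr_{\rho r}\geq \Aendpt\rho r$ from Lemma~\ref{lem-endpt-ball}.

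Next, I carry out a scale-based pigeonhole. Since $\mcl I^*=\bigsqcup_{k=\ul K}^{\ol K}\mcl I_k^*$ and each $I\in\mcl I_k^*$ satisfies $|I|<2^{-k}r$, we have
$\sum_{I\in\mcl I^*}|I| \leq r\sum_{k=\ul K}^{\ol K}2^{-k}\,\#\mcl I_k^*$.
Assume for contradiction that $\#\mcl I_k^* < c_0\cdot 2^{k/2}\Aendpt\rho$ for every $k\in[\ul K,\ol K]_{\BB Z}$. Then the right side is at most $c_0\Aendpt\rho r\sum_{k\geq\ul K}2^{-k/2}\leq C c_0\sqrt{\Anar}\,\Aendpt\rho r$ for a universal $C$ (using $2^{\ul K}\geq 1/(2\Anar)$ to bound the geometric series). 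Comparing with the previous paragraph yields a contradiction as soon as $c_0<1/(16C\sqrt{\Anar})$; since $\sqrt{\Anar}<1$, the truly universal choice $c_0=1/(16C)$ suffices. Hence there exists a (random) $k\in[\ul K,\ol K]_{\BB Z}$ with $\#\mcl I_k^*\geq (16C)^{-1}\cdot 2^{k/2}\Aendpt\rho$.

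Finally, I disjointify via a greedy selection. Order the arcs of $\mcl I_k^*$ by the position of their initial endpoints along $\Imid$, and select arcs whose initial endpoints are at arc distance $>12\cdot 2^{-k}r$ from the previously selected one. Since $3|I_1|+3|I_2|<6\cdot 2^{-k}r$ for $I_1,I_2\in\mcl I_k^*$ and $12\cdot 2^{-k}r\leq 12\Anar r$ is much smaller than the radius of $\bdy\Bsup$ (by Lemma~\ref{lem-E-short}), arc length is comparable to Euclidean distance at this scale, so the balls $B_{3|I|}(y_I)$ for selected $I$ are disjoint. Between two consecutively selected arcs, the skipped arcs of $\mcl I_k^*$ have disjoint supports of length $\geq 2^{-k-1}r$ inside a segment of length at most $12\cdot 2^{-k}r$, so at most an absolute constant number are skipped. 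This yields $\#\mcl I_k^{**}\geq \#\mcl I_k^*/O(1)\succeq 2^{k/2}\Aendpt\rho$ with a universal implicit constant, as desired.

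There is no serious obstacle: this is a routine pigeonhole-plus-greedy argument. The only subtlety worth noting is to keep careful track of the constants in the dyadic pigeonhole, confirming that the factor $\sqrt{\Anar}$ produced by summing $2^{-k/2}$ starting at $k=\ul K\approx\log_2(1/\Anar)$ combines with $\sqrt{\Anar}<1$ to leave a truly universal implicit constant independent of $\Anar$.
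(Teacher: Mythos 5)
Your proof is correct and follows essentially the same route as the paper: combine Lemma~\ref{lem-good-arcs} with $|\Imid|\geq\sr_{\rho r}/8$ and Lemma~\ref{lem-endpt-ball} to lower-bound $\sum_{I\in\mcl I^*}|I|$, run a dyadic-scale pigeonhole (using $2^{\ul K}\geq 1/(2\Anar)$ to make the geometric sum small enough), and then greedily thin out $\mcl I_k^*$ to get disjoint balls while losing only a universal constant factor. The only cosmetic difference is that the paper phrases the pigeonhole threshold as $2^{k/2}r^{-1}|\Imid|$ and converts to $\Aendpt\rho$ at the end, whereas you substitute $\Aendpt\rho$ directly and carry an explicit universal constant $c_0$; both are equivalent.
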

\begin{proof}
We have
\allb \label{eqn-arc-decomp-sum} 
|\Imid|/2 
&\leq |\Imid| - 6 \llambda \sr_{\rho r}  \quad \text{(since $|\Imid| \geq \sr_{\rho r}/8$ by~\eqref{eqn-Imid})} \notag\\
&\leq \sum_{I\in\mcl I^*} |I| \quad \text{(by Lemma~\ref{lem-good-arcs})} \notag\\
&\leq \sum_{k=\ul K }^{\ol K } \sum_{I\in\mcl I_k^*} |I| \quad \text{(since $\mcl I^* = \bigcup_{k=\ul K}^{\ol K} \mcl I_k^*$)} \notag\\
&\leq r \sum_{k=\ul K }^{\ol K } 2^{   -k } \# \mcl I_k^* \quad \text{(by~\eqref{eqn-arcs-k})}  .
\alle
 
We claim that there exists $k\in [\ul K , \ol K ]_{\BB Z}$ such that $\# \mcl I_k^* \geq 2^{ k/2} r^{-1} |\Imid|  $. Indeed, if this is not the case then~\eqref{eqn-arc-decomp-sum} gives
\eqbn
|\Imid|/2  
\leq   |\Imid|  \sum_{k=\ul K }^{\ol K } 2^{  -k / 2 }   \quad \Rightarrow \quad  1/2 \leq \frac{1}{1-2^{-1/2}} 2^{-\ul K / 2 } 
\eqen
which is not true since $2^{-\ul K/2} \leq 2\Anar^{1/2} $, which is much smaller than $(1-2^{-1/2})/2$. 
  
Henceforth fix $k\in [\ul K , \ol K ]_{\BB Z}$ such that $\#\mcl I_k^* \geq 2^{k/2} r^{-1} |\Imid|$.
The arcs in $\mcl I_k^*$ are disjoint and have lengths in $[2^{-k-1} r , 2^{-k} r)$. 
Hence for each $I\in \mcl I_k^*$, the number of arcs in $\mcl I_k^*$ which are contained in $B_{3|I|}(y_I)$ is at most some universal constant.
It follows that we can find a subcollection $\mcl I_k^{**} \subset \mcl I_k^*$ such that $\#\mcl I_k^{**} \succeq 2^{k/2} r^{-1} |\Imid|$ and the balls $B_{3|I|}(y_I)$ for $I\in\mcl I_k^{**}$ are disjoint. 
We conclude by noting that by~\eqref{eqn-Imid} and our choice of $\sr_{\rho r}$ in Lemma~\ref{lem-endpt-ball}, 
\eqbn
r^{-1} |\Imid| \succeq r^{-1} \sr_{\rho r} \geq \Aendpt \rho .
\eqen 
\end{proof}

\begin{proof}[Proof of Lemma~\ref{lem-endpt-close}] 
Throughout the proof, all implicit constants are required to be deterministic and depend only on $\xi$. 

Let $k\in [\ul K ,\ol K]_{\BB Z}$ and $\mcl I_k^{**} \subset \mcl I_k^*$ be as in Lemma~\ref{lem-arc-collection}, so that $\#\mcl I_k^{**} \succeq    2^{k/2 } \Aendpt \rho$.  
For $I \in\mcl I_k^{**}$, let $a' < s_{I } < t_{I } < b'$ be as in Lemma~\ref{lem-arc-loop}. 
Lemma~\ref{lem-arc-loop} tells us that $P_r([s_{I} , t_{I}]) \subset B_{3 |I| }(y_{I})$. Lemma~\ref{lem-arc-collection} implies that the balls $B_{3 |I| }(y_{I})$ are disjoint for different choices of $I\in \mcl I_k^{**}$. Hence the intervals $[s_I , t_I] $ for $I \in\mcl I_k^{**}$ are disjoint. 

In light of Lemma~\ref{lem-arc-loop}, we seek $I\in\mcl I_k^{**}$ for which $t_I - s_I$ is much smaller than $(|I|/r)^{1/4}$. 
To find such an $I$, we will first choose a sub-collection of $\mcl I_k^{**}$, which is not too much smaller than $\mcl I_k^{**}$, such that the increments $t_I - s_I$ for $I\in\mcl I_k^{**}$ are all comparable (step 1). 
We will then use Lemma~\ref{lem-arc-loop} to upper bound the sum of the increments $t_I - s_I$ over all arcs $I$ in this collection (step 2). 
Finally, we will use a pidgeonhole argument to find an $I$ for which $t_I - s_I$ is small (step 3).  
\medskip

\noindent\textit{Step 1: finding a sub-collection on which $t_I - s_I$ is controlled.} 
We seek a collection of distinct arcs $I_1,\dots,I_N \in \mcl I_k^{**}$ such that $N$ is not too much smaller than $\#\mcl I_k^{**}$ and the geodesic time increments $t_{I_j}  - s_{I_j}$ for $j=1,\dots,N$ are all comparable. We will find such a collection via a pidgeonhole argument. 

The bound~\eqref{eqn-arc-loop0} of Lemma~\ref{lem-arc-loop} followed by the definition~\eqref{eqn-arcs-k} of $\mcl I_k^*$ shows that for $I\in\mcl I_k^{**}$,  
\eqb
t_I-s_I \geq \left(\frac{|I|}{4 r} \right)^{ \xi (Q+3) } r^{\xi Q} e^{\xi h_r(0)} \geq  2^{-(k+2)  \xi (Q+3)} r^{\xi Q} e^{\xi h_r(0)}  .
\eqe
By combining this with the crude bound $t_I - s_I \leq \sigma - \tau$ and Lemma~\ref{lem-excursion-length}, we get that for $I\in\mcl I_k^{**}$, 
\allb
t_I - s_I 
&\in [2^{-(k+2) \xi (Q+3)} r^{\xi Q} e^{\xi h_r(0)}  , \Anar^{\xi(Q+3)} r^{\xi Q} e^{\xi h_r(0)}]  \notag\\
&\subset [2^{-(k+2)\xi (Q+3)} r^{\xi Q} e^{\xi h_r(0)}  ,   r^{\xi Q} e^{\xi h_r(0)}].
\alle
The number of intervals of the form $[q,2q]$ for $q  > 0$ needed to cover $[2^{-(k+2)\xi (Q+3)} r^{\xi Q} e^{\xi h_r(0)}  , r^{\xi Q} e^{\xi h_r(0)}]$ is at most a constant (depending only on $\xi$) times $k $. Consequently, we can find a random $q  > 0$, an integer 
\eqb \label{eqn-ordered-arcs}
N\succeq k^{-1} \# \mcl I_k^{**} \succeq k^{-1} 2^{k/2} \Aendpt \rho  ,
\eqe 
and intervals $I_1,\dots,I_N \in \mcl I_k^{**}$ such that $t_{I_j} - s_{I_j} \in [q,2q]$ for each $j \in [1,N]_{\BB Z}$. 

Since the intervals $[s_{I_j} , t_{I_j}]$ for $j\in [1,N]_{\BB Z}$ are disjoint, we can choose our numbering so that
\eqb \label{eqn-disjoint-intervals}
s_{I_1} < t_{I_1} < s_{I_2} < t_{I_2} < \dots < s_{I_N} < t_{I_N} .
\eqe 
\medskip

\noindent\textit{Step 2: bounding $q$.} 
We will now use the estimate~\eqref{eqn-arc-loop} from Lemma~\ref{lem-arc-loop} to show that the number $q$ from the preceding paragraph must be small relative to $\wt D_h(u,v)$. 
For each $j\in [1,N]_{\BB Z}$, we have $|I_j| \in [2^{-k-1} r  ,2^{-k} r]$ and $t_{I_j} - s_{I_j} \in [q,2q]$. By plugging these bounds into~\eqref{eqn-arc-loop}, we get
\eqb  \label{eqn-use-arc-loop}
D_{h-\fr_r}\left( P_r(t_{I_j}) , u ; \BB A_{r,4r}(0) \right) 
\preceq  \llambda e^{-\xi \Amax} \wt D_h(u,v) + 2^{ k/4} q   ,
\quad \forall j\in [1,N]_{\BB Z}  
\eqe
with a universal implicit constant. 

By~\eqref{eqn-use-arc-loop} (with $j=1$ and $j=N$) and the triangle inequality for the points $P(t_{I_1}) , u , P(t_{I_N})$,
\eqb \label{eqn-arc-loop-tri}
t_{I_N} - t_{I_1}
= D_{h-\fr_r}\left( P_r(t_{I_1}) , P_r(t_{I_N}) ; \BB A_{r,4r}(0) \right) 
\preceq  \llambda e^{-\xi \Amax}\wt D_h(u,v) +   2^{ k/4} q   .
\eqe
On the other hand,~\eqref{eqn-disjoint-intervals} and our choices of $N$ and $q$ around~\eqref{eqn-ordered-arcs} shows that
\eqb \label{eqn-arc-loop-sum}
t_{I_N} - t_{I_1} 
\geq \sum_{j=2}^N (t_{I_j}   - s_{I_j}) 
\geq (N-1) q 
\succeq  k^{-1} 2^{k/2 } \Aendpt \rho  q  .
\eqe

Combining~\eqref{eqn-arc-loop-tri} and~\eqref{eqn-arc-loop-sum} gives
\eqb
k^{-1} 2^{k/2 } \Aendpt \rho  q \preceq  \llambda e^{-\xi \Amax} \wt D_h(u,v) +   2^{ k/4} q 
\eqe
which re-arranges to give
\eqb \label{eqn-quantum-size}
q \preceq \frac{\llambda}{k^{-1} 2^{k/2} \Aendpt \rho   - R 2^{k/4}} e^{-\xi \Amax} \wt D_h(u,v)
\eqe
for a constant $R >0$ which depends only on $\xi$. 
\medskip

\noindent\textit{Step 3: conclusion.} 
We have $2^k \geq 2^{\ul K} \geq  1/(2\Anar) $, which can be taken to be as large as we would like as compared to $1/(\Aendpt \rho) $ (recall from the discussion surrounding~\eqref{eqn-rho-parameter} that $\Anar$ is chosen after $\rho$ and the parameters from Lemma~\ref{lem-endpt-ball}).
Hence we can arrange that $k^{-1} 2^{k/2} \Aendpt \rho  q \geq 2 R 2^{k/4}$. 
Therefore,~\eqref{eqn-quantum-size} gives
\eqb \label{eqn-quantum-size'}
q \preceq \frac{k 2^{-k/2}}{\Aendpt \rho}  e^{-\xi \Amax} \wt D_h(u,v)  .
\eqe
Plugging~\eqref{eqn-quantum-size'} into~\eqref{eqn-use-arc-loop} shows that for each $j\in [1,N]_{\BB Z}$, 
\eqb  \label{eqn-use-arc-loop'}
D_{h-\fr_r}\left( P_r(t_{I_j}) , u ; \BB A_{r,4r}(0) \right) 
\preceq  \left( \llambda +  \frac{k 2^{-k/4}}{\Aendpt \rho}  \right)  e^{-\xi \Amax} \wt D_h(u,v)  .
\eqe
Since $k\geq \ul K \geq \log_2(1/\Anar) - 1$, the coefficient on the right side of~\eqref{eqn-use-arc-loop'} can be made to be smaller than $2\llambda$ provided the parameters are chosen appropriately. 
This yields~\eqref{eqn-endpt-close} for an appropriate choice of $\Cclose$. The inclusion~\eqref{eqn-endpt-close-eucl} holds since $t_I \in [a',b']$ and $P_r([a',b'])\subset \Bout$ by definition~\eqref{eqn-ab'}. 
\end{proof}

\subsection{Proof of Proposition~\ref{prop-shortcut}}
\label{sec-shortcut}

\begin{figure}[ht!]
\begin{center}
\includegraphics[width=.85\textwidth]{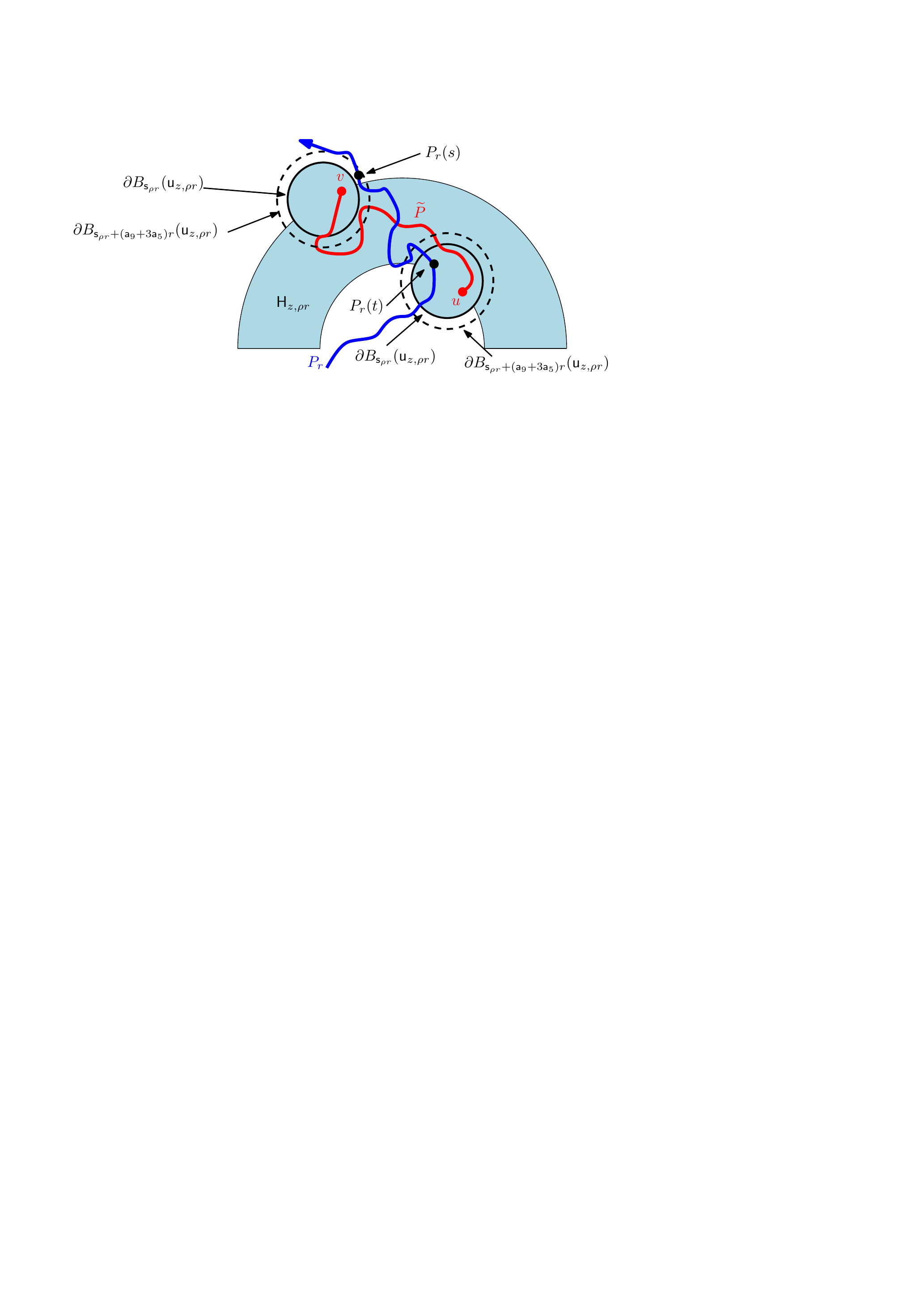} 
\caption{\label{fig-shortcut} Illustration of the proof of Proposition~\ref{prop-shortcut}. We consider a $z\in \Zr_r$ for which $\Fr_{z,\rho r}$ occurs as in Lemma~\ref{lem-excursion-hit}. We look at the corresponding pair of points $u,v$ such that $\wt D_h(u,v) \leq \Cmid_0 D_h(u,v)$ and there is a $\wt D_h$-geodesic $\wt P$ from $u$ to $v$ which is contained in $\ol\Hr_{z,\rho r} \subset \Ur_r$. Lemma~\ref{lem-endpt-close} tells us that there are times $s,t$ for $P_r$ such that $D_h(P_r(t) , u)$ and $D_h(P_r(s) , v)$ are each much smaller than $e^{-\xi \Amax} \wt D_h(u,v) = \wt D_{h-\fr_r}(u,v)$. We then use the triangle inequality to show that $\wt D_h(P_r(t) , P_r(s)) \leq \Cmid  |s-t|$. 
}
\end{center}
\end{figure}

\noindent\textit{Step 1: choice of $s$ and $t$.}
See Figure~\ref{fig-shortcut} for an illustration. 
Let $z\in \Zr_r$ and $u , v \in \bdy \Hr_{z,\rho r}$ be as in Section~\ref{sec-endpt-close}, so that $\Fr_{z,\rho r}$ occurs and $u,v$ are as in the definition of $\Fr_{z,\rho r}$. In particular,
\eqb \label{eqn-shortcut-uv}
\wt D_h(u,v) \leq \Cmid_0 D_h(u,v) .
\eqe
 
By Lemma~\ref{lem-endpt-close}, a.s.\ there exists $t  \subset [\tau,\sigma]$ such that
\eqb \label{eqn-shortcut-u}
P_r(t) \in B_{\sr_{\rho r} + (3\Anar + \Asup) r}(\ur_{z,\rho r}) \quad\text{and} \quad
D_{h-\fr_r}\left( P_r(t) , u ; \BB A_{r,4r}(0) \right) \leq  \Cclose \llambda e^{-\xi \Amax} \wt D_h(u,v)  .
\eqe 
By the definition of $\Fr_{z,\rho r}$, we have $u \in B_{\sr_{\rho r}/2}(\ur_{z,\rho r})$. 
By this,~\eqref{eqn-shortcut-u}, and the triangle inequality, 
\eqb \label{eqn-shortcut-u'}
|P_r(t) - u| \leq \sr_{\rho r} + (3\Anar + \Asup) r + \frac{\sr_{\rho r}}{2} \leq 2\Aendpt^{1/2} \rho r, 
\eqe 
where the second inequality comes from the fact that $\sr_{\rho r} \leq \Aendpt^{1/2} \rho r$ (Lemma~\ref{lem-endpt-ball}) and the fact that each of $\Anar$ and $\Asup$ can be chosen to be much smaller than $\Aendpt$.  
 
By Lemma~\ref{lem-endpt-close} with $\vr_{z,\rho r}$ and $v$ in place of $\ur_{z,\rho r} $ and $u$, there exists $s \in [\tau,\sigma]$ such that 
\eqb \label{eqn-shortcut-v}
D_{h-\fr_r}\left( P_r(s) , v  ; \BB A_{r,4r}(0) \right)   
\leq  \Cclose  \llambda e^{-\xi \Amax} \wt D_h(u,v)  \quad \text{and} \quad
|P_r(s) - v| \leq 2\Aendpt^{1/2} \rho r .
\eqe  
We will check the conditions of~\eqref{eqn-shortcut} for this choice of $s$ and $t$ (possibly with the order of $s$ and $t$ interchanged). 
\medskip

\noindent\textit{Step 2: lower bound for $|s-t|  $.}
Recall that the points $u$ and $v$ lie on the inner and outer boundaries, respectively, of the annulus $\BB A_{\Kann\rho r , \rho r}(z)$. 
From this, the inequalities for Euclidean distances in~\eqref{eqn-shortcut-u'} and~\eqref{eqn-shortcut-v}, and the triangle inequality, we get
\eqb \label{eqn-shortcut-eucl}
|P_r(t) - P_r(s)| \geq (1-\Kann) \rho r - 4\Aendpt^{1/2} \rho r \geq \frac{1-\Kann}{2} \rho r  ,
\eqe 
where in the last inequality we use that $\Aendpt^{1/2}$ is much smaller than $1-\Kann$ (Lemma~\ref{lem-endpt-ball}).

This right side of~\eqref{eqn-shortcut-eucl} is at least $\Anar r$, so the reverse H\"older continuity condition~\ref{item-E-narrow} in the definition of $\Er_r$ gives
\eqb \label{eqn-shortcut-lower}
D_h\left( P_r(t) , P_r(s) ; \BB A_{r,4r}(0) \right)  \geq \Anar^{\xi(Q+3)} r^{\xi Q} e^{\xi h_r(0)} .
\eqe 
By Lemma~\ref{lem-excursion-annulus}, $P_r|_{[\tau',\sigma']}$ is a $D_{h-\fr_r}(\cdot,\cdot;\ol{\BB A}_{r,4r}(0))$-geodesic.
In fact, since $P_r([s,t]) \subset \BB A_{r,4r}(0)$, we have that $P_r|_{[s,t]}$ is a $D_{h-\fr_r}(\cdot,\cdot; \BB A_{r,4r}(0))$-geodesic.
Since $\fr_r \leq \Amax$, we get from~\eqref{eqn-shortcut-lower} that
\allb
|s-t| 
&= D_{h-\fr_r}\left( P_r(t) , P_r(s) ; \BB A_{r,4r}(0) \right)  \notag\\ 
&\geq  e^{-\xi \Amax} D_h\left( P_r(t) , P_r(s) ; \BB A_{r,4r}(0) \right)    \notag\\
&\geq  \Anar^{\xi(Q+3)}  e^{-\xi \Amax}  r^{\xi Q} e^{\xi h_r(0)} 
\alle
which gives the first inequality in~\eqref{eqn-shortcut}. 
\medskip

\noindent\textit{Step 3: upper bound for $\wt D_{h-\fr_r}\left( P_r(t) , P_r(s) ; \BB A_{r,4r}(0) \right)  $.}
We now prove the second inequality in~\eqref{eqn-shortcut}. 
From the bi-Lipschitz equivalence of $D_h$ and $\wt D_h$ and Weyl scaling (Axiom~\ref{item-metric-f}), we get that $D_{h-\fr_r}$ and $\wt D_{h-\fr_r}$ are also bi-Lipschitz equivalent, with the same lower and upper bi-Lipschitz constants $\Clower$ and $\Cupper$. 
Therefore,~\eqref{eqn-shortcut-u} and~\eqref{eqn-shortcut-v} imply that
\eqb \label{eqn-to-shortcut}
\max\left\{ \wt D_{h-\fr_r}\left( P_r(t) , u ; \BB A_{r,4r}(0) \right)  , \wt D_{h-\fr_r}\left( P_r(s) ,v; \BB A_{r,4r}(0) \right)   \right\}   
\leq   \Cupper \Cclose \llambda e^{-\xi \Amax} \wt D_h(u,v)  .
\eqe

Let $\wt P$ be the $\wt D_h$-geodesic from $u$ to $v$ which is contained in $\ol{\Hr}_{z,\rho r}$, as in condition~\ref{item-endpt-ball-annulus'} in the definition of $\Fr_{z,\rho r}$.
Since $\wt P$ is a $\wt D_h$-geodesic, $\wt P\subset \Ur_r$, and $\fr_r $ attains its maximal value $ \Amax$ everywhere on $\Ur_r$, 
\eqb \label{eqn-shortcut-len}
\wt D_{h-\fr_r  }\left( u , v  ; \BB A_{r,4r}(0) \right) = e^{-\xi \Amax}\wt D_h(u,v)   .
\eqe
By~\eqref{eqn-to-shortcut}, \eqref{eqn-shortcut-len}, and the triangle inequality, followed by~\eqref{eqn-shortcut-uv},
\allb \label{eqn-shortcut-tildeD}
\wt D_{h-\fr_r }\left( P_r(t) , P_r(s) ; \BB A_{r,4r}(0) \right) 
&\leq \left(1 + 2 \Cupper \Cclose \llambda  \right) e^{-\xi \Amax} \wt D_h(u,v) \notag\\ 
&\leq \left(1 +2 \Cupper \Cclose \llambda   \right) \Cmid_0 e^{-\xi \Amax}   D_h(u,v) .
\alle 

On the other hand, since $\fr_r \leq \Amax$, Weyl scaling gives
\eqb \label{eqn-uv-D}
D_{h-\fr_r}(u,v) \geq e^{-\xi \Amax} D_h(u,v) .
\eqe
Hence
\allb \label{eqn-shortcut-D}
|s-t| 
&= D_{h-\fr_r}(P_r(t) , P_r(s)) \quad \text{(since $P_r$ is a $D_{h-\fr_r}$-geodesic)} \notag\\
&\geq D_{h-\fr_r}(u,v) - D_{h-\fr_r}( P_r(t) , u ) - D_{h-\fr_r}( P_r(s) , v)  \quad \text{(triangle inequality)} \notag\\
&\geq e^{-\xi \Amax} D_h(u,v)  - 2 \Cclose \llambda e^{-\xi \Amax} \wt D_h(u,v) \quad\text{(by~\eqref{eqn-shortcut-u}, \eqref{eqn-shortcut-v}, and~\eqref{eqn-uv-D})} \notag\\
&\geq e^{-\xi \Amax} D_h(u,v)  - 2 \Cclose \llambda e^{-\xi \Amax} \Cupper D_h(u,v) \quad \text{(bi-Lipschitz equivalence)} \notag\\
&= \left( 1 - 2 \Cupper \Cclose \llambda   \right) e^{-\xi \Amax} D_h(u,v)  .
\alle

Combining~\eqref{eqn-shortcut-tildeD} and~\eqref{eqn-shortcut-D} gives
\eqb \label{eqn-shortcut-end}
\wt D_{h-\fr_r }\left( P_r(t) , P_r(s) ; \BB A_{r,4r}(0) \right) 
\leq \frac{1 + 2 \Cupper \Cclose \llambda }{1 - 2\Cupper \Cclose \llambda } \Cmid_0 |s-t| . 
\eqe
Since $\Cmid_0 < \Cmid $ and $\Cmid_0 , \Cmid $ depend on the laws of $D_h$ and $\wt D_h$ (recall~\eqref{eqn-Cmid0-choice}), we can choose $\llambda$ to be small enough, in a manner depending only on laws of $D_h$ and $\wt D_h$, so that
\eqb
\frac{1 + 2 \Cupper \Cclose \llambda }{1 - 2\Cupper \Cclose \llambda } \Cmid_0 \leq \Cmid . 
\eqe 
Then~\eqref{eqn-shortcut-end} gives the second inequality in~\eqref{eqn-shortcut}. 
\qed

\bibliography{cibib}
\bibliographystyle{hmralphaabbrv}

\end{document}